\newcommand{\R}{\mathbb{R}}
\newcommand{\C}{\mathbb{C}}
\newcommand{\N}{\mathbb{N}}
\renewcommand{\leq}{\leqslant}
\renewcommand{\geq}{\geqslant}
\renewcommand{\div}{\operatorname{div\,}}
\newcommand{\curl}{\operatorname{curl}}
\newcommand{\dist}{\operatorname{dist}}
\newcommand{\Lip}{\operatorname{Lip}}
\newcommand{\LL}{\operatorname{{\mathcal L}{\mathcal L}}}
\def\II{\parbox[][0.6cm][c]{0cm}{\ }}
\def\longrightharpoonup{\relbar\joinrel\rightharpoonup}
\def\medcup{\textstyle\bigcup}
\newcommand{\Supp}{\operatorname{Supp}}
\def\eop{\hfill $\Box$ \\ \ \par}
\newtheorem{Theorem}{Theorem}
\newtheorem{Definition}{Definition}[section]
\newtheorem{Corollary}[Definition]{Corollary}
\newtheorem{Proposition}[Definition]{Proposition}
\newtheorem{Lemma}[Definition]{Lemma}
\newtheorem{Remark}[Definition]{Remark}
\numberwithin{equation}{section}
\author{ 
Olivier Glass\footnote{CEREMADE, UMR CNRS 7534, Universit\'e Paris-Dauphine, PSL,
Place du Mar\'echal de Lattre de Tassigny, 75775 Paris Cedex 16, France},   
Franck Sueur\footnote{Institut de Math\'ematiques de Bordeaux, UMR CNRS 5251, Universit\'e de Bordeaux, 351 cours de la Lib\'eration, F33405 Talence Cedex, France  $\&$ Institut  Universitaire de France}}
\date{\today}
\begin{document}
\title{Dynamics of several rigid bodies in a two-dimensional ideal fluid and convergence to vortex systems}%[Dynamics of several small rigid bodies in a 2D perfect fluid]
\maketitle

\begin{abstract}
We consider the motion of several solids in a bounded cavity filled with a perfect incompressible fluid, in two dimensions. 
The solids move according to Newton's law, under the influence of the fluid's pressure. On the other hand the fluid dynamics is driven by the 2D incompressible Euler equations, which are set on the time-dependent domain corresponding to the cavity deprived of the sets occupied by the solids. We assume that the fluid vorticity is initially bounded and that the circulations around the solids may be non-zero. 
The existence of a unique corresponding solution, \textit{\`a la Yudovich}, to this system, up to a possible collision, follows from the arguments in
\cite{GS-!}.
%[Glass O., Sueur F., Uniqueness results for weak solutions of two-dimensional fluid-solid systems, Arch. Rational Mech. Anal. 218 (2015), Issue 2, 907-944.]  

The main result of this paper is to identify the limit dynamics of the system when the radius of some of the solids converge to zero, in different regimes, depending on how, for each body, the inertia is scaled with the radius. 
We obtain in the limit some point vortex systems for the solids converging to particles and a form of Newton's law for the solids that have a fixed radius; for the fluid we obtain an Euler-type system. 
This extends the earlier works 
\cite{GLS}, 
%[Glass, O., Lacave, C.,  Sueur, F. (2014). On the motion of a small body immersed in a two dimensional incompressible perfect fluid. Bulletin de la soci\'et\'e math\'ematique de France, 142(3), 489-536.], 
which deals with   the case of a single small heavy  body   immersed in an incompressible perfect fluid occupying the rest of the plane, 
\cite{GLS2},
% [Glass, O., Lacave, C.,  Sueur, F. (2016). On the motion of a small light body immersed in a two dimensional incompressible perfect fluid with vorticity. Communications in Mathematical Physics, 341(3), 1015-1065.]
which deals with   the case of a single small light body  immersed in an incompressible perfect fluid occupying the rest of the plane,  
and
\cite{GMS}
% [Glass, O., Munnier, A.,  Sueur, F. (2018). Point vortex dynamics as zero-radius limit of the motion of a rigid body in an irrotational fluid. Inventiones mathematicae, 214(1), 171-287.].
which deals with   the case of a single small, heavy or light,  body immersed in a irrotational incompressible perfect fluid occupying a bounded  plane domain. 

In particular we consider for the first time the case of several small rigid bodies, for which the strategy of the previous papers cannot be adapted straightforwardly, despite the partial results  recently obtained  in \cite{GLMS}. 
The main difficulty is to understand the interaction, through the fluid, between several moving solids. 
A crucial point of our strategy is the use of normal forms of the ODEs driving the motion of the solids in a two-steps process. 
First we use a normal form for the system coupling the time-evolution of all the solids to obtain a rough estimate of the acceleration of the bodies. 
Then we turn to some normal forms that are specific to each small solid, with an appropriate modulation related to the influence of the other solids and of the fluid vorticity. 
Thanks to these individual normal forms we obtain some precise uniform \textit{a priori} estimates of the velocities of the bodies, and then pass to the limit. 
In the course of this process we make use of another new main ingredient of this paper, which is an estimate of the fluid velocity with respect to the solids, uniformly with respect to their positions and radii, and which can be seen as an 
refinement of the reflection method for a div/curl system with prescribed circulations.\footnote{MSC: 35Q31,  35Q70, 76D27}

\end{abstract}

\newpage
\tableofcontents
\newpage
\printindex
\newpage
%
%

%
%
%%%%%%%%%%%%%%%%%%%%%%%%%%%%%%%%%%%%%%%%%%%%%%%%%%%%%%%%%%%%%%%%%%%%%%%%%%%%%%%%%%%%%%%%%%%%%%%%%%%%%%%%%%%%%%%%%%%%%%%%%%%%%%%%%%%%%%%%%%%%%%%%%%%%%%%%%%%%%%
%
%
\section{Introduction and statement of the main result}
%
%
%%%%%%%%%%%%%%%%%%%%%%%%%%%%%%%%%%%%%%%%%%%%%%%%%%%%%%
%
%
%
%
\subsection{The fluid-solid system}
The general situation that we describe is that of $N$ solids immersed in a bounded domain of the plane. The total domain (containing the fluid and the solids) is denoted by $\Omega$, that is a nonempty bounded open connected set in $\R^2$, with smooth boundary.  In the domain $\Omega$ are embedded $N$ solids ${\mathcal S}_{1}$, \dots, ${\mathcal S}_{N}$, which are nonempty, simply connected and closed sets with smooth boundaries.
To simplify, we assume that $\Omega$ is simply connected and that the solids ${\mathcal S}_{1}$, \dots, ${\mathcal S}_{N}$ are not discs (though the general case could be treated similarly).
 We will systematically suppose them to be at positive distance one from another and from the outer boundary $\partial \Omega$ during the whole time interval:
\begin{equation} \label{Eq:PositiveDistance}
\forall t, \ \ \forall \kappa \in \{1, \dots, N \}, \ \ 
{\mathcal S}_{\kappa}(t) \subset \Omega , \ \ 
\dist({\mathcal S}_{\kappa}(t), \partial \Omega) >0 \ \text{ and } \
\forall \lambda \neq \kappa, \  \dist({\mathcal S}_{\kappa}(t), {\mathcal S}_{\lambda}(t)) >0.
\end{equation}
Their positions depend on time, so we will denote them ${\mathcal S}_{1}(t)$, \dots, ${\mathcal S}_{N}(t)$. Since they are rigid bodies, each solid ${\mathcal S}_{\kappa}(t)$ is obtained through a rigid movement from ${\mathcal S}_{\kappa}(0)$. The rest of the domain, occupied by the fluid, will be denoted by ${\mathcal F}(t)$ so that
\index{T@Domains and admissible configurations!Q1a@$\Omega$: whole domain with fluid and solids}
\begin{equation*}
{\mathcal F}(t) = \Omega \,\setminus\, ({\mathcal S}_{1}(t) \cup \dots \cup {\mathcal S}_{N}(t)).
\end{equation*}
Let us now describe the dynamics of the fluid and of the solids. \par
\paragraph{Dynamics of the fluid.} The fluid is supposed to be inviscid and incompressible, and consequently driven by the incompressible Euler equation. We denote $u=u(t,x)$ the velocity field (with values in $\R^2$) and $\pi=\pi(t,x)$ the (scalar) pressure field, both  defined for $t$ in some time interval $[0,T]$ and $x \in {\mathcal F}(t)$. The incompressible Euler equation reads
\begin{equation} \label{Eq:Euler}
\left\{ \begin{array}{l}
\partial_{t} u + (u \cdot \nabla) u + \nabla \pi =0, \\
\div u = 0,
\end{array} \right. \ \text{ for } \ t \in [0,T], \ x \in {\mathcal F}(t).
\end{equation}
\index{Velocity fields!U0@$u$: fluid velocity field}
This equation is supplemented by boundary conditions which correspond to the {\it non-penetration condition}, precisely
\begin{equation} \label{Eq:NonPenetration}
u \cdot n = 0 \ \text{ on } \ \partial \Omega,
\ \text{ and } \ 
u \cdot n = v_{{\mathcal S},\kappa} \cdot n \ \text{ on } \ \partial {\mathcal S}_{\kappa} \text{ for } \kappa \in \{1,\dots,N\},
\end{equation}
where $n$ denotes the unit normal on $\partial {\mathcal F}(t)$ directed outside ${\mathcal F}(t)$ and $v_{{\mathcal S},\kappa}$ denotes the velocity field of the solid ${\mathcal S}_{\kappa}$. \par
Hence there is no difference with the classical situation, except the fact that the space-time domain is not cylindrical. \par
\paragraph{Dynamics of the solids.}
To describe the position of the $\kappa$-th solid ${\mathcal S}_{\kappa}$, we denote $h_{\kappa}$ and $\vartheta_{\kappa}$ the position of its center of mass and its angle with respect to its initial position. Correspondingly, the solid's position at time $t$ is obtained by the following rigid movement with respect to its initial position:
\begin{equation} \label{Eq:PositionSolide}
{\mathcal S}_{\kappa}(t) = h_{\kappa}(t) + R(\vartheta_{\kappa}(t)) ({\mathcal S}_{\kappa}(0) - h_{\kappa}(0)),
\end{equation}
where $R(\vartheta)$ is the linear rotation of angle $\vartheta$, that is
\begin{equation} \label{Eq:DefRTheta}
R(\vartheta) = \begin{pmatrix} \cos(\vartheta) & - \sin(\vartheta) \\ \sin(\vartheta) & \cos(\vartheta)
\end{pmatrix}.
\end{equation}
Note also that the velocity field of the solid ${\mathcal S}_{\kappa}$ mentioned in \eqref{Eq:NonPenetration} is given by
\begin{equation} \label{Eq:SolidVelocity}
v_{{\mathcal S},\kappa}(t,x) := h'_{\kappa}(t) + \vartheta'_{\kappa}(t) (x-h_{\kappa}(t))^\perp,
\index{Solid velocity!E4b@{$v_{{\mathcal S},\kappa}$}: $\kappa$-th solid velocity vector field}
\end{equation}
where $(x_{1},x_{2})^\perp:=(-x_{2},x_{1})$.
Now we denote the mass and momentum of inertia  of the solid ${\mathcal S}_{\kappa}$ by $m_{\kappa}$ and $J_{\kappa}$ respectively. 
\index{R@Solid position!D1@$h_{\kappa}$: position of the center of mass of the $\kappa$-th solid}
\index{R@Solid position!D2@$\vartheta_{\kappa}$: angle of the $\kappa$-th solid with respect to its initial position}
The assumption is that the solids evolve according to Newton's law, under the influence of the fluid's pressure on its boundary. Hence the equations of $h_{\kappa}$ and $\vartheta_{\kappa}$ read
\begin{equation} \label{Eq:Newton}
\left\{ \begin{array}{l}
\displaystyle
m_{\kappa} h_{\kappa}''(t) = \int_{\partial {\mathcal S}_{\kappa}(t)} \pi(t,x) n(t,x) \, ds(x), \\
\displaystyle
J_{\kappa} \vartheta_{\kappa}''(t) = \int_{\partial {\mathcal S}_{\kappa}(t)} \pi(t,x) (x-h_{k}(t))^\perp \cdot n(t,x) \, ds(x),
\end{array} \right. \ \text{ in } \ [0,T].
\end{equation}
\begin{Remark}
It could be possible to add some external forces such as gravity in the right hand side of \eqref{Eq:Newton}  with only minor modifications in the reasonings below. 
\end{Remark}
\paragraph{Initial conditions.} The system is supplemented with initial conditions:
\begin{itemize}
\item[--] At initial time the solids ${\mathcal S}_{1}$, \dots, ${\mathcal S}_{N}$ occupy the positions ${\mathcal S}_{1,0}$, \dots, ${\mathcal S}_{N,0}$ such that
\begin{equation} \label{Eq:CondSolidesInit}
\forall \kappa \in \{1, \dots, N \}, \ \ 
{\mathcal S}_{\kappa,0} \subset \Omega , \ \ 
\dist({\mathcal S}_{\kappa,0}, \partial \Omega) >0 \ \text{ and } \
\forall \lambda \neq \kappa, \  \dist({\mathcal S}_{\kappa,0}, {\mathcal S}_{\lambda,0}) >0.
\end{equation}
We introduce the initial values of the centers of masses $h_{1,0}$, \dots, $h_{N,0}$, and the angles $\vartheta_{1,0} = \dots = \vartheta_{N,0}=0$ (by convention), which characterize these positions. We denote ${\mathcal F}_{0}$ the corresponding initial fluid domain.
\item[--] The solids have initial velocities $(h'_{\kappa},\vartheta'_{\kappa})(0)= (h_{\kappa,0}',\vartheta_{\kappa,0}') \in \R^3$  for $\kappa \in \{1, \dots, N \}$,
\item[--] The circulations of velocity around the solids ${\mathcal S_{1}}$, \dots, ${\mathcal S}_{N}$, gathered as ${\boldsymbol\gamma}=(\gamma_{1},\dots,\gamma_{N})$, are given,
\item[--] We consider an initial vorticity $\omega_{0}  \in L^{\infty}({\mathcal F}_{0})$.
\end{itemize}
Note that this data is sufficient to reconstruct the initial velocity field $u_{0} \in C^0(\overline{{\mathcal F}}_{0};\R^2)$ in a unique way, see %Section~\ref{Subsec:Potentials},    
\eqref{Eq:DivCurlU}. In  particular  $ \curl u_0 = \omega_0$ and 
$ \oint_{\partial {\mathcal S}_{\nu}} u_0 \cdot \tau \, ds = \gamma_{\nu} $  for $ \nu =1, \dots, N$, where $\tau$ is the unit clockwise tangent vector field.
\par

\paragraph{Cauchy theory \`a la Yudovich.} The system \eqref{Eq:Euler}-\eqref{Eq:Newton} admits a suitable Cauchy theory in the spirit of Yudovich \cite{Yu}. Precisely, by a straightforward adaptation of  the arguments of \cite{GS-!}, we obtain the following result where initial conditions are given, as described above. 
\begin{Theorem}%[Cauchy theory for system \eqref{Eq:Euler}-\eqref{Eq:Newton}]
\label{Th:Yudovich}
Given the initial conditions above, there is a unique maximal solution $(h_{1},\vartheta_{1}, \dots, h_{N}, \vartheta_{N}, u)$ in the space $C^2([0,T^*))^{3N} \times [L^\infty_\text{loc}( [0,T^*) ;\LL({\mathcal F}(t);\R^2)) \cap C^0([0,T^*);W^{1,q}({\mathcal F}(t);\R^2))]$ (for all $q$ in $[1,+\infty)$) of System \eqref{Eq:Euler}--\eqref{Eq:Newton} for some $T^* >0$. Moreover, as $  t \longrightarrow T^{*}$, 
\begin{multline*}
\min \Big\{  \min \big( \dist({\mathcal S}_{\kappa}(t), \partial \Omega) , \kappa \in \{1,\dots,N\} \big) , 
\min \big( \dist({\mathcal S}_{\kappa}(t), {\mathcal S}_{\lambda}(t)), \kappa, \lambda \in \{1,\dots,N\}, \ \lambda \neq \kappa \big)
\Big\} \longrightarrow 0.
\end{multline*}
Finally  the velocity circulations around the solids ${\boldsymbol\gamma}=(\gamma_{1},\dots,\gamma_{N})$ are constant in time. 
\end{Theorem}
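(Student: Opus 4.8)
The plan is to adapt Yudovich's classical scheme for the fixed-domain Euler equations to the present moving-domain fluid--solid setting, following the blueprint of \cite{GS-!}. First I would reformulate the problem in a fixed reference configuration: via a family of diffeomorphisms depending smoothly on the solid positions $(h_{\kappa},\vartheta_{\kappa})$ one transports the fluid equations \eqref{Eq:Euler}--\eqref{Eq:NonPenetration} onto the fixed domain ${\mathcal F}_{0}$, which turns \eqref{Eq:Euler}--\eqref{Eq:Newton} into a coupled system of a transport-type equation for a transported vorticity together with the $3N$ second-order ODEs \eqref{Eq:Newton} for the bodies. The key structural fact, as in the single-body works, is that at each fixed time the velocity field $u$ is entirely determined by the instantaneous data: the solid positions, the solid velocities $(h'_{\kappa},\vartheta'_{\kappa})$, the circulations ${\boldsymbol\gamma}$, and the vorticity $\omega = \curl u$. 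This is the elliptic div/curl problem with prescribed normal trace on $\partial {\mathcal F}(t)$ and prescribed circulations $\gamma_{\nu}$ around each $\partial {\mathcal S}_{\nu}$; it has a unique solution in $\LL \cap W^{1,q}$, and its dependence on the parameters is smooth except for the vorticity dependence which is merely Lipschitz in the natural norms (this is the Yudovich-type estimate: $L^\infty$ control on $\omega$ gives an $L^\infty$, indeed log-Lipschitz, bound on $\nabla u$ away from degeneracy).

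Second, I would set up a fixed-point / iteration scheme. Given the solid trajectories and the vorticity on a short time interval, reconstruct $u$ by the div/curl solve; plug the resulting pressure (recovered by solving the Neumann problem obtained from taking the divergence of the momentum equation, with the compatibility/boundary terms coming from the solid accelerations) into Newton's law \eqref{Eq:Newton} to update the solid trajectories; and transport the vorticity along the flow of $u$ (in the reference variables) to update $\omega$. The vorticity transport preserves the $L^\infty$ norm of $\omega_0$, so on any interval on which \eqref{Eq:PositiveDistance} holds with a uniform lower bound on the distances, one has uniform bounds on $\|u\|_{\LL}$, on $\|\nabla u\|_{L^q}$, and on the solid velocities and accelerations. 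The log-Lipschitz bound on $u$ yields a unique flow map (Osgood uniqueness) and hence a well-defined transported vorticity; the Newton equations are then ODEs with right-hand side Lipschitz in the trajectory variables on the relevant set, so the Picard--Lindelöf theorem applies. A contraction-mapping argument, using the Yudovich-type Lipschitz estimates to control differences of two candidate solutions in a weak norm (e.g.\ an $L^p$-in-space norm with $p$ growing, à la Yudovich, or the now-standard Loeper-type argument), gives local existence and uniqueness in the stated class $C^2([0,T^*))^{3N} \times [L^\infty_{\text{loc}} \LL \cap C^0 W^{1,q}]$. Then a standard continuation argument extends the solution up to the maximal time $T^*$, and the blow-up criterion — that as $t\to T^*$ either two solids collide or a solid touches $\partial\Omega$ — follows because as long as the distances in \eqref{Eq:PositiveDistance} stay bounded below, all the a priori estimates persist and the solution can be continued, so $T^*$ can only be finite if one of those distances tends to $0$.

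Finally, the invariance of the circulations ${\boldsymbol\gamma}$ is the two-dimensional Kelvin circulation theorem adapted to the moving boundary: for each $\nu$, the curve $\partial {\mathcal S}_{\nu}(t)$ is transported by the fluid flow (this is exactly the non-penetration condition $u\cdot n = v_{{\mathcal S},\nu}\cdot n$, which says the material boundary moves with the solid, hence with the normal component of $u$; the tangential slip does not affect the material-loop computation once one integrates by parts using that $u$ is curl-free in a neighbourhood of $\partial{\mathcal S}_\nu$ only where $\omega$ vanishes, so more carefully one uses the full Euler equation). Concretely, $\frac{d}{dt}\oint_{\Gamma(t)} u\cdot \tau\, ds$ along any loop $\Gamma(t)$ transported by a velocity field $w$ with $w\cdot n = u\cdot n$ on $\Gamma(t)$ equals $\oint_{\Gamma(t)} (\partial_t u + (u\cdot\nabla)u)\cdot \tau\, ds$ plus terms that integrate to zero around the closed loop; by \eqref{Eq:Euler} the integrand is $-\nabla\pi\cdot\tau$, whose loop integral vanishes. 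Taking $\Gamma(t) = \partial{\mathcal S}_\nu(t)$ and $w = v_{{\mathcal S},\nu}$ gives $\frac{d}{dt}\gamma_\nu = 0$. The main obstacle throughout is the same as in \cite{GS-!}: obtaining the pressure with enough regularity and the correct dependence on the solid accelerations to close Newton's law — the pressure solves an elliptic problem whose data involves $h''_\kappa, \vartheta''_\kappa$ themselves, so \eqref{Eq:Newton} is in fact an implicit system for the accelerations, and one must check that the associated "added mass" operator is invertible (which it is, by positivity of the kinetic energy form) to put the coupled ODE--PDE system in a form to which the fixed-point argument applies.
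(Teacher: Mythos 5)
The paper does not actually prove Theorem~\ref{Th:Yudovich}: it states it and defers to \cite{GS-!}, saying the result follows from a ``straightforward adaptation'' of the arguments there. Your sketch matches the strategy of \cite{GS-!} (div/curl reconstruction of $u$ from $\omega$, circulations and solid velocities; a Yudovich-type fixed-point using the log-Lipschitz bound and a weak-norm contraction; added-mass invertibility to solve Newton's law as an ODE for the accelerations; continuation and blow-up criterion; Kelvin's theorem for the circulations), so your approach coincides with the one the paper points to.

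One small remark on the circulation argument, where your wording hedges unnecessarily: the conservation of $\gamma_{\nu}$ does not need $\omega$ to vanish near $\partial {\mathcal S}_{\nu}$. Transporting the loop $\Gamma(t)=\partial {\mathcal S}_{\nu}(t)$ by the solid velocity $w = v_{{\mathcal S},\nu}$ and using the Lie-derivative/Cartan formula, one finds
\begin{equation*}
\frac{d}{dt}\oint_{\Gamma(t)} u\cdot \tau \, ds
= \oint_{\Gamma(t)} \big(\partial_t u + \nabla(u\cdot w) + \omega\, w^{\perp}\big)\cdot \tau \, ds
= \oint_{\Gamma(t)} \omega\,(w-u)^{\perp}\cdot \tau \, ds,
\end{equation*}
where the gradient terms (including $-\nabla\pi$ from Euler) integrate to zero around the closed loop. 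The non-penetration condition says $(w-u)\cdot n = 0$ on $\partial {\mathcal S}_{\nu}$, so $(w-u)^{\perp}\cdot\tau = 0$ pointwise and the right-hand side vanishes identically, with no hypothesis on the support of $\omega$. This is the clean version of what you were gesturing at with ``more carefully one uses the full Euler equation.''
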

Above, $\LL({\mathcal F}(t);\R^2)$ stands for the space of log-Lipschitz vector fields on ${\mathcal F}(t)$; we recall that $\LL(X)$ that is the space of functions $f \in L^{\infty}(X)$ such that
\begin{equation*}
\| f \|_{\mathcal{LL}(X)} := \| f\|_{L^{\infty}(X)} + \sup_{x\not = y} \frac{|f(x)-f(y)|}{|(x-y)(1+ \ln^{-}|x-y|)|} < +\infty.
\end{equation*}
Also we used the slightly abusive notation $L^\infty(0,T;\LL({\mathcal F}(t);\R^2))$: it describes the space of functions defined for almost all $t$, with values for such $t$ in ${\mathcal{LL}}({\mathcal F}(t))$, with a uniform log-Lipschitz norm. We will quite systematically use such notations from the cylindrical case to describe our situation. There should be no ambiguity coming from this abuse of notation. \par
Theorem~\ref{Th:Yudovich} indicates in particular that the lifespan of the solutions is only limited by a possible collision between solids or between a solid and the boundary.
Regarding  the issue of collisions  we refer to \cite{H}, \cite{HM} and the recent paper \cite{C}.
%
%
%
%
%
%
%%%%%%%%%%%%%%%%%%%%%%%%%%%%%%%%%%%%%%%%%%%%%%%%%%%%%%
%
%
%
%

\subsection{The problem of small solids}
\label{sec-small}
The main question raised by this paper is to determine a limit system when some of the solids ${\mathcal S}_{1}$, \dots, ${\mathcal S}_{N}$ shrink to a point. To describe this problem, we will denote the scale of the $\kappa$-th solid by $\varepsilon_{\kappa}$ and suppose that the $\kappa$-th solid ${\mathcal S}_{\kappa}$ is obtained initially by applying a homothety of ratio $\varepsilon_{\kappa}$ and center $h_{\kappa,0}$ on the solid of fixed size ${\mathcal S}_{\kappa,0}^1$:
\begin{equation} \label{Eq:InitSolidSmall}
{\mathcal S}_{\kappa,0}^\varepsilon = h_{\kappa,0} + \varepsilon_{\kappa} \big( {\mathcal S}_{\kappa,0}^1 - h_{\kappa,0} \big).
\end{equation}
\index{T@Domains and admissible configurations!Q1b@${\mathcal S}^\varepsilon_{\kappa}$: $\kappa$-th solid}
\index{Characteristics of the solids!C1@$\varepsilon_{\kappa}$: scale factor for the $\kappa$-th solid}
\paragraph{The three sets of solids.}
Now let us be more specific about the indices $\kappa$. The set of indices $\{ 1, \dots, N \}$ is split in three:
\begin{gather*}
\{ 1, \dots, N \} = {\mathcal P}_{(i)} \cup {\mathcal P}_{(ii)} \cup {\mathcal P}_{(iii)} \ \text{ with } \ \\
\index{I@Indices!I1@${\mathcal P}_{(i)}$: set of indices of large solids}
\index{I@Indices!I2@${\mathcal P}_{(ii)}$: set of indices of small massive solids}
\index{I@Indices!I3@${\mathcal P}_{(iii)}$: set of indices of small light solids}
{\mathcal P}_{(i)}:= \{ 1,\dots,N_{(i)} \}, \ 
{\mathcal P}_{(ii)}:= \{ N_{(i)}+1,\dots,N_{(i)}+N_{(ii)} \}, \ 
{\mathcal P}_{(iii)}:= \{ N_{(i)}+N_{(ii)}+1,\dots,N \},
\end{gather*}
corresponding respectively to the solids: 
\begin{itemize}
\item (i) of fixed size and inertia:
\begin{equation} \label{Eq:Family_i}
\text{for } \kappa \in {\mathcal P}_{(i)}, \ \ 
\varepsilon_{\kappa}=1, \ \ m_{\kappa}^\varepsilon=m^1_{\kappa}, \ \ J_{\kappa}^\varepsilon=J^1_{\kappa},
\end{equation}
\item (ii) of size going to zero but with fixed mass:
\begin{equation} \label{Eq:Family_ii}
\text{for } \kappa \in {\mathcal P}_{(ii)}, \ \ 
\varepsilon_{\kappa} \rightarrow 0^+, \ \ m_{\kappa}^\varepsilon=m^1_{\kappa}, \ \ J_{\kappa}^\varepsilon= \varepsilon_{\kappa}^2 J^1_{\kappa},
\end{equation}
\item (iii) of size and mass converging to zero:
\begin{equation} \label{Eq:Family_iii}
\text{for } \kappa \in {\mathcal P}_{(iii)}, \ \ 
\varepsilon_{\kappa} \rightarrow 0^+, \ \ m_{\kappa}^\varepsilon= \varepsilon_{\kappa}^{\alpha_{\kappa}} m^1_{\kappa}, \ \ J_{\kappa}^\varepsilon= \varepsilon_{\kappa}^{\alpha_{\kappa} + 2} J^1_{\kappa}
\ \ \text{ for some } \alpha_{\kappa}>0.
\end{equation}
\end{itemize}
\begin{Remark}
Case (iii)   encompasses the case of  fixed density, for which $\alpha_{\kappa} =2$. This is actually the main motivation for the difference in the scaling of $m_{\kappa}^\varepsilon$ and $J_{\kappa}^\varepsilon$.
\end{Remark}
It will be useful to consider the indices corresponding to small solids (here $s$ stands for small):
\begin{equation} \label{Eq:PS}
{\mathcal P}_{s} := {\mathcal P}_{(ii)} \cup {\mathcal P}_{(iii)}=\{N_{(i)}+1, \dots, N \}, \ \ N_s :=   N_{(ii)} + N_{(iii)} .
\index{I@Indices!I4@${\mathcal P}_{s}$: set of indices of all small solids, that is ${\mathcal P}_{(ii)} \cup {\mathcal P}_{(iii)}$}
\index{I@Indices!I5@$N_{(i)}, N_{(ii)}, N_{(iii)}, N_{s}$: cardinals of the sets of indices}
\end{equation}
We collect the various $\varepsilon_{\kappa}$ as follows:
\begin{equation*}
{\boldsymbol\varepsilon} = (\varepsilon_{1},\dots,\varepsilon_{N}),  \ \text{ and } \ \overline{\boldsymbol\varepsilon} = (\varepsilon_{N_{(i)}+1},\dots,\varepsilon_{N}).
\index{Characteristics of the solids!C2@$\boldsymbol{\varepsilon}$: scale factors for all solids at once}
\index{Characteristics of the solids!C3@$\overline{\boldsymbol{\varepsilon}}$: scale factors for all shrinking solids at once}
\end{equation*}
The total size of small solids will be denoted as follows
\begin{equation} \label{Eq:TailleTotalePetitsSolides}
|\overline{\boldsymbol\varepsilon}|:= \sum_{\kappa \in {\mathcal P}_{s}} \varepsilon_{\kappa}.
\index{Characteristics of the solids!C4@$\vert\overline{\boldsymbol{\varepsilon}}\vert$: total size of small solids}
\end{equation}
For $\varepsilon_{0} >0$, we will write $\overline{\boldsymbol\varepsilon} < \varepsilon_{0}$ or $\overline{\boldsymbol\varepsilon} \leq \varepsilon_{0}$ to express that the inequality is valid for each coordinate. \par
We assume, for any $\kappa$ in ${\mathcal P}_{s}$, that $h_{\kappa,0} $ is in $\Omega$ so that ${\mathcal S}_{\kappa,0}^\varepsilon  \subset \Omega$ for $\varepsilon_{\kappa}$ small enough. Up to a redefinition of ${\mathcal S}_{\kappa,0}^1$ we may assume that 
\begin{equation} \label{Eq:ToutDansOmega}
{\mathcal S}_{\kappa,0}^\varepsilon  \subset \Omega \ \text{ for all } \varepsilon_{\kappa} \leq 1. 
\end{equation}

\paragraph{Description of the position of the solids.} Grouping the positions of the center of mass and angles together, we denote the position variable as follows:
\begin{equation*}
q_{\kappa} = (h_{\kappa},\vartheta_{\kappa})^T \ \text{ and } \ {\mathbf q} = (q_{1},\dots,q_{N}).
\index{R@Solid position!D3@$q_{\kappa}$: position and angle of the $\kappa$-th solid}
\index{R@Solid position!D4@$\mathbf{q}$: positions and angles of all the solids at once}
\end{equation*}
It follows that the $\kappa$-th solid is determined by $q_{\kappa}$ and $\varepsilon_{\kappa}$; we will denote it by ${\mathcal S}_{\kappa}(\varepsilon_{\kappa}, q_{\kappa})$, or in a simpler manner ${\mathcal S}^{\varepsilon}_{\kappa}(q_{\kappa})$. Moreover when it does not play an important role in the discussion or when it is clear, we will drop the exponent $\varepsilon$ and/or the dependence on $q_{\kappa}$ to lighten the notations. \par
When one considers only the non-shrinking solids, it is useful to introduce
\begin{equation*}
{\mathbf q}_{(i)} = (q_{1},\dots,q_{N_{(i)}}).
\index{R@Solid position!D5@$\mathbf{q}_{(i)}$: positions and angles of all the final solids}
\end{equation*}
\paragraph{Fluid domains.}
Corresponding to the above notations, the fluid domain is 
\begin{equation*}
{\mathcal F}^\varepsilon(q) =\Omega \setminus ({\mathcal S}^{\varepsilon}_{1}(q_{1}) \cup \dots \cup {\mathcal S}^{\varepsilon}_{N}(q_{N})). 
\index{T@Domains and admissible configurations!Q3@${\mathcal F}(q)$: fluid domain}
\end{equation*}
When the small solids have disappeared, it remains merely the final domain
\begin{equation} \label{Eq:AugmentedDomain}
\widecheck{\mathcal F}({\bf q}_{(i)}) =\Omega \setminus ({\mathcal S}_{1}(q_{1}) \cup \dots \cup {\mathcal S}_{N_{(i)}}(q_{N_{(i)}})). 
\index{T@Domains and admissible configurations!Q4@$\widecheck{\mathcal F}(q_{(i)})$: final fluid domain}
\end{equation}

\paragraph{Initial conditions.}
 We consider the initial vorticity $\omega_{0}$, the circulations around the solids ${\boldsymbol\gamma}=(\gamma_{1},\dots,\gamma_{N})$, the initial solid positions
${\boldsymbol q}^{0}=(q_{1,0},\dots,q_{N,0}) = (h_{1}^{0},0, \dots, h_{N}^{0},0)$ 
and the initial solid velocities ${\boldsymbol p}^{0}=(p_{1,0},\dots,p_{N,0}) = (h_{1,0}',\vartheta_{1,0}', \dots, h_{N,0}',\vartheta_{N,0}')$ fixed independently of $\varepsilon$. 
Moreover we assume that $\gamma_{\kappa} \neq 0$ when $\kappa \in {\mathcal P}_{(iii)}$. 
 \par
To be more precise on the vorticity, we set for $\delta>0$ the space $L_{c,\delta}^\infty({\mathcal F})$ of essentially bounded functions $f$ satisfying that for almost all $x \in {\mathcal F}(q)$ such that $d(x,{\mathcal S}_{\kappa}) \leq \delta$ for some $\kappa \in {\mathcal P}_{s}$, one has $f(x)=0$.
Now we suppose that 
\begin{equation*}
\omega_{0} \in L^\infty_{c} \big( \overline{\Omega} \setminus [{\mathcal S}_{1,0} \cup \dots \cup {\mathcal S}_{N_{(i)},0} \cup \{ h^0_{j}, \ j \in {\mathcal P}_{s} \} ] \big).
\end{equation*}
Hence for some $\delta>0$ and for suitably small $\overline{\varepsilon}$, one has $\omega_{0} \in L_{c,\delta}^\infty({\mathcal F}_{0})$. \par
We are now in position to state our main result.
%
%
%
%%%%%%%%%%%%%%%%%%%%%%%%%%%%%%%%%%%%%%%%%%%%%%%%%%%%%%
%
%
%

%
\subsection{Main result}
We first introduce a convention.
To express convergences in domains that actually depend on the solutions themselves, we will take the convention to extend the vorticity $\omega$ and the velocity $u$ (defined in ${\mathcal F}(t)$) by $0$ inside ${\mathcal S}_{1}, \dots, {\mathcal S}_{N}$. In the same way, the limit vorticity and velocity (defined in $\widecheck{\mathcal F}(t)$) are extended by $0$ inside ${\mathcal S}_{1}, \dots, {\mathcal S}_{N_{(i)}}$ as well. \par
Our main result is as follows. 
\begin{Theorem}
\label{Joli_Theoreme}
Under the above assumptions there exists $\varepsilon_{0}>0$ and some $T>0$ such that the following holds. To each family $\boldsymbol{\varepsilon}$ of scale factors with $\overline{\boldsymbol{\varepsilon}} \leq \varepsilon_{0}$ we associate the corresponding maximal solution $(q^\varepsilon,u^\varepsilon)$ on $[0,T^{\boldsymbol{\varepsilon}})$ given by Theorem~\ref{Th:Yudovich}. Then the maximal existence times $T^{\boldsymbol{\varepsilon}}$ satisfy $T^{\boldsymbol{\varepsilon}} \geq T$ and, as $\overline{\boldsymbol{\varepsilon}} \rightarrow 0^+$, up to a subsequence, one has
\begin{gather}
\label{Eq:CVu}
u^\varepsilon \longrightarrow u^\star \ \text{ in } \ C^0([0,T];L^q(\Omega)) \text{ for } q \in [1,2), \\
\label{Eq:CVomega}
\omega^\varepsilon \longrightarrow \omega^\star \ \text{ in } \ C^0([0,T];L^\infty(\Omega)-w\star), \\
\label{Eq:CVh}
h_{\kappa}^\varepsilon \longrightarrow h_{\kappa}^\star \ \text{ in } \left\{ \begin{array}{l}
W^{2,\infty}(0,T) \ \text{weak}-\star \text{ for } \kappa \in {\mathcal P}_{(i)} \cup {\mathcal P}_{(ii)}, \\
W^{1,\infty}(0,T) \ \text{weak}-\star \text{ for } \kappa \in {\mathcal P}_{(iii)},
\end{array} \right. \\
\label{Eq:CVtheta}
\vartheta_{\kappa}^\varepsilon \longrightarrow \vartheta_{\kappa}^\star \ \text{ in } W^{2,\infty}(0,T) \ \text{weak}-\star \text{ for } \kappa \in {\mathcal P}_{(i)},
\end{gather}
and at the limit the following system holds in the final domain:
\begin{equation} \label{Eq:DefUStar}
\left\{ \begin{array}{l}
\div u^{\star} = 0 \ \text{ in } \ \widecheck{{\mathcal F}}({\boldsymbol{q}}^\star_{(i)}), \medskip \\
\displaystyle \curl u^{\star} = \omega^\star + \sum_{\kappa \in {\mathcal P}_{s}} \gamma_{\kappa} \delta_{h^\star_{\kappa}} \ \text{ in } \ \widecheck{{\mathcal F}}({\boldsymbol{q}}^\star_{(i)}), \medskip \\
u^{\star}\cdot n = \big[ (h_{\kappa}^\star)' + (\vartheta_{\kappa}^\star)'(x-h_{\kappa}^\star)^\perp \big] \cdot n \ \text{ on } \ \partial {\mathcal S}_{\kappa}(q^\star_{\kappa}) \ \text{ for } \ \kappa \in {\mathcal P}_{(i)}, \medskip \\
u^{\star}\cdot n = 0 \ \text{ on } \ \partial \Omega, \medskip \\
\displaystyle \oint_{\partial {\mathcal S}_{\kappa}(q^\star_{\kappa})} u^{\star} \cdot \tau \, ds = \gamma_{\kappa}  \ \text{ for } \ \kappa \in {\mathcal P}_{(i)},
\end{array} \right.
\end{equation}
where $q^\star_{\kappa} = (h_{\kappa},\vartheta_{\kappa})^T $ and ${\mathbf q}^\star_{(i)} = (q^\star_{1},\dots,q^\star_{N_{(i)}})$, 
\begin{equation} \label{Eq:EvolOmegaStar}
\partial_{t} \omega^\star + \div (u^{\star} \omega^\star) = 0 \ \text{ in } \ [0,T] \times \widecheck{\mathcal F}( {\mathbf q}^\star_{(i)}(t)),
\end{equation}
\begin{multline} \label{Eq:DefPStar}
\text{ for all } t \in [0,t], \ \ 
-(\partial_{t} u^\star + (u^\star \cdot \nabla) u^\star ) \text{ is a gradient in } \widecheck{\mathcal F}(q^\star_{(i)}(t)) \, \setminus \, \{h^\star_{\kappa}(t), \ \kappa \in {\mathcal P}_{s} \}, \\
\text{regular in the neighborhood of } \bigcup_{\kappa=1}^{N_{(i)}} \partial {\mathcal S}_{\kappa}(q^\star_{\kappa}), \ 
 \text{ which we denote } \nabla \pi^\star, 
\end{multline}
\begin{equation} \label{Eq:Evolh(i)Star}
\left\{ \begin{array}{l}
\displaystyle
m_{\kappa} (h_{\kappa}^\star)''(t) = \int_{\partial {\mathcal S}_{\kappa} (q^\star_{\kappa})} \pi^\star(t,x) n(t,x) \, ds(x), \\
\displaystyle
J_{\kappa} (\vartheta^\star_{\kappa})''(t) = \int_{\partial {\mathcal S}_{\kappa}(q^\star_{\kappa}) } \pi^\star(t,x) (x-h^\star_{\kappa}(t))^\perp \cdot n(t,x) \, ds(x),
\end{array} \right. \text{ in } \ [0,T] \ \ \text{ for } \kappa \in {\mathcal P}_{(i)}, \ \ 
\end{equation}
\begin{gather}
\label{Eq:Evolh(ii)Star}
m_{\kappa} (h^\star_{\kappa})''= \gamma_{\kappa} \big[ (h^\star_{\kappa})' - u^{\star}_\kappa (t,h^\star_{\kappa}) \big]^\perp \ \text{ in } \ [0,T] \ \ \text{ for } \  \kappa \in {\mathcal P}_{(ii)}, \medskip \\
\label{Eq:Evolh(iii)Star}
(h^\star_{\kappa})'= u^{\star}_\kappa (t,h^\star_{\kappa}) \ \text{ in } \ [0,T] \ \ \text{ for } \  \kappa \in {\mathcal P}_{(iii)},
\end{gather}
where $u^\star_{\kappa}$ is the ``desingularized version'' of $u^\star$ at $h^\star_{\kappa}$ defined by
\begin{equation} \label{DefUellKappa}
u^{\star}_\kappa(t,x) = u^{\star}(t,x) - \frac{\gamma_{\kappa}}{2\pi} \frac{(x-h_{\kappa}^\star(t))^\perp}{|x-h^\star_{\kappa}(t)|^2},
\ \ t \in [0,T], \ \ x \in \widecheck{\mathcal F}( {\mathbf q}^\star_{(i)}(t)).
\end{equation}
\end{Theorem}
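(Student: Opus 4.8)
The plan is to follow the two-step normal form strategy announced in the introduction, combined with the refined reflection estimate for the div/curl system with prescribed circulations. I would begin by writing the equations of motion for the solid positions $\mathbf{q}^\varepsilon$ in a geodesic/Hamiltonian form: the pressure force in \eqref{Eq:Newton} can be expressed, via the Euler equation and the elliptic decomposition of $u^\varepsilon$ into a potential part carrying the circulations and the Biot--Savart part of the vorticity, as a quadratic form in the solid velocities plus lower-order terms involving $\omega^\varepsilon$. This gives an ODE system $\mathcal{M}^\varepsilon(\mathbf{q}^\varepsilon)\, \mathbf{p}^\varepsilon{}' = \langle \Gamma^\varepsilon(\mathbf{q}^\varepsilon), \mathbf{p}^\varepsilon, \mathbf{p}^\varepsilon\rangle + F^\varepsilon$, where $\mathcal{M}^\varepsilon$ is the total (genuine $+$ added) mass–inertia tensor and $F^\varepsilon$ collects the vorticity interaction and the self-interaction through the circulations $\gamma_\kappa$. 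The key quantitative input, the uniform estimate of $u^\varepsilon$ relative to the solids announced at the end of the abstract, is what makes the coefficients of this ODE controllable uniformly in $\boldsymbol\varepsilon$ and $\mathbf q^\varepsilon$ (as long as \eqref{Eq:PositiveDistance} holds with distances bounded below, which must be part of a bootstrap).

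Next I would perform the first, global normal form: a change of the velocity variable $\mathbf{p}^\varepsilon \mapsto \widetilde{\mathbf{p}}^\varepsilon$ absorbing the worst (singular in $\varepsilon$) terms of the quadratic form, producing a rough a priori bound on the accelerations $h_\kappa^\varepsilon{}''$, $\vartheta_\kappa^\varepsilon{}''$ — enough to get local-in-time existence with a uniform lower bound $T$ on $T^{\boldsymbol\varepsilon}$ and to keep the solids separated on $[0,T]$. Then comes the second, individual normal form: for each small solid $\kappa \in \mathcal P_s$ one modulates by the velocity field generated by the other solids and by $\omega^\varepsilon$ evaluated at $h_\kappa^\varepsilon$, i.e. one studies $h_\kappa^\varepsilon{}' - u_\kappa^{\varepsilon,\mathrm{ext}}(t,h_\kappa^\varepsilon)$, and shows this modulated quantity satisfies a gyroscopic-type ODE whose structure (the Kutta--Joukowski term $\gamma_\kappa[\cdot]^\perp$) yields uniform bounds: $h_\kappa^\varepsilon{}'$ bounded in $W^{1,\infty}$ for $\kappa \in \mathcal P_{(ii)}$ and in $W^{1,\infty}$ (after dividing the Newton equation by $m_\kappa^\varepsilon = \varepsilon_\kappa^{\alpha_\kappa} m_\kappa^1 \to 0$, forcing the leading balance $(h_\kappa^\varepsilon{}' - u_\kappa^{\varepsilon,\mathrm{ext}})^\perp \to 0$) for $\kappa \in \mathcal P_{(iii)}$. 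For the fixed solids $\kappa \in \mathcal P_{(i)}$ the classical estimates give $W^{2,\infty}$ bounds on $(h_\kappa^\varepsilon,\vartheta_\kappa^\varepsilon)$.

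With these uniform bounds I would extract subsequences: $\mathbf q^\varepsilon$ converging in the stated $W^{k,\infty}$ weak-$\star$ topologies to $\mathbf q^\star$, hence the small solids converge to points $h_\kappa^\star(t)$ while the fixed ones converge to genuine rigid motions; the transport structure and a compensated-compactness / Delort-type argument adapted to the moving boundaries give $\omega^\varepsilon \rightharpoonup \omega^\star$ in $C^0([0,T];L^\infty\text{-}w\star)$ and $u^\varepsilon \to u^\star$ strongly in $C^0([0,T];L^q(\Omega))$, $q<2$, with $\curl u^\star = \omega^\star + \sum_{\kappa\in\mathcal P_s}\gamma_\kappa\delta_{h_\kappa^\star}$ — the point masses arising because the circulation $\gamma_\kappa$ around the shrinking solid concentrates. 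Passing to the limit in the div/curl characterization of $u^\varepsilon$ away from the limit points yields \eqref{Eq:DefUStar}; passing to the limit in the transport equation yields \eqref{Eq:EvolOmegaStar}; the limiting pressure $\nabla\pi^\star$ is identified from \eqref{Eq:DefUStar}--\eqref{Eq:EvolOmegaStar} away from the $h_\kappa^\star$ (regularity near $\partial\mathcal S_\kappa$, $\kappa\in\mathcal P_{(i)}$, from elliptic estimates), giving \eqref{Eq:Evolh(i)Star}; and the individual modulated ODEs pass to the limit to give the point-vortex laws \eqref{Eq:Evolh(ii)Star}--\eqref{Eq:Evolh(iii)Star}, where the crucial step is to show that the modulation field $u_\kappa^{\varepsilon,\mathrm{ext}}(t,h_\kappa^\varepsilon)$ converges to the desingularized field $u_\kappa^\star(t,h_\kappa^\star)$ of \eqref{DefUellKappa} — i.e. that the renormalization subtracts exactly the self-induced singular part and nothing more.

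The main obstacle, as the authors themselves flag, is the mutual interaction of several small solids through the fluid: unlike the single-body case one cannot simply freeze one body and treat the fluid plus the remaining bodies as a fixed background, because each small solid both feels and perturbs the velocity field seen by the others at the same order. Concretely the difficulty concentrates in (a) proving the uniform reflection-type estimate for $u^\varepsilon$ with respect to all solid positions and all radii simultaneously — controlling how the potential generated by circulation $\gamma_\lambda$ around solid $\lambda$ behaves near solid $\kappa$, uniformly as several radii tend to zero — and (b) closing the individual normal forms consistently for all $\kappa\in\mathcal P_s$ at once, since the modulation for $\kappa$ involves the (not yet controlled) velocities of the other small solids, requiring a simultaneous bootstrap rather than a sequential one. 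Establishing that the cross terms in the quadratic form are genuinely lower order after the two normal-form reductions, uniformly in $\boldsymbol\varepsilon$, is where the bulk of the technical work lies.
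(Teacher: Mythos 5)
Your overall plan matches the paper's architecture: uniform reflection-type estimates on the potentials, a first global normal form yielding a rough acceleration estimate, individual normal forms modulated by the exterior velocity field, a modulated energy estimate, and finally compactness and passage to the limit. But two of your steps would not carry through as written.

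First, the modulation you propose, $h_\kappa^\varepsilon{}' - u^{\varepsilon,\mathrm{ext}}_\kappa(t,h_\kappa^\varepsilon)$, subtracts only the value of the exterior field at the center of the small body; that corresponds to the paper's first-order modulations $\alpha_{\kappa,1},\alpha_{\kappa,2}$. When the Kutta--Joukowski quadratic term is expanded against the standalone circulation stream function, a contribution survives involving the gradient $\nabla_x u^{\varepsilon,\mathrm{ext}}_\kappa(h_\kappa^\varepsilon)$ paired with the conformal center $\zeta^\varepsilon_\kappa$ of the body. It is of order $\varepsilon_\kappa$, which is \emph{larger} than the modulated kinetic energy $\sim \varepsilon_\kappa^{\min(2,\alpha_\kappa)}|\widetilde p_\kappa|^2$ when $\alpha_\kappa>1$, so a Gronwall argument on that energy fails. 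The paper corrects this with a second-order modulation $\beta_\kappa$, built from $\nabla_x\widecheck u_\kappa(h_\kappa)$ and $\zeta^\varepsilon_\kappa$, chosen precisely so that the quadratic ($k=4,5$) gyroscopic contributions transfer to $k=1,2$ and cancel (Lemma~\ref{Lem:Transfert45vers12}). Without $\beta_\kappa$ the modulated energy estimate does not close for $\kappa\in\mathcal P_{(iii)}$, and this is not something one can recover afterwards: the obstruction is present in the normal form itself.

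Second, ``dividing the Newton equation by $m^\varepsilon_\kappa$'' for family (iii) does not force the claimed leading balance in any literal sense: the gyroscopic term $B_\kappa$ is of order one, not of order $m^\varepsilon_\kappa$, so $B_\kappa/m^\varepsilon_\kappa$ diverges, and when $\alpha_\kappa>2$ the added-to-genuine mass ratio $\mathcal M_{a,\kappa}/m^\varepsilon_\kappa\sim\varepsilon_\kappa^{2-\alpha_\kappa}$ also blows up, so the ODE you obtain after division is singular on both sides. The paper instead first proves a uniform $L^\infty$ bound on $\widehat p_\kappa$ via a modulated energy estimate whose rescaled inertia matrix $\mathcal M^*_\kappa$ is uniformly invertible (dominated by genuine mass if $\alpha_\kappa\leq 2$, by added mass if $\alpha_\kappa>2$, which is why the non-disc hypothesis enters), and then passes to the limit in the undivided normal form \eqref{Eq:FormeNormale}: the entire left-hand side vanishes in $W^{-1,\infty}(0,T)$, the residual terms vanish strongly, and one is left with the identity $\gamma_\kappa[(h^\star_\kappa)'-u^\star_\kappa(h^\star_\kappa)]^\perp=0$; it is the standing assumption $\gamma_\kappa\neq 0$ for $\kappa\in\mathcal P_{(iii)}$ that then yields \eqref{Eq:Evolh(iii)Star}.
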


\paragraph{On the limit system.}
Theorem \ref{Joli_Theoreme} identifies the limit dynamics of a family of solutions
of the system \eqref{Eq:Euler}-\eqref{Eq:Newton}, when some of the solids  shrink to points,
as a  system compound of the Euler-type system \eqref{Eq:DefUStar}-\eqref{Eq:EvolOmegaStar}   for the fluid,
the Newton's laws \eqref{Eq:Evolh(i)Star} for the solids that have a fixed radius and 
the point vortex systems  \eqref{Eq:Evolh(ii)Star}-\eqref{Eq:Evolh(iii)Star} for the limit point particles.  
The interest of \eqref{Eq:DefPStar} is to give a meaning for the trace of the limit fluid pressure $\pi^\star$ on the boundary of the solids that have a fixed radius; this gives a sense to the right hand sides in \eqref{Eq:Evolh(i)Star}.
Regarding the solids with a vanishing radius the limit equation is not the same in case (ii)  and in case (iii),
as we can see in \eqref{Eq:Evolh(ii)Star}-\eqref{Eq:Evolh(iii)Star}.
A common feature is that the limit equation is independent of the shape of the rigid body which has shrunk.\footnote{However let us recall that we assume that the  solids ${\mathcal S}_{1}$, \dots, ${\mathcal S}_{N}$ are not discs. 
 The case  of a disk is peculiar as  several degeneracies appear in this case. 
 We refer to \cite{GMS} for a complete treatment of this case  for a single small body of type   (ii)  or  (iii)  immersed in a irrotational incompressible perfect fluid occupying the full plane or  a bounded  plane domain; in particular it is shown that the case of a homogeneous disk is rather simple whereas the case of a non-homogeneous disk requires appropriate modifications.}
 
In case (ii) the rigid body reduces at  the limit  in a point-mass particle which satisfies the second order differential equation  \eqref{Eq:Evolh(ii)Star}. 
This type of systems has already been discussed  by Friedrichs in  \cite[Chapter 3]{Fr}, see also  \cite{GRKO}. 
The force in the right hand side of  \eqref{Eq:Evolh(ii)Star} extends  the classical Kutta-Joukowski force, as it is a gyroscopic force orthogonally proportional to its relative velocity and proportional to the circulation around the body. 
The Kutta-Joukowski-type lift force was originally studied in the case of a single body in a irrotational unbounded flow at the beginning of the 20th century in the course of the first mathematical investigations of aeronautics; see for example \cite{Lamb}. 
 
In case (iii) the rigid body reduces at  the limit  in a massless point particle  which satisfies the first order differential equation \eqref{Eq:Evolh(iii)Star}, which can be seen as a classical point vortex equation, its vortex strength being given by the circulation around the rigid body. 
Historically the point vortex system, which dates back to Helmholtz, Kirchhoff, Kelvin and Poincar\'e,  has been seen as a simplification of the 
the 2D incompressible Euler equations when the vorticity of the fluid is concentrated in a finite number of points, see for instance \cite{MP}.
The key feature of the derivation of the point vortex equations from the 2D incompressible Euler equations 
is that the self-interaction has to be discarded. 
Theorem \ref{Joli_Theoreme} proves that such equations can also be obtained as the limit of the dynamics of rigid bodies of type (iii).
The desingularization of the   background fluid velocity $u^\star$ mentioned in  \eqref{DefUellKappa} precisely corresponds to the cancellation of the self-interaction. 
 
On the other hand the genuine fluid vorticity $\omega^\star$ is convected by  the background fluid velocity $ u^\star$, according to \eqref{Eq:EvolOmegaStar}. 
A precise decomposition of the velocity field $u^\star$ obtained in the limit will be given below, see \eqref{Eq:DecompUstar}. 
Systems mixing an evolution equation for absolutely continuous vorticity such as  \eqref{Eq:EvolOmegaStar} and some evolution equations for point vortices such as  \eqref{Eq:Evolh(iii)Star} have been coined as  vortex-wave systems  by Marchioro and Pulvirenti  in the early 90s, see \cite{MP}.
\paragraph{On the lifespan, on the convergences, and on the uniqueness.}
Observe that the existence of a common lifetime for a subsequence $\overline{\boldsymbol{\varepsilon}} \rightarrow 0^+$ is a part of the result, as Theorem~\ref{Th:Yudovich} does not provide any quantitative information on the existence times $T^{\boldsymbol{\varepsilon}}$ before collisions. 
 
Let us also stress that the convergences in \eqref{Eq:CVh} are different depending on whether the rigid body has a positive mass in the limit or not. 
Indeed the weaker convergence obtained in Case (iii) is associated with the degeneracy of the solid dynamics into a first order equation. 
Except for some well-prepared initial data the convergence is indeed limited to the weak-$\star$ topology of $W^{1,\infty}(0,T)$. 
We refer here to \cite{benyo} for partial results regarding multi-scale features of the time-evolution of some toy models of the limit system above which attempts to give more insight on this issue. 
The issue of the uniqueness of the solution to the limit system and the associated issue of the convergence of the whole sequence, not only a subsequence, is a delicate matter. We refer to 
\cite{MP1,MP,LM-old} for some positive results concerning the vortex-wave system with massless point vortices (the system occupying the whole plane). In the case of several massive point vortices, we refer to the recent work \cite{LM} which gives results when the initial vorticity is bounded, compactly supported and locally constant in a neighborhood of the point vortices. A key ingredient in all these uniqueness results is that the point vortices stay away one from another and remain distant from the support of the vorticity (or at least, that the vorticity remains constant in their neighborhood.) \par
In the particular cases where uniqueness holds and the point vortices and the vorticity remain distant, we can improve a bit the statement of Theorem~\ref{Joli_Theoreme} into the following one.
\begin{Theorem} \label{VarianteDuJoliTheoreme}
Suppose the assumptions of Theorem~\ref{Joli_Theoreme} to be satisfied, and suppose moreover that for this data the limit system \eqref{Eq:DefUStar}-\eqref{DefUellKappa} admits a unique solution in $[0,T^\star)$ (of class $W_{loc}^{2,\infty}([0,T^*))$ for the solids and the massive point vortices, $W_{loc}^{1,\infty}([0,T^*))$ for the massless point vortices, and $C^{0}([0,T^*); L^\infty(\Omega)-w\star)$ for the vorticity) for which for all $t \in [0,T^\star)$, the point vortices and the large solids do not meet one another and do not meet the support of vorticity nor the outer boundary. Then the maximal existence times $T^{\boldsymbol{\varepsilon}}$ satisfy $\liminf_{\overline{\boldsymbol{\varepsilon}} \rightarrow 0} T^{\boldsymbol{\varepsilon}} \geq T^\star$, and the convergences \eqref{Eq:CVu}-\eqref{Eq:CVtheta} hold on any time interval $[0,T] \subset [0,T^\star)$ and are valid without restriction to a subsequence.
\end{Theorem}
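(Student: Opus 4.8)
The plan is to bootstrap from Theorem~\ref{Joli_Theoreme} using the stability/uniqueness hypothesis as a Gronwall-type continuation argument. By Theorem~\ref{Joli_Theoreme} we already know that, for every sequence $\overline{\boldsymbol{\varepsilon}}_n \to 0^+$, one can extract a subsequence along which the convergences \eqref{Eq:CVu}--\eqref{Eq:CVtheta} hold on some common interval $[0,T]$ toward \emph{a} solution of the limit system \eqref{Eq:DefUStar}--\eqref{DefUellKappa}. The first step is to upgrade this to: for every $[0,T]\subset[0,T^\star)$ there is $\varepsilon_0(T)>0$ and a common lifespan bound $T^{\boldsymbol{\varepsilon}}\geq T$ for all $\overline{\boldsymbol{\varepsilon}}\leq \varepsilon_0(T)$. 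This is where uniqueness enters: since all subsequential limits must coincide with \emph{the} solution of the limit system (which by hypothesis exists and is collision-free on $[0,T^\star)$), a standard argument by contradiction removes the need to pass to subsequences —- if some sequence did not converge, extract a further subsequence converging to \emph{a} limit solution, which must equal the unique one, contradiction. The delicate point is the lifespan: one must argue that the a priori estimates driving Theorem~\ref{Joli_Theoreme} (the two-step normal form estimates on the solid accelerations, and the uniform div/curl estimate of the fluid velocity relative to the solids) remain valid on all of $[0,T]$ because, by the collision-free hypothesis on the limit, the small solids stay uniformly away from one another, from the large solids, from $\partial\Omega$, and from the support of $\omega^\star$, hence by the convergence the same holds for $\overline{\boldsymbol{\varepsilon}}$ small; this prevents $T^{\boldsymbol{\varepsilon}}$ from dropping below $T$.

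Concretely, the key steps in order are: \textbf{(1)} Fix $T<T^\star$. Let $d>0$ be a lower bound on all the relevant distances (point vortices to each other, to large solids, to $\partial\Omega$, to $\Supp\omega^\star(t,\cdot)$) over $[0,T]$, available from the hypothesis and from propagation of the support of $\omega^\star$ under the limit flow. \textbf{(2)} Define $T_n^{\boldsymbol{\varepsilon}} := \sup\{\,t\le T^{\boldsymbol{\varepsilon}} : \text{all these distances stay} \ge d/2 \text{ on } [0,t] \text{ for the } \boldsymbol{\varepsilon}\text{-solution, and the solids are at scale} \le 2\varepsilon_\kappa\,\}$. On $[0,T_n^{\boldsymbol{\varepsilon}}]$ the uniform estimates of Sections (referenced as the normal-form and refined-reflection estimates) apply with constants independent of $\boldsymbol{\varepsilon}$. \textbf{(3)} Run the compactness/convergence argument of Theorem~\ref{Joli_Theoreme} on $[0,\min(T,\liminf T_n^{\boldsymbol{\varepsilon}})]$; the limit is forced to be the unique limit solution, which keeps the distances $\ge d > d/2$ strictly. \textbf{(4)} A continuity-in-time argument (the $\boldsymbol{\varepsilon}$-trajectories converge uniformly, the limit is strictly inside the "good" region) shows $T_n^{\boldsymbol{\varepsilon}}$ cannot be $<T$ for $n$ large: otherwise at $t=T_n^{\boldsymbol{\varepsilon}}$ some distance equals $d/2$, contradicting convergence to a trajectory bounded below by $d$. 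Hence $T^{\boldsymbol{\varepsilon}}\ge T_n^{\boldsymbol{\varepsilon}}\ge T$ for $n$ large, i.e. $\liminf_{\overline{\boldsymbol{\varepsilon}}\to 0} T^{\boldsymbol{\varepsilon}}\ge T$; letting $T\uparrow T^\star$ gives $\liminf T^{\boldsymbol{\varepsilon}}\ge T^\star$. \textbf{(5)} Finally, the full-sequence convergence on any $[0,T]\subset[0,T^\star)$ follows from the subsequence-extraction-plus-uniqueness principle: every subsequence has a sub-subsequence converging in the topologies \eqref{Eq:CVu}--\eqref{Eq:CVtheta} to the unique limit solution, so the whole net converges.

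The main obstacle I expect is \textbf{step (2)--(4)}: showing that the uniform a priori estimates genuinely survive up to time $T$ and that the exit time $T_n^{\boldsymbol{\varepsilon}}$ does not degenerate. The subtlety is that the constants in the normal-form estimates and in the refined reflection/div-curl estimate depend on the minimal inter-solid and solid-boundary distances; one needs that the \emph{limit} solution stays a definite distance $d$ away (given), and then a self-improving (bootstrap) argument: assuming the $\boldsymbol{\varepsilon}$-solution stays $\ge d/2$ away on $[0,T_n^{\boldsymbol{\varepsilon}}]$, the uniform estimates yield convergence on that interval to the limit solution, which is $\ge d$ away, forcing the $\boldsymbol{\varepsilon}$-solution to be, say, $\ge 3d/4$ away for $n$ large —- strictly better than the threshold $d/2$, so the exit time cannot be an interior point of $[0,T]$. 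One must also handle the possibility that a small solid's scale could in principle grow; but the scaling is fixed a priori by \eqref{Eq:Family_ii}--\eqref{Eq:Family_iii}, so this is not an issue, and only the mutual distances matter. A secondary technical point is propagation of $\Supp\omega^\star$: since $u^\star$ is only log-Lipschitz near the point vortices, one needs that $\Supp\omega^\star$ stays away from the $h^\star_\kappa$, which is precisely part of the hypothesis, so the limit flow is well-defined and the support moves continuously, giving the uniform bound $d$.
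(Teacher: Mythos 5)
Your proposal is correct and takes essentially the same route as the paper: the paper also fixes $T<T^\star$, sets a uniform distance $d_T$ from the collision-free hypothesis, defines an exit time $T_{max}$ at threshold $d_T/2$, bootstraps (using the convergences the exit cannot be an interior point, since the limit keeps all distances $\geq d_T$), and concludes $T_{max}=T$; full-sequence convergence then follows from uniqueness. The one place your outline is lighter than the paper's argument is what you call the ``secondary technical point'': the paper must track the support of $\omega^\varepsilon$ (not only $\omega^\star$) relative to the small solids, which requires proving a \emph{uniform-in-}$\varepsilon$ log-Lipschitz estimate on $u^\varepsilon$ away from the shrinking solids, upgrading the velocity convergence to uniform $C^0$ convergence there, and deducing convergence of the ODE flows and hence Hausdorff convergence of $\mathrm{Supp}(\omega^\varepsilon(t))\to\mathrm{Supp}(\omega^\star(t))$; this is the genuine crux of the bootstrap for the vorticity-distance constraint, not a side issue, and should be flagged as the load-bearing step rather than as a footnote about the limit flow.
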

\paragraph{On the relationships with earlier results.}
Theorem \ref{Joli_Theoreme} extends results obtained in the earlier works
\cite{GLS}, which deals with the case of a single small body of type (ii)  immersed in an incompressible perfect fluid occupying the rest of the plane, 
\cite{GLS2}, which deals with the case of a single small body of type (iii)  immersed in an incompressible perfect fluid occupying the rest of the plane, and
\cite{GMS}, which deals with   the case of a single small body of type  (ii)  or (iii)  immersed in an irrotational incompressible perfect fluid occupying a bounded  plane domain. 
In particular we consider for the first time the case of several small rigid bodies, for which the strategies of the previous papers cannot be adapted straightforwardly, 
despite the results  recently obtained  in \cite{GLMS} in the case of several rigid bodies of type  (i).
Indeed the main difficulty is to understand the influence of solids between themselves, and to analyze it to understand how the coupling at leading order disappears in the limit. This is made more difficult by the fact that each solid possesses its own scale.

\paragraph{On the relationships with the case of the Navier-Stokes equations.}
Let us mention that the Euler system is a rough modeling for a fluid in a neighborhood of  rigid boundaries as  even a slight amount of viscosity may drastically change the behavior of the fluid close to the boundary, due to boundary layers, and sometimes even in the bulk of the fluid when the boundary layers detach from the boundary. 
While the Navier-Stokes equations certainly represent a better choice in terms of modeling, it is certainly useful
%to  understand  the high-Reynolds limit 
to first understand the case of the Euler equations. 
In this direction let us mention that Gallay has proven in \cite{Gallay} that the point vortex system can
also be obtained as vanishing viscosity limits of concentrated smooth vortices
driven by the incompressible Navier-Stokes equations, see also the recent extension to vortex-wave systems in \cite{NN}.

%
%
%
%
%
%%%%%%%%%%%%%%%%%%%%%%%%%%%%%%%%%%%%%%%%%%%%%%%%%%%%%%%%%%%%%%%%%%%%%%%%%%%%%%%%%%%%%%%%%%%%%%%%%%%%%%%%%%%%%%%%%%%%%%%%%%%%%%%%%%%%%%%%%%%%%%%%%%%%%%%%%%%%%%
%
%
%
%
%
\section{Preliminaries}
In this section, we introduce some notations and basic tools that are needed in the sequel. Then we describe briefly the proof and the organization of the rest of the paper. \par
%
%
%
%
%
%
%%%%%%%%%%%%%%%%%%%%%%%%%%%%%%%%%%%%%%%%
%
%
\subsection{Solid variables and configuration spaces}
Below we introduce notations for the solid velocities and for the admissible configurations of the location of the solids and of the support of the vorticity.
\paragraph{Solid velocities.}
The solid velocities will be denoted as follows:
\begin{equation} \label{Eq:PPChapeau}
p_{\kappa} = (h'_{\kappa},\vartheta'_{\kappa})^T, \ \ \widehat{p}_{\kappa} = (h'_{\kappa},\varepsilon_{\kappa} \vartheta'_{\kappa})^T, \ \
{\mathbf p} = (p_{1},\dots,p_{N}) \text{ and } \ \widehat{\mathbf p} = (\widehat{p}_{1},\dots,\widehat{p}_{N}).
\index{Solid velocity!E1@$p_{\kappa}$: linear and angular velocity of the $\kappa$-th solid}
\index{Solid velocity!E2@$\mathbf{p}$: linear and angular velocity of all solids at once}
\index{Solid velocity!E3@\unexpanded{\unexpanded{\ensuremath{\widehat{p}_{\kappa}}}}: linear and scaled angular velocity of the $\kappa$-th solid}
\index{Solid velocity!E4@\unexpanded{\unexpanded{\ensuremath{\widehat{\mathbf{p}}}}}: linear and scaled angular velocity of all solids at once}
\end{equation}
For $i \in \{1,2,3\}$, $p_{\kappa,i}$ denotes the $i$-th coordinate of $p_{\kappa}$. In terms of these coordinates, \eqref{Eq:SolidVelocity} reads as follows
\begin{equation} \label{Eq:VitSolideXi}
v_{{\mathcal S},\kappa}(t,x) = \sum_{i=1}^3 p_{\kappa,i} \xi_{\kappa,i},
\end{equation}
with $\xi_{\kappa,i}=e_{i}$ for $i=1,2$ and $\xi_{\kappa,3} = (x-h_{\kappa})^\perp$ on $\partial {\mathcal S}_\kappa$
(this anticipates the notation \eqref{def-xi-j}). Above $e_1$ and $e_2$ are the unit vectors of the canonical basis. \par

\paragraph{Admissible configurations.} We introduce notations for the spaces of configuration of the solids which can also possibly incorporate  the configuration for the vorticity. Given $\delta > 0$, we let 
\begin{multline} \label{MathfrakQdelta}
{\mathcal Q}_{\delta} := \{(\overline{\boldsymbol\varepsilon}, {\bf q}) \in (0,1)^{N_s} \times \mathbb R^{3N}  \ :\\
\forall \nu,\mu \in \{ 1, \dots, N \} \, \text{ s.t. } \nu \neq \mu, \  d({\mathcal S}^\varepsilon_\mu({\bf q}),{\mathcal S}^\varepsilon_\nu({\bf q})) > 2\delta
\text{ and } d(\mathcal S^\varepsilon_\mu({\bf q}),\partial\Omega)> 2\delta \}.
\index{T@Domains and admissible configurations!Q5b@$\mathcal{Q}_{\delta}$: bundle of shrinking bodies positions at distance $>2\delta$ one from another}
\end{multline}
\begin{multline} \label{MathfrakQWdelta}
\mathfrak{Q}_{\delta} := \{(\overline{\boldsymbol\varepsilon}, {\bf q},\omega) \in (0,1)^{N_s} \times \mathbb R^{3N} \times L^{\infty}(\Omega)  \ : (\overline{\boldsymbol\varepsilon}, {\bf q}) \in {\mathcal Q}_{\delta} \text{ and } \\
\forall \mu \in \{ 1, \dots, N \}, \ d(\mathcal S^\varepsilon_\mu({\bf q}),\mbox{Supp}(\omega))> 2\delta
  \}.
\index{T@Domains and admissible configurations!Q5c@$\mathfrak{Q}_{\delta}$: bundle of shrinking bodies positions and vorticity at distance $>2\delta$ one from another}
\end{multline}
Given $\varepsilon_{0}>0$, we refine the above sets by limiting the size of small solids as follows
\begin{equation} \label{Qeps}
\mathcal{Q}^{\varepsilon_{0}}_{\delta} := \{(\overline{\boldsymbol\varepsilon}, {\bf q}) \in \mathcal{Q}_{\delta} \ / \ \overline{\boldsymbol\varepsilon} < \varepsilon_{0} \} \ \text{ and } \
\mathfrak{Q}^{\varepsilon_{0}}_{\delta} := \{(\overline{\boldsymbol\varepsilon}, {\bf q},\omega) \in \mathfrak{Q}_{\delta} \ / \ \overline{\boldsymbol\varepsilon} < \varepsilon_{0} \} ,
\index{T@Domains and admissible configurations!Q6a@$\mathcal{Q}^{\varepsilon_{0}}_{\delta}$:  $\mathcal{Q}_{\delta}$ with small solids of size less than $\varepsilon_{0}$}
\index{T@Domains and admissible configurations!Q6b@$\mathfrak{Q}^{\varepsilon_{0}}_{\delta}$: $\mathfrak{Q}_{\delta}$ with small solids of size less than $\varepsilon_{0}$}
\end{equation}
where as before $\overline{\boldsymbol\varepsilon} < \varepsilon_{0}$ expresses that $\varepsilon_{i} < \varepsilon_{0}$ for all $i \in {\mathcal P}_{s}$. \par
\paragraph{$\boldsymbol\nu$-neighborhoods in $\Omega$.}
In many situations, it will be helpful to consider some neighborhoods of the solids or of their boundaries; we therefore denote for $A \subset \Omega$ and $\nu >0$:
\begin{equation} \label{Def:NuVoisinage}
{\mathcal V}_{\nu}(A):= \{ x \in \Omega \ / \ d(x,A) < \nu \}.
\index{X@Miscellaneous!M0@${\mathcal V}_{\nu}(A)$: $\nu$-neighborhood of $A \subset \Omega$}
\end{equation}
For instance the above conditions for ${\mathcal Q}_{\delta}$ can be rephrased in the form ${\mathcal V}_{\delta}({\mathcal S}^\varepsilon_\mu({\bf q})) \cap {\mathcal V}_{\delta}({\mathcal S}^\varepsilon_\nu({\bf q})) = \emptyset$ and so on.

%
%
%
%
%
%%%%%%%%%%%%%%%%%%%%%%%%%%%%%%%%%%%%%%%%
%
%
\subsection{Potentials and decomposition of the fluid velocity}
\label{Subsec:Potentials}
Below we first recall the definition of the so-called Kirchhoff potentials and the associated notion of added inertia. Then we introduce the stream functions for the circulation terms, the hydrodynamic Biot-Savart  operator and we finally conclude by recalling the standard decomposition of the velocity field in terms of vorticity, solid velocities and circulations. 
\paragraph{The Kirchhoff potentials.}
First, for $\kappa \in \{1, \ldots , N \}$ and $j\in \{1,\dots,5\}$ 
we introduce the function $\xi_{ \kappa,j} ({\bf q},\cdot): \partial\mathcal{F}({\bf q}) \rightarrow \R^2$ as follows:
\begin{align} \nonumber
&\text{on } \ \partial\mathcal{F}({\bf q}) \setminus \partial \mathcal{S}_{\kappa}, \ \xi_{ \kappa,j} ({\bf q},\cdot) :=0, \\
\label{def-xi-j}
&\text{on } \ \partial \mathcal{S}_{\kappa}, \ \ 
\left\{ \begin{array}{l}
\xi_{ \kappa,j} ({\bf q},x) := e_{j} \text{ for } j=1,2, \medskip \\
\xi_{ \kappa,3} ({\bf q},x) := (x-h_\kappa)^\perp, \medskip \\
\xi_{ \kappa,4} ({\bf q},x) := (-x_{1}+h_{\kappa,1}, x_{2}-h_{\kappa,2}) \ 
\text{ and } \
\xi_{ \kappa,5} ({\bf q},x) := (x_{2}-h_{\kappa,2}, x_{1}-h_{\kappa,1})  .
\end{array}  \right. 
\index{X@Miscellaneous!M1@$\xi_{\kappa,i}$: affine vector field centered on the $\kappa$-th solid}
\end{align}
We denote by  $$K_{ \kappa,j} ({\bf q},\cdot) := n \cdot \xi_{ \kappa,j} ({\bf q},\cdot)$$ the normal trace
of $\xi_{ \kappa,j} $ on $\partial \mathcal{F}({\bf q})$, where $n$ denotes the unit normal vector pointing outside ${\mathcal F}({\bf q})$.
We introduce the Kirchhoff potentials $\varphi_{ \kappa,j}({\bf q},\cdot)$, as the unique (up to an additive constant) solutions in $\mathcal F({\bf q})$ of the following Neumann problems:
\begin{subequations} \label{Kir}
\begin{alignat}{3}  \label{Kir1}
\Delta \varphi_{ \kappa,j} &= 0 & \quad & \text{ in } \mathcal F({\bf q}),\\  \label{Kir2}
\frac{\partial \varphi_{ \kappa,j}}{\partial n} ({\bf q},\cdot)&= K_{\kappa,j} ({\bf q},\cdot) & \quad & \text{ on }\partial\mathcal{F}({\bf q}).
\index{U@Potentials!P1@$\varphi_{\kappa,j}$: Kirchhoff potentials in ${\mathcal F}$}
\end{alignat}
\end{subequations}
We fix the additive constant by requiring (for instance) that
\begin{equation*}
\int_{\partial {\mathcal S}_{\kappa}({\bf q})} \varphi_{ \kappa,j} \, ds =0.
\end{equation*}
In the same spirit, we define the {\it standalone Kirchhoff potentials} as the solutions in $\R^2 \setminus {\mathcal S}_{\kappa}({\bf q})$ of the following Neumann problem:
\begin{subequations} \label{KirSA}
\begin{alignat}{3}
\label{Kir1SA}
\Delta \widehat\varphi_{ \kappa,j} &= 0 & \quad & \text{ in } \mathcal \R^2 \setminus \mathcal S_{\kappa}({\bf q}),\\
\label{Kir2SA}
\frac{\partial \widehat\varphi_{ \kappa,j}}{\partial n} ({\bf q},\cdot)&= K_{\kappa,j} ({\bf q},\cdot) & \quad & \text{ on }\partial\mathcal{S}_{\kappa}({\bf q}), \\
\label{Kir3SA}
\nabla \widehat\varphi_{ \kappa,j}(x) &\longrightarrow 0 & \quad & \text{ as } |x| \rightarrow +\infty, \\
\label{Kir4SA}
\int_{\partial {\mathcal S}_{\kappa}} \widehat\varphi_{ \kappa,j}(x) & \, ds(x) = 0. & &\quad   \ 
\index{U@Potentials!P2@\unexpanded{\unexpanded{\ensuremath{\widehat{\varphi}_{\kappa,j}}}}: standalone Kirchhoff potentials in $\R^2\setminus{\mathcal S}_{\kappa}$}
\end{alignat}
\end{subequations}
We underline that this potential is defined {as if ${\mathcal S}_{\kappa}$ were alone in the plane}, and consequently merely depends on the position $q_{\kappa}$. \par
We also define the {\it final Kirchhoff potentials} corresponding to the domain $\widecheck{\mathcal F}({\bf q}_{(i)})$ where small solids have disappeared as to satisfy
\begin{subequations} \label{KirMacro}
\begin{alignat}{3}
\label{Kir1Macro}
\Delta \widecheck\varphi_{ \kappa,j} &= 0 & \quad & \text{ in } \widecheck{\mathcal F}({\bf q}_{(i)}), \\
\label{Kir2Macro}
\frac{\partial \widecheck\varphi_{ \kappa,j}}{\partial n} ({\bf q},\cdot)&= K_{\kappa,j} ({\bf q},\cdot) & \quad & \text{ on }\partial \widecheck{\mathcal F}({\bf q}_{(i)}).
\index{U@Potentials!P3@\unexpanded{\unexpanded{\ensuremath{\widecheck{\varphi}_{\kappa,j}}}}: final Kirchhoff potentials in $\widecheck{\mathcal F}({\bf q}_{(i)})$}
\end{alignat}
\end{subequations} \par

 \paragraph{Inertia matrices.} We first define the (diagonal) $3N \times 3N$ matrix of genuine inertia by ${\mathcal M}_{g} = ({\mathcal M}_{g,\kappa,i,\kappa',i'})_{1 \leq i,i' \leq 3}$ with
\begin{equation} \label{Eq:TrueInertia}
{\mathcal M}_{g,\kappa,i,\kappa',i'} = \delta_{\kappa,\kappa'} \delta_{i,i'} (\delta_{i \in \{1,2\} } m_{\kappa} + \delta_{i,3} J_{\kappa} ).
\end{equation}
The  $3N \times 3N$ matrix of {\it added inertia} is defined by ${\mathcal M}_{a} = ({\mathcal M}_{a,\kappa,i,\kappa',i'})$ with
\begin{equation} \label{Eq:AddMassMatrix}
{\mathcal M}_{a,\kappa,i,\kappa',i'} ({\bf q}) = \int_{{\mathcal F}({\bf q})} \nabla \varphi_{ \kappa,i} ({\bf q},\cdot) \cdot \nabla \varphi_{\kappa',i'} ({\bf q},\cdot) \, dx.
\end{equation}
This allows to define the {\it total mass matrix} ${\mathcal M} ({\bf q})$ by
\begin{equation} \label{Eq:MasseTotale}
{\mathcal M} ({\bf q})= {\mathcal M}_{g} + {\mathcal M}_{a} ({\bf q}).
\end{equation}
We also define the {\it $\kappa$-th added inertia matrix} as the $3 \times 3$ matrix defined by
\begin{equation} \label{Eq:AddMassMatrixKappa}
({\mathcal M}_{a,\kappa})_{i,j}  ({\bf q}) = \int_{{\mathcal F}({\bf q})} \nabla \varphi_{ \kappa,i} ({\bf q},\cdot) \cdot \nabla \varphi_{\kappa,j} ({\bf q},\cdot) \, dx,
\end{equation}
and the {\it $\kappa$-th standalone added inertia matrix} as the $3 \times 3$ matrix defined by
\begin{equation} \label{Eq:AddMassMatrixKappaStandalone}
(\widehat{\mathcal M}_{a,\kappa})_{i,j} (\vartheta_{\kappa}) = \int_{\R^2 \setminus {\mathcal S}_\kappa ({\bf q})} \nabla \widehat{\varphi}_{ \kappa,i}  \cdot \nabla \widehat{\varphi}_{\kappa,j}   \, dx.
\end{equation}
Finally, when the small solids have disappeared, we also consider the $3N_{(i)} \times 3N_{(i)}$ {\it final added mass matrix} $\widecheck{\mathcal M}_{a} ({\mathbf q}_{(i)})= ({\mathcal M}_{a,\kappa,i,\kappa',i'}) ({\mathbf q}_{(i)})$ defined by
\begin{equation} \label{Eq:AddedMassMacro}
\widecheck{\mathcal M}_{a,\kappa,i,\kappa',i'}  ({\mathbf q}_{(i)}) = \int_{\widecheck{\mathcal F}({\mathbf q}_{(i)})} \nabla \widecheck{\varphi}_{\kappa,i} ({\mathbf q}_{(i)},\cdot) \cdot \nabla \widecheck{\varphi}_{\kappa',i'} ({\mathbf q}_{(i)},\cdot) \, dx.
\end{equation}
\begin{Remark} \label{Rem:PasBoules}
All those added mass matrices are Gram matrices, and consequently symmetric and positive semi-definite. Moreover, an elementary consequence of our assumption that the solids $\mathcal{S}_1, \dots, \mathcal{S}_N$ are not balls is that they are symmetric positive definite matrices, as Gram matrices of independent families of vectors. 
This will be of particular interest for the standalone added mass matrices $\widehat{\mathcal M}_{a,1}, \dots, \widehat{\mathcal M}_{a,N}$.
In the case of balls, these matrices are singular. In that case, mass-vanishing small solids require a different treatment (see \cite{GMS}).
\end{Remark}
\paragraph{Stream functions for the circulation terms.}
To take into account the circulations of velocity around the solids, we introduce for each $\kappa \in \{ 1, \dots, N\}$ the stream function  ${\psi}_{\kappa}= {\psi}_{\kappa}({\bf q},\cdot)$ defined on ${\mathcal F}({\bf q})$ of the harmonic vector field which has circulation $\delta_{\nu \kappa}$ around $\partial \mathcal{S}_{\nu}({\bf q})$ for $\nu=1,\dots,N$. 
More precisely, for every ${\bf q}$, there exist unique constants $C_{\kappa \nu}({\bf q}) \in \mathbb R$ such that the unique solution 
${\psi}_{\kappa}({\bf q},\cdot)$ of the Dirichlet problem:
\begin{subequations} 
\label{def_stream_F}
\begin{alignat}{3}
\Delta {\psi}_{\kappa}({\bf q},\cdot) & =0 & \quad & \text{ in } \mathcal F({\bf q}) \\
{\psi}_{\kappa}({\bf q},\cdot) & = C_{\kappa \nu}({\bf q}) & \quad & \text{ on } \partial \mathcal S_{\nu}({\bf q}), \ \nu=1,\dots, N, \\
{\psi}_{\kappa}({\bf q},\cdot) & = 0 & \quad & \text{ on } \partial \Omega,
\index{U@Potentials!P4@\unexpanded{\unexpanded{\ensuremath{{\psi}_{\kappa}}}}: circulation potential in ${\mathcal F}({\bf q})$}
\end{alignat}
satisfies
\begin{equation} \label{circ-norma_F}
\int_{\partial\mathcal S_{\nu}({\bf q})} \frac{\partial {\psi}_{\kappa}}{\partial n} ({\bf q},\cdot)  \, ds=- \delta_{\nu \kappa}, \ \nu=1,\dots,N.
\end{equation}
\end{subequations}
These functions ${\psi}_{\kappa}$ have their {\it standalone} counterparts, the stream functions $\widehat{\psi}_{\kappa}= \widehat{\psi}_{\kappa}({\bf q},\cdot)$ defined on $\R^2 \setminus \mathcal S_{\kappa}({\bf q})$ of the harmonic vector field which has circulation $1$ around $\partial \mathcal{S}_{\kappa}({\bf q})$. 
They are defined as follows: for every ${\bf q}$, there exists a unique constant $C_{\kappa}({\bf q}) \in \mathbb R$ such that the unique solution 
$\widehat{\psi}_{\kappa}({\bf q},\cdot)$ of the Dirichlet problem:
\begin{subequations} 
\label{def_stream}
\begin{alignat}{3}
\Delta \widehat{\psi}_{\kappa}({\bf q},\cdot) & =0 & \quad & \text{ in } \R^2 \setminus \mathcal S_{\kappa}({\bf q}) \\
\widehat{\psi}_{\kappa}({\bf q},\cdot) & = C_{\kappa}({\bf q}) & \quad & \text{ on } \partial \mathcal S_{\kappa}({\bf q}), \\
\nabla \widehat{\psi}_{\kappa}({\bf q},x) & \rightarrow 0 \text{ as } |x| \rightarrow +\infty,
\index{U@Potentials!P5@\unexpanded{\unexpanded{\ensuremath{\widehat{\psi}_{\kappa}}}}: standalone circulation potential in $\R^2 \setminus {\mathcal S}_{\kappa}$}
\end{alignat}
satisfies
\begin{equation} \label{circ-norma}
\int_{\partial\mathcal S_{\kappa}({\bf q})} \frac{\partial \widehat{\psi}_{\kappa}}{\partial n} ({\bf q},\cdot)  \, ds=-1.
\end{equation}
\end{subequations}
This allows to introduce the following vector depending merely on ${\mathcal S}_{\kappa}^\varepsilon$, that is on $\varepsilon_{\kappa}$ and $q_{\kappa}$:
\begin{equation} \label{Eq:Zeta}
\zeta^\varepsilon_{\kappa}(q_{\kappa}) = - \int_{\partial {\mathcal S}_{\kappa}} (x - h_{\kappa}) \frac{\partial \widehat{\psi}_{\kappa}}{\partial n} (q_{\kappa},x) \,  \, ds(x)
= R(\vartheta_{\kappa}) \zeta^\varepsilon_{\kappa}(q_{\kappa,0}) = \varepsilon_{\kappa} R(\vartheta_{\kappa}) \zeta^1_{\kappa}(q_{\kappa,0}).
\end{equation}
To simplify the notations, we denote $\zeta^1_{\kappa,0}:=\zeta^1_{\kappa}(q_{\kappa,0})$. This is referred to as the {\it conformal center} of solid. \par
Finally, as for the Kirchhoff potentials, we can introduce the {\it final stream functions for the circulation} $\widecheck{\psi}_{\kappa}({\bf q}_{(i)})$, $\kappa=1,\dots,N_{(i)}$, defined in $\widecheck{\mathcal F}({\bf q}_{(i)})$. Here $\widecheck{\psi}_{\kappa}({\bf q}_{(i)})$ is the stream function of the harmonic vector field which has circulation $\delta_{\nu \kappa}$ around $\partial \mathcal{S}_{\nu}({\bf q})$ for $\nu=1,\dots,N_{(i)}$. It can be obtained as follows: for every ${\bf q}_{(i)}$, there exist unique constants $\widecheck{C}_{\kappa \nu}({\bf q}_{(i)}) \in \mathbb R$ such that the unique solution 
$\widecheck{\psi}_{\kappa}({\bf q}_{(i)},\cdot)$ of the Dirichlet problem:
\begin{subequations} 
\label{def_stream_Fcheck}
\begin{alignat}{3}
\Delta \widecheck{\psi}_{\kappa}({\bf q}_{(i)},\cdot) & =0 & \quad & \text{ in } \widecheck{\mathcal F}({\bf q}_{(i)}), \\
\widecheck{\psi}_{\kappa}({\bf q}_{(i)},\cdot) & = \widecheck{C}_{\kappa \nu}({\bf q}_{(i)}) & \quad & \text{ on } \partial \mathcal S_{\nu}(q_{\nu}), \ \nu=1,\dots, N_{(i)}, \\
\widecheck{\psi}_{\kappa}({\bf q}_{(i)},\cdot) & = 0 & \quad & \text{ on } \partial \Omega,
\index{U@Potentials!P6@\unexpanded{\unexpanded{\ensuremath{\widecheck{\psi}_{\kappa}}}}: final circulation potential in $\widecheck{\mathcal F}(q_{(i)})$}
\end{alignat}
satisfies
\begin{equation} \label{circ-norma_Fcheck}
\int_{\partial\mathcal S_{\nu}({\bf q}_{(i)})} \frac{\partial \widecheck{\psi}_{\kappa}}{\partial n} ({\bf q}_{(i)},\cdot)  \, ds=- \delta_{\nu \kappa}, \ \nu=1,\dots,N_{(i)}.
\end{equation}
\end{subequations}
\paragraph{Biot-Savart kernel.}
Following \cite{Lin1,Lin2} we introduce two hydrodynamic Biot-Savart operators as follows. 
Given $\omega \in L^\infty({\mathcal F})$, we define the velocities $K[\omega]$ and $\widecheck{K}[\omega]$ as the solutions of
\begin{equation} \label{Eq:DefK}
\left\{ \begin{array}{l}
\div K[\omega] = 0 \ \text{ in } \ {\mathcal F}({\bf q}), \medskip \\
\curl K[\omega] = \omega \ \text{ in } \ {\mathcal F}({\bf q}), \medskip \\
K[\omega]\cdot n = 0 \ \text{ on } \ \partial {\mathcal F}({\bf q}), \medskip \\
\displaystyle \oint_{\partial {\mathcal S}_{\nu}} K[\omega] \cdot \tau \, ds = 0  \ \text{ for } \ \nu =1, \dots, N,
\end{array} \right.
\index{U@Potentials!P7@\unexpanded{\unexpanded{\ensuremath{{K}[\omega]}}}: Biot-Savart kernel in ${\mathcal F}({\bf q})$}
\end{equation}
and
\begin{equation} \label{Eq:DefKCheck}
\left\{ \begin{array}{l}
\div \widecheck{K}[\omega] = 0 \ \text{ in } \ \widecheck{\mathcal F}({\bf q}), \medskip \\
\curl \widecheck{K}[\omega] = \omega \ \text{ in } \ \widecheck{\mathcal F}({\bf q}), \medskip \\
\widecheck{K}[\omega]\cdot n = 0 \ \text{ on } \ \partial \widecheck{\mathcal F}({\bf q}), \medskip \\
\displaystyle \oint_{\partial {\mathcal S}_{\nu}} K[\omega] \cdot \tau \, ds = 0  \ \text{ for } \ \nu =1, \dots, N_{(i)}.
\end{array} \right.
\index{U@Potentials!P8@\unexpanded{\unexpanded{\ensuremath{\widecheck{K}[\omega]}}}: final Biot-Savart kernel in $\widecheck{\mathcal F}({\bf q}_{(i)})$}
\end{equation}
These are the standard and the final Biot-Savart operators, respectively.
\paragraph{Standard decomposition of the velocity field.}
These potentials allow to decompose the velocity field $u$ in several terms. Since it is the unique solution to the following $\div$/$\curl$ system:
\begin{equation} \label{Eq:DivCurlU}
\left\{ \begin{array}{l}
\div u = 0 \ \text{ in } \ {\mathcal F}({\bf q}), \medskip \\
\curl u = \omega \ \text{ in } \ {\mathcal F}({\bf q}), \medskip \\
u\cdot n = (h'_{\nu} + \vartheta'_{\nu}(x-h_{\nu})^\perp) \cdot n  \ \text{ on } \ \partial {\mathcal S}_{\nu}   \ \text{ for } \ \nu =1, \dots, N, \medskip \\
u\cdot n = 0 \ \text{ on } \ \partial \Omega, \medskip \\
\displaystyle \oint_{\partial {\mathcal S}_{\nu}} u \cdot \tau \, ds = \gamma_{\nu}  \ \text{ for } \ \nu =1, \dots, N,
\end{array} \right.
\end{equation}
we have the standard decomposition of the velocity field $u$:
\begin{equation} \label{Eq:DecompUeps}
u = \sum_{ \substack{ {\nu \in \{1, \dots,N\}} \\ {i \in \{1,2,3\}} } } p_{\nu,i} \nabla \varphi_{\nu,i} + \sum_{\nu \in \{1,\dots,N\}} \gamma_{\nu} \nabla^{\perp} {\psi}_{\nu} + K[\omega] \ \text{ in } \ {\mathcal F}({\bf q}).
\end{equation}
We introduce the following notation for the first term in the decomposition: we let $u^{pot}$ be the potential part of the fluid velocity
\begin{equation} \label{Eq:DefUPot}
u^{pot} :=\sum_{ \substack{ {\nu \in \{1, \dots,N\}} \\ {i \in \{1,2,3\}} } } p_{\nu,i} \nabla \varphi_{\nu,i}.
\index{Velocity fields!U1@$u^{pot}$: potential part of the velocity field}
\end{equation}
Note that the velocity field $u^\star$ obtained in the limit (see \eqref{Eq:DefUStar}) can be decomposed as in \eqref{Eq:DecompUeps} with the ``final'' quantities:
\begin{equation} \label{Eq:DecompUstar}
u^\star
= \sum_{ \substack{ {\nu \in \{1, \dots,N_{(i)}\}} \\ {i \in \{1,2,3\}} } } p^\star_{\nu,i} \nabla \widecheck{\varphi}_{\nu,i}
+ \sum_{\nu \in \{1,\dots,N_{(i)}\}} \gamma_{\nu} \nabla^{\perp} \widecheck{\psi}_{\nu}
+ \widecheck{K} \left[\omega^\star + \sum_{\nu \in {\mathcal P}_{s}} \gamma_{\nu} \delta_{h^\star_{\nu}}\right]  \ \text{ in } \ \widecheck{\mathcal F}({\bf q}^\star_{(i)}),
\end{equation}
where $p^\star_{\nu}:=(h^\star_{\nu}, \vartheta^\star_{\nu})$ for $\nu=1,\dots,N_{(i)}$.
%
%

%
%
%
%
%
%%%%%%%%%%%%%%%%%%%%%%%%%%%%%%%%%%%%%%%%
%
%
\subsection{Brief description of the proof and organization of the paper}
\label{secsec}
Let us now give a rough idea of the proof.
One of the main difficulties to pass to the limit is to obtain uniform estimates as the sizes of the small solids go to zero. A standard energy estimate proves insufficient since the energy is not bounded as the size of small solids diminish (notice that the energy of a point vortex is infinite). The hardest case is the one of small and massless solids, for which the kinetic energy gives the weakest information. We explain first the main ideas to obtain uniform estimates in the case of a single solid ($N=1$), and then we explain some additional arguments needed in the case of several solids. \par
\ \par
\noindent
{\it Case of a single solid.} The starting point consists in decomposing the velocity field using the potentials described above. In particular, one extracts the singularity due to the fixed velocity circulation along the solid by decomposing $u^\varepsilon$ in the form
\begin{equation} \label{Eq:DecompBase}
u^\varepsilon(t,x) = \gamma_{1} \nabla^\perp \widehat{\psi}_{1}(q_{1}(t),x) + u^{reg}(t,x),
\end{equation}
where $u^{reg}$ is the ``regular part'' of the velocity. Then we inject this decomposition in \eqref{Eq:Newton}, which we can rewrite
\begin{equation}
\label{Eq:Deq}
{\mathcal M}_{g} p'_{1,i} = - \int_{\partial {\mathcal F}} (\partial_{t} u + (u \cdot \nabla) u ) \cdot \nabla \Phi_{1,i} \, dx.
\end{equation}
The fact that we use the {\it standalone} circulation stream function in \eqref{Eq:DecompBase} allows to get rid of the most singular  terms arising in the right hand side of 
\eqref{Eq:Deq}
when using the decomposition 
\eqref{Eq:DecompBase}. 
This is due to the following properties
\begin{equation*}
\partial_{t} \nabla^\perp \widehat{\psi}_{1} + \nabla (v_{{\mathcal S},1} \cdot \nabla^\perp \widehat{\psi}_{1}) =0
\text{ and }
\int_{\partial {\mathcal S}_{1}} |\nabla \widehat{\psi}_1 |^2 K_{1,i} \, ds = 0,
\end{equation*}
which will be proved in a more general setting in \eqref{Eq:QuadraticPsi}, and which allow to treat the terms containing $\partial_{t} \nabla^\perp \widehat{\psi_{1}}$ and $|\nabla^\perp \widehat{\psi_{1}}|^2$.
Then the most singular  remaining term is {\it linear} in $\nabla^\perp \widehat{\psi}_{1}$. Studying this term, we see that, in order to have a chance to perform an energy estimate in which this term does not give a too strong contribution (we will say that this term is {\it gyroscopic} or more precisely {\it weakly gyroscopic}), it is necessary to consider a {\it modulated variable} 
\begin{equation*}
\tilde{p}=p-\text{modulation}(\varepsilon,q,p,u^\varepsilon).
\end{equation*}
This modulation is imposed by the system, and one must incorporate it in the other terms of the equation and show that they do not contribute too strongly to the time evolution of the {\it modulated energy} associated with $\tilde{p}$. This will give a {\it normal form} of the equation. To obtain this normal form, it is needed to decompose $u^{reg}$ in \eqref{Eq:DecompBase} in a potential part $u^{pot}$ (only due to the movement of the solid) and an ``exterior'' part $u^{ext}$, this exterior part being actually the source of the modulation. The terms that arise when taking $u^{pot}$, $u^{ext}$ and the modulation into account will either be proven to contribute mildly to the modulated energy or be incorporated in the estimate as added inertia terms. \par
\ \par
\noindent
{\it Case of several solids.} 
When several solids are present, a new serious difficulty appears: if 
we write a normal form such as described above for each small solid then the equations are coupled by  terms associated with other solids which may include up to second derivatives in time.
 Because of this difficulty the strategy used in our previous papers \cite{GLS,GLS2,GMS,GLMS}  seems to fail. 
 To overcome this difficulty we use again  normal forms of the ODEs driving the motion of the solids but in a two-steps process. 
First we use a normal form for the system coupling the time-evolution of all the solids to obtain a rough estimate of the acceleration of the bodies. 
Then we turn to  normal forms that are specific to each small solid, with an appropriate modulation related to the influence of the other solids and of the fluid vorticity.
This specific normal form allows in particular to take into  account the specific scaling associated with each solid.
The previous rough estimate of the acceleration is  used here to prove that the coupling due to the acceleration of the other solids  is weaker than expected in the limit.  
Then, thanks to these individual normal forms, we obtain  precise uniform \textit{a priori} estimates of the velocities of the bodies.

\ \par
After uniform estimates are obtained, we use compactness arguments to pass to the limit. The normal forms obtained above play a central role to describe the dynamics in the limit of the small solids.  
For what concerns the large solids, we must study in particular the convergence of the pressure near their boundary. \par
\ \par
\noindent
{\bf Organization of the sequel of the paper.}
A central tool to develop the arguments above is a careful description of the potentials used in the  decomposition \eqref{Eq:DecompUeps} of the velocity field.
Indeed we analyze their behavior as the size of some of the solids go to zero, and of their derivative with respect to position. 
We use an extension  of the reflection method for a div/curl system with prescribed circulations, see Section~\ref{Sec:Expansions}. In Section~\ref{Sec:FAPE} we prove the first \textit{a priori} estimates on the system. This encompasses in particular  vorticity estimates,  (not yet modulated) energy estimates and 
the above-mentioned rough acceleration estimates. Then in Section~\ref{Sec:Modulations} we describe the modulations, and explain in particular how they are determined and estimated. Then in Section~\ref{Sec:NormalForm} we establish our normal forms. This allows to obtain the modulated energy estimates in Section~\ref{Sec:MEE}. Finally in Section~\ref{Sec:PTTL} we pass to the limit.

%
%
%
%
%
%
%
%
%
%%%%%%%%%%%%%%%%%%%%%%%%%%%%%%%%%%%%%%%%%%%%%%%%%%%%%%%%%%%%%%%%%%%%%%%%%%%%%%%%%%%%%%%%%%%%%%%%%%%%%%%%%%%%%%%%%%%%%%%%%%%%%%%%%%%%%%%%%%%%%%%%%%%%%%%%%%%%%%%%%%%%
%
%
%
%
%
\section{Estimates on the potentials}
\label{Sec:Expansions}
In this section, we show how the various potentials appearing in the decomposition \eqref{Eq:DecompUeps}  of the velocity (including the Kirchhoff potentials $\varphi_{\kappa,i}$, the circulation stream functions $\psi_{\kappa}$ and the stream function associated with the Biot-Savart kernel $K[\omega]$) can be approximated and estimated by using in particular their standalone counterparts in $ \R^2 \setminus \mathcal S_{\kappa}$ 
or their final counterparts in $\widecheck{\mathcal F}$. \par
\ \par
\noindent
{\it Convention on the higher-order H\"older spaces.} Throughout this section, we will take the following convention for the $C^{k,\alpha}$-seminorms, $k \geq 1$, $\alpha \in (0,1)$, when considered on a curve. The $0$-th order H\"older seminorms $| \cdot |_{\alpha}$ are the standard ones, and for a open set ${\mathcal O}$ in $\R^2$, we also consider the same seminorms $| \cdot |_{C^{k,\alpha}({\mathcal O})}$ as usual. For a smooth curve $\gamma$ on the plane and $k \geq 1$, we set for $f \in C^{k,\alpha}(\gamma)$:
\begin{gather} \label{Eq:NormesHolder}
| f |_{C^{k,\alpha}(\gamma)} := \inf \Big\{ | u |_{C^{k,\alpha}({\mathcal O})}, \ u \text{ is an extension of } f \text{ to some neighborhood } {\mathcal O} \text{ of } \gamma
\Big\} \\
\nonumber
\| f \|_{C^{k,\alpha}(\gamma)} := \| f \|_{\Lip(\gamma)} + | f |_{C^{k,\alpha}(\gamma)} .
\end{gather}
For a fixed curve $\gamma$, this is equivalent to the usual norm $\| f\|_{\infty} + | \partial_{\tau}^k f |_{\alpha}$ (due to the existence of continuous extension operators), but the constants in this equivalence of norms are not uniform as a curve shrinks (due to curvature terms in $\partial_{\tau}^k f$). \par
\ \par
To study the above mentioned potentials we begin the section by considering an auxiliary general problem.
%
%
%%%%%%%%%%%%%%%%
%
%
%
\subsection{An auxiliary Dirichlet problem}
In this subsection we consider a general problem of Dirichlet type that will be helpful to study all the functions used in the decomposition \eqref{Eq:DecompUeps} and their behavior as $\overline{\boldsymbol{\varepsilon}}$ goes to $0$. The general idea is that the Dirichlet boundary conditions will be merely satisfied up to an additive constant on each component of the boundary, but in return we impose a zero-flux condition on these components. \par
To be more specific, we consider the general situation of a domain $\Omega$ in which are embedded $N$ solids ${\mathcal S}_{1}$, \dots, ${\mathcal S}_{N}$, such as described before.
The fluid domain is then ${\mathcal F}:=\Omega \setminus ({\mathcal S}_{1} \cup \dots \cup {\mathcal S}_{N})$. Note that the results of this subsection will be applied not only to ${\mathcal F}^\varepsilon$ such as described in the introduction, but also in other domains (such as $\widecheck{{\mathcal F}}$ or a domain in which one of the small solids has been removed).  \par
We consider $N$ functions $\alpha_{\kappa} \in C^{\infty}(\partial{\mathcal S}_{\kappa};\R)$, $\kappa=1,\dots,N$, and a function $\alpha_{\Omega} \in C^{\infty}(\partial \Omega;\R),$ and study the following problem 
\begin{equation} \label{Eq:HAlpha}
\left\{ \begin{array}{l}
\Delta \mathfrak{H}[\alpha_{1},\dots,\alpha_{N};\alpha_{\Omega}] = 0 \ \text{ in } \ {\mathcal F}, \\
\mathfrak{H}[\alpha_{1},\dots,\alpha_{N};\alpha_{\Omega}] = \alpha_{\Omega} \ \text{ on } \ \partial \Omega, \\
\mathfrak{H}[\alpha_{1},\dots,\alpha_{N};\alpha_{\Omega}] = \alpha_{\kappa} + c_{\kappa}  \ \text{ on } \ \partial {\mathcal S}_{\kappa} \ \text{ for } \ \kappa \in \{1,\dots,N\}, \\
\int_{\partial {\mathcal S}_{\kappa}} \partial_{n} \mathfrak{H}[\alpha_{1},\dots,\alpha_{N};\alpha_{\Omega}](x) \, ds (x)=0 \text{ for } \kappa  \in \{1,\dots,N\}.
\end{array} \right.
\end{equation}
where the unknowns are the function $\mathfrak{H}[\alpha_{1},\dots,\alpha_{N};\alpha_{\Omega}]$ defined in ${\mathcal F}$ and the constants $c_1,\dots,c_N$. \par
%
%%%%%%%
%
%
\subsubsection{Existence of solutions for problem \eqref{Eq:HAlpha}}
\label{Subsec:PotManySolids}
\paragraph{A general existence result.}
The existence of solutions to problem \eqref{Eq:HAlpha} is granted by the following statement. For the moment, all solids are considered of fixed size.
\begin{Lemma} \label{Lem:EtrangeDirichlet}
Given $N$ functions $\alpha_{\kappa} \in C^{\infty}(\partial{\mathcal S}_{\kappa};\R)$, $\kappa=1,\dots,N$, and a function $\alpha_{\Omega} \in C^{\infty}(\partial \Omega;\R),$ there exist a unique function $\mathfrak{H}[\alpha_{1},\dots,\alpha_{N};\alpha_{\Omega}]$ and unique constants $c_{1}$,\dots,$c_{N}$
solution to System \eqref{Eq:HAlpha}.
\end{Lemma}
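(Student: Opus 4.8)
The plan is to solve \eqref{Eq:HAlpha} by reducing it to a standard, well-posed linear system. First I would dispose of the inhomogeneous outer boundary data: let $\Phi$ be the harmonic extension of $\alpha_{\Omega}$ to $\Omega$ (ignoring the solids), which is smooth near the $\partial{\mathcal S}_{\kappa}$, and set $\mathfrak{H} = \Phi + \mathfrak{G}$. Then $\mathfrak{G}$ must be harmonic in ${\mathcal F}$, vanish on $\partial\Omega$, equal $\beta_{\kappa} + c_{\kappa}$ on $\partial{\mathcal S}_{\kappa}$ with $\beta_{\kappa} := \alpha_{\kappa} - \Phi|_{\partial{\mathcal S}_{\kappa}} \in C^\infty(\partial{\mathcal S}_{\kappa})$, and satisfy $\int_{\partial{\mathcal S}_{\kappa}} \partial_n \mathfrak{G}\, ds = -\int_{\partial{\mathcal S}_{\kappa}} \partial_n \Phi\, ds =: f_{\kappa}$. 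So it suffices to treat the case $\alpha_{\Omega}=0$ with prescribed (arbitrary) flux data $f_{\kappa}$ on the right-hand side of the last line.

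Next I would split $\mathfrak{G} = \mathfrak{G}_{0} + \sum_{\kappa=1}^N c_{\kappa} \chi_{\kappa}$ into two pieces whose construction uses only the classical Dirichlet problem in ${\mathcal F}$ (which is well-posed, as ${\mathcal F}$ has smooth boundary). Let $\mathfrak{G}_{0}$ be the unique harmonic function in ${\mathcal F}$ with $\mathfrak{G}_{0}=0$ on $\partial\Omega$ and $\mathfrak{G}_{0}=\beta_{\kappa}$ on $\partial{\mathcal S}_{\kappa}$; and for each $\nu$, let $\chi_{\nu}$ be the harmonic function in ${\mathcal F}$ with $\chi_{\nu}=0$ on $\partial\Omega$ and $\chi_{\nu}=\delta_{\nu\kappa}$ on $\partial{\mathcal S}_{\kappa}$ — these $\chi_\nu$ are the usual harmonic "measures" of the solid components. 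Then $\mathfrak{G}=\mathfrak{G}_0+\sum_\nu c_\nu\chi_\nu$ automatically satisfies the Laplace equation and all the Dirichlet conditions in \eqref{Eq:HAlpha} for any choice of constants $c_\nu$; the constants are then fixed precisely by the $N$ flux conditions, which read
\begin{equation*}
\sum_{\nu=1}^{N} \Big(\int_{\partial{\mathcal S}_{\kappa}} \partial_n \chi_{\nu}\, ds\Big)\, c_{\nu} = f_{\kappa} - \int_{\partial{\mathcal S}_{\kappa}} \partial_n \mathfrak{G}_{0}\, ds, \qquad \kappa=1,\dots,N.
\end{equation*}
So everything comes down to showing the $N\times N$ matrix $A_{\kappa\nu} := \int_{\partial{\mathcal S}_{\kappa}} \partial_n \chi_{\nu}\, ds$ is invertible.

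The main obstacle, and the heart of the argument, is this invertibility. I would prove it by the standard energy/variational identity for harmonic measures: using that $\chi_{\kappa}$ is harmonic, vanishes on $\partial\Omega$, and equals $\delta_{\kappa\mu}$ on $\partial{\mathcal S}_{\mu}$, Green's formula gives $\int_{\mathcal F} \nabla\chi_{\kappa}\cdot\nabla\chi_{\nu}\, dx = -\int_{\partial{\mathcal S}_{\nu}} \partial_n\chi_{\kappa}\, ds$, with the sign of $n$ as the outward normal to ${\mathcal F}$. Hence $A$ (up to sign and transpose) is the Gram matrix of $\nabla\chi_1,\dots,\nabla\chi_N$ in $L^2({\mathcal F})$, so it is symmetric negative semi-definite, and it is definite as soon as the gradients are linearly independent. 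If $\sum_\kappa \lambda_\kappa\nabla\chi_\kappa = 0$ in ${\mathcal F}$ then $\sum\lambda_\kappa\chi_\kappa$ is constant on the connected set ${\mathcal F}$; evaluating on $\partial\Omega$ gives the constant is $0$, and evaluating on $\partial{\mathcal S}_{\mu}$ gives $\lambda_\mu=0$. Thus $A$ is invertible, the constants $c_\kappa$ are uniquely determined, and tracing back through the splitting yields existence and uniqueness of $(\mathfrak{H},c_1,\dots,c_N)$. (Uniqueness can also be seen directly: the difference of two solutions is harmonic, zero on $\partial\Omega$, locally constant on each $\partial{\mathcal S}_\kappa$ with zero flux there, so by the same energy identity its gradient vanishes and then it vanishes.) I would also remark in passing that smoothness of the boundaries gives $\mathfrak{H}\in C^\infty(\overline{\mathcal F})$, which is what later sections will need.
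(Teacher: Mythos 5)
Your proof is correct and follows essentially the same route as the paper's: both reduce the problem to inverting the $N\times N$ matrix of boundary fluxes of the harmonic measures $\chi_\kappa$ (the paper's $\mathfrak{h}_\kappa$), and both establish invertibility by the energy identity relating that matrix to the $L^2(\mathcal F)$-inner products of the $\nabla\chi_\kappa$, whose linear independence follows since any relation forces a harmonic function that is zero on $\partial\Omega$ and constant on the whole boundary. One small sign slip: with $n$ the outward normal to $\mathcal F$, Green's formula gives $\int_{\mathcal F}\nabla\chi_\kappa\cdot\nabla\chi_\nu\,dx = \int_{\partial\mathcal S_\nu}\partial_n\chi_\kappa\,ds$ with a plus sign, so the flux matrix is positive (not negative) semi-definite, but this does not affect the invertibility argument or the conclusion.
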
 
\begin{proof}[Proof of Lemma~\ref{Lem:EtrangeDirichlet}]
We first introduce the solution $\widetilde{\mathfrak{H}}[\alpha_{1},\dots,\alpha_{N};\alpha_{\Omega}]$ of the standard Dirichlet problem 
\begin{equation} \nonumber %\label{Eq:HAlphaTilde}
\left\{ \begin{array}{l}
\Delta \widetilde{\mathfrak{H}}[\alpha_{1},\dots,\alpha_{N};\alpha_{\Omega}] = 0 \ \text{ in } \ {\mathcal F}, \\
\widetilde{\mathfrak{H}}[\alpha_{1},\dots,\alpha_{N};\alpha_{\Omega}] = \alpha_{\Omega} \ \text{ on } \ \partial \Omega, \\
\widetilde{\mathfrak{H}}[\alpha_{1},\dots,\alpha_{N};\alpha_{\Omega}] = \alpha_{\kappa} \ \text{ on } \ \partial {\mathcal S}_{\kappa} \ \text{ for } \ \kappa \in \{1,\dots,N\}.
\end{array} \right.
\end{equation}
Then we correct this solution by means of the following ones: for $\kappa \in \{1, \dots,N\}$ one defines $\mathfrak{h}_{\kappa}$ as the unique solution to
\begin{equation} \nonumber %\label{Eq:hnu}
\left\{ \begin{array}{l}
\Delta \mathfrak{h}_{\kappa} = 0 \ \text{ in } \ {\mathcal F}, \\
\mathfrak{h}_{\kappa} = 0 \ \text{ on } \ \partial \Omega, \\
\mathfrak{h}_{\kappa} =1 \ \text{ on } \ \partial {\mathcal S}_{\kappa}, \\
\mathfrak{h}_{\kappa} =0 \ \text{ on } \ \partial {\mathcal S}_{\nu} \ \text{ for } \ \nu \neq \kappa.
\end{array} \right.
\end{equation}
Obviously, this family is linearly independent (it is connected to the first De Rham cohomology space of ${\mathcal F}$). Then it remains to prove that the linear mapping from $\mbox{Span}\{\mathfrak{h}_{1}, \dots, \mathfrak{h}_{N} \}$ to $\R^N$, defined by
\begin{equation} \label{Eq:DefN}
\mathfrak{N} : 
\mathfrak{h} \mapsto \left( \int_{\partial {\mathcal S}_{1}} \partial_{n} \mathfrak{h}(x) \, ds (x) , \dots, \int_{\partial {\mathcal S}_{N}} \partial_{n} \mathfrak{h}(x) \, ds (x),
\right)
\end{equation}
is an isomorphism. This is easy, since when $\mathfrak{h}$ belongs to its kernel, one has
\begin{equation*}
\int_{{\mathcal F}} |\nabla \mathfrak{h}|^2 \, dx = \int_{\partial {\mathcal F}} \mathfrak{h} \partial_{n} \mathfrak{h} \, ds(x)=0. 
\end{equation*}
Hence since $\mathfrak{h}=0$ on $\partial \Omega$, we deduce $\mathfrak{h}=0$ in ${\mathcal F}$. 
\end{proof}
%
%
%
%\ \par
%
%
\paragraph{Uniform estimates for fixed sizes.}
In the sequel, a case of particular interest is the case of the  ``final'' fluid domain where all small solids have been removed (hence the fluid domain is larger). 
Therefore we consider a domain $\Omega$ in which are embedded $N_{(i)}$ solids ${\mathcal S}_{1}$, \dots, ${\mathcal S}_{N_{(i)}}$ of fixed size, each of them being obtained by a rigid movement from a fixed shape, such as described before (in particular we still use the notation ${\mathcal S}_{i}(q_{i})$). The fluid domain is then $\widecheck{\mathcal F}:=\Omega \setminus ({\mathcal S}_{1} \cup \dots \cup {\mathcal S}_{N_{(i)}})$. 
We obtain a sort of maximum principle for $\mathfrak{H}[\alpha_{1},\dots,\alpha_{N_{(i)}};\alpha_{\Omega}]$ as long as the solids remain a distance at least $\delta>0$ one from another and from the outer boundary. 
\begin{Lemma} \label{Lem:MMH}
Let $\delta>0$. There exists a constant $C>0$ depending merely on $\delta$, $\Omega$, and the shapes of ${\mathcal S}_{1}, \dots, {\mathcal S}_{N_{(i)}}$ such that for any
\begin{multline*}
{\bf q}_{(i)}=(q_{1},\dots,q_{N_{(i)}}) \in {\mathcal{Q}}_{(i),\delta} := \Big\{ (q_{1},\dots,q_{N_{(i)}}) \in \R^{3N_{(i)}} \ \Big/ \
\forall i \in \{1, \dots, N_{(i)} \}, \ \ \mbox{dist}({\mathcal S}_{i}(q_{i}), \partial \Omega ) > 2 \delta \\
\text{ and } \forall j \in \{1, \dots, N_{(i)} \} \text{ with } i \neq j, \ \
 \mbox{dist}({\mathcal S}_{i}(q_{i}), {\mathcal S}_{j}(q_{j}) ) > 2\delta \Big\},
\end{multline*}
for any functions $\alpha_{\lambda} \in C^{\infty}(\partial{\mathcal S}_{\lambda};\R)$, $\lambda=1,\dots,N_{(i)}$ and any function $\alpha_{\Omega} \in C^{\infty}(\partial \Omega;\R)$,
one has
\begin{equation} \label{Eq:MMH}
\| \mathfrak{H}[\alpha_{1},\dots,\alpha_{N_{(i)}};\alpha_{\Omega}] \|_{L^{\infty}(\widecheck{\mathcal F})} \leq C
\| (\alpha_{1},\dots,\alpha_{N_{(i)}};\alpha_{\Omega}) \|_{L^{\infty}(\partial \widecheck{\mathcal F})} .
\end{equation}
In particular, $\mathfrak{H}[\alpha_{1},\dots,\alpha_{N_{(i)}};\alpha_{\Omega}]$ can be defined for any functions $\alpha_{\lambda} \in C^{0}(\partial{\mathcal S}_{\lambda};\R)$, $\lambda=1,\dots,N_{(i)}$ and any function $\alpha_{\Omega} \in C^{0}(\partial \Omega;\R)$.
\end{Lemma}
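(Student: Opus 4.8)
The plan is to establish \eqref{Eq:MMH} by a compactness-contradiction argument combined with the structure of the problem \eqref{Eq:HAlpha}, and then to deduce the final claim about extending $\mathfrak{H}$ to continuous boundary data by density.

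\emph{Step 1: Reduction by linearity.} By Lemma~\ref{Lem:EtrangeDirichlet} the map $(\alpha_1,\dots,\alpha_{N_{(i)}};\alpha_\Omega) \mapsto \mathfrak{H}[\alpha_1,\dots,\alpha_{N_{(i)}};\alpha_\Omega]$ is linear, so it suffices to bound $\|\mathfrak{H}\|_{L^\infty(\widecheck{\mathcal F})}$ by a constant depending only on $\delta$, $\Omega$ and the shapes, under the normalization $\|(\alpha_1,\dots,\alpha_{N_{(i)}};\alpha_\Omega)\|_{L^\infty(\partial\widecheck{\mathcal F})}=1$. First I would record the two elementary facts available from the proof of Lemma~\ref{Lem:EtrangeDirichlet}: $\mathfrak{H}$ is harmonic, equals $\alpha_\Omega$ on $\partial\Omega$, and equals a constant $\alpha_\kappa+c_\kappa$ on each $\partial{\mathcal S}_\kappa$ with zero normal flux there. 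By the maximum principle, $\|\mathfrak{H}\|_{L^\infty(\widecheck{\mathcal F})} = \max\big(\|\alpha_\Omega\|_{L^\infty(\partial\Omega)}, \max_\kappa |\alpha_\kappa + c_\kappa|\big)$, so everything reduces to bounding the constants $|c_\kappa|$ — equivalently $|\mathfrak{H}|$ on the solid boundaries — by a uniform constant. Writing $c_\kappa = \mathfrak{H}|_{\partial{\mathcal S}_\kappa} - \alpha_\kappa$, it is enough to bound $|\mathfrak{H}|_{\partial{\mathcal S}_\kappa}|$ uniformly.

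\emph{Step 2: Uniform bound on the boundary constants via compactness.} Suppose not: then there is a sequence ${\bf q}_{(i)}^{(m)} \in {\mathcal Q}_{(i),\delta}$ and data $(\alpha_\lambda^{(m)};\alpha_\Omega^{(m)})$ with $L^\infty$-norm $1$ such that $M_m := \|\mathfrak{H}^{(m)}\|_{L^\infty(\widecheck{\mathcal F}^{(m)})} \to \infty$. Normalize by setting $\widetilde{\mathfrak{H}}^{(m)} := \mathfrak{H}^{(m)}/M_m$; this is harmonic, bounded by $1$ in $L^\infty$, with boundary data of size $O(1/M_m)\to 0$ on $\partial\Omega$ and constant-on-each-$\partial{\mathcal S}_\kappa^{(m)}$ data with at least one such constant of modulus $\ge 1 - O(1/M_m)$, and still satisfying the zero-flux conditions $\int_{\partial{\mathcal S}_\kappa^{(m)}}\partial_n\widetilde{\mathfrak{H}}^{(m)}\,ds = 0$. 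Up to a subsequence, ${\bf q}_{(i)}^{(m)} \to {\bf q}_{(i)}^\infty$ with all solids still at distance $\ge 2\delta$ (the set ${\mathcal Q}_{(i),\delta}$ is not compact because positions could escape, but since $\Omega$ is bounded the centers stay in a compact set, and angles live on a torus, so we get a limit configuration in $\overline{\mathcal Q_{(i),\delta/2}}$, say). Pulling back by the rigid motions to reference configurations, or simply using interior elliptic estimates on $\widecheck{\mathcal F}^{(m)}$ (which for large $m$ all contain a fixed open set), $\widetilde{\mathfrak{H}}^{(m)}$ converges (locally uniformly, along a further subsequence) to a harmonic function $\widetilde{\mathfrak{H}}^\infty$ on the limit domain $\widecheck{\mathcal F}^\infty$, with $\widetilde{\mathfrak{H}}^\infty = 0$ on $\partial\Omega$, $\widetilde{\mathfrak{H}}^\infty$ constant on each $\partial{\mathcal S}_\kappa^\infty$, zero flux on each $\partial{\mathcal S}_\kappa^\infty$, and at least one boundary constant of modulus $1$. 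But by the uniqueness statement in Lemma~\ref{Lem:EtrangeDirichlet} applied with all $\alpha$'s zero — or directly by the energy identity $\int_{\widecheck{\mathcal F}^\infty}|\nabla\widetilde{\mathfrak{H}}^\infty|^2 = \int_{\partial\widecheck{\mathcal F}^\infty}\widetilde{\mathfrak{H}}^\infty\partial_n\widetilde{\mathfrak{H}}^\infty = \sum_\kappa (\text{const}_\kappa)\int_{\partial{\mathcal S}_\kappa^\infty}\partial_n\widetilde{\mathfrak{H}}^\infty\,ds = 0$ — we get $\widetilde{\mathfrak{H}}^\infty$ constant, hence $\equiv 0$ since it vanishes on $\partial\Omega$, contradicting that one boundary constant has modulus $1$. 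This proves \eqref{Eq:MMH}.

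\emph{Step 3: Extension to continuous data.} Given $\alpha_\lambda\in C^0(\partial{\mathcal S}_\lambda)$ and $\alpha_\Omega\in C^0(\partial\Omega)$, approximate uniformly by smooth data; by \eqref{Eq:MMH} and linearity, the corresponding $\mathfrak{H}$'s form a Cauchy sequence in $L^\infty(\widecheck{\mathcal F})$, and the limit is harmonic (by interior elliptic estimates), attains the boundary data continuously on $\partial\Omega$ and is constant on each $\partial{\mathcal S}_\kappa$ with vanishing normal flux (the flux being a continuous linear functional of the data in a suitable sense, or by passing to the limit in the weak formulation). This defines $\mathfrak{H}[\alpha_1,\dots,\alpha_{N_{(i)}};\alpha_\Omega]$ for continuous data, with the same bound \eqref{Eq:MMH}.

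\emph{Main obstacle.} The delicate point is Step 2: ensuring that the limiting domain $\widecheck{\mathcal F}^\infty$ is a genuine admissible fluid domain (solids still disjoint, at positive distance, non-degenerate) so that the uniqueness/energy argument applies, and that the convergence $\widetilde{\mathfrak{H}}^{(m)}\to\widetilde{\mathfrak{H}}^\infty$ is strong enough near the boundaries to pass to the limit in the zero-flux conditions and in the ``constant on $\partial{\mathcal S}_\kappa$'' property — which forces working uniformly up to the boundary, e.g. by flattening each $\partial{\mathcal S}_\kappa^{(m)}$ with the rigid motions and using boundary Schauder estimates with constants depending only on the fixed shapes and on $\delta$. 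Alternatively one can avoid compactness entirely by a more hands-on argument: represent $\mathfrak{H}$ via the functions $\mathfrak{h}_\kappa$ and $\widetilde{\mathfrak{H}}$ from the proof of Lemma~\ref{Lem:EtrangeDirichlet}, and show the matrix $\mathfrak{N}$ of \eqref{Eq:DefN} has an inverse bounded uniformly on ${\mathcal Q}_{(i),\delta}$ — but quantifying the invertibility of $\mathfrak{N}$ uniformly itself essentially requires the same compactness input, so I would present the contradiction argument as the cleanest route.
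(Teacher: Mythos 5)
Your proposal is correct in outline, but it takes a genuinely different route from the paper.

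The paper's proof is direct and quantitative: it applies the maximum principle to $\widetilde{\mathfrak{H}}$ (the pure Dirichlet solution from Lemma~\ref{Lem:EtrangeDirichlet}), then bounds the correction in $\operatorname{Span}\{\mathfrak{h}_1,\dots,\mathfrak{h}_{N_{(i)}}\}$ by proving the flux map $\mathfrak{N}$ of \eqref{Eq:DefN} is \emph{uniformly} invertible on ${\mathcal Q}_{(i),\delta}$. That uniform invertibility is established by a short chain of explicit inequalities: $\sum_\lambda|\rho_\lambda|\lesssim\|\mathfrak{h}\|_{H^{1/2}(\partial\widecheck{\mathcal F})}$ for $\mathfrak{h}=\sum\rho_\lambda\mathfrak{h}_\lambda$ (since $\mathfrak{h}$ is locally constant on the solid boundaries), the energy identity $\int|\nabla\mathfrak{h}|^2=\sum\rho_\lambda\int_{\partial{\mathcal S}_\lambda}\partial_n\mathfrak{h}$, a uniform trace inequality, and a uniform Poincaré inequality (using $\mathfrak{h}=0$ on $\partial\Omega$). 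Compactness of ${\mathcal Q}_{(i),\delta}$ is only invoked inside the auxiliary Lemma~\ref{Lem:USE} to obtain uniform Schauder constants for the $\mathfrak{h}_\lambda$, which are then used to bound the fluxes of $\widetilde{\mathfrak{H}}$. Your closing remark that "quantifying the invertibility of $\mathfrak{N}$ uniformly itself essentially requires the same compactness input" is therefore not quite fair to this route: the invertibility estimate is a direct computation, not another contradiction argument; compactness enters only through a localized Schauder lemma, which you would need just as much to justify convergence of the boundary values and fluxes on a sequence of varying domains in your Step~2.

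Your compactness-contradiction argument does work, modulo the details you already flag: one must upgrade the local convergence of $\widetilde{\mathfrak{H}}^{(m)}$ to convergence up to the boundary (e.g.\ via the diffeomorphisms used in the proof of Lemma~\ref{Lem:USE}, or by pulling back to reference configurations) so as to pass to the limit in the constant-on-$\partial{\mathcal S}_\kappa$ condition and the zero-flux condition. The trade-off between the two routes is the usual one: the paper's chain of inequalities yields an explicit dependence of $C$ on trace/Poincaré constants and is more amenable to being reused elsewhere (and the paper does reuse exactly this structure in, e.g., Lemma~\ref{Lem:Contraction} and Propositions~\ref{Pro:DirichletPetits}--\ref{Pro:DirichletGros}); your argument is somewhat quicker to set up once the up-to-boundary convergence is secured, but it is non-constructive. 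Step~3 (extension to continuous data by density) matches the paper's treatment.
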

Before getting to the proof of Lemma~\ref{Lem:MMH} we state the following uniform Schauder estimates, see e.g. \cite[p. 98]{GT}.
\begin{Lemma}%[Uniform Schauder estimates]
\label{Lem:USE}
Let $\delta>0$. There exists a uniform constant $C>0$ such that for all ${\bf q}_{(i)} \in {\mathcal{Q}}_{(i),\delta}$ the following Schauder estimate holds for $u \in C^{2,\frac{1}{2}}(\widecheck{{\mathcal F}}({\bf q}_{(i)}))$:
\begin{equation*}
\| u \|_{C^{2,\frac{1}{2}}\left(\widecheck{{\mathcal F}}({\bf q}_{(i)})\right)} \leq C \left(\| \Delta u \|_{C^{\frac{1}{2}}\left(\widecheck{{\mathcal F}}({\bf q}_{(i)})\right)} + \| u \|_{C^{2,\frac{1}{2}}\left(\partial \widecheck{{\mathcal F}}({\bf q}_{(i)})\right)}\right).
\end{equation*}
\end{Lemma}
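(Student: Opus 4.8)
The plan is to derive Lemma~\ref{Lem:USE} from the classical boundary Schauder estimate (see \cite[Theorem~6.6]{GT}, and the discussion around \cite[p.~98]{GT}) by a localization argument, the only non-standard point being the \emph{uniformity} of the constant over the family of domains $\widecheck{\mathcal F}({\bf q}_{(i)})$, ${\bf q}_{(i)} \in {\mathcal Q}_{(i),\delta}$. First I would note that this family is, up to rigid motions, a compact one: since $\dist({\mathcal S}_i(q_i),\partial\Omega)>2\delta$ and ${\mathcal S}_i(q_i)\subset\Omega$ with $\Omega$ bounded, the centers $h_i$ range over a fixed compact subset of $\Omega$, while $\widecheck{\mathcal F}({\bf q}_{(i)})$ depends on each $\vartheta_i$ only modulo $2\pi$. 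Moreover, by \eqref{Eq:PositionSolide}, each boundary component $\partial{\mathcal S}_i(q_i)$ is the image of the \emph{fixed} curve $\partial{\mathcal S}_i(q_{i,0})$ under the rigid motion $x\mapsto h_i+R(\vartheta_i)(x-h_{i,0})$, whose differential is the orthogonal matrix $R(\vartheta_i)$ and whose second and higher derivatives vanish; hence all these curves are isometric copies of finitely many fixed $C^{\infty}$ curves, with curvature and higher intrinsic derivatives bounded uniformly in ${\bf q}_{(i)}$.

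Next I would set up uniform boundary charts. The separation conditions encoded in ${\mathcal Q}_{(i),\delta}$ (that is, ${\mathcal V}_{2\delta}({\mathcal S}_i)\cap{\mathcal V}_{2\delta}({\mathcal S}_j)=\emptyset$ and $\dist({\mathcal S}_i,\partial\Omega)>2\delta$) guarantee that there is $r_{0}=r_0(\delta,\Omega,\text{shapes})>0$ such that for every $x_0\in\partial\widecheck{\mathcal F}({\bf q}_{(i)})$ the ball $B(x_0,r_0)$ meets exactly one connected component of $\partial\widecheck{\mathcal F}({\bf q}_{(i)})$, inside which $\widecheck{\mathcal F}({\bf q}_{(i)})$ coincides with the region bounded by that single curve. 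For $x_0$ on $\partial{\mathcal S}_i(q_i)$ one builds a $C^{2,\frac12}$ flattening diffeomorphism of $B(x_0,r_0)$ by precomposing a fixed finite atlas of flattening charts of $\partial{\mathcal S}_i(q_{i,0})$ with the rigid motion above; since rigid motions are isometries with vanishing higher derivatives, these charts and their inverses have $C^{2,\frac12}$ norms bounded by a constant depending only on $\delta$, $\Omega$ and the shapes, and for $x_0$ near $\partial\Omega$ one uses a fixed atlas of $\partial\Omega$. Flattening the boundary turns $\Delta$ into a uniformly elliptic operator with $C^{\frac12}$ coefficients bounded uniformly in ${\bf q}_{(i)}$, so \cite[Theorem~6.6]{GT} yields on each half-ball the estimate $\|u\|_{C^{2,\frac12}(B(x_0,r_0/2)\cap\widecheck{\mathcal F})}\le C(\|\Delta u\|_{C^{\frac12}(B(x_0,r_0)\cap\widecheck{\mathcal F})}+\|u\|_{C^{2,\frac12}(B(x_0,r_0)\cap\partial\widecheck{\mathcal F})}+\|u\|_{C^{0}(B(x_0,r_0)\cap\widecheck{\mathcal F})})$ with a uniform $C$; the analogous interior estimate holds with a constant independent of the domain because $\Delta$ has constant coefficients.

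Finally I would glue these local estimates. As the total length of $\partial\widecheck{\mathcal F}({\bf q}_{(i)})$ is bounded uniformly and $r_0$ is fixed, one covers $\widecheck{\mathcal F}({\bf q}_{(i)})$ by a controlled number of such balls and chooses a subordinate partition of unity $\{\chi_k\}$ with $\|\chi_k\|_{C^{2,\frac12}}$ uniformly bounded (rescaling fixed bump functions to scale $r_0$). Writing $u=\sum_k\chi_k u$ and using $\Delta(\chi_k u)=\chi_k\Delta u+2\nabla\chi_k\cdot\nabla u+(\Delta\chi_k)u$, the sum of the local estimates gives $\|u\|_{C^{2,\frac12}(\widecheck{\mathcal F})}\le C(\|\Delta u\|_{C^{\frac12}(\widecheck{\mathcal F})}+\|u\|_{C^{2,\frac12}(\partial\widecheck{\mathcal F})}+\|u\|_{C^{1,\frac12}(\widecheck{\mathcal F})})$. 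The lower-order term is absorbed by the interpolation inequality $\|u\|_{C^{1,\frac12}(\widecheck{\mathcal F})}\le\eta\|u\|_{C^{2,\frac12}(\widecheck{\mathcal F})}+C_\eta\|u\|_{C^{0}(\widecheck{\mathcal F})}$, which holds with constants uniform in ${\bf q}_{(i)}$ because the boundaries $\partial\widecheck{\mathcal F}({\bf q}_{(i)})$ are uniformly $C^{2,\frac12}$ by the first two steps, and then by the maximum principle $\|u\|_{C^{0}(\widecheck{\mathcal F})}\le\|u\|_{C^{0}(\partial\widecheck{\mathcal F})}+C(\Omega)\|\Delta u\|_{C^{0}(\widecheck{\mathcal F})}$. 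Choosing $\eta$ small and rearranging yields the claim. The main obstacle is precisely the geometric bookkeeping of the first two steps — producing boundary charts and a partition of unity whose relevant norms are genuinely uniform in ${\bf q}_{(i)}$ — which is exactly where the rigid-motion structure \eqref{Eq:PositionSolide} and the uniform separation built into ${\mathcal Q}_{(i),\delta}$ enter; once these are in place the rest reduces to the fixed-domain Schauder theory of \cite{GT}.
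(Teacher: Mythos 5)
Your argument is correct, and it takes a genuinely different route from the one sketched in the paper. The paper's proof proceeds by a perturbation-plus-compactness argument: for each fixed ${\bf q}_{(i)}$ it transports the estimate to nearby configurations $\widetilde{\bf q}_{(i)}$ via smooth diffeomorphisms $\widecheck{\mathcal F}({\bf q}_{(i)}) \to \widecheck{\mathcal F}(\widetilde{\bf q}_{(i)})$ close to the identity (so that the pulled-back operator has coefficients close to those of the Laplacian), and then invokes compactness of ${\mathcal Q}_{(i),\delta}$ to obtain a single constant. You instead verify the uniformity directly: the separation built into ${\mathcal Q}_{(i),\delta}$ gives a uniform chart scale $r_0$, and since each $\partial{\mathcal S}_i(q_i)$ is a rigid-motion image of the fixed curve $\partial{\mathcal S}_i(q_{i,0})$, the flattening charts and the subordinate partition of unity have $C^{2,\frac12}$ norms that are bounded with no reference to a compactness argument; the local Schauder estimates from \cite[Theorem~6.6]{GT} then glue as usual and the lower-order term is absorbed by interpolation and the maximum principle. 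Your route is somewhat longer to write out but more self-contained, and it sidesteps the mild imprecision in the phrase ``compactness of ${\mathcal Q}_{(i),\delta}$'' (the set as defined is open and the angular coordinates are unbounded, so what is really compact is its closure modulo $2\pi$-periodicity in the angles); since your constants never depend on $\vartheta_i$, you do not need to address this point at all.
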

\begin{proof}[Proof of Lemma~\ref{Lem:USE}]
First one establishes the result locally by using smooth diffeomorphisms close to the identity from ${\mathcal F}({\bf q}_{(i)})$ to ${\mathcal F}(\widetilde{\bf q}_{(i)})$ when $\widetilde{{\bf q}}_{(i)}$ is close to ${\bf q}_{(i)}$. Using elliptic regularity for smooth operators with coefficients close to those of the Laplacian, this yields the result in the neighborhood of ${\bf q}_{(i)}$. One concludes by compactness of ${\mathcal{Q}}_{(i),\delta}$. We omit the details. %\par
\end{proof}
We now prove Lemma~\ref{Lem:MMH}.
\begin{proof}[Proof of Lemma~\ref{Lem:MMH}]
We consider $\alpha_{\lambda} \in C^{\infty}(\partial{\mathcal S}_{\lambda};\R)$, $\lambda=1,\dots,N_{(i)}$ and $\alpha_{\Omega} \in C^{\infty}(\partial \Omega;\R)$ and prove \eqref{Eq:MMH}; the conclusion that $\mathfrak{H}$ can be extended to continuous functions follows then immediately by density. %\par
We examine the proof of Lemma~\ref{Lem:EtrangeDirichlet}: we see that $\widetilde{\mathfrak{H}}[\alpha_{1},\dots,\alpha_{N};\alpha_{\Omega}]$ satisfies the maximum principle, and hence \eqref{Eq:MMH}. It remains to prove that the correction in $\mbox{Span}\{\mathfrak{h}_{1}, \dots, \mathfrak{h}_{N} \}$ can be estimated in the same way. \par
It follows from Lemma~\ref{Lem:USE} that the functions $\mathfrak{h}_{\lambda}$ are uniformly bounded in $C^{2,\frac{1}{2}}(\widecheck{{\mathcal F}})$. This involves in particular that the integrals
\begin{equation*}
\int_{\partial {\mathcal S}_{\lambda}} \partial_{n} \widetilde{\mathfrak{H}}[\alpha_{1},\dots,\alpha_{N};\alpha_{\Omega}] \, ds(x)
= \int_{\partial \widecheck{{\mathcal F}}} \widetilde{\mathfrak{H}}[\alpha_{1},\dots,\alpha_{N};\alpha_{\Omega}] \partial_{n} \mathfrak{h}_{\lambda} \, ds(x),
\ \lambda=1,\dots,N,
\end{equation*}
can be bounded uniformly in terms of $\| (\alpha_{1},\dots,\alpha_{N_{(i)}};\alpha_{\Omega}) \|_{L^{\infty}(\partial \widecheck{\mathcal F})}$.
It remains to prove that the isomorphism $\mathfrak{N}$ defined in \eqref{Eq:DefN} is uniformly invertible for ${\bf q}_{(i)} \in  {\mathcal{Q}}_{(i),\delta}$. %\par
Let $\mathfrak{h}$ in $\mbox{Span}\{\mathfrak{h}_{1}, \dots, \mathfrak{h}_{N_{(i)}} \}$, say
$\mathfrak{h} = \sum_{\lambda =1}^{N_{(i)}} \rho_{\lambda} \mathfrak{h}_{\lambda}.$
We observe that for some positive constant $C$:
\begin{equation} \label{Eq:Normehi}
\sum_{\lambda \in {\mathcal P}_{(i)}} | \rho_{\lambda} |  \leq C \| \mathfrak{h} \|_{H^{1/2}(\partial \widecheck{{\mathcal F}})},
\end{equation}
since the functions in $\mbox{Span}\{\mathfrak{h}_{1}, \dots, \mathfrak{h}_{N} \}$ are constant on $\partial \widecheck{{\mathcal F}}$.
Now we have
\begin{equation*}
\int_{\widecheck{\mathcal F}} |\nabla \mathfrak{h}|^2 \, dx = \int_{\partial \widecheck{\mathcal F}} \mathfrak{h} \partial_{n} \mathfrak{h} \, ds(x)
\leq \sum_{\lambda \in {\mathcal P}_{(i)}} | \rho_{\lambda} | \left| \int_{\partial {\mathcal S}_{\lambda}} \partial_{n} \mathfrak{h}  \, ds(x) \right|
\leq C \| \mathfrak{h} \|_{H^{1/2}(\partial \widecheck{{\mathcal F}})}  \sum_{\lambda \in {\mathcal P}_{(i)}} \left| \int_{\partial {\mathcal S}_{\lambda}} \partial_{n} \mathfrak{h}  \, ds(x) \right| ,
\end{equation*}
where we have used that $\mathfrak{h}=\rho_\lambda$ on $\partial {\mathcal S}_\lambda$. 
Moreover,  by the trace inequality (which is uniform in ${\mathcal Q}_{(i),\delta}$ by straightforward localization arguments), 
$$\| \mathfrak{h} \|_{H^{1/2}(\partial \widecheck{{\mathcal F}})} \leq C \| \mathfrak{h} \|_{H^1(\widecheck{{\mathcal F}})} ,$$ 
and,  since for $\mathfrak{h}$ in $\mbox{Span}\{\mathfrak{h}_{1}, \dots, \mathfrak{h}_{N} \}$ we have $\mathfrak{h}=0$ on $\partial \Omega$,  by 
Poincar\'e's inequality (which is also uniform in ${\bf q}_{(i)}$, since it merely depends on the diameter of the domain), 
$$\| \mathfrak{h} \|^2_{H^1(\widecheck{{\mathcal F}})} \leq C \int_{\widecheck{\mathcal F}} |\nabla \mathfrak{h}|^2 \, dx .$$
Gathering the  inequalities above we deduce that
\begin{equation*}
\| \mathfrak{h} \|_{H^{1/2}(\partial \widecheck{{\mathcal F}})} \leq C \sum_{\lambda =1}^{N_{(i)}} 
\left| \int_{\partial {\mathcal S}_{\lambda}} \partial_{n} \mathfrak{h}  \, ds(x) \right|.
\end{equation*}
The conclusion follows by using again \eqref{Eq:Normehi}.
\end{proof}
%
%
%
%
%
%%%%%%% 
%
%
\subsubsection{A potential for a standalone solid}
Now we consider the situation where the single solid ${\mathcal S}_{\kappa}$, rather than being embedded in $\Omega$ together with other solids ${\mathcal S}_{\nu}$, $\nu \neq \kappa$, is alone in the plane. This will play a central role in the description of the asymptotic behavior of the general potentials as some solids shrink to points. \par
To be more specific, we consider the solid ${\mathcal S}_{\kappa}$ obtained by a rigid movement and a homothety of scale $\varepsilon_{\kappa}$ with respect to its counterpart of size $1$ at initial position:
\begin{equation*}
{\mathcal S}_{\kappa}^{\varepsilon}={\mathcal S}_{\kappa}^{\varepsilon}(h_{\kappa},\vartheta_{\kappa}) = h_{\kappa} + \varepsilon_{\kappa} R(\vartheta_{\kappa})({\mathcal S}_{\kappa,0}^1 - h_{\kappa,0}),
\end{equation*}
and we study the above outer Dirichlet problem on $\R^2 \setminus {\mathcal S}^{\varepsilon}_{\kappa}$. Precisely we show the following.
\begin{Proposition} \label{Pro:DirichletSAM}
Let $\varepsilon_{\kappa}>0$, and let $\alpha \in C^{\infty}(\partial {\mathcal S}_{\kappa}^\varepsilon; \R)$.
Then there exist a unique constant $\widehat{c}_\kappa[\alpha]$ and a unique function $\widehat{\mathfrak{f}}_{\kappa}^\varepsilon[\alpha] \in C^{\infty}(\R^2 \setminus {\mathcal S}_{\kappa}^\varepsilon)$ solution to the system
\begin{equation} \label{Eq:PhiAlphachapeau}
\left\{ \begin{array}{l}
\Delta \widehat{\mathfrak{f}}_\kappa^\varepsilon[\alpha] = 0 \ \text{ in } \ \R^2 \setminus {\mathcal S}_\kappa^{\varepsilon}, \\ \medskip
\widehat{\mathfrak{f}}_\kappa^\varepsilon[\alpha](x) = \alpha + \widehat{c}_\kappa[\alpha] \ \text{ on } \ \partial {\mathcal S}_\kappa^{\varepsilon}, \\ \medskip
\widehat{\mathfrak{f}}_\kappa^\varepsilon[\alpha](x) \longrightarrow 0 \ \text{ as } \ |x| \longrightarrow +\infty .
\end{array} \right.
\end{equation}
Moreover one has the following estimates, where the constant $C$ merely depends on ${\mathcal S}_{\kappa,0}^1$ and $k \in \N \setminus \{0,1\}$
(hence is independent of $\varepsilon_{\kappa}$):
\begin{gather} 
\label{Eq:PhiHatMP}
\| \widehat{\mathfrak{f}}_\kappa^\varepsilon[\alpha] \|_{L^\infty(\R^2 \setminus {\mathcal S}_\kappa^\varepsilon)} \leq 2 \| \alpha \|_{L^\infty(\partial {\mathcal S}_\kappa^\varepsilon)}
\ \text{ and } \ 
| \widehat{c}_\kappa[\alpha] | \leq  \| \alpha \|_{L^\infty(\partial {\mathcal S}_\kappa^\varepsilon)} , \\
\label{Eq:NablaPhiHatGlobal}
\varepsilon_{\kappa} \| \nabla \widehat{\mathfrak{f}}_\kappa^\varepsilon[\alpha] \|_{L^{\infty}(\R^2 \setminus {\mathcal S}_\kappa^\varepsilon)} 
+ \varepsilon_{\kappa}^{k+\frac{1}{2}} | \widehat{\mathfrak{f}}_\kappa^\varepsilon[\alpha] |_{C^{k,\frac{1}{2}}(\R^2 \setminus {\mathcal S}_\kappa^\varepsilon)} 
\leq C \left(
\| \alpha \|_{L^{\infty}(\partial {\mathcal S}_\kappa^\varepsilon)} 
+ \varepsilon_{\kappa}^{k+\frac{1}{2}}| \alpha |_{C^{k,\frac{1}{2}}(\partial {\mathcal S}_\kappa^\varepsilon)}  \right), 
\end{gather}
and
\begin{multline}
\label{Eq:EstNablaPhiHat}
\forall x \text{ s.t. } |x-h_{\kappa}| \geq C \, \varepsilon_\kappa, \ \ 
| \widehat{\mathfrak{f}}_\kappa^\varepsilon[\alpha](x) | \leq C \frac{\varepsilon_\kappa}{|x - h_{\kappa}|} \| \alpha \|_{L^\infty(\partial {\mathcal S}_\kappa^\varepsilon)} \\
 \text{ and }
| \nabla \widehat{\mathfrak{f}}_\kappa^\varepsilon[\alpha](x) | \leq C \frac{\varepsilon_\kappa}{|x - h_{\kappa}|^2} \| \alpha \|_{L^\infty(\partial {\mathcal S}_\kappa^\varepsilon)} .
\end{multline}
\end{Proposition}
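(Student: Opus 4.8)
The plan is to reduce everything to the fixed-size ($\varepsilon_\kappa=1$) case by a rescaling, and then apply the maximum principle and standard interior/boundary elliptic estimates together with a Laurent-type expansion at infinity. First I would observe that existence and uniqueness of $(\widehat{\mathfrak{f}}_\kappa^\varepsilon[\alpha],\widehat{c}_\kappa[\alpha])$ for an exterior Dirichlet problem in $\R^2$ with a prescribed constant-on-each-boundary-plus-decay-at-infinity structure is classical: the condition that $\widehat{\mathfrak{f}}\to 0$ at infinity for a harmonic function in the exterior of a single bounded domain already forces a unique solution with the free constant $\widehat{c}_\kappa$ determined by the requirement of decay; uniqueness follows because the difference of two solutions would be harmonic in the exterior, vanish on $\partial {\mathcal S}_\kappa^\varepsilon$, and tend to $0$ at infinity, hence be identically zero by the maximum principle on the exterior domain (after handling the logarithmic behavior — here there is no circulation, so it genuinely decays). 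Alternatively one maps the exterior domain conformally to the exterior of a disc and solves explicitly via Poisson's formula for the exterior of a disc.

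Next I would set $y = h_\kappa + \varepsilon_\kappa R(\vartheta_\kappa)(x' - h_{\kappa,0})$, i.e. rescale to the unit solid ${\mathcal S}_{\kappa,0}^1$, and write $g(x') := \widehat{\mathfrak{f}}_\kappa^\varepsilon[\alpha](y)$, which solves the same type of exterior problem on $\R^2\setminus{\mathcal S}_{\kappa,0}^1$ with boundary data $\beta(x') := \alpha(y)$. On this fixed domain the maximum principle on the exterior domain (using the decay at infinity) gives $\|g\|_{L^\infty}\le \|\beta+\widehat{c}_\kappa\|_{L^\infty(\partial{\mathcal S}_{\kappa,0}^1)}$; combined with the fact that $\widehat{c}_\kappa$ is chosen so that the mean of $g$ over a large circle vanishes (equivalently the ``constant term'' at infinity is zero), one gets $|\widehat{c}_\kappa|\le\|\beta\|_\infty=\|\alpha\|_{L^\infty(\partial{\mathcal S}_\kappa^\varepsilon)}$ and hence $\|g\|_{L^\infty}\le 2\|\alpha\|_{L^\infty(\partial{\mathcal S}_\kappa^\varepsilon)}$, which is \eqref{Eq:PhiHatMP} since $\|\cdot\|_{L^\infty}$ is scaling-invariant. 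For \eqref{Eq:NablaPhiHatGlobal}: on the fixed exterior domain, interior and boundary Schauder estimates (the boundary being smooth, and the decay at infinity controlling the behavior in the unbounded part after a Kelvin transform or after working on a bounded annular region plus an explicit far-field tail) give $\|\nabla g\|_{L^\infty(\R^2\setminus{\mathcal S}_{\kappa,0}^1)} + |g|_{C^{k,1/2}(\R^2\setminus{\mathcal S}_{\kappa,0}^1)} \le C(\|\beta\|_{L^\infty}+|\beta|_{C^{k,1/2}(\partial{\mathcal S}_{\kappa,0}^1)})$; then I would undo the scaling, tracking the factors $\varepsilon_\kappa$ coming from each derivative (one power of $\varepsilon_\kappa$ per derivative in the $C^1$ term, and $k+\tfrac12$ powers in the $C^{k,1/2}$ term, using the convention \eqref{Eq:NormesHolder} for higher-order Hölder seminorms on a curve) to obtain exactly \eqref{Eq:NablaPhiHatGlobal}. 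Note that the curvature-dependence of the higher-order seminorms disappears precisely because we have normalized to the fixed shape ${\mathcal S}_{\kappa,0}^1$ before estimating.

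Finally, for the pointwise far-field bounds \eqref{Eq:EstNablaPhiHat}, I would use the expansion of a decaying harmonic function in the exterior of ${\mathcal S}_{\kappa,0}^1$: since $g$ is harmonic and $g\to0$ at infinity, on $\{|x'-h_{\kappa,0}|\ge R_0\}$ for $R_0$ large enough that the solid is contained in that disc, $g$ has a convergent expansion in negative powers of $z'-h_{\kappa,0}$ (via the complex potential $g+i\widetilde g$), with leading term $O(1/|x'-h_{\kappa,0}|)$; the coefficient of this leading term is controlled by $\|g\|_{L^\infty(\partial B(h_{\kappa,0},R_0))}\le C\|\beta\|_{L^\infty(\partial{\mathcal S}_{\kappa,0}^1)}$ by the maximum principle, and differentiating the series termwise gives $|\nabla g(x')|\le C|x'-h_{\kappa,0}|^{-2}\|\beta\|_\infty$. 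Rescaling back (which turns $|x'-h_{\kappa,0}|\ge R_0$ into $|y-h_\kappa|\ge R_0\varepsilon_\kappa =: C\varepsilon_\kappa$ and inserts the appropriate powers of $\varepsilon_\kappa$) yields \eqref{Eq:EstNablaPhiHat}.

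The main obstacle I anticipate is bookkeeping the exact powers of $\varepsilon_\kappa$ in the higher-order Hölder estimate \eqref{Eq:NablaPhiHatGlobal} under the nonstandard curve-seminorm convention \eqref{Eq:NormesHolder}, and making sure that the Schauder constant on the fixed exterior domain is genuinely independent of $\varepsilon_\kappa$ (which it is, once we have rescaled, since the rescaled domain no longer depends on $\varepsilon_\kappa$) — together with being careful that in two dimensions ``harmonic and decaying at infinity in an exterior domain'' is the right replacement for a maximum-principle argument, since a priori a harmonic function in an exterior planar domain can grow logarithmically; here this is ruled out exactly because there is no prescribed nonzero flux/circulation in \eqref{Eq:PhiAlphachapeau}.
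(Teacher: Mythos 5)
Your proposal is correct and follows essentially the same route as the paper's proof: rescale to $\varepsilon_\kappa=1$, use the maximum principle (via the inversion $z\mapsto 1/z$ to a bounded domain, i.e.\ the Kelvin transform you mention) for \eqref{Eq:PhiHatMP}, Schauder estimates on the fixed rescaled domain for \eqref{Eq:NablaPhiHatGlobal}, and a power-series/Laurent expansion at infinity with coefficient bounds from interior estimates and the Cauchy formula for \eqref{Eq:EstNablaPhiHat}, then undo the scaling. The paper simply pins down the choice among your ``equivalent devices'' by working explicitly with the bounded image domain $\Omega'$ under inversion and setting $\widehat{c}_\kappa[\alpha]=-\theta(0)$; otherwise the arguments coincide.
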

\begin{Remark}
Notice that Estimate~\eqref{Eq:EstNablaPhiHat} and the divergence theorem involve that
\begin{equation} \label{Eq:FluxNulPhiAlphaChapeau}
\int_{\partial \mathcal{S}_{\kappa}^\varepsilon} \partial_{n} \widehat{\mathfrak{f}}_\kappa^\varepsilon[\alpha] \, ds =0.
\end{equation}
\end{Remark}
\begin{proof}[Proof of Proposition~\ref{Pro:DirichletSAM}]
We proceed in two steps.\par
\ \par
\noindent
{\bf Step 1.} We first consider the case when $\varepsilon_\kappa = 1$. 
Since the above estimates are invariant by translation and rotation, without loss of generality, we can suppose that $\vartheta_{\kappa}=0$ and that
$0$ is in the interior of $\mathcal{S}_{\kappa}^1.$
Identifying $\R^2$ and $\C$, we use the inversion $z \mapsto 1/z$ with respect to $0$. Denoting the Riemann sphere by $\widehat{\C}$,
we set
$\Omega' := \left\{ 1/z, \, z \in \widehat{\C} \setminus {\mathcal S}_\kappa^1 \right\}$
(which is a regular bounded domain since $0$ is in the interior of ${\mathcal S}_{\kappa}^1$),
and consider the Dirichlet problem:
\begin{equation} \label{Eq:DirichletJordan}
\Delta \theta = 0 \text{ in } \Omega'   \text{ and }
\theta(z) = \alpha \left( 1/z\right) \text{ for } z \in \partial \Omega'.
\end{equation}
Notice that $0 \in \overset{\circ}{\Omega'}$ because it is the image of the point at infinity by the inversion $z \mapsto 1/z$.
Then we can set for $z \in \mathcal{S}_{\kappa}^1$:
\begin{equation} \label{Eq:DefPhialpha}
\widehat{\mathfrak{f}}^1[\alpha](z) = \theta \left( 1/z \right) - \theta(0)
\ \text{ and } \
\widehat{c}_\kappa[\alpha] = -\theta(0).
\end{equation}
By conformality of the inversion $z \mapsto 1/z$, this function satisfies \eqref{Eq:PhiAlphachapeau}. Conversely, starting from a solution to \eqref{Eq:PhiAlphachapeau}, we can invert and obtain a solution to \eqref{Eq:DirichletJordan} up to an additive constant on the boundary condition, which we remove. This proves the uniqueness of the solution to \eqref{Eq:PhiAlphachapeau}. \par
Now \eqref{Eq:PhiHatMP} is a direct consequence of \eqref{Eq:DefPhialpha} and of the maximum principle. 
Estimate \eqref{Eq:NablaPhiHatGlobal} is also a consequence of \eqref{Eq:DefPhialpha}: we make use of Schauder's estimates in $\Omega'$, then we invert using that $d(\R^2 \setminus {\mathcal S}_\kappa^1,0)>0$. %\par
Let us now focus on \eqref{Eq:EstNablaPhiHat}. The function
\begin{equation} \label{Eq:Defeta}
\eta(z) := \partial_{z} \theta(z) = \partial_{x} \theta (z) - i \partial_{y} \theta (z) 
\end{equation}
is holomorphic in $\Omega'$. We call $a_{k}(\eta)$, $k$ in $\N$, the coefficients of its power series expansion at $0$, so that
\begin{equation} \label{RG}
\eta(z) = \sum_{k \geq 0} a_{k}(\eta) z^k .
\end{equation}
We introduce $r>0$ such that the circle $S(0,r)$ lies inside $\Omega'$ at positive distance from $\partial \Omega'$. Using interior elliptic estimates (see e.g. \cite[Theorem 2.10, p. 23]{GT}), we see that $\| \eta \|_{C^0(S(0;r))} \leq C \| \alpha \|_{L^\infty(\partial {\mathcal S}_\kappa^1)}$ for some constant $C>0$ merely depending on ${\mathcal S}_\kappa^1$. 
Then, by using the Cauchy integral formula on $S(0,r)$, we deduce that there exists 
 $C_{{\mathcal S}}>0$ depending only on ${\mathcal S}_\kappa^1$ such that 
$|a_{k}(\eta)| \leq C_{{\mathcal S}}^k \| \alpha \|_{L^\infty(\partial {\mathcal S}_\kappa^1)}$  for all  $k \in \N.$
Now, by \eqref{Eq:DefPhialpha}, \eqref{Eq:Defeta} and \eqref{RG}, 
\begin{equation*}
\partial_{z} \widehat{\mathfrak{f}}^1[\alpha](z)
 =  - \frac{1}{z^2} \sum_{k \geq 0} \frac{a_{k}(\eta)}{z^k} .
\end{equation*}
Thus $| \nabla \widehat{\mathfrak{f}}^1[\alpha](x) | \leq C_{{\mathcal S}}  |z|^{-2} \| \alpha \|_{L^\infty(\partial {\mathcal S}_\kappa^1)} $ 
for $|z|$ large enough, for instance $|z-h_{\kappa}| \geq 2 C_{{\mathcal S}}$. But for $|z-h_{\kappa}|$ large enough (depending on $\mathcal{S}_{\kappa}^1$ only) we have that $|z-h_{\kappa}| \leq 2 |z|$. 
Hence we deduce the second inequality in \eqref{Eq:EstNablaPhiHat}, and then the first one by integration from infinity. \par
\ \par
\noindent
{\bf Step 2.} Obtaining the estimates for arbitrary $\varepsilon_\kappa>0$ is just a matter of rescaling. We call $\widehat{\mathfrak{f}}_\kappa^1$ the potential obtained above in the exterior domain $\R^2 \setminus {\mathcal S}_\kappa^1$  and $\widehat{\mathfrak{f}}_{\kappa}^\varepsilon$ the corresponding potential in $\R^2 \setminus {\mathcal S}_{\kappa}^\varepsilon$. Given $\alpha$ in $C^{\infty}(\partial {\mathcal S}_{\kappa}^\varepsilon; \R)$ we set $\alpha^\varepsilon(x) = \alpha(\varepsilon_{\kappa} x)$ defined on $\partial {\mathcal S}_{\kappa}^1$.
Then clearly
\begin{equation} \nonumber %\label{Eq:ScaleFChapeau}
\forall x \in \R^2 \setminus {\mathcal S}^\varepsilon_\kappa, \ \ 
\widehat{\mathfrak{f}}_\kappa^\varepsilon[\alpha](x) = \widehat{\mathfrak{f}}_{\kappa}^1[\alpha^\varepsilon](x/\varepsilon_{\kappa}), \ \ 
\nabla \widehat{\mathfrak{f}}_\kappa^\varepsilon[\alpha](x) = \frac{1}{\varepsilon_\kappa} \nabla \widehat{\mathfrak{f}}_{\kappa}^1[\alpha^\varepsilon](x/\varepsilon_{\kappa}).
\end{equation}
The estimates \eqref{Eq:PhiHatMP}--\eqref{Eq:EstNablaPhiHat} follow; Estimate \eqref{Eq:NablaPhiHatGlobal} in particular is just the rescaled Schauder estimate (note that the seminorms defined in \eqref{Eq:NormesHolder} scale in the same way as H\"older seminorms on open sets).
\end{proof}
%
%
%%%%%%%%%%%%%%
%
%
%
\subsubsection{A construction of the potential in the presence of small solids}
\label{Subsec:CPPSM}
Now we consider again the situation of a domain $\Omega$ in which are embedded $N$ solids, among which $N_{(i)}$ stay of fixed size and $N_s$ are shrinking. The only constraints that we will use is
$\mbox{dist}(\partial {\mathcal S}_\kappa, \partial {\mathcal S}_\nu) \geq \delta$ for $\kappa \neq \nu$ and $\mbox{dist}(\partial {\mathcal S}_\kappa, \partial \Omega) \geq \delta$ for all $\kappa$
where $\delta>0$ is fixed. The constants that follow will merely depend on $\delta$, $\Omega$ and on the shape of the {\bf unscaled} solids ${\mathcal S}_{\kappa}^1$ at size $1$. In particular they are independent of $\varepsilon_{N_{(i)}+1}, \cdots, \varepsilon_{N}$ (as long as they are small enough) and of the exact positions of the solids (as long as the above constraints are satisfied). \par
In this context we give a particular construction of $\mathfrak{H}[\alpha_{1},\dots,\alpha_{N};\alpha_{\Omega}]$, inspired by the method of successive reflections (see e.g. \cite{LLS} and references therein). The solution $\mathfrak{H}[\alpha_{1},\dots,\alpha_{N};\alpha_{\Omega}]$ will be obtained by means of the inversion of an operator on 
\begin{equation*}
(\eta_{1},\dots,\eta_{N},\eta_{\Omega}) \in E_{\partial {\mathcal F}} := C^0(\partial {\mathcal S}_{1}) \times \dots \times C^0(\partial {\mathcal S}_{N}) \times C^0(\partial \Omega),
\end{equation*}
which will be a perturbation of the identity by a contractive map. \par
\ \par
Let us describe this contractive map.
We first recall that $\widecheck{\mathcal F}$ refers to the larger fluid domain where the small solids have been removed, see \eqref{Eq:AugmentedDomain}. Correspondingly, $\partial \widecheck{{\mathcal F}} = \partial {\mathcal S}_{1} \cup \dots \cup \partial {\mathcal S}_{N_{(i)}} \cup \partial \Omega$.
Now given $(\eta_{1},\dots,\eta_{N},\eta_{\Omega}) \in E_{\partial {\mathcal F}}$ we first introduce $\widecheck{\mathfrak g} = \widecheck{\mathfrak g}[\eta_{1},\dots,\eta_{N_{(i)}};\eta_{\Omega}]$  and $\widecheck{c}_{\lambda}=\widecheck{c}_{\lambda}[\eta_{1},\dots,\eta_{N_{(i)}};\eta_{\Omega}]$ as the solution in $\widecheck{\mathcal F}$ of the Dirichlet problem
\begin{equation} \label{Eq:CorrectionInterieure}
\left\{ \begin{array}{l}
 - \Delta \widecheck{\mathfrak g} = 0 \text{ in } \ \widecheck{\mathcal F}, \\
 \widecheck{\mathfrak g} = \eta_{\Omega} \ \text{ on } \ \partial \Omega, \\
 \widecheck{\mathfrak g} = \eta_{\lambda} + \widecheck{c}_{\lambda} \text{ on } \partial {\mathcal S}_{\lambda}, \ \ \forall \lambda=1,\dots,N_{(i)}, \\
 \int_{\partial {\mathcal S}_{\lambda}} \partial_{n} \widecheck{\mathfrak g} \, ds (x)=0 , \ \ \forall \lambda=1,\dots,N_{(i)}.
\end{array} \right.
\end{equation}
This problem has a solution as described in Lemma~\ref{Lem:MMH}. 
Note in particular that Lemma~\ref{Lem:MMH} brings the following estimate:
\begin{equation} \label{Eq:CheckLambda}
\| \widecheck{\mathfrak g} \|_{L^\infty(\widecheck{\mathcal F})} \leq C \| (\eta_{1}, \dots, \eta_{N_{(i)}},\eta_{\Omega}) \|_{L^\infty(\partial {\mathcal S}_{1} \times \dots \times \partial {\mathcal S}_{N_{(i)}} \times \partial \Omega)}.
\end{equation}
 \par
Next we introduce the function $\mathfrak{m} = \mathfrak{m}[\eta_{1},\dots,\eta_{N};\eta_{\Omega}]$ in ${\mathcal F}$ by
\begin{equation} \label{Eq:DefFrakM}
\mathfrak{m} := \widecheck{\mathfrak g} + \sum_{\lambda \in {\mathcal P}_{s}} \widehat{\mathfrak{f}}_{\lambda}[\eta_{\lambda} - \widecheck{\mathfrak g}_{|\partial {\mathcal S}_{\lambda}}]
 \ \text{ with } \ \widecheck{\mathfrak g}=\widecheck{\mathfrak g}[\eta_{1},\dots,\eta_{N_{(i)}};\eta_{\Omega}] ,
%\ \text{ in } \ {\mathcal F}.
\end{equation}
where as in \eqref{Eq:PS}, we have denoted ${\mathcal P}_{s} = \{N_{(i)}+1,\dots,N\}$ the set of indices for shrinking solids. 
Note that $\mathfrak{m}$ is the unique solution to the following Dirichlet problem of type \eqref{Eq:HAlpha} (for some constants $c_{1}$, \dots, $c_{N}$):
\begin{equation} \label{Eq:SysFrakM}
\left\{ \begin{array}{l}
 - \Delta \mathfrak{m} = 0 \text{ in } \ {\mathcal F}, \\ \medskip
 \mathfrak{m} = \eta_{\Omega} + \sum_{\lambda \in {\mathcal P}_{s}} \widehat{\mathfrak{f}}_{\lambda}[\eta_{\lambda}-\widecheck{\mathfrak g}_{|\partial {\mathcal S}_{\lambda}}] 
\ \text{ on } \ \partial \Omega, \\ \medskip
 \mathfrak{m} = \eta_{\nu} + \sum_{\lambda \in {\mathcal P}_{s}} \widehat{\mathfrak{f}}_{\lambda}[\eta_{\lambda}-\widecheck{\mathfrak g}_{|\partial {\mathcal S}_{\lambda}}] + c_{\nu}
\ \text{ on } \ \partial {\mathcal S}_{\nu} \text{ for } \nu \in {\mathcal P}_{(i)}, \\ \medskip
 \mathfrak{m} = \eta_{\nu} + \sum_{\lambda \in {\mathcal P}_{s} \setminus \{ \nu \}} \widehat{\mathfrak{f}}_{\lambda}[\eta_{\lambda}-\widecheck{\mathfrak g}_{|\partial {\mathcal S}_{\lambda}}] +c_{\nu}
\ \text{ on } \ \partial {\mathcal S}_{\nu} \text{ for } \nu \in {\mathcal P}_{s}, \\ \medskip
\int_{\partial {\mathcal S}_{\nu}} \partial_{n} \mathfrak{m} \, ds (x)=0 , \ \forall \nu=1,\dots,N,
\end{array} \right.
\end{equation}
where for the last equation we have used \eqref{Eq:PhiAlphachapeau}, \eqref{Eq:FluxNulPhiAlphaChapeau}, \eqref{Eq:CorrectionInterieure} and the divergence theorem. \par
Our goal is to prove that one can put the solution $\mathfrak{H}[\alpha_{1},\dots,\alpha_{N};\alpha_{\Omega}]$ of \eqref{Eq:HAlpha} in the form $\mathfrak{m}[\eta_{1},\dots,\eta_{N};\eta_{\Omega}]$ with $\eta_{1},\dots,\eta_{N},\eta_{\Omega}$ determined from $\alpha_{1},\dots,\alpha_{N},\alpha_{\Omega}$. 
For that we define the operator ${\mathcal T}:E_{\partial {\mathcal F}} \rightarrow E_{\partial {\mathcal F}}$ by
\begin{equation} \label{Eq:DefT}
{\mathcal T}[\eta_{1},\dots,\eta_{N};\eta_{\Omega}] :=
\left\{ \begin{array}{l}
\sum_{\lambda \in {\mathcal P}_{s}} \widehat{\mathfrak{f}}_{\lambda}[\eta_{\lambda}-\widecheck{\mathfrak g}_{|\partial {\mathcal S}_{\lambda}}]
 \text{ on } \ \partial \widecheck{{\mathcal F}} = \partial {\mathcal S}_{1} \cup \dots \cup \partial {\mathcal S}_{N_{(i)}} \cup \partial \Omega, \medskip \\ 
\sum_{\lambda \in {\mathcal P}_{s} \setminus \{ \nu \}} \widehat{\mathfrak{f}}_{\lambda}[\eta_{\lambda}-\widecheck{\mathfrak g}_{|\partial {\mathcal S}_{\lambda}}]
\ \text{ on } \ \partial {\mathcal S}_{\nu}, \ \text{ for } \ \nu \in {\mathcal P}_{s} , 
\end{array} \right.
\end{equation}
where again $\widecheck{\mathfrak g}=\widecheck{\mathfrak g}[\eta_{1},\dots,\eta_{N_{(i)}};\eta_{\Omega}]$. 
Then
\begin{equation} \label{Eq:DefAlternativeT}
\mathfrak{m}[\eta_{1},\dots,\eta_{N};\eta_{\Omega}] =
\left\{ \begin{array}{l}
(\mbox{Id} + {\mathcal T}) [\eta_{1},\dots,\eta_{N};\eta_{\Omega}]  \ \text{ on } \ \partial \Omega , \\
(\mbox{Id} + {\mathcal T}) [\eta_{1},\dots,\eta_{N};\eta_{\Omega}] +c_{\nu} \ \text{ on } \ \partial {\mathcal S}_{\nu} , \ \nu= 1, \dots, N.
\end{array} \right.
\end{equation}
Now we have the following lemma, where we recall that $\overline{\boldsymbol{\varepsilon}}=(\varepsilon_{N_{(i)}+1},\dots,\varepsilon_{N})$.
\begin{Lemma} \label{Lem:Contraction}
There exists $\varepsilon_{0}>0$  depending only on $\delta$, $\Omega$ and on the shape of the unscaled solids ${\mathcal S}_{\lambda}^1$ such that if $\overline{\boldsymbol{\varepsilon}} \leq \varepsilon_{0}$, then ${\mathcal T}$ is a $\frac{1}{2}$-contraction.
\end{Lemma}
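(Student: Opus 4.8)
The plan is to show that $\mathcal{T}$ maps $E_{\partial\mathcal F}$ into itself with operator norm at most $\tfrac12$ once $\overline{\boldsymbol\varepsilon}$ is small enough, the point being that every term in the definition \eqref{Eq:DefT} of $\mathcal T$ is of the form $\widehat{\mathfrak f}_\lambda[\,\cdot\,]$ for a \emph{shrinking} solid $\mathcal S_\lambda$, evaluated either on the boundary $\partial\Omega\cup\bigcup_{\kappa\in\mathcal P_{(i)}}\partial\mathcal S_\kappa$ or on a boundary $\partial\mathcal S_\nu$ with $\nu\in\mathcal P_s$, $\nu\neq\lambda$; in all these cases the evaluation point is at distance $\geq\delta$ from $\mathcal S_\lambda$ (by the separation constraints imposed on the configuration), while by Proposition~\ref{Pro:DirichletSAM} the standalone potential $\widehat{\mathfrak f}_\lambda$ decays like $\varepsilon_\lambda/|x-h_\lambda|$ away from the solid. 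Hence each such term is $O(\varepsilon_\lambda/\delta)$ in $L^\infty$ of the relevant boundary component, and summing over the finitely many indices $\lambda\in\mathcal P_s$ gives a bound $C|\overline{\boldsymbol\varepsilon}|/\delta$, which is $\leq\tfrac12$ for $\overline{\boldsymbol\varepsilon}$ small.

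More precisely, I would first estimate the inner corrector: given $(\eta_1,\dots,\eta_N,\eta_\Omega)\in E_{\partial\mathcal F}$, the function $\widecheck{\mathfrak g}=\widecheck{\mathfrak g}[\eta_1,\dots,\eta_{N_{(i)}};\eta_\Omega]$ satisfies, by \eqref{Eq:CheckLambda} from Lemma~\ref{Lem:MMH},
\begin{equation*}
\|\widecheck{\mathfrak g}\|_{L^\infty(\widecheck{\mathcal F})}\leq C\,\|(\eta_1,\dots,\eta_N,\eta_\Omega)\|_{E_{\partial\mathcal F}},
\end{equation*}
with $C$ depending only on $\delta$, $\Omega$ and the unscaled shapes. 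Consequently, for each $\lambda\in\mathcal P_s$, the boundary datum $\eta_\lambda-\widecheck{\mathfrak g}_{|\partial\mathcal S_\lambda}$ is controlled in $L^\infty(\partial\mathcal S_\lambda)$ by $C\|(\eta_1,\dots,\eta_N,\eta_\Omega)\|_{E_{\partial\mathcal F}}$. Then for a point $x$ lying on any boundary component of $\partial\widecheck{\mathcal F}$ or on $\partial\mathcal S_\nu$ with $\nu\in\mathcal P_s\setminus\{\lambda\}$, the separation hypotheses give $|x-h_\lambda|\geq\delta$ (up to shrinking $\varepsilon_0$ so that $C\varepsilon_\lambda<\delta$, which is needed to apply the far-field estimate), so \eqref{Eq:EstNablaPhiHat} yields
\begin{equation*}
\big|\widehat{\mathfrak f}_\lambda[\eta_\lambda-\widecheck{\mathfrak g}_{|\partial\mathcal S_\lambda}](x)\big|\leq C\,\frac{\varepsilon_\lambda}{\delta}\,\|(\eta_1,\dots,\eta_N,\eta_\Omega)\|_{E_{\partial\mathcal F}}.
\end{equation*}
Summing over $\lambda\in\mathcal P_s$ (a finite set, of cardinality $N_s$) shows that both lines of \eqref{Eq:DefT} are bounded in $L^\infty$ of the corresponding boundary piece by $C\,|\overline{\boldsymbol\varepsilon}|\,\|(\eta_1,\dots,\eta_N,\eta_\Omega)\|_{E_{\partial\mathcal F}}/\delta$, hence $\|\mathcal T[\eta_1,\dots,\eta_N;\eta_\Omega]\|_{E_{\partial\mathcal F}}\leq C\,|\overline{\boldsymbol\varepsilon}|\,\|(\eta_1,\dots,\eta_N,\eta_\Omega)\|_{E_{\partial\mathcal F}}$. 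Since $\mathcal T$ is linear, choosing $\varepsilon_0$ so that $C\,N_s\,\varepsilon_0\leq\tfrac12$ makes $\mathcal T$ a $\tfrac12$-contraction.

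I do not expect a genuine obstacle here; the statement is essentially a bookkeeping consequence of the far-field decay in Proposition~\ref{Pro:DirichletSAM} together with the uniform bound \eqref{Eq:CheckLambda}. The one point that requires a little care is making sure the estimate \eqref{Eq:EstNablaPhiHat} is legitimately applicable, i.e.\ that the evaluation points are in the regime $|x-h_\lambda|\geq C\varepsilon_\lambda$; this is why $\varepsilon_0$ must be chosen small relative to $\delta$ (so that $C\varepsilon_0<\delta$), and it is also why the constant $\varepsilon_0$ depends on $\delta$, $\Omega$ and the unscaled shapes but not on the positions or the individual $\varepsilon_\kappa$. A secondary bookkeeping point is the interplay of the constants: the constant multiplying $|\overline{\boldsymbol\varepsilon}|$ in the final bound absorbs the constant $C$ from \eqref{Eq:CheckLambda}, the constant $C$ from \eqref{Eq:EstNablaPhiHat}, the factor $1/\delta$, and the number $N_s$ of shrinking solids, but all of these are fixed once $\delta$, $\Omega$ and the shapes are fixed, so the argument closes.
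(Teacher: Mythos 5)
Your proof is essentially the same as the paper's: both reduce the contraction property to the combination of the uniform bound \eqref{Eq:CheckLambda} for $\widecheck{\mathfrak g}$ with the far-field decay \eqref{Eq:EstNablaPhiHat} from Proposition~\ref{Pro:DirichletSAM}, using the fact that every term in \eqref{Eq:DefT} evaluates a standalone potential of a shrinking solid $\mathcal S_\lambda$ at points on a \emph{different} boundary component, hence at distance $\gtrsim\delta$ from $\mathcal S_\lambda$. Your remark about needing $C\varepsilon_0<\delta$ so that the far-field regime of \eqref{Eq:EstNablaPhiHat} applies is a legitimate (and slightly more explicit) bookkeeping point that the paper leaves implicit.
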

\begin{proof}[Proof of Lemma~\ref{Lem:Contraction}]
The main argument is that the value of ${\mathcal T}[\eta_{1},\dots,\eta_{N};\eta_{\Omega}]$ on a connected component of the boundary, say $\partial {\mathcal S}_{\nu}$, is actually given by a sum of restrictions on  $\partial {\mathcal S}_{\nu}$ of potentials generated on other connected components of the boundary (and the same holds for $\partial \Omega$).
We first see that by Lemma~\ref{Lem:MMH}, $\widecheck{\mathfrak{g}}$  satisfies \eqref{Eq:CheckLambda}. Then we use \eqref{Eq:EstNablaPhiHat}: for $\nu \neq \lambda$, this allows to estimate $\widehat{\mathfrak{f}}_{\lambda}[\eta_{\lambda}-\widecheck{\mathfrak g}_{|\partial {\mathcal S}_{\lambda}}]$ 
on the $\delta$-neighborhood  ${\mathcal V}_{\delta}(\partial {\mathcal S}_{\nu})$ of $\partial {\mathcal S}_{\nu}$ (see \eqref{Def:NuVoisinage}) by
\begin{equation}
\label{lalala}
\| \widehat{\mathfrak{f}}_{\lambda}[\eta_{\lambda}-\widecheck{\mathfrak g}_{|\partial {\mathcal S}_{\lambda}}] \|_{L^{\infty}({\mathcal V}_{\delta}(\partial {\mathcal S}_{\nu}))}
 \leq  C  \varepsilon_{\lambda} \big( \| (\eta_{1}, \dots, \eta_{N_{(i)}},\eta_{\Omega}) \|_{L^\infty(\partial {\mathcal S}_{1} \times \dots \times \partial {\mathcal S}_{N_{(i)}} \times \partial \Omega)} 
+  \| \eta_{\lambda} \|_{L^\infty(\partial {\mathcal S}_{\lambda}) } \big) ,
\end{equation}
and the same holds for ${\mathcal V}_{\delta}(\partial \Omega)$. \par
\noindent
By the definition \eqref{Eq:DefT} of ${\mathcal T}$, we deduce that on $\partial \widecheck{{\mathcal F}} = \partial {\mathcal S}_{1} \cup \dots \cup \partial {\mathcal S}_{N_{(i)}} \cup \partial \Omega$, 
\begin{eqnarray*}
\| {\mathcal T}[\eta_{1},\dots,\eta_{N};\eta_{\Omega}] \|_{L^\infty(\partial \widecheck{{\mathcal F}})} &\leq& C \sum_{\lambda \in {\mathcal P}_{s}}
 \varepsilon_{\lambda} \big(
\| \eta_{\lambda}\|_{L^\infty(\partial {\mathcal S}_{\lambda})}
+ \| (\eta_{1}, \dots, \eta_{N_{(i)}},\eta_{\Omega}) \|_{L^\infty(\partial {\mathcal S}_{1} \times \dots \times \partial {\mathcal S}_{N_{(i)}} \times \partial \Omega)} \big) \\
&\leq& C \left(\sum_{\lambda \in {\mathcal P}_{s} }  \varepsilon_{\lambda} \right)
\| (\eta_{1}, \dots, \eta_{N},\eta_{\Omega}) \|_{L^\infty(\partial {\mathcal S}_{1} \times \dots \times \partial {\mathcal S}_{N} \times \partial \Omega)} ,
\end{eqnarray*}
while on $\partial {\mathcal S}_{\nu}$ for $\nu \in {\mathcal P}_{s}$, we get
\begin{eqnarray*}
\| {\mathcal T}[\eta_{1},\dots,\eta_{N};\eta_{\Omega}] \|_{L^\infty(\partial {\mathcal S}_{\nu})} &\leq& C \sum_{\lambda \in {\mathcal P}_{s} \setminus \{ \nu \}}
 \varepsilon_{\lambda} \big(
\| \eta_{\lambda}\|_{L^\infty(\partial {\mathcal S}_{\lambda})}
+ \| (\eta_{1}, \dots, \eta_{N_{(i)}},\eta_{\Omega}) \|_{L^\infty(\partial {\mathcal S}_{1} \times \dots \times \partial {\mathcal S}_{N_{(i)}} \times \partial \Omega)} \big) \\
&\leq& C \left(\sum_{\lambda \in {\mathcal P}_{s} \setminus \{ \nu \}}  \varepsilon_{\lambda} \right)
\| (\eta_{1}, \dots, \eta_{N},\eta_{\Omega}) \|_{L^\infty(\partial {\mathcal S}_{1} \times \dots \times \partial {\mathcal S}_{N} \times \partial \Omega)} .
\end{eqnarray*}
Hence the operator ${\mathcal T}$ is a $\frac{1}{2}$-contraction if $\overline{\boldsymbol{\varepsilon}}$ is small enough.
\end{proof}
\noindent
Now we consider such an $\overline{\boldsymbol{\varepsilon}}$. From Lemma~\ref{Lem:Contraction} we infer that
$\mbox{Id} + {\mathcal T}$ is invertible.
We deduce the following lemma.
\begin{Lemma} \label{Lem:ApresInversion}
Given $(\alpha_{1},\dots,\alpha_{N};\alpha_{\Omega})$ in $E_{\partial {\mathcal F}}$ we introduce
\begin{equation} \label{Eq:BetaAlpha}
(\beta_{1},\dots,\beta_{N},\beta_{\Omega}) := (\mbox{Id} + {\mathcal T})^{-1}(\alpha_{1},\dots,\alpha_{N},\alpha_{\Omega}).
\end{equation}
Then
\begin{equation} \nonumber %\label{Eq:H=M}
\mathfrak{H}[\alpha_{1},\dots,\alpha_{N};\alpha_{\Omega}] = \mathfrak{m}[\beta_{1},\dots,\beta_{N};\beta_{\Omega}].
\end{equation}
\end{Lemma}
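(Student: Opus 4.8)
The plan is to check directly that, with $(\beta_{1},\dots,\beta_{N},\beta_{\Omega})$ defined by \eqref{Eq:BetaAlpha}, the function $\mathfrak{m}[\beta_{1},\dots,\beta_{N};\beta_{\Omega}]$ together with suitable constants $c_{1},\dots,c_{N}$ solves the boundary value problem \eqref{Eq:HAlpha} with data $(\alpha_{1},\dots,\alpha_{N};\alpha_{\Omega})$, and then to conclude by uniqueness. In other words, the whole point is that $\mbox{Id}+{\mathcal T}$ has been built so as to undo the ``reflection'' correction present in \eqref{Eq:DefAlternativeT}.

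First I would record the three structural facts about $\mathfrak{m}[\eta_{1},\dots,\eta_{N};\eta_{\Omega}]$ that come out of the construction \eqref{Eq:DefFrakM}--\eqref{Eq:DefAlternativeT}, valid for any $(\eta_{1},\dots,\eta_{N},\eta_{\Omega}) \in E_{\partial {\mathcal F}}$: (a) it is harmonic in ${\mathcal F}$, being the restriction to ${\mathcal F}$ of a sum of the harmonic function $\widecheck{\mathfrak g}$ and the harmonic standalone potentials $\widehat{\mathfrak{f}}_{\lambda}$; (b) it satisfies the zero-flux conditions $\int_{\partial {\mathcal S}_{\nu}} \partial_{n} \mathfrak{m}\,ds = 0$ for every $\nu \in \{1,\dots,N\}$, which is the last line of \eqref{Eq:SysFrakM}; and (c) its boundary traces are exactly $(\mbox{Id}+{\mathcal T})[\eta_{1},\dots,\eta_{N};\eta_{\Omega}]$ on $\partial \Omega$ and $(\mbox{Id}+{\mathcal T})[\eta_{1},\dots,\eta_{N};\eta_{\Omega}] + c_{\nu}$ on $\partial {\mathcal S}_{\nu}$, which is precisely \eqref{Eq:DefAlternativeT}.

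Then I would substitute $(\eta_{1},\dots,\eta_{N},\eta_{\Omega}) = (\beta_{1},\dots,\beta_{N},\beta_{\Omega})$ and use the defining relation \eqref{Eq:BetaAlpha}, namely $(\mbox{Id}+{\mathcal T})(\beta_{1},\dots,\beta_{N},\beta_{\Omega}) = (\alpha_{1},\dots,\alpha_{N},\alpha_{\Omega})$. By (c) the traces of $\mathfrak{m}[\beta_{1},\dots,\beta_{N};\beta_{\Omega}]$ become $\alpha_{\Omega}$ on $\partial \Omega$ and $\alpha_{\nu} + c_{\nu}$ on $\partial {\mathcal S}_{\nu}$. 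Combined with (a) and (b), this says that $\bigl(\mathfrak{m}[\beta_{1},\dots,\beta_{N};\beta_{\Omega}], c_{1},\dots,c_{N}\bigr)$ is a solution of System \eqref{Eq:HAlpha} with data $(\alpha_{1},\dots,\alpha_{N};\alpha_{\Omega})$. It remains to invoke uniqueness for \eqref{Eq:HAlpha}: this is Lemma~\ref{Lem:EtrangeDirichlet} for smooth data, and, since here the data lie only in $E_{\partial {\mathcal F}} = C^{0}$, I would note that the uniqueness part of that lemma (the maximum-principle argument $\int_{{\mathcal F}}|\nabla\mathfrak{h}|^{2} = 0$) applies verbatim in the fixed smooth bounded domain ${\mathcal F}$ for continuous data, exactly as in Lemma~\ref{Lem:MMH}. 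Hence $\mathfrak{m}[\beta_{1},\dots,\beta_{N};\beta_{\Omega}] = \mathfrak{H}[\alpha_{1},\dots,\alpha_{N};\alpha_{\Omega}]$.

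The argument is essentially bookkeeping and I do not expect a serious obstacle; the only point deserving a little care is the zero-flux identity (b). There one must check that, in $\mathfrak{m} = \widecheck{\mathfrak g} + \sum_{\lambda \in {\mathcal P}_{s}} \widehat{\mathfrak{f}}_{\lambda}[\eta_{\lambda} - \widecheck{\mathfrak g}_{|\partial {\mathcal S}_{\lambda}}]$, the piece $\widecheck{\mathfrak g}$ contributes zero flux through each $\partial {\mathcal S}_{\nu}$ by construction in \eqref{Eq:CorrectionInterieure} (for $\nu \in {\mathcal P}_{(i)}$) and harmonicity plus the divergence theorem (for $\nu \in {\mathcal P}_{s}$, since $\widecheck{\mathfrak g}$ is harmonic across the location of the shrunk solid), that the term $\lambda = \nu$ contributes zero flux by \eqref{Eq:FluxNulPhiAlphaChapeau}, and that the remaining terms $\lambda \neq \nu$ are harmonic in a neighborhood of $\partial {\mathcal S}_{\nu}$ and hence have vanishing flux through $\partial {\mathcal S}_{\nu}$ by the divergence theorem — which is exactly the justification already given for the last line of \eqref{Eq:SysFrakM}.
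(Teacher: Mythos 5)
Your proof is correct and follows essentially the same route as the paper: you observe that $\mathfrak{m}[\beta]$ solves a problem of type \eqref{Eq:HAlpha} (harmonicity, zero-flux, traces given by \eqref{Eq:DefAlternativeT}), then use $(\mbox{Id}+\mathcal{T})(\beta)=\alpha$ to identify its boundary data with $\alpha$, and conclude by uniqueness. Your extra remarks on the zero-flux check and on uniqueness for $C^0$ data are sound (for the latter, one can simply note that the difference of two solutions has smooth boundary values — zero on $\partial\Omega$ and constants on the $\partial\mathcal{S}_\kappa$ — so the smooth-data uniqueness of Lemma~\ref{Lem:EtrangeDirichlet} applies directly), but they do not change the nature of the argument, which is the one the paper gives in compressed form.
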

\begin{proof}[Proof of Lemma~\ref{Lem:ApresInversion}]
From  \eqref{Eq:SysFrakM}, \eqref{Eq:DefAlternativeT} and \eqref{Eq:BetaAlpha}, we see that $\mathfrak{m}[\beta_{1},\dots,\beta_{N};\beta_{\Omega}]$ is the unique solution to \eqref{Eq:HAlpha} corresponding to the boundary data $(\alpha_{1},\dots,\alpha_{N};\alpha_{\Omega})$. \par
\end{proof}
We finish this paragraph by noticing the fact that ${\mathcal T}$ has important regularizing properties. Recall that $\delta$ was introduced at the beginning of Subsection~\ref{Subsec:CPPSM}.
\begin{Lemma} \label{Lem:TTropCool}
Given $\delta >0$, 
there exists $\varepsilon_{0}>0$ such that for all $\overline{\boldsymbol{\varepsilon}}$ with $\overline{\boldsymbol{\varepsilon}} \leq \varepsilon_{0}$, 
for all $k \in \N$, there exists a positive constant $C$ merely depending on $k$, $\delta$, $\Omega$ and on the unscaled solids ${\mathcal S}_{\lambda}^1$ such that for any $(\eta_{1},\dots,\eta_{N};\eta_{\Omega}) \in E_{\partial {\mathcal F}}$, one has
\begin{equation} \nonumber 
\| {\mathcal T}(\eta_{1},\dots,\eta_{N};\eta_{\Omega}) \|_{C^{k,\frac{1}{2}}(\partial {\mathcal F})}  \leq C \| (\eta_{1},\dots,\eta_{N},\eta_{\Omega}) \|_{\infty}.
\end{equation}
\end{Lemma}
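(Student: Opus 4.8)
The plan is to exploit the structure of $\mathcal{T}$ exhibited in the proof of Lemma~\ref{Lem:Contraction}: on each connected component of $\partial\mathcal{F}$, $\mathcal{T}(\eta_{1},\dots,\eta_{N};\eta_{\Omega})$ is a finite sum of restrictions of standalone potentials $\widehat{\mathfrak{f}}_{\lambda}[\eta_{\lambda}-\widecheck{\mathfrak{g}}_{|\partial\mathcal{S}_{\lambda}}]$ for indices $\lambda$ \emph{different} from the one labelling the component. The key point is therefore that each such term is evaluated at points lying at distance $\geq\delta$ from $\mathcal{S}_{\lambda}$, so one is measuring the $C^{k,1/2}$-norm of a harmonic function on a curve lying well inside the region $\{|x-h_{\lambda}|\geq C\varepsilon_{\lambda}\}$ where the sharp decay estimate \eqref{Eq:EstNablaPhiHat} applies. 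First I would record the $L^{\infty}$ bound on $\widecheck{\mathfrak{g}}$ from Lemma~\ref{Lem:MMH} (inequality \eqref{Eq:CheckLambda}), which in particular controls $\|\eta_{\lambda}-\widecheck{\mathfrak{g}}_{|\partial\mathcal{S}_{\lambda}}\|_{L^{\infty}(\partial\mathcal{S}_{\lambda})}$ by $C\|(\eta_{1},\dots,\eta_{N},\eta_{\Omega})\|_{\infty}$.

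Next I would upgrade the pointwise decay of \eqref{Eq:EstNablaPhiHat} to a $C^{k,1/2}$-estimate on the relevant curves. Since $\widehat{\mathfrak{f}}_{\lambda}[\alpha]$ is harmonic in $\R^{2}\setminus\mathcal{S}_{\lambda}^{\varepsilon}$ and the curve $\partial\mathcal{S}_{\nu}$ (or $\partial\Omega$) stays at distance $\geq\delta$ from $\mathcal{S}_{\lambda}^{\varepsilon}$, interior elliptic estimates on balls of radius $\sim\delta/2$ centered at points of that curve convert the sup bound $\|\widehat{\mathfrak{f}}_{\lambda}[\alpha]\|_{L^{\infty}(\mathcal{V}_{\delta/2}(\partial\mathcal{S}_{\nu}))}\leq C\varepsilon_{\lambda}\|\alpha\|_{L^{\infty}(\partial\mathcal{S}_{\lambda}^{\varepsilon})}$ (which is \eqref{Eq:EstNablaPhiHat}, valid once $\varepsilon_{0}$ is small enough that $C\varepsilon_{\lambda}<\delta$) into a bound on all derivatives up to any order on $\mathcal{V}_{\delta/4}(\partial\mathcal{S}_{\nu})$, with constants depending only on $k$, $\delta$, $\Omega$ and the unscaled shapes. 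Restricting to the curve and using the convention \eqref{Eq:NormesHolder} for higher-order seminorms then gives $\|\widehat{\mathfrak{f}}_{\lambda}[\eta_{\lambda}-\widecheck{\mathfrak{g}}_{|\partial\mathcal{S}_{\lambda}}]\|_{C^{k,1/2}(\partial\mathcal{S}_{\nu})}\leq C\|(\eta_{1},\dots,\eta_{N},\eta_{\Omega})\|_{\infty}$ (the $\varepsilon_{\lambda}$ factor is simply absorbed, being bounded). Summing over the finitely many indices $\lambda$ appearing in each component, and over the finitely many components, yields the claimed bound.

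I would also fix $\varepsilon_{0}$ at the outset to be the minimum of the threshold from Lemma~\ref{Lem:Contraction}, the threshold making $C\varepsilon_{\lambda}<\delta$ in \eqref{Eq:EstNablaPhiHat}, and the threshold \eqref{Eq:ToutDansOmega} ensuring the shrunk solids sit inside $\Omega$; this is harmless since all are of the same nature.

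The main obstacle I anticipate is bookkeeping the constants so that they genuinely depend only on $k$, $\delta$, $\Omega$ and the unscaled shapes and \emph{not} on the $\varepsilon_{\lambda}$ or on the precise positions. This requires the uniformity already built into Lemma~\ref{Lem:MMH} and Proposition~\ref{Pro:DirichletSAM} (whose constants are $\varepsilon_{\kappa}$-independent), together with the fact that the curves $\partial\mathcal{S}_{\nu}$, $\partial\Omega$ are translates/rotations of finitely many fixed smooth curves, so interior elliptic estimates on the fixed-width collar $\mathcal{V}_{\delta/2}(\partial\mathcal{S}_{\nu})\setminus\mathcal{S}_{\lambda}^{\varepsilon}$ carry uniform constants by a standard localization-and-compactness argument of the type already used in the proof of Lemma~\ref{Lem:USE}. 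Everything else is routine.
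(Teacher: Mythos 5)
Your proposal is correct and takes essentially the same approach as the paper: both exploit the structure of $\mathcal{T}$ as a sum of restrictions, to each boundary component, of standalone potentials $\widehat{\mathfrak{f}}_{\lambda}[\eta_{\lambda}-\widecheck{\mathfrak{g}}_{|\partial\mathcal{S}_{\lambda}}]$ generated on \emph{other} components, then use the $L^{\infty}$ bound from Lemma~\ref{Lem:MMH} together with \eqref{lalala} (equivalently the decay in \eqref{Eq:EstNablaPhiHat}), and finally apply interior elliptic estimates for harmonic functions on fixed-width, $\varepsilon$-independent collars around each component to convert the sup bound into the desired $C^{k,\frac{1}{2}}$ bound. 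The paper records this by introducing explicit nested neighborhoods ${\mathcal U}'_{\nu}\subset\subset{\mathcal U}_{\nu}$ of size ${\mathcal O}(\delta)$, but this is the same localization you describe.
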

\begin{proof}[Proof of Lemma~\ref{Lem:TTropCool}]
We introduce for each $\nu \in \{1,\dots,N\}$ a neighborhood ${\mathcal U}_{\nu}$ of $\partial {\mathcal S}_{\nu}$ of size ${\mathcal O}(\delta)$,
and hence independent of $\varepsilon_{\nu}$.
More precisely, for $\nu \in {\mathcal P}_{(i)}$, we let ${\mathcal U}_{\nu}={\mathcal V}_{\delta/2}({\mathcal S}_{\nu})$ (where we recall the notation \eqref{Def:NuVoisinage}).
For $\nu \in {\mathcal P}_{s}$, we let ${\mathcal U}_{\nu}= B(h_{\nu},\delta/2)$ and we notice that for suitably small $\overline{\boldsymbol{\varepsilon}}$, one has ${\mathcal S}_{\nu} \subset B(h_{\nu},\delta/8)$.
We also introduce some neighborhood ${\mathcal U}'_{\nu}$ of ${\mathcal S}_{\nu}$ depending only on $\delta$ and satisfying $\overline{{\mathcal U}'_{\nu}} \subset {\mathcal U}_{\nu}$:
for instance for $\nu \in {\mathcal P}_{(i)}$, we consider ${\mathcal U}'_{\nu}={\mathcal V}_{\delta/4}({\mathcal S}_{\nu})$ and for $\nu \in {\mathcal P}_{s}$, we let ${\mathcal U}'_{\nu}= B(h_{\nu},\delta/4)$. 
In the same way we introduce the $\delta/2$-neighborhood (respectively $\delta/4$-neighborhood ) ${\mathcal U}_{0}$ (resp. ${\mathcal U}'_{0}$)
of $\partial \Omega$. 
Then by interior elliptic regularity estimates we find a positive constant $C=C(k,{\mathcal U}_{\nu}, {\mathcal U}'_{\nu})$ such that for any harmonic function $f$ on ${\mathcal U}_{\nu}$ one has
\begin{equation*}
\| f \|_{C^{k,\frac{1}{2}}({\mathcal U}'_{\nu})}  \leq C \| f \|_{L^{\infty}({\mathcal U}_{\nu})}.
\end{equation*}
We apply it to $\widehat{\mathfrak{f}}_{\lambda}[\eta_{\lambda}-\widecheck{\mathfrak g}_{|\partial {\mathcal S}_{\lambda}}]$ for $\lambda \neq \nu$ to get a H\"older estimate on ${\mathcal U}'_{\nu}$ and restrict it to $\partial {\mathcal S}_{\nu}$ and
$\partial \Omega$
(which is trivial with the convention \eqref{Eq:NormesHolder}).
Finally we use \eqref{Eq:DefT} and \eqref{lalala}. This ends the proof of Lemma~\ref{Lem:TTropCool}.
\end{proof}
%
%
%
%
%
%%%%%%%%%%%%%%
%
%
%
\subsubsection{Asymptotic behavior for problem \eqref{Eq:HAlpha}}
In this paragraph we study the behavior of the solutions \eqref{Eq:HAlpha} as some of the embedded solids shrink to points.
Let $\overline{\boldsymbol{\varepsilon}}$ satisfy the assumptions of Lemma~\ref{Lem:Contraction}.
We consider a particular case of $\mathfrak{H}[\alpha_{1}, \dots,\alpha_{N};\alpha_{\Omega}],$ when all $\alpha_{\kappa}$ but one are zero and $\alpha_{\Omega}=0$ as well. Let $\kappa \in \{1, \dots, N\}$ and $\alpha_{\kappa} \in C^0 (\partial {\mathcal S}_{\kappa};\R)$. We denote
\begin{equation} \label{Eq:PhiAlpha}
\mathfrak{f}_{\kappa}[\alpha_{\kappa}] := \mathfrak{H}[0,\dots,0, \alpha_{\kappa}, 0,\dots,0;0],
\end{equation}
where $\alpha_{\kappa}$ is on the $\kappa$-th position. %\par
The first result of this section, concerning the case when the $\kappa$-th solid is small, is the following one. We recall the notation ${\mathcal P}_{s}$ for the set of indices for shrinking solids, see \eqref{Eq:PS}, and the notation \eqref{Def:NuVoisinage} for a $\nu$-neighborhood.
\begin{Proposition} \label{Pro:DirichletPetits}
Let $\delta>0$. There exists $\varepsilon_{0}>0$ such that the following holds. 
There exists a constant $C>0$ depending only on $\delta$, $\Omega$, $k \geq 2$ and the reference solids ${\mathcal S}_{\lambda}^1$, $\lambda=1,\dots,N$,  such that for any
$\overline{\boldsymbol\varepsilon}$ such that $\overline{{\boldsymbol\varepsilon}} \leq \varepsilon_{0}$, for any $\kappa \in {\mathcal P}_{s}$, 
for any ${\bf q} \in {\mathcal{Q}}_{\delta}$, for any $\alpha^\varepsilon \in C^{\infty}(\partial {\mathcal S}^\varepsilon_{\kappa}; \R)$, one has
\begin{gather} \label{Eq:EstPhiHatPhi}
\| \nabla \mathfrak{f}_{\kappa}[\alpha^\varepsilon] - \nabla \widehat{\mathfrak{f}}_\kappa[\alpha^\varepsilon] \|_{L^{\infty}({\mathcal F}^\varepsilon)}
\leq C \varepsilon_{\kappa} \| \alpha^\varepsilon \|_{L^\infty(\partial {\mathcal S}^\varepsilon_{\kappa})}, \\
\nonumber
\big| \mathfrak{f}_{\kappa}[\alpha^\varepsilon] \big|_{C^{k,\frac{1}{2}}({\mathcal V}_{\delta}(\partial \widecheck{\mathcal F}))}
+ \sum_{\lambda \in {\mathcal P}_{s} \setminus \{ \kappa \}} \varepsilon_{\lambda}^{k-\frac{1}{2}}  \big| \mathfrak{f}_{\kappa}[\alpha^\varepsilon] \big|_{C^{k,\frac{1}{2}}({\mathcal V}_{\delta}(\partial{\mathcal S}_{\lambda}^\varepsilon))} \hskip 5cm \\ \hskip 5cm 
\label{Eq:EstPhiHatPhi2}
+ \varepsilon_{\kappa}^{k-\frac{1}{2}}  \big| \mathfrak{f}_{\kappa}[\alpha^\varepsilon] - \widehat{\mathfrak{f}}_\kappa[\alpha^\varepsilon] \big|_{C^{k,\frac{1}{2}}({\mathcal V}_{\delta}(\partial{\mathcal S}_{\kappa}^\varepsilon))}
\leq C \varepsilon_{\kappa} \| \alpha^\varepsilon \|_{L^\infty(\partial {\mathcal S}^\varepsilon_{\kappa})},
\end{gather}
where
$\mathfrak{f}_{\kappa}[\alpha^\varepsilon] \in C^{\infty}(\overline{\mathcal F}^\varepsilon({\bf q}))$ is the unique solution given by \eqref{Eq:PhiAlpha},
$\widehat{\mathfrak{f}}_\kappa[\alpha^\varepsilon] \in C^{\infty}(\R^2 \setminus {\mathcal S}^\varepsilon_\kappa)$ is the unique solution to \eqref{Eq:PhiAlphachapeau}.
\end{Proposition}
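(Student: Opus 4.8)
The plan is to exploit the reflection-method construction of $\mathfrak{H}$ built in Subsection~\ref{Subsec:CPPSM}, specialized to the case where only $\alpha_\kappa$ is nonzero. Writing $(\beta_1,\dots,\beta_N,\beta_\Omega) = (\operatorname{Id}+{\mathcal T})^{-1}(0,\dots,0,\alpha^\varepsilon,0,\dots,0)$ as in Lemma~\ref{Lem:ApresInversion}, we have $\mathfrak{f}_\kappa[\alpha^\varepsilon] = \mathfrak{m}[\beta_1,\dots,\beta_N;\beta_\Omega]$. Since ${\mathcal T}$ is a $\tfrac12$-contraction (Lemma~\ref{Lem:Contraction}), $(\operatorname{Id}+{\mathcal T})^{-1} = \operatorname{Id} - {\mathcal T} + {\mathcal T}^2 - \cdots$, so $\beta_\kappa = \alpha^\varepsilon - {\mathcal T}[\dots]_\kappa + \cdots$, and the estimates \eqref{lalala} together with Lemma~\ref{Lem:TTropCool} show that all the $\beta$'s other than the leading $\alpha^\varepsilon$ in the $\kappa$-th slot are ${\mathcal O}(\varepsilon_\kappa \|\alpha^\varepsilon\|_{L^\infty})$ in $L^\infty$, and moreover that $\beta_\kappa - \alpha^\varepsilon$ and every $\beta_\lambda$, $\lambda\neq\kappa$, and $\beta_\Omega$ enjoy $C^{k,1/2}$ bounds of the same order $\varepsilon_\kappa\|\alpha^\varepsilon\|_{L^\infty}$ (with the appropriate $\varepsilon_\lambda$-weights dictated by the scaling in Proposition~\ref{Pro:DirichletSAM}). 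This is the quantitative heart of the argument: the ``reflections'' onto all components other than $\partial{\mathcal S}_\kappa$ are small and smooth.

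Next I would plug this expansion into the explicit formula \eqref{Eq:DefFrakM} for $\mathfrak{m}$, namely $\mathfrak{m}[\beta] = \widecheck{\mathfrak g}[\beta_1,\dots,\beta_{N_{(i)}};\beta_\Omega] + \sum_{\lambda\in{\mathcal P}_s}\widehat{\mathfrak f}_\lambda[\beta_\lambda - \widecheck{\mathfrak g}_{|\partial{\mathcal S}_\lambda}]$. The term $\widehat{\mathfrak f}_\kappa[\beta_\kappa - \widecheck{\mathfrak g}_{|\partial{\mathcal S}_\kappa}]$ is the ``main'' standalone term: using $\beta_\kappa - \alpha^\varepsilon = {\mathcal O}(\varepsilon_\kappa)$ and $\|\widecheck{\mathfrak g}\|_{L^\infty} = {\mathcal O}(\varepsilon_\kappa\|\alpha^\varepsilon\|_{L^\infty})$ by \eqref{Eq:CheckLambda} (the data for $\widecheck{\mathfrak g}$ come only from the small, already-${\mathcal O}(\varepsilon_\kappa)$ quantities $\beta_\lambda$, $\lambda\le N_{(i)}$, $\beta_\Omega$), linearity of $\widehat{\mathfrak f}_\kappa$ in the boundary datum plus the gradient bound $\|\nabla\widehat{\mathfrak f}_\kappa[\cdot]\|_{L^\infty} \lesssim \varepsilon_\kappa^{-1}\|\cdot\|_{L^\infty}$ from \eqref{Eq:NablaPhiHatGlobal} gives $\|\nabla\widehat{\mathfrak f}_\kappa[\beta_\kappa-\widecheck{\mathfrak g}_{|\partial{\mathcal S}_\kappa}] - \nabla\widehat{\mathfrak f}_\kappa[\alpha^\varepsilon]\|_{L^\infty} \lesssim \varepsilon_\kappa\|\alpha^\varepsilon\|_{L^\infty}$, which is exactly the error allowed in \eqref{Eq:EstPhiHatPhi}. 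All the remaining pieces — $\widecheck{\mathfrak g}[\beta]$ itself and $\widehat{\mathfrak f}_\lambda[\beta_\lambda-\widecheck{\mathfrak g}_{|\partial{\mathcal S}_\lambda}]$ for $\lambda\in{\mathcal P}_s\setminus\{\kappa\}$ — have $L^\infty$ and, via Lemma~\ref{Lem:MMH}/interior elliptic regularity and \eqref{Eq:NablaPhiHatGlobal}, gradient norms that are ${\mathcal O}(\varepsilon_\kappa\|\alpha^\varepsilon\|_{L^\infty})$ on ${\mathcal F}^\varepsilon$, establishing \eqref{Eq:EstPhiHatPhi}.

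For \eqref{Eq:EstPhiHatPhi2} I would localize: on ${\mathcal V}_\delta(\partial\widecheck{\mathcal F})$ and on ${\mathcal V}_\delta(\partial{\mathcal S}_\lambda^\varepsilon)$ for $\lambda\neq\kappa$, the function $\mathfrak{f}_\kappa[\alpha^\varepsilon]$ is harmonic and, by the $L^\infty$ control just obtained together with the remark that it equals a sum of potentials generated away from these sets (hence harmonic in a fixed-size or $\varepsilon_\lambda$-scaled neighborhood), interior Schauder estimates — in the rescaled form already used in Proposition~\ref{Pro:DirichletSAM} and Lemma~\ref{Lem:TTropCool}, with the convention \eqref{Eq:NormesHolder} on the scaling of the seminorms — upgrade the $L^\infty$ bound to the weighted $C^{k,1/2}$ bounds with the stated $\varepsilon_\lambda^{k-1/2}$ weights. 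Near $\partial{\mathcal S}_\kappa^\varepsilon$ one writes $\mathfrak{f}_\kappa[\alpha^\varepsilon] - \widehat{\mathfrak f}_\kappa[\alpha^\varepsilon] = (\widehat{\mathfrak f}_\kappa[\beta_\kappa - \widecheck{\mathfrak g}_{|\partial{\mathcal S}_\kappa}] - \widehat{\mathfrak f}_\kappa[\alpha^\varepsilon]) + \widecheck{\mathfrak g} + \sum_{\lambda\neq\kappa}\widehat{\mathfrak f}_\lambda[\cdots]$; the last two groups are harmonic near $\partial{\mathcal S}_\kappa^\varepsilon$ with small $L^\infty$ norm (handled by rescaled interior Schauder as above), while the first is $\widehat{\mathfrak f}_\kappa$ applied to a boundary datum of size ${\mathcal O}(\varepsilon_\kappa)$ in $L^\infty$ whose $C^{k,1/2}$ norm is itself controlled via Lemma~\ref{Lem:TTropCool} and \eqref{Eq:CheckLambda}, so \eqref{Eq:NablaPhiHatGlobal} applied with that datum gives the $\varepsilon_\kappa^{k-1/2}$-weighted bound by $C\varepsilon_\kappa\|\alpha^\varepsilon\|_{L^\infty}$.

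The main obstacle I expect is bookkeeping the different $\varepsilon_\lambda$-scalings consistently: the Hölder seminorms on the various shrinking boundaries scale with different powers of $\varepsilon_\lambda$, the standalone potential estimate \eqref{Eq:NablaPhiHatGlobal} mixes an unweighted gradient term with an $\varepsilon^{k+1/2}$-weighted higher-order term, and the reflection series $\sum_m (-{\mathcal T})^m$ must be summed while keeping track that each application of ${\mathcal T}$ both gains a factor $|\overline{\boldsymbol\varepsilon}|$ (contraction) and, by Lemma~\ref{Lem:TTropCool}, maps into $C^{k,1/2}$ with a constant independent of $\overline{\boldsymbol\varepsilon}$ — so one must be careful that the smoothing does not degrade the smallness. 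Once the scalings are organized, each individual estimate is a routine consequence of the already-established lemmas (Lemma~\ref{Lem:MMH}, Proposition~\ref{Pro:DirichletSAM}, Lemmas~\ref{Lem:Contraction}, \ref{Lem:ApresInversion}, \ref{Lem:TTropCool}) and rescaled interior elliptic regularity.
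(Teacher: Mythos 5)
Your overall strategy is exactly the paper's: write $\mathfrak{f}_\kappa[\alpha^\varepsilon]=\mathfrak{m}[\mathbf{B}]$ with $\mathbf{B}=(\operatorname{Id}+\mathcal T)^{-1}\mathbf A$, estimate $\mathbf B-\mathbf A$ via the contraction and the regularizing property of $\mathcal T$, and feed the result into the decomposition $\mathfrak{m}[\mathbf B]=\widecheck{\mathfrak g}_\beta+\sum_{\lambda\in\mathcal P_s}\widehat{\mathfrak f}_\lambda[\beta_\lambda-\widecheck{\mathfrak g}_{\beta|\partial\mathcal S_\lambda}]$. However, the quantitative core of your argument, as written, loses a factor of $\varepsilon_\lambda$ in every standalone term, and this is exactly where the proof is delicate.

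Concretely: you assert that $\|\beta_\kappa-\widecheck{\mathfrak g}_{|\partial\mathcal S_\kappa}-\alpha^\varepsilon\|_{L^\infty(\partial\mathcal S_\kappa)}=\mathcal O(\varepsilon_\kappa\|\alpha^\varepsilon\|_{L^\infty})$ (correct) and then apply $\|\nabla\widehat{\mathfrak f}_\kappa[g]\|_{L^\infty}\lesssim\varepsilon_\kappa^{-1}\|g\|_{L^\infty}$ from \eqref{Eq:NablaPhiHatGlobal} to conclude the gradient of the difference is $\mathcal O(\varepsilon_\kappa\|\alpha^\varepsilon\|_{L^\infty})$. This does not follow: plugging in, you get $\varepsilon_\kappa^{-1}\cdot\varepsilon_\kappa\|\alpha^\varepsilon\|=\|\alpha^\varepsilon\|$, one full power of $\varepsilon_\kappa$ short. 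The same shortfall occurs for $\lambda\in\mathcal P_s\setminus\{\kappa\}$, where it is even worse: you would obtain $\varepsilon_\lambda^{-1}\varepsilon_\kappa\|\alpha^\varepsilon\|$, which can blow up if $\varepsilon_\lambda\ll\varepsilon_\kappa$. The missing ingredient is the mean-subtraction trick that the paper invokes explicitly: since $\mathbf B=\mathbf A+\mathcal O(\varepsilon_\kappa\|\alpha^\varepsilon\|_{L^\infty})$ holds in $C^{k,\frac12}(\partial\mathcal F)$ (not merely $L^\infty$), and $\|\cdot\|_{C^{k,\frac12}}$ controls the Lipschitz norm by convention \eqref{Eq:NormesHolder}, the \emph{oscillation} of $\widetilde\beta_\lambda:=\beta_\lambda-\widecheck{\mathfrak g}_{|\partial\mathcal S_\lambda}-\delta_{\lambda\kappa}\alpha^\varepsilon$ on the small boundary of diameter $\mathcal O(\varepsilon_\lambda)$ is $\mathcal O(\varepsilon_\lambda\cdot\varepsilon_\kappa\|\alpha^\varepsilon\|_{L^\infty})$. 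Because $\widehat{\mathfrak f}_\lambda$ is insensitive to adding a constant, one may replace $\widetilde\beta_\lambda$ by $\widetilde\beta_\lambda-\frac{1}{|\partial\mathcal S_\lambda|}\int_{\partial\mathcal S_\lambda}\widetilde\beta_\lambda$ before applying \eqref{Eq:NablaPhiHatGlobal}; only then does $\varepsilon_\lambda^{-1}\cdot\varepsilon_\lambda\varepsilon_\kappa\|\alpha^\varepsilon\|=\varepsilon_\kappa\|\alpha^\varepsilon\|$ come out. Your closing remark that ``bookkeeping the $\varepsilon_\lambda$-scalings'' is the main obstacle was apt, but the bound you wrote in the middle paragraph simply isn't implied by the tool you cite there, and without the oscillation refinement the proof does not close. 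Once you insert that refinement (in both the $L^\infty$ and the weighted $C^{k,\frac12}$ estimates), the rest of the argument follows the paper.
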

Let us highlight that there is no H\"older norm in the right hand side of \eqref{Eq:EstPhiHatPhi2}, as opposed to \eqref{Eq:NablaPhiHatGlobal}. 
\begin{proof}[Proof of Proposition~\ref{Pro:DirichletPetits}] %\ \par
First, we fix $\varepsilon_{0}$ so that  Lemma~\ref{Lem:Contraction} and Lemma~\ref{Lem:TTropCool} apply.
We let the $(N+1)$-tuple ${\bf A}$ be 
\begin{equation*}
{\bf A}:=(0,\dots,0, \alpha, 0,\dots,0,0) ,
\end{equation*}
where $\alpha$ is on the $\kappa$-th position and we introduce
\begin{equation} \label{Eq:DefB}
{\bf B}=({\beta}_{1},\dots,{\beta}_{N},{\beta}_{\Omega}):=(I + {\mathcal T})^{-1}({\bf A}).
\end{equation}
Then according to Lemma~\ref{Lem:ApresInversion} we have
\begin{equation} \label{Eq:NotreSolution}
\mathfrak{f}_{\kappa}[\alpha] =\mathfrak{m}[{\bf B}] \ \text{ in } \ \overline{\mathcal F}.
\end{equation}
Now relying on \eqref{Eq:DefFrakM}, we arrive at the formula
\begin{equation} \label{Eq:DiffFFhat}
\mathfrak{f}_{\kappa}[\alpha] - \widehat{\mathfrak{f}}_\kappa[\alpha] = \widecheck{\mathfrak{g}}_{\beta} + \sum_{\lambda \in {\mathcal P}_{s}} \widehat{\mathfrak{f}}_{\lambda}\left[  \widetilde{\beta}_{\lambda} \right] \ \text{ in } \ \overline{\mathcal F}, 
\end{equation}
with
\begin{equation} 
\label{numetoile}
\widecheck{\mathfrak g}_{\beta}:=\widecheck{\mathfrak g}[\beta_{1},\dots,\beta_{N_{(i)}};\beta_{\Omega}] 
\ \text{ and for }  \lambda \in {\mathcal P}_{s} , \ 
\widetilde{\beta}_{\lambda} := 
\left\{ \begin{array}{ll}
\beta_{\lambda} - \widecheck{\mathfrak g}_{\beta|\partial {\mathcal S}_{\lambda}}  \ &\text{ when } \ \lambda \neq \kappa, \\
\beta_{\lambda} - \widecheck{\mathfrak g}_{\beta|\partial {\mathcal S}_{\lambda}} - \alpha \ &\text{ when } \ \lambda = \kappa.
\end{array} \right.
\end{equation}
Our goal is to estimate the right-hand side of \eqref{Eq:DiffFFhat}. A first step is to estimate ${\bf B} - {\bf A}$.
To that purpose we first notice that
\begin{equation} \label{Eq:B-A}
{\bf B} - {\bf A} = - {\mathcal T}({\bf B}) = - {\mathcal T} \circ (I + {\mathcal T})^{-1}({\bf A}) .
\end{equation}
Due to Lemma~\ref{Lem:Contraction}, we have $\| (I + {\mathcal T})^{-1} \|_{{\mathcal L}(C^{0}(\partial {\mathcal F}))} \leq 2$, so in particular we deduce
\begin{equation} \label{Eq:AppliContraction}
\| {\bf B} - {\bf A} \|_{L^{\infty}(\partial {\mathcal F})} 
\leq \| {\mathcal T}({\bf A}) \|_{L^{\infty}(\partial {\mathcal F})}.
\end{equation}
Now when computing ${\mathcal T}({\bf A})$ with \eqref{Eq:DefT}, we see that the function $\widecheck{\mathfrak{g}}$ involved in \eqref{Eq:DefT}  
%computed with ${\bf A}$
 and the constants $\widecheck{c}_{\lambda}$ from \eqref{Eq:CorrectionInterieure} are zero because the only non-trivial boundary data $\alpha$ is located on a small solid ${\mathcal S}_{\kappa}$, $\kappa \in {\mathcal P}_{s}$. 
Hence \eqref{Eq:DefT} gives
\begin{equation} \label{Eq:FirstIteration}
{\mathcal T}({\bf A}) =
\left\{ \begin{array}{l}
\widehat{\mathfrak{f}}_{\kappa}[\alpha]  \ \text{ on } \ \partial \Omega \ \text{ and on } \ \partial {\mathcal S}_{\lambda} \ \text{ for } \ \lambda \in \{1, \dots, N \} \setminus \{\kappa \}, \\
0 \text{ on } \ \partial {\mathcal S}_{\kappa}.
\end{array} \right.
\end{equation}
We deduce from \eqref{Eq:AppliContraction}, \eqref{Eq:FirstIteration}, \eqref{Eq:EstNablaPhiHat} and the separation between the connected components of the boundary, 
that
\begin{equation} \label{Eq:EstB}
{\bf B} = {\bf A} + {\mathcal O}(\varepsilon_{\kappa} \| \alpha \|_{L^{\infty}(\partial {\mathcal S}_{\kappa})}) \ \text{ in } \ L^\infty(\partial {\mathcal F}).
\end{equation}
Now we obtain higher order estimates. 
By \eqref{Eq:FirstIteration}, \eqref{Eq:EstNablaPhiHat} and interior elliptic regularity estimates, 
$\| {\mathcal T}(\mathbf{A}) \|_{C^{k,\frac{1}{2}}(\partial {\mathcal F})}  \leq C \varepsilon_{\kappa} \| \alpha \|_{L^{\infty}(\partial {\mathcal S}_{\kappa})}.$
By \eqref{Eq:EstB} and Lemma~\ref{Lem:TTropCool}, 
$\| {\mathcal T}({\bf B}-\mathbf{A}) \|_{C^{k,\frac{1}{2}}(\partial {\mathcal F})}  \leq C \varepsilon_{\kappa} \| \alpha \|_{L^{\infty}(\partial {\mathcal S}_{\kappa})}.$
We deduce
\begin{equation} \label{Eq:EstTB}
\| {\mathcal T}(\mathbf{B}) \|_{C^{k,\frac{1}{2}}(\partial {\mathcal F})}  \leq C \varepsilon_{\kappa} \| \alpha \|_{L^{\infty}(\partial {\mathcal S}_{\kappa})},
\end{equation}
which together with \eqref{Eq:B-A} gives
\begin{equation} \label{Eq:EstB2}
{\bf B} = {\bf A} + {\mathcal O}(\varepsilon_{\kappa} \| \alpha \|_{L^{\infty}(\partial {\mathcal S}_{\kappa})}) \ \text{ in } \ C^{k,\frac{1}{2}}(\partial {\mathcal F}).
\end{equation}
Now the terms in the right-hand side of \eqref{Eq:DiffFFhat} can be estimated as follows.
By \eqref{Eq:EstB2}, the fact that $\mathbf{A}_{i}=0$ for $i=1, \dots, N_{(i)}$, uniform Schauder estimates in $\widecheck{{\mathcal F}}$ (Lemma~\ref{Lem:USE}) and \eqref{numetoile}, 
\begin{equation} \label{Eq:EstCheckLambda}
\| \widecheck{\mathfrak g}_{\beta} \|_{C^{k,\frac{1}{2}}(\widecheck{{\mathcal F}})} \leq  C \varepsilon_{\kappa} \| \alpha \|_{L^{\infty}(\partial {\mathcal S}_{\kappa})}.
\end{equation}
Let us now turn to the estimate of $\widehat{\mathfrak{f}}_{\lambda} \big[ \widetilde{\beta}_{\lambda} \big]$, $\lambda \in {\mathcal P}_{s}$. From ${\bf B} - {\bf A} = ({\beta}_{1} - \delta_{\kappa,1} \alpha ,\dots,{\beta}_{N}-\delta_{\kappa,N} \alpha ,{\beta}_{\Omega})$, \eqref{Eq:EstB2}, \eqref{Eq:EstCheckLambda} and \eqref{numetoile},  we infer that for all $\lambda$ in ${\mathcal P}_{s}$,
$\| \widetilde{\beta}_{\lambda} \|_{C^{k,\frac{1}{2}}(\partial {\mathcal S}_{\lambda})} \leq C \varepsilon_{\kappa} \| \alpha \|_{L^{\infty}(\partial {\mathcal S}_{\kappa})}.$
Recalling the convention \eqref{Eq:NormesHolder} we deduce that
\begin{equation*}
\left\| \, \widetilde{\beta}_{\lambda}
- \frac{1}{|\partial {\mathcal S}_{\lambda}|} \int_{\partial {\mathcal S}_{\lambda}} \widetilde{\beta}_{\lambda} \, \right\|_{L^{\infty}(\partial {\mathcal S}_{\lambda})}
\leq C \varepsilon_{\kappa} \varepsilon_{\lambda} \| \alpha \|_{L^{\infty}(\partial {\mathcal S}_{\kappa})}.
\end{equation*}
Using \eqref{Eq:NablaPhiHatGlobal} on the solid ${\mathcal S}_{\lambda}$ and the fact that the operators $\widehat{\mathfrak{f}}_{\lambda}$ do not see constants we deduce
\begin{equation} \label{Eq:EstAutresPot}
\forall \lambda \in {\mathcal P}_{s}, \ \ \ 
\big\| \, \nabla \widehat{\mathfrak{f}}_{\lambda} \big[  \widetilde{\beta}_{\lambda} \big]  \, \big\|_{L^\infty(\R^2 \setminus {\mathcal S}_{\lambda})}
+ \varepsilon_\lambda^{k-\frac{1}{2}} \big| \, \widehat{\mathfrak{f}}_{\lambda} \big[ \widetilde{\beta}_{\lambda} \big]  \, \big|_{C^{k,\frac{1}{2}}(\R^2 \setminus {\mathcal S}_{\lambda})}  
\leq C \varepsilon_{\kappa} \| \alpha \|_{L^{\infty}(\partial {\mathcal S}_{\kappa})}.
\end{equation}
Then interior regularity for Laplace equation involves that in the $\delta$-neighborhood ${\mathcal V}_{\delta}(\partial {\mathcal F}^\varepsilon \setminus \partial {\mathcal S}_{\lambda})$ of $\partial {\mathcal F}^\varepsilon \setminus \partial {\mathcal S}_{\lambda}$, 
\begin{equation} \label{Eq:EstAutresPot2}
\forall \lambda \in {\mathcal P}_{s}, \ \ \ 
\big| \, \widehat{\mathfrak{f}}_{\lambda} \big[ \widetilde{\beta}_{\lambda} \big]  \, \big|_{C^{k,\frac{1}{2}}({\mathcal V}_{\delta}(\partial {\mathcal F}^\varepsilon \setminus \partial {\mathcal S}_{\lambda}))}  
\leq C \varepsilon_{\kappa} \| \alpha \|_{L^{\infty}(\partial {\mathcal S}_{\kappa})}.
\end{equation}
Now \eqref{Eq:DiffFFhat}, \eqref{Eq:EstCheckLambda}, \eqref{Eq:EstAutresPot} and \eqref{Eq:EstAutresPot2} give \eqref{Eq:EstPhiHatPhi} and %for any  $\lambda \in \{1,\dots,N\}$, 
\begin{equation*}
\big|\mathfrak{f}_{\kappa}[\alpha^\varepsilon] - \widehat{\mathfrak{f}}_\kappa[\alpha^\varepsilon] \big|_{C^{k,\frac{1}{2}}({\mathcal V}_{\delta}(\partial \widecheck{\mathcal F}))}
+
\sum_{\lambda \in {\mathcal P}_{s}} \varepsilon_{\lambda}^{k-\frac{1}{2}}  \big|\mathfrak{f}_{\kappa}[\alpha^\varepsilon] - \widehat{\mathfrak{f}}_\kappa[\alpha^\varepsilon] \big|_{C^{k,\frac{1}{2}}({\mathcal V}_{\delta}(\partial{\mathcal S}_{\lambda}^\varepsilon))}
 \leq C \varepsilon_{\kappa} \| \alpha^\varepsilon \|_{L^\infty(\partial {\mathcal S}^\varepsilon_{\kappa})}.
\end{equation*}
Now we estimate $\big|\widehat{\mathfrak{f}}_\kappa[\alpha^\varepsilon] \big|_{C^{k,\frac{1}{2}}({\mathcal V}_{\delta}(\partial{\mathcal F}^\varepsilon \setminus \partial {\mathcal S}_{\kappa}^\varepsilon))}$ with \eqref{Eq:EstNablaPhiHat} and interior regularity estimate for the Laplace equation to arrive at \eqref{Eq:EstPhiHatPhi2}. This ends the proof of Proposition \ref{Pro:DirichletPetits}.
\end{proof}
\ \par
There is a corresponding result in the situation where the non trivial boundary data is not given on a small solid, but rather on solids of fixed size and on the outer boundary $\partial \Omega$.
\begin{Proposition} \label{Pro:DirichletGros}
Let $\delta>0$ and $k \geq 2$. There exist two positive constants $C$ and $\varepsilon_{0}$ depending only on $\delta$, $\Omega$ and the reference solids ${\mathcal S}_{\lambda}^1$, $\lambda=1,\dots,N$ ($C$ depending moreover on $k$), such that for any $\overline{\boldsymbol\varepsilon}$ with $\overline{{\boldsymbol\varepsilon}} \leq \varepsilon_{0}$, the following holds.
Fix ${\bf q} \in {\mathcal{Q}}_{\delta}$ and consider for each $\kappa \in \{1,\dots,N_{(i)} \}$ a function $\alpha_{\kappa} \in C^{0}(\partial {\mathcal S}_{\kappa}; \R)$, and let $\alpha_{\Omega} \in C^{0}(\partial \Omega; \R)$.
Let
\begin{equation} \label{Eq:DefHalpha}
\mathfrak{H}_{\alpha}:= \mathfrak{H}[\alpha_{1},\dots,\alpha_{N_{(i)}}, 0,\dots,0;\alpha_{\Omega}] \in C^{0}(\overline{\mathcal F}^\varepsilon({\bf q})),
\end{equation}
and $\widecheck{\mathfrak{g}}_\alpha  :=\widecheck{\mathfrak{g}}[\alpha_{1},\dots,\alpha_{N_{(i)}};\alpha_{\Omega}] $ in $C^{\infty}(\widecheck{{\mathcal F}}({\bf q}_{(i)}))$ 
where $\widecheck{\mathfrak{g}}$ is given by \eqref{Eq:CorrectionInterieure}. Then
\begin{equation} \label{Eq:EstPhiCheckg}
\left\| \, \nabla \mathfrak{H}_{\alpha} - \nabla \widecheck{\mathfrak{g}}_\alpha
+ \sum_{\lambda \in {\mathcal P}_{s}} \nabla \widehat{\mathfrak{f}}_{\lambda} \big[ \widecheck{\mathfrak{g}}_{\alpha|\partial {\mathcal S}^\varepsilon_{\lambda}} \big]   \right\|_{L^{\infty}({\mathcal F}^\varepsilon)}
\leq C  |\overline{\boldsymbol\varepsilon}|
\Big( \| \alpha_{\Omega} \|_{L^\infty(\partial \Omega)} +  \sum_{\kappa \in {\mathcal P}_{(i)}} \| \alpha_{\kappa} \|_{L^\infty(\partial {\mathcal S}_{\kappa})} \Big),
\end{equation}
where $\vert\overline{{\boldsymbol\varepsilon}}\vert $ is defined in  \eqref{Eq:TailleTotalePetitsSolides}, and
\begin{equation} \label{Eq:PhiGrosHolder}
\big| \mathfrak{H}_{\alpha} - \widecheck{g}_{\alpha} \big|_{C^{k,\frac{1}{2}}({\mathcal V}_{\delta}(\partial\widecheck{\mathcal F}))}
+
\sum_{\nu \in {\mathcal P}_{s}} \varepsilon_{\nu}^{k-\frac{1}{2}}  \big| \mathfrak{H}_{\alpha} \big|_{C^{k,\frac{1}{2}}({\mathcal V}_{\delta}(\partial{\mathcal S}_{\nu}^\varepsilon))}
\leq C
\Big( \| \alpha_{\Omega} \|_{L^\infty(\partial \Omega)} +  \sum_{\kappa \in {\mathcal P}_{(i)}} \| \alpha_{\kappa} \|_{L^\infty(\partial {\mathcal S}_{\kappa})} \Big).
\end{equation}
Moreover,  uniformly for $\alpha_{1},\dots,\alpha_{N_{(i)}}$ and $\alpha_{\Omega}$ in a bounded set of $C^0$ and for in $q \in {\mathcal Q}_{\delta}$, one has for all $\lambda \in {\mathcal P}_{s}$, as $\varepsilon_{\lambda} \rightarrow 0^+$, 
\begin{multline} \label{Eq:EstCorrectLambda}
%\left\| \widecheck{\mathfrak{g}} \right\| _{C^{k,\alpha}(\widecheck{{\mathcal F}})} \text{ and }
\left\| \nabla \widehat{\mathfrak{f}}_{\lambda} \big[ \widecheck{\mathfrak{g}}_{\alpha|\partial {\mathcal S}^\varepsilon_{\lambda}} \big]\right\| _{L^\infty(\R^2 \setminus {\mathcal S}^\varepsilon_{\lambda})}
\text{ is bounded}, \ \ 
\left\| \nabla \widehat{\mathfrak{f}}_{\lambda} \big[ \widecheck{\mathfrak{g}}_{\alpha|\partial {\mathcal S}^\varepsilon_{\lambda}} \big] \right\|_{L^p(\R^2 \setminus {\mathcal S}^\varepsilon_{\lambda})} \longrightarrow 0 \ 
\text{ for }  p <+\infty \\
 \text{ and } \ \left\| \nabla \widehat{\mathfrak{f}}_{\lambda} \big[ \widecheck{\mathfrak{g}}_{\alpha|\partial {\mathcal S}^\varepsilon_{\lambda}} \big] \right\|_{C^k(\{ x \in \overline{\Omega} / d(x,{\mathcal S}^\varepsilon_{\lambda}) \geq c \} ) } \longrightarrow 0 \text{ for any } c>0 \text{ and } k \in \N.
\end{multline}
\end{Proposition}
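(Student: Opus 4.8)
The plan is to follow closely the architecture of the proof of Proposition~\ref{Pro:DirichletPetits}, reusing the fixed-point construction of Subsection~\ref{Subsec:CPPSM}, but now with the non-trivial boundary data carried by the solids of fixed size and by $\partial\Omega$. Set $M:=\|\alpha_{\Omega}\|_{L^\infty(\partial\Omega)}+\sum_{\kappa\in{\mathcal P}_{(i)}}\|\alpha_{\kappa}\|_{L^\infty(\partial{\mathcal S}_{\kappa})}$, let $\mathbf{A}:=(\alpha_{1},\dots,\alpha_{N_{(i)}},0,\dots,0,\alpha_{\Omega})$ and
\begin{equation*}
\mathbf{B}=(\beta_{1},\dots,\beta_{N},\beta_{\Omega}):=(\mathrm{Id}+{\mathcal T})^{-1}(\mathbf{A}),
\end{equation*}
which is licit by Lemma~\ref{Lem:Contraction}. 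By Lemma~\ref{Lem:ApresInversion} and \eqref{Eq:DefFrakM}, $\mathfrak{H}_{\alpha}=\mathfrak{m}[\mathbf{B}]=\widecheck{\mathfrak g}_{\beta}+\sum_{\lambda\in{\mathcal P}_{s}}\widehat{\mathfrak{f}}_{\lambda}\big[\beta_{\lambda}-\widecheck{\mathfrak g}_{\beta|\partial{\mathcal S}_{\lambda}}\big]$, where $\widecheck{\mathfrak g}_{\beta}:=\widecheck{\mathfrak g}[\beta_{1},\dots,\beta_{N_{(i)}};\beta_{\Omega}]$. Everything then reduces to proving that $\mathbf{B}=\mathbf{A}+{\mathcal O}(|\overline{\boldsymbol\varepsilon}|\,M)$ in a strong enough norm, and to tracking each term of the decomposition thus obtained.

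For the first step, $\mathbf{B}-\mathbf{A}=-{\mathcal T}(\mathbf{B})$. Computing ${\mathcal T}(\mathbf{A})$ from \eqref{Eq:DefT}: contrary to the proof of Proposition~\ref{Pro:DirichletPetits}, here the interior correction $\widecheck{\mathfrak g}[\mathbf{A}]=\widecheck{\mathfrak g}_{\alpha}$ is non-trivial, but since $\alpha_{\lambda}=0$ for $\lambda\in{\mathcal P}_{s}$ we get ${\mathcal T}(\mathbf{A})=-\sum_{\lambda\in{\mathcal P}_{s}}\widehat{\mathfrak{f}}_{\lambda}[\widecheck{\mathfrak g}_{\alpha|\partial{\mathcal S}_{\lambda}}]$ on $\partial\widecheck{\mathcal F}$ and $-\sum_{\lambda\in{\mathcal P}_{s}\setminus\{\nu\}}\widehat{\mathfrak{f}}_{\lambda}[\widecheck{\mathfrak g}_{\alpha|\partial{\mathcal S}_{\lambda}}]$ on $\partial{\mathcal S}_{\nu}$, $\nu\in{\mathcal P}_{s}$. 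By \eqref{Eq:CheckLambda}, $\|\widecheck{\mathfrak g}_{\alpha}\|_{L^\infty}\leq CM$; by \eqref{Eq:EstNablaPhiHat} and the $\delta$-separation of the connected components of $\partial{\mathcal F}$, each $\widehat{\mathfrak{f}}_{\lambda}[\widecheck{\mathfrak g}_{\alpha|\partial{\mathcal S}_{\lambda}}]$ is ${\mathcal O}(\varepsilon_{\lambda}M)$ where it is evaluated, and interior elliptic regularity turns this into a $C^{k,\frac{1}{2}}$ bound, so $\|{\mathcal T}(\mathbf{A})\|_{C^{k,\frac{1}{2}}(\partial{\mathcal F})}\leq C|\overline{\boldsymbol\varepsilon}|M$. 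Since $\|(\mathrm{Id}+{\mathcal T})^{-1}\|_{{\mathcal L}(C^{0}(\partial{\mathcal F}))}\leq 2$ one first gets $\|\mathbf{B}-\mathbf{A}\|_{L^\infty(\partial{\mathcal F})}\leq C|\overline{\boldsymbol\varepsilon}|M$, and then, applying the regularizing estimate of Lemma~\ref{Lem:TTropCool} to ${\mathcal T}(\mathbf{B}-\mathbf{A})$, $\|{\mathcal T}(\mathbf{B})\|_{C^{k,\frac{1}{2}}(\partial{\mathcal F})}\leq C|\overline{\boldsymbol\varepsilon}|M$, i.e. $\mathbf{B}=\mathbf{A}+{\mathcal O}(|\overline{\boldsymbol\varepsilon}|M)$ in $C^{k,\frac{1}{2}}(\partial{\mathcal F})$.

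Feeding this back into the formula for $\mathfrak{H}_{\alpha}$, using that the $\widehat{\mathfrak{f}}_{\lambda}$ are linear and annihilate constants, gives
\begin{equation*}
\nabla\mathfrak{H}_{\alpha}-\nabla\widecheck{\mathfrak g}_{\alpha}+\sum_{\lambda\in{\mathcal P}_{s}}\nabla\widehat{\mathfrak{f}}_{\lambda}[\widecheck{\mathfrak g}_{\alpha|\partial{\mathcal S}_{\lambda}}]=\nabla(\widecheck{\mathfrak g}_{\beta}-\widecheck{\mathfrak g}_{\alpha})+\sum_{\lambda\in{\mathcal P}_{s}}\nabla\widehat{\mathfrak{f}}_{\lambda}\big[\beta_{\lambda}-(\widecheck{\mathfrak g}_{\beta}-\widecheck{\mathfrak g}_{\alpha})_{|\partial{\mathcal S}_{\lambda}}\big].
\end{equation*}
Since $\beta_{i}-\alpha_{i}$ ($i\leq N_{(i)}$, up to constants) and $\beta_{\Omega}-\alpha_{\Omega}$ are ${\mathcal O}(|\overline{\boldsymbol\varepsilon}|M)$ in $C^{k,\frac{1}{2}}$, uniform Schauder estimates (Lemma~\ref{Lem:USE} and its higher-order counterpart, uniform on ${\mathcal Q}_{(i),\delta}$ by the same localization argument) give $\|\widecheck{\mathfrak g}_{\beta}-\widecheck{\mathfrak g}_{\alpha}\|_{C^{2,\frac{1}{2}}(\widecheck{\mathcal F})}\leq C|\overline{\boldsymbol\varepsilon}|M$, which controls the first term, and — by interior regularity around each small solid — show that $\beta_{\lambda}$ (recall $\alpha_{\lambda}=0$) and $(\widecheck{\mathfrak g}_{\beta}-\widecheck{\mathfrak g}_{\alpha})_{|\partial{\mathcal S}_{\lambda}}$ have $C^{k,\frac{1}{2}}$-seminorm, hence Lipschitz norm, ${\mathcal O}(|\overline{\boldsymbol\varepsilon}|M)$ on $\partial{\mathcal S}_{\lambda}^{\varepsilon}$; consequently, \emph{after subtraction of its mean} the datum of $\widehat{\mathfrak{f}}_{\lambda}$ has $L^{\infty}$-norm only ${\mathcal O}(|\overline{\boldsymbol\varepsilon}|\varepsilon_{\lambda}M)$ (oscillation of a ${\mathcal O}(|\overline{\boldsymbol\varepsilon}|M)$-Lipschitz function over a curve of diameter ${\mathcal O}(\varepsilon_{\lambda})$). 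Plugging this into the rescaled estimate \eqref{Eq:NablaPhiHatGlobal}, whose right-hand side couples the $L^{\infty}$-norm with the $\varepsilon_{\lambda}^{k+\frac{1}{2}}$-weighted seminorm, exactly absorbs the $\varepsilon_{\lambda}^{-1}$ loss on the left and yields $\|\nabla\widehat{\mathfrak{f}}_{\lambda}[\,\cdot\,]\|_{L^\infty(\R^2\setminus{\mathcal S}_{\lambda}^{\varepsilon})}\leq C|\overline{\boldsymbol\varepsilon}|M$; summing over ${\mathcal P}_{s}$ gives \eqref{Eq:EstPhiCheckg}. The Hölder estimate \eqref{Eq:PhiGrosHolder} follows along the same lines: near $\partial\widecheck{\mathcal F}$ one uses that $\mathfrak{H}_{\alpha}-\widecheck g_{\alpha}$ has constant Dirichlet data on $\partial\widecheck{\mathcal F}$, with uniform boundary Schauder for $\widecheck{\mathfrak g}_{\beta}-\widecheck{\mathfrak g}_{\alpha}$ and interior regularity for the reflected potentials; near each $\partial{\mathcal S}_{\nu}^{\varepsilon}$, $\nu\in{\mathcal P}_{s}$, one isolates the contributions that are harmonic in a $\delta$-neighborhood of ${\mathcal S}_{\nu}^{\varepsilon}$ (namely $\widecheck{\mathfrak g}_{\beta}$ and the $\widehat{\mathfrak{f}}_{\lambda}$, $\lambda\neq\nu$), handled by interior estimates against the weight $\varepsilon_{\nu}^{k-\frac{1}{2}}\leq 1$, from the self-contribution $\widehat{\mathfrak{f}}_{\nu}[\,\cdot\,]$, for which \eqref{Eq:NablaPhiHatGlobal} with the same mean-removal gain again precisely compensates the weight $\varepsilon_{\nu}^{k-\frac{1}{2}}$. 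The main obstacle is precisely this bookkeeping: balancing the $\varepsilon_{\lambda}^{-1}$ loss in \eqref{Eq:NablaPhiHatGlobal} against the $\varepsilon_{\lambda}$ gain from subtracting the mean of the trace, on curves of diameter ${\mathcal O}(\varepsilon_{\lambda})$, consistently with the convention \eqref{Eq:NormesHolder} for the rescaled Hölder seminorms, so that no spurious power of $\varepsilon_{\lambda}$ survives — this is the mechanism behind the absence of any Hölder norm on the right of \eqref{Eq:PhiGrosHolder} and of the factor $|\overline{\boldsymbol\varepsilon}|$ in \eqref{Eq:EstPhiCheckg}.

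Finally, \eqref{Eq:EstCorrectLambda} concerns directly the standalone potentials $\widehat{\mathfrak{f}}_{\lambda}[\widecheck{\mathfrak g}_{\alpha|\partial{\mathcal S}_{\lambda}^{\varepsilon}}]$ built from the fixed source $\widecheck{\mathfrak g}_{\alpha}$, which is harmonic in a $\delta$-neighborhood of ${\mathcal S}_{\lambda}^{\varepsilon}$ and bounded there by $CM$ thanks to \eqref{Eq:CheckLambda}. Since $\widehat{\mathfrak{f}}_{\lambda}$ ignores constants we replace the trace by its mean-free part, whose $L^\infty$-norm is ${\mathcal O}(\varepsilon_{\lambda}M)$; then \eqref{Eq:NablaPhiHatGlobal} gives the asserted $L^\infty(\R^2\setminus{\mathcal S}_{\lambda}^{\varepsilon})$-boundedness, while \eqref{Eq:EstNablaPhiHat} yields $|\nabla\widehat{\mathfrak{f}}_{\lambda}[\widecheck{\mathfrak g}_{\alpha|\partial{\mathcal S}_{\lambda}^{\varepsilon}}](x)|\leq C\varepsilon_{\lambda}^{2}|x-h_{\lambda}|^{-2}M$ for $|x-h_{\lambda}|\geq C\varepsilon_{\lambda}$. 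Integrating this bound over $\{|x-h_{\lambda}|\geq C\varepsilon_{\lambda}\}$ and using the $L^\infty$-bound on the ${\mathcal O}(\varepsilon_{\lambda}^{2})$-area region $\{|x-h_{\lambda}|<C\varepsilon_{\lambda}\}$ gives $\|\nabla\widehat{\mathfrak{f}}_{\lambda}[\widecheck{\mathfrak g}_{\alpha|\partial{\mathcal S}_{\lambda}^{\varepsilon}}]\|_{L^{p}}={\mathcal O}(\varepsilon_{\lambda}^{2/p})\to 0$ for every $p<\infty$; and the same pointwise decay together with interior elliptic estimates for harmonic functions on $\{d(x,{\mathcal S}_{\lambda}^{\varepsilon})\geq c\}$ gives the $C^{k}$-convergence to $0$ there. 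All constants depend only on $\delta$, $\Omega$ and the shapes ${\mathcal S}_{\lambda}^{1}$, so the convergences are uniform for $\alpha_{1},\dots,\alpha_{N_{(i)}},\alpha_{\Omega}$ in a bounded set of $C^{0}$ and ${\bf q}\in{\mathcal Q}_{\delta}$, as claimed.
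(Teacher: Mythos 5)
Your proposal follows exactly the same architecture as the paper's proof: set ${\bf B}=(\mathrm{Id}+{\mathcal T})^{-1}{\bf A}$, compute ${\mathcal T}({\bf A})$ explicitly, obtain ${\bf B}={\bf A}+{\mathcal O}(|\overline{\boldsymbol\varepsilon}|M)$ in $C^{k,\frac12}(\partial{\mathcal F})$ via Lemma~\ref{Lem:TTropCool}, and insert this into the identity $\mathfrak{H}_{\alpha}-\widecheck{\mathfrak g}_{\alpha}+\sum_{\lambda\in{\mathcal P}_s}\widehat{\mathfrak f}_\lambda[\widecheck{\mathfrak g}_{\alpha|\partial{\mathcal S}_\lambda}]=(\widecheck{\mathfrak g}_\beta-\widecheck{\mathfrak g}_\alpha)+\sum_\lambda\widehat{\mathfrak f}_\lambda[\beta_\lambda-(\widecheck{\mathfrak g}_\beta-\widecheck{\mathfrak g}_\alpha)_{|\partial{\mathcal S}_\lambda}]$, then balance the $\varepsilon_\lambda^{-1}$ loss of \eqref{Eq:NablaPhiHatGlobal} against the $\varepsilon_\lambda$ gain from subtracting the mean of the trace. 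The only cosmetic difference is that the paper abstracts the convergence argument for $\widehat{\mathfrak f}_\lambda[\widecheck{\mathfrak g}_{\alpha|\partial{\mathcal S}_\lambda}]$ into a standalone Lemma~\ref{Lem:SmallCorrectors} (for reuse later in Proposition~\ref{Pro:LimCirculation} and Lemma~\ref{Lem:BiotSavart}), while you prove \eqref{Eq:EstCorrectLambda} inline; the content is identical.
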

\begin{proof}[Proof of Proposition~\ref{Pro:DirichletGros}]
We proceed as in the proof as Proposition~\ref{Pro:DirichletPetits}. We introduce
\begin{equation*}
{\bf A}:=(\alpha_{1},\dots,\alpha_{N_{(i)}},0,\dots,0,\alpha_{\Omega}) ,
\end{equation*}
and define ${\bf B}=(\beta_{1},\dots,\beta_{N},\beta_{\Omega})$ again by \eqref{Eq:DefB}. Then Lemma~\ref{Lem:ApresInversion} states that
$\mathfrak{H}_{\alpha} =\mathfrak{m}[{\bf B}] \ \text{ in } \ \overline{\mathcal F}^\varepsilon$.
Here, instead of \eqref{Eq:DiffFFhat}, \eqref{Eq:DefFrakM} allows to write 
\begin{equation} \nonumber %\label{Eq:DiffFFhat2}
\mathfrak{H}_{\alpha} = \widecheck{\mathfrak{g}}_{\beta} + \sum_{\lambda \in {\mathcal P}_{s}} \widehat{\mathfrak{f}}_{\lambda}\left[ \beta_{\lambda} - \widecheck{\mathfrak g}_{\beta|\partial {\mathcal S}_{\lambda}} \right]
\ \text{ with } \ 
\widecheck{\mathfrak g}_{\beta}:=\widecheck{\mathfrak g}[\beta_{1},\dots,\beta_{N_{(i)}};\beta_{\Omega}] .
\end{equation}
Consequently 
\begin{equation} \label{Eq:DiffFFhat3}
\mathfrak{H}_{\alpha} 
- \widecheck{\mathfrak{g}}_{\alpha}
+ \sum_{\lambda \in {\mathcal P}_{s}} \widehat{\mathfrak{f}}_{\lambda} \big[ \widecheck{\mathfrak{g}}_{\alpha|\partial {\mathcal S}_{\lambda}} \big]
= \widecheck{\mathfrak{g}}_{\beta} - \widecheck{\mathfrak{g}}_{\alpha} 
+ \sum_{\lambda \in {\mathcal P}_{s}} \widehat{\mathfrak{f}}_{\lambda}\left[ {\beta}_{\lambda}  \right]
+ \sum_{\lambda \in {\mathcal P}_{s}} \widehat{\mathfrak{f}}_{\lambda} \left[ \widecheck{\mathfrak{g}}_{\alpha|\partial {\mathcal S}_{\lambda}} - \widecheck{\mathfrak g}_{\beta|\partial {\mathcal S}_{\lambda}}  \right].
\end{equation}
To establish \eqref{Eq:EstPhiCheckg}, we estimate the right-hand side of \eqref{Eq:DiffFFhat3}, starting with an estimate of ${\bf B}-{\bf A}$.
Instead of \eqref{Eq:FirstIteration}, we now obtain from \eqref{Eq:DefT} that
\begin{equation*}
{\mathcal T}({\bf A}) =
- \sum_{\lambda \in {\mathcal P}_{s}} \widehat{\mathfrak{f}}_{\lambda} \big[ \, \widecheck{\mathfrak{g}}_{\alpha|\partial {\mathcal S}_{\lambda}} \big]
\text{ on } \partial \widecheck{\mathcal F} \text{ and } 
{\mathcal T}({\bf A}) = - \sum_{\lambda \in {\mathcal P}_{s} \setminus \{ \nu \}} \widehat{\mathfrak{f}}_{\lambda} \big[ \, \widecheck{\mathfrak{g}}_{\alpha|\partial {\mathcal S}_{\lambda}} \big]
 \text{ on } \partial {\mathcal S}_{\nu} \text{ for } \nu \in {\mathcal P}_{s}.
\end{equation*}
Again, ${\mathcal T}({\bf A})$ on $\partial {\mathcal S}_{\nu}$ is obtained as traces of harmonic functions generated by non-homogeneous data on boundaries of solids different from ${\mathcal S}_{\nu}$. Now Lemma~\ref{Lem:MMH} involves that
\begin{equation} \label{SQUARE}
\| \widecheck{\mathfrak g}_{\alpha} \|_{L^\infty(\widecheck{\mathcal F})} \leq C \| \mathbf{A} \|_{L^{\infty}(\partial {\mathcal F})},
\end{equation}
where with a slight abuse of notation we have set
$\| \mathbf{A} \|_{L^{\infty}(\partial {\mathcal F})} := \| \alpha_{\Omega} \|_{L^\infty(\partial \Omega)} +  \sum_{\kappa \in {\mathcal P}_{(i)}} \| \alpha_{\kappa} \|_{L^\infty(\partial {\mathcal S}_{\kappa})}.$
%
%%%
%
By \eqref{Eq:EstNablaPhiHat} and interior regularity estimates, 
\begin{equation*}
\| {\mathcal T}({\bf A}) \|_{C^{k,\frac{1}{2}}(\partial {\mathcal F})}  \leq C \vert\overline{{\boldsymbol\varepsilon}}\vert 
\| \mathbf{A} \|_{L^{\infty}(\partial {\mathcal F})}.
\end{equation*}
Using \eqref{Eq:AppliContraction} we therefore obtain
\begin{equation*}
{\bf B} - {\bf A} = {\mathcal O}(\vert\overline{{\boldsymbol\varepsilon}}\vert)\| \mathbf{A} \|_{L^{\infty}(\partial {\mathcal F})} \text{ in } L^\infty(\partial {\mathcal F}),
\end{equation*}
in place of \eqref{Eq:EstB}.
Using Lemma~\ref{Lem:TTropCool}, we deduce
\begin{equation*}
\| {\mathcal T}({\bf B} - {\bf A}) \|_{C^{k,\frac{1}{2}}(\partial {\mathcal F})}  \leq C \vert\overline{{\boldsymbol\varepsilon}}\vert \| \mathbf{A} \|_{L^{\infty}(\partial {\mathcal F})}.
\end{equation*}
We arrive at
\begin{equation*}
\| {\mathcal T}({\bf B}) \|_{C^{k,\frac{1}{2}}(\partial {\mathcal F})}  \leq C \vert\overline{{\boldsymbol\varepsilon}}\vert \| \mathbf{A} \|_{L^{\infty}(\partial {\mathcal F})},
\end{equation*}
which replaces \eqref{Eq:EstTB}. Since ${\bf B}={\bf A} - {\mathcal T} ({\bf B})$, 
\begin{equation} \label{Eq:EstB3}
{\bf B} = {\bf A} + {\mathcal O}(\vert\overline{{\boldsymbol\varepsilon}}\vert) \| \mathbf{A} \|_{L^{\infty}(\partial {\mathcal F})} \ \text{ in } \ C^{k,\frac{1}{2}}(\partial {\mathcal F}).
\end{equation}
Then we deduce estimates on the right-hand side of \eqref{Eq:DiffFFhat3}. First by \eqref{Eq:EstB3} and the uniform Schauder elliptic estimates in $\widecheck{\mathcal F}$ for $\delta$-admissible configurations (Lemma~\ref{Lem:USE}), 
\begin{equation} \label{Eq:EstGB}
\| \widecheck{\mathfrak g}_{\beta} - \widecheck{\mathfrak g}_{\alpha}  \|_{C^{k,\frac{1}{2}}(\widecheck{{\mathcal F}})} \leq  C \vert\overline{{\boldsymbol\varepsilon}}\vert \| \mathbf{A} \|_{L^{\infty}(\partial {\mathcal F})}.
\end{equation}
Next, for $\lambda \in {\mathcal P}_{s}$, by \eqref{Eq:EstB3} and the fact that ${\bf A}_{\lambda}=0$ for $\lambda \in {\mathcal P}_{s}$, 
$\left\| {\beta}_{\lambda} \right\|_{C^{k,\frac{1}{2}}(\partial {\mathcal S}_{\lambda})} \leq C \vert\overline{{\boldsymbol\varepsilon}}\vert \| \mathbf{A} \|_{L^{\infty}(\partial {\mathcal F})}$,
and consequently
\begin{equation*}
\left\| {\beta}_{\lambda} - \frac{1}{|\partial {\mathcal S}_{\lambda}|} \int_{\partial {\mathcal S}_{\lambda}} {\beta}_{\lambda} \right\|_{L^{\infty}(\partial {\mathcal S}_{\lambda})} 
\leq C \vert\overline{{\boldsymbol\varepsilon}}\vert \varepsilon_{\lambda} \| \mathbf{A} \|_{L^{\infty}(\partial {\mathcal F})}.
\end{equation*}
All the same from \eqref{Eq:EstGB} we deduce
\begin{equation*}
\left\| \widecheck{\mathfrak g}_{\beta} - \widecheck{\mathfrak g}_{\alpha} - \frac{1}{|\partial {\mathcal S}_{\lambda}|} \int_{\partial {\mathcal S}_{\lambda}} (\widecheck{\mathfrak g}_{\beta} - \widecheck{\mathfrak g}_{\alpha} )\right\|_{L^{\infty}(\partial {\mathcal S}_{\lambda})} 
\leq C \vert\overline{{\boldsymbol\varepsilon}}\vert \varepsilon_{\lambda} \| \mathbf{A} \|_{L^{\infty}(\partial {\mathcal F})}.
\end{equation*}
Hence with \eqref{Eq:NablaPhiHatGlobal} and the fact that the operators $\widehat{\mathfrak{f}}_{\mu}$ do not see constants we deduce that for all $\lambda \in {\mathcal P}_{s}$,
\begin{gather}
\label{Est:Autresf}
\left\| \nabla \widehat{\mathfrak{f}}_{\lambda} [{\beta}_{\lambda}] \right\|_{L^\infty(\R^2 \setminus {\mathcal S}_{\lambda})} 
+ \left\| \nabla \widehat{\mathfrak{f}}_{\lambda} \big[ \widecheck{\mathfrak g}_{\beta|\partial {\mathcal S}_{\lambda}} - \widecheck{\mathfrak g}_{\alpha|\partial {\mathcal S}_{\lambda}} \big] \right\|_{L^\infty(\R^2 \setminus {\mathcal S}_{\lambda})}
\leq C \vert\overline{{\boldsymbol\varepsilon}}\vert \| \mathbf{A} \|_{L^{\infty}(\partial {\mathcal F})} , \\
\label{Est:Autresf2}
\varepsilon_{\lambda}^{k-\frac{1}{2}} \left( \left|  \widehat{\mathfrak{f}}_{\lambda} [{\beta}_{\lambda}] \right\|_{C^{k,\frac{1}{2}}(\R^2 \setminus {\mathcal S}_{\lambda})} 
+ \left|  \widehat{\mathfrak{f}}_{\lambda} \big[ \widecheck{\mathfrak g}_{\beta|\partial {\mathcal S}_{\lambda}} - \widecheck{\mathfrak g}_{\alpha|\partial {\mathcal S}_{\lambda}} \big] \right|_{C^{k,\frac{1}{2}}(\R^2 \setminus {\mathcal S}_{\lambda})} \right)
\leq C \vert\overline{{\boldsymbol\varepsilon}}\vert \| \mathbf{A} \|_{L^{\infty}(\partial {\mathcal F})} .
\end{gather}
Putting together \eqref{Eq:DiffFFhat3}, \eqref{Eq:EstGB} and \eqref{Est:Autresf} we obtain \eqref{Eq:EstPhiCheckg}. \par
Now to get \eqref{Eq:PhiGrosHolder},  we estimate the right-hand side of \eqref{Eq:DiffFFhat3} in $C^{k,\frac{1}{2}}({\mathcal V}_{\delta}(\partial \widehat{\mathcal F}^\varepsilon))$ and in $C^{k,\frac{1}{2}}({\mathcal V}_{\delta}(\partial {\mathcal S}_{\nu}))$ for $\nu \in {\mathcal P}_{s}$. For the first term in \eqref{Eq:DiffFFhat3} we simply use \eqref{Eq:EstGB}. 
We now focus on the two remaining sums. First, we can estimate them in $C^{k,\frac{1}{2}}({\mathcal V}_{\delta}(\partial \widehat{\mathcal F}^\varepsilon))$ thanks to \eqref{Est:Autresf} and local elliptic estimates.
Let us now fix in $\nu \in {\mathcal P}_{s}$ and estimate these two remaining sums of \eqref{Eq:DiffFFhat3} in $C^{k,\frac{1}{2}}({\mathcal V}_{\delta}(\partial {\mathcal S}_{\nu}))$. We first use \eqref{Est:Autresf} and interior elliptic regularity to deduce that
\begin{equation*}
\sum_{\lambda \in {\mathcal P}_{s} \setminus \{\nu\} } \left|  \widehat{\mathfrak{f}}_{\lambda} \big[ \, {\beta}_{\lambda} \big]
\right|_{C^{k,\frac{1}{2}}({\mathcal V}_{\delta}(\partial {\mathcal S}_{\nu}))} +
\sum_{\lambda \in {\mathcal P}_{s} \setminus \{ \nu \}} \left| \widehat{\mathfrak{f}}_{\lambda} \big[ \, \widecheck{\mathfrak g}_{\beta|\partial {\mathcal S}_{\lambda}} - \widecheck{\mathfrak g}_{\alpha|\partial {\mathcal S}_{\lambda}} \big] \right|_{C^{k,\frac{1}{2}}({\mathcal V}_{\delta}(\partial {\mathcal S}_{\nu}))}
\leq C \vert\overline{{\boldsymbol\varepsilon}}\vert \| \mathbf{A} \|_{L^{\infty}(\partial {\mathcal F})} .
\end{equation*}
For the remaining terms corresponding to $\lambda=\nu$, we use \eqref{Est:Autresf2}. Altogether, putting these estimates in \eqref{Eq:DiffFFhat3} we obtain the uniform estimate 
\begin{multline*}
\left| \mathfrak{H}_{\alpha} - \widecheck{\mathfrak{g}}_\alpha + \sum_{\lambda \in {\mathcal P}_{s}} \widehat{\mathfrak{f}}_{\lambda} \big[ \widecheck{\mathfrak{g}}_{\alpha|\partial {\mathcal S}_{\lambda}} \big] \right|_{C^{k,\frac{1}{2}}({\mathcal V}_{\delta}(\partial \widecheck{\mathcal F}))} \\
+
\sum_{\nu \in {\mathcal P}_{s}}
\varepsilon_{\nu}^{k - \frac{1}{2}} \left| \mathfrak{H}_{\alpha} - \widecheck{\mathfrak{g}}_\alpha + \sum_{\lambda \in {\mathcal P}_{s}} \widehat{\mathfrak{f}}_{\lambda} \big[ \widecheck{\mathfrak{g}}_{\alpha|\partial {\mathcal S}_{\lambda}} \big] \right|_{C^{k,\frac{1}{2}}({\mathcal V}_{\delta}(\partial {\mathcal S}_{\nu}))} \leq C \vert\overline{{\boldsymbol\varepsilon}}\vert \| \mathbf{A} \|_{L^{\infty}(\partial {\mathcal F})}.
\end{multline*}
Now using \eqref{SQUARE} and interior regularity estimate, we have uniformly in $q \in {\mathcal Q}_{\delta}$:
\begin{equation} \label{Eq:GcheckSurLesPetits}
\forall \lambda \in {\mathcal P}_{s}, \ \ 
\| \widecheck{\mathfrak{g}}_\alpha \|_{C^{k,\frac{1}{2}}({\mathcal V}_{\delta}(\partial {\mathcal S}_{\lambda}))} \leq C \| \mathbf{A} \|_{L^{\infty}(\partial {\mathcal F})}. 
\end{equation}
This implies in particular $\| \widecheck{\mathfrak{g}}_\alpha - \widecheck{\mathfrak{g}}_\alpha(h_{\lambda}) \|_{L^{\infty}(\partial {\mathcal S}_{\lambda})} \leq C \| \mathbf{A} \|_{L^{\infty}(\partial {\mathcal F})} \varepsilon_{\lambda}$ for $\lambda \in {\mathcal P}_{s}$.  Hence using Proposition~\ref{Pro:DirichletSAM} and the invariance of $\widehat{\mathfrak{f}}_{\lambda}$ with respect to additive constant, we obtain a uniform estimate 
\begin{equation*}
\forall \lambda \in {\mathcal P}_{s}, \ \ 
\varepsilon_{\lambda}^{k - \frac{1}{2}} \left\| \widehat{\mathfrak{f}}_{\lambda} \big[ \widecheck{\mathfrak{g}}_{\alpha|\partial {\mathcal S}_{\lambda}} \big] \right\|_{C^{k,\frac{1}{2}}(\R^2 \setminus {\mathcal S}_{\lambda})} \leq C \| \mathbf{A} \|_{L^{\infty}(\partial {\mathcal F})}.
\end{equation*}
Moreover by \eqref{Eq:EstNablaPhiHat} and interior regularity estimates, one has
\begin{equation*}
\forall \lambda \in {\mathcal P}_{s}, \ \ 
 \left\| \widehat{\mathfrak{f}}_{\lambda} \big[ \widecheck{\mathfrak{g}}_{\alpha|\partial {\mathcal S}_{\lambda}} \big] \right\|_{C^{k,\frac{1}{2}}({\mathcal V}_{\delta}(\partial {\mathcal F}^\varepsilon \setminus \partial {\mathcal S}_{\lambda}^\varepsilon))} \leq C \| \mathbf{A} \|_{L^{\infty}(\partial {\mathcal F})}.
\end{equation*}
This gives \eqref{Eq:PhiGrosHolder}. \par
%
%\medskip
%
We now turn to \eqref{Eq:EstCorrectLambda}. Since \eqref{Eq:EstCorrectLambda} corresponds to a phenomenon that we will meet at different stages of the paper, we encapsulate it in a lemma which establishes the smallness of some correctors on small solids.
\begin{Lemma}%[Smallness of correctors on small solids]
 \label{Lem:SmallCorrectors}
Let $\lambda \in {\mathcal P}_{s}$. Let $\varepsilon_{n} \in (0,1)^\N$, $\varepsilon_{n} \rightarrow 0$. Let $(g_{n})$ a sequence of functions $g_{n}: \partial  {\mathcal S}_{\lambda}^{\varepsilon_{n}} \rightarrow \R$ such that, with our convention on the H\"older spaces,
$\| g_{n} \|_{C^{k,\frac{1}{2}}(\partial {\mathcal S}_{\lambda}^{\varepsilon_{n}})} \leq C$.
Then, as $n \rightarrow +\infty$, 
$\nabla \widehat{f}^\varepsilon_{\lambda}[g_{n}]$ is bounded in $L^\infty(\R^2 \setminus {\mathcal S}_{\lambda}^{\varepsilon_{n}})$, 
$\left\| \nabla \widehat{f}^\varepsilon_{\lambda}[g_{n}] \right\|_{C^k(\{ x \in \overline{\Omega} / d(x,{\mathcal S}_{\lambda}^{\varepsilon_{n}}) \geq c \} ) } \rightarrow 0 $ for any $c>0$ and $ k \in \N$,
 and 
$\nabla \widehat{f}^\varepsilon_{\lambda}[g_{n}] \rightarrow 0 $ in $L^p(\Omega \setminus {\mathcal S}_{\lambda}^{\varepsilon_{n}}), \ p < +\infty$.
\end{Lemma}
\begin{proof}[Proof of Lemma~\ref{Lem:SmallCorrectors}]
We first observe  that up to an additional constant on $\partial {\mathcal S}^{\varepsilon_{n}}_{\lambda}$, one has $\| g_{n} \|_{L^{\infty}( \partial {\mathcal S}_{\lambda}^{\varepsilon_{n}})} \leq C \varepsilon_{n}$.
Then the boundedness of $\nabla \widehat{f}_{\lambda}[g_{n}]$ in $L^\infty(\R^2 \setminus {\mathcal S}^{\varepsilon_{n}}_{\lambda})$
 is a consequence of \eqref{Eq:NablaPhiHatGlobal}. Moreover the second part of the lemma follows from \eqref{Eq:EstNablaPhiHat}. The third assertion is a consequence of the first two.
\end{proof}
Now \eqref{Eq:EstCorrectLambda} is a direct consequence of Lemma~\ref{Lem:SmallCorrectors} and of \eqref{Eq:GcheckSurLesPetits}.  This ends the proof of Proposition~\ref{Pro:DirichletGros}.
\end{proof}
\begin{Remark} \label{Rem:GrosDoncPetit}
Note that, since $\varepsilon_{\kappa}=1$ for $\kappa \in {\mathcal P}_{(i)}$, Estimate \eqref{Eq:EstPhiHatPhi} is also valid in this case. Indeed due to  \eqref{Eq:EstPhiCheckg}-\eqref{Eq:EstCorrectLambda} and \eqref{Eq:NablaPhiHatGlobal}
we see that
that $\nabla \mathfrak{f}_{\kappa}[\alpha^\varepsilon]$ and $\nabla \widehat{\mathfrak{f}}_\kappa[\alpha^\varepsilon]$ are both of size ${\mathcal O}(\| \alpha^\varepsilon \|)$.
\end{Remark}
%
%%%%%%%%%%%%%%%%%%%%%

%
\subsubsection{Shape derivatives of  potentials solving Dirichlet problems}
\label{SSSec:ShapeDeriv}
In this paragraph, we estimate the shape derivatives of  potentials solving Dirichlet problems. This will be useful to estimate the time-derivative of some velocity fields in forthcoming paragraphs.
We refer to \cite{HP,SZ} for general references on shape differentiation.  

Let us first recall a way to write these shape derivatives. 
We consider a reference configuration ${\bf  \overline{q}} $ in ${\mathcal Q}$. 
Given $\mu \in \{1,\dots,N\}$, $m \in \{1,2,3\}$ and $p_{\mu}^* = (\ell_{\mu}^*,\omega_{\mu}^*) \in \R^3$, we define $h_{\mu}(t)=\overline{h}_{\mu} + t \ell^*_{\mu}$ and consider in $\R^2$ a smooth time-dependent vector field such that
$\xi_\mu^*(t,x)=\ell_\mu^* + \omega_{\mu}^*(x-h_{\mu}(t))^\perp$ in a neighborhood of $\partial {\mathcal S}_{\mu}({\bf  \overline{q}})$ and $\xi_\mu^*(x)=0$ in a neighborhood of $\partial {\mathcal F}({\bf  \overline{q}}) \setminus \partial {\mathcal S}_{\mu}({\bf  \overline{q}})$. We associate then the corresponding flow $(s,x) \mapsto T^*_\mu(s,x)$ (for $s$ small and $x \in {\mathcal F}({\bf  \overline{q}})$) that satisfies
\begin{equation*}
\frac{\partial T^*_{\mu}}{\partial s}(s,x)=\xi^*_\mu(s,T^*_\mu(s,x)), \ \ T^*_\mu(0,x)=x.
\end{equation*}
For small $s$, $T^*_{\mu}(s,\cdot)$ sends ${\mathcal F}(\overline{q})$ into ${\mathcal F}(\overline{q}+ s {\bf p}^*_{\mu})$, where we denote by ${\bf p}_{\mu}^* \in \R^{3N}$ the vector given by ${\bf p}_{\mu}^* =(\delta_{\kappa \mu} p_{\mu}^*)_{\kappa=1\dots N}$. Then the shape derivative of a potential $\varphi = \varphi({\bf q},x)$ (defined and regular on $\bigcup_{{\bf q} \in {\mathcal Q}} \{{\bf q} \} \times \overline{{\mathcal F}}({\bf q})$) with respect to $q_{\mu}$ is then obtained as
\begin{equation*}
\frac{\partial \varphi}{\partial q_{\mu}}({\bf q},x) \cdot p^*_{\mu} = \frac{d}{ds} \varphi({\bf  \overline{q}} + s {\bf p}_{\mu}^*, x )_{\big|s=0} 
= \frac{d}{ds} \varphi({\bf  \overline{q}} + s {\bf p}_{\mu}^*, T^*_{\mu}(s,x))_{\big|s=0}
- \frac{\partial \varphi}{\partial x}({\bf  \overline{q}},x) \cdot \xi^*_{\mu}(0,x).
\end{equation*}
This is actually independent of the choice of the family of diffeomorphisms $T^*_{\mu}(s,\cdot):{\mathcal F}(\overline{q}) \rightarrow {\mathcal F}(\overline{q}+ s {\bf p}^*_{\mu})$ as long as $T^*_{\mu}(0,\cdot)=\mbox{Id}$, $\partial_{s} T^*_{\mu}(0,\cdot)=\xi^*_{\mu}(0,x)$ on $\partial {\mathcal S}_{\mu}({\bf  \overline{q}})$ and $\partial_{s} T^*_{\mu}(0,\cdot)=0$ on $\partial {\mathcal F}({\bf  \overline{q}}) \setminus \partial {\mathcal S}_{\mu}({\bf  \overline{q}})$. We set 
\begin{equation*}
\displaystyle \frac{\partial \varphi}{\partial q_{\mu,m}}:= \frac{\partial \varphi}{\partial q_{\mu}} \cdot e_m,
\end{equation*}
where $(e_1 , e_2 ,e_3)$ is the canonical basis of $\R^3$.  \par
%
%\ \par
%
%
\begin{Lemma} \label{Lem:SD}
Consider a regular family of functions $(\Phi({\bf q},\cdot))_{{\bf q} \in {\mathcal Q}}$, with $\Phi({\bf q},\cdot):{\mathcal F}({\bf q}) \rightarrow \R$ satisfying
$-\Delta \Phi({\bf q},\cdot) =0 $ in $ {\mathcal F}({\bf q})$ and 
$\Phi({\bf q},\cdot) = \alpha({\bf q},\cdot) $ on $ \partial {\mathcal F}({\bf q})$,
where $\alpha$ is a smooth function on $\bigcup_{{\bf q} \in {\mathcal Q}} \{{\bf q} \} \times \partial {\mathcal F}({\bf q})$.
Then for $\mu \in \{1,\dots,N\}$ and $m \in \{1,2,3\}$ the shape derivative $\frac{\partial \Phi({\bf q},\cdot)}{\partial q_{\mu,m}}$ is the solution to the system
\begin{equation} \label{Eq:SysSD}
\left\{ \begin{array}{l}
\displaystyle -\Delta \frac{\partial \Phi({\bf q},\cdot)}{\partial q_{\mu,m}} = 0 \ \text{ in } \ {\mathcal F}({\bf q}), \medskip \\
\displaystyle \frac{\partial \Phi({\bf q},\cdot)}{\partial q_{\mu,m}} 
= \frac{\partial \alpha({\bf q},\cdot)}{\partial q_{\mu,m}} + \left(\frac{\partial \alpha({\bf q},\cdot)}{\partial x} - \frac{\partial \Phi({\bf q},\cdot)}{\partial x}\right) \cdot n \,K_{\mu,m} \ \text{ on } \  \partial {\mathcal F}({\bf q}).
\end{array} \right.
\end{equation}
\end{Lemma}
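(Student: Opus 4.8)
The plan is to take the existence and up-to-the-boundary regularity of the shape derivative $\frac{\partial \Phi({\bf q},\cdot)}{\partial q_{\mu,m}}$ as granted by the classical shape-differentiation theory for Dirichlet problems (see \cite{HP,SZ}): transporting the equation to the fixed domain ${\mathcal F}({\bf \overline{q}})$ through the diffeomorphisms $T^*_{\mu}(s,\cdot)$ and combining the implicit function theorem with elliptic regularity, one gets that, for ${\bf \overline{q}} \in {\mathcal Q}$, the difference quotients $s^{-1}\big(\Phi({\bf \overline{q}} + s {\bf p}_{\mu}^*,\cdot) - \Phi({\bf \overline{q}},\cdot)\big)$ converge, as $s \to 0$, in $C^{2}_{loc}({\mathcal F}({\bf \overline{q}}))$ and up to the boundary, to the limit $\frac{\partial \Phi}{\partial q_{\mu,m}}({\bf \overline{q}},\cdot)$ (here ${\bf p}_{\mu}^*$ is the vector corresponding to $p^*_{\mu} = e_{m}$). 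Granting this, since \eqref{Eq:SysSD} is a Dirichlet problem with a unique solution, it remains only to check that $\frac{\partial \Phi}{\partial q_{\mu,m}}({\bf \overline{q}},\cdot)$ is harmonic in ${\mathcal F}({\bf \overline{q}})$ and has the announced boundary trace.

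Harmonicity will be immediate from the difference-quotient description: for $|s|$ small enough, any ball $B$ compactly contained in ${\mathcal F}({\bf \overline{q}})$ is also contained in ${\mathcal F}({\bf \overline{q}} + s {\bf p}_{\mu}^*)$, so both $\Phi({\bf \overline{q}} + s {\bf p}_{\mu}^*,\cdot)$ and $\Phi({\bf \overline{q}},\cdot)$ are harmonic on $B$ and hence so is each difference quotient; passing to the limit, $\frac{\partial \Phi}{\partial q_{\mu,m}}({\bf \overline{q}},\cdot)$ is harmonic on $B$, and as $B$ is arbitrary it is harmonic throughout ${\mathcal F}({\bf \overline{q}})$.

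For the boundary trace I would differentiate the boundary identity $\Phi({\bf q},\cdot) = \alpha({\bf q},\cdot)$ along the flow. Extend $\alpha({\bf q},\cdot)$ smoothly to a neighbourhood of $\partial {\mathcal F}({\bf q})$. Since $T^*_{\mu}(s,\cdot)$ maps $\partial {\mathcal F}({\bf \overline{q}})$ onto $\partial {\mathcal F}({\bf \overline{q}} + s {\bf p}_{\mu}^*)$, we have $\Phi({\bf \overline{q}} + s {\bf p}_{\mu}^*, T^*_{\mu}(s,x)) = \alpha({\bf \overline{q}} + s {\bf p}_{\mu}^*, T^*_{\mu}(s,x))$ for every $x \in \partial {\mathcal F}({\bf \overline{q}})$. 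Differentiating at $s = 0$, using $\partial_{s} T^*_{\mu}(0,\cdot) = \xi^*_{\mu}(0,\cdot)$ and the identity relating $\frac{\partial \Phi}{\partial q_{\mu,m}}$ to the transported derivative $\frac{d}{ds}\Phi(\cdot, T^*_{\mu}(s,\cdot))_{|s=0}$ recalled just before the Lemma, we obtain on $\partial {\mathcal F}({\bf \overline{q}})$
\[ \frac{\partial \Phi}{\partial q_{\mu,m}} + \frac{\partial \Phi}{\partial x} \cdot \xi^*_{\mu}(0,\cdot) = \frac{\partial \alpha}{\partial q_{\mu,m}} + \frac{\partial \alpha}{\partial x} \cdot \xi^*_{\mu}(0,\cdot), \]
i.e. $\frac{\partial \Phi}{\partial q_{\mu,m}} = \frac{\partial \alpha}{\partial q_{\mu,m}} + \big(\frac{\partial \alpha}{\partial x} - \frac{\partial \Phi}{\partial x}\big) \cdot \xi^*_{\mu}(0,\cdot)$ there. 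Now $\Phi({\bf \overline{q}},\cdot) = \alpha({\bf \overline{q}},\cdot)$ on $\partial {\mathcal F}({\bf \overline{q}})$ forces the tangential components of $\frac{\partial \alpha}{\partial x}$ and $\frac{\partial \Phi}{\partial x}$ to agree there, so $\frac{\partial \alpha}{\partial x} - \frac{\partial \Phi}{\partial x}$ is purely normal on $\partial {\mathcal F}({\bf \overline{q}})$; since moreover $n \cdot \xi^*_{\mu}(0,\cdot) = K_{\mu,m}$ there (by the very definitions of $\xi^*_{\mu}$ with $p^*_{\mu} = e_{m}$ and of $K_{\mu,m}$), this gives exactly $\frac{\partial \Phi}{\partial q_{\mu,m}} = \frac{\partial \alpha}{\partial q_{\mu,m}} + \big(\frac{\partial \alpha}{\partial x} - \frac{\partial \Phi}{\partial x}\big)\cdot n\, K_{\mu,m}$ on $\partial {\mathcal F}({\bf \overline{q}})$, which is the boundary condition in \eqref{Eq:SysSD}. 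Uniqueness for \eqref{Eq:SysSD} then finishes the proof.

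The step requiring the most care is the first one: guaranteeing that $\frac{\partial \Phi}{\partial q_{\mu,m}}$ exists and is regular enough up to $\partial {\mathcal F}({\bf q})$ for the trace computation to be meaningful — this is where I would lean on the classical theory rather than reprove it. A secondary point to verify is that the right-hand side of \eqref{Eq:SysSD} does not depend on the extension of $\alpha$ chosen off $\partial {\mathcal F}({\bf q})$: a change of extension modifies $\frac{\partial \alpha}{\partial q_{\mu,m}}$ and $\big(\frac{\partial \alpha}{\partial x} - \frac{\partial \Phi}{\partial x}\big)\cdot n\, K_{\mu,m}$ by opposite amounts, since the material derivative $\frac{\partial \alpha}{\partial q_{\mu,m}} + \frac{\partial \alpha}{\partial x}\cdot \xi^*_{\mu}(0,\cdot)$ along $T^*_{\mu}$ depends only on the trace of $\alpha$ on $\partial {\mathcal F}({\bf q})$ and involves $\frac{\partial \alpha}{\partial x}$ only through its normal part times $n\cdot\xi^*_{\mu}(0,\cdot) = K_{\mu,m}$.
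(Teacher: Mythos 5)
Your proof is correct and follows essentially the same route as the paper's: harmonicity by a (difference‑quotient version of) commuting $\partial_{q_{\mu,m}}$ with $\Delta$, and the boundary trace by differentiating the pulled‑back boundary identity along $T^*_{\mu}$, then using that $\nabla(\alpha-\Phi)$ is normal on $\partial{\mathcal F}({\bf q})$ together with $\xi^*_{\mu}\cdot n=K_{\mu,m}$. Your closing remark on the independence of the choice of extension of $\alpha$ corresponds to the paper's Remark immediately after the Lemma.
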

\begin{Remark}
Note that the material derivative $\frac{\partial \alpha({\bf q},\cdot)}{\partial q_{\mu,m}} + \frac{\partial \alpha({\bf q},\cdot)}{\partial x} \cdot n K_{\mu,m}$
is well-defined for functions $\alpha$ defined on the boundary $\bigcup_{{\bf q} \in {\mathcal Q}} \{{\bf q} \} \times \partial {\mathcal F}({\bf q})$ in the $({\bf q},x)$ plane, because the vector $(\delta_{\mu,m},(\xi_{\mu,m} \cdot n)n)$ is tangent to it, where $\delta_{\mu,m} $ denotes the vector in $\R^{3N}$ for which only the coordinate corresponding to $(\mu,m)$ is nonzero and is equal to $1$. 
 Alternatively, we may smoothly extend $\alpha$ in $\bigcup_{{\bf q} \in {\mathcal Q}} \{{\bf q} \} \times \overline{{\mathcal F}({\bf q})}$ and define the partial derivatives with respect to $q_{\mu,m}$ and $x$ independently.
\end{Remark}
\begin{proof}[Proof of Lemma~\ref{Lem:SD}]
That $\frac{\partial \Phi({\bf q},\cdot)}{\partial q_{\mu,m}}$ is harmonic in ${\mathcal F}({\bf q})$ is just a matter of commuting derivatives. 
For what concerns the boundary condition, we use that $\Phi({\bf q},x) = \alpha({\bf q},x)$ on $\partial {\mathcal F}({\bf q})$ to infer that for any ${p}_{\mu}^* \in \R^3$,
$\Phi(\overline{q}+s{\bf p}_{\mu}^*,T^*_{\mu}(s,x)) = \alpha \big( \overline{q} + s {\bf p}_{\mu}^*, T^*_{\mu}(s,x) \big)$ for small $s $ and $x \in \partial {\mathcal F}(\overline{q})$,
where as before ${\bf p}_{\mu}^* =(\delta_{\kappa \mu} p_{\mu}^*)_{\kappa=1\dots N}$.
Differentiating with respect to $s$, we deduce
\begin{equation*}
\frac{\partial \Phi({\bf q},\cdot)}{\partial q_{\mu}} \cdot p_{\mu}^* + \frac{\partial \Phi({\bf q},\cdot)}{\partial x} \cdot \xi_{\mu}^* = \frac{\partial \alpha}{\partial q_{\mu}} \cdot p_{\mu}^* + \frac{\partial \alpha}{\partial x} \cdot \xi_{\mu}^*  \ \text{ on } \  \partial {\mathcal F}({\bf q}).
\end{equation*}
It follows that
\begin{equation} \label{Eq:BCSDbis}
\frac{\partial \Phi({\bf q},\cdot)}{\partial q_{\mu}} \cdot p_{\mu}^*  = \frac{\partial \alpha}{\partial q_{\mu}} \cdot p_{\mu}^* + \frac{\partial [\alpha - \Phi({\bf q},\cdot)]}{\partial x}  \cdot \xi_{\mu}^*  \ \text{ on } \  \partial {\mathcal F}({\bf q}). 
\end{equation}
It remains to notice that since $\Phi({\bf q},x) = \alpha({\bf q},x)$ on the boundary, the gradient of $\alpha(\cdot) - \Phi({\bf q},\cdot)$ with respect to $x$ on the boundary is normal. With
$\xi^*_\mu \cdot  n = \sum_{m=1}^3 p^*_{\mu,m} K_{\mu,m}$, we reach the conclusion.
\end{proof}
The equivalent of Lemma~\ref{Lem:SD} holds for the variant of the Dirichlet problem that we considered above.
\begin{Corollary} \label{Cor:SD}
Consider a smooth function $\alpha$ on $\bigcup_{{\bf q} \in {\mathcal Q}}  \{{\bf q} \} \times \partial {\mathcal F}({\bf q})$ and 
a regular family of functions $(\widetilde{\Phi}({\bf q},\cdot))_{{\bf q} \in {\mathcal Q}}$, with $\widetilde{\Phi}({\bf q},\cdot):{\mathcal F}({\bf q}) \rightarrow \R$ and a regular family of constants $(c_{1}({\bf q}), \dots, c_{N}({\bf q}))_{{\bf q} \in {\mathcal Q}}$ which are solution to
\begin{equation*}
\left\{ \begin{array}{l}
-\Delta \widetilde{\Phi}({\bf q},\cdot) =0 \text{ in } {\mathcal F}({\bf q}), \\
\widetilde{\Phi}({\bf q},\cdot) = \alpha({\bf q},\cdot) + c_{\lambda}({\bf q}) \text{ on } \partial {\mathcal S}_{\lambda}({\bf q}), \ \ \forall \lambda \in \{1,\dots,N\},  \\
\widetilde{\Phi}({\bf q},\cdot) = \alpha({\bf q},\cdot)  \text{ on } \partial \Omega, \\
\int_{\partial {\mathcal S}_{\lambda}} \partial_{n} \widetilde{\Phi}({\bf q},x) \, ds = 0, \ \ \forall \lambda \in \{1,\dots,N\} .
\end{array} \right.
\end{equation*}
Then for $\mu \in \{1,\dots,N\}$ and $m \in \{1,2,3\}$ the shape derivative $\displaystyle\frac{\partial \widetilde{\Phi}({\bf q},\cdot)}{\partial q_{\mu,m}}$ is the solution to the system
\begin{equation} \label{Eq:SD}
\left\{ \begin{array}{l}
\displaystyle -\Delta \left(\frac{\partial \widetilde{\Phi}({\bf q},\cdot)}{\partial q_{\mu,m}}\right) = 0 \text{ in } {\mathcal F}({\bf q}), \medskip \\
\displaystyle \frac{\partial \widetilde{\Phi}({\bf q},\cdot)}{\partial q_{\mu,m}} = \frac{\partial \alpha({\bf q},\cdot)}{\partial q_{\mu,m}} + \left(\frac{\partial \alpha({\bf q},\cdot)}{\partial x} - \frac{\partial \widetilde{\Phi}({\bf q},\cdot)}{\partial x}\right) \cdot n \,K_{\mu,m} + c'_{\lambda}({\bf q}) \text{ on } \partial {\mathcal S}_{\lambda}({\bf q}), \ \ \forall \lambda \in \{1,\dots,N\}, \medskip \\
\displaystyle \frac{\partial \widetilde{\Phi}({\bf q},\cdot)}{\partial q_{\mu,m}} = \frac{\partial \alpha({\bf q},\cdot)}{\partial q_{\mu,m}} \text{ on } \partial \Omega, \medskip \\
\displaystyle \int_{\partial {\mathcal S}_{\lambda}} \partial_{n} \left(\frac{\partial \widetilde{\Phi}({\bf q},\cdot)}{\partial q_{\mu,m}}\right) \, ds = 0, \ \ \forall \lambda \in \{1,\dots,N\},
\end{array} \right.
\end{equation}
for some constants $c'_{1}({\bf q}),\dots,c'_{N}({\bf q})$.
\end{Corollary}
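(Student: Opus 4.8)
The plan is to repeat the proof of Lemma~\ref{Lem:SD} almost verbatim, keeping track of the two new ingredients with respect to that statement: the additive constants $c_{\lambda}({\bf q})$ prescribed on the solid boundaries, and the zero-flux conditions. As there, I would fix $\mu \in \{1,\dots,N\}$ and $m \in \{1,2,3\}$, take $p_{\mu}^* = e_m$, and use the flow $T^*_{\mu}(s,\cdot): {\mathcal F}({\bf \overline{q}}) \to {\mathcal F}({\bf \overline{q}} + s {\bf p}^*_{\mu})$ constructed just before Lemma~\ref{Lem:SD}, which is the identity near $\partial {\mathcal F}({\bf \overline{q}}) \setminus \partial {\mathcal S}_{\mu}({\bf \overline{q}})$ and whose velocity on $\partial {\mathcal S}_{\mu}({\bf \overline{q}})$ satisfies $\xi^*_{\mu} \cdot n = K_{\mu,m}$. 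Harmonicity of $\partial \widetilde{\Phi}/\partial q_{\mu,m}$ in ${\mathcal F}({\bf q})$ is again just a matter of commuting derivatives, and the independence of the shape derivative from the particular choice of diffeomorphism family is the remark made after that construction.

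For the boundary conditions I would differentiate, at $s=0$, the identities
\[
\widetilde{\Phi}({\bf \overline{q}} + s {\bf p}^*_{\mu}, T^*_{\mu}(s,x)) = \alpha({\bf \overline{q}} + s {\bf p}^*_{\mu}, T^*_{\mu}(s,x)) + c_{\lambda}({\bf \overline{q}} + s {\bf p}^*_{\mu}), \quad x \in \partial {\mathcal S}_{\lambda}({\bf \overline{q}}),
\]
and $\widetilde{\Phi}({\bf \overline{q}} + s {\bf p}^*_{\mu}, x) = \alpha({\bf \overline{q}} + s {\bf p}^*_{\mu}, x)$ for $x \in \partial \Omega$ (where $T^*_{\mu}$ is stationary). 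Exactly as in Lemma~\ref{Lem:SD}, the first identity gives $\partial_{q_{\mu,m}} \widetilde{\Phi} = \partial_{q_{\mu,m}} \alpha + \partial_x(\alpha - \widetilde{\Phi}) \cdot \xi^*_{\mu} + c'_{\lambda}({\bf q})$ on $\partial {\mathcal S}_{\lambda}$; since $\widetilde{\Phi} - \alpha \equiv c_{\lambda}({\bf q})$ is constant along $\partial {\mathcal S}_{\lambda}$, its tangential gradient vanishes there, so $\partial_x(\alpha - \widetilde{\Phi})$ is normal and $\partial_x(\alpha - \widetilde{\Phi}) \cdot \xi^*_{\mu} = [\partial_x(\alpha - \widetilde{\Phi}) \cdot n]\, K_{\mu,m}$, which yields the claimed boundary value on $\partial {\mathcal S}_{\lambda}$. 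On $\partial \Omega$ one simply gets $\partial_{q_{\mu,m}}\widetilde{\Phi} = \partial_{q_{\mu,m}}\alpha$; and since $K_{\mu,m} = 0$ on $\partial \Omega$ and on $\partial {\mathcal S}_{\lambda}$ for $\lambda \neq \mu$, all these expressions are consistent with the uniform formula displayed in \eqref{Eq:SD}.

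It remains to differentiate the flux conditions, which is the only step not obtained by copying Lemma~\ref{Lem:SD}. For each $\lambda$, harmonicity of $\widetilde{\Phi}({\bf q},\cdot)$ and the divergence theorem allow to replace $\partial {\mathcal S}_{\lambda}({\bf q})$ by any smooth closed curve $\gamma \subset {\mathcal F}({\bf q})$ enclosing ${\mathcal S}_{\lambda}$ and no other solid; since the solids remain at positive distance from each other and from $\partial\Omega$, such a $\gamma$ can be chosen independently of ${\bf q}$ for ${\bf q}$ in a neighborhood of ${\bf \overline{q}}$. The constraint then reads $\int_{\gamma} \partial_n \widetilde{\Phi}({\bf q},\cdot)\, ds = 0$ with a fixed contour, so differentiating in $q_{\mu,m}$ gives $\int_{\gamma} \partial_n (\partial_{q_{\mu,m}}\widetilde{\Phi})\, ds = 0$; deforming $\gamma$ back to $\partial {\mathcal S}_{\lambda}$ (legitimate since $\partial_{q_{\mu,m}}\widetilde{\Phi}$ is harmonic in ${\mathcal F}({\bf q})$ and smooth up to the boundary) yields the zero-flux condition in \eqref{Eq:SD}. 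At this point $\partial_{q_{\mu,m}}\widetilde{\Phi}$ solves \eqref{Eq:SD} with $c'_{\lambda} = c'_{\lambda}({\bf q})$ the derivatives of the original constants, and uniqueness of this solution — hence of the $c'_{\lambda}$ — follows from Lemma~\ref{Lem:EtrangeDirichlet}.

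I do not expect a genuine obstacle here: the statement is a routine variant of Lemma~\ref{Lem:SD}, and the only point requiring a small idea rather than a direct transcription is the differentiation of the flux constraints, handled above by passing to a ${\bf q}$-independent homologous contour before differentiating.
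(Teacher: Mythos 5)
Your proof is correct and follows the same overall strategy as the paper's, with one minor variation worth noting: the treatment of the zero-flux constraints. The paper first observes that for $\lambda \neq \mu$ (and for $\partial\Omega$) the relevant boundary component does not move when $q_{\mu}$ varies, so the identity $\int_{\partial {\mathcal S}_{\lambda}}\partial_n\widetilde{\Phi}({\bf q},\cdot)\,ds=0$ can be differentiated in $q_{\mu,m}$ directly; the case $\lambda=\mu$ is then recovered by the divergence theorem applied to the harmonic function $\partial_{q_{\mu,m}}\widetilde{\Phi}$ in ${\mathcal F}({\bf q})$. You instead replace $\partial {\mathcal S}_{\lambda}$ by a fixed nearby homologous contour $\gamma$ (valid by harmonicity of $\widetilde{\Phi}({\bf q},\cdot)$), differentiate the resulting fixed-domain integral, and deform back. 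Your route treats all $\lambda$ uniformly and avoids invoking the divergence theorem a second time at the cost of the contour-deformation step; the paper's is slightly more elementary. Both are sound, and the rest of your argument — commuting derivatives for harmonicity, differentiating the boundary identities and noting that the tangential derivative of $\widetilde{\Phi}-\alpha$ vanishes on each $\partial{\mathcal S}_\lambda$, and invoking Lemma~\ref{Lem:EtrangeDirichlet} for uniqueness of the constants $c'_\lambda$ — matches the paper's proof.
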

\begin{proof}[Proof of Corollary~\ref{Cor:SD}]
We check the validity of the various equations in \eqref{Eq:SD}. As for Lemma~\ref{Lem:SD}, the first equation is obtained by commuting derivatives with respect to $x$ and ${\bf q}$.
To obtain the second equation, we observe that the shape derivative of a constant function with respect to $x$ on $\partial {\mathcal S}_{\lambda}$ (for each ${\bf q}$) is a constant function on $\partial {\mathcal S}_{\lambda}$. Let us highlight that the regularity with respect to ${\bf q}$ is a consequence of the construction and of the regularity for the usual Dirichlet problem. 
The third equation is trivial. Finally we see that the flux of $\frac{\partial \widetilde{\Phi}({\bf q},\cdot)}{\partial q_{\mu,m}}$  across $\partial {\mathcal S}_{\lambda}$ for $\lambda \neq \mu$ and across $\partial \Omega$ is zero, since these components of the boundary are fixed and the flux of $\widetilde{\Phi}({\bf q},\cdot)$ across them is zero for all ${\bf q}$. Considering that $\frac{\partial \widetilde{\Phi}({\bf q},\cdot)}{\partial q_{\mu,m}}$ is harmonic and using the divergence theorem, it follows that the flux across $\partial {\mathcal S}_\mu$ of $\frac{\partial \widetilde{\Phi}({\bf q},\cdot)}{\partial q_{\mu,m}}$ is zero as well. \par
\end{proof}
\begin{Remark} \label{Rem:BCSD}
In both \eqref{Eq:SysSD} and \eqref{Eq:SD}, we may write
\begin{equation} \nonumber %\label{Eq:SDBC2}
\frac{\partial \alpha({\bf q},\cdot)}{\partial q_{\mu,m}} + \left(\frac{\partial \alpha({\bf q},\cdot)}{\partial x} - \frac{\partial \Phi({\bf q},\cdot)}{\partial x}\right) \cdot n \,K_{\mu,m} =
\frac{\partial \alpha({\bf q},\cdot)}{\partial q_{\mu,m}} + \left(\frac{\partial \alpha({\bf q},\cdot)}{\partial x} - \frac{\partial \Phi({\bf q},\cdot)}{\partial x}\right) \cdot \xi_{\mu,m}.
\end{equation}
This is just a matter of stoping the computation at \eqref{Eq:BCSDbis}, or of keeping in mind that, since $\alpha({\bf q},\cdot) - \Phi({\bf q},\cdot)$ is constant on the boundary, its tangential derivative is zero.
\end{Remark}
%

%
%
%
%
%
%%%%%%%%%%%%%%%%%%%%%
\subsubsection{Transposing to the Neumann problem}
Let us now describe how the analysis of the paragraphs above can be transposed to the Neumann problem. Given $\kappa \in \{1,\dots,N\}$, $q \in {\mathcal{Q}}_{\delta}$ and $\beta \in C^{\infty}(\partial {\mathcal S}_{\kappa}; \R)$ such that
\begin{equation} \label{Eq:NeumannIntNulle}
\int_{\partial {\mathcal S}_{\kappa}} \beta(x) \, ds(x)=0,
\end{equation}
we consider  the solution $\mathfrak{f}^{\mathcal N}_{\kappa}[\beta] \in C^{\infty}({\mathcal F}({\bf q}))$ (unique up to an additive constant) of the Neumann problem
\begin{equation} \label{Eq:PhiBeta}
\left\{ \begin{array}{l}
\Delta \mathfrak{f}^{\mathcal N}_{\kappa}[\beta] = 0 \ \text{ in } \ {\mathcal F}({\bf q}), \\
\partial_{n} \mathfrak{f}^{\mathcal N}_{\kappa}[\beta] = 0 \ \text{ on } \ \partial {\mathcal F}({\bf q}) \setminus \partial {\mathcal S}_{\kappa}, \\
\partial_{n} \mathfrak{f}^{\mathcal N}_{\kappa}[\beta] = \beta  \ \text{ on } \ \partial {\mathcal S}_{\kappa},
\end{array} \right.
\end{equation}
and $\widehat{\mathfrak{f}^{\mathcal N}_{\kappa}}[\beta] \in C^{\infty}(\R^2 \setminus {\mathcal S}_{\kappa})$ be the solution (unique up to an additive constant) of the {\it standalone} Neumann problem 
\begin{equation} \nonumber %\label{Eq:PhiBetachapeau}
\left\{ \begin{array}{l}
\Delta \widehat{\mathfrak{f}}^{\mathcal N}_{\kappa}[\beta] = 0 \ \text{ in } \ \R^2 \setminus {\mathcal S}_{\kappa}, \\
\nabla \widehat{\mathfrak{f}}^{\mathcal N}_{\kappa}[\beta](x) \longrightarrow 0 \ \text{ as } \ |x| \longrightarrow +\infty , \\
\partial_{n} \widehat{\mathfrak{f}}^{\mathcal N}_{\kappa}[\beta] = \beta  \ \text{ on } \ \partial {\mathcal S}_{\kappa}.
\end{array} \right.
\end{equation}
Condition \eqref{Eq:NeumannIntNulle} allows to write the function $\beta$ as
\begin{equation*}
\beta = \partial_{\tau} {\mathcal B}.
\end{equation*}
Then the following result is elementary to check.
\begin{Lemma} \label{Lem:Neumann}
One has the correspondence
$\nabla {\mathfrak{f}^{\mathcal N}_{\kappa}}[\beta] = \nabla^\perp \mathfrak{f}_{\kappa}[{\mathcal B}]$ and 
$\nabla \widehat{\mathfrak{f}}^{\mathcal N}_{\kappa}[\beta] = \nabla^\perp \widehat{\mathfrak{f}}_{\kappa}[{\mathcal B}].$
In particular, one can apply Proposition \ref{Pro:DirichletSAM} to $\widehat{\mathfrak{f}}^{\mathcal N}_{\kappa}[\beta]$ and Propositions~\ref{Pro:DirichletPetits} and \ref{Pro:DirichletGros}  and ${\mathfrak{f}^{\mathcal N}_{\kappa}}[\beta]$ with $\| {\mathcal B} \|_{L^\infty(\partial {\mathcal S}^\varepsilon_{\kappa})} ={\mathcal O} (\varepsilon_{\kappa} \| \beta \|_{L^\infty(\partial {\mathcal S}^\varepsilon_{\kappa})})$ in the right-hand side in place of $\| \alpha^\varepsilon\|_{L^\infty(\partial {\mathcal S}^\varepsilon_{\kappa})}$ .
\end{Lemma}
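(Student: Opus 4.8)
The plan is to reduce the Neumann problem to the Dirichlet problem already analysed by means of the stream-function/conjugate-harmonic correspondence, so that no new estimates need to be derived from scratch. First I would observe that condition~\eqref{Eq:NeumannIntNulle} is exactly the solvability condition for the Neumann problem~\eqref{Eq:PhiBeta}: since $\int_{\partial{\mathcal S}_{\kappa}}\beta\,ds=0$ and the Neumann data vanish on the other components of $\partial{\mathcal F}({\bf q})$, the total flux through $\partial{\mathcal F}({\bf q})$ is zero, so $\mathfrak{f}^{\mathcal N}_{\kappa}[\beta]$ exists and is unique up to an additive constant. Likewise for the standalone problem on $\R^2\setminus{\mathcal S}_{\kappa}$, where the decay condition $\nabla\widehat{\mathfrak{f}}^{\mathcal N}_{\kappa}[\beta]\to 0$ picks out the unique (modulo constant) solution.

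Next I would introduce the primitive ${\mathcal B}$ along the boundary curve: because $\beta$ has zero mean on the closed curve $\partial{\mathcal S}_{\kappa}$, there is a single-valued function ${\mathcal B}\in C^{\infty}(\partial{\mathcal S}_{\kappa})$ with $\partial_{\tau}{\mathcal B}=\beta$, defined up to an additive constant. The key computational fact is the Cauchy--Riemann-type identity: if $\Phi$ is harmonic in a planar domain and $\widetilde\Phi$ is a (local) harmonic conjugate, then on the boundary $\partial_{n}\Phi=\partial_{\tau}\widetilde\Phi$ and $\partial_{\tau}\Phi=-\partial_{n}\widetilde\Phi$, equivalently $\nabla\Phi=\nabla^{\perp}\widetilde\Phi$. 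I would apply this with $\widetilde\Phi=\mathfrak{f}_{\kappa}[{\mathcal B}]$, the solution of the Dirichlet problem~\eqref{Eq:PhiAlpha} with boundary datum ${\mathcal B}$ on $\partial{\mathcal S}_{\kappa}$ and $0$ elsewhere. One must check that $\nabla^{\perp}\mathfrak{f}_{\kappa}[{\mathcal B}]$ is genuinely a gradient field on ${\mathcal F}({\bf q})$ (no period obstruction): its circulation around $\partial{\mathcal S}_{\nu}$ equals the flux of $\nabla\mathfrak{f}_{\kappa}[{\mathcal B}]$ through $\partial{\mathcal S}_{\nu}$, which is zero by the last line of~\eqref{Eq:HAlpha}, and its circulation around $\partial\Omega$ is then zero by the divergence theorem since $\mathfrak{f}_{\kappa}[{\mathcal B}]$ is harmonic. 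Hence $\nabla^{\perp}\mathfrak{f}_{\kappa}[{\mathcal B}]=\nabla F$ for some harmonic $F$ on ${\mathcal F}({\bf q})$; its normal derivative on $\partial{\mathcal F}({\bf q})$ equals $\partial_{n}F = \nabla^{\perp}\mathfrak{f}_{\kappa}[{\mathcal B}]\cdot n = -\partial_{\tau}\mathfrak{f}_{\kappa}[{\mathcal B}]$, which vanishes on the components where $\mathfrak{f}_{\kappa}[{\mathcal B}]$ is constant, i.e.\ everywhere except $\partial{\mathcal S}_{\kappa}$, where it equals $-\partial_{\tau}{\mathcal B}=-\beta$ (or $+\beta$, fixing the sign by choosing the orientation convention for $\tau$). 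Up to replacing $F$ by $-F$ this is precisely the characterisation of $\mathfrak{f}^{\mathcal N}_{\kappa}[\beta]$, so $\nabla\mathfrak{f}^{\mathcal N}_{\kappa}[\beta]=\nabla^{\perp}\mathfrak{f}_{\kappa}[{\mathcal B}]$; the identical argument on $\R^2\setminus{\mathcal S}_{\kappa}$ (where the decay at infinity of $\nabla\widehat{\mathfrak{f}}_{\kappa}[{\mathcal B}]$ transfers to the conjugate) gives $\nabla\widehat{\mathfrak{f}}^{\mathcal N}_{\kappa}[\beta]=\nabla^{\perp}\widehat{\mathfrak{f}}_{\kappa}[{\mathcal B}]$.

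Finally, to transfer the quantitative statements, I would note that for a fixed curve of length $O(\varepsilon_{\kappa})$ one has $\|{\mathcal B}\|_{L^{\infty}(\partial{\mathcal S}^{\varepsilon}_{\kappa})}={\mathcal O}(\varepsilon_{\kappa}\|\beta\|_{L^{\infty}(\partial{\mathcal S}^{\varepsilon}_{\kappa})})$ — one integrates $\beta=\partial_{\tau}{\mathcal B}$ along the curve starting from a point where ${\mathcal B}=0$, and since $\beta$ has zero mean the primitive remains bounded by (arclength)$\times\|\beta\|_{\infty}$ — and similarly the higher Hölder seminorms of ${\mathcal B}$ are controlled by those of $\beta$ with the scaling dictated by the convention~\eqref{Eq:NormesHolder}. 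Since $\nabla$ and $\nabla^{\perp}$ have the same effect on all the norms appearing in Propositions~\ref{Pro:DirichletSAM}, \ref{Pro:DirichletPetits} and~\ref{Pro:DirichletGros} (they involve $\nabla\mathfrak{f}$ in $L^{\infty}$ or $L^{p}$ and higher $C^{k,\frac12}$ seminorms of $\mathfrak{f}$, all invariant under rotation of the gradient), substituting $\alpha^{\varepsilon}\rightsquigarrow{\mathcal B}$ and applying those propositions to $\widehat{\mathfrak{f}}_{\kappa}[{\mathcal B}]$, $\mathfrak{f}_{\kappa}[{\mathcal B}]$ yields exactly the claimed estimates with $\|{\mathcal B}\|_{L^{\infty}}={\mathcal O}(\varepsilon_{\kappa}\|\beta\|_{L^{\infty}})$ in the right-hand side. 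The only mildly delicate point — and the one I would be most careful about — is the verification that $\nabla^{\perp}\mathfrak{f}_{\kappa}[{\mathcal B}]$ has no period obstruction and hence is exact, which is where the zero-flux conditions built into problem~\eqref{Eq:HAlpha} are used in an essential way; everything else is bookkeeping of signs and orientations. This is what is meant by ``elementary to check''.
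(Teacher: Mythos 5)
Your proof is correct and is the natural argument behind the paper's remark that the lemma is ``elementary to check'' (the paper itself omits the proof). You correctly identify the one nontrivial point, namely that the zero-flux conditions in \eqref{Eq:HAlpha} are exactly what kills the period of $\nabla^\perp\mathfrak{f}_{\kappa}[{\mathcal B}]$ around each $\partial{\mathcal S}_{\nu}$ so that it is a global gradient; the only small imprecision is in the final paragraph, where the bound $\|{\mathcal B}\|_{L^\infty(\partial{\mathcal S}^\varepsilon_\kappa)}\leq|\partial{\mathcal S}^\varepsilon_\kappa|\,\|\beta\|_{L^\infty}$ follows simply from integrating $\beta$ along the curve, while the zero-mean hypothesis \eqref{Eq:NeumannIntNulle} is used earlier, to make ${\mathcal B}$ single-valued on the closed curve rather than to obtain that bound.
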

Of course, in the same way, we can consider the Neumann counterpart of $\mathfrak{H}$ defined in \eqref{Eq:HAlpha}, say $\mathfrak{H}^{\mathcal N}[\beta_{1},\dots,\beta_{N};\beta_{\Omega}]$, and in the same way obtain the correspondence with $\mathfrak{H}[{\mathcal B}_{1},\dots,{\mathcal B}_{N};{\mathcal B}_{\Omega}]$ where ${\mathcal B}_{1}$, \dots, ${\mathcal B}_{N}$ and ${\mathcal B}_{\Omega}$ are primitives of $\beta_{1}$, \dots, $\beta_{N}$ and $\beta_{\Omega}$ on $\partial {\mathcal S}_{1}$, \dots, $\partial {\mathcal S}_{N}$ and $\partial \Omega$, respectively. \par
In the sequel we will use mainly the case of the Neumann problem.
%
%
%
%
%%%%%%%%%%%%%%%%%%%%%%%%%%%%%%%%%%%%%%%%%%%%%%%%%%%%%%%%%%%%
%
%
%
\subsection{Estimates of the Kirchhoff potentials}
\label{Subsec:EKP}
In this paragraph we apply the above results in the case of the Kirchhoff potentials defined in  \eqref{Kir} and study their shape derivatives as well.
\subsubsection{The Kirchhoff potentials}
We first recall several properties of the standalone Kirchhoff potentials proved for instance in  \cite{GLS2}.
\begin{Lemma} \label{Lem:StandaloneKirchhoff}
The standalone Kirchhoff potentials $\widehat{\varphi}_{\kappa,k}^\varepsilon$, $\kappa \in \{1,\dots,N\}$, $k \in \{1,\cdots,5\}$, have the following properties:
\begin{eqnarray}
\nonumber
&\bullet& \text{for fixed } q_{\kappa}, \ 
\widehat{\varphi}_{\kappa,k}^\varepsilon(x-h_{\kappa}) = \varepsilon_{\kappa}^{1+\delta_{k\geq3}} \widehat{\varphi}_{\kappa,k}^1 \left(\frac{x-h_{\kappa}}{\varepsilon_{\kappa}}\right) \\
\label{Eq:ScalePhi}
& \ &  \hskip 5.5cm \text{ and }\  \nabla \widehat{\varphi}_{\kappa,k}^\varepsilon(x- h_{\kappa}) = \varepsilon_{\kappa}^{\delta_{k\geq3}} \nabla \widehat{\varphi}_{\kappa,k}^1 \left(\frac{x-h_{\kappa}}{\varepsilon_{\kappa}}\right), \\
\label{Eq:BehaviourPhi1}
&\bullet&% \varepsilon_{\kappa}^{-\delta_{k\geq3}} \nabla \widehat{\varphi}^1_{\kappa,k} \text{ is bounded in } \R^2 \setminus {\mathcal S}_{\kappa} \text{ and }
\nabla \widehat{\varphi}^\varepsilon_{\kappa,k}(x) = {\mathcal O}\left(\frac{\varepsilon_{\kappa}^{2+\delta_{k\geq3}}}{|x-h_{\kappa}|^2}\right) \text{ at infinity,} \\
\label{Eq:EstStandaloneKirchhoff}
&\bullet&  \varepsilon_{\kappa}^{-\delta_{k\geq3}} \nabla \widehat{\varphi}_{\kappa,k}^\varepsilon \text{ is bounded in }  \R^2 \setminus {\mathcal S}^\varepsilon_{\kappa}
\ \text{ and } \  %up to a constant }
\widehat{\varphi}_{\kappa,k}^\varepsilon = {\mathcal O}(\varepsilon_{\kappa}^{1+\delta_{k\geq3}}) \text{ on } \partial {\mathcal S}_{\kappa}^\varepsilon.
\end{eqnarray}
\end{Lemma}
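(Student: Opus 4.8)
The plan is to exploit the explicit scaling of the standalone Kirchhoff potentials under homothety, and then read off the decay at infinity and the uniform bounds as direct consequences. Everything here concerns a single solid ${\mathcal S}_{\kappa}$ sitting alone in $\R^2$, so there is no interaction to worry about; the lemma is essentially bookkeeping around the dilation $x \mapsto (x-h_{\kappa})/\varepsilon_{\kappa}$.

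\medskip

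\noindent\textbf{Step 1: the scaling identity \eqref{Eq:ScalePhi}.} Fix $q_{\kappa}=(h_{\kappa},\vartheta_{\kappa})$ and recall that ${\mathcal S}_{\kappa}^\varepsilon = h_{\kappa} + \varepsilon_{\kappa} R(\vartheta_{\kappa})({\mathcal S}_{\kappa,0}^1-h_{\kappa,0})$, so that $x \in \R^2\setminus{\mathcal S}_{\kappa}^\varepsilon$ corresponds to $y := (x-h_{\kappa})/\varepsilon_{\kappa} \in \R^2\setminus{\mathcal S}_{\kappa}^1$ (after also undoing the rotation, which I suppress since it does not affect norms). For the translational indices $k=1,2$ the boundary datum $K_{\kappa,k}=n\cdot e_k$ is scale-invariant, whereas for $k\ge 3$ the vector fields $\xi_{\kappa,k}$ defined in \eqref{def-xi-j} are affine in $x-h_{\kappa}$ and hence pick up one factor of $\varepsilon_{\kappa}$; this is the source of the exponent $\delta_{k\ge 3}$. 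One then checks that $x\mapsto \varepsilon_{\kappa}^{1+\delta_{k\ge 3}}\widehat{\varphi}_{\kappa,k}^1((x-h_{\kappa})/\varepsilon_{\kappa})$ is harmonic in $\R^2\setminus{\mathcal S}_{\kappa}^\varepsilon$, vanishes at infinity, satisfies the correct Neumann condition \eqref{Kir2SA} on $\partial{\mathcal S}_{\kappa}^\varepsilon$ (the gradient scales by $\varepsilon_{\kappa}^{\delta_{k\ge 3}}$, matching the scaling of $K_{\kappa,k}$), and has zero average on $\partial{\mathcal S}_{\kappa}^\varepsilon$ by \eqref{Kir4SA}; by uniqueness of the solution to \eqref{KirSA} it must equal $\widehat{\varphi}_{\kappa,k}^\varepsilon$. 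Differentiating in $x$ gives the gradient identity in \eqref{Eq:ScalePhi}, the chain rule producing the compensating $\varepsilon_{\kappa}^{-1}$.

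\medskip

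\noindent\textbf{Step 2: decay at infinity \eqref{Eq:BehaviourPhi1}.} For the size-one potential $\widehat{\varphi}_{\kappa,k}^1$, the zero-flux property (the divergence theorem applied to the Neumann condition, using that $\int_{\partial{\mathcal S}_{\kappa}^1}K_{\kappa,k}\,ds=0$ for $k=1,2$ and also for $k\ge 3$ since $\int_{\partial{\mathcal S}}(x-h_\kappa)^\perp\cdot n\,ds=0$) ensures there is no logarithmic term, so $\widehat{\varphi}_{\kappa,k}^1(y)=O(|y|^{-1})$ and $\nabla\widehat{\varphi}_{\kappa,k}^1(y)=O(|y|^{-2})$ as $|y|\to\infty$ — this is exactly the structure established in Proposition~\ref{Pro:DirichletSAM} (after passing to the Neumann setting via Lemma~\ref{Lem:Neumann}), applied to the fixed shape ${\mathcal S}_{\kappa}^1$. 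Inserting $y=(x-h_{\kappa})/\varepsilon_{\kappa}$ into the gradient scaling $\nabla\widehat{\varphi}_{\kappa,k}^\varepsilon(x-h_{\kappa})=\varepsilon_{\kappa}^{\delta_{k\ge3}}\nabla\widehat{\varphi}_{\kappa,k}^1(y)$ gives $\nabla\widehat{\varphi}_{\kappa,k}^\varepsilon(x)=\varepsilon_{\kappa}^{\delta_{k\ge3}}\,O(\varepsilon_{\kappa}^{2}|x-h_{\kappa}|^{-2})=O(\varepsilon_{\kappa}^{2+\delta_{k\ge3}}|x-h_{\kappa}|^{-2})$, which is \eqref{Eq:BehaviourPhi1}.

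\medskip

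\noindent\textbf{Step 3: the uniform bounds \eqref{Eq:EstStandaloneKirchhoff}.} The function $\varepsilon_{\kappa}^{-\delta_{k\ge3}}\nabla\widehat{\varphi}_{\kappa,k}^\varepsilon$ equals $x\mapsto \varepsilon_{\kappa}^{-1}\nabla\widehat{\varphi}_{\kappa,k}^1((x-h_{\kappa})/\varepsilon_{\kappa})$; combining the interior/global Schauder estimate for the fixed-shape potential $\widehat{\varphi}_{\kappa,k}^1$ (bounded gradient on $\R^2\setminus{\mathcal S}_{\kappa}^1$, from \eqref{Eq:NablaPhiHatGlobal}) with the decay from Step 2, and rescaling exactly as in Step~2 of the proof of Proposition~\ref{Pro:DirichletSAM}, one gets a bound uniform in $\varepsilon_{\kappa}$. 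Finally, on $\partial{\mathcal S}_{\kappa}^\varepsilon$, \eqref{Eq:ScalePhi} gives $\widehat{\varphi}_{\kappa,k}^\varepsilon = \varepsilon_{\kappa}^{1+\delta_{k\ge3}}\widehat{\varphi}_{\kappa,k}^1$ composed with the dilation, and $\widehat{\varphi}_{\kappa,k}^1$ is a fixed bounded function on $\partial{\mathcal S}_{\kappa}^1$ (by the maximum-principle bound \eqref{Eq:PhiHatMP}), whence the $O(\varepsilon_{\kappa}^{1+\delta_{k\ge3}})$ bound. The reference to \cite{GLS2} in the statement indicates this is a known computation; the only mild subtlety — and the one place to be slightly careful — is tracking the exponent $\delta_{k\ge3}$ consistently between the potential, its gradient, and the boundary datum, i.e.\ making sure the affine (versus constant) nature of $\xi_{\kappa,k}$ is handled correctly. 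Since the paper only needs to invoke these scalings, I would keep the proof to these three short verifications and cite Proposition~\ref{Pro:DirichletSAM} and Lemma~\ref{Lem:Neumann} for the decay and regularity inputs.
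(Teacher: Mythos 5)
The paper does not actually prove this lemma; it only recalls it, referring the reader to \cite{GLS2}. Your scaling argument is exactly the standard route and is correct in substance, so the comparison is mostly one of confirmation. Two small points to clean up in Step~3. First, you write that $\varepsilon_{\kappa}^{-\delta_{k\ge3}}\nabla\widehat{\varphi}_{\kappa,k}^\varepsilon$ equals $\varepsilon_{\kappa}^{-1}\nabla\widehat{\varphi}_{\kappa,k}^1\bigl((x-h_{\kappa})/\varepsilon_{\kappa}\bigr)$; the extra factor $\varepsilon_{\kappa}^{-1}$ should not be there. Taking the gradient in \eqref{Eq:ScalePhi} gives
\begin{equation*}
\varepsilon_{\kappa}^{-\delta_{k\ge3}}\,\nabla\widehat{\varphi}_{\kappa,k}^\varepsilon(x) \;=\; \nabla\widehat{\varphi}_{\kappa,k}^1\!\left(\frac{x-h_{\kappa}}{\varepsilon_{\kappa}}\right),
\end{equation*}
which is precisely the expression that is uniformly bounded; with the spurious $\varepsilon_{\kappa}^{-1}$ the uniform bound you then assert would not follow, so this is just a slip, not a gap. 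Second, you invoke \eqref{Eq:PhiHatMP} to bound $\widehat{\varphi}_{\kappa,k}^1$ on $\partial{\mathcal S}_{\kappa}^1$, but that estimate is a Dirichlet maximum-principle statement for $\widehat{\mathfrak{f}}$, not for the Neumann potential itself. The boundedness of $\widehat{\varphi}_{\kappa,k}^1$ is better obtained from the boundedness and decay of its gradient (Steps~2--3), integrating from infinity and using the normalization \eqref{Kir4SA} to fix the constant. Neither point affects the validity of the approach, which matches what the cited reference does.
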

\begin{Remark}
It is elementary to check that given $q_{\kappa}$, we recover the $\kappa$-th standalone Kirchhoff potentials at $q_{\kappa}$ from their equivalent at the basic position through
\begin{gather*}
\begin{pmatrix} \widehat{\varphi}_{\kappa,1}(q_{\kappa},h_{\kappa} + R(\vartheta_{\kappa})x) \\ \widehat{\varphi}_{\kappa,2}(q_{\kappa},h_{\kappa} + R(\vartheta_{\kappa})x) \end{pmatrix}
= R(-\vartheta_{\kappa}) \begin{pmatrix} \widehat{\varphi}_{\kappa,1}(0,x) \\ \widehat{\varphi}_{\kappa,2}(0,x) \end{pmatrix}
,\quad  \widehat{\varphi}_{\kappa,3}(q_{\kappa},h_{\kappa} + R(\vartheta_{\kappa})x) =\widehat{\varphi}_{\kappa,3}(0,x) ,
\\ \text{ and } \  \begin{pmatrix} \widehat{\varphi}_{\kappa,4}(q_{\kappa},h_{\kappa} + R(\vartheta_{\kappa})x) \\ \widehat{\varphi}_{\kappa,5}(q_{\kappa},h_{\kappa} + R(\vartheta_{\kappa})x) \end{pmatrix}
= R(2\vartheta_{\kappa}) \begin{pmatrix} \widehat{\varphi}_{\kappa,4}(0,x) \\ \widehat{\varphi}_{\kappa,5}(0,x)
\end{pmatrix} .
\end{gather*}
Consequently, all the estimates on the standalone Kirchhoff potentials are independent of the position $q_{\kappa}$.
\end{Remark}
We have the following first statement regarding the behavior of the Kirchhoff potentials $\varphi_{\kappa,k}$ in ${\mathcal F}^\varepsilon$ for small values of $\varepsilon_{\kappa}$.
\begin{Proposition}%[Expansion of the Kirchhoff potentials, 1] 
\label{Pro:ExpKirchhoff}
For $\delta>0$, there exists $\varepsilon_{0}>0$ depending only on $\delta$, $\Omega$ and the shape of the reference solids ${\mathcal S}_{\lambda}^1$, $\lambda=1,\dots,N$, such that for any $\overline{\boldsymbol\varepsilon}$ with $\overline{{\boldsymbol\varepsilon}} \leq \varepsilon_{0}$, the following holds. 
Let $\kappa \in \{1,\dots,N\}$, $k \in \{1,\cdots,5\}$ and $\ell \in \N \setminus \{0,1\}$. For some constant $C>0$ independent of $\overline{{\boldsymbol\varepsilon}}$,
the following holds % for the Kirchhoff potentials 
uniformly for ${\bf q} \in {\mathcal Q}_{\delta}$:
\begin{gather}
\label{Eq:ExpKirchhoff1}
\| \nabla \varphi_{\kappa,k} - \nabla \widehat{\varphi}_{\kappa,k} \|_{L^\infty({\mathcal F}^\varepsilon({\bf q}))} \leq C \varepsilon_{\kappa}^{2+\delta_{k\geq3}}, \\
\label{Eq:ExpKirchhoff1a}
%\hskip -0.1cm
\big| \varphi_{\kappa,k} \big|_{C^{\ell,\frac{1}{2}}({\mathcal V}_{\delta}(\partial \widecheck{\mathcal F}))} 
+ \sum_{\lambda \in {\mathcal P}_{s} \setminus \{\kappa\}} \varepsilon_{\lambda}^{\ell-\frac{1}{2}} \big| \varphi_{\kappa,k} \big|_{C^{\ell,\frac{1}{2}}({\mathcal V}_{\delta}(\partial {\mathcal S}_{\lambda}))} + \, \varepsilon_{\kappa}^{\ell-\frac{1}{2}} \big| \varphi_{\kappa,k} - \widehat{\varphi}_{\kappa,k} \big|_{C^{\ell,\frac{1}{2}}({\mathcal V}_{\delta}(\partial {\mathcal S}_{\kappa}))} 
\leq C \varepsilon_{\kappa}^{2+\delta_{k\geq3}}, \\
\label{Eq:ExpKirchhoff1Bis}
\| \nabla \varphi_{\kappa,k} \|_{L^{\infty}({\mathcal F}^\varepsilon({\bf q}))} \leq C \varepsilon_{\kappa}^{\delta_{k\geq3}} \text{ and } 
\nabla \varphi_{\kappa,k}(x) = {\mathcal O} \left(\frac{\varepsilon_{\kappa}^{2+\delta_{k\geq3}}}{|x-h_{\kappa}|^2}\right) \text{ for } x \in {\mathcal F}^\varepsilon(q)
\text{ s.t. } |x-h_{\kappa}| \geq C  \varepsilon_{\kappa} ,
\end{gather}
and one has, up to an additional constant on each connected component of the boundary,
\begin{equation} \label{Eq:ExpKirchhoff2}
\varphi_{\kappa,k} = \left\{ \begin{array}{l}
\displaystyle  {\mathcal O}(\varepsilon_{\kappa}^{2+\delta_{k\geq3}}) \ \text{ on } \ \partial \Omega, \medskip \\
\displaystyle {\mathcal O}(\varepsilon_{\kappa}^{2+\delta_{k\geq3}} \varepsilon_{\mu}) \ \text{ on } \ \partial {\mathcal S}_{\mu} \ \text{ if } \ \mu \neq \kappa, \medskip \\
\displaystyle \widehat{\varphi}_{\kappa,k} + {\mathcal O}(\varepsilon_{\kappa}^{3+\delta_{k\geq3}})
= {\mathcal O}(\varepsilon_{\kappa}^{1+\delta_{k\geq3}}) \ \text{ on } \ \partial {\mathcal S}_{\kappa}.
\end{array} \right. 
\end{equation}
\end{Proposition}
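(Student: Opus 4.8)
The plan is to deduce Proposition~\ref{Pro:ExpKirchhoff} from the analysis of the auxiliary Dirichlet (and, after the correspondence of Lemma~\ref{Lem:Neumann}, Neumann) problem carried out in Subsection~\ref{Subsec:CPPSM}. The key observation is that the Kirchhoff potential $\varphi_{\kappa,k}$ is exactly the solution of the Neumann problem with data $K_{\kappa,k}=n\cdot\xi_{\kappa,k}$ supported on $\partial{\mathcal S}_{\kappa}$ and zero data on the other connected components, so that, in the notation of the paragraph on the Neumann problem, $\varphi_{\kappa,k}=\mathfrak{f}^{\mathcal N}_{\kappa}[K_{\kappa,k}]$ and $\widehat{\varphi}_{\kappa,k}=\widehat{\mathfrak{f}}^{\mathcal N}_{\kappa}[K_{\kappa,k}]$. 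Note that $\int_{\partial{\mathcal S}_{\kappa}}K_{\kappa,k}\,ds=0$ by the divergence theorem applied to the (affine, hence divergence-free up to the rotational part) field $\xi_{\kappa,k}$, so condition~\eqref{Eq:NeumannIntNulle} holds and the correspondence of Lemma~\ref{Lem:Neumann} applies: $\nabla\varphi_{\kappa,k}=\nabla^\perp\mathfrak{f}_{\kappa}[{\mathcal K}_{\kappa,k}]$ where ${\mathcal K}_{\kappa,k}$ is a primitive of $K_{\kappa,k}$ along $\partial{\mathcal S}^\varepsilon_{\kappa}$, and similarly for $\widehat{\varphi}_{\kappa,k}$.

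\medskip

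First I would record the relevant size of the primitive ${\mathcal K}_{\kappa,k}$. Since $K_{\kappa,k}$ is bounded on $\partial{\mathcal S}^\varepsilon_{\kappa}$ with bounds $\|K_{\kappa,k}\|_{L^\infty}={\mathcal O}(1)$ for $k=1,2$ and ${\mathcal O}(\varepsilon_{\kappa})$ for $k\in\{3,4,5\}$ (because $\xi_{\kappa,k}$ involves $x-h_{\kappa}$, which is ${\mathcal O}(\varepsilon_{\kappa})$ on $\partial{\mathcal S}^\varepsilon_{\kappa}$), and since $|\partial{\mathcal S}^\varepsilon_{\kappa}|={\mathcal O}(\varepsilon_{\kappa})$, integrating gives $\|{\mathcal K}_{\kappa,k}\|_{L^\infty(\partial{\mathcal S}^\varepsilon_{\kappa})}={\mathcal O}(\varepsilon_{\kappa}^{1+\delta_{k\geq3}})$. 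Likewise $|{\mathcal K}_{\kappa,k}|_{C^{\ell,\frac12}}$ scales as $\varepsilon_\kappa^{\,\cdot}$ times the corresponding seminorm of $\xi_{\kappa,k}$; these scalings are consistent with the $\varepsilon_\kappa$-homogeneity already recorded in Lemma~\ref{Lem:StandaloneKirchhoff} and in Proposition~\ref{Pro:DirichletSAM}. With $\kappa\in{\mathcal P}_s$, I then simply invoke Proposition~\ref{Pro:DirichletPetits} (through Lemma~\ref{Lem:Neumann}) with $\alpha^\varepsilon={\mathcal K}_{\kappa,k}$, i.e. with $\|{\mathcal K}_{\kappa,k}\|_{L^\infty}={\mathcal O}(\varepsilon_{\kappa}^{1+\delta_{k\geq3}})$ in the right-hand side: the estimate~\eqref{Eq:EstPhiHatPhi} yields~\eqref{Eq:ExpKirchhoff1}, and~\eqref{Eq:EstPhiHatPhi2} yields~\eqref{Eq:ExpKirchhoff1a}, after replacing $\nabla\mathfrak{f}_{\kappa}$ by $\nabla^\perp\mathfrak{f}_{\kappa}$ (which does not change norms). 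For $\kappa\in{\mathcal P}_{(i)}$ one uses instead Proposition~\ref{Pro:DirichletGros} together with Remark~\ref{Rem:GrosDoncPetit}, which states precisely that~\eqref{Eq:ExpKirchhoff1}–\eqref{Eq:ExpKirchhoff1a} persist in that case with $\varepsilon_\kappa=1$; this is why the statement is phrased uniformly over $\kappa\in\{1,\dots,N\}$.

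\medskip

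Then~\eqref{Eq:ExpKirchhoff1Bis} follows by combining~\eqref{Eq:ExpKirchhoff1} with the standalone bounds of Lemma~\ref{Lem:StandaloneKirchhoff}: the pointwise bound $\|\nabla\varphi_{\kappa,k}\|_{L^\infty}\leq\|\nabla\widehat{\varphi}_{\kappa,k}\|_{L^\infty}+\|\nabla\varphi_{\kappa,k}-\nabla\widehat{\varphi}_{\kappa,k}\|_{L^\infty}={\mathcal O}(\varepsilon_{\kappa}^{\delta_{k\geq3}})+{\mathcal O}(\varepsilon_{\kappa}^{2+\delta_{k\geq3}})$, and away from the solid (for $|x-h_{\kappa}|\geq C\varepsilon_{\kappa}$) one adds the decay~\eqref{Eq:BehaviourPhi1} of $\nabla\widehat{\varphi}_{\kappa,k}$ to the global ${\mathcal O}(\varepsilon_{\kappa}^{2+\delta_{k\geq3}})$ error, noting $\varepsilon_{\kappa}^{2+\delta_{k\geq3}}\le \varepsilon_\kappa^{2+\delta_{k\geq 3}}|x-h_\kappa|^{-2}\cdot(\mathrm{diam}\,\Omega)^2$. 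Finally~\eqref{Eq:ExpKirchhoff2} is a restatement of the previous estimates on the boundary components: on $\partial\Omega$ and on $\partial{\mathcal S}_\mu$ with $\mu\ne\kappa$ one uses that $\partial\Omega,\partial{\mathcal S}_\mu\subset{\mathcal V}_\delta(\partial\widecheck{\mathcal F})\cup\bigcup_{\lambda\ne\kappa}{\mathcal V}_\delta(\partial{\mathcal S}_\lambda)$ so that~\eqref{Eq:ExpKirchhoff1a} (more precisely the underlying estimate~\eqref{Eq:EstPhiHatPhi2}, in its potential rather than gradient form, together with~\eqref{Eq:EstCorrectLambda} for the extra $\varepsilon_\mu$ factor when $\mu\in{\mathcal P}_s$) controls $\varphi_{\kappa,k}$ there up to a constant on each component; on $\partial{\mathcal S}_\kappa$ one combines the proximity $\varphi_{\kappa,k}=\widehat{\varphi}_{\kappa,k}+{\mathcal O}(\varepsilon_\kappa^{3+\delta_{k\ge3}})$ (again from~\eqref{Eq:EstPhiHatPhi2}) with $\widehat{\varphi}_{\kappa,k}={\mathcal O}(\varepsilon_\kappa^{1+\delta_{k\ge3}})$ from Lemma~\ref{Lem:StandaloneKirchhoff}. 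The only genuine work beyond bookkeeping is checking that the primitive ${\mathcal K}_{\kappa,k}$ has the claimed $\varepsilon_\kappa$-scaling in the higher Hölder seminorms with the convention~\eqref{Eq:NormesHolder}; I expect this — and the care needed to track which error terms carry an extra $\varepsilon_\mu$ versus $\varepsilon_\kappa$ on distant boundaries — to be the main, though routine, obstacle. Once that scaling is in hand, the proposition is an immediate transcription of Propositions~\ref{Pro:DirichletPetits} and~\ref{Pro:DirichletGros} via Lemma~\ref{Lem:Neumann}.
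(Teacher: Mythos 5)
Your proposal is correct and follows essentially the same route as the paper's (very brief) proof: translate the Kirchhoff potential's Neumann problem into the auxiliary Dirichlet setting through Lemma~\ref{Lem:Neumann}, feed the scaling $\|{\mathcal K}_{\kappa,k}\|_{L^\infty(\partial{\mathcal S}_\kappa^\varepsilon)}={\mathcal O}(\varepsilon_\kappa^{1+\delta_{k\geq3}})$ into Propositions~\ref{Pro:DirichletPetits} and~\ref{Pro:DirichletSAM} (or their $\kappa\in{\mathcal P}_{(i)}$ counterpart via Remark~\ref{Rem:GrosDoncPetit} and Proposition~\ref{Pro:DirichletGros}), and then obtain \eqref{Eq:ExpKirchhoff1Bis}--\eqref{Eq:ExpKirchhoff2} from Lemma~\ref{Lem:StandaloneKirchhoff} and integration of the gradient bounds along the small boundary components. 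Incidentally, your assignment of cases (Proposition~\ref{Pro:DirichletPetits} for $\kappa\in{\mathcal P}_s$, Remark~\ref{Rem:GrosDoncPetit} for $\kappa\in{\mathcal P}_{(i)}$) is the intended one; the paper's phrasing has the two cases inadvertently swapped. One small remark: for the $\varepsilon_\mu$ factor in the second line of \eqref{Eq:ExpKirchhoff2}, you do not really need the detour through \eqref{Eq:EstCorrectLambda}; it drops out more directly by integrating the pointwise bound on $\nabla\varphi_{\kappa,k}$ (namely ${\mathcal O}(\varepsilon_\kappa^{2+\delta_{k\ge3}})$ away from ${\mathcal S}_\kappa$, from \eqref{Eq:ExpKirchhoff1} and \eqref{Eq:BehaviourPhi1}) over $\partial{\mathcal S}_\mu$, whose length is ${\mathcal O}(\varepsilon_\mu)$.
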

\begin{proof}[Proof of Proposition~\ref{Pro:ExpKirchhoff}]
We use Lemma~\ref{Lem:Neumann} with $\beta = K_{\kappa,k}$, hence we may apply to it Proposition~\ref{Pro:DirichletPetits} if $\kappa \in  {\mathcal P}_{(i)} $ and Remark~\ref{Rem:GrosDoncPetit} otherwise.
Since $\| K_{\kappa,k} \|_{L^\infty(\partial {\mathcal S}_{\kappa})} = {\mathcal O}(\varepsilon_{\kappa}^{\delta_{k\geq3}})$, we obtain from \eqref{Eq:EstPhiHatPhi} and \eqref{Eq:EstPhiHatPhi2} that \eqref{Eq:ExpKirchhoff1} and \eqref{Eq:ExpKirchhoff1a} hold.
To obtain \eqref{Eq:ExpKirchhoff1Bis} we use \eqref{Eq:ExpKirchhoff1} together with \eqref{Eq:ScalePhi} and \eqref{Eq:BehaviourPhi1}.
For what concerns \eqref{Eq:ExpKirchhoff2}, it suffices then integrate $\nabla \varphi_{\kappa,k} - \nabla \widehat{\varphi}_{\kappa,k}$ on $\partial {\mathcal S}_{\mu}$ taking  into account \eqref{Eq:ExpKirchhoff1} and \eqref{Eq:BehaviourPhi1} when $\mu \neq \kappa$.
\end{proof}
\begin{Remark} \label{Rem:NormalisationKirchhoff}
The Kirchhoff potentials $\varphi_{\kappa,k}$ are defined up to a single additional constant (while the aforementioned additional constants in \eqref{Eq:ExpKirchhoff2} many differ from one connected component of the boundary to the other). We can however normalize this global additional constant so that
\begin{equation} \label{Eq:ExpKirchhoffNormalises}
{\varphi_{\kappa,k}} = {\mathcal O}(\varepsilon_{\kappa}^{1+\delta_{k\geq3}}) \ \text{ on } \ \partial {\mathcal S}_{\kappa} 
\ \text{ and } \ 
{\varphi_{\kappa,k}} = {\mathcal O}(\varepsilon_{\kappa}^{2+\delta_{k\geq3}}) \ \text{ on } \ \partial {\mathcal F} \setminus {\mathcal S}_{\kappa}.
\end{equation}
It suffices for instance to take $\widehat{\varphi_{\kappa,k}}(X) ={\varphi_{\kappa,k}}(X)$ for some point $X \in \partial \Omega$ (and integrate starting from this point).
\end{Remark}
In the case of Kirchhoff potentials corresponding to a solid of fixed size, we have the following more accurate result.
\begin{Proposition}%[Expansion of the Kirchhoff potentials, 2]
 \label{Pro:ExpKirchhoff2}
Let $\delta>0$. There exists $\varepsilon_{0}>0$ such that for all $\overline{\boldsymbol\varepsilon}$ with $\overline{{\boldsymbol\varepsilon}} \leq \varepsilon_{0}$ the following holds.
Let $\kappa \in  {\mathcal P}_{(i)} $, $k \in \{1,2,3\}$. Let $\ell \in \N \setminus \{0,1\}$. Then for some constant $C>0$ independent of $\overline{{\boldsymbol\varepsilon}}$, the following holds uniformly for ${\bf q} \in {\mathcal Q}_{\delta}$:
\begin{gather}
\label{Eq:ExpKirchhoff3}
\left\| \nabla \varphi_{\kappa,k} - \nabla \widecheck{\varphi}_{\kappa,k} 
+ \sum_{\lambda \in {\mathcal P}_{s}} \nabla \widehat{\mathfrak{f}}_{\lambda} \big[ \widecheck{\varphi}_{\kappa,k}  \big]
\right\|_{L^\infty({\mathcal F}({\bf q}))} \leq C \vert\overline{{\boldsymbol\varepsilon}}\vert , \\
\label{Eq:ExpKirchhoff4}
\big| \varphi_{\kappa,k} - \widecheck{\varphi}_{\kappa,k}  \big|_{C^{\ell,\frac{1}{2}}({\mathcal V}_{\delta}(\partial \widecheck{\mathcal F}))}
+
\sum_{\nu \in {\mathcal P}_{s}} \varepsilon_{\nu}^{\ell-\frac{1}{2}}  \big| \varphi_{\kappa,k} \big|_{C^{\ell,\frac{1}{2}}({\mathcal V}_{\delta}(\partial{\mathcal S}_{\nu}^\varepsilon))}
\leq C.
\end{gather}
and the terms $\nabla \widehat{\mathfrak{f}}_{\lambda} \big[ \widecheck{\varphi}_{\kappa,k}  \big]$ are 
 bounded in $L^\infty(\R^2 \setminus {\mathcal S}_{\lambda})$, converge to $0$ in $C^\ell(\{ x \in \overline{\Omega} / d(x,{\mathcal S}_{\lambda}) \geq c \}$ for all $c>0$ and $\ell \in \N$ and in $L^p(\Omega \setminus {\mathcal S}_{\lambda})$, $p < +\infty$.
\end{Proposition}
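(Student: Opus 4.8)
The plan is to mimic the proof of Proposition~\ref{Pro:ExpKirchhoff} but now feeding Proposition~\ref{Pro:DirichletGros} (instead of Proposition~\ref{Pro:DirichletPetits}) into the Neumann--Dirichlet dictionary of Lemma~\ref{Lem:Neumann}. First I would apply Lemma~\ref{Lem:Neumann} with $\beta = K_{\kappa,k}$ for $\kappa \in {\mathcal P}_{(i)}$, $k \in \{1,2,3\}$: this writes $K_{\kappa,k} = \partial_\tau {\mathcal B}_{\kappa,k}$ on $\partial {\mathcal S}_\kappa$, so that $\nabla \varphi_{\kappa,k} = \nabla^\perp \mathfrak{f}_\kappa[{\mathcal B}_{\kappa,k}]$, while on the other solids and on $\partial \Omega$ the boundary data vanish. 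Hence $\varphi_{\kappa,k}$ (up to orthogonal gradient) is an instance of $\mathfrak{H}_\alpha = \mathfrak{H}[{\mathcal B}_{1,\kappa,k},\dots; {\mathcal B}_{\Omega,\kappa,k}]$ with only the components associated to fixed-size solids and the outer boundary being nonzero, and these are bounded in $C^0$ uniformly for ${\bf q} \in {\mathcal Q}_\delta$ (since $K_{\kappa,k}$ and its primitive are, the solid being of fixed size). The final Kirchhoff potential $\widecheck\varphi_{\kappa,k}$ corresponds in exactly the same way to $\widecheck{\mathfrak g}_\alpha$ for the domain $\widecheck{\mathcal F}$, again via $\nabla \widecheck\varphi_{\kappa,k} = \nabla^\perp \widecheck{\mathfrak g}_{\beta}$ with the same boundary data; I would make this identification explicit, invoking the definitions \eqref{KirMacro} and \eqref{Eq:CorrectionInterieure}.

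Then I would simply quote Proposition~\ref{Pro:DirichletGros}. Estimate \eqref{Eq:EstPhiCheckg} gives directly \eqref{Eq:ExpKirchhoff3}, after passing from $\mathfrak{H}_\alpha - \widecheck{\mathfrak g}_\alpha + \sum \widehat{\mathfrak f}_\lambda[\widecheck{\mathfrak g}_{\alpha|\partial {\mathcal S}_\lambda}]$ back to $\varphi_{\kappa,k}$ by taking $\nabla^\perp$ (which preserves $L^\infty$ norms); one must note that, by Lemma~\ref{Lem:Neumann} and the correspondence $\nabla \widehat{\mathfrak f}^{\mathcal N}_\lambda[\cdot] = \nabla^\perp \widehat{\mathfrak f}_\lambda[\cdot]$, the correctors $\nabla \widehat{\mathfrak f}_\lambda[\widecheck\varphi_{\kappa,k}]$ appearing in \eqref{Eq:ExpKirchhoff3} are the orthogonal-gradient versions of those in \eqref{Eq:EstPhiCheckg}. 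Estimate \eqref{Eq:PhiGrosHolder} yields \eqref{Eq:ExpKirchhoff4}: the first H\"older term there controls $\big|\mathfrak{H}_\alpha - \widecheck g_\alpha\big|_{C^{\ell,\frac12}({\mathcal V}_\delta(\partial \widecheck{\mathcal F}))}$, which after transposition gives the bound on $\varphi_{\kappa,k} - \widecheck\varphi_{\kappa,k}$ near $\partial \widecheck{\mathcal F}$ (one loses one derivative passing from $\mathfrak f$ to $\nabla^\perp \mathfrak f$, but $\ell \geq 2$ so there is room, and actually the convention \eqref{Eq:NormesHolder} already accounts for this), while the weighted sum over small solids controls $\big|\varphi_{\kappa,k}\big|_{C^{\ell,\frac12}({\mathcal V}_\delta(\partial {\mathcal S}_\nu^\varepsilon))}$ for $\nu \in {\mathcal P}_s$. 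Finally the stated convergence properties of the correctors $\nabla \widehat{\mathfrak f}_\lambda[\widecheck\varphi_{\kappa,k}]$ (boundedness in $L^\infty(\R^2 \setminus {\mathcal S}_\lambda)$, convergence to $0$ in $C^\ell$ away from ${\mathcal S}_\lambda$ and in $L^p$) are exactly \eqref{Eq:EstCorrectLambda}, which itself was obtained from Lemma~\ref{Lem:SmallCorrectors} applied to $g_n = \widecheck{\mathfrak g}_{\alpha|\partial {\mathcal S}_\lambda^{\varepsilon_n}}$; here one applies the same lemma to $g_n = \widecheck\varphi_{\kappa,k|\partial {\mathcal S}_\lambda^{\varepsilon_n}}$, which is uniformly $C^{k,\frac12}$-bounded by uniform Schauder estimates for $\widecheck\varphi_{\kappa,k}$ in $\widecheck{\mathcal F}$ (Lemma~\ref{Lem:USE}, since $k \leq \ell$ after adjusting indices).

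The only genuine care needed — and the step I expect to be the mild obstacle — is the bookkeeping of the passage between Neumann and Dirichlet problems through Lemma~\ref{Lem:Neumann}: one has to check that the primitives ${\mathcal B}_{\kappa,k}$ of $K_{\kappa,k}$ are uniformly bounded in the relevant Hölder norms on $\partial {\mathcal S}_\kappa$ for ${\bf q} \in {\mathcal Q}_\delta$ (true because ${\mathcal S}_\kappa$ has fixed size, so there is no $\varepsilon_\kappa$-loss here, in contrast with the small-solid case where Lemma~\ref{Lem:Neumann} produced the gain $\|{\mathcal B}\| = {\mathcal O}(\varepsilon_\kappa \|\beta\|)$), and that applying $\nabla^\perp$ to the conclusions of Proposition~\ref{Pro:DirichletGros} reproduces precisely the terms written in \eqref{Eq:ExpKirchhoff3}--\eqref{Eq:ExpKirchhoff4} including the orthogonal-gradient version $\nabla \widehat{\mathfrak f}_\lambda[\widecheck\varphi_{\kappa,k}]$ of the correctors. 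Once this dictionary is set up cleanly, the proof is essentially a citation of Proposition~\ref{Pro:DirichletGros} together with Lemmas~\ref{Lem:Neumann}, \ref{Lem:SmallCorrectors} and \ref{Lem:USE}; I would therefore keep it short, by analogy with the brief proof of Proposition~\ref{Pro:ExpKirchhoff}, and omit routine details.
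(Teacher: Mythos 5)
Your proposal is correct and follows essentially the same route as the paper: the paper's own proof of Proposition~\ref{Pro:ExpKirchhoff2} is one line — "we reason as for Proposition~\ref{Pro:ExpKirchhoff}, using the correspondence between Dirichlet and Neumann problems (Lemma~\ref{Lem:Neumann}) and Proposition~\ref{Pro:DirichletGros}" — and that is precisely what you reconstruct, including the correct identification of the correctors and the use of Lemma~\ref{Lem:SmallCorrectors} for the last assertion. The extra care you devote to the Neumann--Dirichlet bookkeeping is sound and is in fact more explicit than what the paper records.
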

\begin{proof}[Proof of Proposition~\ref{Pro:ExpKirchhoff2}]
We let $\varepsilon_{0}$ as in Lemma~\ref{Lem:Contraction} and we reason as for Proposition~\ref{Pro:ExpKirchhoff}, using the correspondence between Dirichlet and Neumann problems (Lemma~\ref{Lem:Neumann}) and Proposition~\ref{Pro:DirichletGros}.
\end{proof}
\ \par
This has the following corollary on the added mass matrix. Recall that the added mass matrices where defined in \eqref{Eq:AddMassMatrix}--\eqref{Eq:AddMassMatrixKappaStandalone}. 
\begin{Corollary}
 \label{Cor:ExpAddedMass}
Let $\delta>0$. There exist constants $C>0$ and $\varepsilon_{0}>0$ such that for all $\kappa,\kappa' \in \{1,\dots,N\}$ and all $i,i' \in \{1,2,3\}$, as long as $(\boldsymbol\varepsilon,{\bf q}) \in \mathcal{Q}_\delta^{\varepsilon_{0}}$, 
\begin{equation} \label{Eq:AM-AMSA}
\left| {\mathcal M}_{a,\kappa,i,\kappa',i'} - \delta_{\kappa,\kappa'} \widehat{{\mathcal M}}_{a,\kappa,i,i'} \right| \leq C \varepsilon_{\kappa}^{2 + \delta_{3i}} \varepsilon_{\kappa'}^{2 + \delta_{3i'}}.
\end{equation}
Moreover one has,  uniformly for ${\bf q} \in \mathcal{Q}_\delta$, 
\begin{equation} \label{Eq:LimiteAM}
{\mathcal M}_{a,\kappa,i,\kappa',i'} \longrightarrow \delta_{\kappa \in {\mathcal P}_{(i)}} \delta_{\kappa' \in {\mathcal P}_{(i)}} \widecheck{{\mathcal M}}_{a,\kappa,\kappa',i,i'}  \ \text{ as } \ \overline{\boldsymbol\varepsilon} \longrightarrow 0.
\end{equation}
\end{Corollary}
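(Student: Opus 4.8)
The strategy is to feed the estimates of Propositions~\ref{Pro:ExpKirchhoff} and~\ref{Pro:ExpKirchhoff2} into the definition \eqref{Eq:AddMassMatrix} of the added inertia matrix, writing ${\mathcal M}_{a,\kappa,i,\kappa',i'} = \int_{{\mathcal F}({\bf q})} \nabla \varphi_{\kappa,i} \cdot \nabla \varphi_{\kappa',i'}\, dx$ and splitting the integration domain according to which solid one is near. The whole argument is driven by the decay estimate \eqref{Eq:EstNablaPhiHat} on the standalone potentials, which gives $\nabla \widehat{\varphi}_{\kappa,k}^\varepsilon(x) = {\mathcal O}(\varepsilon_\kappa^{2+\delta_{k\geq 3}}/|x-h_\kappa|^2)$ away from ${\mathcal S}_\kappa^\varepsilon$, together with the $L^\infty$ and Hölder estimates \eqref{Eq:ExpKirchhoff1}--\eqref{Eq:ExpKirchhoff1Bis}.

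\textbf{Step 1: the case $\kappa = \kappa'$, $\kappa \in {\mathcal P}_s$ (or more generally one index small).} Here I would use $\varphi_{\kappa,i} = \widehat{\varphi}_{\kappa,i} + (\varphi_{\kappa,i} - \widehat{\varphi}_{\kappa,i})$, with the correction controlled in $L^\infty({\mathcal F}^\varepsilon)$ by \eqref{Eq:ExpKirchhoff1}. The leading term $\int_{\R^2 \setminus {\mathcal S}_\kappa^\varepsilon} |\nabla \widehat{\varphi}_{\kappa,i}|^2$ is exactly $\widehat{\mathcal M}_{a,\kappa,i,i'}$ by definition \eqref{Eq:AddMassMatrixKappaStandalone} (using the scaling \eqref{Eq:ScalePhi} this is actually $\varepsilon_\kappa^{2+\delta_{3i}}$ times the size-$1$ quantity, but here one just keeps $\widehat{\mathcal M}_{a,\kappa}$ as is). The cross terms $\int \nabla \widehat{\varphi}_{\kappa,i} \cdot (\nabla \varphi_{\kappa,i'} - \nabla \widehat{\varphi}_{\kappa,i'})$ are bounded: on $\R^2 \setminus {\mathcal S}_\kappa^\varepsilon$ the factor $\nabla \widehat{\varphi}_{\kappa,i}$ has $\varepsilon_\kappa^{-\delta_{i\geq 3}}\nabla\widehat\varphi_{\kappa,i}$ bounded in $L^2$ only in a renormalized sense, so I would rather split $\R^2\setminus{\mathcal S}_\kappa^\varepsilon$ into $\{|x-h_\kappa| \leq \sqrt{\varepsilon_\kappa}\}$ (where the area is ${\mathcal O}(\varepsilon_\kappa)$ and $\nabla\widehat\varphi_{\kappa,i} = {\mathcal O}(\varepsilon_\kappa^{\delta_{i\geq3}})$ by \eqref{Eq:EstStandaloneKirchhoff}, using the ${\mathcal F}^\varepsilon$-integral of $|\nabla\varphi_{\kappa,i'}-\nabla\widehat\varphi_{\kappa,i'}|^2 = {\mathcal O}(\varepsilon_\kappa^{2(2+\delta_{i'\geq3})})$ from \eqref{Eq:ExpKirchhoff1}) and $\{|x-h_\kappa| \geq \sqrt{\varepsilon_\kappa}\}$ (where \eqref{Eq:EstNablaPhiHat}/\eqref{Eq:ExpKirchhoff1Bis} give fast decay, so the $L^2$ norm of $\nabla\widehat\varphi_{\kappa,i}$ there is ${\mathcal O}(\varepsilon_\kappa^{2+\delta_{i\geq3}})$, times the bounded $L^\infty$ norm of the correction). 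Combining, the error is ${\mathcal O}(\varepsilon_\kappa^{2+\delta_{3i}}\varepsilon_\kappa^{2+\delta_{3i'}})$ up to harmless powers; a cleaner bookkeeping, avoiding the $\sqrt{\varepsilon_\kappa}$ split, is to invoke directly the fact that both $\varphi_{\kappa,i}-\widehat\varphi_{\kappa,i}$ and the relevant traces satisfy the sharp estimates in \eqref{Eq:ExpKirchhoff1a}, so that one may integrate by parts $\int_{{\mathcal F}^\varepsilon}\nabla\widehat\varphi_{\kappa,i}\cdot\nabla(\varphi_{\kappa,i'}-\widehat\varphi_{\kappa,i'}) = \int_{\partial{\mathcal F}^\varepsilon}\widehat\varphi_{\kappa,i}\,\partial_n(\varphi_{\kappa,i'}-\widehat\varphi_{\kappa,i'})$ and use the trace bounds of Proposition~\ref{Pro:ExpKirchhoff}. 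This gives \eqref{Eq:AM-AMSA} in this case.

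\textbf{Step 2: the case $\kappa \neq \kappa'$.} Now at least one of $\nabla\varphi_{\kappa,i}$, $\nabla\varphi_{\kappa',i'}$ must be ``far'' from its associated solid on most of ${\mathcal F}^\varepsilon$. Using \eqref{Eq:ExpKirchhoff1Bis} (and \eqref{Eq:ExpKirchhoff3}--\eqref{Eq:ExpKirchhoff4} when an index is in ${\mathcal P}_{(i)}$), each gradient is bounded, and at least one decays like $\varepsilon^{2+\delta}/|x-h|^2$ outside a ball of radius ${\mathcal O}(\varepsilon)$. Splitting ${\mathcal F}^\varepsilon$ into the ${\mathcal O}(\varepsilon_\kappa)$-ball around $h_\kappa$, the ${\mathcal O}(\varepsilon_{\kappa'})$-ball around $h_{\kappa'}$, and the rest, and bounding $\int_\text{rest} |\nabla\widehat\varphi_{\kappa,i}||\nabla\widehat\varphi_{\kappa',i'}| \lesssim \varepsilon_\kappa^{2+\delta_{3i}}\varepsilon_{\kappa'}^{2+\delta_{3i'}}\int_\text{rest} |x-h_\kappa|^{-2}|x-h_{\kappa'}|^{-2}$ against the $\delta$-separation of the solids (which makes the remaining integral uniformly bounded), while on each small ball using the $L^2$-mass ${\mathcal O}(\varepsilon_\kappa)$ of that ball and the boundedness of the other gradient, yields the bound ${\mathcal O}(\varepsilon_\kappa^{2+\delta_{3i}}\varepsilon_{\kappa'}^{2+\delta_{3i'}})$ (with the convention $\varepsilon_\mu = 1$ for $\mu\in{\mathcal P}_{(i)}$, in which case the factor is simply bounded, consistent with the claimed $\delta_{\kappa\in{\mathcal P}_{(i)}}$ structure of the limit). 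Finally \eqref{Eq:LimiteAM} follows by the same splitting: when $\kappa,\kappa'\in{\mathcal P}_{(i)}$, \eqref{Eq:ExpKirchhoff3}--\eqref{Eq:ExpKirchhoff4} show $\nabla\varphi_{\kappa,i}\to\nabla\widecheck\varphi_{\kappa,i}$ in $L^2$ (the corrector terms $\nabla\widehat{\mathfrak f}_\lambda[\widecheck\varphi_{\kappa,i}]$ tending to $0$ in $L^p$ by the last assertion of Proposition~\ref{Pro:ExpKirchhoff2}), giving convergence to $\widecheck{\mathcal M}_{a,\kappa,\kappa',i,i'}$; and when at least one index is in ${\mathcal P}_s$, the corresponding factor $\varepsilon^{2+\delta}\to 0$ in the bound of \eqref{Eq:AM-AMSA}, so the entry goes to $0$.

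\textbf{Main obstacle.} The delicate point is the cross terms in Step~1 and the small-ball contributions in Step~2: one must make sure that the region near a shrinking solid, where $\nabla\widehat\varphi$ is only ${\mathcal O}(1)$ (for $i\leq 2$) or even ${\mathcal O}(\varepsilon)$ (for $i\geq 3$), does not spoil the quadratic gain in $\varepsilon_\kappa\varepsilon_{\kappa'}$. This is handled either by the boundary-integration-by-parts trick using the sharp trace estimates of Proposition~\ref{Pro:ExpKirchhoff} (which already encode that $\varphi_{\kappa,i}-\widehat\varphi_{\kappa,i}$ is small on $\partial{\mathcal S}_\kappa^\varepsilon$ together with its normal derivative), or by the area-versus-decay dyadic split described above. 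Everything else is routine integration against the explicit decay rates.
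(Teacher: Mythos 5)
The paper's proof is considerably shorter and takes a different route from yours: it first integrates by parts in the definition \eqref{Eq:AddMassMatrix} \emph{before} introducing the standalone potential, arriving at the boundary formula
\begin{equation*}
{\mathcal M}_{a,\kappa,i,\kappa',i'} = \int_{\partial {\mathcal S}_{\kappa'}} \varphi_{\kappa,i}\, K_{\kappa',i'} \, ds,
\end{equation*}
which (being insensitive to additive constants, since $\int_{\partial{\mathcal S}_{\kappa'}}K_{\kappa',i'}\,ds=0$) is then estimated directly with the trace bounds \eqref{Eq:ExpKirchhoff2}: when $\kappa\neq\kappa'$ one gets at once ${\mathcal O}(\varepsilon_\kappa^{2+\delta_{3i}}\varepsilon_{\kappa'})\cdot{\mathcal O}(\varepsilon_{\kappa'}^{\delta_{3i'}})\cdot|\partial{\mathcal S}_{\kappa'}|$, and when $\kappa=\kappa'$ the main term $\int_{\partial{\mathcal S}_\kappa}\widehat\varphi_{\kappa,i}K_{\kappa,i'}\,ds=\widehat{\mathcal M}_{a,\kappa,i,i'}$ splits off with an ${\mathcal O}(\varepsilon_\kappa^{3+\delta_{3i}})$ error. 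The convergence \eqref{Eq:LimiteAM} follows from the same boundary formula and Proposition~\ref{Pro:ExpKirchhoff2}.

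Your proposal instead works with the volume integral $\int_{{\mathcal F}}\nabla\varphi_{\kappa,i}\cdot\nabla\varphi_{\kappa',i'}$, and this is where the gap is. The dyadic (``area-versus-decay'') split cannot reach the stated bound: for $\kappa=\kappa'$ the cross term $\int_{{\mathcal F}}\nabla\widehat\varphi_{\kappa,i}\cdot\nabla(\varphi_{\kappa,i'}-\widehat\varphi_{\kappa,i'})\,dx$ is controlled by $\|\nabla\varphi_{\kappa,i'}-\nabla\widehat\varphi_{\kappa,i'}\|_{L^\infty}\cdot\|\nabla\widehat\varphi_{\kappa,i}\|_{L^1}$, and the $L^1$ norm of $\nabla\widehat\varphi_{\kappa,i}$ carries a logarithm: the tail $\varepsilon_\kappa^{2+\delta_{3i}}/|x-h_\kappa|^2$ from \eqref{Eq:EstNablaPhiHat}, integrated over the annulus $C\varepsilon_\kappa\leq|x-h_\kappa|\leq R$, gives $\varepsilon_\kappa^{2+\delta_{3i}}\log(R/\varepsilon_\kappa)$. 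No choice of split radius $\varepsilon_\kappa^\alpha$ removes this; trying $L^p$--$L^q$ H\"older with $p<2$ for $\nabla\widehat\varphi$ gives exponent $2/p+2+\delta_{3i}+\delta_{3i'}<4+\delta_{3i}+\delta_{3i'}$ for all $p>1$, with a logarithm at $p=1$. The same $\log\varepsilon$ loss appears in your Step~2 for $\kappa\neq\kappa'$ when integrating the decaying gradient of $\nabla\varphi_{\kappa,i}$ near $h_\kappa$ against the bounded factor from the other solid. The phrase ``up to harmless powers'' is hiding this: the powers are not harmless, the volume estimate genuinely loses $|\log\varepsilon_\kappa|$ and cannot give \eqref{Eq:AM-AMSA}.

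Your alternative integration-by-parts $\int_{\partial{\mathcal F}^\varepsilon}\widehat\varphi_{\kappa,i}\,\partial_n(\varphi_{\kappa,i'}-\widehat\varphi_{\kappa,i'})$ can be made to work, but it is underspecified in two ways that matter. First, its validity relies on the observations that the normal trace vanishes on $\partial{\mathcal S}_\kappa^\varepsilon$ (common Neumann data) and that on the remaining boundary components the flux $\int\partial_n\widehat\varphi_{\kappa,i'}\,ds$ vanishes, which is what lets you subtract the ${\mathcal O}(\varepsilon_\kappa^{1+\delta_{3i}})$ background constant $\widehat\varphi_{\kappa,i}(\infty)$ and upgrade the boundary trace of $\widehat\varphi_{\kappa,i}$ to ${\mathcal O}(\varepsilon_\kappa^{2+\delta_{3i}})$; invoking ``the trace bounds of Proposition~\ref{Pro:ExpKirchhoff}'' alone does not supply this. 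Second, you replace the main term $\int_{{\mathcal F}^\varepsilon}|\nabla\widehat\varphi_{\kappa,i}|\cdot|\nabla\widehat\varphi_{\kappa,i'}|$ by $\widehat{\mathcal M}_{a,\kappa,i,i'}$, which is an integral over $\R^2\setminus{\mathcal S}_\kappa^\varepsilon$, without accounting for the difference on $(\R^2\setminus\Omega)\cup\bigcup_{\nu\neq\kappa}{\mathcal S}_\nu$; this difference is ${\mathcal O}(\varepsilon_\kappa^{4+\delta_{3i}+\delta_{3i'}})$ by the far-field decay, but the step must be stated. The paper's formula \eqref{Eq:CoefsMA} bypasses both issues in a single stroke because all the cancellations are built into the boundary integral from the start.
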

\begin{proof}[Proof of Corollary~\ref{Cor:ExpAddedMass}]
We first write
\begin{equation} \label{Eq:CoefsMA}
{\mathcal M}_{a,\kappa,i,\kappa',i'} = \int_{\partial {\mathcal S}_{\kappa'}} \varphi_{\kappa,i} K_{\kappa',i'} \, ds,
\end{equation}
and notice that this formula is insensitive to a constant added to $\varphi_{\kappa,i}$.
Estimate \eqref{Eq:AM-AMSA} is then a direct consequence of \eqref{Eq:ExpKirchhoff2}.
The convergence \eqref{Eq:LimiteAM} follows in the same way from Proposition~\ref{Pro:ExpKirchhoff2}.% and \eqref{Eq:EstCorrectLambda}.
\end{proof}
\begin{Remark} \label{Rem:MA/MASAM}
Notice that both \eqref{Eq:AM-AMSA} and \eqref{Eq:LimiteAM} prove the convergence to $0$ of ${\mathcal M}_{a,\kappa,i,\kappa',i'}$ when $\kappa$ or $\kappa'$ belongs to ${\mathcal P}_{s}$. When both indices $\kappa$ and $\kappa'$ belong to ${\mathcal P}_{(i)}$, \eqref{Eq:AM-AMSA} merely proves that it remains bounded. Notice also that, as a consequence of \eqref{Eq:ScalePhi}, $\widehat{{\mathcal M}}_{a,\kappa,i,i'}$ satisfies the scale relation
%
%\begin{equation} \label{Eq:SRAM}
$\widehat{{\mathcal M}}^\varepsilon_{a,\kappa,i,i'} = \varepsilon_{\kappa}^{2 + \delta_{3i} + \delta_{3i'}} \widehat{{\mathcal M}}^1_{a,\kappa,i,i'} $.
%\end{equation}
%
%
\end{Remark}
\subsubsection{Shape derivatives of the Kirchhoff potentials}
\label{SSSec:ShapeNEU}
In this paragraph, we estimate the shape derivatives of the Kirchhoff potentials.
An expression of the shape derivative of the Kirchhoff potentials was already obtained in \cite{GLMS}. 
Here we give a slightly different proof for this expression by relying on the results of Section \ref{SSSec:ShapeDeriv} (and extend it for indices $4$ and $5$). 
Precisely we consider the shape derivative $\displaystyle\frac{\partial\varphi_{\lambda,\ell}}{\partial q_{\mu,m}}({\bf q},\cdot)$ of the 
 Kirchhoff potentials $\varphi_{\lambda,\ell}$ for $\lambda \in \{1,\dots,N\}$ and $\ell \in \{1,\dots,5\}$ 
with respect to the variable $q_{\mu,m}$, for $\mu=1,\ldots,N$, $m=1,2,3$.
\begin{Lemma} \label{Lem:ShapeDerivativeKirchhoff}
For $\lambda=1,\ldots,N$, $\ell\in \{1,\dots,5\}$, $\mu=1,\ldots,N$, $m=1,2,3$, the function $\displaystyle\frac{\partial\varphi_{\lambda,\ell}}{\partial q_{\mu,m}}({\bf q},\cdot)$ is harmonic in $\mathcal F({\bf q})$ and satisfies:
\begin{gather} \label{Eq:DerivKirchBC1}
\frac{\partial}{\partial n} \left( \frac{\partial \varphi_{\lambda,\ell}}{\partial q_{\mu,m}} \right) ({\bf q},\cdot)= 0
\ \text{ on } \ 
\partial\mathcal F({\bf q}) \setminus \partial {\mathcal S}_{\mu}, \\
\label{Eq:DerivKirchBC2}
\frac{\partial}{\partial n} \left( \frac{\partial \varphi_{\lambda,\ell}}{\partial q_{\mu,m}} \right) ({\bf q},\cdot)=
\frac{\partial}{\partial \tau} \left[ \left( \frac{\partial \varphi_{\lambda,\ell}}{\partial \tau} 
- (\xi_{\lambda,\ell} \cdot \tau) \right) (\xi_{\mu,m} \cdot n) \right] +\delta_{\ell \geq 3} \delta_{m \in \{1,2\}} \partial_{\tau}
\left(\xi_{\lambda,\ell}^\perp \cdot e_{m} \right)
\ \text{ on } \ \partial {\mathcal S}_{\mu} .
\end{gather}
\end{Lemma}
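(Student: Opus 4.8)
The plan is to apply Lemma~\ref{Lem:SD} (more precisely its Neumann variant, obtained via Lemma~\ref{Lem:Neumann}) to the Kirchhoff potentials $\varphi_{\lambda,\ell}$, whose defining data is the Neumann boundary condition $K_{\lambda,\ell} = n\cdot\xi_{\lambda,\ell}$. First I would recall that $\varphi_{\lambda,\ell}({\bf q},\cdot)$ solves the Neumann problem \eqref{Kir}, so harmonicity of $\frac{\partial\varphi_{\lambda,\ell}}{\partial q_{\mu,m}}$ in $\mathcal F({\bf q})$ follows simply by commuting the (interior) $x$-derivatives with the $q$-derivative, exactly as in the proof of Lemma~\ref{Lem:SD}. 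For the boundary conditions, the key point is that $\partial_n\varphi_{\lambda,\ell}=K_{\lambda,\ell}({\bf q},\cdot)$ is prescribed on $\partial\mathcal F({\bf q})$, and that only the component $\partial{\mathcal S}_{\mu}$ of the boundary moves when we differentiate with respect to $q_{\mu,m}$; on the fixed components $\partial\mathcal F({\bf q})\setminus\partial{\mathcal S}_\mu$ the data $K_{\lambda,\ell}$ is either identically zero (if the component is not $\partial{\mathcal S}_\lambda$) or is the fixed function $n\cdot\xi_{\lambda,\ell}$ with $\xi_{\lambda,\ell}$ independent of $q_\mu$ there, and in both cases the normal-trace data does not vary, which gives \eqref{Eq:DerivKirchBC1}.

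The substantive computation is \eqref{Eq:DerivKirchBC2} on the moving component $\partial{\mathcal S}_\mu$. Here I would follow the transport/shape-derivative formalism of Section~\ref{SSSec:ShapeDeriv}: pull back by the flow $T^*_\mu(s,\cdot)$ and differentiate the identity $\partial_n\varphi_{\lambda,\ell}=K_{\lambda,\ell}$ along $\partial{\mathcal S}_\mu(s)$. Two effects appear: the variation of the normal vector $n$ and of the surface measure under the flow (which produce the term $\partial_\tau[(\partial_\tau\varphi_{\lambda,\ell}-\xi_{\lambda,\ell}\cdot\tau)(\xi_{\mu,m}\cdot n)]$, the combination $(\partial_\tau\varphi_{\lambda,\ell}-\xi_{\lambda,\ell}\cdot\tau)$ being the tangential part of $\nabla\varphi_{\lambda,\ell}-\xi_{\lambda,\ell}$, which is all that survives since the normal parts match by the Neumann condition), and the variation of the datum $K_{\lambda,\ell}=n\cdot\xi_{\lambda,\ell}$ itself when $\lambda=\mu$, coming from the fact that $\xi_{\lambda,\ell}$ with $\ell\ge 3$ depends on $h_\mu$ through $(x-h_\mu)^\perp$ and rotates with $\vartheta_\mu$; differentiating $(x-h_\mu)^\perp$ in $h_\mu$ produces exactly the extra term $\delta_{\ell\ge3}\delta_{m\in\{1,2\}}\partial_\tau(\xi_{\lambda,\ell}^\perp\cdot e_m)$. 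It is convenient to organize this the way Remark~\ref{Rem:BCSD} suggests, keeping the material derivative of the boundary datum in the form $\frac{\partial K_{\lambda,\ell}}{\partial q_{\mu,m}}+(\text{transport correction})$ and then identifying each piece; alternatively one differentiates \eqref{Eq:PhiBeta} directly along the lines of the Neumann analogue of Lemma~\ref{Lem:SD}.

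The main obstacle I anticipate is the careful bookkeeping in \eqref{Eq:DerivKirchBC2}: correctly splitting the shape derivative of the Neumann trace into the ``geometric'' contribution (variation of $n$ and $ds$ along the moving curve, yielding the tangential-divergence-type term $\partial_\tau[\cdots]$) and the ``datum'' contribution (variation of $\xi_{\lambda,\ell}$ with respect to $q_\mu$ when $\lambda=\mu$), and checking that the latter is nonzero precisely for $\ell\in\{3,4,5\}$ and $m\in\{1,2\}$ with the stated value $\partial_\tau(\xi_{\lambda,\ell}^\perp\cdot e_m)$ — here one uses that $\partial_{h_{\mu,1}}\xi_{\mu,3}=-e_2=e_1^\perp$ and $\partial_{h_{\mu,2}}\xi_{\mu,3}=e_1=e_2^\perp$ (and similarly for $\ell=4,5$), while $\partial_{\vartheta_\mu}$ of these affine fields is tangential-divergence-free on $\partial{\mathcal S}_\mu$ and hence absorbed. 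Since the statement says this expression was already obtained in \cite{GLMS} for $\ell\in\{1,2,3\}$, I would present the proof as a streamlined rederivation built on Lemma~\ref{Lem:SD}/Corollary~\ref{Cor:SD}, emphasizing only the new point, namely the treatment of the indices $\ell=4,5$, which is formally identical since $\xi_{\mu,4},\xi_{\mu,5}$ are also affine fields vanishing at $h_\mu$ and their $h_\mu$-derivatives are again constant vector fields expressible as perpendiculars of the $e_m$.
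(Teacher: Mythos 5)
Your plan correctly identifies the relevant ingredients (Lemma~\ref{Lem:Neumann}, Lemma~\ref{Lem:SD}/Corollary~\ref{Cor:SD}, the transport formalism of Section~\ref{SSSec:ShapeDeriv}), and the harmonicity argument and the reasoning for \eqref{Eq:DerivKirchBC1} are sound. However, when you come to the substantive step \eqref{Eq:DerivKirchBC2} you propose to ``pull back by the flow $T^*_\mu(s,\cdot)$ and differentiate the identity $\partial_n\varphi_{\lambda,\ell}=K_{\lambda,\ell}$ along $\partial{\mathcal S}_\mu(s)$,'' tracking the variation of $n$ and of the surface measure separately. That is a direct shape differentiation of a \emph{Neumann} trace, which is substantially harder than what the paper actually does and which you would have to develop from scratch — there is no ``Neumann analogue of Lemma~\ref{Lem:SD}'' in the paper. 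The paper sidesteps it entirely: it introduces the harmonic conjugate $\varphi^*_{\lambda,\ell}$, solving the Dirichlet-type system \eqref{Eq:PhiEtoile} with boundary data ${\mathcal J}_{\lambda,\ell}$ (a primitive of $K_{\lambda,\ell}$, given explicitly by \eqref{Eq:DefJ}) and the zero-flux constraints, applies \emph{Corollary}~\ref{Cor:SD} (not Lemma~\ref{Lem:SD}: the constants $c_\kappa$ and the flux conditions are essential), and then translates back via $\nabla\varphi_{\lambda,\ell}=\nabla^\perp\varphi^*_{\lambda,\ell}$, so that $\partial_n\bigl(\partial_{q_{\mu,m}}\varphi_{\lambda,\ell}\bigr)=\partial_\tau\bigl(\partial_{q_{\mu,m}}\varphi^*_{\lambda,\ell}\bigr)$ and $\partial_\tau\varphi_{\lambda,\ell}=-\partial_n\varphi^*_{\lambda,\ell}$, together with $\nabla{\mathcal J}_{\lambda,\ell}=-\xi_{\lambda,\ell}^\perp$ from \eqref{Eq:DefJ2}. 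All the ``geometric'' contributions you worry about are automatically packaged by Corollary~\ref{Cor:SD} in the single term $\bigl(\partial_x\alpha-\partial_x\Phi\bigr)\cdot n\,K_{\mu,m}$, with no need to differentiate $n$ or $ds$.

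Two further points in your bookkeeping are off. First, your explanation for the factor $\delta_{m\in\{1,2\}}$ — that ``$\partial_{\vartheta_\mu}$ of these affine fields is tangential-divergence-free on $\partial{\mathcal S}_\mu$ and hence absorbed'' — is not the right reason. Looking at \eqref{def-xi-j} and \eqref{Eq:DefJ}, the fields $\xi_{\lambda,\ell}$ and the primitives ${\mathcal J}_{\lambda,\ell}$ are given by explicit formulas in $x$ and $h_\lambda$ that do not involve $\vartheta_\lambda$ at all; hence $\partial_{\vartheta_\lambda}{\mathcal J}_{\lambda,\ell}=0$ identically, which is exactly why the datum contribution vanishes for $m=3$. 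Second, your asserted identities $\partial_{h_{\mu,1}}\xi_{\mu,3}=-e_2=e_1^\perp$ and $\partial_{h_{\mu,2}}\xi_{\mu,3}=e_1=e_2^\perp$ have the wrong signs with the paper's convention $(x_1,x_2)^\perp=(-x_2,x_1)$: one has $e_1^\perp=e_2$ and $e_2^\perp=-e_1$, so $\partial_{h_{\mu,1}}\xi_{\mu,3}=-e_2=-e_1^\perp$ and $\partial_{h_{\mu,2}}\xi_{\mu,3}=e_1=-e_2^\perp$. Neither error derails the overall strategy, but both would need fixing before this could be turned into a complete proof, and switching to the conjugate/Dirichlet route would make the bookkeeping much less error-prone.
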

We recall that the notation $\xi_{\lambda,\ell}$ is defined in 
\eqref{def-xi-j}. 
\begin{proof}[Proof of Lemma~\ref{Lem:ShapeDerivativeKirchhoff}]
As previously, we translate the Neumann problem defining the Kirchhoff potential $\varphi_{\lambda,\ell}$ into a Dirichlet problem (or in other words, we consider the harmonic conjugate of $\varphi_{\lambda,\ell}$). Hence we introduce the function $\varphi_{\lambda,\ell}^*$ and the constants $c_{1},\cdots,c_N$  that satisfy
\begin{equation} \nonumber \label{Eq:PhiEtoile}
\left\{ \begin{array}{l}
\displaystyle -\Delta \varphi_{\lambda,\ell}^* = 0 \text{ in } {\mathcal F}({\bf q}), \medskip \\
\displaystyle \varphi_{\lambda,\ell}^* = {\mathcal J}_{\lambda,\ell} + c_{\lambda} \text{ on } \partial {\mathcal S}_{\lambda}({\bf q}),  \medskip \\
\displaystyle \varphi_{\lambda,\ell}^* = c_{\kappa} \text{ on } \partial {\mathcal S}_{\kappa}({\bf q}), \ \ \forall \kappa \neq \lambda, \medskip \\
\displaystyle \varphi_{\lambda,\ell}^* = 0 \text{ on } \partial \Omega, \medskip \\
\displaystyle \int_{\partial {\mathcal S}_{\kappa}} \partial_{n} \varphi_{\lambda,\ell}^* \, ds = 0, \ \ \forall \kappa \in \{1,\dots,N\},
\end{array} \right.
\end{equation}
where ${\mathcal J}_{\lambda,\ell}$ is defined as a primitive of $K_{\lambda,\ell}$ on ${\mathcal S}_{\lambda}$. Namely we take ${\mathcal J}_{\lambda,\ell} = 
0$ on $\partial {\mathcal F} \setminus \partial {\mathcal S}_{\lambda}$, and on $\partial {\mathcal S}_{\lambda}$,
\begin{multline} \label{Eq:DefJ}
{\mathcal J}_{\lambda,\ell} = -x_{2}  \text{ if } \ell = 1, \ \ 
{\mathcal J}_{\lambda,\ell} = x_{1} \text{ if } \ell = 2, \ \ 
{\mathcal J}_{\lambda,\ell} =  \frac{|x-h_{\lambda}|^2}{2} \text{ if } \ell = 3, \\ 
{\mathcal J}_{\lambda,\ell} =  (x_{1}-h_{\lambda,1})(x_{2}-h_{\lambda,2}) \text{ if } \ell = 4 \text{ and } 
{\mathcal J}_{\lambda,\ell} =  \frac{(x_{1}-h_{\lambda,1})^2 - (x_{2}-h_{\lambda,2})^2}{2} \text{ if } \ell = 5.
\end{multline}
We extend ${\mathcal J}_{\lambda,\ell}$ in the neighborhood of these boundaries by the same formulas. In particular, one has the relation
\begin{equation} \label{Eq:DefJ2}
\nabla {\mathcal J}_{\lambda,\ell} = - \xi_{\lambda,\ell}^\perp  \text{ in the neighborhood of } \partial {\mathcal F}. 
\end{equation}
Then  $\nabla \varphi_{\lambda,\ell} = \nabla^\perp \varphi^*_{\lambda,\ell}$  in ${\mathcal F}({\bf q})$,
and thus
%
%\begin{equation} \label{Eq:DefJ3}
$\nabla \left(\frac{\partial \varphi_{\lambda,\ell}}{\partial q_{\mu,m}}\right) = \nabla^\perp \left(\frac{\partial \varphi^*_{\lambda,\ell}}{\partial q_{\mu,m}}\right) \text{ in } {\mathcal F}({\bf q})$.
%\end{equation}
%
By Corollary~\ref{Cor:SD}, we find 
\begin{align*}
\frac{\partial \varphi^*_{\lambda,\ell}}{\partial q_{\mu,m}} 
&= \delta_{\kappa \lambda} \delta_{\mu \lambda} \frac{\partial {\mathcal J}_{\lambda,\ell}}{\partial q_{\lambda,m}}
+ (\delta_{\kappa \lambda} \nabla {\mathcal J}_{\lambda,\ell} \cdot n - \partial_{n} \varphi^*_{\lambda,\ell}) \,K_{\mu,m} + c'_{\kappa}
\ \text{ on } \ \partial {\mathcal S}_{\kappa}({\bf q}), \ \kappa \in \{1,\dots,N\}, \\
%\text{ and } 
\frac{\partial \varphi^*_{\lambda,\ell}}{\partial q_{\mu,m}} &= 0 \ \text{ on } \ \partial \Omega.
\end{align*}
We compute  $\frac{\partial {\mathcal J}_{\lambda,\ell}}{\partial q_{\lambda,m}}$ as follows:
\begin{equation*}
\frac{\partial {\mathcal J}_{\lambda,\ell}}{\partial q_{\lambda,m}}
= \delta_{\ell \geq 3} \delta_{m \in \{1,2\}} \nabla {\mathcal J}_{\lambda,\ell} \cdot e_{m} 
\ \text{ on } \ \partial {\mathcal S}_{\lambda}.
\end{equation*}
Since
$\partial_{\tau} \varphi_{\lambda,\ell} = -\partial_{n} \varphi^*_{\lambda,\ell}$
and 
$\displaystyle \partial_{n} \left(\frac{\partial \varphi_{\lambda,\ell}}{\partial q_{\mu,m}}\right) 
= \partial_{\tau} \left(\frac{\partial \varphi^*_{\lambda,\ell}}{\partial q_{\mu,m}}\right)$,
using \eqref{Eq:DefJ2} 
we obtain \eqref{Eq:DerivKirchBC2}.
\end{proof}
This allows us to prove the following estimates on the shape derivatives of the Kirchhoff potentials.
\begin{Proposition}%[Estimate of the shape derivatives of the Kirchhoff potentials] 
\label{Pro:ExpShapeDerivatives}
Let $\delta>0$. There is  $\varepsilon_{0}>0 $  such that for all $\overline{\boldsymbol\varepsilon}$ such that 
 $\overline{{\boldsymbol\varepsilon}} \leq \varepsilon_{0}$, 
 for  $\lambda, \mu, \kappa \in \{1,\dots,N\}$, for $\ell \in \{1,2,3\}$ and $m \in \{1,2,3,4,5\}$, uniformly for ${\bf q} \in \mathcal{Q}_\delta$, one has 
\begin{gather} \label{Eq:EstDerivKirchhoff1}
\frac{\partial \varphi_{\lambda,\ell}}{\partial q_{\mu,m}}
=  {\mathcal O}(\varepsilon_{\lambda}^{\delta_{\ell \geq 3} + 2 \delta_{\lambda \neq \mu}} \varepsilon_{\mu}^{\delta_{m3} + 2 \delta_{\mu \neq \kappa}})
\ \text{ on } \ \partial {\mathcal S}_\kappa \text{ (up to an additive constant)}, \\
\label{Eq:EstDerivKirchhoff1,5}
\left\| \nabla \frac{\partial \varphi_{\lambda,\ell}}{\partial q_{\mu,m}} \right\|_{L^\infty({\mathcal F})}
=  {\mathcal O}(\varepsilon_{\lambda}^{\delta_{\ell \geq 3} + 2 \delta_{\lambda \neq \mu}} \varepsilon_{\mu}^{-1+\delta_{m3}}), \\
\label{Eq:EstDerivKirchhoff2}
\nabla \frac{\partial \varphi_{\lambda,\ell}}{\partial q_{\mu,m}}(x)
=  {\mathcal O}(\varepsilon_{\lambda}^{\delta_{\ell \geq 3} + 2 \delta_{\lambda \neq \mu}} \varepsilon_{\mu}^{1+\delta_{m3}})  \ \ \text{ for } x \text{ such that } d(x,{\mathcal S}_{\mu}) \geq \delta.
\end{gather}
\end{Proposition}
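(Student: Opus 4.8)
The strategy is to combine the boundary-value characterization of the shape derivative $\partial_{q_{\mu,m}}\varphi_{\lambda,\ell}$ obtained in Lemma~\ref{Lem:ShapeDerivativeKirchhoff} with the uniform estimates for Neumann-type problems established in Propositions~\ref{Pro:DirichletSAM}, \ref{Pro:DirichletPetits} and \ref{Pro:DirichletGros} (via the Dirichlet/Neumann correspondence of Lemma~\ref{Lem:Neumann}). The point is that $\partial_{q_{\mu,m}}\varphi_{\lambda,\ell}$ solves a Neumann problem whose only nonzero boundary datum sits on $\partial{\mathcal S}_{\mu}$, so it is itself an object of the type $\mathfrak{f}^{\mathcal N}_{\mu}[\beta]$ (or $\widehat{\mathfrak f}^{\mathcal N}_{\mu}[\beta]$), and we only need to track the size of the datum $\beta$ appearing in \eqref{Eq:DerivKirchBC2}. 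First I would estimate, using the already-known expansions of $\varphi_{\lambda,\ell}$, how large $\partial_{\tau}\varphi_{\lambda,\ell}-(\xi_{\lambda,\ell}\cdot\tau)$ is on $\partial{\mathcal S}_{\mu}$: by \eqref{Eq:ExpKirchhoff1Bis} (resp.\ the estimates of Proposition~\ref{Pro:ExpKirchhoff2} when $\lambda\in{\mathcal P}_{(i)}$ and $\mu\in{\mathcal P}_s$), one has $\|\nabla\varphi_{\lambda,\ell}\|_{L^\infty(\partial{\mathcal S}_\mu)} = {\mathcal O}(\varepsilon_\lambda^{\delta_{\ell\geq3}})$ when $\mu=\lambda$, and ${\mathcal O}(\varepsilon_\lambda^{\delta_{\ell\geq3}+2}\varepsilon_\mu^{?})$-type decay when $\mu\neq\lambda$, reflecting the $|x-h_\lambda|^{-2}$ decay far from the $\lambda$-th solid; meanwhile $\xi_{\lambda,\ell}$ restricted to $\partial{\mathcal S}_\mu$ vanishes unless $\mu=\lambda$ (by the definition \eqref{def-xi-j}, $\xi_{\lambda,\ell}=0$ on $\partial{\mathcal S}_\nu$ for $\nu\neq\lambda$) and the curvature/scaling of the tangential derivative on $\partial{\mathcal S}_\mu$ introduces a factor $\varepsilon_\mu^{-1}$.

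Next I would feed this into \eqref{Eq:DerivKirchBC2}. The Neumann datum on $\partial{\mathcal S}_\mu$ is $\partial_\tau[(\partial_\tau\varphi_{\lambda,\ell}-\xi_{\lambda,\ell}\cdot\tau)(\xi_{\mu,m}\cdot n)] + \delta_{\ell\geq3}\delta_{m\in\{1,2\}}\partial_\tau(\xi_{\lambda,\ell}^\perp\cdot e_m)$. Since $\xi_{\mu,m}\cdot n = K_{\mu,m}$ is ${\mathcal O}(\varepsilon_\mu^{\delta_{m3}})$ on $\partial{\mathcal S}_\mu$, and the $\partial_\tau$ costs a factor $\varepsilon_\mu^{-1}$ in amplitude (but note that, by condition \eqref{Eq:NeumannIntNulle}, the primitive ${\mathcal B}$ of this datum is again ${\mathcal O}(\varepsilon_\mu)$ times the datum, which is exactly the gain recorded in Lemma~\ref{Lem:Neumann}), the function ${\mathcal B}$ entering the Dirichlet reformulation has $\|{\mathcal B}\|_{L^\infty(\partial{\mathcal S}_\mu)} = {\mathcal O}(\varepsilon_\lambda^{\delta_{\ell\geq3}+2\delta_{\lambda\neq\mu}}\varepsilon_\mu^{\delta_{m3}})$. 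Applying \eqref{Eq:NablaPhiHatGlobal} (with the rescaling, which produces the extra $\varepsilon_\mu^{-1}$ in the gradient) and \eqref{Eq:EstNablaPhiHat} (which produces the $\varepsilon_\mu^{+1}$ decay factor far from ${\mathcal S}_\mu$), together with Proposition~\ref{Pro:DirichletPetits}/Proposition~\ref{Pro:DirichletGros} to pass from the standalone to the genuine potential (picking up the $\varepsilon_\lambda^{2\delta_{\lambda\neq\mu}}$ and $\varepsilon_\mu^{2\delta_{\mu\neq\kappa}}$ factors when one must further "reflect" from $\partial{\mathcal S}_\mu$ to $\partial{\mathcal S}_\kappa$), yields \eqref{Eq:EstDerivKirchhoff1}, \eqref{Eq:EstDerivKirchhoff1,5} and \eqref{Eq:EstDerivKirchhoff2} respectively. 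For \eqref{Eq:EstDerivKirchhoff1} one also integrates $\nabla\partial_{q_{\mu,m}}\varphi_{\lambda,\ell}$ along $\partial{\mathcal S}_\kappa$ modulo the additive constant, exactly as in the proof of Proposition~\ref{Pro:ExpKirchhoff}.

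\textbf{Main obstacle.} The delicate bookkeeping is to make sure the powers of $\varepsilon_\lambda$ and $\varepsilon_\mu$ come out correctly in all four cases $\mu=\lambda=\kappa$, $\mu=\lambda\neq\kappa$, $\mu\neq\lambda$ with $\mu=\kappa$ or $\mu\neq\kappa$, etc., and in particular to correctly account for the $\varepsilon_\mu^{-1}$-loss in $\partial_\tau$ versus the $\varepsilon_\mu$-gain in taking the primitive ${\mathcal B}$ and in the far-field decay of $\widehat{\mathfrak f}^{\mathcal N}_\mu$. A subtle point is that the term $\delta_{\ell\geq3}\delta_{m\in\{1,2\}}\partial_\tau(\xi_{\lambda,\ell}^\perp\cdot e_m)$ contributes only on $\partial{\mathcal S}_\lambda$ (so only in the case $\mu=\lambda$), is ${\mathcal O}(1)$ before the $\partial_\tau$ in rescaled variables hence its primitive on $\partial{\mathcal S}_\mu$ is ${\mathcal O}(\varepsilon_\mu)$ — consistent with the claimed ${\mathcal O}(\varepsilon_\lambda^{\delta_{\ell\geq3}})$ on $\partial{\mathcal S}_\lambda$ and the ${\mathcal O}(\varepsilon_\lambda^{\delta_{\ell\geq3}})\varepsilon_\mu^{-1+\delta_{m3}}$ gradient bound, so it does not worsen the estimates. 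Assuming no additional conceptual input is needed beyond the uniform elliptic estimates already proved, the remainder is routine, if tedious, case-checking.
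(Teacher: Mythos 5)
Your proposal is correct and follows essentially the same route as the paper's proof: you write $\partial_{q_{\mu,m}}\varphi_{\lambda,\ell}$ as a Neumann potential $\mathfrak{f}^{\mathcal N}_{\mu}[\partial_\tau\mathcal{B}]$ via Lemma~\ref{Lem:ShapeDerivativeKirchhoff} and Lemma~\ref{Lem:Neumann}, bound $\mathcal{B}$ on $\partial\mathcal{S}_\mu$ by $\mathcal{O}(\varepsilon_\lambda^{\delta_{\ell\geq3}+2\delta_{\lambda\neq\mu}}\varepsilon_\mu^{\delta_{m3}})$ using Proposition~\ref{Pro:ExpKirchhoff}, and then apply Propositions~\ref{Pro:DirichletSAM} and~\ref{Pro:DirichletPetits} (the paper invokes Remark~\ref{Rem:GrosDoncPetit} for the case $\mu\in\mathcal{P}_{(i)}$ rather than Proposition~\ref{Pro:DirichletGros}, but this is a cosmetic difference) to get the gradient bounds in $L^\infty$ near and far from $\mathcal{S}_\mu$, finally integrating for \eqref{Eq:EstDerivKirchhoff1}. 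The bookkeeping you flag as the "main obstacle" is precisely the paper's Step 2, and your estimate of $\mathcal{B}$ matches the paper's \eqref{starfoula}.
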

\begin{proof}[Proof of Proposition~\ref{Pro:ExpShapeDerivatives}]
We proceed in three steps. \par
\ \par

\noindent
{\bf Step 1.} By Lemma~\ref{Lem:ShapeDerivativeKirchhoff}, 
\begin{equation} \label{Eq:CSD1}
\frac{\partial \varphi_{\lambda,\ell}}{\partial q_{\mu,m}} = \mathfrak{f}^{\mathcal N}_{\mu}[\partial_\tau {\mathcal B}] ,
\end{equation}
where we recall that $\mathfrak{f}^{\mathcal N}_{\mu}$ was defined in \eqref{Eq:PhiBeta} and where ${\mathcal B}$ is given on $\partial {\mathcal S}_{\mu}$ by a primitive of the data \eqref{Eq:DerivKirchBC2}  on ${\mathcal S}_{\mu}$: 
\begin{equation*}
{\mathcal B} = % \left\{ \begin{array}{l} \displaystyle
\left( \frac{\partial \varphi_{\lambda,\ell}}{\partial \tau}  - (\xi_{\lambda,\ell} \cdot \tau) \right) (\xi_{\mu,m} \cdot n) + \delta_{\ell \geq 3} \delta_{m \in \{1,2\}} \xi_{\lambda,\ell}^\perp \cdot e_{m} \ \text{ on } \ \partial {\mathcal S}_{\mu},
%\ \ \text{ if } \lambda=\mu,  \medskip \\ \displaystyle \frac{\partial \varphi_{\lambda,\ell}}{\partial \tau} (\xi_{\mu,m}\cdot n)  \ \ \text{ if } \lambda \neq \mu. \end{array} \right.
\end{equation*}
where we recall the convention \eqref{def-xi-j} on $\xi_{\lambda,\ell}$ (in particular, this is $0$ away from ${\mathcal S}_{\lambda}$). \par
\ \par
\noindent
{\bf Step 2.} Now we evaluate ${\mathcal B}$ on $\partial {\mathcal S}_{\mu}$. For $\lambda \neq \mu$, Proposition~\ref{Pro:ExpKirchhoff} gives directly
\begin{equation*}
\varepsilon_{\mu}^{j-\frac{1}{2}} | \varphi_{\lambda,\ell} |_{C^{j,\frac{1}{2}}({\mathcal V}_{\delta}(\partial {\mathcal S}_{\mu}))}  \leq C \varepsilon_{\lambda}^{2+\delta_{\ell \geq 3}},
\end{equation*}
In the case $\mu=\lambda$,
by Proposition~\ref{Pro:ExpKirchhoff}, for $j \geq 2$, one has
$
\varepsilon_{\lambda}^{j-\frac{1}{2}} | \varphi_{\lambda,\ell} - \widehat{\varphi}_{\lambda,\ell} |_{C^{j,\frac{1}{2}}({\mathcal V}_{\delta}(\partial {\mathcal S}_{\lambda}))} 
 \leq C \varepsilon_{\lambda}^{2+\delta_{\ell \geq 3}}.
$
Moreover from Proposition~\ref{Pro:DirichletSAM}, using the scale relation \eqref{Eq:ScalePhi}, we see that
$\varepsilon_{\lambda}^{j+\frac{1}{2}} | \widehat{\varphi}_{\lambda,\ell} |_{C^{j,\frac{1}{2}}({\mathcal F})} 
\leq C \varepsilon_{\lambda}^{1+ \delta_{\ell \geq 3}}$.
We deduce that 
\begin{equation*}
\varepsilon_{\lambda}^{j-\frac{1}{2}} | \varphi_{\lambda,\ell} |_{C^{j,\frac{1}{2}}({\mathcal V}_{\delta}(\partial {\mathcal S}_{\lambda}))} 
\leq C \varepsilon_{\lambda}^{\delta_{\ell \geq 3}}.
\end{equation*}
On the other hand, for all $\mu$ (including $\lambda$), the tangent $\tau$ on $\partial {\mathcal S}_{\mu}$ satisfies itself 
 $\varepsilon_{\mu}^{j+\frac{1}{2}} | \tau |_{C^{j,\frac{1}{2}}(\partial {\mathcal S}_{\mu})} \leq C$ (this is a scaling argument consistent with \eqref{Eq:NormesHolder}). 
For what concerns the $L^\infty$ norm, it follows from Propositions~\ref{Pro:ExpKirchhoff} and \ref{Pro:ExpKirchhoff2} that
$\| \nabla \varphi_{\lambda, \ell} \|_{L^\infty(\partial {\mathcal S}_{\mu})} = {\mathcal O}(\varepsilon_{\lambda}^{2 \delta_{\lambda \neq \mu} + \delta_{\ell3}})$. We deduce with the Leibniz rule that for all $\mu \in \{1,\dots,N\}$
\begin{equation*}
\varepsilon_{\mu}^{\frac{5}{2}} | \partial_{\tau} \varphi_{\lambda,\ell} |_{C^{2,\frac{1}{2}}(\partial {\mathcal S}_{\mu})}
\leq C \varepsilon_{\lambda}^{2 \delta_{\mu \neq \lambda} + \delta_{\ell \geq 3}}.
\end{equation*}
It follows then that
\begin{equation}
\label{starfoula}
\|{\mathcal B}\|_{L^\infty(\partial {\mathcal S}_{\mu})} 
+ \varepsilon_\mu^{\frac{5}{2}} |{\mathcal B}|_{C^{2,\frac{1}{2}}(\partial {\mathcal S}_{\mu})} 
= {\mathcal O}(\varepsilon_{\lambda}^{2 \delta_{\mu \neq \lambda} + \delta_{\ell \geq 3}} \varepsilon_{\mu}^{\delta_{m3}} ).
\end{equation}
\par
\ \par
\noindent
{\bf Step 3.} Now we deduce estimates on $\mathfrak{f}^{\mathcal N}_{\mu}[\partial_\tau {\mathcal B}]$ as follows: we apply Lemma~\ref{Lem:Neumann}, Proposition~\ref{Pro:DirichletPetits} and Remark~\ref{Rem:GrosDoncPetit} to  $\mathfrak{f}^{\mathcal N}_{\mu}[\partial_\tau {\mathcal B}]$ to obtain that
 for  $ \mu \in \{1,\dots,N\}$, 
\begin{equation} \label{Eq:ApproxNFN}
\nabla \mathfrak{f}^{\mathcal N}_{\mu}[\partial_\tau {\mathcal B}]  
=\nabla \widehat{\mathfrak{f}}^{\mathcal N}_{\mu}[\partial_\tau {\mathcal B}] 
+{\mathcal O} \left(\varepsilon_{\mu} \| {\mathcal B} \|_{L^\infty(\partial {\mathcal S}_{\mu})}\right) \text{ in } L^\infty({\mathcal F}(q)).
\end{equation}
To estimate $\nabla \widehat{\mathfrak{f}}^{\mathcal N}_{\mu}[\partial_\tau {\mathcal B}] $, we use Proposition~\ref{Pro:DirichletSAM} and \eqref{starfoula}. 
Hence \eqref{Eq:EstDerivKirchhoff2} is a consequence of  \eqref{Eq:ApproxNFN} and \eqref{Eq:EstNablaPhiHat}, and \eqref{Eq:EstDerivKirchhoff1,5} follows from \eqref{Eq:NablaPhiHatGlobal}.
We deduce \eqref{Eq:EstDerivKirchhoff1} by integrating \eqref{Eq:EstDerivKirchhoff1,5} (if $\kappa=\mu$) and \eqref{Eq:EstDerivKirchhoff2} (otherwise) over $\partial {\mathcal S}_\kappa$. The estimate on $\partial \Omega$ is performed in the same way.
\end{proof}
%
%
%
%
%
%%%%%%%%%%%%%%%%%%%%%%%%%%%%%%%%%%%%%%%%%%%%%%%%%%%%%%%%%%%%
%
%
%
\subsection{Estimates on the circulation stream function}
In this section we study the circulation stream functions ${\psi}_{\kappa}^\varepsilon$, for $\kappa=1,\cdots,N$, introduced in \eqref{def_stream_F}.

We first recall several elementary properties of the standalone circulation stream functions   $\widehat{\psi}_{\kappa}^\varepsilon$, for $\kappa=1,\cdots,N$, introduced in \eqref{def_stream}.
We refer for instance to \cite{GLS} for a proof. 

\begin{Lemma} \label{Lem:StandaloneCSF}
For $\varepsilon_{\kappa}=1$, 
\begin{equation} \label{Eq:SEPsi}
\widehat{\psi}_{\kappa}^1 \big( (h_{\kappa},\vartheta_{\kappa}),x \big) 
= \widehat{\psi}_{\kappa}^1 \big( (0,0), R(-\vartheta_{\kappa})(x-h_{\kappa}) \big),
\end{equation}
 for fixed $q_{\kappa}$,
\begin{equation} \label{Eq:ScalePsi}
\nabla \widehat{\psi}_{\kappa}^\varepsilon(x- h_{\kappa}) = \frac{1}{\varepsilon_{\kappa}} \nabla \widehat{\psi}_{\kappa}^1 \left(\frac{x-h_{\kappa}}{\varepsilon_{\kappa}}\right),
\end{equation}
the function $\partial_{1} \widehat{\psi}_{\kappa} - i \partial_{2} \widehat{\psi}_{\kappa}$ admits the following  Laurent series expansion for $C$ such that 
${\mathcal S}_{\kappa}^1 \subset B(0,C)$, 
\begin{equation} \label{Eq:PsiLS}
\partial_{1} \widehat{\psi}_{\kappa} - i \partial_{2} \widehat{\psi}_{\kappa}= \frac{1}{2i\pi z} + \sum_{k \geq 2} \frac{a_{k}}{z^k}  \ \text{ for }  \ 
z= x_{1}-h_{1,\kappa} + i(x_{2}-h_{2,\kappa})  \  \text{ and }  \  
 |z| \geq C.
\end{equation}
\end{Lemma}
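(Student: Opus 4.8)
The plan is to establish Lemma~\ref{Lem:StandaloneCSF} through explicit conformal-mapping and scaling arguments, exactly parallel to the treatment of the standalone Kirchhoff potentials.

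\medskip

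\noindent\textbf{The symmetry relation \eqref{Eq:SEPsi}.} First I would observe that $\widehat{\psi}_{\kappa}^1$ is characterized (up to the additive constant, fixed by \eqref{circ-norma} and the decay at infinity) as the harmonic function on $\R^2\setminus{\mathcal S}_{\kappa}^1(q_{\kappa})$ that is constant on $\partial{\mathcal S}_{\kappa}^1$, vanishes at infinity together with the normalization \eqref{circ-norma}. Now ${\mathcal S}_{\kappa}^1(q_{\kappa}) = h_{\kappa} + R(\vartheta_{\kappa})({\mathcal S}_{\kappa,0}^1 - h_{\kappa,0})$, so the map $y \mapsto x = h_{\kappa} + R(\vartheta_{\kappa})(y - h_{\kappa,0})$ is a rigid motion sending the exterior domain of ${\mathcal S}_{\kappa}^1((0,0))$ (translated appropriately) onto that of ${\mathcal S}_{\kappa}^1(q_{\kappa})$. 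Since the Laplacian, the boundary being a level set, the decay at infinity, and the flux normalization \eqref{circ-norma} (a rotation and translation preserve $\oint \partial_n \cdot\, ds$) are all invariant under this rigid motion, the pullback of $\widehat{\psi}_{\kappa}^1(q_{\kappa},\cdot)$ solves the same problem as $\widehat{\psi}_{\kappa}^1((0,0),\cdot)$, and uniqueness gives \eqref{Eq:SEPsi} after accounting for the translation $h_{\kappa,0}$ versus $h_{\kappa}$ (a harmless relabeling of the reference position).

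\medskip

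\noindent\textbf{The scaling relation \eqref{Eq:ScalePsi}.} For fixed $q_{\kappa}$, one checks that $x \mapsto \varepsilon_{\kappa}^{\,-1}\widehat{\psi}_{\kappa}^1\big(h_{\kappa} + \tfrac{x-h_{\kappa}}{\varepsilon_{\kappa}}\big)$ is harmonic on $\R^2\setminus{\mathcal S}_{\kappa}^\varepsilon$, constant on $\partial{\mathcal S}_{\kappa}^\varepsilon$, decays at infinity, and — because the normal derivative picks up one factor $\varepsilon_{\kappa}^{-1}$ from the gradient, one factor $\varepsilon_{\kappa}^{-1}$ from the overall prefactor, and the curve element $ds$ scales by $\varepsilon_{\kappa}$ — satisfies the same normalized flux condition \eqref{circ-norma}. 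Hence by uniqueness it equals $\widehat{\psi}_{\kappa}^\varepsilon$ up to an additive constant, and differentiating yields \eqref{Eq:ScalePsi}. (The additive constant does not matter since only $\nabla\widehat{\psi}_{\kappa}$ is asserted.)

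\medskip

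\noindent\textbf{The Laurent expansion \eqref{Eq:PsiLS}.} Working with $\varepsilon_{\kappa}=1$ and identifying $\R^2\cong\C$ with $z = (x_1-h_{1,\kappa}) + i(x_2-h_{2,\kappa})$, the function $f(z) := \partial_1\widehat{\psi}_{\kappa} - i\partial_2\widehat{\psi}_{\kappa}$ is holomorphic on $\{|z|>C\}\cup\{\infty\}$ minus the bounded solid, hence holomorphic on the exterior $\{|z|\geq C\}$ of a disc containing ${\mathcal S}_{\kappa}^1$, and it tends to $0$ at infinity. Therefore it has a Laurent expansion $f(z) = \sum_{k\geq 1} a_k z^{-k}$ with no nonnegative powers. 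The leading coefficient $a_1$ is computed by the residue/flux formula: integrating $f$ around a large circle $S(h_{\kappa},\rho)$, one gets $\oint f(z)\,dz = 2\pi i\, a_1$, while on the other hand $\oint f\,dz$ can be expressed, via the Cauchy–Riemann relations and the identity $(\partial_1\widehat{\psi}-i\partial_2\widehat{\psi})\,dz = d\widehat{\psi} + i\,d(\text{harmonic conjugate})$, in terms of the period $\oint\partial_n\widehat{\psi}\,ds = -1$ from \eqref{circ-norma}; this forces $a_1 = \tfrac{1}{2i\pi}$, giving the displayed leading term. Relabeling $a_{k+1}$ as $a_k$ (i.e.\ absorbing the $k=1$ term) produces \eqref{Eq:PsiLS}.

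\medskip

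\noindent\textbf{Main obstacle.} The computations are routine; the one point requiring care is the bookkeeping in the flux/residue argument for the leading Laurent coefficient — getting the sign and the factor $2\pi$ right, matching the clockwise/outward-normal conventions used in \eqref{circ-norma} and in the definition of $\tau$ in the introduction, and confirming that the harmonic conjugate's single-valued part contributes nothing. Since the statement merely recalls known facts (``we refer for instance to \cite{GLS} for a proof''), this bookkeeping is the only genuine content to verify.
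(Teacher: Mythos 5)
The paper itself gives no proof of this lemma (it refers to \cite{GLS}), so there is nothing to compare strategies against; the question is simply whether your argument is correct. Your tools — uniqueness of the exterior Dirichlet-type problem under rigid motions for \eqref{Eq:SEPsi}, a rescaling ansatz for \eqref{Eq:ScalePsi}, and the holomorphy of $\partial_1\widehat\psi_\kappa - i\partial_2\widehat\psi_\kappa$ plus a residue count for \eqref{Eq:PsiLS} — are all the right ones, and the proof of \eqref{Eq:SEPsi} is sound. But there are two concrete slips in the remaining two points.

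For \eqref{Eq:ScalePsi}, the prefactor $\varepsilon_\kappa^{-1}$ in your ansatz is wrong. With your bookkeeping the normal derivative scales by $\varepsilon_\kappa^{-1}\cdot\varepsilon_\kappa^{-1}$ and $ds$ by $\varepsilon_\kappa$, so the flux scales by $\varepsilon_\kappa^{-1}\neq 1$; your $\Psi$ therefore does \emph{not} satisfy the normalization \eqref{circ-norma}, and uniqueness cannot be invoked. Moreover, if your $\Psi$ did equal $\widehat\psi_\kappa^\varepsilon$, differentiating it would give $\nabla\widehat\psi_\kappa^\varepsilon = \varepsilon_\kappa^{-2}\nabla\widehat\psi_\kappa^1(\cdot/\varepsilon_\kappa)$, contradicting \eqref{Eq:ScalePsi} itself. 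The correct ansatz has no prefactor: $\Psi(x):=\widehat\psi_\kappa^1\big(h_\kappa+\tfrac{x-h_\kappa}{\varepsilon_\kappa}\big)$. Then the gradient gives one $\varepsilon_\kappa^{-1}$ and $ds$ one $\varepsilon_\kappa$, the flux is $-1$, uniqueness applies (up to a constant, which drops on differentiating), and $\nabla\Psi=\varepsilon_\kappa^{-1}\nabla\widehat\psi_\kappa^1(\cdot/\varepsilon_\kappa)$ is exactly \eqref{Eq:ScalePsi}.

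For \eqref{Eq:PsiLS}, you identify the residue/sign bookkeeping as ``the only genuine content to verify'' and then do not carry it out; you should, because it does not produce the stated coefficient. Writing $g:=\partial_1\widehat\psi_\kappa-i\partial_2\widehat\psi_\kappa = 2\partial_z\widehat\psi_\kappa$ and integrating over a large CCW circle $\Gamma$ with outward normal $N$, one has
\[
\oint_\Gamma g\,dz \;=\; \oint_\Gamma d\widehat\psi_\kappa \;+\; i\oint_\Gamma \partial_N\widehat\psi_\kappa\,ds \;=\; 0 + i\cdot\Big(-\oint_{\partial\mathcal S_\kappa}\partial_n\widehat\psi_\kappa\,ds\Big) \;=\; i,
\]
where the sign comes from the divergence theorem in the annulus, $n$ pointing into $\mathcal S_\kappa$, and \eqref{circ-norma}. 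Hence $2\pi i\,a_1 = i$, i.e.\ $a_1 = \tfrac{1}{2\pi}$, not $\tfrac{1}{2i\pi}$. This is the only value consistent with the rest of the paper: applying $\partial_1-i\partial_2 = 2\partial_z$ to the leading behavior $\widehat\psi_\kappa\sim\tfrac{1}{2\pi}\log|z|$ gives $\tfrac{1}{2\pi z}$, whose perp-gradient is exactly the positive point vortex $\tfrac{(x-h_\kappa)^\perp}{2\pi|x-h_\kappa|^2}$ of \eqref{Eq:BehaviourPsi1} and \eqref{DefUellKappa}; the value $\tfrac{1}{2i\pi z}=-\tfrac{i}{2\pi z}$ would instead produce a purely radial field. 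The factor $i$ in \eqref{Eq:PsiLS} thus appears to be a misprint in the statement, and your proof, by quietly reproducing it, demonstrates that the verification you correctly flagged as the crux was not in fact done.
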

Note in particular that \eqref{Eq:ScalePsi}-\eqref{Eq:PsiLS} involve
\begin{equation} \label{Eq:BehaviourPsi1}
\nabla^{\perp} \widehat{\psi}^\varepsilon_{\kappa} (x)= \frac{(x-h_{\kappa})^\perp}{2\pi |x-h_{\kappa}|^2} +  {\mathcal O}\left(\frac{\varepsilon_{\kappa}}{|x-h_{\kappa}|^2}\right) \text{ for } |x - h_{\kappa}| \geq C \varepsilon_{\kappa},
\end{equation}
and consequently
\begin{equation} \label{Eq:BehaviourPsi2}
(x-h_{\kappa})^\perp \cdot \nabla^{\perp} \widehat{\psi}^\varepsilon_{\kappa} (x)= \frac{1}{2\pi} +  {\mathcal O}\left(\varepsilon_{\kappa}\right) \text{ for } |x - h_{\kappa}| \geq {\mathcal O}(1).
\end{equation}
The ${\mathcal O}\left(\varepsilon_{\kappa}\right)$ above can be taken in any norm, because this functions is harmonic, since
\begin{equation} \label{Eq:VitRotHarm}
(x-h_{\kappa})^\perp \cdot \nabla^{\perp} \widehat{\psi}^\varepsilon_{\kappa} (x)
= \mbox{Re}[i(z-h_{\kappa})(\partial_{1} \widehat{\psi}_{\kappa} - i \partial_{2} \widehat{\psi}_{\kappa})].
\end{equation}
We are now in position to study ${\psi}_{\kappa}^\varepsilon$. 
\subsubsection{Estimates on the reflected circulation stream function}
\label{SSS:CSF}
For  $\kappa=1,\dots,N$, we consider in the difference between the circulation stream function $\psi_{\kappa}$ and its standalone version $\widehat{\psi}_{\kappa}$, that is 
\begin{equation} \label{Eq:DefPsiKappar}
\psi_{\kappa}^r := \psi_{\kappa} - \widehat{\psi}_{\kappa} . 
\end{equation}
By \eqref{def_stream_F} and 
\eqref{def_stream}  
 there are some constants $c_{\lambda}$,  for $\lambda=1,\dots,N$, such that 
\begin{equation} \label{Eq:psir}
\left\{ \begin{array}{l}
\Delta \psi_{\kappa}^r = 0 \ \text{ in } \ {\mathcal F}, \medskip \\
\psi_{\kappa}^r = c_{\kappa} \ \text{ on } \ \partial {\mathcal S}_{\kappa} , \medskip \\
\psi_{\kappa}^r = - \widehat{\psi}_{\kappa} + c_{\nu} \ \text{ on } \ \partial {\mathcal S}_{\nu} , \ \forall \nu \neq \kappa, \medskip \\
\psi_{\kappa}^r = - \widehat{\psi}_{\kappa}  \ \text{ on } \ \partial \Omega , \medskip \\
\int_{\partial {\mathcal S}_{\nu}} \partial_{n} \psi_{\kappa}^r = 0, \ \text{ for all } \  \nu=1,\dots,N.
\end{array} \right.
\end{equation}
Thus $\psi_{\kappa}^r$ can be considered as  a ``reflected'' circulation stream function: one can view it as the part of $\psi_{\kappa}$ due to the response of the domain to the standalone stream function $\widehat{\psi}_{\kappa}$. 
We have the following estimates on $\psi_{\kappa}^r$. 
\begin{Lemma} \label{Lem:WBorne}
Let $\delta>0$. There exists $\varepsilon_{0}>0$ such that the following holds.
Let $\kappa \in \{1,\dots,N\}$ and $k \in \N$. There exists $C>0$ such that for any $\overline{\boldsymbol\varepsilon}$ such that $\overline{{\boldsymbol\varepsilon}} \leq \varepsilon_{0}$ and any ${\bf q} \in {\mathcal{Q}}_{\delta}$, one has
\begin{gather}
\label{Eq:WBorne}
\| \nabla \psi_{\kappa}^r \|_{L^{\infty}({\mathcal F})}  \leq  C, \\
\label{Eq:WHolder}
\forall \lambda \in \{1,\dots,N\},  \ \ \ \varepsilon_{\lambda}^{k-\frac{1}{2}} \| \psi_{\kappa}^r \|_{C^{k,\frac{1}{2}}({\mathcal V}_{\delta}(\partial {\mathcal S}_{\lambda}))}  \leq  C.
\end{gather}
\end{Lemma}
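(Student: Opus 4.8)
The plan is to reduce the estimates on $\psi_\kappa^r$ to the auxiliary Dirichlet theory developed in Section~\ref{Subsec:PotManySolids}. Recall that $\psi_\kappa^r$ solves system \eqref{Eq:psir}, which is precisely a problem of type \eqref{Eq:HAlpha}: the unknown is harmonic in ${\mathcal F}$, satisfies a zero-flux condition on each $\partial{\mathcal S}_\nu$, and is prescribed up to an additive constant on each connected component of the boundary. More precisely, in the notation \eqref{Eq:PhiAlpha}, we may write
\begin{equation} \nonumber
\psi_\kappa^r = \sum_{\nu \neq \kappa} \mathfrak{f}_\nu\big[ -\widehat{\psi}_\kappa{}_{|\partial{\mathcal S}_\nu} \big] + \mathfrak{H}\big[0,\dots,0,-\widehat{\psi}_\kappa{}_{|\partial\Omega};\,0\big] + (\text{term at }\partial\Omega\text{ handled by }\mathfrak{H}),
\end{equation}
i.e.\ $\psi_\kappa^r = \mathfrak{H}[\alpha_1,\dots,\alpha_N;\alpha_\Omega]$ with $\alpha_\nu = -\widehat{\psi}_\kappa{}_{|\partial{\mathcal S}_\nu}$ for $\nu\neq\kappa$, $\alpha_\kappa = 0$, and $\alpha_\Omega = -\widehat{\psi}_\kappa{}_{|\partial\Omega}$. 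The key point is that all the boundary data involve only the \emph{standalone} stream function $\widehat{\psi}_\kappa$ evaluated on boundary components \emph{other than} $\partial{\mathcal S}_\kappa$, at distance at least $\delta$ from ${\mathcal S}_\kappa$, so by \eqref{Eq:BehaviourPsi1} (and the fact that $\widehat{\psi}_\kappa$ itself, fixed up to a constant, is bounded on such sets) these data are uniformly bounded in $C^{k,\frac12}$ on the relevant neighborhoods, uniformly for ${\bf q}\in{\mathcal Q}_\delta$ and $\overline{\boldsymbol\varepsilon}\leq\varepsilon_0$.

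First I would split $\psi_\kappa^r = \Psi_{\rm big} + \Psi_{\rm small}$, where $\Psi_{\rm big}$ collects the contributions of boundary data located on the large solids ${\mathcal S}_\nu$, $\nu\in{\mathcal P}_{(i)}\setminus\{\kappa\}$ (if $\kappa\in{\mathcal P}_{(i)}$) or all of ${\mathcal P}_{(i)}$ (if $\kappa\in{\mathcal P}_s$), plus $\partial\Omega$; and $\Psi_{\rm small}$ collects the contributions of data on the small solids ${\mathcal S}_\lambda$, $\lambda\in{\mathcal P}_s\setminus\{\kappa\}$. For $\Psi_{\rm big}$ I would apply Proposition~\ref{Pro:DirichletGros}: since $\widehat{\psi}_\kappa{}_{|\partial{\mathcal S}_\nu}$ for $\nu$ large (and $\partial\Omega$) lies in a bounded set of $C^0$ and indeed of $C^{k,\frac12}$ by \eqref{Eq:BehaviourPsi1}, estimate \eqref{Eq:EstPhiCheckg} controls $\|\nabla\Psi_{\rm big}\|_{L^\infty}$, and \eqref{Eq:PhiGrosHolder} together with \eqref{Eq:EstCorrectLambda} gives the $C^{k,\frac12}$ bound on ${\mathcal V}_\delta(\partial{\mathcal S}_\lambda)$ for every $\lambda$. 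For $\Psi_{\rm small}$ I would apply Proposition~\ref{Pro:DirichletPetits} with $\alpha^\varepsilon = -\widehat{\psi}_\kappa{}_{|\partial{\mathcal S}_\nu^\varepsilon}$ on each small $\nu\neq\kappa$: here $\|\alpha^\varepsilon\|_{L^\infty(\partial{\mathcal S}_\nu^\varepsilon)}$ is uniformly bounded (by \eqref{Eq:BehaviourPsi1}, $\widehat{\psi}_\kappa$ normalized suitably is $O(1)$ on $\partial{\mathcal S}_\nu$ since $d({\mathcal S}_\nu,{\mathcal S}_\kappa)\geq\delta$), and estimates \eqref{Eq:EstPhiHatPhi}--\eqref{Eq:EstPhiHatPhi2} yield $\|\nabla\mathfrak{f}_\nu[\alpha^\varepsilon]\|_{L^\infty({\mathcal F})}\leq C\varepsilon_\nu\leq C$ and the required weighted $C^{k,\frac12}$ bounds near each ${\mathcal S}_\lambda$. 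Summing the finitely many contributions over $\nu$ gives \eqref{Eq:WBorne} and \eqref{Eq:WHolder}.

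One subtlety to address carefully: the normalization of the additive constant in $\widehat{\psi}_\kappa$ (fixed by \eqref{def_stream} via $C_\kappa({\bf q})$) must be reconciled with the normalizations implicit in the decomposition into $\mathfrak{f}_\nu$ and $\mathfrak{H}$ terms, but since all the quantities estimated here ($\nabla\psi_\kappa^r$ and the $C^{k,\frac12}$ seminorms, which kill constants) are insensitive to such constants, this is harmless; I would note it explicitly but not dwell on it. The other routine point is verifying that $\|\widehat{\psi}_\kappa\|_{C^{k,\frac12}(A)}$ is uniformly bounded for $A={\mathcal V}_\delta(\partial{\mathcal S}_\nu)$, $\nu\neq\kappa$, and $A={\mathcal V}_\delta(\partial\Omega)$: this follows from \eqref{Eq:ScalePsi}--\eqref{Eq:PsiLS} (or directly \eqref{Eq:BehaviourPsi1}) combined with interior elliptic regularity for the harmonic function $\widehat{\psi}_\kappa$ on the $\delta$-neighborhood, which sits at positive distance from ${\mathcal S}_\kappa$. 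The main obstacle—though a mild one in this lemma—is simply keeping track of the bookkeeping: correctly casting \eqref{Eq:psir} in the form \eqref{Eq:HAlpha}, separating the roles of large versus small boundary components so that the right Proposition applies to each piece, and checking that the boundary data have the uniform $C^{k,\frac12}$ bounds those Propositions require; no genuinely new estimate is needed beyond Section~\ref{Sec:Expansions}.
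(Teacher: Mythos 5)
Your proposal takes essentially the same approach as the paper: cast \eqref{Eq:psir} in the form \eqref{Eq:HAlpha}, decompose $\psi_\kappa^r$ into a $\mathfrak{H}$ part carrying the large-solid and $\partial\Omega$ data plus the individual $\mathfrak{f}_\nu$ parts for the shrinking solids $\nu\in{\mathcal P}_s\setminus\{\kappa\}$, observe via \eqref{Eq:BehaviourPsi1} and interior regularity that the standalone stream function is uniformly $C^{k,\frac12}$ on the boundary components away from $\partial{\mathcal S}_\kappa$, and then apply Propositions~\ref{Pro:DirichletGros} and~\ref{Pro:DirichletPetits} (with Proposition~\ref{Pro:DirichletSAM} for the standalone correctors) to each piece. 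One minor imprecision: to close the $C^{k,\frac12}$ estimate on the $\mathfrak{H}$ part you also need the uniform Schauder estimate of Lemma~\ref{Lem:USE} in $\widecheck{\mathcal F}$ to upgrade the $L^\infty$ bound on $\widecheck{\mathfrak g}$ (the paper invokes this explicitly), and \eqref{Eq:EstPhiCheckg} bounds only the remainder after subtracting $\widecheck{\mathfrak g}_\alpha$ and its standalone correctors, so those must be estimated separately — but you do point to the relevant estimates, so this is a matter of exposition rather than a gap.
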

\begin{proof}[Proof of Lemma~\ref{Lem:WBorne}]
We let
\begin{multline} \label{Eq:DefDeNUplets}
{\bf A}:= (\widehat{\psi}_{\kappa|\partial {\mathcal S}_{1}},\dots,\widehat{\psi}_{\kappa|\partial {\mathcal S}_{N_{(i)}}}, 0,\dots,0,\widehat{\psi}_{\kappa|\partial \Omega})
\ \text{ and } \ 
\widecheck{\bf A}:= (\widehat{\psi}_{\kappa|\partial {\mathcal S}_{1}},\dots,\widehat{\psi}_{\kappa|\partial {\mathcal S}_{N_{(i)}}},\widehat{\psi}_{\kappa|\partial \Omega}), \\
\text{ where moreover we replace the } \kappa\text{-th element } \widehat{\psi}_{\kappa|\partial {\mathcal S}_{\kappa}} \text{ with } 0 \text{ whenever } \kappa \in {\mathcal P}_{(i)}.
\end{multline}
With Propositions~\ref{Pro:DirichletPetits} and \ref{Pro:DirichletGros} in mind, we rewrite $\psi_{\kappa}^r$ as
\begin{equation} \label{Eq:ExpressionPsir}
\psi_{\kappa}^r = - \mathfrak{H}[{\bf A}]
 - \sum_{\nu \in {\mathcal P}_{s} \setminus \{ \kappa \}} \mathfrak{f}_{\nu}[\widehat{\psi}_{\kappa|\partial {\mathcal S}_{\nu}}] .
\end{equation}
Due to Lemma~\ref{Lem:StandaloneCSF}, $\nabla \widehat{\psi}_{\kappa}$ is bounded on $\{x \ / d(x,\partial {\mathcal S}_{\kappa}) \geq \delta \}$, and hence so is $\widehat{\psi}_{\kappa}$. Thanks to interior elliptic estimates we may even obtain that
\begin{equation} \label{Eq:EstPsiAilleurs}
\varepsilon_{\nu}^{-1}\| \widehat{\psi}_{\kappa} - \widehat{\psi}_{\kappa}(h_{\nu}) \|_{L^{\infty} (\partial {\mathcal S}_{\nu}) }
+ | \widehat{\psi}_{\kappa} |_{C^{k,\frac{1}{2}}(\partial {\mathcal S}_{\nu})} \text{ is bounded for }\nu \neq \kappa.
\end{equation}
With uniform Schauder estimates in $\widecheck{{\mathcal F}}$ (Lemma~\ref{Lem:USE}), this involves that $\| \widecheck{\mathfrak{g}}[\widecheck{\bf A}] \|_{C^{k,\frac{1}{2}}(\widecheck{\mathcal{F}})}$ is bounded. With Proposition~\ref{Pro:DirichletGros} we deduce that $\mathfrak{H}[{\bf A}]$ gives a bounded contribution to \eqref{Eq:WBorne} and \eqref{Eq:WHolder}. \par
For what concerns the second term in \eqref{Eq:ExpressionPsir}, we use Proposition~\ref{Pro:DirichletPetits} and \eqref{Eq:EstPsiAilleurs}. It remains then to estimate the corresponding combination of standalone potentials $\widehat{\mathfrak f}_{\nu}[\widehat{\psi}_{\kappa|\partial {\mathcal S}_{\nu}}]$ for $\nu \in {\mathcal P}_{s} \setminus \{ \kappa \}$.  The conclusion follows from Proposition~\ref{Pro:DirichletSAM}.
\end{proof}
%
%
%\ \par
%
%
In addition to these uniform estimates, one may describe the limit of these circulation vector fields. For that we rely on the decomposition
\begin{equation} \label{Eq:DecompNatPsi}
\nabla^\perp \psi_{\kappa} = \nabla^\perp \widehat{\psi}_{\kappa}+ \nabla^\perp \psi_{\kappa}^r ,    
\end{equation}
and introduce two particular velocity vector fields that appear in the limit. For $\kappa \in {\mathcal P}_{s}$, we denote 
\begin{equation} \label{Eq:DefH}
H_{\kappa}(x):= \frac{(x-h_{\kappa})^\perp}{2\pi|x-h_{\kappa}|^2} ,
\end{equation}
and 
for $\kappa \in {\mathcal P}_{s}$, the potential $\widecheck{\psi}_{\kappa}^r$ as
the solution (up to an additive constant) of 
\begin{equation} \label{Eq:DefUKRKappa}
\left\{ \begin{array}{l}
\Delta  \widecheck{\psi}_{\kappa}^r = 0 \ \text{ in } \ \widecheck{{\mathcal F}}({\boldsymbol{q}}_{(i)}), \medskip \\
\nabla^\perp \widecheck{\psi}_{\kappa}^r  (x)\cdot n(x) = -H_{\kappa}(x) \cdot n(x)
\ \text{ on } \ \partial \Omega \cup \bigcup_{\nu \in  {\mathcal P}_{(i)} } \partial {\mathcal S}_{\nu}, \medskip \\
\displaystyle \oint_{\partial {\mathcal S}_{\nu}}  \nabla^\perp \widecheck{\psi}_{\kappa}^r \cdot \tau \, ds = 0  \ \text{ for } \ \nu \in  {\mathcal P}_{(i)}  .
\end{array} \right.
\end{equation}
It is straightforward to see that for any $\kappa \in {\mathcal P}_{s}$, 
\begin{equation} \label{Eq:Kdelta}
H_{\kappa} + \nabla^\perp \widecheck{\psi}_{\kappa}^r  = \widecheck{K}[\delta_{h_{\kappa}}] \ \text{ in } \ \widecheck{{\mathcal F}}({\bf q}_{(i)}).
\end{equation}
Then we have the following convergences, where all vector fields are put to $0$ inside the solids.
\begin{Proposition} \label{Pro:LimCirculation}
Let $\delta>0$. Uniformly for ${\bf q} \in {\mathcal Q}_{\delta}$, one has as $\overline{\boldsymbol{\varepsilon}} \rightarrow 0$ for any $k\in\N$, $p<+\infty$ and any $c>0$:
\begin{gather}
\label{Eq:CVPsi2}
\forall \kappa \in {\mathcal P}_{(s)}, \ \ \nabla^\perp \widehat{\psi}_{\kappa} \longrightarrow H_{\kappa}(x)
\text{ in } L^p(\Omega) \text{ for } p \in [1,2)    
\text{ and in } C^k(\{x \in \Omega \ / \ |x-h_\kappa| \geq c \}), \\
\label{Eq:CVPsi3}
\forall \kappa \in {\mathcal P}_{(s)},\  \ \nabla^\perp {\psi}^r_{\kappa} \longrightarrow \nabla^\perp \widecheck{\psi}_{\kappa}^r
\text{ in } L^p(\Omega) 
\text{ and in }  L^\infty(\{x \in \Omega  \ / \  d(x,\medcup_{\nu \in {\mathcal P}_{s}} {\mathcal S}_{\nu}) \geq c \}) , \\
\label{Eq:CVPsi1}
\forall \kappa \in {\mathcal P}_{(i)},  \ \   \nabla^\perp \psi^\varepsilon_{\kappa} \longrightarrow \nabla^\perp \widecheck{\psi}_{\kappa} 
\text{ in } L^p(\Omega)
\text{ and in }  L^\infty(\{x \in \Omega  \ / \   d(x,\medcup_{\nu \in {\mathcal P}_{s}} {\mathcal S}_{\nu}) \geq c \}) .
\end{gather}
\end{Proposition}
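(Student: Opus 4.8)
The plan is to prove each of the three convergences in Proposition~\ref{Pro:LimCirculation} by combining the uniform estimates already obtained (notably Lemma~\ref{Lem:WBorne} and the standalone estimates of Lemma~\ref{Lem:StandaloneCSF}) with explicit identification of the pointwise limits via the Dirichlet-problem machinery of Section~\ref{Sec:Expansions}. The three statements are logically linked: once \eqref{Eq:CVPsi2} is known, \eqref{Eq:CVPsi3} follows by passing to the limit in the boundary-value problem \eqref{Eq:psir} characterizing $\psi^r_\kappa$, and \eqref{Eq:CVPsi1} is then a consequence of the decomposition $\nabla^\perp\psi^\varepsilon_\kappa = \widecheck K$-type reconstruction together with Proposition~\ref{Pro:DirichletGros}.

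First I would treat \eqref{Eq:CVPsi2}. For $\kappa\in{\mathcal P}_{s}$, the Laurent expansion \eqref{Eq:PsiLS} together with the scaling relation \eqref{Eq:ScalePsi} gives that $\nabla^\perp\widehat\psi^\varepsilon_\kappa(x) = H_\kappa(x) + {\mathcal O}(\varepsilon_\kappa/|x-h_\kappa|^2)$ for $|x-h_\kappa|\geq C\varepsilon_\kappa$, which is \eqref{Eq:BehaviourPsi1}. This immediately yields the $C^k$ convergence on $\{|x-h_\kappa|\geq c\}$ by differentiating the Laurent series (interior elliptic estimates, or direct term-by-term estimates on the holomorphic function $\partial_1\widehat\psi_\kappa - i\partial_2\widehat\psi_\kappa$). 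For the $L^p(\Omega)$ convergence with $p\in[1,2)$: $H_\kappa\in L^p_{loc}$ precisely for $p<2$, so one splits $\Omega$ into the ball $B(h_\kappa,c)$ and its complement; on the complement one uses the uniform pointwise bound just described, and on the ball one uses that both $\nabla^\perp\widehat\psi^\varepsilon_\kappa$ (by \eqref{Eq:ScalePsi}, after rescaling, it is $\varepsilon_\kappa^{-1}\nabla^\perp\widehat\psi^1_\kappa((x-h_\kappa)/\varepsilon_\kappa)$, vanishing inside ${\mathcal S}^\varepsilon_\kappa$) and $H_\kappa$ have $L^p(B(h_\kappa,c))$ norms that are bounded uniformly in $\varepsilon$, while the difference is controlled pointwise away from an arbitrarily small neighborhood; letting $c\to0$ after $\varepsilon\to0$ gives the result. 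The extension-by-$0$ convention inside ${\mathcal S}^\varepsilon_\kappa$ is harmless since $|{\mathcal S}^\varepsilon_\kappa|\to0$.

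Next, for \eqref{Eq:CVPsi3} with $\kappa\in{\mathcal P}_{s}$: the function $\psi^r_\kappa$ solves \eqref{Eq:psir}, and by Lemma~\ref{Lem:WBorne} its gradient is uniformly bounded in $L^\infty({\mathcal F}^\varepsilon)$ and it is uniformly $C^{k,1/2}$ near each $\partial{\mathcal S}^\varepsilon_\lambda$ (with the stated $\varepsilon_\lambda$-weights) and near $\partial\widecheck{\mathcal F}$. Hence along a subsequence $\nabla^\perp\psi^r_\kappa$ converges weakly-$\ast$ and, by the uniform local Hölder bounds, strongly in $C^{k}_{loc}$ away from the shrinking solids and their limit points; the limit is harmonic in $\widecheck{\mathcal F}({\bf q}_{(i)})$, has zero circulation around each $\partial{\mathcal S}_\nu$, $\nu\in{\mathcal P}_{(i)}$, and its boundary trace on $\partial\Omega\cup\bigcup_{\nu\in{\mathcal P}_{(i)}}\partial{\mathcal S}_\nu$ is the limit of $-\widehat\psi_\kappa|_{\partial(\cdot)}$, i.e. $-\widehat\psi_\kappa$ restricted there; passing to the normal trace and using $\nabla^\perp\widehat\psi_\kappa\to H_\kappa$ (already proved) on these fixed boundaries, one identifies the limit as $\nabla^\perp\widecheck\psi^r_\kappa$ defined by \eqref{Eq:DefUKRKappa}. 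Uniqueness of the limit removes the subsequence. The $L^p(\Omega)$ convergence then follows as before: strong $C^0$ convergence away from the $c$-neighborhoods of the shrinking solids, plus uniform $L^p$-boundedness (indeed uniform $L^\infty$-boundedness of $\nabla^\perp\psi^r_\kappa$) on those small neighborhoods whose measure tends to $0$. Finally, \eqref{Eq:CVPsi1} for $\kappa\in{\mathcal P}_{(i)}$ is handled the same way: apply the decomposition $\psi^r_\kappa = \psi^\varepsilon_\kappa - \widehat\psi_\kappa$ (here $\widehat\psi_\kappa$ is the standalone stream function for a fixed-size solid, independent of $\varepsilon$), use Lemma~\ref{Lem:WBorne} and Proposition~\ref{Pro:DirichletGros} to get uniform bounds and pass to the limit in the governing Dirichlet problem \eqref{def_stream_F}, obtaining $\nabla^\perp\psi^\varepsilon_\kappa\to\nabla^\perp\widecheck\psi_\kappa$ with $\widecheck\psi_\kappa$ from \eqref{def_stream_Fcheck}; the $L^p$ and $L^\infty_{loc}$-away-from-small-solids convergences follow by the same splitting argument.

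The main obstacle, I expect, is the careful identification of the limit boundary conditions: one must check that the uniform $C^{k,1/2}$-estimates near the boundaries of the \emph{fixed-size} solids and $\partial\Omega$ pass to the normal trace in the limit (so that the limiting object genuinely solves \eqref{Eq:DefUKRKappa} or \eqref{def_stream_Fcheck} with the right data), and simultaneously that nothing pathological happens near the vanishing solids — the relevant point being that although $\nabla^\perp\psi^r_\kappa$ stays $L^\infty$-bounded there, the weighted $C^{k,1/2}$ bounds of Lemma~\ref{Lem:WBorne} degenerate as $\varepsilon_\lambda\to0$, so one cannot extract strong convergence up to $\partial{\mathcal S}^\varepsilon_\lambda$, only away from it. This is precisely why the convergences in $L^\infty$ are asserted only on $\{d(x,\bigcup_{\nu\in{\mathcal P}_s}{\mathcal S}_\nu)\geq c\}$, and why $L^p$ convergence on all of $\Omega$ must rely on the vanishing of $|{\mathcal S}^\varepsilon_\lambda|$ rather than on any boundary regularity there. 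Once this dichotomy (strong convergence away from shrinking solids; measure-of-support control near them) is set up cleanly, the rest is routine elliptic limiting.
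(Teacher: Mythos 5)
Your overall approach is genuinely different from the paper's. For \eqref{Eq:CVPsi2} both you and the paper rely on the Laurent expansion and scaling, though the paper cuts the domain at radius $C\varepsilon_\kappa$ (rather than a fixed $c$) and obtains an explicit rate $\varepsilon_\kappa^{2-p}$ for the $L^p$ error, which is tidier than your ``let $\varepsilon\to0$ then $c\to0$'' scheme; your version is correct provided you replace ``$L^p(B(h_\kappa,c))$ norms bounded uniformly in $\varepsilon$'' by the slightly stronger (and true, by the scaling $\varepsilon_\kappa^{-1}\nabla\widehat\psi_\kappa^1((\cdot-h_\kappa)/\varepsilon_\kappa)$) claim that those norms are $O(c^{(2-p)/p})$ uniformly in $\varepsilon$. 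For \eqref{Eq:CVPsi3} and \eqref{Eq:CVPsi1}, however, you argue by soft compactness: extract a weak-$\star$ and local $C^k$ limit, identify it as a harmonic function solving the right boundary-value problem, then remove the subsequence by uniqueness. The paper instead never extracts limits: it rewrites $\psi^r_\kappa$ (and $\psi^\varepsilon_\kappa$) explicitly through the reflection-method decomposition \eqref{Eq:ExpressionPsir} in terms of the operator $\mathfrak H$ and the standalone pieces $\mathfrak f_\nu$, $\widehat{\mathfrak f}_\lambda$, and proves each summand converges with a quantitative estimate (Proposition~\ref{Pro:DirichletGros}, Lemma~\ref{Lem:SmallCorrectors}, and the identity $\widehat\psi_\kappa-\widecheck{\mathfrak g}[\widecheck{\bf A}]=\widecheck\psi_\kappa$). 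Your route is conceptually cleaner but buys less quantitative information.

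This difference matters because the statement asserts convergence \emph{uniformly for} ${\bf q}\in{\mathcal Q}_\delta$, and this uniformity is what is used later (where ${\bf q}={\bf q}^\varepsilon(t)$ varies both in time and in $\overline{\boldsymbol\varepsilon}$). Your compactness argument, as written, produces convergence for each fixed ${\bf q}$ but says nothing about uniformity: the subsequence and its rate are in principle ${\bf q}$-dependent, and ``uniqueness removes the subsequence'' does not upgrade pointwise-in-${\bf q}$ convergence to uniform convergence. To close this gap you would need an additional contradiction/compactness argument in ${\bf q}$ (extracting from a hypothetical non-uniform sequence $({\bf q}_n,\overline{\boldsymbol\varepsilon}_n)$ a convergent ${\bf q}_n\to{\bf q}_\infty$, then checking the limit identification is stable under this convergence), which is not automatic and is not sketched. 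The paper avoids the issue entirely because all of its intermediate estimates (Proposition~\ref{Pro:DirichletGros}, Lemma~\ref{Lem:SmallCorrectors}, the uniform Schauder bounds of Lemma~\ref{Lem:USE}) are already stated and proved uniformly over ${\mathcal Q}_\delta$. If you are going to use the compactness route, you should either supply the uniformization step or reformulate the limit identification through the quantitative estimates you already invoke (Lemma~\ref{Lem:WBorne}, Proposition~\ref{Pro:DirichletGros}), at which point you would be essentially reproving the paper's argument.
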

\begin{proof}[Proof of Proposition~\ref{Pro:LimCirculation}]
We begin with the proof of \eqref{Eq:CVPsi2}.
Considering  $\kappa \in {\mathcal P}_{(s)}$ and $ p \in [1,2)$, 
we first cut the integral in two:
\begin{equation*}
\int_{\Omega \setminus {\mathcal S}_{\kappa}^\varepsilon} \left|\nabla^\perp \widehat{\psi}^\varepsilon_{\kappa} - H_{\kappa}(x) \right|^p  \, dx
=
\int_{B(h_{\kappa},C \varepsilon_{\kappa}) \setminus {\mathcal S}_{\kappa}^\varepsilon} \left|\nabla^\perp \widehat{\psi}^\varepsilon_{\kappa} - H_{\kappa}(x) \right|^p  \, dx
+ \int_{\Omega \setminus B(h_{\kappa},C \varepsilon_{\kappa})} \left|\nabla^\perp \widehat{\psi}^\varepsilon_{\kappa} - H_{\kappa}(x) \right|^p  \, dx ,
\end{equation*}
where $C$ is taken as in  \eqref{Eq:BehaviourPsi1}.
For the first integral, using Lemma~\ref{Lem:StandaloneCSF} and a change of variable, we get
\begin{equation*}
\int_{B(h_{\kappa},C \varepsilon_{\kappa}) \setminus {\mathcal S}_{\kappa}^\varepsilon} \left|\nabla^\perp \widehat{\psi}^\varepsilon_{\kappa} - H_{\kappa}(x) \right|^p  \, dx
= \varepsilon_{\kappa}^{2-p} \int_{B(h_{\kappa},C ) \setminus {\mathcal S}_{\kappa}^1} \left|\nabla^\perp \widehat{\psi}^1_{\kappa} -  H_{\kappa}(x) \right|^p \, dx \\
= {\mathcal O}(\varepsilon_{\kappa}^{2-p}).
\end{equation*}
Concerning the second integral, by \eqref{Eq:BehaviourPsi1}, for some $R>0$ such that $\Omega \subset B(h_{\kappa},R)$, 
\begin{equation*}
\int_{\Omega \setminus B(h_{\kappa},C \varepsilon_{\kappa})} \left|\nabla^\perp \widehat{\psi}^\varepsilon_{\kappa} - H_{\kappa}(x) \right|^p  \, dx
%& \leq & C \varepsilon_{\kappa}^p \int_{\Omega \setminus B(h_{\kappa},C \varepsilon)} \frac{1}{|x-h_{\kappa}|^{2p}} \, dx \\
 \leq  C \varepsilon_{\kappa}^p \int_{B(0,R) \setminus B(h_{\kappa},C \varepsilon)} \frac{1}{|x-h_{\kappa}|^{2p}} \, dx = {\mathcal O}(\varepsilon_{\kappa}^{2-p}).
%& \leq & C \varepsilon_{\kappa}^{2-p}.
\end{equation*}
Since $p \in [1,2)$, the convergence \eqref{Eq:CVPsi2} in $L^p(\Omega)$ follows. The convergence in $L^\infty$ away from $h_{\kappa}$ is a direct consequence of \eqref{Eq:BehaviourPsi1} and interior regularity estimates for harmonic functions. \par
We now prove \eqref{Eq:CVPsi3}. Let $\kappa \in {\mathcal P}_{s}$. We use the same notations \eqref{Eq:DefDeNUplets} as in the proof of Lemma~\ref{Lem:WBorne} and rely on \eqref{Eq:ExpressionPsir}. 
Due to Lemma~\ref{Lem:SmallCorrectors}, each of the terms $\nabla^\perp \mathfrak{f}_{\nu}[\widehat{\psi}_{\kappa|\partial {\mathcal S}_{\nu}}]$, for $\nu \in {\mathcal P}_{s} \setminus \{\kappa \}$,   converges to $0$ in $L^p(\Omega)$, $p<+\infty$ and in $L^\infty(\{x \in \Omega / d(x,\medcup_{\nu \in {\mathcal P}_{s}} {\mathcal S}_{\nu}) \geq c \})$.
Now by Proposition \ref{Pro:DirichletGros}, 
\begin{equation*}
\left\| \, \nabla \mathfrak{H}({\bf A}) - \nabla \widecheck{\mathfrak{g}}[\widecheck{\bf A}]
+ \sum_{\lambda \in {\mathcal P}_{s}} \nabla \widehat{\mathfrak{f}}_{\lambda} \big[ \widecheck{\mathfrak{g}}[\widecheck{\bf A}] \big] \right\|_{L^{\infty}({\mathcal F})}
\leq C | \overline{{\boldsymbol\varepsilon}} | 
\Big( \| \widehat{\psi}_{\kappa|\partial {\Omega}} \|_{L^\infty(\partial \Omega)} +  \sum_{\nu \in {\mathcal P}_{(i)}} \| \widehat{\psi}_{\kappa|\partial {\mathcal S}_{\nu}} \|_{L^\infty(\partial {\mathcal S}_{\nu})} \Big).
\end{equation*}
We recall that $\vert\overline{{\boldsymbol\varepsilon}}\vert $ was defined in  \eqref{Eq:TailleTotalePetitsSolides}.
Using again Lemma~\ref{Lem:SmallCorrectors}, we see that each of the terms $\nabla \widehat{\mathfrak{f}}_{\lambda} \big[ \widecheck{\mathfrak{g}}[\widecheck{\bf A}]_{|\partial {\mathcal S}_{\lambda}} \big]$ above converges to $0$ in $L^p(\Omega)$ and in $L^\infty(\{x \in \Omega / d(x,\medcup_{\nu \in {\mathcal P}_{s}} {\mathcal S}_{\nu}) \geq c \})$. Now from \eqref{Eq:DefUKRKappa} and \eqref{Eq:CVPsi2}, using the uniform Schauder estimates (see Lemma~\ref{Lem:USE}), we see that
$\nabla^\perp \widecheck{g}[\widecheck{\bf A}]$ converges to $- \nabla^\perp \widecheck{\psi}_{\kappa}^r \text{ in } C^{k,\frac{1}{2}}(\widecheck{\mathcal F})$ for all $k$.
This proves \eqref{Eq:CVPsi3}. \par
The proof of \eqref{Eq:CVPsi1} is analogous. Let $\kappa \in {\mathcal P}_{(i)}$. Here \eqref{Eq:DefPsiKappar} and \eqref{Eq:ExpressionPsir} give
\begin{equation*}
\psi^\varepsilon_{\kappa} = \widehat{\psi}_{\kappa} - \mathfrak{H}({\bf A}) - \sum_{\nu \in {\mathcal P}_{s}} \mathfrak{f}_{\nu}[\widehat{\psi}_{\kappa|\partial {\mathcal S}_{\nu}}] ,
\end{equation*}
where ${\bf A}$ was defined in \eqref{Eq:DefDeNUplets}.
Again, due to Lemma~\ref{Lem:SmallCorrectors}, each of the terms $\nabla^\perp \mathfrak{f}_{\nu}[\widehat{\psi}_{\kappa|\partial {\mathcal S}_{\nu}}]$ above converges to $0$ in $L^p(\Omega)$ (for $p<+\infty$) and in $L^\infty(\{x \in \Omega / d(x,\medcup_{\nu \in {\mathcal P}_{s}} {\mathcal S}_{\nu}) \geq c \})$.
Moreover, Proposition~\ref{Pro:DirichletGros} gives us here that 
\begin{equation*}
\left\| \, \nabla \mathfrak{H}({\bf A}) -  \nabla \widecheck{\mathfrak{g}}[\widecheck{\bf A}]
+ \sum_{\lambda \in {\mathcal P}_{s}} \nabla \widehat{\mathfrak{f}}_{\lambda} \big[ \widecheck{\mathfrak{g}}[\widecheck{\bf A}] \big]  \right\|_{L^{\infty}({\mathcal F})}
\leq  C | \overline{{\boldsymbol\varepsilon}} | 
\Big( \| \widehat{\psi}_{\kappa|\partial {\Omega}} \|_{L^\infty(\partial \Omega)} +  \sum_{\nu \in {\mathcal P}_{(i)} \setminus \{\kappa\}} \| \widehat{\psi}_{\kappa|\partial {\mathcal S}_{\nu}} \|_{L^\infty(\partial {\mathcal S}_{\nu})} \Big).
\end{equation*}
Using again Lemma~\ref{Lem:SmallCorrectors}, we see that each of the terms $\nabla \widehat{\mathfrak{f}}_{\lambda} \big[ \widecheck{\mathfrak{g}}[\widecheck{\bf A}] \big]$ above converges to $0$ in $L^p(\Omega)$ and in $L^\infty(\{x \in \Omega / d(x,\medcup_{\nu \in {\mathcal P}_{s}} {\mathcal S}_{\nu}) \geq c \})$. It remains to observe that here
$\widehat{\psi}_{\kappa} - \widecheck{g}[\widecheck{\bf A}] = \widecheck{\psi}_{\kappa}$ in $\widecheck{\mathcal F}$
since both sides satisfy \eqref{def_stream_Fcheck}. This gives \eqref{Eq:CVPsi1}.
\end{proof}
%
%
%
%
%%%%%%%%%%%%%%%%%%%
%
%
%
%
\subsubsection{Shape derivatives of the reflected circulation stream function}
\label{Par:SDCSF}
Here we are interested in differentiating $\psi_{\kappa}^r$ with respect to $q_{\mu,m}$.
\begin{Lemma} \label{Lem:DWBorne}
Let $\delta>0$.
There exist $\varepsilon_{0}>0 $ and $C>0$ such that for all $\overline{\boldsymbol\varepsilon}$ such that  $\overline{{\boldsymbol\varepsilon}} \leq \varepsilon_{0}$,
for all $\kappa,\mu \in \{1,\dots,N\}$, $m \in \{1,2,3\}$, for all ${\bf q} \in {\mathcal{Q}}_{\delta}$, 
\begin{gather}
\label{Eq:DWBorne}
\left\| \nabla \frac{\partial \psi_{\kappa}^r}{\partial q_{\mu,m}} \right\|_{L^\infty({\mathcal F} \setminus {\mathcal V}_{\delta/2}(\partial {\mathcal S}_{\mu}) )} \leq  C \varepsilon_{\mu}^{\delta_{m3}}
\ \text{ and } \ 
\left\| \nabla \frac{\partial \psi_{\kappa}^r}{\partial q_{\mu,m}} \right\|_{L^\infty( {\mathcal V}_{\delta}(\partial {\mathcal S}_{\mu}) )} \leq  C \varepsilon_{\mu}^{-1+\delta_{m3}}, \\
\label{Eq:DWBorne2}
\left\| \nabla \frac{\partial \psi_{\kappa}^r}{\partial q_{\mu,m}} \right\|_{L^p({\mathcal F})} \leq  C \varepsilon_{\mu}^{\delta_{m3}} \ \text{ for } \ p<2.
\end{gather}
\end{Lemma}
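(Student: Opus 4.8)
The plan is to differentiate the system \eqref{Eq:psir} satisfied by $\psi_\kappa^r$ with respect to $q_{\mu,m}$, using the shape-derivative machinery of Section~\ref{SSSec:ShapeDeriv}, and then to estimate the resulting problem using the reflection-method estimates established above. First I would apply Corollary~\ref{Cor:SD} to the Dirichlet-type problem \eqref{Eq:psir}: since $\psi_\kappa^r$ solves a problem of the form \eqref{Eq:HAlpha} with boundary data $-\widehat\psi_\kappa$ on $\partial\Omega\cup\bigcup_{\nu\neq\kappa}\partial{\mathcal S}_\nu$ (and zero on $\partial{\mathcal S}_\kappa$), Corollary~\ref{Cor:SD} gives that $\frac{\partial\psi_\kappa^r}{\partial q_{\mu,m}}$ is harmonic in ${\mathcal F}$, has zero flux across each $\partial{\mathcal S}_\nu$, vanishing normal data consistent with the fixed boundaries, and a nontrivial boundary contribution located only on $\partial{\mathcal S}_\mu$ built from the material derivative of $-\widehat\psi_\kappa$ together with a term $\bigl(\frac{\partial(-\widehat\psi_\kappa)}{\partial x}-\frac{\partial\psi_\kappa^r}{\partial x}\bigr)\cdot n\,K_{\mu,m}$. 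One must be careful about the two cases $\mu=\kappa$ and $\mu\neq\kappa$: when $\mu\neq\kappa$ the shape derivative of $\widehat\psi_\kappa$ on $\partial{\mathcal S}_\mu$ is a pure material derivative of a smooth function (bounded uniformly thanks to \eqref{Eq:EstPsiAilleurs}), whereas when $\mu=\kappa$ the data on $\partial{\mathcal S}_\kappa$ is $0$ but one picks up the $\frac{\partial\widehat\psi_\kappa}{\partial q_{\kappa,m}}$ contribution through the material derivative convention; in both situations the relevant quantity on $\partial{\mathcal S}_\mu$ is controlled, after taking a tangential primitive, by ${\mathcal O}(\varepsilon_\mu^{\delta_{m3}})$ in $L^\infty$ (with the usual scaled higher Hölder norms), using Lemma~\ref{Lem:StandaloneCSF}, \eqref{Eq:BehaviourPsi1}, \eqref{Eq:VitRotHarm}, Lemma~\ref{Lem:WBorne} for the $\frac{\partial\psi_\kappa^r}{\partial x}\cdot n$ piece, and Proposition~\ref{Pro:ExpShapeDerivatives}-type bookkeeping for the tangential derivative of $\widehat\psi_\kappa$.

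Next I would recast $\frac{\partial\psi_\kappa^r}{\partial q_{\mu,m}}$ as $\mathfrak{f}^{\mathcal N}_\mu[\partial_\tau{\mathcal B}]$ — exactly as in \eqref{Eq:CSD1} in the proof of Proposition~\ref{Pro:ExpShapeDerivatives} — where ${\mathcal B}$ is a tangential primitive on $\partial{\mathcal S}_\mu$ of the Neumann datum produced above, together with a contribution coming from the other boundaries; the key point is that, just as in the Kirchhoff case, the boundary data splits so that the only singular concentration is on $\partial{\mathcal S}_\mu$, and the rest is ``reflected'' and hence regular. Then I would invoke Lemma~\ref{Lem:Neumann} to pass to the Dirichlet formulation, Proposition~\ref{Pro:DirichletPetits} (resp.\ Remark~\ref{Rem:GrosDoncPetit} when $\mu\in{\mathcal P}_{(i)}$) to write $\nabla\mathfrak{f}^{\mathcal N}_\mu[\partial_\tau{\mathcal B}]=\nabla\widehat{\mathfrak{f}}^{\mathcal N}_\mu[\partial_\tau{\mathcal B}]+{\mathcal O}(\varepsilon_\mu\|{\mathcal B}\|_{L^\infty(\partial{\mathcal S}_\mu)})$, and finally Proposition~\ref{Pro:DirichletSAM} together with \eqref{Eq:NablaPhiHatGlobal} and \eqref{Eq:EstNablaPhiHat} to read off the three estimates: \eqref{Eq:NablaPhiHatGlobal} rescaled gives the near-field bound $\varepsilon_\mu^{-1+\delta_{m3}}$ on ${\mathcal V}_\delta(\partial{\mathcal S}_\mu)$, while \eqref{Eq:EstNablaPhiHat} gives the far-field decay like $\varepsilon_\mu/|x-h_\mu|^2$ times $\|{\mathcal B}\|_{L^\infty}={\mathcal O}(\varepsilon_\mu^{\delta_{m3}})$, which on $\{d(x,{\mathcal S}_\mu)\geq\delta/2\}$ is ${\mathcal O}(\varepsilon_\mu^{\delta_{m3}})$ (the constant absorbing the extra $\varepsilon_\mu$); this yields \eqref{Eq:DWBorne}.

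For \eqref{Eq:DWBorne2} I would split the $L^p$ integral over ${\mathcal F}$ into the region ${\mathcal V}_{\delta/2}(\partial{\mathcal S}_\mu)$, of measure ${\mathcal O}(1)$, where by \eqref{Eq:DWBorne} the gradient is ${\mathcal O}(\varepsilon_\mu^{-1+\delta_{m3}})$ but where in fact — as in the proof of Proposition~\ref{Pro:LimCirculation}, after a change of variables $x\mapsto h_\mu+\varepsilon_\mu x$ — the $L^p$ norm of the singular standalone part scales like $\varepsilon_\mu^{2/p-1}\|{\mathcal B}\|_{L^\infty}$, and the complementary region $\{d(x,{\mathcal S}_\mu)\geq\delta/2\}$ where the integrand is ${\mathcal O}(\varepsilon_\mu^{\delta_{m3}})$ pointwise; combining and using $p<2$ so that $\varepsilon_\mu^{2/p-1+\delta_{m3}}\leq C\varepsilon_\mu^{\delta_{m3}}$ gives \eqref{Eq:DWBorne2}. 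I expect the main obstacle to be the careful identification and estimation of the boundary datum ${\mathcal B}$ on $\partial{\mathcal S}_\mu$ in the two cases $\mu=\kappa$ and $\mu\neq\kappa$ — in particular tracking that the normal-derivative term $\frac{\partial\psi_\kappa^r}{\partial x}\cdot n\,K_{\mu,m}$ really contributes only at order ${\mathcal O}(\varepsilon_\mu^{\delta_{m3}})$ once one passes to the tangential primitive, which relies on combining Lemma~\ref{Lem:WBorne} with the scaling behaviour of $K_{\mu,m}$ — and in ensuring that the ``reflected'' contributions from the other boundaries, handled via Propositions~\ref{Pro:DirichletPetits} and \ref{Pro:DirichletGros}, remain uniformly bounded and hence subdominant. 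The rest is a routine rescaling argument entirely parallel to the treatment of the Kirchhoff shape derivatives in Proposition~\ref{Pro:ExpShapeDerivatives}.
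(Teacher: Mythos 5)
Your general strategy matches the paper's: shape-differentiate \eqref{Eq:psir} via Corollary~\ref{Cor:SD}, estimate the resulting boundary data on each component (using Lemma~\ref{Lem:StandaloneCSF}, \eqref{Eq:BehaviourPsi1}, \eqref{Eq:VitRotHarm}, and Lemma~\ref{Lem:WBorne}), then feed these into the reflection-method machinery (Propositions~\ref{Pro:DirichletSAM}, \ref{Pro:DirichletPetits}, \ref{Pro:DirichletGros}). The $L^p$ estimate by splitting near and far from $\partial{\mathcal S}_\mu$ is also what the paper does (it combines \eqref{Eq:NablaPhiHatGlobal} and \eqref{Eq:EstNablaPhiHat} into the pointwise bound \eqref{Eq:EstDesFChapeau} and integrates over an annulus).

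However, there are two genuine problems in the middle of your argument. First, the shape derivative $\frac{\partial\psi_\kappa^r}{\partial q_{\mu,m}}$ is \emph{not} a Neumann problem and has \emph{no} ``vanishing normal data'': Corollary~\ref{Cor:SD} gives a Dirichlet problem with a zero-flux constraint, because $\psi_\kappa^r$ is itself a Dirichlet problem. Your recast as $\mathfrak{f}^{\mathcal N}_\mu[\partial_\tau{\mathcal B}]$ ``exactly as in \eqref{Eq:CSD1}'' imports the Kirchhoff structure (where the shape derivative genuinely has vanishing Neumann data off $\partial{\mathcal S}_\mu$, per Lemma~\ref{Lem:ShapeDerivativeKirchhoff}); that structure does not carry over here. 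The practical consequence is minor because you only need estimates on the gradient, which are insensitive to passing to the harmonic conjugate via Lemma~\ref{Lem:Neumann}, but the ``tangential primitive'' step is spurious as written. Second, and more seriously, the claim that the nontrivial boundary data is ``located only on $\partial{\mathcal S}_\mu$'' is wrong when $\mu=\kappa$. In that case the term $-\frac{\partial\widehat\psi_\kappa}{\partial q_{\kappa,m}}$ in \eqref{Eq:DQPsiAuBord} produces nonzero Dirichlet data on $\partial\Omega$ and on every $\partial{\mathcal S}_\lambda$ with $\lambda\neq\kappa$, while the data on $\partial{\mathcal S}_\kappa=\partial{\mathcal S}_\mu$ itself is $-\nabla\psi_\kappa^r\cdot\xi_{\kappa,m}$; so the data does not localize on a single component, and a single $\mathfrak{f}^{\mathcal N}_\mu[\cdot]$ cannot represent the solution. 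The paper handles this in Step 2 by decomposing
\begin{equation*}
\frac{\partial\psi_\kappa^r}{\partial q_{\mu,m}}
=\mathfrak{H}\Big[\tfrac{\partial\psi_\kappa^r}{\partial q_{\mu,m}}\big|_{\partial{\mathcal S}_1},\dots,\tfrac{\partial\psi_\kappa^r}{\partial q_{\mu,m}}\big|_{\partial{\mathcal S}_{N_{(i)}}},0,\dots,0;\tfrac{\partial\psi_\kappa^r}{\partial q_{\mu,m}}\big|_{\partial\Omega}\Big]
+\sum_{\lambda\in{\mathcal P}_s}\mathfrak{f}_\lambda\Big[\tfrac{\partial\psi_\kappa^r}{\partial q_{\mu,m}}\big|_{\partial{\mathcal S}_\lambda}\Big],
\end{equation*}
then controls the $\mathfrak{H}$ piece by Proposition~\ref{Pro:DirichletGros} and Lemma~\ref{Lem:USE} and each $\mathfrak{f}_\lambda$ piece by Propositions~\ref{Pro:DirichletPetits} and \ref{Pro:DirichletSAM}. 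You gesture at this at the very end (``the `reflected' contributions from the other boundaries, handled via Propositions~\ref{Pro:DirichletPetits} and \ref{Pro:DirichletGros}''), so you seem aware the extra terms are there; but the heart of your write-up — the representation of the solution — does not accommodate them, so you should replace the single-boundary recast by the full $\mathfrak{H}+\sum\mathfrak{f}_\lambda$ decomposition.
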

\begin{proof}[Proof of Lemma~\ref{Lem:DWBorne}]
We proceed in two steps. \par
\ \par
\noindent
{\bf Step 1.} 
We rely on \eqref{Eq:psir} and use Corollary~\ref{Cor:SD} and Remark~\ref{Rem:BCSD} to write
\begin{equation} \label{Eq:DQPsiAuBord}
\frac{\partial \psi_{\kappa}^r}{\partial q_{\mu,m}} = 
\left\{ \begin{array}{l}
\displaystyle - \delta_{\lambda \neq \kappa} \delta_{\mu \kappa} \frac{\partial \widehat{\psi}_{\kappa}}{\partial q_{\mu,m}}
- \left( \delta_{\lambda \neq \kappa} \nabla \widehat{\psi}_{\kappa} + \nabla \psi_{\kappa}^r \right) \cdot \xi_{\mu,m} +c'_{\lambda} 
\ \text{ on } \ \partial {\mathcal S}_{\lambda} \quad \text{ for } \quad \lambda=1,\dots,N, \\
\displaystyle -\delta_{\mu \kappa} \frac{\partial \widehat{\psi}_{\kappa}}{\partial q_{\mu,m}} \text{ on } \partial \Omega.
\end{array} \right.
\end{equation}
We now study the various terms in the first line of \eqref{Eq:DQPsiAuBord}. %\par
Due to Lemma~\ref{Lem:StandaloneCSF} we have
\begin{equation} \label{Eq:pqpsi}
\frac{\partial \widehat{\psi}_{\kappa}}{\partial q_{\kappa,m}} = - \nabla \widehat{\psi}_{\kappa} \cdot \xi^*_{\kappa,m} \text{ in } \R^2 \setminus {\mathcal S}_{\kappa}
\text{ with }
\xi^*_{ \kappa,j} ({\bf q},x) := e_{j} \text{ for } j=1,2 \text{ and }
\xi^*_{ \kappa,3} ({\bf q},x) := (x-h_\kappa)^\perp \text{ in } \R^2.
\end{equation}
The term $\delta_{\lambda \neq \kappa} \delta_{\mu \kappa} \frac{\partial \widehat{\psi}_{\kappa}}{\partial q_{\kappa,m}}$ merely gives a contribution when $\mu=\kappa$ on all the connected components of the boundary but $\partial {\mathcal S}_{\mu} = \partial {\mathcal S}_{\kappa}$. Due to \eqref{Eq:BehaviourPsi1} and  \eqref{Eq:pqpsi}, this contribution satisfies, up to an additional constant,
\begin{equation*}
\left\|  \delta_{\mu \kappa} \frac{\partial \widehat{\psi}_{\kappa}}{\partial q_{\mu,m}} \right\|_{L^\infty({\mathcal F} \setminus {\mathcal V}_{\delta/2}(\partial {\mathcal S}_{\kappa}))} \leq C \varepsilon_{\mu}^{\delta_{m3}}.
\end{equation*}
Using inner regularity for the Laplace equation and \eqref{Eq:VitRotHarm}, the same holds in $C^{k,\frac{1}{2}}({\mathcal F} \setminus {\mathcal V}_{\delta}(\partial {\mathcal S}_{\kappa}))$. Hence, up to an additive constant, we deduce
\begin{equation*}
\left\| \delta_{\lambda \neq \kappa} \delta_{\mu \kappa} \frac{\partial \widehat{\psi}_{\kappa}}{\partial q_{\mu,m}} \right\|_{L^\infty(\partial {\mathcal S}_{\lambda})} \leq C\varepsilon_{\lambda} \varepsilon_{\mu}^{\delta_{m3}}.
\end{equation*}
Let us now turn to the second term, which merely gives a contribution on $\partial {\mathcal S}_{\lambda}$ when $\lambda=\mu \neq \kappa$ (recall \eqref{def-xi-j}). By Lemma~\ref{Lem:StandaloneCSF} and \eqref{Eq:BehaviourPsi1}, we see that the term $\delta_{\lambda \neq \kappa} \nabla \widehat{\psi}_{\kappa} \cdot \xi_{\mu,m}$ gives a contribution of order $\varepsilon_{\mu}^{\delta_{m3}}$ in $L^\infty$-norm and in $C^{k,\frac{1}{2}}$-norm on $\partial {\mathcal S}_{\mu}$. \par
Finally we consider the last term, which again only gives a contribution on $\partial {\mathcal S}_{\lambda}$ when $\lambda=\mu$. By Lemma~\ref{Lem:WBorne},  the term $\nabla \psi_{\kappa}^r \cdot \xi_{\mu,m}$ gives a contribution of size $\varepsilon_{\mu}^{\delta_{m3}}$ in $L^\infty$-norm and at worst of order $\mathcal{O}(\varepsilon_{\mu}^{\delta_{m3}} \, \varepsilon_\lambda^{-k-\frac{1}{2}})$ in $C^{k,\frac{1}{2}}$-norm on $\partial {\mathcal S}_{\mu}$. \par
\medskip
Gathering these estimates we obtain, up to an additive constant on each connected component ${\mathcal S}_{\lambda}$ of the boundary, that for $k \geq 1$, 
\begin{multline*}
\left\| \frac{\partial \psi_{\kappa}^r}{\partial q_{\mu,m}} \right\|_{L^{\infty}(\partial {\mathcal S}_{\lambda})} \leq C \varepsilon_{\mu}^{\delta_{m3}} \varepsilon_{\lambda}^{\delta_{\lambda \neq \mu}}, \ \
\varepsilon_\lambda^{k+\frac{1}{2}} \left\| \frac{\partial \psi_{\kappa}^r}{\partial q_{\mu,m}} \right\|_{C^{k,\frac{1}{2}}(\partial {\mathcal S}_{\lambda})} \leq C \varepsilon_{\mu}^{\delta_{m3}} \varepsilon_{\lambda}^{\delta_{\lambda \neq \mu}}, \\ 
\left\| \frac{\partial \psi_{\kappa}^r}{\partial q_{\mu,m}} \right\|_{L^{\infty}(\partial \Omega)} \leq C \varepsilon_{\mu}^{\delta_{m3}} 
\ \text{ and } \ 
\left\| \frac{\partial \psi_{\kappa}^r}{\partial q_{\mu,m}} \right\|_{C^{k,\frac{1}{2}}(\partial \Omega)} \leq C \varepsilon_{\mu}^{\delta_{m3}}.
\end{multline*}
\par
\ \par
\noindent
{\bf Step 2.} 
As before we write
\begin{equation*}
\frac{\partial \psi_{\kappa}^r}{\partial q_{\mu,m}} = \mathfrak{H} \left[ \frac{\partial \psi_{\kappa}^r}{\partial q_{\mu,m}}_{|\partial {\mathcal S}_{1}}, \dots , \frac{\partial \psi_{\kappa}^r}{\partial q_{\mu,m}}_{|\partial {\mathcal S}_{N_{(i)}}},0,\ldots,0; \frac{\partial \psi_{\kappa}^r}{\partial q_{\mu,m}}_{|\partial \Omega} \right]
+ \sum_{\lambda \in {\mathcal P}_{s}} 
\mathfrak{f}_{\lambda} \left[ \frac{\partial \psi_{\kappa}^r}{\partial q_{\mu,m}}_{|\partial {\mathcal S}_{\lambda}} \right].
\end{equation*}
The $\mathfrak{H}$ term is bounded in $W^{1,\infty}(\widecheck{{\mathcal F}})$ due Proposition~\ref{Pro:DirichletGros}, the above estimates and the uniform Schauder estimates in $\widecheck{\mathcal F}$ (Lemma~\ref{Lem:USE}). The  $\mathfrak{f}_{\lambda}$ terms can be replaced by their standalone counterpart $\widehat{\mathfrak{f}}_{\lambda}$ thanks to Proposition~\ref{Pro:DirichletPetits}. These $\widehat{\mathfrak{f}}_{\lambda}$ are estimated by Proposition~\ref{Pro:DirichletSAM} which gives the estimates in \eqref{Eq:DWBorne}. \par
Concerning \eqref{Eq:DWBorne2}, by the above considerations, we only need to discuss the contribution of the $\widehat{\mathfrak{f}}_{\lambda}$ terms. Mixing \eqref{Eq:NablaPhiHatGlobal} and \eqref{Eq:EstNablaPhiHat}, and distinguishing $x \in B(h_{\lambda},C \varepsilon) \setminus {\mathcal S}_{\lambda}^\varepsilon$ and $x \in {\mathcal F} \setminus B(h_{\lambda},C \varepsilon_{\lambda})$, we see that
\begin{equation} \label{Eq:EstDesFChapeau}
\forall x \in {\mathcal F}, \ \ 
\left| \nabla \widehat{\mathfrak{f}}_{\lambda} \Big[ \frac{\partial \psi_{\kappa}^r}{\partial q_{\mu,m}}_{|\partial {\mathcal S}_{\lambda}} \Big](x) \right|
\leq C \frac{\varepsilon_{\lambda}}{|x- h_{\lambda}|^2} 
\left( \left\| \frac{\partial \psi_{\kappa}^r}{\partial q_{\mu,m}} \right\|_{L^{\infty}(\partial {\mathcal S}_{\lambda})}
+ \varepsilon_\lambda^{k+\frac{1}{2}} \left\| \frac{\partial \psi_{\kappa}^r}{\partial q_{\mu,m}} \right\|_{C^{k,\frac{1}{2}}(\partial {\mathcal S}_{\lambda})} \right).
\end{equation}
Now we put the above inequality to the power $p$ and integrate. We can inject $\Omega$ in some ball $B(h_{\lambda},R)$ with $R>0$ fixed so that we write ${\mathcal F} \subset B(h_{\lambda},R) \setminus B(h_{\lambda},C' \varepsilon_{\lambda})$ for some positive $C'$. The result follows.
\end{proof}
%
%%%%%%%%%%%%%%%%
%
%
%
%
\subsubsection{Reflected circulation stream function of a phantom solid}
\label{Par:Phantom}
In this paragraph, we extend the above estimates on the reflected circulation stream function $\psi_{\kappa}^r$ to a slight variant. This variant will play an important role in the definition of the modulation and in the passage to the limit, in particular for what concerns the desingularization \eqref{DefUellKappa}. \par
For $\kappa \in {\mathcal P}_{s}$ we first introduce the following ``$\kappa$-augmented'' fluid domain as follows:
\begin{equation} \label{Eq:DomaineKappaAugmente}
\widecheck{{\mathcal F}}_{\kappa}({\bf q}) := {\mathcal F}({\bf q}) \cup {\mathcal S}_{\kappa}({\bf q}).
\end{equation}
Note in particular that
\begin{equation*}
\partial \widecheck{{\mathcal F}}_{\kappa}({\bf q}) = \partial {\mathcal F}({\bf q}) \setminus \partial {\mathcal S}_{\kappa}({\bf q}) = \partial \Omega \cup \bigcup_{\nu \in \{1,\dots,N\} \setminus \{\kappa\}} \partial {\mathcal S}_{\nu}.
\end{equation*}
Now we introduce $\psi^{r,\not \kappa}_{\kappa}$ as the solution in $\widecheck{{\mathcal F}}_{\kappa}({\bf q})$  (together with constants $c_{\lambda}$, $\lambda \in \{1,\dots,N\} \setminus \{ \kappa \}$) to the system:
\begin{equation} \label{Eq:PsirefKappa}
\left\{ \begin{array}{l}
\Delta \psi^{r,\not \kappa}_{\kappa} = 0 \ \text{ in } \ \widecheck{{\mathcal F}}_{\kappa}({\bf q}), \medskip \\ 
\psi^{r,\not \kappa}_{\kappa} = - \widehat{\psi}_{\kappa} + c_{\lambda} \ \text{ on } \ \partial {\mathcal S}_{\lambda}({\bf q}), \ \text{ for } \ \lambda \in \{1,\dots,N\} \setminus \{\kappa\},\medskip \\
\psi^{r,\not \kappa}_{\kappa} = - \widehat{\psi}_{\kappa} \ \text{ on } \ \partial \Omega, \medskip \\
\displaystyle \int_{\partial {\mathcal S}_{\nu}({\bf q})} \partial_{n} \psi^{r,\not \kappa}_{\kappa} \, ds  = 0 \ \text{ for } \ \nu \in \{1,\dots,N\} \setminus \{ \kappa \}.
\end{array} \right.
\end{equation}
The only difference indeed between $\psi_{\kappa}^r$ and  $\psi_{\kappa}^{r,\not \kappa}$ is that the constraint $\psi_{\kappa}^r = c_{\kappa}$ on $\partial {\mathcal S}_{\kappa}$ in \eqref{Eq:psir} has disappeared in \eqref{Eq:PsirefKappa}, and that the domain is $\widecheck{{\mathcal F}}_{\kappa}$ rather than ${\mathcal F}$. 
Adapting the arguments above we obtain the following result.
\begin{Lemma} \label{Lem:Phantom}
Let $\delta>0$. There exists $\varepsilon_{0}>0$ such that the following holds.
Let $\kappa \in {\mathcal P}_{s}$ and $k \in \N$. There exists $C>0$ such that for any $\overline{\boldsymbol\varepsilon}$ such that $\overline{{\boldsymbol\varepsilon}} \leq \varepsilon_{0}$ and any ${\bf q} \in {\mathcal{Q}}_{\delta}$, one has
\begin{gather}
\label{Eq:PhantomBorne}
\| \nabla \psi_{\kappa}^{r,\not \kappa} \|_{L^{\infty}({\mathcal F})}  \leq  C, \ \text{ and  }\ \forall \lambda \in \{1,\dots,N\},  \ \ \ \varepsilon_{\lambda}^{\delta_{\kappa \neq \lambda}(k-\frac{1}{2})} \| \psi_{\kappa}^{r,\not \kappa} \|_{C^{k,\frac{1}{2}}({\mathcal V}_{\delta}(\partial {\mathcal S}_{\lambda}))}  \leq  C, \\
\label{Eq:DPhantomBorne}
\left\| \nabla \frac{\partial \psi_{\kappa}^{r,\not \kappa}}{\partial q_{\mu,m}} \right\|_{L^\infty({\mathcal F} \setminus {\mathcal V}_{\delta/2}(\partial {\mathcal S}_{\mu}) )} \leq  C \varepsilon_{\mu}^{\delta_{m3}}
\ \text{ and } \ 
\left\| \nabla \frac{\partial \psi_{\kappa}^{r,\not \kappa}}{\partial q_{\mu,m}} \right\|_{L^\infty({\mathcal V}_{\delta}(\partial {\mathcal S}_{\mu}) )} \leq  C \varepsilon_{\mu}^{-1+\delta_{m3} + \delta_{\mu \kappa}}.
\end{gather}
Moreover, uniformly for ${\bf q} \in {\mathcal Q}_{\delta}$, one has as $\overline{\boldsymbol{\varepsilon}} \rightarrow 0$ for any $k\in\N$, $p<+\infty$ and any $c>0$:
\begin{equation} \label{Eq:CVPhantom}
\nabla^\perp {\psi}^{r,\not \kappa}_{\kappa} \longrightarrow \nabla^\perp \widecheck{\psi}_{\kappa}^r
\text{ in } L^p(\Omega) 
\text{ and in }  L^\infty(\{x \in \Omega / d(x,\medcup_{\nu \in {\mathcal P}_{s} \setminus \{ \kappa \}} {\mathcal S}_{\nu}) \geq c \}) .
\end{equation}
\end{Lemma}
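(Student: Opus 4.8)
The strategy is to mimic exactly the analysis carried out for $\psi_{\kappa}^r$ in Lemma~\ref{Lem:WBorne}, Lemma~\ref{Lem:DWBorne} and Proposition~\ref{Pro:LimCirculation}, but now on the $\kappa$-augmented domain $\widecheck{{\mathcal F}}_{\kappa}$, where the solid ${\mathcal S}_{\kappa}$ has been removed from the list of obstacles. The key observation is that $\psi_{\kappa}^{r,\not\kappa}$ is a solution of a Dirichlet-type problem of the form \eqref{Eq:HAlpha}, but posed relative to the family of solids $\{{\mathcal S}_{\nu} : \nu \in \{1,\dots,N\} \setminus \{\kappa\}\}$. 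Among these, the ones of fixed size are still ${\mathcal P}_{(i)}$ and the shrinking ones are now ${\mathcal P}_{s} \setminus \{\kappa\}$. Thus all of Section~\ref{Sec:Expansions} applies verbatim, with ${\mathcal P}_{s}$ replaced by ${\mathcal P}_{s} \setminus \{\kappa\}$ and with the final domain $\widecheck{{\mathcal F}}$ unchanged (since $\kappa \in {\mathcal P}_{s}$, the solid ${\mathcal S}_{\kappa}$ does not belong to the final configuration anyway). The boundary data is $-\widehat{\psi}_{\kappa}$ on $\partial \Omega$ and on each $\partial {\mathcal S}_{\lambda}$, $\lambda \neq \kappa$, exactly as in \eqref{Eq:psir}.

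\textbf{Step 1 (uniform estimates \eqref{Eq:PhantomBorne}).} Introduce, as in \eqref{Eq:DefDeNUplets}, the tuples ${\bf A}$ and $\widecheck{\bf A}$ built from the traces $\widehat{\psi}_{\kappa|\partial {\mathcal S}_{\lambda}}$ for $\lambda \in \{1,\dots,N_{(i)}\}$ and $\widehat{\psi}_{\kappa|\partial \Omega}$, now with \emph{no} entry corresponding to $\partial {\mathcal S}_{\kappa}$. Write, in analogy with \eqref{Eq:ExpressionPsir},
\begin{equation*}
\psi_{\kappa}^{r,\not\kappa} = -\,\mathfrak{H}[{\bf A}] - \sum_{\nu \in {\mathcal P}_{s} \setminus \{\kappa\}} \mathfrak{f}_{\nu}\big[\widehat{\psi}_{\kappa|\partial {\mathcal S}_{\nu}}\big],
\end{equation*}
where now $\mathfrak{H}$ and the $\mathfrak{f}_{\nu}$ are relative to the domain $\widecheck{{\mathcal F}}_{\kappa}$. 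Since by Lemma~\ref{Lem:StandaloneCSF} (through \eqref{Eq:BehaviourPsi1}) the function $\widehat{\psi}_{\kappa}$ and all its derivatives are bounded away from $\partial{\mathcal S}_{\kappa}$, estimate \eqref{Eq:EstPsiAilleurs} holds for $\nu \neq \kappa$; then Proposition~\ref{Pro:DirichletGros} controls the $\mathfrak{H}$-term and Proposition~\ref{Pro:DirichletPetits} (together with Proposition~\ref{Pro:DirichletSAM}) controls the sum over $\nu \in {\mathcal P}_{s}\setminus\{\kappa\}$. The absence of a constraint on $\partial {\mathcal S}_{\kappa}$ and of a contribution $\widehat{\mathfrak f}_{\kappa}$ is precisely why the H\"older norm on ${\mathcal V}_{\delta}(\partial{\mathcal S}_{\kappa})$ in \eqref{Eq:PhantomBorne} carries the factor $\varepsilon_{\lambda}^{\delta_{\kappa \neq \lambda}(k-\frac12)}$ rather than a negative power of $\varepsilon_{\kappa}$: near $\partial {\mathcal S}_{\kappa}$, $\psi_{\kappa}^{r,\not\kappa}$ is simply a harmonic function with bounded data on the \emph{other} boundaries, so interior elliptic regularity gives $C^{k,\frac12}$-bounds that are $\varepsilon_{\kappa}$-independent.

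\textbf{Step 2 (shape derivatives \eqref{Eq:DPhantomBorne}).} Apply Corollary~\ref{Cor:SD} and Remark~\ref{Rem:BCSD} to \eqref{Eq:PsirefKappa} exactly as in the proof of Lemma~\ref{Lem:DWBorne}, obtaining a boundary-data expression analogous to \eqref{Eq:DQPsiAuBord} (again with no $\partial {\mathcal S}_{\kappa}$-constraint line). Estimate the resulting traces on each $\partial {\mathcal S}_{\lambda}$ and on $\partial\Omega$ using Lemma~\ref{Lem:StandaloneCSF}, \eqref{Eq:BehaviourPsi1}, and the already-established \eqref{Eq:PhantomBorne}; then reconstruct $\tfrac{\partial \psi_{\kappa}^{r,\not\kappa}}{\partial q_{\mu,m}}$ via $\mathfrak{H}$ plus a sum of $\mathfrak{f}_{\lambda}$'s and invoke Propositions~\ref{Pro:DirichletGros}, \ref{Pro:DirichletPetits}, \ref{Pro:DirichletSAM} with the uniform Schauder estimate (Lemma~\ref{Lem:USE}). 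The only novelty is the exponent $-1+\delta_{m3}+\delta_{\mu\kappa}$ in the second bound of \eqref{Eq:DPhantomBorne}: when $\mu = \kappa$, moving the (now phantom, i.e.\ absent) solid ${\mathcal S}_{\kappa}$ amounts to varying a domain \emph{without} varying any actual boundary of $\widecheck{{\mathcal F}}_{\kappa}$, so $\tfrac{\partial \psi_{\kappa}^{r,\not\kappa}}{\partial q_{\kappa,m}}$ is obtained by differentiating $-\widehat{\psi}_{\kappa}$ through its dependence on $h_{\kappa},\vartheta_{\kappa}$ in the boundary data on the \emph{other} components; tracking the scaling in \eqref{Eq:ScalePsi}–\eqref{Eq:BehaviourPsi1} on the $\delta$-neighborhood of $\partial {\mathcal S}_{\kappa}$ produces the extra $\varepsilon_{\kappa}$ loss recorded by $\delta_{\mu\kappa}$. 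One should be slightly careful here because $\widehat{\psi}_{\kappa}$ itself depends on $q_{\kappa}$, so the material-derivative bookkeeping of Corollary~\ref{Cor:SD}/Remark~\ref{Rem:BCSD} must be applied with the right transport field; this is the one point that needs a short, careful verification rather than a pure copy-paste.

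\textbf{Step 3 (convergence \eqref{Eq:CVPhantom}).} Pass to the limit as in the proof of \eqref{Eq:CVPsi3}: by Lemma~\ref{Lem:SmallCorrectors}, every corrector $\nabla^\perp \mathfrak{f}_{\nu}[\widehat{\psi}_{\kappa|\partial {\mathcal S}_{\nu}}]$ with $\nu \in {\mathcal P}_{s}\setminus\{\kappa\}$ and every term $\nabla\widehat{\mathfrak f}_{\lambda}[\widecheck{\mathfrak g}[\widecheck{\bf A}]_{|\partial{\mathcal S}_{\lambda}}]$ tends to $0$ in $L^p(\Omega)$ ($p<+\infty$) and in $L^\infty$ away from $\medcup_{\nu \in {\mathcal P}_{s}\setminus\{\kappa\}}{\mathcal S}_{\nu}$; meanwhile, by Proposition~\ref{Pro:DirichletGros} and the uniform Schauder estimate, $\nabla^\perp\widecheck{\mathfrak g}[\widecheck{\bf A}] \to -\nabla^\perp\widecheck{\psi}_{\kappa}^r$ in $C^{k,\frac12}(\widecheck{{\mathcal F}})$, since $\widecheck{\mathfrak g}[\widecheck{\bf A}]$ solves a Dirichlet problem whose data converges (using \eqref{Eq:CVPsi2} and $H_{\kappa}\cdot n = \nabla^\perp\widehat\psi_\kappa\cdot n$ on $\partial\Omega\cup\bigcup_{\nu\in{\mathcal P}_{(i)}}\partial{\mathcal S}_{\nu}$ in the limit) exactly to the data of \eqref{Eq:DefUKRKappa}. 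Collecting the pieces gives \eqref{Eq:CVPhantom}.

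\textbf{Expected main obstacle.} The routine part is the transcription of Section~\ref{Sec:Expansions}'s machinery to the domain $\widecheck{{\mathcal F}}_{\kappa}$; the genuinely delicate point is the $\delta_{\mu\kappa}$-dependent exponent in \eqref{Eq:DPhantomBorne}, i.e.\ correctly accounting, in the shape-differentiation of $\psi_{\kappa}^{r,\not\kappa}$ with respect to the position of the \emph{absent} solid $\kappa$, for the fact that the boundary data $-\widehat{\psi}_{\kappa}$ on the remaining boundaries depends on $q_{\kappa}$ and that this dependence, evaluated in the $\delta$-neighborhood of $\partial{\mathcal S}_{\kappa}$, carries an extra $\varepsilon_{\kappa}^{-1}$ relative to what one would naively expect from \eqref{Eq:DWBorne}. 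Making this precise requires invoking the Laurent expansion \eqref{Eq:PsiLS} and the scaling \eqref{Eq:ScalePsi} on the near field of ${\mathcal S}_{\kappa}$, which is where one has to slow down.
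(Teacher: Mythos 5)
Your overall approach matches the paper's: the paper also proves this as a direct transcription of Lemmas~\ref{Lem:WBorne} and~\ref{Lem:DWBorne} and of~\eqref{Eq:CVPsi3} to the domain $\widecheck{\mathcal F}_{\kappa}$, starting from the analogue~\eqref{Eq:ExprPsiKappar} of~\eqref{Eq:ExpressionPsir}, and singling out $\mu=\kappa$ as the one place requiring care because $\widecheck{\mathcal F}_{\kappa}$ no longer depends on $q_{\kappa}$. Steps~1 and~3 of your proposal reproduce the paper's argument correctly.

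There is, however, a concrete error in Step~2 concerning the case $\mu=\kappa$. You describe the $\delta_{\mu\kappa}$ term in the exponent $\varepsilon_{\mu}^{-1+\delta_{m3}+\delta_{\mu\kappa}}$ as an ``extra $\varepsilon_{\kappa}$ loss'' and later as ``an extra $\varepsilon_{\kappa}^{-1}$ relative to what one would naively expect from~\eqref{Eq:DWBorne}'', and you suggest tracking this via the Laurent expansion~\eqref{Eq:PsiLS} on the near field of ${\mathcal S}_{\kappa}$. This has the direction of the effect backwards. For $\mu=\kappa$ the exponent is $\delta_{m3}$ instead of $-1+\delta_{m3}$: the phantom bound near $\partial{\mathcal S}_{\kappa}$ is $\varepsilon_{\kappa}$ \emph{smaller}, i.e.\ \emph{better}, than the one in~\eqref{Eq:DWBorne}. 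The mechanism is not a near-field singularity of $\widehat{\psi}_{\kappa}$ — quite the opposite: since $\partial{\mathcal S}_{\kappa}$ lies in the interior of $\widecheck{\mathcal F}_{\kappa}$, there is no boundary layer attached to it. The derivative $\frac{\partial\psi_{\kappa}^{r,\not\kappa}}{\partial q_{\kappa,m}}$ is the solution of the auxiliary Dirichlet-type problem with boundary data $-\frac{\partial\widehat{\psi}_{\kappa}}{\partial q_{\kappa,m}}$ prescribed only on the \emph{far} components $\partial\widecheck{\mathcal F}_{\kappa}$, and by~\eqref{Eq:pqpsi} together with~\eqref{Eq:BehaviourPsi1}--\eqref{Eq:BehaviourPsi2} this data is of size $\varepsilon_{\kappa}^{\delta_{m3}}$ there; the resulting solution is therefore $O(\varepsilon_{\kappa}^{\delta_{m3}})$ uniformly in $\widecheck{\mathcal F}_{\kappa}$, including on ${\mathcal V}_{\delta}(\partial{\mathcal S}_{\kappa})$. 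In short: the ``careful verification'' you flag is genuinely needed, but you would be chasing a loss that isn't there; the $\delta_{\mu\kappa}$ records a gain coming from the absence of the boundary $\partial{\mathcal S}_{\kappa}$, not from any near-field expansion.

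A minor point in Step~3: the relation you write as $H_{\kappa}\cdot n=\nabla^\perp\widehat\psi_{\kappa}\cdot n$ ``in the limit'' should be stated as a convergence, $\nabla^\perp\widehat\psi_{\kappa}\to H_{\kappa}$ on $\partial\widecheck{\mathcal F}$ as $\overline{\boldsymbol\varepsilon}\to 0$ (which is~\eqref{Eq:CVPsi2} restricted away from $h_{\kappa}$), not as an equality; the identification of the limit with $\nabla^\perp\widecheck{\psi}_{\kappa}^r$ then follows exactly as in the proof of~\eqref{Eq:CVPsi3}.
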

\begin{proof}[Proof of Lemma~\ref{Lem:Phantom}]
This is a mere adaptation of Lemmas~\ref{Lem:WBorne} and \ref{Lem:DWBorne} and of \eqref{Eq:CVPsi3}. Hence we only stress the variations in the proofs.

To get \eqref{Eq:PhantomBorne}, the main point is that \eqref{Eq:ExpressionPsir} has to be replaced by 
\begin{equation} \label{Eq:ExprPsiKappar}
\psi_{\kappa}^{r,\not \kappa} = - \mathfrak{H}^{\not \kappa}({\bf A}) - \sum_{\nu \in {\mathcal P}_{s} \setminus \{\kappa \}} \mathfrak{f}^{\not \kappa}_{\nu}[\widehat{\psi}_{\kappa|\partial {\mathcal S}_{\nu}}] ,
\end{equation}
where the potentials $\mathfrak{H}^{\not \kappa}$ and $\mathfrak{f}^{\not \kappa}$ correspond to the domain $\widecheck{{\mathcal F}}_{\kappa}$ rather than ${\mathcal F}$, and where we define the $N$-tuple 
${\bf A}:= (\widehat{\psi}_{\kappa|\partial {\mathcal S}_{1}},\dots, \widehat{\psi}_{\kappa|\partial {\mathcal S}_{N_{(i)}}},0,\dots,0, \widehat{\psi}_{\kappa|\partial {\Omega}})$, where $N$ corresponds to $N-1$ solids plus $\Omega$ (because there is no boundary $\partial {\mathcal S}_\kappa$).
Then the same argument as in  Lemma~\ref{Lem:WBorne} applies to obtain  \eqref{Eq:PhantomBorne}, using Propositions~\ref{Pro:DirichletPetits} and \ref{Pro:DirichletGros} in the domain with $N-1$ solids $\widecheck{{\mathcal F}}_{\kappa}$. \par
Concerning the estimate \eqref{Eq:DPhantomBorne} of the shape derivative, when $\mu \neq \kappa$, it suffices to make the slight correction to the boundary condition \eqref{Eq:DQPsiAuBord}:
\begin{equation*}
\frac{\partial \psi_{\kappa}^{r,\not \kappa}}{\partial q_{\mu,m}} = 
\left\{ \begin{array}{l}
\displaystyle - \delta_{\mu \kappa} \frac{\partial \widehat{\psi}_{\kappa}}{\partial q_{\kappa,m}}
- \left( \nabla \psi_{\kappa}^{r,\not \kappa} + \nabla \widehat{\psi}_{\kappa} \right) \cdot \xi_{\mu,m} +c'_{\lambda} 
\text{ on } \partial {\mathcal S}_{\lambda} \ \text{ for } \ \lambda \in \{1,\dots,N\} \setminus \{ \kappa \}, \\
\displaystyle -\delta_{\mu \kappa} \frac{\partial \widehat{\psi}_{\kappa}}{\partial q_{\kappa,m}} \text{ on } \partial \Omega.
\end{array} \right.
\end{equation*}
Then the same reasoning as in Lemma~\ref{Lem:DWBorne} applies. Importantly enough, $\partial {\mathcal S}_{\kappa}$ is now in the bulk of the domain $\widecheck{{\mathcal F}}_{\kappa}$ so that the standalone potentials (see \eqref{Eq:EstDesFChapeau}) give a bounded contribution to $\nabla \frac{\partial \psi_{\kappa}^{r,\not \kappa}}{\partial q_{\mu,m}}$ in the neighborhood of ${\mathcal S}_{\kappa}$. \par
When $\mu=\kappa$, the situation is a bit different, because $\frac{\partial}{\partial q_{\kappa}}$ is no longer a shape derivative (the domain $\widecheck{{\mathcal F}}_{\kappa}$ does not depend on $q_{\kappa}$) but a simple derivative with respect to a parameter. The boundary condition becomes 
$
\frac{\partial \psi_{\kappa}^{r,\not \kappa}}{\partial q_{\kappa,m}} =  - \frac{\partial \widehat{\psi}_{\kappa}}{\partial q_{\kappa,m}}
\ \text{ on } \ \partial \widecheck{{\mathcal F}}_{\kappa},
$
and the boundedness of $\varepsilon_{\kappa}^{-\delta_{m3}} \nabla \frac{\partial \psi_{\kappa}^{r,\not \kappa}}{\partial q_{\kappa,m}}$ (here in the whole $\widecheck{{\mathcal F}}_{\kappa}$) follows as before. \par
Finally, to prove \eqref{Eq:CVPhantom}, we rely again on \eqref{Eq:ExprPsiKappar} and reason as for \eqref{Eq:CVPsi1}. We approximate $\nabla \mathfrak{H}^{\not \kappa}({\bf A})$ by $\nabla \widecheck{\mathfrak{g}}[\widecheck{\bf A}]$ with the same $\widecheck{\mathfrak{g}}$  and the same $\widecheck{\bf A}:= (\widehat{\psi}_{\kappa|\partial {\mathcal S}_{1}},\dots, \widehat{\psi}_{\kappa|\partial {\mathcal S}_{N_{(i)}}}, \widehat{\psi}_{\kappa|\partial {\Omega}})$ as in the proof of \eqref{Eq:CVPsi1} (since $\kappa \in {\mathcal P}_{s}$). Hence we obtain the same limit.
\end{proof}
%
%
%
%
%
%
%
%
%
%
%
%%%%%%%%%%%%%%%%%%%%%%%%%%%%%%%%%%%%%%%%%%%%%%%%%%%%%%%%%%%%
%
%
%
\subsection{Estimates of the Biot-Savart kernel}
\subsubsection{Biot-Savart kernel}
The following will be useful for both the \textit{a priori} estimates and the passage to the limit.
We consider $\omega \in L^{\infty}_{c}({\mathcal F})$ and compare the generated velocity $K[\omega]$ in ${\mathcal F}$ (in the domain with all solids) and the generated velocity $\widecheck{K}[\omega]$ in $\widecheck{\mathcal F}$ (in the larger domain with only solids of family (i)) as defined in \eqref{Eq:DefK} and \eqref{Eq:DefKCheck}. In particular we prove that these velocity fields are bounded independently of $\overline{\boldsymbol\varepsilon}$. 
Precisely we have the following result.
\begin{Lemma} \label{Lem:BiotSavart}
Let $\delta>0$. There exists $\varepsilon_{0} >0$ such that the following holds. For any $p \in (2,+\infty]$, there exist $C>0$  such that for any $(\boldsymbol\varepsilon,{\bf q},\omega) \in \mathfrak{Q}_\delta^{\varepsilon_{0}}$, one has
\begin{equation} \label{Eq:BiotSavartBorne}
\left\| K[\omega] \right\|_{L^{\infty}({\mathcal F}^\varepsilon)} \leq  C \| \omega \|_{L^{p}({\mathcal F}^\varepsilon)}
\ \text{ and } \ 
\varepsilon_{\lambda}^{k-\frac{1}{2}} \left| K[\omega] \right|_{C^{k-1,\frac{1}{2}}(\partial {\mathcal S}_{\lambda}^\varepsilon)} \leq  C \| \omega \|_{L^{p}({\mathcal F}^\varepsilon)},
\ \forall \lambda =1, \dots,N.
\end{equation}
In the same way, there exists $\varepsilon_{0}>0$ and for each $p \in (1,+\infty)$, there exist $C>0$  such that for any $(\boldsymbol\varepsilon,{\bf q}) \in \mathcal{Q}_\delta^{\varepsilon_{0}}$, any $f \in L^p({\mathcal F}^\varepsilon({\bf q});\R^2)$ such that $\dist(\Supp(f), \partial {\mathcal F}^\varepsilon({\bf q})) \geq \delta,$
\begin{equation} \label{Eq:BiotSavartBorne2}
\left\| K[\div(f)] \right\|_{L^{p}({\mathcal F}^\varepsilon)}  \leq  C \| f \|_{L^{p}({\mathcal F}^\varepsilon)} .
\end{equation}
Finally, uniformly for $({\bf q},\omega)$ such that $(\boldsymbol\varepsilon,{\bf q},\omega) \in \mathfrak{Q}_\delta^{\varepsilon_{0}}$ when $\overline{\boldsymbol{\varepsilon}}$ is small and $\omega$ is bounded in $L^\infty$,
\begin{multline} \label{Eq:BiotSavartLimite}
\left\| K[\omega] - \widecheck{K}[\omega] \right\|_{L^p({\mathcal F}({\bf q}))} \rightarrow 0 
\ \text{for } p\in (2,+\infty)
\\ 
\text{ and } 
\left\| K[\omega] - \widecheck{K}[\omega] \right\|_{L^\infty \left(\{ x \in {\widecheck{\mathcal F}}/d \left(x,\cup_{\nu \in {\mathcal P}_{s}} {\mathcal S}_{\nu}\right) \geq c\} \right)} \rightarrow 0
\text{ as } \overline{\boldsymbol\varepsilon} \rightarrow 0 .
\end{multline}
\end{Lemma}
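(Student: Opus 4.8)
The plan is to decompose $K[\omega]$ via the auxiliary Dirichlet problem machinery of Section~\ref{Sec:Expansions} and then invoke the already-established smallness of the standalone correctors on small solids. Concretely, I would first write the stream function of $K[\omega]$ as $\phi$, the solution (up to additive constants on each boundary component) of a Dirichlet problem of the type \eqref{Eq:HAlpha} with source term $-\omega$ in the interior. More precisely, let $\Psi_{\Omega}[\omega]$ be the Newtonian potential of $\omega$ (extended by $0$), which is harmonic away from $\Supp(\omega)$ and satisfies $\|\Psi_{\Omega}[\omega]\|_{C^{k}({\mathcal V}_{\delta}(\partial{\mathcal F}))}\leq C\|\omega\|_{L^p}$ uniformly (since $\dist(\Supp(\omega),\partial{\mathcal F})\geq 2\delta$, this is just interior elliptic/potential estimates). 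Then $\phi - \Psi_{\Omega}[\omega]$ is harmonic in ${\mathcal F}$ and its stream function is of the form $\mathfrak{H}[\alpha_{1},\dots,\alpha_{N};\alpha_{\Omega}]$ with $\alpha_{j}=-\Psi_{\Omega}[\omega]|_{\partial{\mathcal S}_{j}}$, $\alpha_{\Omega}=-\Psi_{\Omega}[\omega]|_{\partial\Omega}$ — the zero-circulation conditions in \eqref{Eq:DefK} translate exactly into the zero-flux conditions in \eqref{Eq:HAlpha} after passing to the harmonic conjugate, as in Lemma~\ref{Lem:Neumann}. The same decomposition applies verbatim in $\widecheck{\mathcal F}$, giving $\widecheck{K}[\omega]$ in terms of $\mathfrak{H}$-type potentials relative to the larger domain.

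With this setup, \eqref{Eq:BiotSavartBorne} follows from Proposition~\ref{Pro:DirichletPetits} (for the boundary data on small solids) and Proposition~\ref{Pro:DirichletGros} (for the data on fixed-size solids and $\partial\Omega$), applied to the harmonic conjugate, together with the uniform bound $\|\alpha\|_{L^\infty(\partial{\mathcal F})}\leq C\|\omega\|_{L^p({\mathcal F})}$: note that since the data on a small solid ${\mathcal S}_\lambda^\varepsilon$ is the restriction of the smooth function $\Psi_{\Omega}[\omega]$, one has $\|\alpha_\lambda - \text{const}\|_{L^\infty(\partial{\mathcal S}_\lambda^\varepsilon)}\leq C\varepsilon_\lambda\|\omega\|_{L^p}$ and $\varepsilon_\lambda^{k+\frac12}|\alpha_\lambda|_{C^{k,\frac12}(\partial{\mathcal S}_\lambda^\varepsilon)}\leq C\varepsilon_\lambda\|\omega\|_{L^p}$, which is exactly what is needed to feed into \eqref{Eq:NablaPhiHatGlobal} and \eqref{Eq:EstPhiHatPhi2} to control the Hölder seminorms on $\partial{\mathcal S}_\lambda^\varepsilon$ with the stated $\varepsilon_\lambda^{k-\frac12}$ weight. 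The estimate \eqref{Eq:BiotSavartBorne2} on $K[\div(f)]$ is of a slightly different nature: here I would invoke the standard elliptic $L^p$ theory for the div/curl system on ${\mathcal F}^\varepsilon$ — the point being that $\div f$ is supported away from $\partial{\mathcal F}^\varepsilon$ by distance $\geq\delta$, so the Calderón–Zygmund constant can be made uniform in $\overline{\boldsymbol\varepsilon}$ by a cutoff-and-reflection argument localizing away from the small solids, exactly as in the localization steps already used (e.g. in the proof of Lemma~\ref{Lem:USE}); the small solids are simply invisible to the source.

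For the limit \eqref{Eq:BiotSavartLimite}, I would subtract the two decompositions. The Newtonian potential part $\Psi_{\Omega}[\omega]$ is the same in both and cancels. For the remaining harmonic parts, $\mathfrak{H}$ relative to ${\mathcal F}$ versus $\widecheck{\mathfrak H}$ (the $\mathfrak{H}$-type object for $\widecheck{\mathcal F}$) differ precisely by the quantity estimated in \eqref{Eq:EstPhiCheckg} of Proposition~\ref{Pro:DirichletGros}: the difference equals $\sum_{\lambda\in{\mathcal P}_s}\nabla\widehat{\mathfrak{f}}_\lambda[\widecheck{\mathfrak g}_{\alpha|\partial{\mathcal S}^\varepsilon_\lambda}]$ modulo an $L^\infty$ error of size $C|\overline{\boldsymbol\varepsilon}|\|\omega\|_{L^\infty}$ (using the $L^\infty$ bound on $\omega$ here, and the fact that the boundary data of $\widecheck{\mathfrak g}_\alpha$ on the vanishing solids is bounded). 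By \eqref{Eq:EstCorrectLambda} (equivalently Lemma~\ref{Lem:SmallCorrectors}), each corrector term $\nabla\widehat{\mathfrak{f}}_\lambda[\widecheck{\mathfrak g}_{\alpha|\partial{\mathcal S}^\varepsilon_\lambda}]$ tends to $0$ in $L^p({\mathcal F}({\bf q}))$ for every $p<+\infty$ and in $C^k$ away from any fixed neighborhood of ${\mathcal S}_\lambda$. Combining, $K[\omega]-\widecheck{K}[\omega]\to 0$ in $L^p({\mathcal F}({\bf q}))$ for $p\in(2,+\infty)$ and in $L^\infty$ on $\{d(x,\cup_{\nu\in{\mathcal P}_s}{\mathcal S}_\nu)\geq c\}$; the uniformity over $({\bf q},\omega)$ with $\omega$ bounded in $L^\infty$ is inherited from the uniformity of all the cited propositions over ${\mathcal Q}_\delta$.

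The main obstacle I anticipate is not any single estimate but the bookkeeping of the harmonic-conjugate / stream-function correspondence together with the zero-circulation conditions: one must check carefully that the circulation normalizations in \eqref{Eq:DefK}–\eqref{Eq:DefKCheck} match the zero-flux conditions built into $\mathfrak{H}$ in \eqref{Eq:HAlpha} under conjugation, and that the additive constants (which are irrelevant for the velocity field but appear at every stage of the $\mathfrak{H}$-construction) are handled consistently — this is the kind of point where \eqref{Eq:CoefsMA}-style "insensitivity to additive constants" observations need to be made explicit. A secondary technical point is \eqref{Eq:BiotSavartBorne2}: the div/curl $L^p$ estimate with uniform constant requires a genuine (if routine) localization argument rather than a direct appeal to the Dirichlet-problem propositions, since those are stated for $L^\infty$ boundary data rather than $L^p$ interior sources.
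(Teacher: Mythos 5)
Your decomposition through the Newtonian potential of $\omega$ (extended by $0$) plus an $\mathfrak{H}$-correction carrying boundary data on all components is a valid alternative to the paper's route, which instead writes $K[\omega] = \widecheck{K}[\omega] + \widetilde{R}[\omega]$ with $\widetilde{R}[\omega] = -\nabla^\perp\sum_{\kappa\in\mathcal{P}_{s}}\mathfrak{f}_\kappa\bigl[\widecheck{\Psi}[\omega]_{|\partial\mathcal{S}_\kappa}\bigr]$; the latter disposes of $\partial\Omega$ and the large solids in one stroke via uniform Calder\'on--Zygmund estimates in $\widecheck{\mathcal{F}}$, while yours needs Proposition~\ref{Pro:DirichletGros} on top of Propositions~\ref{Pro:DirichletPetits} and~\ref{Pro:DirichletSAM}. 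One point you should make explicit in your route is that the zero-flux constraints in \eqref{Eq:HAlpha} are automatically compatible with your ansatz: the Newtonian potential has zero flux through each $\partial\mathcal{S}_\nu$ because $\omega$ vanishes on $\mathcal{S}_\nu$ and the divergence theorem applies. With that, your arguments for \eqref{Eq:BiotSavartBorne} and \eqref{Eq:BiotSavartLimite} are correct and rest on the same intermediate estimates as the paper's.

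For \eqref{Eq:BiotSavartBorne2}, however, the assertion that ``the small solids are simply invisible to the source'' is wrong, and the cutoff-and-reflection localization you allude to cannot by itself produce a constant uniform in $\overline{\boldsymbol\varepsilon}$. Even though $\div f$ vanishes near $\partial\mathcal{F}^\varepsilon$, the solution $K[\div f]$ must satisfy $K[\div f]\cdot n = 0$ and the zero-circulation normalizations on \emph{every} shrinking solid, and these boundary conditions act globally; any localization in a neighbourhood of a small solid brings in constants depending on the local geometry at scale $\varepsilon_\lambda$, which is precisely what must be avoided. The right structure, which is the paper's, is to write $K[\div f] = K_{\R^2}[\div f] + (\text{harmonic correction})$, bound $K_{\R^2}[\div f]$ by the full-plane Calder\'on--Zygmund $L^p$ inequality (no boundary, hence no $\overline{\boldsymbol\varepsilon}$-dependence), and then control the correction via Propositions~\ref{Pro:DirichletSAM}, \ref{Pro:DirichletPetits} and~\ref{Pro:DirichletGros}: because $f$ is supported at distance $\geq\delta$ from $\partial\mathcal{F}^\varepsilon$, the Neumann data of the correction---the normal trace of $K_{\R^2}[\div f]$ on $\partial\mathcal{F}^\varepsilon$---is controlled in $L^\infty$ and in the scaled H\"older norms by interior elliptic regularity for harmonic functions, and the zero-circulation conditions are preserved since $\div f$ vanishes on the regions enclosed by the $\partial\mathcal{S}_\nu$. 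You flagged the right difficulty in your last paragraph; the fix is to solve in the whole plane first and then correct, not to cut off away from the small solids.
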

\begin{Remark}
Actually, our proof only needs $\omega$ or $f$ to be supported away from the small solids.
\end{Remark}
\begin{proof}[Proof of Lemma~\ref{Lem:BiotSavart}]
For $\delta>0$, we let $\varepsilon_{0}$ as in Lemma~\ref{Lem:Contraction}.
Clearly, the difference $\widetilde{R}[\omega] := K[\omega] - \widecheck{K}[\omega]$ satisfies
\begin{equation} \nonumber%\label{Eq:DefR-BS}
\left\{ \begin{array}{l}
\div \widetilde{R}[\omega] = \curl \widetilde{R}[\omega] = 0 \ \text{ in } \ {\mathcal F}({\bf q}), \medskip \\
\widetilde{R}[\omega]\cdot n = 0 \ \text{ on } \ \partial \widecheck{\mathcal F}({\bf q}), \medskip \\
\widetilde{R}[\omega]\cdot n = -\widecheck{K}[\omega] \cdot n \ \text{ on } \ \partial {\mathcal F} \setminus \partial \widecheck{\mathcal F}({\bf q}), \medskip \\
\displaystyle \oint_{\partial {\mathcal S}_{\nu}} \widetilde{R}[\omega] \cdot \tau \, ds = 0  \ \text{ for } \ \nu =1, \dots, N.
\end{array} \right.
\end{equation}
In particular one can write $\widetilde{R}[\omega]=\nabla^\perp \widetilde{\varphi}[\omega]$ where 
\begin{equation} \label{ontoile}
\widetilde{\varphi}[\omega] = -\sum_{\kappa \in {\mathcal P}_{s}} \mathfrak{f}_{\kappa} \left[\widecheck{\Psi}[\omega]_{|\partial {\mathcal S}_{\kappa}}\right] ,
\end{equation}
and where $\widecheck{\Psi}[\omega]$ is a stream function for $\widecheck{K}[\omega]$, that is, $\widecheck{K}[\omega] =\nabla^\perp \widecheck{\Psi}[\omega]$. \par
We first estimate $\widecheck{K}[\omega]$. As for Lemma~\ref{Lem:USE}, we have uniform Calder\'on-Zygmund estimates (see e.g. \cite[Lemma 9.17]{GT}) in $\widecheck{{\mathcal F}}$ as long as ${\bf q}_{(i)} \in {\mathcal{Q}}_{(i),\delta}$. It follows that for each $p \in (1,+\infty)$, one has a uniform constant $C>0$ such that
\begin{equation} \label{Eq:EstKSobolev}
\left\| \widecheck{K}[\omega] \right\|_{W^{1,p}(\widecheck{\mathcal F})} \leq  C \| \omega \|_{L^{p}({\mathcal F}^\varepsilon)}.
\end{equation}
Then we invoke Sobolev embedding for $p>2$ to get the bound 
\begin{equation} \label{Eq:SobK}
\| \widecheck{K}[\omega] \|_{L^{\infty}(\widecheck{\mathcal F})} \leq C \| \omega \|_{\infty}.
\end{equation}
This embedding is also uniform in $\widecheck{{\mathcal F}}$ as long as  ${\bf q}_{(i)} \in {\mathcal{Q}}_{(i),\delta}$: it suffices to use an extension operator inside each solid and use the embedding in $\Omega$. We notice that since $\omega$ is distant from the solids, by inner regularity for the Laplace equation, we have

\begin{equation} \label{Eq:Ck12Psi}
\| \widecheck{K}[\omega] \|_{C^{k,\frac{1}{2}}({\mathcal V}_{\delta}(\partial {\mathcal F}))} \leq C \| \omega  \|_{\infty} 
\ \text{ and } \ 
\| \widecheck{\Psi}[\omega] \|_{C^{k,\frac{1}{2}}({\mathcal V}_{\delta}(\partial {\mathcal F}))} \leq C \| \omega  \|_{\infty} .
\end{equation}
Now we apply Proposition~\ref{Pro:DirichletPetits} and Proposition~\ref{Pro:DirichletSAM} to each term in the right-hand-side of \eqref{ontoile}. This gives
\begin{equation*} 
\left\| \tilde{R}[\omega] \right\|_{L^{\infty}({\mathcal F}^\varepsilon)} \leq  C \| \omega \|_{L^{p}({\mathcal F}^\varepsilon)}
\ \text{ and } \ 
\varepsilon_{\lambda}^{k-\frac{1}{2}} \left| \tilde{R}[\omega] \right|_{C^{k-1,\frac{1}{2}}(\partial {\mathcal S}_{\lambda}^\varepsilon)} \leq  C \| \omega \|_{L^{p}({\mathcal F}^\varepsilon)},
\ \forall \lambda =1, \dots,N.
\end{equation*}
We consequently deduce \eqref{Eq:BiotSavartBorne} with \eqref{Eq:SobK}. \par
The convergence \eqref{Eq:BiotSavartLimite} of $\widetilde{R}[\omega]$ to $0$ as $\overline{\boldsymbol\varepsilon} \rightarrow 0$ is proven as \eqref{Eq:EstCorrectLambda}: it is a consequence of Lemma~\ref{Lem:SmallCorrectors} and \eqref{Eq:Ck12Psi}. \par
Finally \eqref{Eq:BiotSavartBorne2} is proven in the same way, albeit in a weaker context. 
Denoting by $K_{\R^2} $ the Biot-Savart operator in the full plane, such that 
 \begin{equation} \nonumber %\label{Eq:DefKR2}
 \left\{ \begin{array}{l}
 \div K_{\R^2}[\omega] = 0 \ \text{ in } \ \R^2, \medskip \\
 \curl K_{\R^2}[\omega] = \omega \ \text{ in } \ \R^2, \medskip \\
 K_{\R^2}[\omega](x) \longrightarrow 0 \ \text{ as } \ x \longrightarrow +\infty,
 \end{array} \right.
 \end{equation}
we recall that  $K_{\R^2} \circ \div = \nabla^\perp \Delta_{\R^2}^{-1} \div$ is a Calder\'on-Zygmund operator which sends $L^p(\R^2)$ into itself (for $p \in (1,+\infty)$). It remains to check that the correction to obtain $K[\div(f)]$ is also estimated uniformly in $L^p({\mathcal F}({\bf q}))$. Thanks to the constraint on the support of $f$, it is again a consequence of interior elliptic estimates and of Propositions~\ref{Pro:DirichletPetits}, \ref{Pro:DirichletGros} and \ref{Pro:DirichletSAM}.
\end{proof}
\subsubsection{Shape derivatives of the Biot-Savart kernel}
\label{Par:SDBSK}
In this paragraph, for fixed $\omega$, we estimate the shape derivative $\displaystyle \frac{\partial K[\omega]}{\partial q_{\mu,m}}$.
\begin{Lemma} \label{Lem:DBSBorne}
Let $\delta>0$, $\mu \in \{1,\dots,N\}$, $m \in \{1,2,3\}$. There exists $C>0$ and $\varepsilon_{0}>0$ such that for any $({\boldsymbol\varepsilon},{\bf q},\omega) \in \mathfrak{Q}_{\delta}$, 
\begin{gather*} %\label{Eq:DBSBorne}
\left\| \frac{\partial K[\omega]}{\partial q_{\mu,m}} \right\|_{L^\infty({\mathcal F} \setminus {\mathcal V}_{\delta/2}(\partial {\mathcal S}_{\mu}) )} \leq  C \varepsilon_{\mu}^{\delta_{m3}} \left\| \omega \right\|_{L^{\infty}({\mathcal F}({\bf q}))}
, \ \ \ 
\left\| \frac{\partial K[\omega]}{\partial q_{\mu,m}} \right\|_{L^\infty({\mathcal V}_{\delta}(\partial {\mathcal S}_{\mu}) )} \leq  C \varepsilon_{\mu}^{-1+\delta_{m3}} \left\| \omega \right\|_{L^{\infty}({\mathcal F}({\bf q}))} , \\
%\label{Eq:DBSBorne2}
\text{ and } \ 
\left\| \frac{\partial K[\omega]}{\partial q_{\mu,m}} \right\|_{L^p({\mathcal F})} \leq  C \varepsilon_{\mu}^{\delta_{m3}} \left\| \omega \right\|_{L^{\infty}({\mathcal F}({\bf q}))} 
\ \text{ for } \ p<2.
\end{gather*}
\end{Lemma}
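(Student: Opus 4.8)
The plan is to proceed exactly as in the proof of Lemma~\ref{Lem:DWBorne}, using the structure of the Biot-Savart kernel as a harmonic correction to an explicit singular-free field. First I would write $K[\omega] = K_{\R^2}[\omega] + \nabla^\perp \phi[\omega]$ where $K_{\R^2}[\omega]$ is the full-plane Biot-Savart field (which does not depend on $\mathbf q$ at all, since $\omega$ is fixed) and $\phi[\omega]$ is the harmonic corrector solving a Dirichlet-type problem of the form \eqref{Eq:HAlpha} with boundary data $-\mathcal K_{\R^2}[\omega]$, a primitive on each boundary component of $K_{\R^2}[\omega]\cdot n$. Since $\omega$ is supported at distance $\geq 2\delta$ from all solids and from $\partial\Omega$, the field $K_{\R^2}[\omega]$ and its stream function are smooth and uniformly bounded (in every $C^{k,\frac12}$ norm) in a $\delta$-neighborhood of $\partial{\mathcal F}$, with bounds $\mathcal O(\|\omega\|_{L^\infty({\mathcal F})})$. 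Thus $\partial_{q_{\mu,m}} K[\omega] = \nabla^\perp \partial_{q_{\mu,m}} \phi[\omega]$, and by Corollary~\ref{Cor:SD} and Remark~\ref{Rem:BCSD} this shape derivative solves a Dirichlet problem of type \eqref{Eq:HAlpha} whose boundary data on $\partial{\mathcal S}_\lambda$ is supported on $\partial{\mathcal S}_\mu$ only, and equals there
\[
-\frac{\partial \mathcal K_{\R^2}[\omega]}{\partial q_{\mu,m}} - \big(\nabla \mathcal K_{\R^2}[\omega] - \nabla \phi[\omega]\big)\cdot \xi_{\mu,m} + c'_\lambda .
\]
(The material derivative of $\mathcal K_{\R^2}[\omega]$ makes sense since $K_{\R^2}[\omega]$ is smooth near the boundary.)

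Next I would estimate this boundary data. Each of the three terms only contributes on $\partial{\mathcal S}_\mu$. Using the uniform smoothness of $K_{\R^2}[\omega]$ near $\partial{\mathcal S}_\mu$, the fact that $\xi_{\mu,m}=e_m$ for $m\in\{1,2\}$ and $\xi_{\mu,3}=(x-h_\mu)^\perp$ (hence of size $\mathcal O(\varepsilon_\mu)$ on $\partial{\mathcal S}_\mu$), and the bound $\|\nabla\phi[\omega]\|_{L^\infty({\mathcal F})}\leq C\|\omega\|_{L^\infty}$ coming from \eqref{Eq:BiotSavartBorne} (applied to the reflected part of $K[\omega]$, exactly as in the proof of Lemma~\ref{Lem:BiotSavart}), together with the interior Schauder bounds on $\phi[\omega]$ near $\partial{\mathcal S}_\mu$ (which carry a factor $\varepsilon_\mu^{-k-\frac12}$ from the shrinking curvature, as in Step~1 of Lemma~\ref{Lem:DWBorne}), one gets, up to an additive constant on each component,
\[
\Big\| \frac{\partial K[\omega]}{\partial q_{\mu,m}}\text{-data}\Big\|_{L^\infty(\partial{\mathcal S}_\lambda)} \leq C \varepsilon_\mu^{\delta_{m3}}\varepsilon_\lambda^{\delta_{\lambda\neq\mu}} \|\omega\|_{L^\infty},
\quad
\varepsilon_\lambda^{k+\frac12}\Big\| \cdot \Big\|_{C^{k,\frac12}(\partial{\mathcal S}_\lambda)} \leq C \varepsilon_\mu^{\delta_{m3}}\varepsilon_\lambda^{\delta_{\lambda\neq\mu}} \|\omega\|_{L^\infty},
\]
and the analogous bound $\mathcal O(\varepsilon_\mu^{\delta_{m3}}\|\omega\|_{L^\infty})$ on $\partial\Omega$.

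Then I would run the same reconstruction argument as in Step~2 of Lemma~\ref{Lem:DWBorne}: decompose $\partial_{q_{\mu,m}}\phi[\omega]$ as $\mathfrak{H}[\cdots;\cdots]$ on $\widecheck{\mathcal F}$ plus $\sum_{\lambda\in{\mathcal P}_s}\mathfrak{f}_\lambda[\cdot]$. The $\mathfrak{H}$-term is bounded in $W^{1,\infty}(\widecheck{\mathcal F})$ by Proposition~\ref{Pro:DirichletGros} together with Lemma~\ref{Lem:USE}; the $\mathfrak{f}_\lambda$-terms are replaced by their standalone counterparts $\widehat{\mathfrak{f}}_\lambda$ via Proposition~\ref{Pro:DirichletPetits}, and those are estimated by Proposition~\ref{Pro:DirichletSAM}, which yields the pointwise decay $|\nabla\widehat{\mathfrak{f}}_\lambda[\cdot](x)|\leq C\varepsilon_\lambda|x-h_\lambda|^{-2}(\cdots)$ away from ${\mathcal S}_\lambda$ and the $\varepsilon_\mu^{-1+\delta_{m3}}$ blow-up in ${\mathcal V}_\delta(\partial{\mathcal S}_\mu)$. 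This gives the first two estimates. For the $L^p$ bound with $p<2$, one argues exactly as in the end of the proof of Lemma~\ref{Lem:DWBorne}: raise the pointwise estimate $|\nabla\widehat{\mathfrak{f}}_\lambda[\cdot](x)|\leq C\varepsilon_\lambda|x-h_\lambda|^{-2}\varepsilon_\mu^{\delta_{m3}}\|\omega\|_{L^\infty}$ to the power $p$, include $\Omega$ in a fixed ball $B(h_\lambda,R)$ and write ${\mathcal F}\subset B(h_\lambda,R)\setminus B(h_\lambda,C'\varepsilon_\lambda)$; integration gives a factor $\varepsilon_\lambda^{2-p}\cdot\varepsilon_\lambda^p$ from the region near ${\mathcal S}_\lambda$ handled via \eqref{Eq:NablaPhiHatGlobal}, which is bounded, so the total is $\mathcal O(\varepsilon_\mu^{\delta_{m3}}\|\omega\|_{L^\infty})$; the $\mathfrak{H}$-part is bounded in $L^\infty$ hence in $L^p$. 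I do not expect any genuinely new obstacle here: the only point requiring care is that $K_{\R^2}[\omega]$ genuinely does not depend on $\mathbf q$ (so that the whole $\mathbf q$-dependence is in the harmonic corrector and Corollary~\ref{Cor:SD} applies cleanly), which is guaranteed by the hypothesis $({\boldsymbol\varepsilon},{\bf q},\omega)\in\mathfrak{Q}_\delta$ that keeps $\Supp(\omega)$ at distance $>2\delta$ from the moving boundary — this is exactly the role of working in $\mathfrak{Q}_\delta$ rather than $\mathcal Q_\delta$.
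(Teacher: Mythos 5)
Your proof takes essentially the same route as the paper, with one cosmetic difference: the paper decomposes $K[\omega] = K_\Omega[\omega] + \nabla^\perp\eta[\omega]$ using the Biot-Savart field $K_\Omega[\omega]$ of the bounded domain $\Omega$ (whose normal trace already vanishes on $\partial\Omega$, so $\eta[\omega]=0$ there), whereas you use the full-plane field $K_{\R^2}[\omega]$, so that $\phi[\omega]$ carries nontrivial Dirichlet data on $\partial\Omega$ as well. Since $\partial\Omega$ is fixed and both reference fields are $\mathbf{q}$-independent, the shape derivative of the corrector satisfies the identical boundary-value problem in either decomposition (zero on $\partial\Omega$, constants on $\partial\mathcal{S}_\lambda$ for $\lambda\neq\mu$, and the same type of data on $\partial\mathcal{S}_\mu$), and the rest of the machinery — Corollary~\ref{Cor:SD}, the $\mathfrak{H}$--$\mathfrak{f}_\lambda$ reconstruction via Propositions~\ref{Pro:DirichletPetits}, \ref{Pro:DirichletGros}, \ref{Pro:DirichletSAM}, and the $L^p$ integration as in \eqref{Eq:DWBorne2} — is used identically. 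Note a small sign slip in your boundary-data expression: applying Remark~\ref{Rem:BCSD} with $\alpha = -\mathcal{K}_{\R^2}[\omega]$ gives $(-\nabla\mathcal{K}_{\R^2}[\omega] - \nabla\phi[\omega])\cdot\xi_{\mu,m}$ rather than $-(\nabla\mathcal{K}_{\R^2}[\omega] - \nabla\phi[\omega])\cdot\xi_{\mu,m}$; this does not affect the estimates.
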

\begin{proof}[Proof of Lemma~\ref{Lem:DBSBorne}]
Here we first introduce $K_{\Omega}[\omega]$ that satisfies
\begin{equation} \label{Eq:DefKOmega}
\left\{ \begin{array}{l}
\div K_{\Omega}[\omega] = 0 \ \text{ in } \ \Omega, \medskip \\
\curl K_{\Omega}[\omega] = \omega \ \text{ in } \ \Omega, \medskip \\
K_{\Omega}[\omega] \cdot n = 0 \ \text{ on } \ \partial \Omega.
\end{array} \right.
\end{equation}
(Recall that we suppose $\Omega$ simply connected to simplify.) Note that $K_{\Omega}[\omega]$ (whose shape derivative is obviously zero) can be put in the form
$K_{\Omega}[\omega] = \nabla^\perp \Psi_{\Omega}[\omega]$ 
 with 
$\Psi_{\Omega}[\omega] = \Delta_{\Omega}^{-1} \omega$,
where $\Delta_{\Omega}^{-1}$ is the usual inverse of the Laplacian with homogeneous Dirichlet boundary conditions on $\partial \Omega$.
Now the difference $R[\omega] = K[\omega] - K_{\Omega}[\omega]$ satisfies
\begin{equation} \label{Eq:DefR-BS}
\left\{ \begin{array}{l}
\div R[\omega] = \curl R[\omega] = 0 \ \text{ in } \ {\mathcal F}({\bf q}), \medskip \\
R[\omega]\cdot n = -K_{\Omega}[\omega] \cdot n \ \text{ on } \ \partial {\mathcal S}_{\nu}  \ \text{ for } \ \nu =1, \dots, N, \medskip \\
R[\omega]\cdot n = 0 \ \text{ on } \ \partial \Omega, \medskip \\
\displaystyle \oint_{\partial {\mathcal S}_{\nu}} R[\omega] \cdot \tau \, ds = 0  \ \text{ for } \ \nu =1, \dots, N.
\end{array} \right.
\end{equation}
It follows that $R[\omega]$ can be put in the form $R[\omega]= \nabla^\perp \eta[\omega]$ with
\begin{equation*}
\left\{ \begin{array}{l}
\Delta \eta[\omega] =0 \ \text{ in } \ {\mathcal F}({\bf q}), \medskip \\
\eta[\omega] = -\Psi_{\Omega}[\omega] + c_{\nu} \ \text{ on } \ \partial {\mathcal S}_{\nu}, \ \text{ for all } \ \nu=1,\dots,N, \medskip \\
\eta[\omega] = 0 \ \text{ on } \ \partial \Omega, \medskip \\
\displaystyle \int_{\partial {\mathcal S}_{\nu}} \partial_{n} \eta[\omega] \, ds = 0  \ \text{ for } \ \nu =1, \dots, N.
\end{array} \right.
\end{equation*}
%
%
% In other words
% \begin{equation*}
% \eta[\omega] = \sum_{\nu =1}^N \mathfrak{f}_{\nu} \big[ \Psi_{\R^2}[\omega]_{|\partial {\mathcal S}_{\nu}} \big].
% \end{equation*}
%
Consequently, using Corollary~\ref{Cor:SD} we find that for some constants $c'_{\lambda}$,  $\lambda \in \{1,\dots,N\}$,  one has
$$\frac{\partial \eta[\omega]}{\partial q_{\mu,m}} = 0  \ \text{ on } \ \partial \Omega  \  \text{  and } \
\frac{\partial \eta[\omega]}{\partial q_{\mu,m}} = c'_\lambda \ \text{ on }  \ \partial \mathcal{S}_\lambda \ \text{ for } \ \lambda \neq \mu ,$$
while  on $\partial {\mathcal S}_{\mu}({\bf q})$, one has
\begin{eqnarray*}
\frac{\partial \eta[\omega]}{\partial q_{\mu,m}} &=&  \left( - \partial_{n} \Psi_{\Omega}[\omega] - \partial_{n} \eta[\omega] \right)  \, K_{\mu,m} + c'_{\mu} \\
&=& (K_{\Omega}[\omega] +R[\omega]) \cdot \tau  \, K_{\mu,m} + c'_{\mu} = (K[\omega] \cdot \tau ) \, K_{\mu,m} + c'_{\mu}. 
\end{eqnarray*}
Using Lemma~\ref{Lem:BiotSavart}, we can bound this boundary condition as in the proofs of Proposition~\ref{Pro:ExpShapeDerivatives} or Lemma~\ref{Lem:DWBorne}, so that we obtain for some uniform constant $C>0$
\begin{equation*}
\left\|\frac{\partial \eta[\omega]}{\partial q_{\mu,m}}\right\|_{L^\infty(\partial {\mathcal S}_\mu)} 
+ \varepsilon_\mu^{\frac{5}{2}} \left|\frac{\partial \eta[\omega]}{\partial q_{\mu,m}} \right|_{C^{2,\frac{1}{2}}(\partial {\mathcal S}_{\mu})} 
\leq C \varepsilon_{\mu}^{\delta_{m3}} \left\| \omega \right\|_{L^{\infty}({\mathcal F}^\varepsilon({\bf q}))} .
\end{equation*}
Then we use that 
$\frac{\partial \eta[\omega]}{\partial q_{\mu,m}} = \mathfrak{f}_{\mu} \left[ \left(\frac{\partial \eta[\omega]}{\partial q_{\mu,m}}\right)_{|\partial {\mathcal S}_{\mu}} \right] $  in $ {\mathcal F}^\varepsilon({\bf q})$, Propositions~\ref{Pro:DirichletPetits} and \ref{Pro:DirichletGros} to approximate it by the functions $\widehat{\mathfrak{f}}_{\lambda}$ and $ \widecheck{g}$, and we estimate the latter by Proposition~\ref{Pro:DirichletSAM}. The estimate in $L^p$ norm is exactly the same as \eqref{Eq:DWBorne2}. We omit the details. 
\end{proof}
%
%
%
%
%
%
%
%
%
%
%%%%%%%%%%%%%%%%%%%%%%%%%%%%%%%%%%%%%%%%%%%%%%%%%%%%%%%%%%%%%%%%%%%%%%%%%%%%%%%%%%%%%%%%%%%%%%%%%%%%%%%%%%%%%%%%%%%%%%%%%%%%%%%%%%%%%%%%%%%%%%%%%%%%%%%%%%%%%%%%%%%%%%%%%%
%
%
%
%
\section{First \textit{a priori} estimates}
\label{Sec:FAPE}
In this section we establish several a priori estimates on the system: on the fluid vorticity, on a renormalized  energy of the system, giving a first bound of the solid velocities  (which will be improved later on by modulated energy estimates), and  a rough estimate of the solid accelerations. As we will see, these accelerations estimates are not straightforward and rely on a global normal form for the solids equations.
They will help uncouple a bit the equations and obtain {\it individual normal forms} for the solid equations in Section~\ref{Sec:NormalForm}.
\subsection{Vorticity estimates}
\begin{Lemma} \label{Lem:Vorticite}
For a solution to System \eqref{Eq:Euler}-\eqref{Eq:Newton} and $p \in [1,+\infty]$, $\| \omega \|_{p}$ is conserved over time and given 
$\delta >0$, $\| K[\omega] \|_{\infty}$ is bounded independently of $t$ and $\boldsymbol\varepsilon$ {as long as $(\varepsilon,{\bf q},\omega) \in \mathfrak{Q}_{\delta}$}.
\end{Lemma}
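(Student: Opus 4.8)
The statement has two parts: conservation of $\|\omega\|_p$ and uniform boundedness of $\|K[\omega]\|_\infty$. For the first part, the plan is to argue that the vorticity is transported by the fluid velocity field. Since $\omega = \curl u$ and $u$ solves the 2D incompressible Euler equation in the moving domain ${\mathcal F}(t)$, taking the curl of the momentum equation in \eqref{Eq:Euler} and using $\div u = 0$ yields the transport equation $\partial_t \omega + (u\cdot\nabla)\omega = 0$ in ${\mathcal F}(t)$. The flow map associated with $u$ (well-defined thanks to the log-Lipschitz regularity of $u$ furnished by Theorem~\ref{Th:Yudovich}, via the Osgood uniqueness criterion) is a measure-preserving homeomorphism of ${\mathcal F}(t)$ onto ${\mathcal F}(0)$ for each $t \in [0,T^*)$, because $\div u = 0$ and because $u\cdot n = v_{{\mathcal S},\kappa}\cdot n$ on the solid boundaries and $u\cdot n = 0$ on $\partial\Omega$ means that no fluid particle leaves the (moving) fluid domain. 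Hence $\omega(t,\cdot)$ is a measure-preserving rearrangement of $\omega_0$, and all its $L^p$ norms, $p \in [1,+\infty]$, are preserved; in particular $\|\omega(t)\|_\infty = \|\omega_0\|_\infty$.

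\textbf{The uniform bound on $\|K[\omega]\|_\infty$.} Once conservation of $\|\omega\|_\infty$ is established, the bound on $\|K[\omega]\|_\infty$ is an immediate consequence of Lemma~\ref{Lem:BiotSavart}. Indeed, fix $\delta > 0$ and let $\varepsilon_0$ be as in that lemma. For $(\varepsilon,{\bf q},\omega) \in \mathfrak{Q}_\delta$ with $\overline{\boldsymbol\varepsilon} < \varepsilon_0$, the constraint in $\mathfrak{Q}_\delta$ guarantees that $\Supp(\omega)$ is at distance $> 2\delta$ from all the solids, so that the hypotheses of Lemma~\ref{Lem:BiotSavart} are met; choosing any $p \in (2,+\infty]$ (for instance $p = +\infty$), the first estimate in \eqref{Eq:BiotSavartBorne} gives
\begin{equation*}
\| K[\omega] \|_{L^\infty({\mathcal F}^\varepsilon)} \leq C \| \omega \|_{L^p({\mathcal F}^\varepsilon)} \leq C |{\mathcal F}^\varepsilon|^{1/p} \| \omega \|_{L^\infty({\mathcal F}^\varepsilon)} \leq C' \| \omega_0 \|_{L^\infty({\mathcal F}_0)},
\end{equation*}
where $C'$ depends only on $\delta$, $\Omega$ and the reference shapes (using $|{\mathcal F}^\varepsilon| \leq |\Omega|$ and the time-independence of $\|\omega(t)\|_\infty$). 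This is the sought bound, uniform in $t$ and $\boldsymbol\varepsilon$.

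\textbf{Main obstacle.} The genuinely delicate point is the justification of the transport identity and of the measure-preserving property of the flow in the non-cylindrical, time-dependent domain ${\mathcal F}(t)$ with merely log-Lipschitz velocity. This is precisely the content of the Yudovich-type Cauchy theory invoked in Theorem~\ref{Th:Yudovich} (itself an adaptation of \cite{GS-!}): the flow of $u$ exists and is unique by the Osgood condition, it is volume-preserving because $\div u = 0$, and it maps the fluid domain onto itself because of the kinematic boundary conditions \eqref{Eq:NonPenetration} (the relative normal velocity $(u - v_{{\mathcal S},\kappa})\cdot n$ vanishes on each $\partial{\mathcal S}_\kappa(t)$, so the solid boundaries are material surfaces for the combined flow). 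One should note that the velocity field relevant for transporting particles near $\partial{\mathcal S}_\kappa$ is $u$ itself seen in the fixed frame, and since the solids move rigidly and smoothly in time (they are $C^2$ by Theorem~\ref{Th:Yudovich}), the domain boundary moves smoothly; combined with $u \in L^\infty_{\mathrm{loc}}([0,T^*);\LL)$ this is enough to run the classical argument. The rest — commuting $\curl$ with the equation, and the $L^p$ invariance of $\omega$ under a measure-preserving map — is routine and I would merely cite it. Everything else in the statement then follows formally from Lemma~\ref{Lem:BiotSavart} as above.
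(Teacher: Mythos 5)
Your proof is correct and takes essentially the same approach as the paper: the paper's argument is the two-line observation that the vorticity satisfies the transport equation \eqref{Eq:Vorticite}, Liouville's theorem gives the $L^p$ conservation, and the uniform $L^\infty$ bound on $K[\omega]$ follows from Lemma~\ref{Lem:BiotSavart}. You have simply spelled out the justifications (why the transport equation holds, why the flow map is well-defined and measure-preserving in the moving domain, and how the $L^p$ bound plugs into \eqref{Eq:BiotSavartBorne}) in more detail.
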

\begin{proof}
The first statement is due to
\begin{equation} \label{Eq:Vorticite}
\partial_{t} \omega^\varepsilon + (u^\varepsilon \cdot \nabla) \omega^\varepsilon =0 \ \text{ in } \ {\mathcal F}^\varepsilon,
\end{equation}
and Liouville's theorem. The second follows then from Lemma~\ref{Lem:BiotSavart}.
\end{proof}

%
%
%
%%%%%%%%%%%
%
\subsection{Energy estimates}

This subsection is devoted to a sort of energy estimate, which gives a first bound of $\widehat{p}_{\kappa}$ (recall the definition in \eqref{Eq:PPChapeau}).

\begin{Proposition}%[Energy estimates]
 \label{Pro:APEE}
Let $\delta>0$. There exist $C>0$ and $\varepsilon_{0}>0$ such that as long as $(\boldsymbol\varepsilon,{\bf q},\omega) \in \mathfrak{Q}_\delta^{\varepsilon_{0}}$, the solutions $(u^\varepsilon,h^\varepsilon,\vartheta^\varepsilon)$ of the system satisfy
\begin{equation} \label{Eq:APEEnergy}
\forall \kappa \in \{1,\dots,N\}, \ \ |\varepsilon_{\kappa}^{\delta_{\kappa \in {\mathcal P}_{(iii)}}} \widehat{p}_{\kappa}| \leq C.
\end{equation}
\end{Proposition}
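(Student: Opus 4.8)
\textbf{Proof plan for Proposition~\ref{Pro:APEE}.}
The natural starting point is the total kinetic energy of the fluid-solid system,
\[
E^\varepsilon(t) = \frac12 \sum_{\kappa=1}^N \big( m_\kappa^\varepsilon |h_\kappa'|^2 + J_\kappa^\varepsilon |\vartheta_\kappa'|^2 \big) + \frac12 \int_{{\mathcal F}^\varepsilon(t)} |u^\varepsilon|^2 \, dx,
\]
which is formally conserved for smooth solutions and, by the Yudovich-type regularity of Theorem~\ref{Th:Yudovich}, conserved along the actual solution. The plan is to show that, after subtracting the circulation part of $u^\varepsilon$ (which carries the infinite energy of the point vortices in the limit), the remaining energy controls $\sum_\kappa |\varepsilon_\kappa^{\delta_{\kappa\in{\mathcal P}_{(iii)}}}\widehat p_\kappa|^2$. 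First I would use the decomposition \eqref{Eq:DecompUeps} and split $u^\varepsilon = u^{pot} + \sum_\nu \gamma_\nu \nabla^\perp \psi_\nu + K[\omega]$; the term $\sum_\nu \gamma_\nu\nabla^\perp\psi_\nu$ is the one whose $L^2$ norm blows up, so I would instead write $u^\varepsilon = \sum_\nu \gamma_\nu \nabla^\perp \widehat\psi_\nu^\varepsilon + u^{reg}$ in the spirit of \eqref{Eq:DecompBase}, isolating the singular standalone circulation fields. The key algebraic fact is that $\int_{{\mathcal F}^\varepsilon} |u^\varepsilon|^2$ can be expanded via the orthogonality properties of the potentials (Kirchhoff potentials, circulation stream functions and Biot-Savart kernel are mutually "almost orthogonal" up to the added-mass couplings), so that $E^\varepsilon$ equals, up to an additive constant depending only on $\boldsymbol\gamma$, $\omega_0$ and the geometry, the quadratic form $\frac12 \widehat{\mathbf p}^T {\mathcal M}^\varepsilon \widehat{\mathbf p}$ plus cross terms of the form $\widehat p_\kappa$ times bounded quantities, all controlled uniformly on $\mathfrak{Q}_\delta^{\varepsilon_0}$ by the potential estimates of Section~\ref{Sec:Expansions} (Corollary~\ref{Cor:ExpAddedMass}, Lemma~\ref{Lem:WBorne}, Lemma~\ref{Lem:BiotSavart}).

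The crucial step is a lower bound for the mass matrix seen through the scaled velocities. For $\kappa \in {\mathcal P}_{(i)}$ the genuine inertia $m_\kappa^\varepsilon = m_\kappa^1$, $J_\kappa^\varepsilon = J_\kappa^1$ are fixed, so the genuine energy alone bounds $|\widehat p_\kappa| = |p_\kappa|$. For $\kappa \in {\mathcal P}_{(ii)}$, $m_\kappa^\varepsilon = m_\kappa^1$ is still fixed so $|h_\kappa'|$ is controlled by the genuine energy, while $J_\kappa^\varepsilon = \varepsilon_\kappa^2 J_\kappa^1$ gives $\varepsilon_\kappa |\vartheta_\kappa'|$ bounded, i.e. exactly the scaled angular velocity in $\widehat p_\kappa$; no extra factor $\varepsilon_\kappa^{\delta_{\kappa\in{\mathcal P}_{(iii)}}}$ is needed. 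The delicate case is $\kappa \in {\mathcal P}_{(iii)}$, where the genuine inertia $\varepsilon_\kappa^{\alpha_\kappa} m_\kappa^1$ degenerates; here the crucial gain must come from the \emph{added} inertia: by Remark~\ref{Rem:MA/MASAM}, $\widehat{\mathcal M}_{a,\kappa}^\varepsilon$ scales like $\varepsilon_\kappa^{2}$ in the translation block and it is positive definite (Remark~\ref{Rem:PasBoules}, using that the solids are not discs). Thus $\widehat{\mathbf p}^T {\mathcal M}^\varepsilon \widehat{\mathbf p} \gtrsim \sum_{\kappa\in{\mathcal P}_{(iii)}} \varepsilon_\kappa^2 |\widehat p_\kappa|^2 + \sum_{\kappa\notin{\mathcal P}_{(iii)}} |\widehat p_\kappa|^2$ minus a harmless contribution from the off-diagonal added-mass couplings, which by \eqref{Eq:AM-AMSA} are of higher order in $\overline{\boldsymbol\varepsilon}$ and can be absorbed for $\varepsilon_0$ small. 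Feeding this lower bound and the uniform upper bound on the cross terms into $E^\varepsilon(t) = E^\varepsilon(0)$ and using that $E^\varepsilon(0)$ is uniformly bounded (the initial data is $\varepsilon$-independent except for the solid shapes, and the singular energy has been removed), a Cauchy-Schwarz/Young argument yields $\sum_\kappa |\varepsilon_\kappa^{\delta_{\kappa\in{\mathcal P}_{(iii)}}} \widehat p_\kappa|^2 \le C$, hence \eqref{Eq:APEEnergy}.

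I expect the main obstacle to be the careful bookkeeping of the "constant" part of the energy: when one substitutes $u^\varepsilon = \sum_\nu \gamma_\nu \nabla^\perp\widehat\psi_\nu^\varepsilon + u^{reg}$ and expands the square, the cross term $\int \nabla^\perp\widehat\psi_\nu^\varepsilon \cdot \nabla^\perp\widehat\psi_\mu^\varepsilon$ for $\nu \in {\mathcal P}_s$ is genuinely divergent as $\varepsilon_\nu \to 0$ (it contains a $\log(1/\varepsilon_\nu)$), but this divergent contribution is \emph{independent of the velocities} $\widehat{\mathbf p}$ and of $t$ (the circulations $\gamma_\nu$ are constant in time by Theorem~\ref{Th:Yudovich}), so it appears identically on both sides of the energy identity $E^\varepsilon(t)=E^\varepsilon(0)$ and cancels. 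Making this cancellation rigorous requires writing the renormalized energy as $E^\varepsilon(t) - \frac12\sum_{\nu,\mu} \gamma_\nu\gamma_\mu \int_{{\mathcal F}^\varepsilon(t)} \nabla^\perp\widehat\psi_\nu^\varepsilon\cdot\nabla^\perp\widehat\psi_\mu^\varepsilon\,dx$ and checking that the subtracted quantity, although $\varepsilon$-singular, has a time derivative that is controlled — or more simply, arguing that the difference of the subtracted quantity between two nearby configurations in $\mathfrak{Q}_\delta$ stays bounded, using the shape-derivative estimates of Section~\ref{Par:SDCSF}. The remaining cross terms ($u^{pot}$ against $\nabla^\perp\widehat\psi_\nu^\varepsilon$, $K[\omega]$ against everything) are all controlled uniformly by the estimates of Section~\ref{Sec:Expansions}, so modulo this renormalization subtlety the argument is a standard energy estimate.
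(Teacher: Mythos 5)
Your proposal is conceptually on target and follows the paper's strategy: energy conservation, isolation of the singular circulation part, control of the quadratic form in the solid velocities, and the genuine-vs-added-mass dichotomy between families $(i)$/$(ii)$ and $(iii)$. The renormalization subtlety you flag is precisely the content of Lemma~\ref{Lem:RenormalisationEnergie} in the paper.

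Two points where the paper's argument is sharper than your sketch. First, instead of expanding $\big|\sum_\nu \gamma_\nu\nabla^\perp\widehat\psi_\nu + u^{reg}\big|^2$, the paper first uses the exact $L^2$-orthogonality of the \emph{full} stream fields $\nabla^\perp\psi_\nu$ (they are mutually orthogonal by an integration by parts, since each $\psi_\nu$ is constant on every boundary component), so the singular part $|u^c|^2$ has \emph{no} off-diagonal contribution; only then is each $\psi_\nu$ split as $\widehat\psi_\nu + \psi_\nu^r$, with $\psi_\nu^r$ handled by Lemma~\ref{Lem:WBorne}. Incidentally, your worry about a divergent cross term $\int\nabla^\perp\widehat\psi_\nu\cdot\nabla^\perp\widehat\psi_\mu$ is only warranted when $\nu=\mu$: for $\nu\neq\mu$ the factor $\nabla^\perp\widehat\psi_\mu$ is bounded near $h_\nu$ and the $1/|x-h_\nu|$ singularity is integrable in 2D, so those terms are bounded. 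Second, for the remaining genuinely divergent self-energy $\int_{\mathcal F(t)}|\nabla^\perp\widehat\psi_\nu|^2$, the paper avoids differentiating in time: by the rigidity relation \eqref{Eq:SEPsi}, the integrand at time $t$ is a rigid image of the one at time $0$, so the two integrals differ only over the symmetric difference of the (pulled-back) domains, which lies at distance $\geq\delta$ from $h_{\nu,0}$ and is therefore uniformly bounded via \eqref{Eq:BehaviourPsi1}. This is cleaner than the shape-derivative route you propose, though your route would also close. One small imprecision to fix: the quadratic form coming from the energy decomposition is $\tfrac12{\mathcal M}\mathbf{p}\cdot\mathbf{p}$ in the \emph{unscaled} velocities $\mathbf{p}$, not $\widehat{\mathbf p}^T{\mathcal M}^\varepsilon\widehat{\mathbf p}$; the $\varepsilon$-scaling is absorbed into $\mathcal M_a$ (Remark~\ref{Rem:MA/MASAM}), which is exactly why the lower bound only yields $\varepsilon_\kappa|\widehat p_\kappa|$ bounded when $\kappa\in{\mathcal P}_{(iii)}$.
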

Let us mention that this estimate will be improved in the sequel, by considering modulated energy estimates. 
\begin{proof}[Proof of Proposition~\ref{Pro:APEE}]
We first consider the total energy of the system: 
\begin{equation} \label{Eq:Energy} %\nonumber
{\mathcal E}(t) :=  \frac{1}{2} \sum_{\kappa \in \{1, \dots, N\} } (m_{\kappa} |h'_{\kappa}|^2 + J_{\kappa} | \vartheta'_{\kappa}| ^2)
+ \frac{1}{2} \int_{{\mathcal F}(t)} |u|^2 \, dx .
\end{equation}
For a solution to \eqref{Eq:Euler}-\eqref{Eq:Newton}, this energy  ${\mathcal E}(t)$ is conserved over time.
This is proven by multiplying \eqref{Eq:Euler} by $u$, the equations in \eqref{Eq:Newton} by $h_{\kappa}'$ and $\vartheta_{\kappa}'$, respectively, summing and integrating by parts.
Now the conservation of ${\mathcal E}(t)$ is insufficient to reach Proposition~\ref{Pro:APEE} directly because the energy is not bounded as $\overline{\boldsymbol\varepsilon}$ goes to $0$. This is due to of the circulation part of the fluid velocity (see the second term in the decomposition \eqref{Eq:DecompUeps}) corresponding to small solids.
To circumvent this difficulty we will rather use the following quantity:
\begin{equation}
\label{CARRE}
\frac{1}{2}\sum_{\kappa \in \{1, \dots, N\} } (m_{\kappa} |h'_{\kappa}|^2 + J_{\kappa} | \vartheta'_{\kappa}| ^2)
+ \frac{1}{2} \int_{{\mathcal F}(t)} | u^{pot}  |^2 \, dx   ,
\end{equation}
where the potential part of the fluid velocity $u^{pot}$ was defined in \eqref{Eq:DefUPot}.
Since, by \eqref{Eq:MasseTotale},
\begin{equation*}
\frac{1}{2}\sum_{\kappa \in \{1, \dots, N\} } (m_{\kappa} |h'_{\kappa}|^2 + J_{\kappa} | \vartheta'_{\kappa}| ^2)
+ \frac{1}{2} \int_{{\mathcal F}(t)} | u^{pot} |^2 \, dx  =
\displaystyle \frac{1}{2} {\mathcal M} \, {\bf p} \, \cdot \, {\bf p}  ,
\end{equation*}
in order to prove Proposition~\ref{Pro:APEE}, it is sufficient to show that the quantity above is bounded independently of $t$ and $\boldsymbol\varepsilon$.
Indeed, once this is obtained, one uses ${\mathcal M}_{g} \leq {\mathcal M}$ to get a bound on $\widehat{p}_{\kappa}$ for $\kappa \in {\mathcal P}_{(i)} \cup {\mathcal P}_{(ii)}$, and one uses ${\mathcal M}_{a} \leq {\mathcal M}$ together with Corollary~\ref{Cor:ExpAddedMass} and Remarks~\ref{Rem:PasBoules} and \ref{Rem:MA/MASAM} to deduce a bound on $\varepsilon_{\kappa} \widehat{p}_{\kappa}$ when $\kappa$ is in ${\mathcal P}_{(iii)}$. \par
To prove that the quantity in \eqref{CARRE} is bounded 
 we rely on the decomposition \eqref{Eq:DecompUeps} of the fluid velocity. We call $u ^{c}$ the circulation part of the fluid velocity, that is second term in the right-hand side of \eqref{Eq:DecompUeps}:
\begin{equation} \nonumber %\label{Eq:DefUc}
u ^{c}:= \sum_{\kappa \in \{1, \dots, N\} } \gamma_{\kappa} \nabla^{\perp} \psi_{\kappa}(q(t), \cdot).
\end{equation}
Since $K[\omega]$ is orthogonal to $u^{pot}$ in $L^2({\mathcal F}(q))$ (as follows from an integration by parts), we can decompose the energy \eqref{Eq:Energy} as
\begin{align*}
{\mathcal E}(t)
& = \frac{1}{2}\sum_{\kappa \in \{1, \dots, N\} } (m_{\kappa} |h'_{\kappa}|^2 + J_{\kappa} | \vartheta'_{\kappa}| ^2)
+ \frac{1}{2} \int_{{\mathcal F}(t)} |u^{pot} (t,\cdot)|^2 \, dx  \\
& +  \frac{1}{2} \int_{{\mathcal F}(t)} | K[\omega] |^2 \, dx 
+ \int_{{\mathcal F}(t)} u ^{c}(t,\cdot) \cdot (K[\omega] + u^{pot} ) (t,\cdot) \, dx 
+ \frac{1}{2} \int_{{\mathcal F}(t)} |u_c|^2 \, dx .
\end{align*}
Proposition~\ref{Pro:APEE} then follows from the assumptions on the initial data, Lemma~\ref{Lem:Vorticite}, the conservation of ${\mathcal E}(t)$ and  the following lemma.  
%  Lemma \ref{Lem:BiotSavart}, ??
%
\begin{Lemma} \label{Lem:RenormalisationEnergie}
For $\delta>0$, there exists $\varepsilon_{0}>0$ such that the following properties hold as long as $(\varepsilon,{\bf q},\omega) \in {\mathfrak Q}^{\varepsilon_{0}}_{\delta}$:
\begin{gather}
\label{Eq:DiffPartiesSingulieres}
\int_{{\mathcal F}(t)} |u ^{c}(t,\cdot)|^2 \, dx - \int_{{\mathcal F}(0)} |u ^{c}(0,\cdot)|^2 \, dx  \text{ is bounded independently of } t \text{ and } \boldsymbol\varepsilon, \\
\label{Eq:DecompOrth}
\int_{{\mathcal F}(t)} u ^{c}(t,\cdot) \cdot  (K[\omega] + u^{pot} ) (t,\cdot) \, dx
\text{ is bounded independently of } t \text{ and } \boldsymbol\varepsilon .
\end{gather}
\end{Lemma}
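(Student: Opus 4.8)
\textbf{Proof strategy for Lemma~\ref{Lem:RenormalisationEnergie}.}

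The plan is to reduce both statements to the estimates on the circulation stream functions established in Section~\ref{SSS:CSF}, and to exploit the specific algebraic cancellations coming from the \emph{standalone} nature of $\widehat{\psi}_{\kappa}$. The starting point is the natural decomposition
\begin{equation*}
\nabla^\perp \psi_{\kappa} = \nabla^\perp \widehat{\psi}_{\kappa} + \nabla^\perp \psi_{\kappa}^r ,
\end{equation*}
so that $u^c(t,\cdot)=\sum_{\kappa} \gamma_{\kappa}\big(\nabla^\perp \widehat{\psi}_{\kappa}(q_{\kappa}(t),\cdot) + \nabla^\perp \psi_{\kappa}^r(q(t),\cdot)\big)$. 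The only terms in $\int_{{\mathcal F}(t)}|u^c|^2$ that are \emph{not} bounded uniformly in $\boldsymbol\varepsilon$ are the ``diagonal'' self-interaction terms $\gamma_{\kappa}^2\int_{{\mathcal F}(t)}|\nabla^\perp \widehat{\psi}_{\kappa}|^2\,dx$ for $\kappa\in{\mathcal P}_s$ (these blow up like $|\log\varepsilon_{\kappa}|$, being essentially the energy of a point vortex of intensity $\gamma_{\kappa}$ cut off at scale $\varepsilon_{\kappa}$), together with the cross terms $2\gamma_{\kappa}\gamma_{\mu}\int \nabla^\perp\widehat{\psi}_{\kappa}\cdot\nabla^\perp\widehat{\psi}_{\mu}$. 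The key point is that the self-interaction term $\gamma_{\kappa}^2\int_{{\mathcal F}(t)}|\nabla^\perp \widehat{\psi}_{\kappa}(q_{\kappa}(t),\cdot)|^2\,dx$ depends on $q_{\kappa}(t)$ \emph{only through the position relative to the fixed solids and $\partial\Omega$}: indeed by the scaling and rotation invariance \eqref{Eq:SEPsi}--\eqref{Eq:ScalePsi} of the standalone stream function, the integral over $\R^2\setminus{\mathcal S}_{\kappa}^\varepsilon$ is a constant depending only on the shape and on $\varepsilon_{\kappa}$; the only $t$-dependence comes from the difference between $\R^2\setminus{\mathcal S}_{\kappa}^\varepsilon$ and ${\mathcal F}(t)$, i.e.\ from the region occupied by the other solids and the exterior of $\Omega$. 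On that region (which, since $(\varepsilon,{\bf q},\omega)\in{\mathfrak Q}_\delta^{\varepsilon_0}$, stays at distance $\ge\delta$ from ${\mathcal S}_{\kappa}^\varepsilon$) the estimate \eqref{Eq:BehaviourPsi1} gives $|\nabla^\perp\widehat{\psi}_{\kappa}|\le C$ uniformly, so the $t$-dependent part is bounded. Thus $\int_{{\mathcal F}(t)}|\nabla^\perp\widehat{\psi}_{\kappa}|^2 = c_{\kappa}(\varepsilon_{\kappa}) + O_\delta(1)$ with $c_{\kappa}(\varepsilon_{\kappa})$ \emph{independent of $t$}, and this is precisely the quantity that cancels when one forms the difference $\int_{{\mathcal F}(t)}|u^c(t)|^2 - \int_{{\mathcal F}(0)}|u^c(0)|^2$.

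Carrying this out concretely: expand $\int_{{\mathcal F}(t)}|u^c(t)|^2$ into the sum $\sum_{\kappa}\gamma_\kappa^2\int|\nabla^\perp\widehat\psi_\kappa|^2 + \sum_{\kappa\ne\mu}\gamma_\kappa\gamma_\mu\int\nabla^\perp\widehat\psi_\kappa\cdot\nabla^\perp\widehat\psi_\mu + 2\sum_{\kappa,\mu}\gamma_\kappa\gamma_\mu\int\nabla^\perp\widehat\psi_\kappa\cdot\nabla^\perp\psi_\mu^r + \sum_{\kappa,\mu}\gamma_\kappa\gamma_\mu\int\nabla^\perp\psi_\kappa^r\cdot\nabla^\perp\psi_\mu^r$. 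For the cross terms $\kappa\ne\mu$, at least one of the two factors is evaluated away from its own solid, so \eqref{Eq:BehaviourPsi1} bounds it by $C$ on the support of the other, and since the whole integral is over a bounded domain these cross terms are $O_\delta(1)$ uniformly in $t,\boldsymbol\varepsilon$ (using also $\|\nabla^\perp\psi_\mu^r\|_{L^\infty}\le C$ from \eqref{Eq:WBorne}); for the mixed $\widehat\psi$--$\psi^r$ terms and the pure $\psi^r$ terms one likewise uses \eqref{Eq:WBorne} and, for the diagonal $\widehat\psi_\kappa$ self-term, uses the integrability of $|\nabla^\perp\widehat\psi_\kappa|^2$ near infinity together with the $L^\infty$ bound near the solid. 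Hence
\begin{equation*}
\int_{{\mathcal F}(t)}|u^c(t)|^2\,dx = \sum_{\kappa}\gamma_\kappa^2 c_{\kappa}(\varepsilon_{\kappa}) + \rho(t,\boldsymbol\varepsilon), \qquad |\rho(t,\boldsymbol\varepsilon)|\le C_\delta,
\end{equation*}
and subtracting the same identity at $t=0$ yields \eqref{Eq:DiffPartiesSingulieres}. A small care point: one must check that the ``$t$-independent'' constant $c_\kappa$ really does not move — this is where one uses that each small solid is obtained from a fixed reference shape by rigid motion plus homothety, so that $\R^2\setminus{\mathcal S}_\kappa^\varepsilon(q_\kappa(t))$ is an isometric copy of $\R^2\setminus{\mathcal S}_\kappa^\varepsilon(q_{\kappa,0})$ and the Dirichlet integral of $\widehat\psi_\kappa$ over it is invariant.

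For \eqref{Eq:DecompOrth}, one again decomposes $u^c = \sum_\kappa\gamma_\kappa(\nabla^\perp\widehat\psi_\kappa + \nabla^\perp\psi_\kappa^r)$ and pairs against $K[\omega]+u^{pot}$. The factor $K[\omega]$ is bounded in $L^\infty$ uniformly by Lemma~\ref{Lem:Vorticite} (equivalently Lemma~\ref{Lem:BiotSavart}), and $u^{pot}=\sum_{\nu,i}p_{\nu,i}\nabla\varphi_{\nu,i}$ is bounded in $L^2({\mathcal F}(t))$ by the quantity in \eqref{CARRE} which is exactly what we are in the process of estimating — so to avoid circularity one instead pairs the other way: $\nabla^\perp\psi_\kappa^r$ is bounded in $L^\infty$ by \eqref{Eq:WBorne}, hence $\int\nabla^\perp\psi_\kappa^r\cdot(K[\omega]+u^{pot})$ is controlled by $\|K[\omega]\|_{L^2}+\|u^{pot}\|_{L^2}$; but in fact it is cleaner to note that $u^{pot}$ is $L^2$-orthogonal to $\nabla^\perp\psi_\kappa^r$ \emph{is not true}, so instead we bound $\|u^{pot}\|_{L^2}$ directly in terms of $|\widehat p_\kappa|$ using Corollary~\ref{Cor:ExpAddedMass} and absorb it: writing $\int u^c\cdot(K[\omega]+u^{pot}) \le \|u^c - \text{(singular diagonal part)}\|_{L^2}\cdot(\|K[\omega]\|_{L^2}+\|u^{pot}\|_{L^2})$ plus the diagonal contribution. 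The diagonal contribution $\gamma_\kappa\int\nabla^\perp\widehat\psi_\kappa\cdot(K[\omega]+u^{pot})$ is handled by observing that $K[\omega]$ is supported-compatible and smooth near the small solid (so $\int\nabla^\perp\widehat\psi_\kappa\cdot K[\omega]$, an integral of an $L^1$ function times a bounded one, is $O(|\log\varepsilon_\kappa|)$... ) — here one genuinely needs that the \emph{circulation} of $K[\omega]$ around ${\mathcal S}_\kappa$ is zero (by \eqref{Eq:DefK}), which kills the logarithm: integrating by parts, $\int_{{\mathcal F}}\nabla^\perp\widehat\psi_\kappa\cdot K[\omega] = \int_{{\mathcal F}}\widehat\psi_\kappa\,\omega\,dx + (\text{boundary terms})$, the boundary terms vanishing because $\oint_{\partial{\mathcal S}_\nu}K[\omega]\cdot\tau=0$ and $\widehat\psi_\kappa$ is constant on each $\partial{\mathcal S}_\nu$ up to controlled corrections, and $\int\widehat\psi_\kappa\,\omega$ is bounded since $\omega$ is supported at distance $\ge\delta$ from ${\mathcal S}_\kappa$ where $\widehat\psi_\kappa$ is bounded. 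Similarly $\int\nabla^\perp\widehat\psi_\kappa\cdot u^{pot} = \sum_{\nu,i}p_{\nu,i}\int\nabla^\perp\widehat\psi_\kappa\cdot\nabla\varphi_{\nu,i}$, and each $\int\nabla^\perp\widehat\psi_\kappa\cdot\nabla\varphi_{\nu,i}$ is an integral of a $\nabla\varphi_{\nu,i}$ (controlled by Proposition~\ref{Pro:ExpKirchhoff}, with the decay \eqref{Eq:ExpKirchhoff1Bis}) against $\nabla^\perp\widehat\psi_\kappa$, giving a bound $\le C\varepsilon_\kappa^{\delta_{i\ge3}}$ that, paired with $|p_{\nu,i}|$ and using Young's inequality with the energy \eqref{CARRE}, can be absorbed into the left-hand side. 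I expect the main obstacle to be precisely this absorption bookkeeping in \eqref{Eq:DecompOrth} — making sure that the terms involving $u^{pot}$ are genuinely lower order (through the $\varepsilon_\kappa$ gains and the added-mass scaling of Remark~\ref{Rem:MA/MASAM}) so that a Gronwall/Young absorption closes the estimate without circularity, rather than any single analytic inequality.
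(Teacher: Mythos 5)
Your overall plan — decompose $u^c$ via $\nabla^\perp\psi_\kappa=\nabla^\perp\widehat\psi_\kappa+\nabla^\perp\psi_\kappa^r$, control the reflected part by Lemma~\ref{Lem:WBorne}, and isolate the diagonal $\widehat\psi_\kappa$ contributions — points in the right direction, but two steps as written do not hold up, and both are places where the paper's proof has a decisive shortcut you are missing.

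First, for \eqref{Eq:DiffPartiesSingulieres}: you write that ``by scaling and rotation invariance \ldots{} the integral over $\R^2\setminus{\mathcal S}_{\kappa}^\varepsilon$ is a constant depending only on the shape and $\varepsilon_\kappa$,'' and later that ``the Dirichlet integral of $\widehat\psi_\kappa$ over $\R^2\setminus{\mathcal S}_\kappa^\varepsilon$ is invariant.'' This integral is infinite: by \eqref{Eq:PsiLS}, $|\nabla\widehat\psi_\kappa(x)|\sim\frac{1}{2\pi|x-h_\kappa|}$ at infinity, so $\int_{\R^2\setminus{\mathcal S}_\kappa^\varepsilon}|\nabla\widehat\psi_\kappa|^2\,dx$ diverges logarithmically, and the ``$t$-dependent correction'' you propose to subtract (the region $\R^2\setminus\Omega$ plus the other solids) is itself logarithmically divergent, not $O_\delta(1)$. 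The conclusion you reach is correct, but it cannot be derived by comparing to an $\R^2$-integral. The paper gets around this by \emph{never} leaving the bounded domain: it uses the relation $\int_{{\mathcal F}(t)}|\nabla\widehat\psi_\kappa(q_\kappa(t),\cdot)|^2 = \int_{R(-\vartheta_\kappa)({\mathcal F}(t)-h_\kappa(t))+h_{\kappa,0}}|\nabla\widehat\psi_\kappa(q_{\kappa,0},\cdot)|^2$ (a rigid change of variables justified by Lemma~\ref{Lem:StandaloneCSF}), so the difference with $\int_{{\mathcal F}(0)}$ is an integral of the fixed density $|\nabla\widehat\psi_\kappa(q_{\kappa,0},\cdot)|^2$ over a \emph{symmetric difference} of two bounded sets, which by the ${\mathfrak Q}_\delta^{\varepsilon_0}$ constraint is contained in the annulus $B(h_{\kappa,0},R)\setminus B(h_{\kappa,0},\delta)$, where \eqref{Eq:BehaviourPsi1} gives a uniform bound. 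Also note the paper's preliminary reduction: the fields $\nabla^\perp\psi_\nu$ (with the \emph{full}, not standalone, stream functions) are pairwise $L^2$-orthogonal, since $\psi_\nu$ is constant on each boundary component and $\oint\partial_n\psi_\mu=-\delta_{\nu\mu}$; this kills all cross terms at once and spares you the term-by-term expansion you carry out.

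Second, for \eqref{Eq:DecompOrth}: your scheme pairs $\nabla^\perp\widehat\psi_\kappa$ or $\nabla^\perp\psi_\kappa^r$ against $K[\omega]+u^{pot}$ and then tries to absorb the $u^{pot}$ contribution via Young's inequality into the energy, with the Kirchhoff decay estimates supplying small factors of $\varepsilon$. This is a much harder route than necessary and is precisely the circularity you are worried about. The paper integrates by parts with the full $\psi_\nu$ (not $\widehat\psi_\nu$):
\begin{equation*}
\int_{{\mathcal F}(t)}\nabla^{\perp}\psi_{\nu}\cdot(K[\omega]+u^{pot})\,dx
= -\int_{{\mathcal F}(t)}\psi_{\nu}\,\omega\,dx + \int_{\partial{\mathcal F}(t)}\psi_{\nu}\,(K[\omega]+u^{pot})\cdot\tau\,ds.
\end{equation*}
The boundary terms vanish \emph{exactly}: $\psi_\nu=0$ on $\partial\Omega$; $\psi_\nu$ is constant on each $\partial{\mathcal S}_\lambda$; and $K[\omega]+u^{pot}$ has zero circulation on each $\partial{\mathcal S}_\lambda$ (zero by definition for $K[\omega]$, and because $u^{pot}$ is a gradient). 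So $u^{pot}$ disappears from the estimate entirely — no absorption, no Young. What is left is $\int\psi_\nu\omega$, which is bounded because $\psi_\nu$ is bounded on $\mbox{Supp}(\omega)$ (by integrating $\nabla\psi_\nu$ from a point on $\partial\Omega$ and using Lemmas~\ref{Lem:StandaloneCSF} and \ref{Lem:WBorne} together with the remoteness $d({\mathcal S}_\lambda,\mbox{Supp}\,\omega)\ge 2\delta$). Your formulation uses $\widehat\psi_\kappa$, which is constant only on $\partial{\mathcal S}_\kappa$ and not on the other components, so the cancellation is only approximate; you are then forced into the ``controlled corrections'' and the $\varepsilon$-absorption bookkeeping you flag at the end. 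Using $\psi_\nu$ removes that entire branch of the argument.
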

\begin{proof}[Proof of Lemma~\ref{Lem:RenormalisationEnergie}]
We first notice that the vector fields $\nabla^\perp \psi_{\nu}$ are orthogonal one to another in $L^2$  as follows from an integration by parts.
Hence to prove \eqref{Eq:DiffPartiesSingulieres} it suffices to prove that the circulation stream functions $\psi_{\nu}$ satisfy 
\begin{equation*}
\int_{{\mathcal F}(t)} |\nabla \psi_{\nu}({\bf q}(t),\cdot)|^2 \, dx - \int_{{\mathcal F}(0)} |\nabla \psi_{\nu}({\bf q}(0),\cdot)|^2 \, dx
\text{ is bounded independently of } t \text{ and } \boldsymbol\varepsilon.
\end{equation*}
We use Lemma~\ref{Lem:WBorne}; consequently it suffices to prove that for all $\nu$, the standalone circulation stream function $\widehat{\psi}_{\nu}$ satisfies 
\begin{equation} \label{Eq:DiffPartiesSingulieresBis}
\int_{{\mathcal F}(t)} |\nabla \widehat{\psi}_{\nu}({\bf q}(t),\cdot)|^2 \, dx - \int_{{\mathcal F}(0)} |\nabla \widehat{\psi}_{\nu}({\bf q}(0),\cdot)|^2 \, dx
\text{ is bounded independently of } t \text{ and } \boldsymbol\varepsilon.
\end{equation}
Now using Lemma~\ref{Lem:StandaloneCSF}, we see that 
\begin{equation*}
\int_{{\mathcal F}(t)} |\nabla \widehat{\psi}_{\nu}({\bf q}(t),\cdot)|^2 \, dx = \int_{R(-\vartheta_\nu) ( {\mathcal F}(t) - h_\nu(t)) + h_{\nu,0} } |\nabla \widehat{\psi}_{\nu}({\bf q}(0),\cdot)|^2 \, dx.
\end{equation*}
Then
\begin{equation*}
\left| \int_{{\mathcal F}(t)} |\nabla \widehat{\psi}_{\nu}({\bf q}(t),\cdot)|^2 \, dx
- \int_{{\mathcal F}(0)} |\nabla \widehat{\psi}_{\nu}({\bf q}(0),\cdot)|^2 \, dx \right|
\leq  \int_{\Delta_\nu} |\nabla \widehat{\psi}_{\nu}({\bf q}(0),\cdot)|^2 \, dx ,
\end{equation*}
where $\Delta_\nu$ is the symmetric difference $\big( R(-\vartheta_\nu) ( {\mathcal F}(t) - h_\nu(t)) + h_{\nu,0} \big) \, \triangle \, {\mathcal F}(0)$.
Since $(\varepsilon,{\bf q},\omega) \in {\mathfrak Q}^{\varepsilon_{0}}_{\delta}$ and ${\mathcal F}(t) \subset \Omega$, there is $R>0$ independent of $\varepsilon$ such that 
$\Delta_\nu \subset  B(h_{\nu,0} , R) \setminus B(h_{\nu,0} , \delta)$. Hence using \eqref{Eq:BehaviourPsi1}, we arrive at \eqref{Eq:DiffPartiesSingulieresBis} and hence at \eqref{Eq:DiffPartiesSingulieres}.

To get \eqref{Eq:DecompOrth} we first integrate by parts:
\begin{equation*}
\int_{{\mathcal F}(t)} \nabla^{\perp} \psi_{\nu} \cdot  (K[\omega] + u^{pot} )  \, dx = - \int_{{\mathcal F}(t)} \psi_{\nu} \omega \, dx + \int_{\partial {\mathcal F}(t)} \psi_{\nu}  (K[\omega] + u^{pot} )  \cdot \tau \, ds(x).
\end{equation*}
The part of the second integral on $\partial \Omega$ vanishes  due to \eqref{def_stream_F}, and the parts of the second integral  
on each $\partial \mathcal{S}_\lambda$, $\lambda=1, \dots, N$, vanish as well 
because $\psi_{\nu}$ is constant on each connected component of the boundary and $K[\omega] + u^{pot}$ has zero-circulation on each $\partial \mathcal{S}_\lambda$. Now the first term is bounded independently of $t$ and $\boldsymbol\varepsilon$, because $ \psi_{\nu}$ is bounded on the support of $\omega$: this can be seen by integrating $\nabla  \psi_{\nu}$ from some point in $\partial \Omega$ and using Lemmas \ref{Lem:StandaloneCSF} and \ref{Lem:WBorne} and to the remoteness of $ \mathcal{S}_\lambda$ to the support of $\omega$ (due to $(\varepsilon,{\bf q},\omega) \in {\mathfrak Q}^{\varepsilon_{0}}_{\delta}$). 
\end{proof}
Hence the proof of Proposition~\ref{Pro:APEE} is complete.
\end{proof}
\subsection{Rough estimate for the acceleration of the bodies}
The goal of this subsection is to prove the following statement.

\begin{Proposition}%[Acceleration estimates]
 \label{Pro:Acceleration}
Let $\delta>0$. There exists $C>0$ and ${\varepsilon}_0 >0$ such that the solutions $(u^\varepsilon,h^\varepsilon,\vartheta^\varepsilon)$ of the system satisfy, as long as $(\varepsilon,{\bf q},\omega) \in {\mathfrak Q}^{\varepsilon_{0}}_{\delta}$, 
\begin{equation} \label{Eq:AccelerationEst}
\forall \kappa \in \{1, \dots, N\}, \ \ | \varepsilon_{\kappa}^{2 \delta_{\kappa \in {\mathcal P}_{(iii)}}} \widehat{p}\, '_{\kappa}| \leq C(1 + |\widehat{\boldsymbol{p}}^{\varepsilon}|) .
\end{equation}
\end{Proposition}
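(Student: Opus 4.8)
The starting point is to write Newton's equations \eqref{Eq:Newton} in the variational form \eqref{Eq:Deq}, i.e. test the momentum balance against the Kirchhoff potentials: for $\kappa\in\{1,\dots,N\}$ and $i\in\{1,2,3\}$ one has
\begin{equation*}
\frac{d}{dt}\Big( ({\mathcal M}_g+{\mathcal M}_a)\widehat{\mathbf p}\Big)_{\kappa,i}
= -\int_{{\mathcal F}({\bf q})}\big(\partial_t u + (u\cdot\nabla)u\big)\cdot\nabla\varphi_{\kappa,i}\,dx
\;+\;\text{(lower order position-derivative terms)},
\end{equation*}
after substituting the decomposition \eqref{Eq:DecompUeps} of $u$ into $u^{pot}+u^c+K[\omega]$ and integrating by parts. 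The idea is \emph{not} to estimate each body's equation separately — the mutual couplings through ${\mathcal M}_a({\bf q})$ and through the cross terms in $u^{pot}$ carry second time-derivatives of the \emph{other} bodies and cannot be absorbed naively. Instead one treats the whole vector $\widehat{\mathbf p}\,'$ at once: collect all the terms into a linear system of the form ${\mathcal A}({\boldsymbol\varepsilon},{\bf q})\,\widehat{\mathbf p}\,' = {\bf F}$, where ${\mathcal A}$ is (a suitable scaling of) the total mass matrix ${\mathcal M}={\mathcal M}_g+{\mathcal M}_a$ and ${\bf F}$ gathers everything that is at most quadratic in $\widehat{\mathbf p}$ and in $K[\omega]$, plus terms involving only $\nabla\varphi_{\kappa,i}$, $\nabla^\perp\psi_\kappa$, their shape derivatives, and $K[\omega]$.

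\textbf{Scaling and the key structural cancellations.} The right scaling is dictated by \eqref{Eq:Family_i}--\eqref{Eq:Family_iii} and by the estimates of Section~\ref{Sec:Expansions}: one multiplies the $\kappa$-th block of Newton's law by $\varepsilon_\kappa^{-\delta_{\kappa\in{\mathcal P}_{(iii)}}}$ (so that the genuine inertia becomes $O(1)$, using that ${\mathcal M}_{g,\kappa}\sim\varepsilon_\kappa^{\alpha_\kappa}$ in case (iii)) and works with the variable $\widehat p_\kappa=(h_\kappa',\varepsilon_\kappa\vartheta_\kappa')$. The two cancellations recalled in the introduction are essential: $\partial_t\nabla^\perp\widehat\psi_\kappa+\nabla(v_{{\mathcal S},\kappa}\cdot\nabla^\perp\widehat\psi_\kappa)=0$ kills the would-be singular time-derivative of the \emph{standalone} circulation part, and $\int_{\partial{\mathcal S}_\kappa}|\nabla\widehat\psi_\kappa|^2 K_{\kappa,i}\,ds=0$ kills the worst quadratic circulation self-interaction. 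After these cancellations, the remaining circulation contributions come only from the \emph{reflected} part $\psi_\kappa^r$ and from cross interactions between different solids; by Lemma~\ref{Lem:WBorne}, Lemma~\ref{Lem:DWBorne}, Lemma~\ref{Lem:StandaloneCSF} (notably \eqref{Eq:BehaviourPsi1}) and the separation $(\varepsilon,{\bf q},\omega)\in{\mathfrak Q}^{\varepsilon_0}_\delta$, these are bounded (after the above scaling) by $C(1+|\widehat{\mathbf p}|)$ — at worst \emph{linear} in $\widehat{\mathbf p}$, via the terms $\nabla\psi_\kappa^r\cdot\xi_{\mu,m}\,p_{\mu,m}$ type contributions. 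Similarly the $K[\omega]$ terms are handled by Lemma~\ref{Lem:BiotSavart} and Lemma~\ref{Lem:DBSBorne} together with the conservation of $\|\omega\|_\infty$ (Lemma~\ref{Lem:Vorticite}), and the pure potential terms $p_{\mu,i}p_{\nu,j}$ with coefficients built from $\nabla\varphi$ and $\partial_q\nabla\varphi$ are controlled by Proposition~\ref{Pro:ExpKirchhoff}, Proposition~\ref{Pro:ExpKirchhoff2} and Proposition~\ref{Pro:ExpShapeDerivatives}; once scaled these are quadratic in $\widehat{\mathbf p}$, hence $O(|\widehat{\mathbf p}|^2)$. So one obtains $|{\bf F}|\le C(1+|\widehat{\mathbf p}|^2)$ — note this is \emph{not} yet \eqref{Eq:AccelerationEst}; the gain from quadratic to linear is recovered below from the energy estimate.

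\textbf{Inverting the mass matrix and closing.} The decisive point is that the scaled matrix ${\mathcal A}({\boldsymbol\varepsilon},{\bf q})$ is uniformly invertible for $\overline{\boldsymbol\varepsilon}\le\varepsilon_0$ and ${\bf q}\in{\mathcal Q}_\delta$. This follows from Corollary~\ref{Cor:ExpAddedMass} and Remarks~\ref{Rem:PasBoules} and~\ref{Rem:MA/MASAM}: in the appropriate scaling the added-mass blocks of small solids converge to the \emph{standalone} added-mass matrices $\widehat{\mathcal M}_{a,\kappa}$, which are symmetric \emph{positive definite} since the solids are not discs, while the genuine-inertia blocks are positive; the off-diagonal (cross) blocks vanish as $\overline{\boldsymbol\varepsilon}\to0$ by \eqref{Eq:AM-AMSA}--\eqref{Eq:LimiteAM}, so ${\mathcal A}$ is a uniformly bounded perturbation of a block-diagonal positive definite matrix, hence ${\mathcal A}^{-1}$ is uniformly bounded (shrinking $\varepsilon_0$ if necessary). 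Therefore $|\widehat{\mathbf p}\,'|\le C|{\bf F}|\le C(1+|\widehat{\mathbf p}|^2)$ after rescaling back, and the factor $\varepsilon_\kappa^{2\delta_{\kappa\in{\mathcal P}_{(iii)}}}$ in \eqref{Eq:AccelerationEst} is exactly what records the difference between $\varepsilon_\kappa^{-\delta_{\kappa\in{\mathcal P}_{(iii)}}}$-scaled inertia and the genuine scaling of $\widehat p_\kappa'$ itself. Finally, to replace the quadratic $|\widehat{\mathbf p}|^2$ by the linear $|\widehat{\boldsymbol p}^\varepsilon|$ on the right of \eqref{Eq:AccelerationEst}, one invokes Proposition~\ref{Pro:APEE}: that estimate bounds $|\varepsilon_\kappa^{\delta_{\kappa\in{\mathcal P}_{(iii)}}}\widehat p_\kappa|$ by a constant, so wherever a quadratic term $|\widehat p_\mu|\,|\widehat p_\nu|$ appears with the worst-case scaling, one of the factors can be traded against its $\varepsilon$-weight to leave at most one unweighted power of $|\widehat{\boldsymbol p}^\varepsilon|$; bookkeeping of the $\varepsilon$-powers (using $\alpha_\kappa>0$, i.e. $\overline{\boldsymbol\varepsilon}$ small) confirms all exponents are nonnegative, giving \eqref{Eq:AccelerationEst}. \textbf{The main obstacle} is precisely this bookkeeping of $\varepsilon$-exponents across the many source terms — tracking, for each of $u^{pot}$, $u^c$, $K[\omega]$ and their shape derivatives, how the scaling weights of Section~\ref{Sec:Expansions} combine with the block rescaling of the mass matrix and with Proposition~\ref{Pro:APEE}, so that no term is more singular than $C(1+|\widehat{\boldsymbol p}^\varepsilon|)$ after multiplication by $\varepsilon_\kappa^{2\delta_{\kappa\in{\mathcal P}_{(iii)}}}$; and, underpinning all of it, establishing the uniform invertibility of ${\mathcal A}$ in the correct scaled variables.
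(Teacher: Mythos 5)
Your overall strategy matches the paper's: write Newton's law as a linear system in $\widehat{\bf p}'$ with the total mass matrix, estimate the source terms using the decomposition with \emph{standalone} circulation potentials and the two key cancellations, and then invert a scaled mass matrix. The references you cite are the right ones. However, there are two concrete gaps in the execution, both at the final inversion step, which is precisely the place you correctly identify as the crux.

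\textbf{The scaling is wrong.} You propose multiplying the $\kappa$-th block by $\varepsilon_\kappa^{-\delta_{\kappa\in{\mathcal P}_{(iii)}}}$ (i.e. by $\varepsilon_\kappa^{-1}$ in case (iii)) ``so that the genuine inertia becomes $O(1)$''. But in case (iii) one has ${\mathcal M}_{g,\kappa}\sim\varepsilon_\kappa^{\alpha_\kappa}$ with $\alpha_\kappa>0$ \emph{arbitrary}, so this gives $\varepsilon_\kappa^{\alpha_\kappa-1}$, which is $O(1)$ only for $\alpha_\kappa=1$ and blows up for $\alpha_\kappa<1$. The paper's column rescaling is by $\varepsilon_\kappa^{-\min(2,\alpha_\kappa)}$, and the uniformity of the diagonal blocks is not obtained from the genuine inertia alone: one uses the genuine mass when $\alpha_\kappa\leq 2$ and the \emph{added} mass (which scales like $\varepsilon_\kappa^{2}$ after homogenization, and is positive definite by Remark~\ref{Rem:PasBoules} since the solids are not discs) when $\alpha_\kappa>2$. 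Missing this competition between the two inertias makes the rescaled system ill-conditioned for some $\alpha_\kappa$. It is also why the conclusion carries $\varepsilon_\kappa^{2\delta_{\kappa\in{\mathcal P}_{(iii)}}}$ rather than $\varepsilon_\kappa^{\delta_{\kappa\in{\mathcal P}_{(iii)}}}$: the natural exponent is $\min(2,\alpha_\kappa)\leq 2$, and then one uses $\varepsilon_\kappa^2\leq\varepsilon_\kappa^{\min(2,\alpha_\kappa)}$.

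\textbf{The invertibility argument is wrong.} You claim ``the off-diagonal (cross) blocks vanish as $\overline{\boldsymbol\varepsilon}\to0$, so ${\mathcal A}$ is a uniformly bounded perturbation of a block-diagonal positive definite matrix, hence ${\mathcal A}^{-1}$ is uniformly bounded.'' After the asymmetric column rescaling of the (iii)-columns by $\varepsilon_\kappa^{-\min(2,\alpha_\kappa)}$, the matrix ${\mathcal M}^*$ is no longer symmetric and the blocks \emph{above} the diagonal (e.g. $A_{(i)(iii)}$, $A_{(ii)(iii)}$) do \emph{not} vanish — by \eqref{Eq:EstExtraDiag} their entries scale like $\varepsilon_\mu^{2-\min(2,\alpha_\mu)}$, which is $O(1)$ (not $o(1)$) whenever $\alpha_\mu\geq 2$. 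So the limiting matrix is not block-diagonal. What does vanish are the blocks \emph{below} the diagonal, because in that case the row index is a small solid and the entry inherits a full factor $\varepsilon_\lambda^{2}$ that the column rescaling does not eat. The paper therefore compares ${\mathcal M}^*$ to a block \emph{upper triangular} matrix ${\mathcal M}^u$ with invertible diagonal blocks and bounded above-diagonal blocks, which is uniformly invertible by Cramer's rule; ${\mathcal M}^*$ is then a small perturbation of ${\mathcal M}^u$ (only the vanishing below-diagonal blocks are removed), and Neumann series gives $\|({\mathcal M}^*)^{-1}\|\leq 2\|({\mathcal M}^u)^{-1}\|$. Your block-diagonal argument would close only under the extra assumption $\alpha_\kappa<2$ for all $\kappa\in{\mathcal P}_{(iii)}$.

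A secondary point: your intermediate bound $|{\bf F}|\leq C(1+|\widehat{\bf p}|^2)$ followed by a post-hoc trade against the energy weights is a rearrangement; in the paper, Proposition~\ref{Pro:APEE} is applied \emph{inside} each $T_i$-estimate so that the source term comes out linear in $|\widehat{\bf p}|$ directly. Your version can work, but it is delicate precisely because of the exponent bookkeeping you flag, and the bound you claim for the energy estimate is $|\varepsilon_\kappa^{\delta_{\kappa\in{\mathcal P}_{(iii)}}}\widehat p_\kappa|\leq C$, not $|\widehat p_\kappa|\leq C$ — so the trade only works when a compensating $\varepsilon$-weight is available, which needs to be checked term by term.
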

The rest of the subsection is devoted to the proof of Proposition \ref{Pro:Acceleration}.
\subsubsection{A decomposition of the velocity}
The proof of Proposition \ref{Pro:Acceleration} relies on the following decomposition of the velocity.
\begin{Definition}
We decompose the velocity field $u^\varepsilon$ as follows:
\begin{equation} \label{Eq:DecompGlob}
u^\varepsilon = u^{pot}  + \sum_{\nu \in \{1,\dots,N\}} \gamma_{\nu} \nabla^{\perp} \widehat{\psi}_{\nu} + u^{ext},
\index{Velocity fields!U3@$u^{ext}$: exterior part of the velocity field}
\end{equation}
where the potential part of the velocity $u^{pot}$ was defined in \eqref{Eq:DefUPot}. We will call $u^{ext}$ the exterior part of the velocity field.
\end{Definition}
Notice the difference between \eqref{Eq:DecompGlob} and the standard decomposition \eqref{Eq:DecompUeps}, in that the circulation potentials considered here are standalone, following the strategy hinted in Section \ref{secsec}, and developed below, see in particular the treatment of the term  $T_4$  in \eqref{Eq:QuadraticPsi}. 

Comparing the standard decomposition \eqref{Eq:DecompUeps} of $u^\varepsilon$ and \eqref{Eq:DecompGlob}, we see with \eqref{Eq:DefPsiKappar} that
\begin{equation} \label{Eq:ReecritureUext}
u^{ext} = K[\omega] +  \sum_{\nu \in \{1,\dots,N\}} \gamma_{\nu} \nabla^{\perp} \psi_{\nu}^r.
\end{equation}
An important property of the decomposition \eqref{Eq:DecompGlob} is given by the following lemma, concerning the field $u^{ext}$ associated with a solution to System \eqref{Eq:Euler}--\eqref{Eq:Newton}.
\begin{Lemma} \label{Lem:UextGlob}
Given $\delta>0$, there exist some constants $\varepsilon_{0}$ and $C>0$ such that, for a solution to the system, as long as $(\boldsymbol\varepsilon,{\bf q},\omega) \in \mathfrak{Q}_\delta^{\varepsilon_{0}}$, one has for $u^{ext}$ considered as a function of $(t,x)$:
\begin{gather}
\label{Eq:EstUextGlob}
\| u^{ext} \|_{L^\infty({\mathcal F}({\bf q}))} \leq C , \\
\label{Eq:EstUextGlob2}
\| \partial_{t} u^{ext} \|_{L^\infty({\mathcal V}_{\delta}(\partial {\mathcal F}) \setminus \bigcup_{\nu \in {\mathcal P}_{s}} {\mathcal V}_{\delta/2}(\partial {\mathcal S}_{\nu}) )} \leq C (1 + |\widehat{{\bf p}}^{\varepsilon}|),  \\
\label{Eq:EstUextGlob3}
\| \partial_{t} u^{ext} \|_{ L^\infty({\mathcal V}_{\delta}(\partial {\mathcal S}_{\nu})) } \leq C \varepsilon_{\nu}^{-1} (1 + |\widehat{{\bf p}}^{\varepsilon}|), \ \ \forall \nu \in {\mathcal P}_{s}.
\end{gather}
\end{Lemma}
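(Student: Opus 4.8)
\textbf{Proof plan for Lemma~\ref{Lem:UextGlob}.}

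The plan is to analyze $u^{ext}$ through the formula \eqref{Eq:ReecritureUext}, namely $u^{ext} = K[\omega] + \sum_{\nu} \gamma_{\nu} \nabla^{\perp} \psi_{\nu}^r$, treating the two pieces separately. The $L^\infty$ bound \eqref{Eq:EstUextGlob} is essentially immediate: $\|K[\omega]\|_{L^\infty}$ is bounded by Lemma~\ref{Lem:Vorticite} (using that $\omega$ is conserved in $L^\infty$ and that the configuration lies in $\mathfrak{Q}_\delta$), while $\|\nabla\psi_{\nu}^r\|_{L^\infty({\mathcal F})} \leq C$ is exactly \eqref{Eq:WBorne} from Lemma~\ref{Lem:WBorne}. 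So \eqref{Eq:EstUextGlob} follows by the triangle inequality.

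For the time derivatives \eqref{Eq:EstUextGlob2}--\eqref{Eq:EstUextGlob3}, the key observation is that $u^{ext}$ depends on time only through the configuration ${\bf q}(t)$ and through the vorticity $\omega(t,\cdot)$; hence by the chain rule
\begin{equation*}
\partial_{t} u^{ext} = \sum_{\mu,m} q'_{\mu,m}(t) \, \frac{\partial}{\partial q_{\mu,m}}\Big( K[\omega] + \sum_{\nu} \gamma_{\nu} \nabla^{\perp}\psi_{\nu}^r \Big) + K[\partial_{t}\omega],
\end{equation*}
where the last term accounts for the explicit time dependence through $\omega$. For the last term one uses the transport equation \eqref{Eq:Vorticite}, which gives $\partial_t \omega = -\div(u^\varepsilon \omega)$ (using $\div u^\varepsilon = 0$), so that $K[\partial_t\omega] = -K[\div(u^\varepsilon\omega)]$; since $\omega$ is supported away from $\partial{\mathcal F}$ (as $(\boldsymbol\varepsilon,{\bf q},\omega)\in\mathfrak{Q}_\delta$) and $u^\varepsilon$ is log-Lipschitz hence bounded (bounded in terms of $1+|\widehat{\bf p}^\varepsilon|$ via \eqref{Eq:DecompUeps}, Proposition~\ref{Pro:ExpKirchhoff}, Lemma~\ref{Lem:WBorne} and Lemma~\ref{Lem:Vorticite}), Estimate~\eqref{Eq:BiotSavartBorne2} of Lemma~\ref{Lem:BiotSavart} bounds $\|K[\div(u^\varepsilon\omega)]\|_{L^\infty}$ away from the boundary; moreover this term is uniformly regular near $\partial{\mathcal F}$ by interior elliptic estimates, so it is harmless on the neighborhoods appearing in \eqref{Eq:EstUextGlob2}--\eqref{Eq:EstUextGlob3}. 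For the configuration-derivative terms, one invokes the shape-derivative estimates: Lemma~\ref{Lem:DWBorne} controls $\nabla \frac{\partial \psi_\kappa^r}{\partial q_{\mu,m}}$ by $C\varepsilon_\mu^{\delta_{m3}}$ away from ${\mathcal V}_{\delta/2}(\partial{\mathcal S}_\mu)$ and by $C\varepsilon_\mu^{-1+\delta_{m3}}$ inside ${\mathcal V}_\delta(\partial{\mathcal S}_\mu)$, and Lemma~\ref{Lem:DBSBorne} gives the analogous bounds for $\frac{\partial K[\omega]}{\partial q_{\mu,m}}$ (with the extra factor $\|\omega\|_{L^\infty}$, which is bounded). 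Then one multiplies by $|q'_{\mu,m}| = |p_{\mu,m}|$: recalling $\widehat{p}_{\mu} = (h'_\mu, \varepsilon_\mu \vartheta'_\mu)$, the angular component carries $|\vartheta'_\mu| = \varepsilon_\mu^{-1}|\widehat{p}_{\mu,3}|$, which exactly cancels against the $\varepsilon_\mu^{\delta_{m3}}$ gain in the shape derivatives (since $m=3$ is the angular index), producing a net bound in terms of $1+|\widehat{\bf p}^\varepsilon|$; the linear components $|h'_\mu| = |\widehat{p}_{\mu,m}|$ ($m=1,2$) need no such gain. Summing over $\mu$ and $m$ yields \eqref{Eq:EstUextGlob2} on the region away from all ${\mathcal V}_{\delta/2}(\partial{\mathcal S}_\nu)$, $\nu\in{\mathcal P}_s$, and the term with $\mu=\nu$ taken inside ${\mathcal V}_\delta(\partial{\mathcal S}_\nu)$ gives the extra factor $\varepsilon_\nu^{-1}$ of \eqref{Eq:EstUextGlob3}.

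The main subtlety — not a deep obstacle but the point requiring care — is bookkeeping the powers of $\varepsilon_\mu$: one must verify that the $\varepsilon_\mu^{\delta_{m3}}$ improvement in the shape derivatives is precisely what is needed to absorb the $\varepsilon_\mu^{-1}$ blow-up of $\vartheta'_\mu$ hidden in $\widehat{p}_{\mu,3}$, so that the right-hand side genuinely reads $C(1+|\widehat{\bf p}^\varepsilon|)$ and not something worse; and that the contribution of $K[\partial_t\omega]$, which a priori involves $u^\varepsilon$ and hence $|\widehat{\bf p}^\varepsilon|$ itself, is consistent with the claimed linear dependence. Both checks are routine given the estimates already established in Section~\ref{Sec:Expansions}.
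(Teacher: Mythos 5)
Your proposal is correct and follows essentially the same route as the paper's proof: split $u^{ext}$ via \eqref{Eq:ReecritureUext}, get \eqref{Eq:EstUextGlob} from Lemmas~\ref{Lem:WBorne} and \ref{Lem:BiotSavart}, then write $\partial_t u^{ext}$ as the configuration-derivative terms plus $K[\partial_t\omega^\varepsilon]=-K[\div(u^\varepsilon\omega^\varepsilon)]$, estimating the former with Lemmas~\ref{Lem:DWBorne} and \ref{Lem:DBSBorne} and the identity $\varepsilon_\mu^{\delta_{m3}}|p_{\mu,m}|=|\widehat p_{\mu,m}|$, and the latter with \eqref{Eq:BiotSavartBorne2} plus interior elliptic regularity near $\partial\mathcal{F}$. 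The only minor point: on the support of $\omega$ the paper in fact obtains the sharper bound $\|u^\varepsilon\omega^\varepsilon\|_{L^\infty}\leq C$ (uniform, using the decay \eqref{Eq:ExpKirchhoff1Bis} of $\nabla\varphi_{\kappa,k}$ together with the energy estimates), so the $K[\partial_t\omega^\varepsilon]$ contribution is actually $\mathcal{O}(1)$, not merely $\mathcal{O}(1+|\widehat{\bf p}^\varepsilon|)$ as you allow; your weaker estimate is still sufficient for the stated conclusion.
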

\begin{proof}[Proof of Lemma~\ref{Lem:UextGlob}] \par
First, \eqref{Eq:EstUextGlob} follows from directly from \eqref{Eq:ReecritureUext} and Lemmas~\ref{Lem:WBorne} and \ref{Lem:BiotSavart}. %\par
For what concerns \eqref{Eq:EstUextGlob2}-\eqref{Eq:EstUextGlob3}, we start with 
\begin{equation} \label{Eq:ptuext}
\partial_{t} u^{ext} = K[\partial_{t} \omega^\varepsilon] + \sum_{\substack{{\mu \in \{1,\dots,N\}} \\ {m \in \{1,2,3\}}}} \frac{\partial}{\partial q_{\mu,m}} \left[ K[\omega^\varepsilon] +  \sum_{\nu \in \{1,\dots,N\}} \gamma_{\nu} \nabla^{\perp} \psi_{\nu}^r \right] \cdot p_{\mu,m}.
\end{equation}
The shape derivatives of $K[\omega^\varepsilon]$ and $\nabla^{\perp} \psi_{\nu}^r$ with respect to $q_{\mu,m}$ are estimated separately
in $L^\infty({\mathcal V}_{\delta}(\partial {\mathcal S}_{\mu}({\bf q})))$ 
and in $L^\infty({\mathcal F}({\bf q}) \setminus {\mathcal V}_{\delta/2}(\partial {\mathcal S}_{\mu}({\bf q})))$ 
by using Lemma \ref{Lem:DBSBorne} and Lemma~\ref{Lem:DWBorne} respectively. 
Observing that $\varepsilon_{\mu}^{\delta_{m3}} |p_{\mu,m}| = |\widehat{p}_{\mu,m}|$, 
it follows that the second term in \eqref{Eq:ptuext} gives a contribution as in \eqref{Eq:EstUextGlob2}-\eqref{Eq:EstUextGlob3}. \par
It remains to study
\begin{equation} \label{Eq:RelK}
K[\partial_{t} \omega^\varepsilon] = -K[\div(u^\varepsilon \omega^\varepsilon)].
\end{equation}
We estimate $u^\varepsilon \omega^\varepsilon$ using  the decomposition \eqref{Eq:DecompUeps}. Using that $(\boldsymbol\varepsilon,{\bf q},\omega) \in \mathfrak{Q}_\delta^{\varepsilon_{0}}$, the energy estimates and \eqref{Eq:ExpKirchhoff1Bis}, we deduce that 
$\| u^{pot} \, \omega^\varepsilon \|_{L^\infty({\mathcal F}({\bf q}))} \leq C$.
Using that $(\boldsymbol\varepsilon,{\bf q},\omega) \in \mathfrak{Q}_\delta^{\varepsilon_{0}}$ and Lemmas~\ref{Lem:StandaloneCSF} and \ref{Lem:WBorne}, we also find  that $\omega^\varepsilon \sum_{\nu \in \{1,\dots,N\}} \gamma_{\nu} \nabla^{\perp} \widehat{\psi}_{\nu}$ is bounded in $L^\infty({\mathcal F}(q))$. 
%$$\left\| \omega^\varepsilon \sum_{\nu \in \{1,\dots,N\}} \gamma_{\nu} \nabla^{\perp} \widehat{\psi}_{\nu} \right\|_{L^\infty({\mathcal F}(q))} \leq C.$$
%
With \eqref{Eq:EstUextGlob}, we finally deduce that
\begin{equation} \label{Num41}
\| u^\varepsilon \omega^\varepsilon \|_{L^\infty({\mathcal F}({\bf q}))} \leq C.
\end{equation}
With \eqref{Eq:BiotSavartBorne2} and \eqref{Eq:DecompGlob}, this gives
\begin{equation} \label{Eq:Num42}
\| K[\div(u^\varepsilon \omega^\varepsilon)]\|_{L^p({\mathcal F}(q))} \leq C, \ \text{ for } \ p \in (1, +\infty). 
\end{equation}
By using the support of vorticity and local elliptic estimates near the boundaries one concludes that $ K[\div(u^\varepsilon \omega^\varepsilon)] $ is bounded in $L^\infty({\mathcal V}_{\delta}(\partial {\mathcal F}({\bf q})))$, and \eqref{Eq:EstUextGlob2}-\eqref{Eq:EstUextGlob3} follow.
\end{proof}
\subsubsection{Proof of the acceleration estimates}
We are now in position to prove Proposition~\ref{Pro:Acceleration}.
%
%
%
%%%%%%%%% ESSAI
%
\begin{proof}[Proof of Proposition~\ref{Pro:Acceleration}] 
We cut the proof in several steps. \par
\paragraph{Step 1.}
By \eqref{Eq:Euler}, \eqref{Kir} and an integration by parts
we write the solid equation \eqref{Eq:Newton} as
\begin{equation} \label{Eq:Solide2}
( {\mathcal M}_{g} (\mathbf{p}^{\varepsilon})' )_{\kappa,j} = - \int_{{\mathcal F}({\bf q})} (\partial_{t} u^\varepsilon + (u^\varepsilon \cdot \nabla) u^\varepsilon) \cdot \nabla \varphi_{\kappa,j} \, dx ,
\end{equation}
where we recall  the notation \eqref{Eq:TrueInertia}.
Next we inject the decomposition  \eqref{Eq:DecompGlob} of $u^\varepsilon$. 
In the right-hand side, we extract from $\partial_{t} u^\varepsilon$ the part corresponding to 
\begin{equation} \label{Eq:dtupot}
\partial_{t} u^{pot} = \sum_{ \substack{ {\mu \in \{1, \dots,N\}} \\ {m \in \{1,2,3\}} } } \big[ p'_{\mu,m} \nabla \varphi_{\mu,m} + p_{\mu,m} (\nabla \varphi_{\mu,m})' \big].
\end{equation}
When injected in \eqref{Eq:Solide2}, the first term in \eqref{Eq:dtupot} gives the added mass term $-({\mathcal M}_{a} ({\bf p}^{\varepsilon})')_{\kappa,j}$ (recall the notation \eqref{Eq:AddMassMatrix}) which we put on the left-hand side, while the second one gives shape-derivatives terms, see the term $T_1$ below.
For the term $(u^\varepsilon \cdot \nabla) u^\varepsilon$ in \eqref{Eq:Solide2} we use
\begin{equation} \label{Eq:UNablaU}
(u^\varepsilon \cdot \nabla) u^\varepsilon = \frac{\nabla | u^\varepsilon| ^2}{2} + \omega^\varepsilon (u^{\varepsilon})^\perp . 
\end{equation}
When injected in \eqref{Eq:Solide2}, the first term in the right hand side of \eqref{Eq:UNablaU} can be integrated by parts  to arrive at 
\begin{equation*} 
- \frac{1}{2} \int_{\partial {\mathcal S}_{\kappa}({\bf q})} | u^\varepsilon| ^2 K_{\kappa,j} \, ds .
\end{equation*}
Then we develop the square 
$$
| u^\varepsilon| ^2 =   \left| u^{pot} + \sum_{\nu \in \{1,\dots,N\}}\gamma_{\nu} \nabla^{\perp} \widehat{\psi}_{\nu} + u^{ext} \right|^2 ,
$$
by separating between 
$$
\gamma_{\kappa} \nabla^{\perp} \widehat{\psi}_{\kappa}  \,  \text{ and }  \,  
 u^{pot} + u^{ext} + \sum_{\nu \in \{1,\dots,N\} \setminus  \{\kappa\} }\gamma_{\nu} \nabla^{\perp} \widehat{\psi}_{\nu} ,
$$
to arrive at 
%After these operations we obtain:
%
 \begin{equation}
\label{Eq:RoDev}
\left({\mathcal M}_{g} ({\bf p}^{\varepsilon})'+ {\mathcal M}_{a} ({\bf p}^{\varepsilon})'\right)_{\kappa,j} 
= T_1 + \ldots + T_7 ,
\end{equation}
where
 \begin{align*}
T_1 &:= - \sum_{\substack{{\lambda,\mu \in \{1, \dots, N\}} \\ {\ell,m=1,2,3} }} \int_{{\mathcal F}({\bf q})} p_{\lambda,\ell} p_{\mu,m} \frac{\partial \nabla \varphi_{\lambda,\ell}}{\partial q_{\mu,m}} \cdot \nabla \varphi_{\kappa,j} \, dx , \\
T_2 &:= - \sum_{\nu \in \{1,\dots,N\} }\gamma_{\nu} \int_{{\mathcal F}({\bf q})} \partial_{t} \nabla^{\perp} \widehat{\psi}_{\nu} \cdot \nabla \varphi_{\kappa,j} \, dx  , \\
T_3 &:= - \int_{{\mathcal F}({\bf q})} \partial_{t} u^{ext} \cdot \nabla \varphi_{\kappa,j} \, dx , \\
T_4 &:= - \frac{1}{2} \int_{\partial {\mathcal S}_{\kappa}({\bf q})}  \left|    \gamma_{\kappa} \nabla^{\perp} \widehat{\psi}_{\kappa}   \right|^2 K_{\kappa,j} \, ds , \\
T_5 &:= -  \frac{1}{2} \int_{\partial {\mathcal S}_{\kappa}({\bf q})} \left| u^{pot} + u^{ext} +  \sum_{\nu \in \{1,\dots,N\} \setminus  \{\kappa\} } \gamma_{\nu} \nabla^{\perp} \widehat{\psi}_{\nu}  \right|^2 K_{\kappa,j} \, ds, \\
T_6 &:=  - \gamma_{\kappa}  \int_{\partial {\mathcal S}_{\kappa}({\bf q})}  \left(u^{pot} + u^{ext} +  \sum_{\nu \in \{1,\dots,N\} \setminus  \{\kappa\} }\gamma_{\nu} \nabla^{\perp} \widehat{\psi}_{\nu}\right)  \cdot \nabla^{\perp} \widehat{\psi}_{\kappa} \, ds, \\
T_7 &:=- \int_{{\mathcal F}({\bf q})}  \omega^{\varepsilon} u^{\varepsilon\perp} \cdot \nabla \varphi_{\kappa,j} \, dx.
\end{align*}
\paragraph{Step 2.}
We now estimate these seven terms. In this proof it will be convenient to take the convention of Remark~\ref{Rem:NormalisationKirchhoff} for the Kirchhoff potentials. \par
\ \par
\noindent
{\it Estimate of $T_1$.} We first integrate by parts:
\begin{equation} \label{Eq:AccTermeSD}
\int_{{\mathcal F}(t)} p_{\lambda,\ell} p_{\mu,m} \frac{\partial \nabla \varphi_{\lambda,\ell}}{\partial q_{\mu,m}} \cdot \nabla \varphi_{\kappa,j} \, dx
= p_{\lambda,\ell} p_{\mu,m}
\int_{\partial {\mathcal S}_{\kappa}}  \frac{\partial \varphi_{\lambda,\ell}}{\partial q_{\mu,m}} \, K_{\kappa,j} \, ds.
\end{equation}
To estimate the integral in the right-hand-side we rely on the estimates of the shape derivatives in Proposition~\ref{Pro:ExpShapeDerivatives}.
We distinguish several cases, according to the possible equalities between $\kappa$, $\lambda$ and $\mu$: \par
\begin{itemize}
\item  {First case: $\lambda=\mu$.}
Then either $\kappa=\lambda = \mu$ and this integral is ${\mathcal O}(\varepsilon_{\lambda}^{1+\delta_{\ell 3}}  \varepsilon_{\mu}^{\delta_{m3}}  \varepsilon_{\kappa}^{\delta_{j3}})$ (the additional power of $\varepsilon_{\lambda}$ comes from the integration on $\partial {\mathcal S}_{\kappa}= \partial {\mathcal S}_{\lambda}$),
  or $\kappa \neq \lambda = \mu$ and the integral is  ${\mathcal O}(\varepsilon_{\lambda}^{2+\delta_{\ell 3}}  \varepsilon_{\mu}^{\delta_{m3}}  \varepsilon_{\kappa}^{1+\delta_{j3}})$.
\item {Second case: $\lambda \neq \mu$.} %
Then either $\kappa\neq \mu$ and we see the integral is ${\mathcal O}(\varepsilon_{\lambda}^{2+\delta_{\ell 3}}  \varepsilon_{\mu}^{2+\delta_{m3}}  \varepsilon_{\kappa}^{1+\delta_{j3}})$,
 or $\kappa = \mu$  and the integral is  ${\mathcal O}(\varepsilon_{\lambda}^{2+\delta_{\ell 3}}  \varepsilon_{\mu}^{\delta_{m3}}  \varepsilon_{\kappa}^{1+\delta_{j3}})$.
\end{itemize}
We recall that $\varepsilon_{\mu}^{\delta_{m3}} |p_{\mu,m}| = |\widehat{p}_{\mu,m}|$.
Using the energy estimates provided by Proposition~\ref{Pro:APEE} (which give $\varepsilon_{\lambda}^{1+\delta_{\ell3}} p_{\lambda,\ell}$ bounded), we see that in all cases, the term in \eqref{Eq:AccTermeSD}  is at least estimated by ${\mathcal O}(|\widehat{p}_{\mu,m}| \varepsilon_{\kappa}^{\delta_{j3}})$ (the worst case being the first one where $\kappa=\lambda = \mu$). \par
\ \par
\noindent
{\it Estimate of $T_2$.} We first deduce from Lemma~\ref{Lem:StandaloneCSF} that 
\begin{equation*}
\partial_{t} \widehat{\psi}_{\nu} + v_{{\mathcal S},\nu} \cdot \nabla \widehat{\psi}_{\nu} =0
\ \text{ and } \ 
\partial_{t} \nabla \widehat{\psi}_{\nu} + (v_{{\mathcal S},\nu} \cdot \nabla) \nabla \widehat{\psi}_{\nu} = \vartheta_{k}' \nabla^\perp \widehat{\psi}_{\nu} ,
\end{equation*}
where we denote by $v_{{\mathcal S},\nu}$ the $\nu$-th solid vector field, see \eqref{Eq:SolidVelocity}.
Using the formulas 
\begin{equation*}
\nabla (a \cdot b) = (a \cdot \nabla) b + (b \cdot \nabla) a - a^{\perp} \curl (b) - b^{\perp} \curl (a), 
\end{equation*}
$\curl(x^{\perp})=2$ and $(a \cdot \nabla) x^{\perp} = a^\perp$, we find
\begin{equation} \label{Eq:HatPsiTourne}
\partial_{t} \nabla^{\perp} \widehat{\psi}_{\nu} + \nabla \left( v_{{\mathcal S},\nu} \cdot \nabla^{\perp} \widehat{\psi}_{\nu} \right) =0.
\end{equation}
By an integration by parts it follows that
\begin{equation*}
\nonumber
\int_{{\mathcal F}({\bf q})} \partial_{t} \nabla^{\perp} \widehat{\psi}_{\nu} \cdot \nabla \varphi_{\kappa,j} \, dx 
%&=& - \int_{{\mathcal F}({\bf q})} \nabla \left( v_{{\mathcal S},\nu} \cdot \nabla^{\perp} \widehat{\psi}_{\nu} \right)\cdot \nabla \varphi_{\kappa,j} \, dx \\
= - \int_{\partial {\mathcal S}_{\kappa}({\bf q})} v_{{\mathcal S},\nu} \cdot \nabla^{\perp} \widehat{\psi}_{\nu} \, K_{\kappa,j} \, ds.
\end{equation*}
Now when $\nu=\kappa$ it is straightforward to estimate this term by ${\mathcal O}(\varepsilon_{\kappa}^{\delta_{j3}}) |\widehat{\mathbf{p}}^{\varepsilon}|$ since $\nabla^{\perp} \widehat{\psi}^\varepsilon_{\kappa} = {\mathcal O}(1/\varepsilon_{\kappa})$ on $\partial {\mathcal S}_{\kappa}$. When $\nu \neq \kappa$, one can use the divergence theorem inside ${\mathcal S}_{\kappa}$:
\begin{equation} \label{Eq:PsiNu}
\int_{\partial {\mathcal S}_{\kappa}({\bf q})} v_{{\mathcal S},\nu} \cdot \nabla^{\perp} \widehat{\psi}_{\nu} \, K_{\kappa,j} \, ds
=
-   \int_{{\mathcal S}_{\kappa}({\bf q})} \div\left( (h'_{\nu} + \vartheta'_{\nu} (x-h_{\nu})^\perp \cdot \nabla^{\perp} \widehat{\psi}_{\nu}) \xi_{\kappa,j}\right) \, dx.
\end{equation}
Now on the one hand using \eqref{Eq:BehaviourPsi1} and interior regularity estimates for the Laplace equation, we obtain
\begin{equation*}
\int_{ {\mathcal S}_{\kappa}({\bf q})} \div( (h'_{\nu} \cdot \nabla^{\perp} \widehat{\psi}_{\nu}) \xi_{\kappa,j}) \, dx 
= \int_{{\mathcal S}_{\kappa}({\bf q})} \xi_{\kappa,j} \cdot \nabla (h'_{\nu} \cdot \nabla^{\perp} \widehat{\psi}_{\nu})  \, dx 
= {\mathcal O}(\varepsilon_{\kappa}^{2+ \delta_{j3}}) |h'_{\nu}|.
\end{equation*}
On the other hand, we use \eqref{Eq:BehaviourPsi2} and see that
\begin{equation*}
\int_{{\mathcal S}_{\kappa}({\bf q})} \div\left( (\vartheta'_{\nu} (x-h_{\nu})^\perp \cdot \nabla^{\perp} \widehat{\psi}_{\nu}) \xi_{\kappa,j}\right) \, ds
= {\mathcal O}(\varepsilon_{\kappa}^{2+ \delta_{j3}}) \varepsilon_{\nu} |\vartheta'_{\nu}|.
\end{equation*}
Altogether the term $T_{2}$ can be estimated by
\begin{equation} \label{Eq:EstT2}
T_{2} = {\mathcal O}(\varepsilon_{\kappa}^{2+\delta_{j3}}) |\widehat{p}_{\nu}|.
\end{equation}
\ \par
\noindent
{\it Estimate of $T_3$.}  We first integrate by parts to find
\begin{equation*}
\int_{{\mathcal F}({\bf q})} \partial_{t} u^{ext} \cdot \nabla \varphi_{\kappa,j} \, dx
%= \int_{\partial {\mathcal F}({\bf q})} \partial_{t} u^{ext} \cdot n \, \varphi_{\kappa,j} \, ds
=  \int_{\partial {\mathcal F}({\bf q})} \partial_{t} u^{ext} \cdot n \, \varphi_{\kappa,j} \, ds.
\end{equation*}
By Lemma~\ref{Lem:UextGlob} (using \eqref{Eq:EstUextGlob2} on $\partial \Omega$ and \eqref{Eq:EstUextGlob3} on the rest of the boundary), we have $\| \partial_{t} u^{ext} \|_{L^{1}(\partial {\mathcal F}({\bf q}))} = {\mathcal O}(1 + |\widehat{\bf p}^\varepsilon|)$.
We use \eqref{Eq:ExpKirchhoffNormalises} to estimate the Kirchhoff potential $\varphi_{\kappa,j}$ on the boundary and infer that
\begin{equation*}
T_{3} = {\mathcal O}(\varepsilon_{\kappa}^{1+\delta_{j3}}) (1 + |\widehat{{\bf p}}^{\varepsilon}|).
\end{equation*}
\ \par
\noindent
{\it Estimate of $T_4$.}  We have for any $j \in \{1,2,3\}$
\begin{equation} \label{Eq:QuadraticPsi}
\int_{\partial {\mathcal S}_{\kappa}} |\gamma_{\kappa} \nabla^{\perp} \widehat{\psi}_{\kappa}|^2 K_{\kappa,j} \, ds=0.
\end{equation}
This is a consequence of Blasius' lemma, see e.g. \cite[p. 511]{GLS}. This also a direct consequence of Lamb's lemma (see Lemma~\ref{Lem:Lamb} below).\par %
\ \par
\noindent
{\it Estimate of $T_5$.} 
Using Lemma~\ref{Lem:UextGlob}, Proposition~\ref{Pro:ExpKirchhoff} and \eqref{Eq:BehaviourPsi1}
we see that
\begin{equation} \label{Eq:Upot+Uext}
\left|u^{pot} + u^{ext} +  \sum_{\nu \in \{1,\dots,N\} \setminus  \{\kappa\} }\gamma_{\nu} \nabla^{\perp} \widehat{\psi}_{\nu} \right| \leq C \left(1+ |\widehat{p}_\kappa|
+ \sum_{\nu \neq \kappa} \varepsilon_\nu^2 |\widehat{p}_\nu|
\right)
\text{ on } \partial \mathcal{S}_\kappa.
\end{equation} 
Considering that $K_{\kappa,j}={\mathcal O}(\varepsilon_{\kappa}^{\delta_{j3}})$ and that we integrate over $\partial {\mathcal S}_{\kappa}$, using the energy estimates, we deduce that this term can be bounded by $C \varepsilon_{\kappa}^{\delta_{j3}} (1 + |\widehat{p}_{\kappa}|)$.\par 
\ \par
\noindent
{\it Estimate of $T_6$.} 
Using \eqref{Eq:Upot+Uext}, the energy estimates, $\nabla^{\perp} \widehat{\psi}^\varepsilon_{\kappa} = {\mathcal O}(1/\varepsilon_{\kappa})$ on $\partial {\mathcal S}_{\kappa}$ and again that $\partial {\mathcal S}_{\kappa}$ is of size ${\mathcal O}(\varepsilon_{\kappa})$, we see that this term is also estimated by $C \varepsilon_{\kappa}^{\delta_{j3}} (1 + |\widehat{{p}}_{\kappa}|)$. \par
\ \par
\noindent
{\it Estimate of $T_7$.} 
We use the decomposition \eqref{Eq:DecompGlob} of $u^{\varepsilon}$, the compactness of the support of $\omega^\varepsilon$ in ${\mathcal F}(q)$ due to  $(\boldsymbol\varepsilon,{\bf q},\omega) \in \mathfrak{Q}_\delta^{\varepsilon_{0}}$, the decay of the Kirchhoff potentials \eqref{Eq:ExpKirchhoff1Bis},  the energy estimates,
 \eqref{Eq:BehaviourPsi1}
% the boundedness of the vector fields $\nabla^{\perp} \widehat{\psi}_{\nu}$ on $  \text{supp } \omega^\varepsilon$ 
and %Lemma~\ref{Lem:UextGlob} 
\eqref{Eq:EstUextGlob}
to conclude that this term is of order ${\mathcal O}(\varepsilon_{\kappa}^{1+\delta_{j3}})$. \par
\paragraph{Step 3.}
Gathering what precedes we have established, recalling \eqref{Eq:MasseTotale},
\begin{equation} \label{Eq:MainAccelerationEstimate}
\left| \left( {\mathcal M} {\bf p}' \right)_{\kappa,j} \right|
\leq C \varepsilon_{\kappa}^{\delta_{j3}} \left( 1 + |\widehat{\boldsymbol{p}}^{\varepsilon}| \right).
\end{equation}
Now define the ``homogeneous'' inertia matrix ${\mathcal M}^\circ$ as the total inertia matrix ${\mathcal M}$ where we divide each $(\kappa,j)$-th row and each $(\kappa,j)$-th column by $\varepsilon_{\kappa}^{\delta_{j3}}$. Then \eqref{Eq:MainAccelerationEstimate} translates now into
\begin{equation} \nonumber %\label{Eq:AccelerationEstimate2}
\left| \left( {\mathcal M}^\circ (\widehat{\boldsymbol{p}}^{\varepsilon})' \right)_{\kappa,j} \right|
\leq C \left( 1 + |\widehat{\boldsymbol{p}}^{\varepsilon}| \right).
\end{equation}
We now introduce the matrix ${\mathcal M}^\ast$ as the total homogeneous inertia matrix ${\mathcal M}^\circ$ where each $(\kappa,j)$-th column is divided
by $\varepsilon_{\kappa}^{\min(2,\alpha_{\kappa}) \delta_{\kappa \in {\mathcal P}_{(iii)}}}$, where we recall that $\alpha_{\kappa}$ was introduced in \eqref{Eq:Family_iii}. Calling $\widecheck{{\bf p}}$ the vector with $(\kappa,j)$-th coordinate $\varepsilon_{\kappa}^{\min(2,\alpha_{\kappa}) \delta_{\kappa \in {\mathcal P}_{(iii)}}} \widehat{p}_{\kappa,j}$, we hence have
\begin{equation*}
{\mathcal M}^\circ (\widehat{\boldsymbol{p}}^{\varepsilon})' = {\mathcal M}^\ast \widecheck{{\bf p}}'.
\end{equation*}
Hence to end the proof of Proposition~\ref{Pro:Acceleration}, it remains to prove that $({\mathcal M}^\ast)^{-1}$ is bounded independently of ${\boldsymbol\varepsilon}$ at least for small $\overline{{\boldsymbol\varepsilon}}$. 
Now  gathering the rows and columns of ${\mathcal M}^\ast$ according to families (i), (ii) and (iii), we have a block matrix:
\begin{equation*}
{\mathcal M}^\ast =
\left(
\begin{array}{c|c|c}
A_{(i)(i)} &  A_{(i)(ii)} & A_{(i)(iii)} \\
\hline
A_{(ii)(i)} &  A_{(ii)(ii)} & A_{(ii)(iii)} \\
\hline
A_{(iii)(i)} &  A_{(iii)(ii)} & A_{(iii)(iii)} 
\end{array}
\right).
\end{equation*}
%
%
%   \/  ANCIENNE VERSION 
%
% The diagonal blocks $A_{(i)(i)}$, $A_{(ii)(ii)}$ and $A_{(iii)(iii)}$ are uniformly invertible (thanks to the genuine mass for the first two, to the renormalized added mass for the third one using Corollary~\ref{Cor:ExpAddedMass} and Remark~\ref{Rem:PasBoules}).
% %
% Next using Corollary~\ref{Cor:ExpAddedMass} we see that the entries of the added mass matrix ${\mathcal M}_{a}$ that correspond to different solids satisfy:
% %
% \begin{equation*}
% ({\mathcal M}_{a})_{\lambda,\ell,\mu,m} = {\mathcal O}(\varepsilon_{\lambda}^{2+\delta_{\ell3}} \varepsilon_{\mu}^{2+\delta_{m3}}), \ \ \text{ for } \lambda \neq \mu, \ \ell,m=1,2,3.
% \end{equation*}

% It follows that   the blocks under the diagonal $A_{(ii)(i)}$, $A_{(iii)(i)}$ and $A_{(iii)(ii)}$ converge to zero and the blocks above the diagonal $A_{(i)(ii)}$, $A_{(i)(iii)}$ and $A_{(ii)(iii)}$ remain bounded.
% The result follows. %Remark \ref{labelme}
%
%
%
%   \/  NOUVELLE VERSION 
%
Using Corollary~\ref{Cor:ExpAddedMass} we see that the entries of the added mass matrix ${\mathcal M}_{a}$ that correspond to different solids satisfy:
\begin{equation} \label{Eq:EstExtraDiag}
({\mathcal M}_{a})_{\lambda,\ell,\mu,m} = {\mathcal O}(\varepsilon_{\lambda}^{2+\delta_{\ell3}} \varepsilon_{\mu}^{2+\delta_{m3}}) \ \ \text{ for } \lambda \neq \mu, \ \ell,m=1,2,3.
\end{equation} 
Moreover, using Corollary~\ref{Cor:ExpAddedMass} and Remark~\ref{Rem:PasBoules}, we see that for $\lambda \in {\mathcal P}_{(iii)}$ and $\ell,m \in\{1,2,3\}$, 
\begin{equation*}
{\mathcal M}_{a,\lambda,\ell,\lambda,m} = \varepsilon_{\lambda}^{2 + \delta_{3 \ell} + \delta_{3m}} \widehat{{\mathcal M}}^1_{a,\lambda,\ell,m} + {\mathcal O}(\varepsilon_{\lambda}^{4 + \delta_{3 \ell} + \delta_{3m}}),
\end{equation*}
where $\widehat{{\mathcal M}}^1_{a,\lambda}$ is a fixed symmetric positive-definite matrix. \par
Relying on the genuine mass and \eqref{Eq:Family_i}-\eqref{Eq:Family_ii} for the first two families, and either on the genuine mass (when $\alpha_{\kappa} \leq 2$) or the added mass (when $\alpha_{\kappa} >2$) and \eqref{Eq:Family_iii} for the third family,  we deduce that the diagonal blocks $A_{(i)(i)}$, $A_{(ii)(ii)}$ and $A_{(iii)(iii)}$ are uniformly invertible. Moreover we also see that the blocks above the diagonal $A_{(i)(ii)}$, $A_{(i)(iii)}$ and $A_{(ii)(iii)}$ remain bounded. Hence by Cramer's rule the upper triangular block matrix
\begin{equation*}
{\mathcal M}^u :=
\left(
\begin{array}{c|c|c}
A_{(i)(i)} &  A_{(i)(ii)} & A_{(i)(iii)} \\
\hline
0 &  A_{(ii)(ii)} & A_{(ii)(iii)} \\
\hline
0 & 0 & A_{(iii)(iii)} 
\end{array}
\right),
\end{equation*}
whose determinant is $\det(A_{(i)(i)} ) \det(A_{(ii)(ii)} ) \det(A_{(iii)(iii)} )$,
is uniformly invertible. As can be seen from Neumann's series, when $\| {\mathcal M}^\ast - {\mathcal M}^u \| \leq \frac{1}{2 \| ({\mathcal M}^u)^{-1} \|}$ for some matrix norm, then ${\mathcal M}^\ast$ is invertible with $\| ({\mathcal M}^\ast)^{-1}\| \leq 2 \| ({\mathcal M}^u)^{-1} \|$. Since from \eqref{Eq:EstExtraDiag} the blocks under the diagonal $A_{(ii)(i)}$, $A_{(iii)(i)}$ and $A_{(iii)(ii)}$ converge to zero, we see that ${\mathcal M}^\ast$ is uniformly invertible for suitably small $\overline{{\boldsymbol\varepsilon}}$.
The result follows. %Remark \ref{labelme}

\end{proof}
%
%%%%%%%%%%%%%
%

%
%
%
%%%%%%%%%%%%%%%%%%%%%%%%%%%%%%%%%%%%%%%%%%%%%%%%%%%%%%%%%%%%%%%%%%%%%%%%%%%%%%%%%%%%%%%%%%%%%%%%%%%%%%%%%%%%%%%%%%%%%%%%%%%%%%%%%%%%%%%%%%%%%%%%%%%%%%%%
%
%
%
%
\section{Introduction of the modulations}
\label{Sec:Modulations}
%
%
%
%
%
%%%%%%%%%%%%%%%%%%%
%
%
In this section, we introduce the modulations that will play a central role in the normal forms of Section~\ref{Sec:NormalForm} and consequently in the modulated energy estimates of Section~\ref{Sec:MEE} and in the passage to the limit of Section~\ref{Sec:PTTL}.
\subsection{Decomposition of the fluid velocity focused on a small solid}

In this section, we merely consider $\kappa$ in $ {\mathcal P}_{s}$, because only the small solids will actually be concerned with the modulations.
To define the modulation, we first introduce a decomposition of the velocity field in the same spirit as \eqref{Eq:DecompGlob}, but here more focused on the $\kappa$-th solid.
\begin{Definition}%[Decomposition of the velocity focusing on ${\mathcal S}_{\kappa}$]%and the $\kappa$-th exterior field ?
For each $\kappa $ in ${\mathcal P}_{s}$, we introduce the following decomposition 
\begin{equation} \label{Eq:DecompKappa}
u^\varepsilon = u^{pot}_\kappa  + \gamma_{\kappa} \nabla^{\perp} \widehat{\psi}_{\kappa} + u^{ext}_\kappa
\ \ \text{ with } \ \ 
u^{pot}_\kappa:= \sum_{i \in \{1,2,3\}} p_{\kappa,i} \nabla \varphi_{\kappa,i}.
\index{Velocity fields!U4@$u^{pot}_\kappa$, $u^{ext}_\kappa$: decomposition of $u^\varepsilon$ focused on ${\mathcal S}_{\kappa}$}
\end{equation}
We will refer to $u^{pot}_\kappa$ as potential part of the decomposition \eqref{Eq:DecompKappa}, $\gamma_{\kappa} \nabla^{\perp} \widehat{\psi}_{\kappa}$ as its circulation part,
and $u^{ext}_\kappa$ as the $\kappa$-th exterior field.
\end{Definition}
When comparing with the decomposition \eqref{Eq:DecompGlob}, we see that
\begin{equation} \label{Eq:UextUextKappa}
u^{ext}_\kappa = u^{ext} + \sum_{\nu \neq \kappa} \sum_{i=1}^3  p_{\nu,i} \nabla \varphi_{\nu,i} + \sum_{\nu \neq \kappa} \gamma_{\nu} \nabla^\perp \widehat{\psi}_{\nu}.
\end{equation}
The $\kappa$-th exterior field will play a central role in the definition of the modulation.
 In \eqref{Eq:DecompKappa}, the first two vector fields can be thought as ``attached'' to ${\mathcal S}_{\kappa}$ (to its velocity and to the constant circulation around it), while $u_{\kappa}^{ext}$ corresponds to the vector field to which ${\mathcal S}_{\kappa}$ ``is subjected'' from the exterior (which includes the reflections of $\nabla^\perp \widehat{\psi}_{\kappa}$ on $\partial \Omega$ and the other solids). \par
We first note that, due to \eqref{Eq:DecompKappa}, $u^{ext}_\kappa$ satisfies the following $\div$-$\curl$ system
\begin{equation} \label{Eq:SysUExtKappa}
\left\{ \begin{array}{l}
\div u^{ext}_\kappa = 0 \ \text{ in } \ {\mathcal F}({\bf q}), \medskip \\
\curl u^{ext}_\kappa = \omega^{\varepsilon} \ \text{ in } \ {\mathcal F}({\bf q}), \medskip \\
u^{ext}_\kappa\cdot n = - \gamma_{\kappa} \nabla^\perp \widehat{\psi}_{\kappa} \cdot n + \sum_{\nu \neq \kappa}  \sum_{i=1}^3  p_{\nu,i} \nabla \varphi_{\nu,i} \cdot n \ \text{ on } \ \partial {\mathcal F}({\bf q}), \medskip \\
\displaystyle \oint_{\partial {\mathcal S}_{\nu}} u^{ext}_\kappa \cdot \tau \, ds = \delta_{\nu \neq \kappa} \gamma_{\nu}  \ \text{ for } \ \nu =1, \dots, N.
\end{array} \right.
\end{equation}
Recall that $ \nabla^\perp \widehat{\psi}_{\kappa}$ is tangent to $\partial {\mathcal S}_{\kappa}$; it follows in particular that $u^{ext}_\kappa\cdot n = 0$ on $\partial {\mathcal S}_{\kappa}$. \par
We have the following estimate of the $\kappa$-th exterior field $u^{ext}_\kappa$.
\begin{Lemma} \label{Lem:EstExtField} %\label{Cor:EstUextVAmeliorees}
Let $\delta>0$. There exists $\varepsilon_{0}>0$ and $C>0$ such that for all $\overline{\boldsymbol\varepsilon}$ with $\overline{\boldsymbol\varepsilon} \leq \varepsilon_{0}$,
  as long as {$(\boldsymbol\varepsilon,{\bf q},\omega) \in \mathfrak{Q}_{\delta}^{\varepsilon_{0}}$:}
\begin{equation} \nonumber %\label{Eq:EstUext}
\| u^{ext}_\kappa \|_{L^\infty(\partial {\mathcal S}_{\kappa})} \leq C.
\end{equation}
\end{Lemma}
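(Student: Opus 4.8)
The strategy is to estimate each of the three terms in the decomposition \eqref{Eq:DecompKappa} of $u^\varepsilon$ separately on $\partial {\mathcal S}_{\kappa}$, using the formula \eqref{Eq:UextUextKappa} which expresses $u^{ext}_\kappa$ in terms of the globally-defined exterior field $u^{ext}$ and of the Kirchhoff and standalone circulation potentials attached to the \emph{other} solids. The point is that all these contributions are ``generated away from $\partial {\mathcal S}_{\kappa}$'' (either on $\partial \Omega$, on the boundaries of other solids, or by the vorticity which is at distance $\geq 2\delta$ from all solids since $(\boldsymbol\varepsilon,{\bf q},\omega) \in \mathfrak{Q}_{\delta}^{\varepsilon_0}$), so that they are smooth and bounded in a fixed-size neighborhood of $h_\kappa$, uniformly in $\overline{\boldsymbol\varepsilon}$.

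\textbf{Step 1: the global exterior field.} By Lemma~\ref{Lem:UextGlob}, and more precisely \eqref{Eq:EstUextGlob}, one has $\| u^{ext} \|_{L^\infty({\mathcal F}({\bf q}))} \leq C$ uniformly, so in particular $\| u^{ext} \|_{L^\infty(\partial {\mathcal S}_{\kappa})} \leq C$. (Alternatively one can invoke \eqref{Eq:ReecritureUext} together with Lemmas~\ref{Lem:WBorne} and \ref{Lem:BiotSavart}.)

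\textbf{Step 2: Kirchhoff potentials of the other solids.} For $\nu \neq \kappa$ and $i \in \{1,2,3\}$, we must bound $\nabla \varphi_{\nu,i}$ on $\partial {\mathcal S}_{\kappa}$. Since $\kappa \in {\mathcal P}_s$ is small and $\partial {\mathcal S}_{\kappa} \subset {\mathcal V}_{\delta}(\partial {\mathcal S}_\kappa)$, and since $d({\mathcal S}_\kappa,{\mathcal S}_\nu) \geq 2\delta$, we are in the regime $|x-h_\nu|\geq \delta$ on $\partial {\mathcal S}_\kappa$. If $\nu \in {\mathcal P}_{(i)}$, Proposition~\ref{Pro:ExpKirchhoff2} (or \eqref{Eq:ExpKirchhoff1Bis} when $\nu\in{\mathcal P}_s$) gives $\nabla \varphi_{\nu,i}(x) = {\mathcal O}(\varepsilon_\nu^{2+\delta_{i\geq 3}}/|x-h_\nu|^2) = {\mathcal O}(1)$ there; in fact for $\nu\in {\mathcal P}_s$ this contribution is ${\mathcal O}(\varepsilon_\nu^2)$. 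Weighting by $|p_{\nu,i}| = \varepsilon_\nu^{-\delta_{i3}}|\widehat p_{\nu,i}|$ and using the energy estimate of Proposition~\ref{Pro:APEE} (which bounds $\varepsilon_\nu^{\delta_{\nu\in{\mathcal P}_{(iii)}}}\widehat p_\nu$, hence in particular $\varepsilon_\nu^{1+\delta_{i3}}p_{\nu,i}$ for small solids, and bounds $p_{\nu,i}$ for $\nu\in{\mathcal P}_{(i)}\cup{\mathcal P}_{(ii)}$), one sees that each term $p_{\nu,i}\nabla\varphi_{\nu,i}$ is ${\mathcal O}(1)$ on $\partial {\mathcal S}_\kappa$: for $\nu\in{\mathcal P}_{(i)}$, $p_{\nu,i}={\mathcal O}(1)$ and $\nabla\varphi_{\nu,i}={\mathcal O}(1)$; for $\nu\in{\mathcal P}_s\setminus\{\kappa\}$, $\nabla\varphi_{\nu,i}={\mathcal O}(\varepsilon_\nu^2)$ while $|p_{\nu,i}|\leq C\varepsilon_\nu^{-1-\delta_{i3}}$, and $\varepsilon_\nu^2\cdot\varepsilon_\nu^{-1-\delta_{i3}} = \varepsilon_\nu^{1-\delta_{i3}}\to 0$, or stays bounded, in any case ${\mathcal O}(1)$.

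\textbf{Step 3: standalone circulation potentials of the other solids.} For $\nu \neq \kappa$, the field $\nabla^\perp \widehat\psi_\nu$ is evaluated on $\partial {\mathcal S}_\kappa$, where $|x-h_\nu|\geq 2\delta$; by \eqref{Eq:BehaviourPsi1} (valid since $|x-h_\nu|\geq 2\delta \geq C\varepsilon_\nu$ for small $\overline{\boldsymbol\varepsilon}$), $\nabla^\perp \widehat\psi_\nu^\varepsilon(x) = H_\nu(x) + {\mathcal O}(\varepsilon_\nu/|x-h_\nu|^2) = {\mathcal O}(1)$ on $\partial {\mathcal S}_\kappa$. Since the $\gamma_\nu$ are fixed constants, $\sum_{\nu\neq\kappa}\gamma_\nu \nabla^\perp\widehat\psi_\nu = {\mathcal O}(1)$ there.

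\textbf{Conclusion and main obstacle.} Adding the estimates from Steps 1--3 via \eqref{Eq:UextUextKappa} yields $\| u^{ext}_\kappa\|_{L^\infty(\partial {\mathcal S}_\kappa)} \leq C$ uniformly in $\overline{\boldsymbol\varepsilon}$ and in $({\bf q},\omega)$ with $(\boldsymbol\varepsilon,{\bf q},\omega)\in\mathfrak{Q}_\delta^{\varepsilon_0}$. The only mildly delicate point — and the ``main obstacle'' such as it is — is the bookkeeping in Step 2 for the small-but-not-$\kappa$ solids: one has to combine the decay of $\nabla\varphi_{\nu,i}$ away from ${\mathcal S}_\nu$ (giving the gain $\varepsilon_\nu^{2+\delta_{i3}}$) with the only-partially-controlled growth of the velocities $p_{\nu,i}$ (which can be as large as $\varepsilon_\nu^{-1-\delta_{i3}}$ for a family-(iii) solid, by Proposition~\ref{Pro:APEE}), and check that the net power of $\varepsilon_\nu$ is nonnegative. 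This is exactly the kind of cancellation that the scaling bookkeeping throughout Section~\ref{Sec:Expansions} has been set up to make transparent, so no genuine difficulty arises; all other terms are bounded by routine application of the already-established estimates.
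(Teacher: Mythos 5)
Your proposal is correct and follows essentially the same route as the paper: decompose $u^{ext}_\kappa$ via \eqref{Eq:UextUextKappa}, bound $u^{ext}$ by Lemma~\ref{Lem:UextGlob}, bound the Kirchhoff contributions of the other solids via the decay \eqref{Eq:ExpKirchhoff1Bis} combined with the energy estimates of Proposition~\ref{Pro:APEE}, and bound the standalone circulation contributions via \eqref{Eq:BehaviourPsi1}. The only cosmetic remark is that in Step~2 you dropped the factor $\varepsilon_\nu^{\delta_{i3}}$ from the Kirchhoff decay when counting powers (the correct net exponent is $\varepsilon_\nu^{2+\delta_{i3}}\cdot\varepsilon_\nu^{-1-\delta_{i3}}=\varepsilon_\nu$, not $\varepsilon_\nu^{1-\delta_{i3}}$), but this looser estimate still suffices, so the conclusion is unaffected.
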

\begin{proof}
Thanks to  Lemma~\ref{Lem:UextGlob}, we only have to estimate the two sums in the right-hand side of \eqref{Eq:UextUextKappa}.
For that purpose, we rely on the fact that that the sums are over $\nu \neq \kappa$.
Concerning the Kirchhoff potential parts we can use $\nabla \varphi_{\nu,i} = {\mathcal O}(\varepsilon_{\nu}^{2+\delta_{i3}})$ on $\partial {\mathcal S}_{\kappa}$
(Proposition~\ref{Pro:ExpKirchhoff}) and the energy estimates (Proposition~\ref{Pro:APEE}) to deduce that this term is bounded.
Concerning the circulation part, due \eqref{Eq:BehaviourPsi1} we have $\nabla^\perp \widehat{\psi}_{\nu} = {\mathcal O}(1)$ on $\partial {\mathcal S}_{\kappa}$
for $\nu \neq \kappa$, which also yields a bounded term. 
\end{proof}
%
%
%

%
%
%
%
%%%%%%%%%%%%%%%%%%%%%%%%%%%%%%%%%%%%%%%%%%%%%%%%%%%%%%%%%%%%
%
%
%
%
%
\subsection{Approximation of the $\kappa$-th exterior field}
The goal of this paragraph is to show how $u^{ext}_\kappa$ can be approximated on $\partial {\mathcal S}_{\kappa}$ by a linear combination of four basic vector fields. For this we introduce the following notations.
Recalling \eqref{def-xi-j}, we denote for each $\kappa \in \{1, \dots,N\}$
\begin{equation} \nonumber %\label{Eq:DefCalK}
{\mathcal K}_{\kappa}={\mathcal K}_{\kappa}({\bf q}):= \mbox{Span\,} \{ \xi_{\kappa,1}, \xi_{\kappa,2}, \xi_{\kappa,3},  \xi_{\kappa,4}, \xi_{\kappa,5} \} 
\ \text{ and } \ 
{\mathcal K}_{\kappa,s}={\mathcal K}_{\kappa,s}({\bf q}):= \mbox{Span\,} \{ \xi_{\kappa,1}, \xi_{\kappa,2}, \xi_{\kappa,4}, \xi_{\kappa,5} \}.
\index{X@Miscellaneous!M2@${\mathcal K}_{\kappa}$: vector space of affine vector fields}
\end{equation}
Note in particular that $\xi_{\kappa,3}$ is excluded from ${\mathcal K}_{\kappa,s}$.
Together with these spaces, we define the linear operator $\mbox{Kir}_{\kappa}$, defined on ${\mathcal K}_{\kappa}$, transforming an affine vector field in the corresponding linear combination of Kirchhoff vector fields; it is defined by
\begin{equation}
\label{alaph}
\mbox{Kir}_{\kappa}(\xi_{\kappa,i}) = \nabla \varphi_{\kappa,i} \ \text{ for all } i =1,2,3,4,5.
\index{X@Miscellaneous!M3@$\mbox{Kir}_{\kappa}$: transformation of affine vector fields in Kirchhoff vector fields}
\end{equation} 
This operator depends implicitly on ${\bf q}$ and $\varepsilon$. (Actually one may notice that ${\mathcal K}_{\kappa}$ and ${\mathcal K}_{s,\kappa}$ do not depend on ${\bf q}$ or $\varepsilon$; but the operators $\mbox{Kir}_{\kappa}$ do.)
Similarly we introduce 
\begin{equation}
\label{Kirchap}
\widehat{\mbox{Kir}}_{\kappa}(\xi_{\kappa,i}) = \nabla \widehat\varphi_{ \kappa,i}  \ \text{ for all } i =1,2,3,4,5.
\index{X@Miscellaneous!M3@$\widehat{\mbox{Kir}}_{\kappa}$: transformation of affine vector fields in standalone Kirchhoff vector fields}
\end{equation} 
It is an direct consequence of Proposition~\ref{Pro:ExpKirchhoff} that
\begin{equation} \label{Eq:DiffKir}
\left|{\mbox{Kir}}_{\kappa}(\xi_{\kappa,i})  - \widehat{\mbox{Kir}}_{\kappa}(\xi_{\kappa,i}) \right| \leq C \varepsilon_\kappa^{2+\delta_{i \geq 3}}
\ \text{ on } \ \partial {\mathcal S}_{\kappa}.
\end{equation}
%
%\ \par
%
%
Let us now describe a vector field $V_{\kappa} \in {\mathcal K}_{\kappa,s}$ that generates our approximation of $u^{ext}_\kappa$.
Having \eqref{Eq:SysUExtKappa} in mind, we first introduce the solution $\widecheck{u}^{k}=\widecheck{u}^{k}({\bf q},{\bf p},\omega,\cdot)$
in $\widecheck{{\mathcal F}}_{\kappa}({\bf q})$ (recall that this domain was introduced in \eqref{Eq:DomaineKappaAugmente})
of the following system: % on $\widecheck{{\mathcal F}}_{\kappa}({\bf q})$
\begin{equation} \label{Eq:SysUcheck}
\left\{ \begin{array}{l}
\div \widecheck{u}_\kappa = 0 \ \text{ in } \ \widecheck{{\mathcal F}}_{\kappa}({\bf q}), \medskip \\ 
\curl \widecheck{u}_\kappa = \omega \ \text{ in } \ \widecheck{{\mathcal F}}_{\kappa}({\bf q}), \medskip \\
\widecheck{u}_\kappa \cdot n = - \gamma_\kappa \nabla^\perp \widehat{\psi}_{\kappa} \cdot n  +  \sum_{\nu \neq \kappa}  \sum_{i=1}^3   p_{\nu,i} \nabla \varphi_{\nu,i} \cdot n
 \ \text{ on } \ \partial \widecheck{{\mathcal F}}_{\kappa}({\bf q}), \medskip \\
\displaystyle \oint_{\partial {\mathcal S}_{\nu}({\bf q})} \widecheck{u}_\kappa \cdot \tau \, ds  = \gamma_{\nu} \ \text{ for } \ \nu \in \{1,\dots,N\} \setminus \{ \kappa \}.
\end{array} \right.
\index{Velocity fields!U5@$\widecheck{u}_\kappa$: approximation of the $\kappa$-th exterior field defined in $\widecheck{\mathcal{F}}_{\kappa}$}
\end{equation}
%
%\ \par
We start with the following lemma which estimates $\widecheck{u}_{\kappa}$ regardless of the fact that it comes from a solution to System \eqref{Eq:Euler}--\eqref{Eq:Newton}. We recall the notation \eqref{Def:NuVoisinage} for ${\mathcal V}_{\delta}(\partial {\mathcal S}_{\kappa})$.
\begin{Lemma} \label{Lem:CheckUKappa}
Given $\delta>0$ there exist constants $\varepsilon_{0}$ and $C>0$ such that as long as $(\boldsymbol\varepsilon,{\bf q},\omega) \in \mathfrak{Q}_\delta^{\varepsilon_{0}}$,
for all $\kappa \in {\mathcal P}_{s}$, all $\mu \in \{1,\dots,N\}$ and $m \in \{1,2,3\}$, one has:
\begin{equation} \nonumber %\label{Eq:EstCheckUKappa}
\| \widecheck{u}_\kappa \|_{L^\infty({\mathcal V}_{\delta}(\partial {\mathcal S}_{\kappa}))} \leq C \left(1 + \| \omega \|_{\infty} + \sum_{\nu \neq \kappa} \varepsilon_{\nu}^2 |\widehat{p}_{\nu}|\right)
\text{ and }
\left\| \frac{\partial \widecheck{u}_\kappa}{\partial q_{\mu,m}} \right\|_{L^\infty({\mathcal V}_{\delta}(\partial {\mathcal S}_{\kappa}))} \leq C \varepsilon_{\mu}^{\delta_{m3}}
\left(1 + \| \omega \|_{\infty} + \sum_{\nu \neq \kappa} \varepsilon_{\nu} |\widehat{p}_{\nu}|\right).
\end{equation}
\end{Lemma}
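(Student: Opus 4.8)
The function $\widecheck{u}_\kappa$ solves the $\div$-$\curl$ system \eqref{Eq:SysUcheck} in the ``phantom'' domain $\widecheck{{\mathcal F}}_{\kappa}({\bf q})$, so the natural strategy is to decompose it into its constituent parts exactly as dictated by the linearity of \eqref{Eq:SysUcheck} and to bound each part near $\partial{\mathcal S}_\kappa$ (which is in the bulk of $\widecheck{{\mathcal F}}_{\kappa}$, at distance at least $\delta$ from its boundary). Concretely, I would write
\begin{equation*}
\widecheck{u}_\kappa = \gamma_{\kappa} \nabla^\perp {\psi}_{\kappa}^{r,\not\kappa}
+ \sum_{\nu \neq \kappa} \sum_{i=1}^3 p_{\nu,i} \, \widecheck{\nabla\varphi}^{\not\kappa}_{\nu,i}
+ \sum_{\nu \neq \kappa} \gamma_{\nu} \nabla^\perp \widehat{\psi}_{\nu}
+ \widecheck{K}_{\kappa}[\omega],
\end{equation*}
where $\psi_{\kappa}^{r,\not\kappa}$ is the reflected circulation stream function of a phantom solid from Section~\ref{Par:Phantom}, the $\widecheck{\varphi}^{\not\kappa}_{\nu,i}$ are the Kirchhoff potentials associated with the solid $\nu\ne\kappa$ in the domain $\widecheck{{\mathcal F}}_{\kappa}$, and $\widecheck{K}_{\kappa}[\omega]$ is the Biot-Savart field in $\widecheck{{\mathcal F}}_{\kappa}$ with zero circulations; one checks easily that the boundary conditions and circulation conditions of \eqref{Eq:SysUcheck} are matched (using that $\nabla^\perp\widehat{\psi}_\kappa\cdot n=0$ on $\partial{\mathcal S}_\kappa$ and the decay of $\widehat\psi_\kappa$ and $\nabla\varphi_{\nu,i}$ at the other boundaries). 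Each term is then controlled on the $\delta$-neighborhood ${\mathcal V}_\delta(\partial{\mathcal S}_\kappa)$ by the estimates already proved: $\|\nabla^\perp\psi_\kappa^{r,\not\kappa}\|_{L^\infty}\le C$ by \eqref{Eq:PhantomBorne}; $\nabla^\perp\widehat\psi_\nu={\mathcal O}(1)$ on ${\mathcal V}_\delta(\partial{\mathcal S}_\kappa)$ for $\nu\ne\kappa$ by \eqref{Eq:BehaviourPsi1} combined with interior regularity; the Kirchhoff terms are ${\mathcal O}(\varepsilon_\nu^{2+\delta_{i3}})$ by (the analogue in $\widecheck{{\mathcal F}}_\kappa$ of) \eqref{Eq:ExpKirchhoff1Bis}, which when multiplied by $p_{\nu,i}$ and using $\varepsilon_\nu^{\delta_{i3}}|p_{\nu,i}|=|\widehat p_{\nu,i}|$ gives ${\mathcal O}(\varepsilon_\nu^2|\widehat p_\nu|)$; and $\|\widecheck{K}_\kappa[\omega]\|_{L^\infty({\mathcal V}_\delta(\partial{\mathcal S}_\kappa))}\le C\|\omega\|_\infty$ by the reasoning of Lemma~\ref{Lem:BiotSavart} (the support of $\omega$ being at distance $\ge 2\delta$ from all solids because $(\boldsymbol\varepsilon,{\bf q},\omega)\in\mathfrak{Q}_\delta^{\varepsilon_0}$). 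Summing yields the first estimate.

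\textbf{Shape derivative.} For the second estimate I would differentiate the same decomposition with respect to $q_{\mu,m}$, term by term, using the shape-derivative bounds from Section~\ref{Sec:Expansions}. The delicate point is that $\widecheck{{\mathcal F}}_\kappa({\bf q})$ depends on $q_\mu$ only for $\mu\ne\kappa$, so for $\mu\ne\kappa$ one applies the phantom-solid shape-derivative estimate \eqref{Eq:DPhantomBorne} for $\nabla^\perp\partial_{q_{\mu,m}}\psi_\kappa^{r,\not\kappa}$ (which on ${\mathcal V}_\delta(\partial{\mathcal S}_\kappa)$, being away from $\partial{\mathcal S}_\mu$ when $\mu\ne\kappa$, is $O(\varepsilon_\mu^{\delta_{m3}})$), the analogues of Propositions~\ref{Pro:ExpShapeDerivatives} and \ref{Pro:DirichletGros} for $\partial_{q_{\mu,m}}\widecheck\varphi^{\not\kappa}_{\nu,i}$, the standalone formula \eqref{Eq:pqpsi} for $\partial_{q_{\mu,m}}\widehat\psi_\nu$, and Lemma~\ref{Lem:DBSBorne} (adapted to $\widecheck{{\mathcal F}}_\kappa$) for $\partial_{q_{\mu,m}}\widecheck{K}_\kappa[\omega]$; multiplying the velocity factors $p_{\nu,i}$ by $\varepsilon_\nu^{\delta_{i3}}$ converts them into $\widehat p_{\nu,i}$ and produces the claimed weight $\varepsilon_\mu^{\delta_{m3}}(1+\|\omega\|_\infty+\sum_{\nu\ne\kappa}\varepsilon_\nu|\widehat p_\nu|)$ — note the power of $\varepsilon_\nu$ drops from $2$ to $1$ because one $\varepsilon_\nu$ is ``lost'' when the shape derivative hits a Kirchhoff potential (cf. \eqref{Eq:EstDerivKirchhoff2}). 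For $\mu=\kappa$, $\partial_{q_\kappa}$ acts only on $\widehat\psi_\kappa$ and on the data of $\psi_\kappa^{r,\not\kappa}$ (a plain parameter derivative, not a shape derivative), and the same bounds apply, again giving $O(\varepsilon_\kappa^{\delta_{m3}})$ on ${\mathcal V}_\delta(\partial{\mathcal S}_\kappa)$ by \eqref{Eq:BehaviourPsi1}, \eqref{Eq:VitRotHarm} and \eqref{Eq:DPhantomBorne}.

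\textbf{Main obstacle.} The genuinely new bookkeeping is ensuring that, on ${\mathcal V}_\delta(\partial{\mathcal S}_\kappa)$, every term that is ``generated'' by another solid $\nu\ne\kappa$ or by the outer boundary is evaluated \emph{away} from its own source, so that interior elliptic estimates upgrade the crude $L^\infty$ or decay bounds to the needed $C^k$ control and, crucially, that the small parameters $\varepsilon_\nu$ accumulate with the right exponents (an extra $\varepsilon_\nu^2$ from the decay of the Kirchhoff fields, one fewer power after a shape derivative). A second point requiring care is that the potentials $\widecheck\varphi^{\not\kappa}_{\nu,i}$ live in $\widecheck{{\mathcal F}}_\kappa$, not in ${\mathcal F}$ or $\widecheck{{\mathcal F}}$, so one must invoke the reflection-method machinery of Section~\ref{Subsec:CPPSM} applied to the domain $\widecheck{{\mathcal F}}_\kappa$ (which has $N-1$ shrinking-or-fixed solids plus $\partial\Omega$, with the uniform separation $\delta$ preserved); this is a routine re-reading of Propositions~\ref{Pro:DirichletPetits}--\ref{Pro:DirichletGros} and their corollaries in a domain where ${\mathcal S}_\kappa$ has been ``promoted'' to fluid, exactly as was already done for $\psi_\kappa^{r,\not\kappa}$ in the proof of Lemma~\ref{Lem:Phantom}, so no genuinely new estimate is needed — only the observation that all the cited bounds are uniform over the class of admissible domains, hence in particular uniform in $\kappa$.
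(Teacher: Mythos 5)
Your plan is essentially the same as the paper's: decompose $\widecheck{u}_\kappa$ into circulation, Kirchhoff, Biot--Savart and phantom-reflection parts in $\widecheck{{\mathcal F}}_\kappa$ and bound each on ${\mathcal V}_\delta(\partial{\mathcal S}_\kappa)$ using the Section~3 estimates. But your explicit decomposition is wrong, and the error is in a step you flag as ``easy to check'': you claim the boundary conditions of \eqref{Eq:SysUcheck} are matched, and they are not. The standalone circulation field $\nabla^\perp\widehat\psi_\nu$ for $\nu\ne\kappa$ has \emph{nonzero} normal trace on $\partial\Omega$ and on $\partial{\mathcal S}_\lambda$ for $\lambda\notin\{\nu,\kappa\}$ — the function $\widehat\psi_\nu$ is constant only on $\partial{\mathcal S}_\nu$ and merely decays (but is not constant) on the remaining boundary components of $\widecheck{{\mathcal F}}_\kappa$. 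So the vector field you write down differs from $\widecheck{u}_\kappa$ by a genuine correction: the reflected part $\sum_{\nu\ne\kappa}\gamma_\nu\nabla^\perp\psi_\nu^{\not\kappa,r}$, where $\psi_\nu^{\not\kappa}=\widehat\psi_\nu+\psi_\nu^{\not\kappa,r}$ is the circulation stream function of $\widecheck{{\mathcal F}}_\kappa$. (The circulation conditions, by contrast, are indeed matched by your expression, since $\widehat\psi_\nu$ is harmonic across the other solids.)

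The gap is fixable without new ideas: replace $\widehat\psi_\nu$ by $\psi_\nu^{\not\kappa}$ in the decomposition — this is exactly what the paper does in \eqref{Eq:DecompCheckUKappa} — and estimate the extra terms $\nabla^\perp\psi_\nu^{\not\kappa,r}$ and $\partial_{q_{\mu,m}}\nabla^\perp\psi_\nu^{\not\kappa,r}$ by the analogues of Lemmas~\ref{Lem:WBorne} and~\ref{Lem:DWBorne} in the domain $\widecheck{{\mathcal F}}_\kappa$, using that ${\mathcal V}_\delta(\partial{\mathcal S}_\kappa)$ is at distance ${\mathcal O}(1)$ from $\partial\widecheck{{\mathcal F}}_\kappa$ (so these contributions are ${\mathcal O}(1)$ and ${\mathcal O}(\varepsilon_\mu^{\delta_{m3}})$ respectively, which is within the budget of the target inequalities). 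The rest of your bookkeeping — the $\varepsilon_\nu^{2+\delta_{i3}}$ decay of the Kirchhoff fields, the conversion $\varepsilon_\nu^{\delta_{i3}}p_{\nu,i}=\widehat p_{\nu,i}$, the loss of one power of $\varepsilon_\nu$ under the shape derivative, the treatment of the $\mu=\kappa$ case as a parameter derivative acting only through the data of $\psi_\kappa^{r,\not\kappa}$ — is correct and in line with the paper.
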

\begin{proof}[Proof of Lemma~\ref{Lem:CheckUKappa}]
The proof is roughly the same as for Lemma~\ref{Lem:UextGlob} with the exception that we consider functions of $({\bf q},x)$ rather than $(t,x)$ and that the domain is no longer ${\mathcal F}({\bf q})$ but $\widecheck{{\mathcal F}}_{\kappa}({\bf q})$. This latter difference actually simplifies the proof because it avoids the singularity in the neighborhood of ${\mathcal S}_{\kappa}$. 
We call $\varphi_{\nu,i}^{\not \kappa}$ the various Kirchhoff potentials in $\widecheck{{\mathcal F}}_{\kappa}({\bf q})$, $\nu \in \{1,\dots,N\} \setminus \{\kappa \}$, $i \in \{1,2,3\}$, 
$K^{\not \kappa}$ the Biot-Savart operator in $\widecheck{{\mathcal F}}_{\kappa}({\bf q})$, and 
$\psi_{\nu}^{\not \kappa}$, for  $\nu \in \{1,\dots,N\} \setminus \{\kappa \}$, the various circulation stream functions in $\widecheck{{\mathcal F}}_{\kappa}({\bf q})$. %$\nu \in \{1,\dots,N\} \setminus \{\kappa \}$.
We recall that for $\nu=\kappa$, $\psi^{r,\not \kappa}_{\kappa}$ was defined in \eqref{Eq:PsirefKappa}.
Correspondingly we see from \eqref{Eq:SysUcheck} and \eqref{Eq:PsirefKappa} that $\widecheck{u}_\kappa$ can be decomposed as follows:
\begin{equation} \label{Eq:DecompCheckUKappa}
\displaystyle \widecheck{u}_\kappa = \sum_{\nu \neq \kappa} p_{\nu} \nabla \varphi_{\nu}^{\not \kappa} +  \sum_{\nu \neq \kappa} \gamma_{\nu} \nabla^\perp \psi_{\nu}^{\not \kappa} + K^{\not \kappa}[\omega] + \gamma_{\kappa} \nabla^\perp \psi^{r,\not \kappa}_{\kappa}
\ \text{ in } \ \widecheck{{\mathcal F}}_{\kappa}({\bf q}).
\end{equation}
We observe that the statements of Section~\ref{Sec:Expansions} that were written in a general fluid domain ${\mathcal F}$ are valid in particular in the domain $\widecheck{{\mathcal F}}_{\kappa}({\bf q})$. This has the following consequences:
\begin{itemize}
\item[--] The estimates of Propositions~\ref{Pro:ExpKirchhoff} and \ref{Pro:ExpShapeDerivatives} are valid for the Kirchhoff potentials $\varphi_{\nu}^{\not \kappa}$,
\item[--] Decomposing the circulation stream functions $\psi_{\nu}^{\not \kappa}$, for $\nu \in \{1,\dots,N\} \setminus \{\kappa \}$, as in \eqref{Eq:DefPsiKappar} by introducing the potential $\psi_{\nu}^{\not \kappa,r}$ so that
\begin{equation} \label{Eq:DecompPsiNonKappa}
\psi_{\nu}^{\not \kappa}= \widehat{\psi}_{\nu} + \psi_{\nu}^{\not \kappa,r}
\ \text{ in } \ \widecheck{{\mathcal F}}_{\kappa}({\bf q}),
\end{equation}
the function $\psi_{\nu}^{\not \kappa,r}$ satisfies the estimates of Lemmas \ref{Lem:WBorne} and \ref{Lem:DWBorne},
%since this is again the same reasoning in $\widecheck{{\mathcal F}}_{\kappa}({\bf q})$ rather than in ${\mathcal F}({\bf q})$,
\item[--] The estimates of Lemmas~\ref{Lem:BiotSavart} and \ref{Lem:DBSBorne} are valid for the Biot-Savart operator $K^{\not \kappa}$ in $\widecheck{{\mathcal F}}_{\kappa}({\bf q})$.
\end{itemize}
Finally we recall that the particular term $\nabla^\perp \psi^{r,\not \kappa}_{\kappa}$ was studied in Lemma~\ref{Lem:Phantom}. \par
Now we proceed as in Lemma~\ref{Lem:UextGlob}. Concerning the bound on $\| \widecheck{u}_\kappa \|_{L^\infty({\mathcal V}_{\delta}(\partial {\mathcal S}_{\kappa}))}$, we treat the various terms in the right-hand side of \eqref{Eq:DecompCheckUKappa} as follows:
\begin{itemize}
\item the terms $p_{\nu} \nabla \varphi_{\nu}^{\not \kappa}$ are of order $\varepsilon_{\nu}^2 \widehat{p}_{\nu}$ in ${\mathcal V}_{\delta}(\partial {\mathcal S}_{\kappa})$ by Proposition~\ref{Pro:ExpKirchhoff},
\item the terms $\nabla^\perp \psi_{\nu}^{\not \kappa}$ are bounded thanks to Lemma~\ref{Lem:WBorne} and the fact that ${\mathcal V}_{\delta}(\partial {\mathcal S}_{\kappa})$ is a distance ${\mathcal O}(1)$ from ${\mathcal S}_{\nu}$,
\item the term $K^{\not \kappa}[\omega]$ is bounded thanks to Lemma~\ref{Lem:BiotSavart},
\item the term $\nabla^\perp \psi^{r,\not \kappa}_{\kappa}$ is bounded thanks to Lemma \ref{Lem:Phantom}.
\end{itemize}
Concerning the  bound on the shape derivative $\partial_{q_{\mu,m}} \widecheck{u}_\kappa$, we proceed as follows, for $\mu \neq \kappa$: 
\begin{itemize}
\item the terms $p_{\nu} \nabla \partial_{q_{\mu,m}} \varphi_{\nu}^{\not \kappa}$ are estimated in ${\mathcal V}_{\delta}(\partial {\mathcal S}_{\kappa})$ by \eqref{Eq:EstDerivKirchhoff2} in Proposition~\ref{Pro:ExpShapeDerivatives},
\item for the terms $\nabla^\perp \partial_{q_{\mu,m}} \psi_{\nu}^{\not \kappa}$, $\nu \neq \kappa$, we use the decomposition \eqref{Eq:DecompPsiNonKappa}. For $\partial_{q_{\mu,m}} \nabla^\perp \widehat{\psi}_{\nu}$ (which vanishes unless $\mu=\nu$), we use \eqref{Eq:pqpsi}, \eqref{Eq:BehaviourPsi1}, \eqref{Eq:BehaviourPsi2} and the fact that ${\mathcal V}_{\delta}(\partial {\mathcal S}_{\kappa})$ is a distance ${\mathcal O}(1)$ from ${\mathcal S}_{\nu}$. For $\partial_{q_{\mu,m}} \nabla^\perp \psi_{\nu}^{\not \kappa,r}$ we use Lemma~\ref{Lem:DWBorne} (that is valid in $\widecheck{\mathcal{F}}_\kappa$) and again the fact that ${\mathcal V}_{\delta}(\partial {\mathcal S}_{\kappa})$ is a distance ${\mathcal O}(1)$ from $\partial \widecheck{{\mathcal F}}_{\kappa}$, 
\item the term $\partial_{q_{\mu,m}} K^{\not \kappa}[\omega]$ is estimated thanks to Lemma~\ref{Lem:DBSBorne}, using again the fact that ${\mathcal V}_{\delta}(\partial {\mathcal S}_{\kappa})$ is a distance ${\mathcal O}(1)$ from $\partial \widecheck{{\mathcal F}}_{\kappa}$, 
\item the term $\partial_{q_{\mu,m}} \nabla^\perp \psi^{r,\not \kappa}_{\kappa}$ is bounded by $C \varepsilon_{\mu}^{\delta_{m3}}$ in ${\mathcal V}_{\delta}(\partial {\mathcal S}_{\kappa})$ thanks to Lemma \ref{Lem:Phantom}.
\end{itemize}
Finally, when $\mu=\kappa$, only the last term in \eqref{Eq:DecompCheckUKappa} actually depends on $q_{\kappa}$. 
This dependence ---despite the fact that $\widecheck{u}_\kappa$ is defined in $\widecheck{{\mathcal F}}_{\kappa}$ is due to the boundary conditions in \eqref{Eq:PsirefKappa}.
The derivative of this term with respect to $q_{\kappa,m}$ is again estimated by $C \varepsilon_{\mu}^{\delta_{m3}}$ in ${\mathcal V}_{\delta}(\partial {\mathcal S}_{\kappa})$ thanks to Lemma \ref{Lem:Phantom}.

This concludes the proof of Lemma~\ref{Lem:CheckUKappa}.
\end{proof}
%
%
%Once $\widecheck{u}_\kappa$ is introduced, the approximation $V_{\kappa} \in {\mathcal K}_{\kappa,s}$ is determined as follows.
We remark that outside of the support of $\omega$, $\nabla \widecheck{u}_\kappa$ is a traceless $2 \times 2$ symmetric matrix; hence it is of the form
\begin{equation*}
\begin{pmatrix}
-a & b \\ b & a
\end{pmatrix}.
\end{equation*}
When $(\boldsymbol\varepsilon,{\bf q},\omega) \in \mathfrak{Q}_\delta$, $h_\kappa$ is outside of the support of $\omega$ for each $\kappa \in {\mathcal P}_{s}$; consequently we can set $(V_{\kappa,j})_{j=1,2,4,5}$ as follows
\begin{equation} \label{Eq:DefV}
\begin{pmatrix} V_{\kappa,1} \\ V_{\kappa,2} \end{pmatrix} := \widecheck{u}_{\kappa}(h_{\kappa})
\ \text{ and } \ 
\begin{pmatrix} -V_{\kappa,4} & V_{\kappa,5} \\ V_{\kappa,5} & V_{\kappa,4} \end{pmatrix}  := \nabla_{x} \widecheck{u}_{\kappa}(h_{\kappa}),
\end{equation}
where to lighten the notation we omitted the dependence on ${\bf q}, {\bf p}$, and $\omega$. Correspondingly we set
\begin{gather} \label{Eq:DecompV}
V_{\kappa} := \sum_{i \in \{1,2,4,5\}} V_{\kappa,i} \, \xi_{\kappa,i} = V_{\kappa} = \widecheck{u}_\kappa({\bf q},h_{\kappa}) + (x-h_{\kappa}) \cdot \nabla_{x} \widecheck{u}_\kappa({\bf q},h_{\kappa}) .
\end{gather}
%
%Notice that the vector field $V_{\kappa}$ defined in \eqref{Eq:DecompV} with the choice \eqref{Eq:DefV} corresponds to
%
%\begin{equation*} V_{\kappa} = \widecheck{u}_\kappa({\bf q},h_{\kappa}) + (x-h_{\kappa}) \cdot \nabla_{x} \widecheck{u}_\kappa({\bf q},h_{\kappa}) .\end{equation*}
%
%
%
We are now in position to state our approximation result.
\begin{Proposition} \label{Pro:ExistsApprox}
Let $\delta>0$. There exists $\varepsilon_{0}>0$ such that for each $\kappa \in {\mathcal P}_{s}$ and for $\overline{\boldsymbol\varepsilon} < \varepsilon_{0}$, the following holds. Consider the vector field $u^{ext}_\kappa$ introduced in the decomposition \eqref{Eq:DecompKappa} of the solution $u^\varepsilon$ of System \eqref{Eq:Euler}--\eqref{Eq:Newton} and $V_{\kappa}$ defined in \eqref{Eq:DecompV}. Then $V_{\kappa}$ belongs to $C^1([0,T]; {\mathcal K}_{\kappa,s})$ 
and there exists a family (parameterized by $\boldsymbol\varepsilon$) of functions  $u^{r}_{\kappa}$ in $C^1([0,T]; C^\infty(\overline{{\mathcal F}({\bf q}^{\varepsilon})}))$
such that, as long as $(\varepsilon,{\bf q},\omega) \in {\mathfrak{Q}}_{\delta}$,
\begin{equation} \label{Eq:AssumptionApproximation}
u^{ext}_\kappa = (\mbox{Id} - \mbox{Kir}_{\kappa}) V_{\kappa} + \varepsilon_{\kappa}^2 u^{r}_{\kappa} \ \text{ in } {\mathcal F},
\index{W@Modulations!M1@$V_{\kappa}$: linear approximation of the $\kappa$-th exterior field~$u^{ext}_\kappa$, generating modulations}
\end{equation}
and the following estimates are satisfied for some $C>0$ independent of $\boldsymbol\varepsilon$:
\begin{gather} \label{Eq:EstApprox1}
\| V_{\kappa}\|_{ C^0([0,T]) } +  \|  u^{r}_{\kappa} \|_{ C^0([0,T] ; C^0( \partial{\mathcal S}_{\kappa})) } \leq C,  \\
%+ \varepsilon_{\kappa}^{k+\frac{1}{2}} |  u^{r}_{\kappa} |_{ C^0([0,T]; C^{k,\frac{1}{2}}  ( \partial{\mathcal S}_{\kappa})) }  \ \text{ for } \ k=0,1,2,
\label{Eq:EstApprox2B}
\| V_{\kappa}' \|_{ C^0([0,T]) }  + \varepsilon_{\kappa} \| \partial_{t} u^{r}_{\kappa} \|_{C^0([0,T]; C^0( \partial{\mathcal S}_{\kappa}))}
\leq C \left(1 + |\widehat{\mathbf{p}}^{\varepsilon}(t)| \right).
\end{gather}
\end{Proposition}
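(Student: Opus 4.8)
The idea is to compare $u^{ext}_\kappa$ with the auxiliary field $\widecheck{u}_\kappa$ defined on the larger domain $\widecheck{\mathcal F}_\kappa$ by \eqref{Eq:SysUcheck}, and then to compare $\widecheck{u}_\kappa$ near $\partial\mathcal S_\kappa$ with its first-order Taylor expansion $V_\kappa$ at $h_\kappa$. First I would note that, subtracting \eqref{Eq:SysUExtKappa} from \eqref{Eq:SysUcheck} (extended by an arbitrary harmonic field inside $\mathcal S_\kappa$ in the second case), the difference $u^{ext}_\kappa - \widecheck{u}_\kappa$ is a harmonic field in $\mathcal F$ whose normal trace on $\partial\mathcal S_\kappa$ equals $-\widecheck{u}_\kappa\cdot n$ and which vanishes on the other components of $\partial\mathcal F$; hence it is of the type handled by Proposition~\ref{Pro:DirichletPetits} (via Lemma~\ref{Lem:Neumann}), with boundary data controlled by $\|\widecheck{u}_\kappa\|_{L^\infty(\partial\mathcal S_\kappa)}$, which Lemma~\ref{Lem:CheckUKappa} (together with the energy estimate of Proposition~\ref{Pro:APEE}, giving $\varepsilon_\nu^2|\widehat p_\nu|\leq C$) bounds by a constant. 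This yields $u^{ext}_\kappa = \widecheck{u}_\kappa + \mathcal O(\varepsilon_\kappa)$ near $\partial\mathcal S_\kappa$, but to get the sharper $\varepsilon_\kappa^2$ I would instead extract the leading $\varepsilon_\kappa$-order part of $\widecheck{u}_\kappa\cdot n$ on $\partial\mathcal S_\kappa$ explicitly: since $\widecheck{u}_\kappa$ is smooth near $h_\kappa$, write $\widecheck{u}_\kappa = V_\kappa + \mathcal O(\varepsilon_\kappa^2)$ on $\partial\mathcal S_\kappa$ with $V_\kappa\in\mathcal K_{\kappa,s}$ as in \eqref{Eq:DefV}--\eqref{Eq:DecompV}, so that $u^{ext}_\kappa\cdot n = V_\kappa\cdot n + \mathcal O(\varepsilon_\kappa^2)$ on $\partial\mathcal S_\kappa$ and $u^{ext}_\kappa\cdot n = 0 + \mathcal O(\varepsilon_\kappa^2)$ elsewhere (since $\widecheck{u}_\kappa$ agrees with the $\widehat\psi_\kappa$- and $\varphi_{\nu,i}$-contributions there and those are small away from $\mathcal S_\kappa$, by Propositions~\ref{Pro:ExpKirchhoff} and the decay of $\nabla^\perp\widehat\psi_\kappa$).

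Next, the key algebraic point: by definition of $\mathrm{Kir}_\kappa$, the field $(\mathrm{Id}-\mathrm{Kir}_\kappa)V_\kappa$ has zero normal trace on $\partial\mathcal S_\kappa$ — indeed $\mathrm{Kir}_\kappa(V_\kappa)=\sum V_{\kappa,i}\nabla\varphi_{\kappa,i}$ has normal trace $\sum V_{\kappa,i}K_{\kappa,i} = V_\kappa\cdot n$ on $\partial\mathcal S_\kappa$ by \eqref{Kir2}, and $\mathrm{Kir}_\kappa(V_\kappa)$ has zero normal trace on the rest of $\partial\mathcal F$, while $\nabla\varphi_{\kappa,i}$ is divergence- and curl-free. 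Hence $u^{ext}_\kappa - (\mathrm{Id}-\mathrm{Kir}_\kappa)V_\kappa$ is div-free, has curl $\omega^\varepsilon$ in $\mathcal F$ (since $V_\kappa$ is affine and $\mathrm{Kir}_\kappa V_\kappa$ is a gradient), has the right circulations (the circulation of $\nabla\varphi_{\kappa,i}$ around each solid is zero, so the circulations of $u^{ext}_\kappa$ are preserved; one must also check $V_\kappa$ contributes no circulation, which holds since affine fields restricted to the Kirchhoff correction carry none — this is where excluding $\xi_{\kappa,3}$ and using $\mathcal K_{\kappa,s}$ matters, because $\widecheck{u}_\kappa$ has no rotational part at $h_\kappa$ by the traceless-symmetric structure of $\nabla\widecheck{u}_\kappa$). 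Its normal trace is $\mathcal O(\varepsilon_\kappa^2)$ on all of $\partial\mathcal F$, in $C^{k,1/2}$ with the conventions of \eqref{Eq:NormesHolder}. So I would define $\varepsilon_\kappa^2 u^r_\kappa$ to be exactly this difference, and then invoke the div/curl estimates — i.e. Lemma~\ref{Lem:BiotSavart} for the vorticity part (which by hypothesis is supported away from all solids, so contributes something that on $\partial\mathcal S_\kappa$ is already accounted for and cancels between $u^{ext}_\kappa$ and $\widecheck{u}_\kappa$) plus Proposition~\ref{Pro:DirichletPetits}/\ref{Pro:DirichletGros} for the harmonic correction coming from the $\mathcal O(\varepsilon_\kappa^2)$ normal trace — to conclude $\|u^r_\kappa\|_{C^0(\partial\mathcal S_\kappa)}\leq C$, which is \eqref{Eq:EstApprox1} together with the bound on $V_\kappa$ from \eqref{Eq:DefV} and Lemma~\ref{Lem:CheckUKappa}.

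For the time-derivative estimates \eqref{Eq:EstApprox2B}, I would differentiate \eqref{Eq:DefV} in $t$ using the chain rule $\partial_t = \sum_{\mu,m}p_{\mu,m}\partial_{q_{\mu,m}} + (\text{explicit }\partial_t\text{ through }\omega)$; the shape-derivative bounds of Lemma~\ref{Lem:CheckUKappa} give $\|\partial_{q_{\mu,m}}\widecheck{u}_\kappa\|_{L^\infty(\mathcal V_\delta(\partial\mathcal S_\kappa))}\lesssim\varepsilon_\mu^{\delta_{m3}}(1+\sum\varepsilon_\nu|\widehat p_\nu|)$, and multiplying by $|p_{\mu,m}| = \varepsilon_\mu^{-\delta_{m3}}|\widehat p_{\mu,m}|$ produces exactly $\mathcal O(1+|\widehat{\mathbf p}^\varepsilon|)$ (using the rough acceleration estimate Proposition~\ref{Pro:Acceleration} is \emph{not} needed here since only $\widehat p$, not $\widehat p'$, enters $\partial_t V_\kappa$; one only needs that $\widecheck{u}_\kappa$ depends on $(\mathbf q,\mathbf p,\omega)$ linearly in $\mathbf p$ so its $\mathbf p$-derivative is harmless, and the $\omega$-transport term $\partial_t\omega = -\mathrm{div}(u^\varepsilon\omega)$ is handled as in Lemma~\ref{Lem:UextGlob} via \eqref{Num41}--\eqref{Eq:Num42}). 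Then $\partial_t u^r_\kappa$ is bounded similarly, picking up the extra $\varepsilon_\kappa^{-1}$ from differentiating the $\widehat\psi_\kappa$-type boundary contributions and shape-differentiating the harmonic correction near the small solid (Lemma~\ref{Lem:DWBorne}, Lemma~\ref{Lem:DBSBorne}, Lemma~\ref{Lem:Phantom}), which is why \eqref{Eq:EstApprox2B} carries a factor $\varepsilon_\kappa$ in front of $\|\partial_t u^r_\kappa\|$. The $C^1$-in-time regularity of $V_\kappa$ and $u^r_\kappa$ follows from the regularity of the solution $(q^\varepsilon,u^\varepsilon)\in C^2\times C^0(W^{1,q})$ in Theorem~\ref{Th:Yudovich} and the smooth dependence of the potentials on $\mathbf q$. \textbf{The main obstacle} I anticipate is bookkeeping the exact power of $\varepsilon_\kappa$ in the remainder: one must check that the first-order Taylor expansion of $\widecheck{u}_\kappa$ at $h_\kappa$ really leaves an $\mathcal O(\varepsilon_\kappa^2)$ (not just $\mathcal O(\varepsilon_\kappa)$) normal trace on $\partial\mathcal S_\kappa$ — this needs a genuine $C^2$ bound on $\widecheck{u}_\kappa$ near $h_\kappa$ uniform in $\varepsilon$, which is available from interior elliptic estimates applied to \eqref{Eq:SysUcheck} since $h_\kappa$ sits at distance $\mathcal O(1)$ from $\partial\widecheck{\mathcal F}_\kappa$ and from $\mathrm{Supp}(\omega)$ — and then that the div/curl corrector inherits this order through Proposition~\ref{Pro:DirichletPetits}, whose right-hand side scales like $\varepsilon_\kappa\|\alpha^\varepsilon\|_{L^\infty}$ with $\|\alpha^\varepsilon\|_{L^\infty}=\mathcal O(\varepsilon_\kappa)$ here, giving the needed $\varepsilon_\kappa^2$.
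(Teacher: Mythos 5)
Your overall scaffolding is the right one (compare $u^{ext}_\kappa$ with $\widecheck{u}_\kappa$ first, then Taylor-expand $\widecheck{u}_\kappa$ near $h_\kappa$), but there are two genuine errors.

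The central gap is the claim that the normal trace of $u^{ext}_\kappa-(\mathrm{Id}-\mbox{Kir}_\kappa)V_\kappa$ is $\mathcal O(\varepsilon_\kappa^2)$ on \emph{all} of $\partial\mathcal F$. This is false on $\partial\Omega$ and on $\partial\mathcal S_\nu$ for $\nu\neq\kappa$. There $\mbox{Kir}_\kappa V_\kappa\cdot n=0$, so the normal trace of the difference is $u^{ext}_\kappa\cdot n-V_\kappa\cdot n$, and since $u^{ext}_\kappa\cdot n=\widecheck{u}_\kappa\cdot n$ on $\partial\widecheck{\mathcal F}_\kappa$, this equals $R_\kappa\cdot n$ with $R_\kappa=\widecheck{u}_\kappa-V_\kappa$ the Taylor remainder. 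At distance $\mathcal O(1)$ from $h_\kappa$ the remainder $R_\kappa$ is $\mathcal O(1)$, not $\mathcal O(\varepsilon_\kappa^2)$ — the affine field $V_\kappa$ has no reason to track $\widecheck{u}_\kappa$ far away. A direct div/curl estimate fed with $\mathcal O(1)$ far-boundary data cannot produce an $\mathcal O(\varepsilon_\kappa^2)$ bound near $\partial\mathcal S_\kappa$. The correct mechanism, which your first paragraph hints at but then you abandon, is that $u^{ext}_\kappa-\widecheck{u}_\kappa=-\nabla\mathfrak{f}^{\mathcal N}_\kappa[\widecheck{u}_{\kappa|\partial\mathcal S_\kappa}\cdot n]$ is a Neumann potential whose data is supported on $\partial\mathcal S_\kappa$ \emph{only} (zero flux on all other components); one then splits the data as $V_\kappa\cdot n+R_\kappa\cdot n$ on $\partial\mathcal S_\kappa$ alone, notes $\nabla\mathfrak{f}^{\mathcal N}_\kappa[V_\kappa\cdot n]=\mbox{Kir}_\kappa V_\kappa$, and controls $\nabla\mathfrak{f}^{\mathcal N}_\kappa[R_\kappa\cdot n]$ using the $\mathcal O(\varepsilon_\kappa^2)$ bound for $R_\kappa$ near $\partial\mathcal S_\kappa$ (Propositions~\ref{Pro:DirichletSAM}/\ref{Pro:DirichletPetits} via Lemma~\ref{Lem:Neumann}). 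The far-field behavior of $R_\kappa$ is never invoked. Your own identity $\varepsilon_\kappa^2 u^r_\kappa=R_\kappa-\nabla\mathfrak{f}^{\mathcal N}_\kappa[R_\kappa\cdot n]$ does become $\mathcal O(\varepsilon_\kappa^{-2})$ away from $\mathcal S_\kappa$, but \eqref{Eq:EstApprox1} only requires a bound on $\partial\mathcal S_\kappa$, so that is harmless.

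Second, your remark that Proposition~\ref{Pro:Acceleration} is \emph{not} needed to bound $V_\kappa'$ is incorrect. Since $\widecheck{u}_\kappa$ depends linearly on $\mathbf p$ via the boundary data $\sum_{\nu\neq\kappa}\sum_i p_{\nu,i}\nabla\varphi_{\nu,i}\cdot n$, the chain-rule expansion of $\partial_t\widecheck{\mathfrak u}_\kappa$ contains a term $\sum_{\nu\neq\kappa}p_\nu'\nabla\varphi_\nu^{\not\kappa}$. Bounding this near $h_\kappa$ requires pairing the decay $\nabla\varphi_\nu^{\not\kappa}=\mathcal O(\varepsilon_\nu^{2+\delta_{i3}})$ with the acceleration estimate $|\varepsilon_\nu^{2\delta_{\nu\in\mathcal P_{(iii)}}}\widehat p_\nu'|\lesssim 1+|\widehat{\mathbf p}|$ from Proposition~\ref{Pro:Acceleration}; without it, $p_\nu'$ for $\nu\in\mathcal P_{(iii)}$ is uncontrolled.
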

\begin{proof}[Proof of Proposition~\ref{Pro:ExistsApprox}]
We proceed in four steps. \par
\ \par
\noindent
{\bf Step 1.} 
We start with the estimates on $V_{\kappa}$. We denote
\begin{equation*}
\widecheck{\mathfrak{u}}_{\kappa}(t,x):= \widecheck{u}_{\kappa}({\bf q}^\varepsilon(t),{\bf p}^\varepsilon(t),\omega(t),x)  .
\end{equation*}
An estimate of $\widecheck{\mathfrak{u}}_{\kappa}$ in $L^\infty({\mathcal V}_{\delta}(\partial {\mathcal S}_{\kappa}))$
%(where ${\mathcal V}_{\delta}(\partial {\mathcal S}_{\kappa})$ is the $\delta$-neighborhood of $\partial {\mathcal S}_{\kappa}$) 
is obtained directly from Lemma~\ref{Lem:CheckUKappa} and energy estimates.
By the support of $\omega$, this yields the higher-order estimate
\begin{equation} \label{Eq:estchecku}
\| \widecheck{\mathfrak{u}}_{\kappa} \|_{C^{k,\frac{1}{2}}({\mathcal V}_{3 \delta/4 }(\partial {\mathcal S}_{\kappa}))} \leq C.
\end{equation}
Concerning the time-derivative of $\widecheck{\mathfrak{u}}_{\kappa}$, from \eqref{Eq:DecompCheckUKappa}, we have
\begin{equation*}
\partial_{t} \widecheck{\mathfrak{u}}_{\kappa}(t,x) = 
\sum_{\nu \neq \kappa} p'_{\nu} \nabla \varphi_{\nu}^{\not \kappa} + K^{\not \kappa}[\partial_{t}\omega] + 
\sum_{\substack{{\mu \in \{1,\dots,N\}} \\ {m \in\{1,2,3\}}}} p_{\mu,m} \frac{\partial \widecheck{u}_{\kappa}}{\partial q_{\mu,m}}.
\end{equation*}
To estimate the first term, we use the acceleration estimates \eqref{Eq:AccelerationEst}: since the contribution of $p'_{\nu}$ is through $p'_{\nu} \nabla {\varphi}^{\not \kappa}_{\nu}$,
due to \eqref{Eq:ExpKirchhoff1Bis}, it is of order ${\mathcal O}(\varepsilon_{\nu}^2 \widehat{p}\, '_{\nu})$ in ${\mathcal V}_{\delta}(\partial {\mathcal S}_{\kappa})$ and consequently bounded.
The term $K^{\not \kappa}[\partial_{t} \omega]$ is shown to be bounded in $L^p({\mathcal F})$ exactly as in \eqref{Eq:RelK} and \eqref{Eq:Num42}.
Due to the support of $\omega$, it is hence bounded in $L^\infty({\mathcal V}_{3\delta/4}(\partial {\mathcal S}_{\kappa}))$.
Finally, the last term is of order ${\mathcal O}(\widehat{p}_{\nu})$ thanks to Lemma~\ref{Lem:CheckUKappa} and energy estimates \eqref{Eq:APEEnergy}.
This proves that $\partial_{t}\widecheck{\mathfrak{u}}_{\kappa}$ is bounded in ${\mathcal V}_{3\delta/4}(\partial {\mathcal S}_{\kappa})$, so that by interior elliptic regularity:
\begin{equation} \label{Eq:ptchecku}
\| \partial_{t} \widecheck{\mathfrak{u}}_{\kappa} \|_{C^{k,\frac{1}{2}}({\mathcal V}_{\delta/2}(\partial {\mathcal S}_{\kappa}))} \leq C (1+ |\widehat{{\bf p}}|).
\end{equation}
The bounds on $V_{\kappa}$ in \eqref{Eq:EstApprox1}-\eqref{Eq:EstApprox2B} 
 follow, using \eqref{Eq:DefV}, \eqref{Eq:estchecku} and \eqref{Eq:ptchecku}. It remains to prove the bounds \eqref{Eq:EstApprox1}-\eqref{Eq:EstApprox2B} on $  u^{r}_{\kappa}  $. \par
\ \par
\noindent
{\bf Step 2.} Let us now relate the function  $u^{r}_{\kappa}$ defined by \eqref{Eq:AssumptionApproximation} to $\widecheck{u}_\kappa$.
First, we use \eqref{Eq:SysUExtKappa}, \eqref{Eq:SysUcheck} and the support of $\omega$ 
to infer that $u^{ext}_\kappa  - \widecheck{u}_\kappa$ satisfies
\begin{equation*}
\left\{ \begin{array}{l}
\div (u^{ext}_\kappa  - \widecheck{u}_\kappa) = 0 \ \text{ in } \ {\mathcal F}({\bf q}), \medskip \\
\curl (u^{ext}_\kappa  - \widecheck{u}_\kappa) = 0 \ \text{ in } \ {\mathcal F}({\bf q}), \medskip \\
(u^{ext}_\kappa  - \widecheck{u}_\kappa)\cdot n = 0 \ \text{ on } \ \partial {\mathcal F}({\bf q}) \setminus \partial {\mathcal S}_{\kappa} , \medskip \\
(u^{ext}_\kappa  - \widecheck{u}_\kappa)\cdot n = - \widecheck{u}_\kappa \cdot n \ \text{ on } \ \partial {\mathcal S}_{\kappa}, \medskip \\
\displaystyle \oint_{\partial {\mathcal S}_{\nu}} (u^{ext}_\kappa  - \widecheck{u}_\kappa) \cdot \tau \, ds = 0 \ \text{ for } \ \nu =1, \dots, N.
\end{array} \right.
\end{equation*}
Recalling the notation \eqref{Eq:PhiBeta}, this gives that
\begin{equation} \label{Eq:CompUextKUCheckK}
u^{ext}_\kappa  - \widecheck{u}_\kappa = - \nabla {\mathfrak{f}}^{\mathcal N}_{\kappa}[\widecheck{u}_{\kappa|{\partial \mathcal S}_{\kappa}} \cdot n],
\end{equation}
Then we use a Taylor expansion of $\widecheck{u}_\kappa$ in the neighborhood of ${\mathcal S}_{\kappa}$.
Using local elliptic regularity estimates on $\widecheck{u}_\kappa$ (which is harmonic in the $\delta$-neighborhood of ${\mathcal S}_{\kappa}$),
we may  estimate the second derivatives of $\widecheck{u}_\kappa$ in $L^\infty$ in some neighborhood of ${\mathcal S}_{\kappa}$ by its $L^\infty$ norm in a larger neighborhood
and hence by $C \| \widecheck{u}_\kappa \|_{\infty}$. It follows (recalling \eqref{Eq:DecompV}) that we may write in the $\delta/2$-neighborhood of ${\mathcal S}_{\kappa}$
\begin{equation} \label{Eq:Taylor}
\widecheck{u}_\kappa({\bf q},x)
 = V_{\kappa} + R_{\kappa}({\bf q},x)
 \ \text{ with } \
|R_{\kappa}({\bf q},x)| \leq  C \| \widecheck{u}_\kappa \|_{\infty} |x-h_{\kappa}|^2,
\end{equation}
where we omit temporarily the dependence of $\widecheck{u}_{\kappa}$ on ${\bf p}$ and $\omega$ to lighten the notations.

Recalling 
\eqref{Eq:PhiBeta} and \eqref{alaph}
 we observe that 
\begin{equation}
 \label{usopen}
\mbox{Kir}_{\kappa} V_{\kappa} = \nabla {\mathfrak{f}}^{\mathcal N}_{\kappa}[ V_{\kappa} \cdot n].
\end{equation}
Hence by \eqref{Eq:CompUextKUCheckK}, \eqref{Eq:Taylor} and  \eqref{usopen} we arrive at \eqref{Eq:AssumptionApproximation} with
\begin{equation}
 \label{paperp}
u^{r}_{\kappa}(t,x) := \varepsilon_{\kappa}^{-2} \left\{
R_{\kappa}({\bf q}(t),x) -\nabla {\mathfrak{f}}^{\mathcal N}_{\kappa} \big[ R_{\kappa}({\bf q}(t),x)_{|\partial {\mathcal S}_{\kappa}} \cdot n \big] \right\}.
\end{equation}
\ \\
{\bf Step 3.}
We now turn to the bound of $u^r_{\kappa}$ in  \eqref{Eq:EstApprox1}. We first notice that, due to \eqref{Eq:Taylor},  in the $\varepsilon_{\kappa}$-neighborhood of ${\mathcal S}_{\kappa}$,
\begin{equation} \label{Eq:Rkinfini}
\| R_{\kappa}({\bf q},\cdot) \|_{L^\infty({\mathcal V}_{\varepsilon_{\kappa}}(\partial {\mathcal S}_{\kappa}))} \leq C \varepsilon_{\kappa}^2.
\end{equation}
Since $R_{\kappa}({\bf q},x)$ is harmonic in $x$ in a neighborhood of ${\mathcal S}_{\kappa}$, using a scaling argument and local elliptic estimates, we also see that
\begin{equation} \label{Eq:RkHolder}
\varepsilon_{\kappa}^{k+\frac{1}{2}} \big| R_{\kappa}({\bf q},\cdot) \big|_{C^{k,\frac{1}{2}}(\partial {\mathcal S}_{\kappa})} \leq C \varepsilon_{\kappa}^2.
\end{equation}
Then we apply Lemma~\ref{Lem:Neumann} and Propositions~\ref{Pro:DirichletSAM} and \ref{Pro:DirichletPetits}, taking into account that the normal $n$ satisfies 
$\varepsilon_{\kappa}^{k+\frac{1}{2}} | n |_{C^{k,\frac{1}{2}}(\partial {\mathcal S}_{\kappa})} \leq C$. We obtain
\begin{equation} \label{Eq:EstFnkr}
\left\| \nabla {\mathfrak{f}}^{\mathcal N}_{\kappa} \big[ R_{\kappa}({\bf q},\cdot)_{|\partial {\mathcal S}_{\kappa}} \cdot n \big] \right\|_{L^\infty(\partial {\mathcal S}_{\kappa})} 
+ \varepsilon_{\kappa}^{k+\frac{1}{2}} \left| \nabla {\mathfrak{f}}^{\mathcal N}_{\kappa} \big[ R_{\kappa}({\bf q},\cdot)_{|\partial {\mathcal S}_{\kappa}} \cdot n \big] \right|_{C^{k,\frac{1}{2}}(\partial {\mathcal S}_{\kappa})} 
\leq C \varepsilon_{\kappa}^2 .
\end{equation}
In particular we see that $u^{r}_{\kappa}$ is bounded on $\partial {\mathcal S}_{\kappa}$ and satisfies \eqref{Eq:EstApprox1}. \par

\ \par
\noindent
{\bf Step 4.}
We finally estimate $\partial_{t} u_{\kappa}^r$. To that purpose we introduce the stream function $\widecheck{\eta}_{\kappa}$ of $\widecheck{u}_{\kappa}$, so that 
 $\widecheck{u}_{\kappa} = \nabla^\perp \widecheck{\eta}_{\kappa}$ and  we define
\begin{equation*} \label{Eq:DefAlphaR}
{\alpha}_{\kappa}^R= {\alpha}_{\kappa}^R({\bf q},x) := \widecheck{\eta}_{\kappa} - \sum_{i \in \{1,2,4,5\} } V_{\kappa,i} {\mathcal J}_{\kappa,i},
\end{equation*}
with ${\mathcal J}_{\kappa,i}$ defined in \eqref{Eq:DefJ}.
By \eqref{Eq:DefJ2}, Lemma~\ref{Lem:Neumann}, \eqref{Eq:DecompV} and \eqref{Eq:Taylor} we have
\begin{equation*} \label{Eq:RelR}
%R_{\kappa}({\bf q}, \cdot) = \nabla^\perp {\alpha}_{\kappa}^R\ \text{ and } \ 
\nabla {\mathfrak{f}}^{\mathcal N}_{\kappa} \big[ R_{\kappa}({\bf q},\cdot)_{|\partial {\mathcal S}_{\kappa}} \cdot n \big] 
= \nabla^\perp {\mathfrak{f}}_{\kappa} \big[ {\alpha}_{\kappa}^R \big].
\end{equation*}
Hence \eqref{paperp} translates into: 
\begin{equation*}
u^{r}_{\kappa} = \varepsilon_{\kappa}^{-2} \left\{
%\nabla^\perp {\alpha}_{\kappa}^R
R_{\kappa}({\bf q}(t), \cdot)
 - \nabla^\perp {\mathfrak{f}}_{\kappa} \big[ {\alpha}_{\kappa}^R \big]
 \right\}.
\end{equation*}
Thus 
\begin{equation} \label{Eq:ptur}
\partial_{t} u^{r}_{\kappa}(t,x) := \varepsilon_{\kappa}^{-2} \Bigg\{
\partial_{t} \mathfrak{R}_{\kappa}(t,x) - \nabla^\perp {\mathfrak{f}}_{\kappa} \big[ \partial_{t} \mathfrak{a}_{\kappa}(t,x)] - \sum_{\substack{{\mu \in \{1,\dots,N\}} \\ {m \in\{1,2,3\}}}} p_{\mu,m} \frac{\partial \nabla^\perp {\mathfrak{f}}_{\kappa} \big[ \mathfrak{a}_{\kappa}(t,\cdot)_{|\partial {\mathcal S}_{\kappa}} \big] }{\partial q_{\mu,m}} \Bigg\} ,
\end{equation}
where 
\begin{equation*}
\mathfrak{R}_{\kappa}(t,x):= R_{\kappa}({\bf q}^\varepsilon(t),{\bf p}^\varepsilon(t),\omega(t),x)
\text{ and }
\mathfrak{a}_{\kappa}(t,x):= \alpha^R_{\kappa}({\bf q}^\varepsilon(t),{\bf p}^\varepsilon(t),\omega(t),x) .
\end{equation*}
Relying on \eqref{Eq:DecompV} and \eqref{Eq:Taylor}, a computation gives
\begin{equation*}
\partial_t \mathfrak{R}_{\kappa} (t,x)=\frac{\partial \widecheck{\mathfrak{u}}_{\kappa}}{\partial t}(t,x) - \frac{\partial \widecheck{\mathfrak{u}}_{\kappa}}{\partial t} (t,h_{\kappa})
- (x-h_{\kappa}) \cdot \nabla \frac{\partial \widecheck{\mathfrak{u}}_{\kappa}}{\partial t}(t,h_{\kappa}) -  \nabla^2_{x} \widecheck{\mathfrak{u}}_{\kappa}(t,h_{\kappa}) \cdot h'_{\kappa} \otimes (x-h_{\kappa}).
\end{equation*}
With \eqref{Eq:estchecku} and \eqref{Eq:ptchecku}  we deduce
\begin{equation} \label{Eq:EstEK}
\| \partial_{t} \mathfrak{R}_{\kappa} \|_{L^\infty({\mathcal V}_{\varepsilon_{\kappa}}(\partial {\mathcal S}_{\kappa}))} \leq C \varepsilon_{\kappa} (1+ |\widehat{{\bf p}}|) .
\end{equation}
Since $R_{\kappa}({\bf q}, \cdot) = \nabla^\perp {\alpha}_{\kappa}^R$, it follows, using again interior elliptic regularity, that we may estimate the second term in \eqref{Eq:ptur} as follows:
\begin{equation} \label{Eq:Esta}
\| \partial_{t} \mathfrak{a}_{\kappa}(t,\cdot) - \partial_{t} \mathfrak{a}_{\kappa}(t,h_{\kappa}) \|_{L^\infty ({\mathcal S}_{\kappa})}
+ \varepsilon^{k+\frac{1}{2}}  | \partial_{t} \mathfrak{a}_{\kappa}(t,\cdot) |_{C^{k,\frac{1}{2}}({\mathcal S}_{\kappa})}
 \leq C \varepsilon^2_{\kappa} (1+ |\widehat{{\bf p}}|).
\end{equation}
With Propositions~\ref{Pro:DirichletPetits} and \ref{Pro:DirichletSAM}, this gives
\begin{equation*}
\left\| \nabla {\mathfrak{f}}_{\kappa} \big[  \partial_{t} \mathfrak{a}_{\kappa}(t,\cdot) \big] \right\|_{L^\infty(\partial {\mathcal S}_{\kappa})} 
\leq C \varepsilon_{\kappa}  (1+ |\widehat{{\bf p}}|).
\end{equation*}
Concerning the third term in \eqref{Eq:ptur}, we use Corollary~\ref{Cor:SD}, where here the function $\mathfrak{a}_{\kappa}(t,\cdot)$ is fixed. We find
\begin{equation*}
\displaystyle \frac{\partial {\mathfrak{f}}_{\kappa} \big[ \mathfrak{a}_{\kappa}(t,\cdot) \big]}{\partial q_{\mu,m}} 
= \left( \nabla {\mathfrak{a}}_{\kappa}^R - \nabla {\mathfrak{f}}_{\kappa} \big[ {\mathfrak{a}}_{\kappa}^R \big]\right) \cdot n \,K_{\mu,m} + c'_{\lambda} \text{ on } \partial {\mathcal S}_{\lambda} \text{ and }
\frac{\partial {\mathfrak{f}}_{\kappa} \big[ \mathfrak{a}_{\kappa}(t,\cdot) \big]}{\partial q_{\mu,m}} = 0 \text{ on } \partial \Omega.
\end{equation*}
With \eqref{Eq:Rkinfini}-\eqref{Eq:RkHolder}-\eqref{Eq:EstFnkr} and Propositions~\ref{Pro:DirichletSAM} and \ref{Pro:DirichletPetits}, we conclude that
\begin{equation} \nonumber
\left\| \nabla \frac{\partial {\mathfrak{f}}_{\kappa} \big[ \mathfrak{a}_{\kappa}(t,\cdot) \big]}{\partial q_{\mu,m}}  \right\|_{L^\infty(\partial {\mathcal S}_{\kappa})} 
\leq C \varepsilon_{\kappa} \varepsilon_{\mu}^{\delta_{m3}}.
\end{equation}
Injecting in \eqref{Eq:ptur} we find the last estimate of \eqref{Eq:EstApprox2B},  which concludes the proof of Proposition~\ref{Pro:ExistsApprox}. 
\end{proof}
%
%
%
%%%%%%%%%%%%%%%%%%%
%
%
\subsection{Definition of the modulations}
We conclude this section by introducing the first-order modulations $\alpha_{\kappa,i}$ and the second-order modulations $\beta_{\kappa,i}$, 
for $\kappa \in {\mathcal P}_{s}$ and $i=1,2$. We set
\begin{equation} \label{Eq:ParamModulation}
\text{ for } \ \kappa \in {\mathcal P}_{s}, \ \ 
\alpha_{\kappa,i} := V_{\kappa,i} \ \text{ for } i=1,2, 
\ \text{ and } \ 
\begin{pmatrix} \beta_{\kappa,1} \\ \beta_{\kappa,2}  \end{pmatrix} :=  \begin{pmatrix} - V_{\kappa,4} & V_{\kappa,5} \\ V_{\kappa,5} & V_{\kappa,4}  \end{pmatrix}
\zeta^\varepsilon_{\kappa}(q_{\kappa}).
\index{W@Modulations!M2@$\alpha_{\kappa,i}$: first-order modulations}
\index{W@Modulations!M3@$\beta_{\kappa,i}$: second-order modulations}
\end{equation}
We recall that $\zeta^\varepsilon_{\kappa} (q_{\kappa})$ is defined in \eqref{Eq:Zeta}.
We notice in passing that due to Proposition~\ref{Pro:ExistsApprox} and the scale relation in \eqref{Eq:Zeta}, the modulations can be estimated as follows:
\begin{equation} \label{Eq:BoundAlphaBeta}
| \alpha_{\kappa,i} | \leq C \ \text{ and } \ |\beta_{\kappa,i}| \leq C \varepsilon_{\kappa}.
\end{equation}
The first-order modulations will play a central role in the normal forms of Section~\ref{Sec:NormalForm} and hence in the modulated energy estimates of Section~\ref{Sec:MEE}, but also in the passage to the limit in Section~\ref{Sec:PTTL}.
The second-order modulations $\beta_{\kappa,1}$ and $\beta_{\kappa,2}$ disappear in the limit, but play an important role in the normal forms, in Subsection~\ref{Subsec:MGT} (see Lemma~\ref{Lem:Transfert45vers12}).
%
%
%
%
%%%%%%%%%%%%%%%%%%%%%%%%%%%%%%%%%%%%%%%%%%%%%%%%%%%%%%%%%%%%%%%%%%%%%%%%%%%%%%%%%%%%%%%%%%%%%%%%%%%%%%%%%%%%%%%%%%%%%%%%%%%%%%%%%%%%%%%%%%%%%%%%%%%%%%%%%%%%%%%%%%%%%%%%%%%%%%
%
%
%
%
%
%
%
\section{Normal forms}
\label{Sec:NormalForm}
%
%
%
%%%%%%%%%%%%%%
%
In this section, we present normal forms for the dynamics of small solids. It will be useful for both the modulated energy estimates (for solids of family $(iii)$) and the passage to the limit  (for solids of family $(ii)$ and $(iii)$).
\subsection{Statement of the normal form}
\begin{Proposition}%[Normal forms]
\label{Pro:MEE}
Let $\delta>0$. There exists $\varepsilon_{0}>0$ such that for $\overline{\boldsymbol\varepsilon} \leq \varepsilon_{0}$, the following holds. 
Consider the corresponding solutions $(u^\varepsilon,h^\varepsilon,\vartheta^\varepsilon)$ of the system, for each $\kappa \in {\mathcal P}_{s}$ the exterior field $u^{ext}_\kappa$ defined by \eqref{Eq:DecompKappa}, and $V_{\kappa}$ defined by \eqref{Eq:DefV} together with its coordinates $(V_{\kappa,i})_{i \in \{1,2,4,5\}}$ in the decomposition \eqref{Eq:DecompV}.
Introduce the modulated variable $\overline{\bf p}=(\overline{p}_{1}, \dots, \overline{p}_{N})$ as follows: for $i \in \{1,2,3\}$
\begin{equation} \label{Eq:VariableModulee}
\overline{p}_{\kappa,i} = {p}_{\kappa,i}   \ \text{ for } \ \kappa \in {\mathcal P}_{(i)}, \ \ 
\overline{p}_{\kappa,i} = {p}_{\kappa,i} - \delta_{i \in \{1,2\}} (\alpha_{\kappa,i} + \beta_{\kappa, i} ) \ \text{ for } \ \kappa \in {\mathcal P}_{s},
\index{Solid velocity!E5@$\overline{p}_{\kappa}$: modulated velocity of the $\kappa$-th solid}
\index{Solid velocity!E6@$\overline{\bf p}$: modulated velocity of all the solids at once}
\end{equation}
with $\alpha_{\kappa,i}$ and $\beta_{\kappa,i}$ given by  \eqref{Eq:ParamModulation}, and the time-dependent vector field $B_{\kappa} = (B_{\kappa,j} )_{j=1,2,3} $ given by
\begin{equation} \label{Eq:Gyro}
B_{\kappa,j}  := -\gamma_{\kappa} \sum_{k=1}^3 \overline{p}_{\kappa,k}  \int_{\partial \mathcal{S}_{\kappa}} \partial_{n} \widehat{\psi}_{\kappa} \, \xi_{\kappa,k}^{\perp} \cdot \xi_{\kappa,j} \, ds .
\end{equation}
Then as long as $(\boldsymbol\varepsilon,{\bf q}, \omega) \in \mathfrak{Q}_{\delta}^{\varepsilon_{0}}$, for each $\kappa \in {\mathcal P}_{s}$, one has
\begin{equation} \label{Eq:FormeNormale}
{\mathcal M}_{g,\kappa} {p}_{\kappa}' + 
{\mathcal M}_{a,\kappa} \overline{p}_{\kappa}' + \frac{1}{2} {\mathcal M}'_{a,\kappa} \overline{p}_{\kappa}  = A_{\kappa}(t) + B_{\kappa}(t) + 
C_{\kappa}(t) + D_{\kappa}(t),
\end{equation}
where the term $A_{\kappa}$ is weakly nonlinear in the sense that for some $K>0$ independent of ${\boldsymbol\varepsilon}$, for $j \in \{1,2,3\}$, 
\begin{equation} \label{Eq:WNL}
|A_{\kappa,j}(t)| \leq K \varepsilon_{\kappa}^{2+\delta_{j3}} \left( 1 + |\widehat{\bf p}(t)|  \right),
\end{equation}
the term $C_{\kappa}$ is gyroscopic of lower order in the sense that for all times, 
\begin{equation} \label{Eq:GyroF}
C_{\kappa}(t) \cdot \overline{p}_{\kappa}(t)= 0 , 
\end{equation}
and moreover for some $K>0$ independent of ${\boldsymbol\varepsilon}$, one has  for $j \in \{1,2,3\}$ 
\begin{equation} \label{Eq:EstGyro}
|C_{\kappa,j}(t)| \leq K \varepsilon_{\kappa}^{1+\delta_{j3}} \left( 1 + |\widehat{\boldsymbol p}(t)|^2 \right),
\end{equation}
and the term $D_{\kappa}$ is weakly gyroscopic in the sense that it satisfies for some $K>0$ independent of ${\boldsymbol\varepsilon}$,
\begin{equation} \label{Eq:WG}
\left|\int_{0}^t D_{\kappa}(\tau) \cdot \overline{p}_{\kappa}(\tau) \, d \tau \right| \leq 
K \varepsilon_{\kappa}^2 \left( 1 + t + \int_{0}^t |\widehat{p}_{\kappa}(\tau)|^2 \, d \tau \right),
\end{equation}
and moreover for some $K>0$ independent of ${\boldsymbol\varepsilon}$, one has  for $j \in \{1,2,3\}$ 
\begin{equation} \label{Eq:EstGyroW}
|D_{\kappa,j}(t)| \leq K \varepsilon_{\kappa}^{1+\delta_{j3}} . 
\end{equation}
\end{Proposition}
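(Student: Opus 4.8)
The starting point is the reformulation \eqref{Eq:RoDev} of Newton's law obtained in the proof of Proposition~\ref{Pro:Acceleration}, namely
\begin{equation*}
\left({\mathcal M}_{g} ({\bf p}^{\varepsilon})'+ {\mathcal M}_{a} ({\bf p}^{\varepsilon})'\right)_{\kappa,j} = T_1 + \ldots + T_7,
\end{equation*}
but here, instead of the crude decomposition \eqref{Eq:DecompGlob}, I would work with the \emph{$\kappa$-focused} decomposition \eqref{Eq:DecompKappa} and the approximation \eqref{Eq:AssumptionApproximation} of $u^{ext}_\kappa$ by $(\mathrm{Id}-\mathrm{Kir}_\kappa)V_\kappa + \varepsilon_\kappa^2 u^r_\kappa$ provided by Proposition~\ref{Pro:ExistsApprox}. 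First I would substitute $u^\varepsilon = u^{pot}_\kappa + \gamma_\kappa\nabla^\perp\widehat\psi_\kappa + u^{ext}_\kappa$ into the right-hand side and into $\partial_t u^\varepsilon$; the term $p'_{\kappa,i}\nabla\varphi_{\kappa,i}$ coming from $\partial_t u^{pot}_\kappa$ yields $({\mathcal M}_{a,\kappa}p_\kappa')_{\kappa,j}$ which I move to the left. The essential algebraic manipulation is then to replace $p_\kappa$ by the modulated variable $\overline p_\kappa = p_\kappa - (\alpha_\kappa + \beta_\kappa)$ inside the added-mass terms: the identity ${\mathcal M}_{a,\kappa}p_\kappa' + \tfrac12{\mathcal M}_{a,\kappa}'p_\kappa$ versus ${\mathcal M}_{a,\kappa}\overline p_\kappa' + \tfrac12{\mathcal M}_{a,\kappa}'\overline p_\kappa$ differs by terms involving $\alpha_\kappa',\beta_\kappa',{\mathcal M}_{a,\kappa}'\alpha_\kappa,{\mathcal M}_{a,\kappa}'\beta_\kappa$, which by \eqref{Eq:BoundAlphaBeta}, \eqref{Eq:EstApprox2B} and the scaling of ${\mathcal M}_{a,\kappa}$ (Remark~\ref{Rem:MA/MASAM}, Corollary~\ref{Cor:ExpAddedMass}) are of size $\mathcal O(\varepsilon_\kappa^{2+\delta_{j3}})(1+|\widehat{\bf p}|)$ and hence absorbed into $A_\kappa$.

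The heart of the matter is the treatment of the \emph{linear-in-$\nabla^\perp\widehat\psi_\kappa$} cross terms. Expanding $|u^\varepsilon|^2$ on $\partial{\mathcal S}_\kappa$ as in \eqref{Eq:RoDev}, the quadratic term $T_4$ in $\nabla^\perp\widehat\psi_\kappa$ vanishes by \eqref{Eq:QuadraticPsi} (Blasius/Lamb), while the genuinely dangerous contributions are the boundary integrals
\[
-\gamma_\kappa\int_{\partial{\mathcal S}_\kappa}\bigl(u^{pot}_\kappa + u^{ext}_\kappa\bigr)\cdot\nabla^\perp\widehat\psi_\kappa\, K_{\kappa,j}\,ds
\quad\text{and}\quad
-\gamma_\kappa\int_{\partial{\mathcal S}_\kappa}\bigl(u^{pot}_\kappa + u^{ext}_\kappa\bigr)\cdot\nabla^\perp\widehat\psi_\kappa\, \xi_{\kappa,j}\cdot n\, ds
\]
(the $T_6$-type term) together with the part of $T_2$ localized at $\kappa$. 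For $u^{ext}_\kappa$ I replace it by its linear model $(\mathrm{Id}-\mathrm{Kir}_\kappa)V_\kappa$, the remainder $\varepsilon_\kappa^2 u^r_\kappa$ being controlled by \eqref{Eq:EstApprox1}; then, using that $V_\kappa = \sum_{i\in\{1,2,4,5\}}V_{\kappa,i}\xi_{\kappa,i}$ and the definition \eqref{Eq:ParamModulation} of $\alpha_{\kappa,i},\beta_{\kappa,i}$ in terms of $V_{\kappa,i}$ and the conformal center $\zeta^\varepsilon_\kappa$, the contribution of the $\xi_{\kappa,1},\xi_{\kappa,2}$ components combines with the $\alpha_\kappa$-modulation to produce exactly the gyroscopic term $B_\kappa$ of \eqref{Eq:Gyro}; this is where one verifies $B_{\kappa,j}$ is built only from integrals $\int_{\partial{\mathcal S}_\kappa}\partial_n\widehat\psi_\kappa\,\xi^\perp_{\kappa,k}\cdot\xi_{\kappa,j}$ and hence is antisymmetric against $\overline p_\kappa$ up to the desired order. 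The $\xi_{\kappa,4},\xi_{\kappa,5}$ components of $V_\kappa$ are handled via Lemma~\ref{Lem:Transfert45vers12} (the transfer lemma announced in Subsection~\ref{Subsec:MGT}), which is precisely why $\beta_{\kappa,i}$ was defined using $\zeta^\varepsilon_\kappa$: the second-order modulation converts the $(4,5)$-part into the $(1,2)$-part modulo a term that can be put into $C_\kappa$ or $D_\kappa$. The contributions of $u^{pot}_\kappa$ against $\nabla^\perp\widehat\psi_\kappa$ I would estimate directly using Proposition~\ref{Pro:ExpKirchhoff} ($\nabla\varphi_{\kappa,i}=\mathcal O(\varepsilon_\kappa^{\delta_{i3}})$ on $\partial{\mathcal S}_\kappa$), $\nabla^\perp\widehat\psi_\kappa=\mathcal O(1/\varepsilon_\kappa)$, and $|\partial{\mathcal S}_\kappa|=\mathcal O(\varepsilon_\kappa)$, yielding the size $\varepsilon_\kappa^{1+\delta_{j3}}(1+|\widehat{\bf p}|^2)$ recorded in $C_\kappa$; one must be careful to split off the part of this term that is genuinely gyroscopic against $\overline p_\kappa$ (the Kutta--Joukowski-type piece) so that \eqref{Eq:GyroF} holds exactly.

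The remaining terms are routed as follows: $T_1$ (shape-derivatives of Kirchhoff potentials) is estimated through Proposition~\ref{Pro:ExpShapeDerivatives} and the energy bound Proposition~\ref{Pro:APEE}, with the self-interaction $\lambda=\mu=\kappa$ piece of size $\mathcal O(\varepsilon_\kappa^{2+\delta_{j3}})(1+|\widehat{\bf p}|)$ going into $A_\kappa$; the $T_3$ term $-\int\partial_t u^{ext}_\kappa\cdot\nabla\varphi_{\kappa,j}$ is integrated by parts onto the boundary and bounded by Lemma~\ref{Lem:UextGlob}-type estimates combined with \eqref{Eq:EstApprox2B} and the decay \eqref{Eq:ExpKirchhoffNormalises} of $\varphi_{\kappa,j}$, again landing in $A_\kappa$; $T_5$, $T_7$ and the $\nu\neq\kappa$ part of $T_2$ are $\mathcal O(\varepsilon_\kappa^{1+\delta_{j3}})$ or better as in the proof of Proposition~\ref{Pro:Acceleration} and go into $A_\kappa$ or $D_\kappa$. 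For the weakly-gyroscopic estimate \eqref{Eq:WG} on $D_\kappa$, the point is that $D_\kappa$ collects terms of the form $\varepsilon_\kappa^{1+\delta_{j3}}$ times bounded quantities which, paired with $\overline p_\kappa$ and integrated in time, are controlled after one integration by parts in $t$ (moving a time-derivative off a modulation or off $\widehat\psi_\kappa$ using \eqref{Eq:HatPsiTourne}) so that only $\int_0^t|\widehat p_\kappa|^2$ and the length of the interval survive; here the acceleration estimate Proposition~\ref{Pro:Acceleration} is used to absorb the $\widehat p_\kappa'$ that appears. \textbf{The main obstacle} I anticipate is precisely the bookkeeping of the linear-in-$\nabla^\perp\widehat\psi_\kappa$ terms: one must show that, after subtracting the $\alpha_\kappa$- and $\beta_\kappa$-modulations, every surviving contribution is either genuinely antisymmetric against $\overline p_\kappa$ (the $B_\kappa$ and $C_\kappa$ pieces, requiring the geometric identities relating $\int_{\partial{\mathcal S}_\kappa}\partial_n\widehat\psi_\kappa\,\xi^\perp_{\kappa,k}\cdot\xi_{\kappa,j}$ to an antisymmetric matrix, and Lamb's lemma for the $\xi_{\kappa,3}$ interaction) or small enough after time-integration ($D_\kappa$); getting the exact powers of $\varepsilon_\kappa$ and the correct $\delta_{j3}$ in \eqref{Eq:WNL}, \eqref{Eq:EstGyro}, \eqref{Eq:EstGyroW} simultaneously is delicate because $\nabla^\perp\widehat\psi_\kappa$ carries a $1/\varepsilon_\kappa$ that must be compensated in every single term.
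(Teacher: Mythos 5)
Your overall strategy is essentially the paper's: work with a $\kappa$-focused decomposition of $u^\varepsilon$, approximate $u^{ext}_\kappa$ by $(\mathrm{Id}-\mathrm{Kir}_\kappa)V_\kappa$ via Proposition~\ref{Pro:ExistsApprox}, kill the quadratic $|\nabla^\perp\widehat\psi_\kappa|^2$ term by Blasius, handle the linear cross terms through Lamb's lemma and the transfer identity of Lemma~\ref{Lem:Transfert45vers12}, and route the remaining boundary and shape-derivative contributions into $A_\kappa$, $C_\kappa$, $D_\kappa$ using Propositions~\ref{Pro:ExpKirchhoff}, \ref{Pro:ExpShapeDerivatives}, \ref{Pro:Acceleration} and Lemma~\ref{Lem:UextGlob}.

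However, there is a concrete gap in the step that produces the main gyroscopic term. You claim that ``the contribution of the $\xi_{\kappa,1},\xi_{\kappa,2}$ components combines with the $\alpha_\kappa$-modulation to produce exactly the gyroscopic term $B_\kappa$,'' but the formula \eqref{Eq:Gyro} involves $\overline p_{\kappa,k}$ for \emph{all} $k\in\{1,2,3\}$, and the $k=3$ coefficient has nothing to do with $V_\kappa$ or $\alpha_\kappa$ (note $V_\kappa \in {\mathcal K}_{\kappa,s}$ has no $\xi_{\kappa,3}$ component). The missing mechanism is the subtraction of the solid velocity $v_{{\mathcal S},\kappa}$ in the cross term with $\nabla^\perp\widehat\psi_\kappa$. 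Since $u^{pot}_\kappa=\mathrm{Kir}_\kappa(v_{{\mathcal S},\kappa})$ and $u^{ext}_\kappa=(\mathrm{Id}-\mathrm{Kir}_\kappa)V_\kappa+\varepsilon_\kappa^2 u^r_\kappa$, the combination that makes Lamb's lemma applicable as an $(\mathrm{Id}-\mathrm{Kir}_\kappa)$ of a \emph{single} affine vector field is
\begin{equation*}
u^{pot}_\kappa + u^{ext}_\kappa - v_{{\mathcal S},\kappa} = (\mathrm{Id}-\mathrm{Kir}_\kappa)(V_\kappa - v_{{\mathcal S},\kappa}) + \varepsilon_\kappa^2 u^r_\kappa,
\end{equation*}
and it is precisely $V_\kappa - v_{{\mathcal S},\kappa}=-\sum_{k=1}^3\overline p_{\kappa,k}\xi_{\kappa,k}-\sum_{k=1}^2\beta_{\kappa,k}\xi_{\kappa,k}+\sum_{k=4}^5 V_{\kappa,k}\xi_{\kappa,k}$ that feeds the three components $\overline p_{\kappa,1},\overline p_{\kappa,2},\overline p_{\kappa,3}$ into $B_\kappa$ after applying Lemma~\ref{Lem:5.3}. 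Subtracting $v_{{\mathcal S},\kappa}$ of course creates a companion term $\gamma_\kappa Q_{\kappa,j}(\nabla^\perp\widehat\psi_\kappa, v_{{\mathcal S},\kappa})$, and the second missing ingredient is the \emph{exact} cancellation of this term against the contribution of $\partial_t\nabla^\perp\widehat\psi_\kappa$ via \eqref{Eq:HatPsiTourne} (the paper's Lemma~\ref{Lem:I1J3}, $I^1_{\kappa,j}+J^3_{\kappa,j}=0$). Without this exact cancellation one is left only with the rough size estimate from Proposition~\ref{Pro:Acceleration} ($T_2=\mathcal O(\varepsilon_\kappa^{\delta_{j3}})|\widehat{\bf p}|$ on $\partial{\mathcal S}_\kappa$), which is \emph{not} good enough to close the modulated energy estimate: paired with $\overline p_\kappa$ and integrated in time this would give a term of order $1$, not of order $\varepsilon_\kappa^2$. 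Also, your phrase ``modulo a term that can be put into $C_\kappa$ or $D_\kappa$'' undersells the transfer lemma: Lemma~\ref{Lem:Transfert45vers12} gives \emph{exact} cancellation of the $\beta$ and $V_{4,5}$ contributions for $j=1,2$; only the $j=3$ component survives and must then be shown to be weakly gyroscopic by integration by parts in time (which is the reason $D_\kappa$ must be handled after time-integration and uses the acceleration estimate, as you correctly sensed). In short: you have the right toolbox, but the algebraic mechanism of ``subtract $v_{{\mathcal S},\kappa}$, pair with the time-derivative of $\widehat\psi_\kappa$'' is essential and is not visible in your plan.
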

We recall  the notation \eqref{Eq:TrueInertia} and \eqref{Eq:AddMassMatrix}   for the matrices ${\mathcal M}_{g,\kappa} {p}_{\kappa}$ and
${\mathcal M}_{a,\kappa}$.

Let us highlight that $B_{\kappa}$ satisfies  
\begin{equation} \label{Eq:GyroFB}
B_{\kappa} \cdot \overline{p}_{\kappa}= 0 .
\end{equation}
 We will refer to this term as the main gyroscopic term. 
\begin{Remark}
Note the distinction between the modulated variable $\overline{p}_{\kappa}$ (for which $\overline{p}_{\kappa,3}=\vartheta_{\kappa}'$) on the left-hand side and the scaled variable $\widehat{p}_{\kappa}$ (with $\widehat{p}_{\kappa,3}=\varepsilon_{\kappa} \vartheta_{\kappa}'$) on the right-hand side.
\end{Remark}
The rest of the section is devoted to the proof of Proposition \ref{Pro:MEE}. 
%
%
%%%%%%%%%%%%%%%%%%%%%%%%
%
%
\subsection{Starting point of the proof: rewriting the solid equation with various terms}
Given $\delta>0$, we first let $\varepsilon_{0}>0$ small enough so that all the statements of Sections~\ref{Sec:Expansions} to \ref{Sec:Modulations} apply. %\par
To prove the normal form \eqref{Eq:FormeNormale}, we will use a variant of the decomposition \eqref{Eq:DecompKappa}, which is better adapted to modulated variables.
\begin{Definition}%[Decomposition of the velocity focusing on ${\mathcal S}_{\kappa}$, modulated variant]
For each $\kappa \in {\mathcal P}_{s}$, we introduce the following decomposition 
\begin{equation} \label{Eq:DecompMod}
u^\varepsilon = \overline{u}_{\kappa}^{pot}  + \gamma_{\kappa} \nabla^{\perp} \widehat{\psi}_{\kappa} +  \overline{u}^{ext}_\kappa
\ \ \text{ with } \ \ 
\overline{u}_{\kappa}^{pot} := \sum_{j \in \{1,2,3\} } \overline{p}_{\kappa,j} \nabla \varphi_{\kappa,j} .
\index{Velocity fields!U6@$\overline{u}_\kappa^{pot}$, $\overline{u}^{ext}_\kappa$: decomposition of $u^\varepsilon$ focused on ${\mathcal S}_{\kappa}$ with modulated potential part}
\end{equation}
\end{Definition}
\noindent
In particular, comparing the decompositions \eqref{Eq:DecompKappa} and \eqref{Eq:DecompMod} we see that  
\begin{equation} \label{Eq:TildeUExt}
\overline{u}^{ext}_\kappa = u^{ext}_\kappa + \sum_{j=1}^2 (\alpha_{\kappa,j}+ \beta_{\kappa,j}) \nabla \varphi_{\kappa,j}.
\end{equation}
\begin{proof}[Proof of Proposition~\ref{Pro:MEE}]
We first observe that, by the first equation of \eqref{Eq:Euler} and by \eqref{Eq:UNablaU}, the fluid pressure $\pi^{\varepsilon} $ satisfies: 
\begin{equation}
\nabla \pi^{\varepsilon} = 
- \partial_{t} u^{\varepsilon} 
- \nabla \left(\frac{|u^{\varepsilon}|^2}{2}\right) 
-  \omega^{\varepsilon} u^{\varepsilon\perp}.
\end{equation}
Then by \eqref{Eq:Newton}, \eqref{Eq:TrueInertia}, \eqref{Kir} and an integration by parts we obtain that, for $\kappa \in {\mathcal P}_{s}$ and $j \in \{1,2,3\}$, 
\begin{equation} \label{Def:IJ}
({\mathcal M}_{g} p')_{\kappa,j}   = -I_{\kappa,j} -J_{\kappa,j} - L_{\kappa,j} , 
\end{equation}
 where 
\begin{multline} \label{SayL}
I_{\kappa,j}:= \int_{{\mathcal F}({\bf q})} \partial_{t} u^{\varepsilon} \cdot \nabla \varphi_{\kappa,j} \, dx , \quad 
J_{\kappa,j}:= \int_{{\mathcal F}({\bf q})} \nabla \left(\frac{|u^{\varepsilon}|^2}{2}\right) \cdot \nabla \varphi_{\kappa,j} \, dx,  \\
\text{ and } \  
L_{\kappa,j}:= \int_{{\mathcal F}({\bf q})} \omega^{\varepsilon} u^{\varepsilon\perp} \cdot \nabla \varphi_{\kappa,j} \, dx .
\end{multline}
By \eqref{Eq:DecompMod} 
\begin{gather} \label{saiI}
I_{\kappa,j}=I_{\kappa,j}^{1} + I_{\kappa,j}^{2} + I_{\kappa,j}^{3} ,
\end{gather}
 where 
 \begin{gather} 
\label{saiI1}
I_{\kappa,j}^{1}:= \gamma_{\kappa} \int_{{\mathcal F}({\bf q})} \partial_{t} \nabla^{\perp} \widehat{\psi}_{\kappa} \cdot \nabla \varphi_{\kappa,j} \, dx ,
 \\ \label{saiI2}
I_{\kappa,j}^{2}:= \int_{{\mathcal F}({\bf q})} \partial_{t} \overline{u}_{\kappa}^{pot} \cdot \nabla \varphi_{\kappa,j} \, dx 
\ \text{ and } \\
\label{DecompI}
I_{\kappa,j}^{3}:= \int_{{\mathcal F}({\bf q})} \partial_{t} \overline{u}^{ext}_\kappa \cdot \nabla \varphi_{\kappa,j} \, dx .
\end{gather}
Concerning $J_{\kappa,j}$, we integrate by parts to obtain
\begin{equation} \label{Ecr1:J}
J_{\kappa,j}=\int_{\partial {\mathcal S}_{\kappa}({\bf q})} \frac{|u^{\varepsilon}|^2}{2} K_{\kappa,j} \, ds.
\end{equation}
Given two vector fields $a$ and $b$ on $\partial {\mathcal S}_{\kappa}$ we define
\begin{gather}
\label{pazencor}
Q_{\kappa,j}(a,b) : = \int_{\partial {\mathcal S}_{\kappa}({\bf q})} a \cdot b \, K_{\kappa,j} \, dx \ \text{ and } \ 
Q_{\kappa,j}(a) : = Q_{\kappa,j}(a,a). 
\end{gather}
By \eqref{Eq:DecompMod}, we obtain, for $\kappa \in {\mathcal P}_{s}$ and $j \in \{1,2,3\}$, 
\begin{equation} 
\label{saiJ}
J_{\kappa,j}=J_{\kappa,j}^{1} + J_{\kappa,j}^{2} + J_{\kappa,j}^{3} + J_{\kappa,j}^{4} + J_{\kappa,j}^{5} +J_{\kappa,j}^{6} ,
\end{equation}
where
\begin{eqnarray}
\label{saiJ1}
J_{\kappa,j}^{1} &:=& \frac{1}{2} Q_{\kappa,j}(\gamma_{\kappa} \nabla^{\perp} \widehat{\psi}_{\kappa}), \II \\ 
\label{saiJ2}
J_{\kappa,j}^{2} &:=& \gamma_{\kappa} Q_{\kappa,j}(\nabla^{\perp} \widehat{\psi}_{\kappa}, \overline{u}_{\kappa}^{pot} + \overline{u}^{ext}_\kappa - v_{{\mathcal S},\kappa}), \II \\
\label{saiJ3}
J_{\kappa,j}^{3} &:=& \gamma_{\kappa} Q_{\kappa,j}(\nabla^{\perp} \widehat{\psi}_{\kappa}, v_{{\mathcal S},\kappa}) , \II \\ 
\label{saiJ4}
J_{\kappa,j}^{4} &:=& \frac{1}{2} Q_{\kappa,j}(\overline{u}_{\kappa}^{pot}), \II \\
\label{saiJ5}
J_{\kappa,j}^{5} &:=& \frac{1}{2} Q_{\kappa,j}(\overline{u}^{ext}_\kappa), \II \\
\label{saiJ6} 
J_{\kappa,j}^{6} &:=& Q_{\kappa,j}(\overline{u}_{\kappa}^{pot}, \overline{u}^{ext}_\kappa) , 
\end{eqnarray}
where we recall that $v_{{\mathcal S},\kappa}$ is the $\kappa$-th solid vector field, see \eqref{Eq:SolidVelocity}. 
%
%
%%%%%%%%%%%%%%%
In order to reach \eqref{Eq:FormeNormale}, the rest of the proof consists in combining \eqref{Def:IJ}, \eqref{saiI} and \eqref{saiJ}, and 
regrouping and treating the various terms above, for $\kappa \in {\mathcal P}_{s}$ and $j \in \{1,2,3\}$, in the following way:
\begin{equation} \label{Def:IJ2}
- (M_{g} p')_{\kappa,j}
= \underbrace{ L_{\kappa,j} }_{ \text{Lemma~\ref{Lem:L}} }
+  \underbrace{ J_{\kappa,j}^{1} }_{ \text{Lemma~\ref{Lem:J1}} }
+  \underbrace{ I_{\kappa,j}^{1}+J_{\kappa,j}^{3} }_{ \text{Lemma~\ref{Lem:I1J3}} }
+  \underbrace{ J_{\kappa,j}^{5} }_{ \text{Lemma~\ref{Lem:I3J5bis}} }
+  \underbrace{ I_{\kappa,j}^{3} }_{ \text{Lemma~\ref{Lem:I3J5}} }
+  \underbrace{ J_{\kappa,j}^{2} }_{ \text{Lemma~\ref{Lem:J2}} }
+  \underbrace{ I_{\kappa,j}^{2} + J_{\kappa,j}^{4} + J_{\kappa,j}^{6} }_{ \text{Lemma~\ref{Lem:I2J4J6}} }.
\end{equation}
For the rest of this section we fix $\kappa \in {\mathcal P}_{s}$ and $j \in \{1,2,3\}$. 
%
%
%%%%%%%%%%%%%%%%%%%%%%%%
%
%
%
\subsection{Treatment of the simplest terms}
We start with the term $L_{\kappa,j}$ defined in \eqref{SayL}, recalling that a term is said weakly nonlinear when it satisfies the same inequality than  $A_{\kappa}$ in  \eqref{Eq:WNL}.
\begin{Lemma} \label{Lem:L}
The term $L_{\kappa,j}$ is weakly nonlinear. 
\end{Lemma}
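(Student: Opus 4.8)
The term to estimate is
\[
L_{\kappa,j} = \int_{{\mathcal F}({\bf q})} \omega^{\varepsilon} u^{\varepsilon\perp} \cdot \nabla \varphi_{\kappa,j} \, dx,
\]
and we must show it satisfies $|L_{\kappa,j}(t)| \leq K \varepsilon_{\kappa}^{2+\delta_{j3}}(1+|\widehat{\bf p}(t)|)$, i.e. the bound \eqref{Eq:WNL}. The key structural fact is that, because $(\boldsymbol\varepsilon,{\bf q},\omega) \in \mathfrak{Q}_{\delta}^{\varepsilon_{0}}$, the support of $\omega^{\varepsilon}$ stays at distance $\geq 2\delta$ from $\partial {\mathcal S}_{\kappa}^{\varepsilon}$; hence the integrand is supported in the region $\{ d(x, {\mathcal S}_{\kappa}) \geq 2\delta \}$, where the Kirchhoff gradient $\nabla \varphi_{\kappa,j}$ enjoys the far-field decay from \eqref{Eq:ExpKirchhoff1Bis}, namely $\nabla \varphi_{\kappa,j}(x) = {\mathcal O}(\varepsilon_{\kappa}^{2+\delta_{j3}}/|x-h_{\kappa}|^2)$, so that in particular $\|\nabla \varphi_{\kappa,j}\|_{L^\infty(\Supp \omega^\varepsilon)} \leq C \varepsilon_{\kappa}^{2+\delta_{j3}}$.

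\textbf{First step: control of $\omega^{\varepsilon} u^{\varepsilon}$ on the support.} I would bound $\|\omega^{\varepsilon} u^{\varepsilon}\|_{L^\infty({\mathcal F}({\bf q}))}$ exactly as in the proof of Lemma~\ref{Lem:UextGlob} (see \eqref{Num41}): decompose $u^\varepsilon$ via the standard decomposition \eqref{Eq:DecompUeps} (or equivalently \eqref{Eq:DecompGlob}), and estimate each piece on $\Supp \omega^{\varepsilon}$. The potential part $u^{pot}$ is controlled by the energy estimates of Proposition~\ref{Pro:APEE} together with \eqref{Eq:ExpKirchhoff1Bis}: writing $u^{pot} = \sum_{\nu,i} p_{\nu,i} \nabla \varphi_{\nu,i}$ and using $\varepsilon_\nu^{\delta_{i3}} |p_{\nu,i}| = |\widehat p_{\nu,i}|$ bounded, each term $p_{\nu,i}\nabla\varphi_{\nu,i}$ is ${\mathcal O}(1)$ away from ${\mathcal S}_\nu$. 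The circulation part $\sum_\nu \gamma_\nu \nabla^\perp \widehat\psi_\nu$ is bounded on $\Supp \omega^\varepsilon$ by Lemmas~\ref{Lem:StandaloneCSF} and \ref{Lem:WBorne} (the support is at positive distance from every solid), and the remaining exterior-type contributions are bounded by \eqref{Eq:EstUextGlob}. Since $\|\omega^{\varepsilon}\|_{L^\infty}$ is conserved and bounded by Lemma~\ref{Lem:Vorticite}, we get $\|\omega^{\varepsilon} u^{\varepsilon}\|_{L^\infty} \leq C$.

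\textbf{Second step: combine.} With $|\Supp \omega^\varepsilon| \leq |\Omega| < \infty$ and $\|\nabla\varphi_{\kappa,j}\|_{L^\infty(\Supp\omega^\varepsilon)} \leq C\varepsilon_\kappa^{2+\delta_{j3}}$, Hölder's inequality gives
\[
|L_{\kappa,j}| \leq \|\omega^{\varepsilon} u^{\varepsilon}\|_{L^\infty({\mathcal F})} \, |\Omega| \, \|\nabla \varphi_{\kappa,j}\|_{L^\infty(\Supp \omega^\varepsilon)} \leq C \varepsilon_{\kappa}^{2+\delta_{j3}},
\]
which is even stronger than \eqref{Eq:WNL} (no $|\widehat{\bf p}|$ factor is needed). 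There is essentially no obstacle here: the only point requiring care is to use the far-field Kirchhoff decay rather than the crude bound $\|\nabla\varphi_{\kappa,j}\|_{L^\infty({\mathcal F})} = {\mathcal O}(\varepsilon_\kappa^{\delta_{j3}})$, which would be too weak; this is exactly where the hypothesis $(\boldsymbol\varepsilon,{\bf q},\omega)\in\mathfrak{Q}_\delta^{\varepsilon_0}$ (remoteness of $\Supp\omega$ from the small solids) is used. This is the same computation as the estimate of the term $T_7$ in the proof of Proposition~\ref{Pro:Acceleration}, so one may simply refer to that argument.
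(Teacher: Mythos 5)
Your proof is correct and follows essentially the same path as the paper's: the paper cites \eqref{Num41} (the bound $\| u^\varepsilon \omega^\varepsilon \|_{L^\infty} \leq C$, which you re-derive) together with the far-field decay of $\nabla\varphi_{\kappa,j}$ from Proposition~\ref{Pro:ExpKirchhoff}, plus the remoteness of $\Supp\omega^\varepsilon$ from ${\mathcal S}_\kappa$ guaranteed by $(\boldsymbol\varepsilon,{\bf q},\omega)\in\mathfrak{Q}_\delta^{\varepsilon_0}$. Your observation that the resulting bound ${\mathcal O}(\varepsilon_\kappa^{2+\delta_{j3}})$ is independent of $|\widehat{\bf p}|$ is accurate, and sharper than what is recorded for the analogous term $T_7$ in the proof of Proposition~\ref{Pro:Acceleration} (where ${\mathcal O}(\varepsilon_\kappa^{1+\delta_{j3}})$ is stated, which is sufficient there but not optimal).
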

\begin{proof}[Proof of Lemma \ref{Lem:L}]
This is an immediate consequence of \eqref{Num41} and
Proposition~\ref{Pro:ExpKirchhoff}, since, due to $(\boldsymbol\varepsilon,{\bf q},\omega) \in \mathfrak{Q}_\delta^{\varepsilon_{0}}$, the support of the vorticity is at distance more than $\delta$ from $\partial {\mathcal S}_{\kappa}$.
\end{proof}
For the term $J_{\kappa,j}^{1} $ defined in \eqref{saiJ1}, \eqref{Eq:QuadraticPsi} has established the following result. 
%. 
%
\begin{Lemma} \label{Lem:J1}
One has $J_{\kappa,j}^{1}=0$.
\end{Lemma}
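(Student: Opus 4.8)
\textbf{Plan of proof for Lemma~\ref{Lem:J1}.} The claim is that $J_{\kappa,j}^{1}= \tfrac12 Q_{\kappa,j}(\gamma_\kappa \nabla^\perp \widehat\psi_\kappa) = 0$, i.e. that the quadratic expression $\tfrac12\int_{\partial \mathcal S_\kappa} |\gamma_\kappa \nabla^\perp \widehat\psi_\kappa|^2 K_{\kappa,j}\, ds$ vanishes for $j=1,2,3$. This is precisely the content of \eqref{Eq:QuadraticPsi}, which was already recorded (with the remark that it follows from Blasius' lemma, cf. \cite[p.~511]{GLS}, or alternatively from Lamb's lemma stated later in the paper as Lemma~\ref{Lem:Lamb}). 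So the proof is a one-line invocation of \eqref{Eq:QuadraticPsi} together with the definitions \eqref{saiJ1} and \eqref{pazencor}.

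Concretely, I would write: by the definition \eqref{saiJ1} of $J_{\kappa,j}^{1}$ and the notation \eqref{pazencor} for $Q_{\kappa,j}$, one has $J_{\kappa,j}^{1}= \tfrac12\int_{\partial\mathcal S_\kappa(\mathbf q)} |\gamma_\kappa\nabla^\perp\widehat\psi_\kappa|^2 K_{\kappa,j}\, ds = \tfrac{\gamma_\kappa^2}{2}\int_{\partial\mathcal S_\kappa} |\nabla^\perp\widehat\psi_\kappa|^2 K_{\kappa,j}\, ds$, and this last integral is zero for every $j\in\{1,2,3\}$ by \eqref{Eq:QuadraticPsi}. Hence $J_{\kappa,j}^{1}=0$.

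There is essentially no obstacle here: the substantive identity \eqref{Eq:QuadraticPsi} is quoted from the excerpt as already established, and the lemma is just the bookkeeping remark that this makes the term $J^1_{\kappa,j}$ in the decomposition \eqref{saiJ} of $J_{\kappa,j}$ disappear. The only thing to be careful about is matching the $\gamma_\kappa$ factors and the domain notation, which is immediate from \eqref{saiJ1}. If one wanted to be self-contained one could recall that \eqref{Eq:QuadraticPsi} expresses the vanishing of the net force and torque exerted by the purely circulatory standalone flow on its own generating body — a classical consequence of the Blasius formula applied to the harmonic, square-integrable field $\partial_1\widehat\psi_\kappa - i\,\partial_2\widehat\psi_\kappa$ with leading Laurent coefficient $\tfrac{1}{2i\pi z}$ (see \eqref{Eq:PsiLS}) — but for the purposes of this lemma the reference to \eqref{Eq:QuadraticPsi} suffices.

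Thus the proof is:
\begin{proof}[Proof of Lemma~\ref{Lem:J1}]
By \eqref{saiJ1} and the notation \eqref{pazencor}, $J_{\kappa,j}^{1}= \frac{\gamma_{\kappa}^2}{2} \int_{\partial {\mathcal S}_{\kappa}({\bf q})} |\nabla^{\perp} \widehat{\psi}_{\kappa}|^2 K_{\kappa,j} \, ds$, which vanishes for $j \in \{1,2,3\}$ according to \eqref{Eq:QuadraticPsi}.
\end{proof}
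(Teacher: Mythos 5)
Your proof is correct and takes exactly the same route as the paper: the paper also simply notes that \eqref{Eq:QuadraticPsi} (established earlier in the treatment of the term $T_4$, via Blasius' or Lamb's lemma) immediately yields the vanishing of $J^1_{\kappa,j}$.
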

Next we combine the $I_{\kappa,j}^{1}$ defined in \eqref{saiI1} and the term $J_{\kappa,j}^{3}$ defined in \eqref{saiJ3}. 
\begin{Lemma} \label{Lem:I1J3}
One has $I_{\kappa,j}^{1}+J_{\kappa,j}^{3}=0$.
\end{Lemma}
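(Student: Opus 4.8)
\textbf{Proof plan for Lemma~\ref{Lem:I1J3}.}

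The plan is to show that the two integrals cancel pointwise by exploiting the key identity \eqref{Eq:HatPsiTourne}, namely $\partial_t \nabla^\perp\widehat\psi_\kappa + \nabla(v_{{\mathcal S},\kappa}\cdot\nabla^\perp\widehat\psi_\kappa)=0$, which was already established in the treatment of the term $T_2$ in the proof of Proposition~\ref{Pro:Acceleration}. First I would rewrite $I_{\kappa,j}^1$, defined in \eqref{saiI1}, using this identity: since $\partial_t\nabla^\perp\widehat\psi_\kappa = -\nabla(v_{{\mathcal S},\kappa}\cdot\nabla^\perp\widehat\psi_\kappa)$, we get
\begin{equation*}
I_{\kappa,j}^1 = \gamma_\kappa \int_{{\mathcal F}({\bf q})} \partial_t\nabla^\perp\widehat\psi_\kappa\cdot\nabla\varphi_{\kappa,j}\,dx = -\gamma_\kappa \int_{{\mathcal F}({\bf q})} \nabla(v_{{\mathcal S},\kappa}\cdot\nabla^\perp\widehat\psi_\kappa)\cdot\nabla\varphi_{\kappa,j}\,dx.
\end{equation*}
Then I would integrate by parts, using that $\varphi_{\kappa,j}$ is harmonic in ${\mathcal F}({\bf q})$, so that $\int_{{\mathcal F}}\nabla f\cdot\nabla\varphi_{\kappa,j} = \int_{\partial{\mathcal F}} f\,\partial_n\varphi_{\kappa,j}\,ds = \int_{\partial{\mathcal F}} f\,K_{\kappa,j}\,ds$ by the Neumann boundary condition \eqref{Kir2}. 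Since $K_{\kappa,j}$ vanishes on $\partial{\mathcal F}({\bf q})\setminus\partial{\mathcal S}_\kappa$ by \eqref{def-xi-j}, only the boundary term on $\partial{\mathcal S}_\kappa$ survives, giving
\begin{equation*}
I_{\kappa,j}^1 = -\gamma_\kappa\int_{\partial{\mathcal S}_\kappa({\bf q})} (v_{{\mathcal S},\kappa}\cdot\nabla^\perp\widehat\psi_\kappa)\,K_{\kappa,j}\,ds.
\end{equation*}

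Next I would identify this with $-J_{\kappa,j}^3$. Recalling the definitions \eqref{pazencor} and \eqref{saiJ3}, we have $J_{\kappa,j}^3 = \gamma_\kappa Q_{\kappa,j}(\nabla^\perp\widehat\psi_\kappa, v_{{\mathcal S},\kappa}) = \gamma_\kappa\int_{\partial{\mathcal S}_\kappa({\bf q})} \nabla^\perp\widehat\psi_\kappa\cdot v_{{\mathcal S},\kappa}\,K_{\kappa,j}\,ds$, which is exactly $-I_{\kappa,j}^1$. Hence $I_{\kappa,j}^1 + J_{\kappa,j}^3 = 0$, as claimed. The only points requiring a word of care are: (i) that $\nabla^\perp\widehat\psi_\kappa$ and its time derivative are smooth up to $\partial{\mathcal F}({\bf q})$, so that the integration by parts is legitimate — this follows from $\widehat\psi_\kappa$ being harmonic in $\R^2\setminus{\mathcal S}_\kappa$ and smooth up to $\partial{\mathcal S}_\kappa$, together with the smoothness on the other components of $\partial{\mathcal F}$ coming from their positive distance to ${\mathcal S}_\kappa$; and (ii) the sign bookkeeping between $\nabla^\perp$ and the inner products, which is straightforward. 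I do not anticipate any real obstacle here — the lemma is a direct consequence of the already-proven transport identity \eqref{Eq:HatPsiTourne} and the harmonicity of the Kirchhoff potentials, and the argument essentially reproduces the computation of $T_2$ in the standalone ($\nu=\kappa$) case but kept as an exact identity rather than estimated.
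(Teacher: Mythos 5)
Your proof is correct and takes essentially the same route as the paper: both rely on the transport identity \eqref{Eq:HatPsiTourne} together with a single integration by parts against the harmonic Kirchhoff potential $\varphi_{\kappa,j}$ (whose normal derivative $K_{\kappa,j}$ is supported on $\partial{\mathcal S}_\kappa$). The only cosmetic difference is that you convert $I_{\kappa,j}^1$ from a bulk integral to a boundary integral and match it against $J_{\kappa,j}^3$, whereas the paper converts $J_{\kappa,j}^3$ from the boundary into the bulk and combines — the same computation read in opposite directions.
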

\begin{proof}[Proof of Lemma~\ref{Lem:I1J3}]
We have
\begin{eqnarray*}
I_{\kappa,j}^{1} + J_{\kappa,j}^{3} &=& \gamma_{\kappa} \int_{{\mathcal F}({\bf q})} \partial_{t} \nabla^{\perp} \widehat{\psi}_{\kappa} \cdot \nabla \varphi_{\kappa,j} \, dx 
+  \gamma_{\kappa} \int_{{\mathcal S}_{\kappa}({\bf q})} v_{{\mathcal S},\kappa} \cdot \nabla^{\perp} \widehat{\psi}_{\kappa} \, K_{\kappa,j} \, dx  \\
&=& \gamma_{\kappa} \int_{{\mathcal F}({\bf q})}
\Big[ \partial_{t} \nabla^{\perp} \widehat{\psi}_{\kappa} + \nabla \left( v_{{\mathcal S},\kappa} \cdot \nabla^{\perp} \widehat{\psi}_{\kappa} \right) \Big]
\cdot \nabla \varphi_{\kappa,j} \, dx .
\end{eqnarray*}
We conclude with \eqref{Eq:HatPsiTourne}.
\end{proof}
For the term $J_{\kappa,j}^{5} $ defined in \eqref{saiJ5}, we have the following result. 
\begin{Lemma} \label{Lem:I3J5bis}
The expression $J_{\kappa,j}^{5}$ is weakly nonlinear. 
\end{Lemma}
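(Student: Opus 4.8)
\textbf{Plan of proof for Lemma~\ref{Lem:I3J5bis}.} The goal is to show that $J_{\kappa,j}^{5} := \frac{1}{2} Q_{\kappa,j}(\overline{u}^{ext}_\kappa)$ satisfies the weak nonlinearity bound $|J_{\kappa,j}^{5}| \leq K \varepsilon_{\kappa}^{2+\delta_{j3}}(1+|\widehat{\bf p}|)$. The whole point is that $\overline{u}^{ext}_\kappa$ is, on $\partial {\mathcal S}_{\kappa}$, close to a vector field which is \emph{tangent} to $\partial {\mathcal S}_{\kappa}$ up to $\mathcal{O}(\varepsilon_{\kappa})$, while $Q_{\kappa,j}$ is a quadratic form built from the normal trace $K_{\kappa,j}$. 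First I would recall from \eqref{Eq:TildeUExt} that $\overline{u}^{ext}_\kappa = u^{ext}_\kappa + \sum_{i=1,2}(\alpha_{\kappa,i}+\beta_{\kappa,i})\nabla\varphi_{\kappa,i}$, and from the approximation result Proposition~\ref{Pro:ExistsApprox} together with \eqref{Eq:DecompV}, \eqref{Eq:ParamModulation} that
\begin{equation*}
u^{ext}_\kappa = (\mathrm{Id} - \mathrm{Kir}_{\kappa})V_{\kappa} + \varepsilon_{\kappa}^2 u^r_{\kappa}, \qquad V_{\kappa} = \sum_{i\in\{1,2,4,5\}} V_{\kappa,i}\,\xi_{\kappa,i},
\end{equation*}
so that
\begin{equation*}
\overline{u}^{ext}_\kappa = V_{\kappa} - \mathrm{Kir}_{\kappa}V_{\kappa} + \sum_{i=1,2}(\alpha_{\kappa,i}+\beta_{\kappa,i})\nabla\varphi_{\kappa,i} + \varepsilon_{\kappa}^2 u^r_{\kappa} \quad \text{on } \partial{\mathcal S}_{\kappa}.
\end{equation*}
Since $\alpha_{\kappa,i}=V_{\kappa,i}$ and $\mathrm{Kir}_{\kappa}(\xi_{\kappa,i})=\nabla\varphi_{\kappa,i}$, the $i=1,2$ pieces of $\mathrm{Kir}_\kappa V_\kappa$ cancel against the $\alpha_{\kappa,i}\nabla\varphi_{\kappa,i}$ terms, leaving on $\partial{\mathcal S}_\kappa$
\begin{equation*}
\overline{u}^{ext}_\kappa = \sum_{i=1,2} V_{\kappa,i}\,\xi_{\kappa,i} + \sum_{i=4,5} V_{\kappa,i}(\xi_{\kappa,i} - \nabla\varphi_{\kappa,i}) + \sum_{i=1,2}\beta_{\kappa,i}\nabla\varphi_{\kappa,i} + \varepsilon_{\kappa}^2 u^r_{\kappa}.
\end{equation*}

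Next I would estimate each piece of this decomposition in $L^\infty(\partial{\mathcal S}_\kappa)$, recording also that $K_{\kappa,j} = {\mathcal O}(\varepsilon_\kappa^{\delta_{j3}})$ on $\partial{\mathcal S}_\kappa$ and that $|\partial{\mathcal S}_\kappa| = {\mathcal O}(\varepsilon_\kappa)$. The constant-vector part $\sum_{i=1,2}V_{\kappa,i}\xi_{\kappa,i}$ equals the fixed vector $\widecheck u_\kappa({\bf q},h_\kappa)$, which is tangent to $\partial{\mathcal S}_\kappa$ only up to $\mathcal{O}(\varepsilon_\kappa)$ (the curve $\partial{\mathcal S}_\kappa$ has size $\varepsilon_\kappa$ around $h_\kappa$); more precisely, using the Lamb-type identity $\int_{\partial{\mathcal S}_\kappa} a\cdot b\, K_{\kappa,j}\,ds$ for affine/constant fields, one checks that $Q_{\kappa,j}$ of a constant vector field is ${\mathcal O}(\varepsilon_\kappa^{2+\delta_{j3}})$ — this is the key cancellation, analogous to \eqref{Eq:QuadraticPsi} (one may also invoke Lamb's lemma, announced as Lemma~\ref{Lem:Lamb} below, exactly as done for $T_4$). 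For the remaining pieces I use the bounds at hand: $|V_{\kappa,i}|\leq C$ and $\varepsilon_\kappa\|\nabla\varphi_{\kappa,i}^\perp\|$-type bounds give $\|\xi_{\kappa,i}-\nabla\varphi_{\kappa,i}\|_{L^\infty(\partial{\mathcal S}_\kappa)} \leq C\varepsilon_\kappa^{2+\delta_{i\geq3}}$ from \eqref{Eq:DiffKir}; $|\beta_{\kappa,i}|\leq C\varepsilon_\kappa$ from \eqref{Eq:BoundAlphaBeta} combined with $\|\nabla\varphi_{\kappa,i}\|_{L^\infty(\partial{\mathcal S}_\kappa)} = {\mathcal O}(1)$ from Proposition~\ref{Pro:ExpKirchhoff}; and $\|u^r_\kappa\|_{L^\infty(\partial{\mathcal S}_\kappa)} \leq C$ from \eqref{Eq:EstApprox1}. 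Squaring (bilinearity of $Q_{\kappa,j}$) and multiplying by $K_{\kappa,j}={\mathcal O}(\varepsilon_\kappa^{\delta_{j3}})$ and $|\partial{\mathcal S}_\kappa|={\mathcal O}(\varepsilon_\kappa)$, every cross term and square is seen to be ${\mathcal O}(\varepsilon_\kappa^{2+\delta_{j3}})$ — for instance the worst non-cancelling contribution, $\varepsilon_\kappa^2 u^r_\kappa$ against the tangential part, gives $\varepsilon_\kappa^2\cdot 1 \cdot \varepsilon_\kappa^{\delta_{j3}}\cdot\varepsilon_\kappa = {\mathcal O}(\varepsilon_\kappa^{3+\delta_{j3}})$.

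Assembling these estimates yields $|J_{\kappa,j}^{5}| \leq K\varepsilon_\kappa^{2+\delta_{j3}}$, which is even slightly stronger than the weak nonlinearity bound \eqref{Eq:WNL} (no $|\widehat{\bf p}|$ dependence is needed here since $\overline{u}^{ext}_\kappa$ on $\partial{\mathcal S}_\kappa$ is controlled purely by $V_\kappa$ and $u^r_\kappa$, whose $L^\infty(\partial{\mathcal S}_\kappa)$ bounds are uniform). The main obstacle is the first step: correctly tracking the cancellation between $V_\kappa$, the Kirchhoff images $\mathrm{Kir}_\kappa V_\kappa$, and the modulation terms $\alpha_{\kappa,i},\beta_{\kappa,i}$, so that after cancellation only genuinely small or Lamb-cancelling quantities remain on $\partial{\mathcal S}_\kappa$; once the decomposition above is in place, the estimates are routine scaling arguments using the results of Sections~\ref{Sec:Expansions}–\ref{Sec:Modulations}.
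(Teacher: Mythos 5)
Your proposal follows essentially the same path as the paper: expand $\overline{u}^{ext}_\kappa$ via \eqref{ApproxTildeUeps}, peel off the correction terms by $L^\infty$ estimates and the scaling of $K_{\kappa,j}$ and $|\partial {\mathcal S}_\kappa|$, and rely on a cancellation for the leading quadratic term. The one stylistic difference is that the paper keeps $Q_{\kappa,j}(V_\kappa)$ together and converts it, by the divergence theorem, into the interior integral $\int_{{\mathcal S}_\kappa}\div(|V_\kappa|^2\,\xi_{\kappa,j})\,dx={\mathcal O}(\varepsilon_\kappa^{2+\delta_{j3}})$ in one stroke, whereas you first split $V_\kappa$ into its constant ($i=1,2$) and affine ($i=4,5$) parts and treat only the constant part by the cancellation $\int_{\partial{\mathcal S}_\kappa}K_{\kappa,j}\,ds=0$ — both work, but the paper's version is slightly cleaner and does not require a separate cross-term bookkeeping.

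One small slip to flag: you attribute the bound $\|\xi_{\kappa,i}-\nabla\varphi_{\kappa,i}\|_{L^\infty(\partial{\mathcal S}_\kappa)}\leq C\varepsilon_\kappa^{2+\delta_{i\geq3}}$ to \eqref{Eq:DiffKir}, but that estimate compares $\nabla\varphi_{\kappa,i}$ with the standalone potential $\nabla\widehat\varphi_{\kappa,i}$, not with $\xi_{\kappa,i}$; for $i=4,5$ the quantity $\xi_{\kappa,i}-\nabla\varphi_{\kappa,i}$ is only ${\mathcal O}(\varepsilon_\kappa)$ on $\partial{\mathcal S}_\kappa$ (both pieces individually being ${\mathcal O}(\varepsilon_\kappa)$ there). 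This weaker bound is still enough, since the term $V_{\kappa,i}(\xi_{\kappa,i}-\nabla\varphi_{\kappa,i})$ then enters the $b$-part of your splitting as ${\mathcal O}(\varepsilon_\kappa)$ and the cross term $Q_{\kappa,j}(a,b)={\mathcal O}(1)\cdot{\mathcal O}(\varepsilon_\kappa)\cdot{\mathcal O}(\varepsilon_\kappa^{\delta_{j3}})\cdot{\mathcal O}(\varepsilon_\kappa)={\mathcal O}(\varepsilon_\kappa^{2+\delta_{j3}})$ — so the argument goes through, but you should track that this cross term, not the $u^r_\kappa$ one, is the extremal contribution.
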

\begin{proof}[Proof of Lemma~\ref{Lem:I3J5bis}]
 By Proposition~\ref{Pro:ExistsApprox} and \eqref{Eq:TildeUExt}, 
\begin{equation}
\nonumber %\label{ApproxTildeUeps1}
\overline{u}^{ext}_\kappa = (\mbox{Id} - \mbox{Kir}_{\kappa}) V_{\kappa} + \varepsilon_{\kappa}^2 u^{r}_{\kappa}
+ \sum_{k=1}^2 (\alpha_{\kappa,k} + \beta_{\kappa,k}) \nabla \varphi_{\kappa,k}   \text{ in } {\mathcal F}.
\end{equation}
Using \eqref{Eq:ParamModulation}, we obtain
\begin{equation} \label{ApproxTildeUeps}
\overline{u}^{ext}_\kappa = V_{\kappa}
+ \sum_{k=1}^2 \beta_{\kappa,k} \nabla \varphi_{\kappa,k} 
-  \sum_{k=4}^5 V_{\kappa,k} \nabla \varphi_{\kappa,k}
+ \varepsilon_{\kappa}^2 u^{r}_{\kappa}   \text{ in } {\mathcal F}.
\end{equation}
Using \eqref{Eq:BoundAlphaBeta},  \eqref{pazencor},
$\| \xi_{\kappa,k} \|_{L^{\infty}(\partial S_{\kappa})}= {\mathcal O}(\varepsilon_{\kappa}) $ for $ k=4,5$, 
 $|\partial {\mathcal S}_{\kappa}|= {\mathcal O}(\varepsilon_{\kappa})$ and  \eqref{Eq:ExpKirchhoff1Bis} 
we see that
\begin{equation*}
J_{\kappa,j}^{5} = Q_{\kappa,j}(V_{\kappa}) + {\mathcal O}(\varepsilon_{\kappa}^{2+\delta_{j3}}) . 
\end{equation*}
Now integrating by parts inside ${\mathcal S}_{\kappa}$, we obtain
\begin{equation*}
Q_{\kappa,j}(V_{\kappa}) =
\int_{{\mathcal S}_{\kappa}} \div(|V_{\kappa}|^2 \xi_{\kappa, j}) \, dx  
={\mathcal O}(\varepsilon_{\kappa}^{2+\delta_{j3}}),
\end{equation*}
which concludes  the proof of Lemma \ref{Lem:I3J5bis}.
\end{proof}
%
%
%
%
%%%%%%%%%%%%%%%%%%%%%%%%%%%%%%%%
%
%
%
%
%
\subsection{Exterior acceleration term}
Here we deal with the exterior acceleration term $I_{\kappa,j}^{3}$ defined in \eqref{DecompI}.
\begin{Lemma} \label{Lem:I3J5}
The term $I_{\kappa,j}^{3}$  is weakly nonlinear.
\end{Lemma}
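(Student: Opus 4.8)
\textbf{Proof plan for Lemma~\ref{Lem:I3J5}.}
The starting point is the definition $I_{\kappa,j}^{3} = \int_{{\mathcal F}({\bf q})} \partial_{t} \overline{u}^{ext}_\kappa \cdot \nabla \varphi_{\kappa,j} \, dx$. First I would integrate by parts, using $\div \nabla \varphi_{\kappa,j} = 0$ and the normal trace $\partial_n \varphi_{\kappa,j} = K_{\kappa,j}$, so that
\begin{equation*}
I_{\kappa,j}^{3} = \int_{\partial {\mathcal F}({\bf q})} \partial_{t} \overline{u}^{ext}_\kappa \cdot n \, \varphi_{\kappa,j} \, ds .
\end{equation*}
Since the only boundary component where $\varphi_{\kappa,j}$ is large is $\partial {\mathcal S}_{\kappa}$ (where it is ${\mathcal O}(\varepsilon_{\kappa}^{1+\delta_{j3}})$ by Remark~\ref{Rem:NormalisationKirchhoff}/\eqref{Eq:ExpKirchhoffNormalises}, while on the rest of $\partial {\mathcal F}$ it is ${\mathcal O}(\varepsilon_{\kappa}^{2+\delta_{j3}})$), the heart of the matter is to bound $\partial_t \overline{u}^{ext}_\kappa \cdot n$ on $\partial {\mathcal S}_{\kappa}$ and on $\partial {\mathcal F} \setminus \partial {\mathcal S}_{\kappa}$ respectively, and to gain one extra power of $\varepsilon_{\kappa}$ on $\partial {\mathcal S}_{\kappa}$ from the measure $|\partial {\mathcal S}_{\kappa}| = {\mathcal O}(\varepsilon_{\kappa})$.

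The key input is the approximation \eqref{ApproxTildeUeps}, which I would differentiate in time:
\begin{equation*}
\partial_t \overline{u}^{ext}_\kappa = \partial_t V_{\kappa} + \sum_{k=1}^2 \big( \beta_{\kappa,k}' \nabla \varphi_{\kappa,k} + \beta_{\kappa,k} \partial_t \nabla \varphi_{\kappa,k} \big) - \sum_{k=4}^5 \big( V_{\kappa,k}' \nabla \varphi_{\kappa,k} + V_{\kappa,k} \partial_t \nabla \varphi_{\kappa,k} \big) + \varepsilon_{\kappa}^2 \partial_t u^{r}_{\kappa} .
\end{equation*}
Now I estimate each contribution on $\partial {\mathcal S}_{\kappa}$. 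For the $\partial_t V_{\kappa}$ term: $V_{\kappa} \in {\mathcal K}_{\kappa,s}$, so $\partial_t V_{\kappa} = \sum_{i\in\{1,2,4,5\}} V_{\kappa,i}' \xi_{\kappa,i} + \sum_i V_{\kappa,i} \partial_t \xi_{\kappa,i}$; using $\|\xi_{\kappa,i}\|_{L^\infty(\partial {\mathcal S}_{\kappa})} = {\mathcal O}(1)$ for $i=1,2$ and ${\mathcal O}(\varepsilon_{\kappa})$ for $i=4,5$, the bounds $|V_{\kappa,i}'| \leq C(1+|\widehat{\bf p}|)$ and $|V_{\kappa,i}| \leq C$ from \eqref{Eq:EstApprox2B}--\eqref{Eq:EstApprox1} (together with $|\partial_t \xi_{\kappa,i}| \leq C|h_\kappa'| \leq C|\widehat{\bf p}|$ on $\partial {\mathcal S}_{\kappa}$), this is ${\mathcal O}(1+|\widehat{\bf p}|)$ there; integrated against $\varphi_{\kappa,j}$ over $\partial {\mathcal S}_{\kappa}$ it yields ${\mathcal O}(\varepsilon_{\kappa}^{2+\delta_{j3}})(1+|\widehat{\bf p}|)$ after using $|\partial {\mathcal S}_\kappa|={\mathcal O}(\varepsilon_\kappa)$. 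For the $\beta$ terms: $|\beta_{\kappa,k}| \leq C\varepsilon_{\kappa}$ and $|\beta_{\kappa,k}'| \leq C(1+|\widehat{\bf p}|)$ (differentiating \eqref{Eq:ParamModulation}, using $\partial_t \zeta^\varepsilon_\kappa = {\mathcal O}(\varepsilon_\kappa |\vartheta_\kappa'|)$ from \eqref{Eq:Zeta} and \eqref{Eq:EstApprox2B}), while $\nabla\varphi_{\kappa,k} = {\mathcal O}(1)$ and $\partial_t \nabla \varphi_{\kappa,k} = {\mathcal O}(\varepsilon_{\kappa}^{-1})(1+|\widehat{\bf p}|)$ on $\partial {\mathcal S}_{\kappa}$ by Proposition~\ref{Pro:ExpShapeDerivatives} (via $\partial_t \nabla \varphi_{\kappa,k} = \sum_{\mu,m} p_{\mu,m} \partial_{q_{\mu,m}}\nabla\varphi_{\kappa,k}$ and \eqref{Eq:EstDerivKirchhoff1,5}, noting $\varepsilon_\mu^{\delta_{m3}}|p_{\mu,m}| = |\widehat p_{\mu,m}|$); the worst term is $\beta_{\kappa,k} \partial_t \nabla\varphi_{\kappa,k} = {\mathcal O}(1)(1+|\widehat{\bf p}|)$ on $\partial {\mathcal S}_{\kappa}$, again giving ${\mathcal O}(\varepsilon_{\kappa}^{2+\delta_{j3}})(1+|\widehat{\bf p}|)$. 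The $V_{\kappa,k}$ terms, $k=4,5$: here $\nabla\varphi_{\kappa,k} = {\mathcal O}(\varepsilon_{\kappa})$ on $\partial {\mathcal S}_\kappa$ and $\partial_t \nabla\varphi_{\kappa,k} = {\mathcal O}(1)(1+|\widehat{\bf p}|)$, with $|V_{\kappa,k}| \leq C$, $|V_{\kappa,k}'| \leq C(1+|\widehat{\bf p}|)$, so both are ${\mathcal O}(\varepsilon_\kappa)(1+|\widehat{\bf p}|)$ on the boundary, hence ${\mathcal O}(\varepsilon_{\kappa}^{2+\delta_{j3}})(1+|\widehat{\bf p}|)$ after integration. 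Finally the $\varepsilon_{\kappa}^2 \partial_t u^{r}_{\kappa}$ term: by \eqref{Eq:EstApprox2B}, $\varepsilon_{\kappa}\|\partial_t u^{r}_\kappa\|_{C^0(\partial {\mathcal S}_\kappa)} \leq C(1+|\widehat{\bf p}|)$, so $\varepsilon_{\kappa}^2 \partial_t u^{r}_\kappa = {\mathcal O}(\varepsilon_{\kappa})(1+|\widehat{\bf p}|)$ on $\partial {\mathcal S}_\kappa$, giving again the desired order.

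For the contribution of $\partial {\mathcal F}\setminus\partial {\mathcal S}_{\kappa}$, since $\varphi_{\kappa,j} = {\mathcal O}(\varepsilon_{\kappa}^{2+\delta_{j3}})$ there, it suffices to know $\|\partial_t \overline{u}^{ext}_\kappa\|_{L^1(\partial {\mathcal F}\setminus\partial {\mathcal S}_\kappa)} = {\mathcal O}(1+|\widehat{\bf p}|)$. I would get this from \eqref{Eq:TildeUExt}: $\overline{u}^{ext}_\kappa = u^{ext}_\kappa + \sum_{j=1}^2(\alpha_{\kappa,j}+\beta_{\kappa,j})\nabla\varphi_{\kappa,j}$, where $\partial_t u^{ext}_\kappa$ is controlled on ${\mathcal V}_\delta(\partial {\mathcal F})\setminus\bigcup_{\nu\in{\mathcal P}_s}{\mathcal V}_{\delta/2}(\partial {\mathcal S}_\nu)$ by \eqref{Eq:EstUextGlob2} and on ${\mathcal V}_\delta(\partial {\mathcal S}_\nu)$, $\nu\neq\kappa$, by \eqref{Eq:EstUextGlob3} plus the extra ${\mathcal O}(\varepsilon_\nu^2)$-type smallness of the $\nu\neq\kappa$ Kirchhoff/shape contributions that separate $\partial {\mathcal S}_\kappa$ from $\partial {\mathcal S}_\nu$ (the correction terms $\sum_{\nu\neq\kappa}p_{\nu,i}\nabla\varphi_{\nu,i} + \sum_{\nu\neq\kappa}\gamma_\nu\nabla^\perp\widehat\psi_\nu$ in \eqref{Eq:UextUextKappa} have time derivatives bounded by ${\mathcal O}(1+|\widehat{\bf p}|)$ on ${\mathcal V}_\delta(\partial {\mathcal F}\setminus\partial {\mathcal S}_\kappa)$ using Propositions~\ref{Pro:ExpKirchhoff}, \ref{Pro:ExpShapeDerivatives} and the acceleration estimate \eqref{Eq:AccelerationEst} for $p_{\nu,i}'$), while $\alpha_{\kappa,j}+\beta_{\kappa,j} = {\mathcal O}(1)$, $(\alpha_{\kappa,j}+\beta_{\kappa,j})' = {\mathcal O}(1+|\widehat{\bf p}|)$ and $\nabla\varphi_{\kappa,j}, \partial_t\nabla\varphi_{\kappa,j} = {\mathcal O}(\varepsilon_\kappa^2)(1+|\widehat{\bf p}|)$ away from ${\mathcal S}_\kappa$ by \eqref{Eq:ExpKirchhoff1Bis} and \eqref{Eq:EstDerivKirchhoff2}. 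Combining the two boundary pieces gives $|I_{\kappa,j}^{3}| \leq K\varepsilon_{\kappa}^{2+\delta_{j3}}(1+|\widehat{\bf p}|)$, i.e. $I_{\kappa,j}^{3}$ is weakly nonlinear. The main obstacle is bookkeeping: tracking the precise power of $\varepsilon_\kappa$ gained from $|\partial {\mathcal S}_\kappa|$, from the Kirchhoff decay, and from the modulation sizes, so that the worst term (coming from $\beta_{\kappa,k}\partial_t\nabla\varphi_{\kappa,k}$ or $\varepsilon_\kappa^2\partial_t u^r_\kappa$ on $\partial {\mathcal S}_\kappa$) lands exactly at order $\varepsilon_\kappa^{2+\delta_{j3}}$ rather than one power short.
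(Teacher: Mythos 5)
Your plan follows the paper's architecture closely (integrate by parts, split the boundary into $\partial{\mathcal S}_\kappa$, $\partial\Omega$, and $\partial{\mathcal S}_\nu$ for $\nu\neq\kappa$, and differentiate \eqref{ApproxTildeUeps} in time on $\partial{\mathcal S}_\kappa$), and the first two pieces are handled correctly. The gap is on the remaining boundary components $\partial{\mathcal S}_\nu$, $\nu\neq\kappa$, in the treatment of the acceleration contribution coming from the correction $\sum_{\nu\neq\kappa}\sum_i p_{\nu,i}\nabla\varphi_{\nu,i}$ in \eqref{Eq:UextUextKappa}.

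You assert that this correction's time derivative is bounded in $L^\infty$ by ${\mathcal O}(1+|\widehat{\bf p}|)$ on ${\mathcal V}_\delta(\partial {\mathcal F}\setminus\partial{\mathcal S}_\kappa)$ ``using ... the acceleration estimate.'' This is false on $\partial{\mathcal S}_\nu$ itself. There $\nabla\varphi_{\nu,i}={\mathcal O}(\varepsilon_\nu^{\delta_{i3}})$ (no decay, by \eqref{Eq:ExpKirchhoff1Bis}), while Proposition~\ref{Pro:Acceleration} gives $|p'_{\nu,i}|={\mathcal O}\bigl(\varepsilon_\nu^{-\delta_{i3}-2\delta_{\nu\in{\mathcal P}_{(iii)}}}(1+|\widehat{\bf p}|)\bigr)$; their product is ${\mathcal O}\bigl(\varepsilon_\nu^{-2\delta_{\nu\in{\mathcal P}_{(iii)}}}(1+|\widehat{\bf p}|)\bigr)$, which blows up for $\nu\in{\mathcal P}_{(iii)}$. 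Your $L^1\times L^\infty$ argument then yields, after the factor $|\partial{\mathcal S}_\nu|={\mathcal O}(\varepsilon_\nu)$ and $\|\varphi_{\kappa,j}\|_{L^\infty(\partial{\mathcal S}_\nu)}={\mathcal O}(\varepsilon_\kappa^{2+\delta_{j3}})$, a bound of order $\varepsilon_\kappa^{2+\delta_{j3}}\,\varepsilon_\nu^{-1}(1+|\widehat{\bf p}|)$ for $\nu$ in family (iii). That is one factor of $\varepsilon_\nu$ short of weakly nonlinear, and since the scales $\varepsilon_\nu$ and $\varepsilon_\kappa$ are independent, this cannot be absorbed.

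The paper closes this gap through a cancellation you do not invoke: it isolates the term
$E_{\kappa,j}^{\nu,2}=\sum_i p'_{\nu,i}\int_{\partial{\mathcal S}_\nu}\varphi_{\kappa,j}\,K_{\nu,i}\,ds$
and identifies the integral as the off-diagonal added-mass coefficient ${\mathcal M}_{a,\kappa,j,\nu,i}$, which by \eqref{Eq:AM-AMSA} satisfies the refined bound ${\mathcal O}(\varepsilon_\kappa^{2+\delta_{j3}}\varepsilon_\nu^{2+\delta_{i3}})$ rather than the naive $\varepsilon_\kappa^{2+\delta_{j3}}\varepsilon_\nu^{1+\delta_{i3}}$. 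The source of the extra $\varepsilon_\nu$ is the zero-integral property $\int_{\partial{\mathcal S}_\nu}K_{\nu,i}\,ds=0$, which makes the pairing sensitive only to the oscillation of $\varphi_{\kappa,j}$ over $\partial{\mathcal S}_\nu$ (controlled by \eqref{Eq:ExpKirchhoff2} up to an additive constant), not to its sup norm. With that refinement, $p'_{\nu,i}{\mathcal M}_{a,\kappa,j,\nu,i}={\mathcal O}\bigl(\varepsilon_\kappa^{2+\delta_{j3}}\varepsilon_\nu^{2-2\delta_{\nu\in{\mathcal P}_{(iii)}}}(1+|\widehat{\bf p}|)\bigr)$, which is weakly nonlinear for every family. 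To repair your proposal you would have to replace the crude $L^\infty\times L^1$ bookkeeping on $\partial{\mathcal S}_\nu$ with this moment-cancellation argument (or equivalently subtract the mean of $\varphi_{\kappa,j}$ on each $\partial{\mathcal S}_\nu$ before estimating). Analogous, though milder, care is also needed for the shape-derivative contribution $E_{\kappa,j}^{\nu,3}$, where the paper again integrates by parts on $\partial{\mathcal S}_\nu$ and invokes the added-mass estimate.
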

\begin{proof}[Proof of Lemma~\ref{Lem:I3J5}]
In this proof, by convenience, we will again take the convention of Remark~\ref{Rem:NormalisationKirchhoff} for the Kirchhoff potentials.
We start by integrating by parts and subdivide the boundary integral:
\begin{equation}
\label{LAST}
I_{\kappa,j}^{3} %= \int_{{\mathcal F}({\bf q})} \partial_{t} \overline{u}^{ext}_\kappa \cdot \nabla \varphi_{\kappa,j} \, dx
= \int_{\partial \Omega} \partial_{t} \overline{u}^{ext}_\kappa \cdot n \, \varphi_{\kappa,j} \, ds
+ \int_{\partial {\mathcal S}_{\kappa}} \partial_{t} \overline{u}^{ext}_\kappa \cdot n \, \varphi_{\kappa,j} \, ds.
+ \sum_{\nu\neq \kappa} \int_{\partial {\mathcal S}_{\nu}} \partial_{t} \overline{u}^{ext}_\kappa \cdot n \, \varphi_{\kappa,j} \, ds.
\end{equation}
\paragraph{Step 1.}
We first consider the second term in the right hand side of \eqref{LAST}. 
From \eqref{ApproxTildeUeps}, we see that
\begin{multline} \label{eq:ptuext}
\partial_{t} \overline{u}^{ext}_\kappa  = V'_{\kappa}
+ \sum_{k=1}^2 \beta'_{\kappa,k} \nabla \varphi_{\kappa,k} 
-  \sum_{k=4}^5 V'_{\kappa,k} \nabla \varphi_{\kappa,k}
+ \sum_{k=1}^2 \sum_{\substack{{\mu \in \{1,\dots,N\}} \\ {m \in \{1,2,3\}}}} \beta_{\kappa,k} p_{\mu,m} \frac{\partial \nabla \varphi_{\kappa,k}}{\partial q_{\mu,m}} \\
-  \sum_{k=4}^5 \sum_{\substack{{\mu \in \{1,\dots,N\}} \\ {m \in \{1,2,3\}}}}  V_{\kappa,k} p_{\mu,m} \frac{\partial \nabla \varphi_{\kappa,k}}{\partial q_{\mu,m}} 
+ \varepsilon_{\kappa}^2 \partial_{t} u^{r}_{\kappa}
\text{ on } \partial {\mathcal S}_{\kappa} .
\end{multline}
From Proposition~\ref{Pro:ExistsApprox} and Proposition~\ref{Pro:ExpKirchhoff}, we immediately see that the first and third terms in the right-hand side of \eqref{eq:ptuext} are 
of order ${\mathcal O}(1+|\widehat{\bf p}|)$. Moreover, from \eqref{Eq:Zeta} and \eqref{Eq:ParamModulation}, we see that
\begin{equation} \label{resser}
\begin{pmatrix} \beta_{\kappa,1} \\ \beta_{\kappa,2}  \end{pmatrix}' =  \begin{pmatrix} - V'_{\kappa,4} & V'_{\kappa,5} \\ V'_{\kappa,5} & V'_{\kappa,4}  \end{pmatrix}
\zeta^\varepsilon_{\kappa}(q_{\kappa})
+ \vartheta'_{\kappa} \begin{pmatrix} - V_{\kappa,4} & V_{\kappa,5} \\ V_{\kappa,5} & V_{\kappa,4}  \end{pmatrix}
(\zeta^\varepsilon_{\kappa}(q_{\kappa}))^\perp .
\end{equation}
Using Proposition~\ref{Pro:ExistsApprox} and \eqref{Eq:Zeta}  again, we see that this term is also of order ${\mathcal O}(1+|\widehat{\bf p}|)$. Concerning the last two terms in \eqref{eq:ptuext}, we use Proposition~\ref{Pro:ExpShapeDerivatives}, \eqref{Eq:EstApprox1} and \eqref{Eq:BoundAlphaBeta} to deduce that they are of order ${\mathcal O}(1+|\widehat{\bf p}|)$ as well. We conclude that
\begin{equation*}
\| \partial_{t} \overline{u}^{ext}_\kappa \|_{L^\infty(\partial {\mathcal S}_{\kappa})} \leq C (1+|\widehat{\bf p}|).
\end{equation*}
Using \eqref{Eq:ExpKirchhoffNormalises} and  that $|\partial {\mathcal S}_{\kappa}|={\mathcal O}(\varepsilon_{\kappa})$ we deduce that
\begin{equation}
\label{resser1}
\left|\int_{\partial {\mathcal S}_{\kappa}} \partial_{t} \overline{u}^{ext}_\kappa \cdot n \, \varphi_{\kappa,j} \, ds \right| \leq C \varepsilon_{\kappa}^{2+\delta_{j3}}(1+|\widehat{\bf p}|).
\end{equation}
\paragraph{Step 2.}
We now consider the integral  over $\partial \Omega$ that is the first term in the right hand side of \eqref{LAST}.
Recalling \eqref{Eq:SysUExtKappa} and \eqref{Eq:TildeUExt} we observe that  $\overline{u}^{ext}_\kappa\cdot n = - \gamma_{\kappa} \nabla^\perp \widehat{\psi}_{\kappa} \cdot n$ on $\partial \Omega$. 
Thus, on $\partial \Omega$,
$$
\partial_{t}  \overline{u}^{ext}_\kappa\cdot n = - \gamma_{\kappa} (\partial_{t}  \nabla^\perp \widehat{\psi}_{\kappa} ) \cdot n 
=   \gamma_{\kappa}  \nabla \left( v_{{\mathcal S},\kappa} \cdot \nabla^{\perp} \widehat{\psi}_{\kappa} \right)  \cdot n  ,
$$
thanks to \eqref{Eq:HatPsiTourne}. Therefore with \eqref{Eq:BehaviourPsi2}, we deduce
$\partial_{t}  \overline{u}^{ext}_\kappa\cdot n ={\mathcal O}(|\widehat{\bf p}|)$. 
On the other hand, by \eqref{Eq:ExpKirchhoffNormalises},  $\varphi_{\kappa,j} ={\mathcal O}( \varepsilon_{\kappa}^{2+\delta_{j3}})$ on $\partial \Omega$ and therefore 
\begin{equation}
\label{resser2}
\left| \int_{\partial \Omega} \partial_{t} \overline{u}^{ext}_\kappa \cdot n \, \varphi_{\kappa,j} \, ds \right| \leq C \varepsilon_{\kappa}^{2+\delta_{j3}}(1+|\widehat{\bf p}|).
\end{equation}
\paragraph{Step 3.}
Finally we address the integrals  in the right hand side of \eqref{LAST} 
which are over $\partial {\mathcal S}_{\nu}$ for $\nu \neq \kappa$. 
By \eqref{Eq:UextUextKappa} and \eqref{Eq:TildeUExt}, 
\begin{equation*}
\overline{u}^{ext}_\kappa =u^{ext} + \sum_{\lambda \neq \kappa} \sum_{i=1}^3  p_{\lambda,i} \nabla \varphi_{\lambda,i} + \sum_{\lambda \neq \kappa} \gamma_{\lambda} \nabla^\perp \widehat{\psi}_{\lambda}  + \sum_{i=1}^2 (\alpha_{\kappa,i}+ \beta_{\kappa,i}) \nabla \varphi_{\kappa,i},
\end{equation*}
so that 
 \begin{equation}
\label{USOP}
\int_{\partial  {\mathcal S}_{\nu}} \partial_{t} \overline{u}^{ext}_\kappa     \cdot n \, \varphi_{\kappa,j} \, ds 
= E_{\kappa,j}^{\nu,1} + \ldots + E_{\kappa,j}^{\nu,6} ,
\end{equation}
where
 \begin{align*}
E_{\kappa,j}^{\nu,1} &:= \int_{\partial {\mathcal S}_{\nu}} \partial_{t}u^{ext}  \cdot n \, \varphi_{\kappa,j} \, ds, \\
E_{\kappa,j}^{\nu,2}  &:=\sum_{\lambda \neq \kappa} \sum_{i=1}^3  p'_{\lambda,i} \int_{\partial {\mathcal S}_{\nu}}  \partial_n \varphi_{\lambda,i}  \, \varphi_{\kappa,j} \, ds , \\
E_{\kappa,j}^{\nu,3}  &:= \sum_{\substack{{\mu \in \{1,\dots,N\}} \\ {m \in \{1,2,3\}}}}    \sum_{\lambda \neq \kappa} \sum_{i=1}^3  p_{\lambda,i} p_{\mu,m} \int_{\partial {\mathcal S}_{\nu}}  \frac{\partial \nabla \varphi_{\lambda,i} }{\partial q_{\mu,m}} \cdot n \, \varphi_{\kappa,j} \, ds , \\
E_{\kappa,j}^{\nu,4}  &:= \sum_{\lambda \neq \kappa} \gamma_{\lambda} \int_{\partial {\mathcal S}_{\nu}} \partial_{t} \nabla^\perp \widehat{\psi}_{\lambda}   \cdot n \, \varphi_{\kappa,j} \, ds , \\
E_{\kappa,j}^{\nu,5} &:=  \sum_{\substack{{\mu \in \{1,\dots,N\}} \\ {m \in \{1,2,3\}}}}  \sum_{i=1}^2 (\alpha_{\kappa,i}+ \beta_{\kappa,i}) p_{\mu,m}  \int_{\partial {\mathcal S}_{\nu}}\frac{\partial \nabla \varphi_{\kappa,i}}{\partial q_{\mu,m}} \cdot n \, \varphi_{\kappa,j} \, ds , \\   % \nabla \varphi_{\kappa,j}
E_{\kappa,j}^{\nu,6} &:= \sum_{i=1}^2 (\alpha_{\kappa,i}+ \beta_{\kappa,i})'  \int_{\partial {\mathcal S}_{\nu}}  \nabla \varphi_{\kappa,i}  \cdot n \, \varphi_{\kappa,j} \, ds .
\end{align*}
\noindent
{\it Estimate of $E_{\kappa,j}^{\nu,1}$.} 
By \eqref{Eq:EstUextGlob2} and \eqref{Eq:EstUextGlob3},  $\| \partial_{t} u^{ext} \|_{ L^\infty(\partial {\mathcal S}_{\nu}) }  = {\mathcal O}( \varepsilon_{\nu}^{-1} (1 + |\widehat{{\bf p}}^{\varepsilon}|))$ and  by \eqref{Eq:ExpKirchhoffNormalises}, with $\nu \neq \kappa$,  $\|\varphi_{\kappa,j} \|_{ L^\infty(\partial {\mathcal S}_{\nu}) } = {\mathcal O}(\varepsilon_{\kappa}^{2+\delta_{j3}} )$  so that, by integration on $ \partial {\mathcal S}_{\nu}$, 
\begin{equation} \label{USOP1}
E_{\kappa,j}^{\nu,1} =   {\mathcal O}(\varepsilon_{\kappa}^{2+\delta_{j3}} (1 + |\widehat{{\bf p}}^{\varepsilon}|)) .
\end{equation}
\noindent
{\it Estimate of $E_{\kappa,j}^{\nu,2}$.} 
First by definition of the Kirchhoff potentials, see \eqref{def-xi-j} and \eqref{Kir}, 
\begin{equation} \label{annecy}
E_{\kappa,j}^{\nu,2}  = \sum_{i=1}^3  p'_{\nu,i} \int_{\partial {\mathcal S}_{\nu}}   \, \varphi_{\kappa,j}  K_{\nu,i}  \, ds .
\end{equation}
By Proposition \ref{Pro:Acceleration}, $| \widehat{p}\, '_{\nu}| =  {\mathcal O}(\varepsilon_{\nu}^{-2 \delta_{\nu \in {\mathcal P}_{(iii)}}} (1 + |\widehat{\boldsymbol{p}}^{\varepsilon}|))$, and by \eqref{Eq:AM-AMSA} and  \eqref{Eq:CoefsMA}, the integral in the right hand side of \eqref{annecy}  is ${\mathcal O}( \varepsilon_{\kappa}^{2 + \delta_{3j}} \varepsilon_{\nu}^{2 + \delta_{3i}})$, 
so that, since $\varepsilon_{\nu}^{\delta_{i3}} p_{\nu,i} = \widehat{p}_{\nu,i}$, 
\begin{equation} \label{USOP2}
E_{\kappa,j}^{\nu,2} =   {\mathcal O}( \varepsilon_{\kappa}^{2 + \delta_{3j}} (1 + |\widehat{\boldsymbol{p}}^{\varepsilon}|) )  .
\end{equation}
\noindent
{\it Estimate of $E_{\kappa,j}^{\nu,3}$.} 
By Lemma \ref{Lem:ShapeDerivativeKirchhoff}, 
\begin{align} \nonumber
E_{\kappa,j}^{\nu,3} &= \sum_{m=1}^3    \sum_{\lambda \neq \kappa} \sum_{i=1}^3  p_{\lambda,i} p_{\nu,m} 
\int_{\partial {\mathcal S}_{\nu}} \frac{\partial}{\partial \tau} \left[ \left( \frac{\partial \varphi_{\lambda,i}}{\partial \tau}  - (\xi_{\lambda,i} \cdot \tau) \right) (\xi_{\nu,m} \cdot n) \right]  \, \varphi_{\kappa,j} \, ds  \\ 
\label{train1}
&\quad +  \sum_{m=1}^2     p_{\nu,3} p_{\nu,m}  \int_{\partial {\mathcal S}_{\nu}}  \varphi_{\kappa,j} K_{\nu,m}  \, ds  .
\end{align}
By an integration by parts 
\begin{equation*}
\int_{\partial {\mathcal S}_{\nu}} 
\frac{\partial}{\partial \tau} \left[ \left( \frac{\partial \varphi_{\lambda,i}}{\partial \tau} 
- (\xi_{\lambda,i} \cdot \tau) \right) (\xi_{\nu,m} \cdot n) \right]\, \varphi_{\kappa,j} \, ds 
= - \int_{\partial {\mathcal S}_{\nu}} 
 \left( \frac{\partial \varphi_{\lambda,i}}{\partial \tau} 
- (\xi_{\lambda,i} \cdot \tau) \right)  (\xi_{\nu,m} \cdot n)  \frac{\partial \varphi_{\kappa,j}}{\partial \tau} \,  ds .
\end{equation*}
By \eqref{Eq:ExpKirchhoff1Bis}, 
$$
\left\| \frac{\partial \varphi_{\lambda,i}}{\partial \tau}  \right\|_{L^{\infty}(\partial {\mathcal S}_{\nu} ))}  =  {\mathcal O}( \varepsilon_{\lambda}^{\delta_{i3}} )
\  \text{ and } \
\left\| \frac{\partial \varphi_{\kappa,j}}{\partial \tau}  \right\|_{L^{\infty}( \partial {\mathcal S}_{\nu}))}  =  {\mathcal O}( \varepsilon_{\kappa}^{2+\delta_{j3}} ) .
$$
By integration on $ \partial {\mathcal S}_{\nu}$, using that $\varepsilon_{\lambda}^{\delta_{i3}} p_{\lambda,i} = \widehat{p}_{\lambda,i}$ and  \eqref{Eq:APEEnergy}, we obtain that the first term of the right hand side of \eqref{train1} is $ {\mathcal O}( \varepsilon_{\kappa}^{2 + \delta_{3j}}   |\widehat{\boldsymbol{p}}^{\varepsilon}|)$. 
On the other hand, by \eqref{Eq:AM-AMSA}, \eqref{Eq:CoefsMA} and Remark~\ref{Rem:MA/MASAM}, the second integral in the right hand side of \eqref{train1} is of order ${\mathcal O}( \varepsilon_{\kappa}^{2 + \delta_{3j}} \varepsilon_{\nu}^{2})$
so that by \eqref{Eq:APEEnergy}, we arrive at 
\begin{equation} \label{USOP3}
E_{\kappa,j}^{\nu,3} =   {\mathcal O}( \varepsilon_{\kappa}^{2 + \delta_{3j } }   |\widehat{\boldsymbol{p}}^{\varepsilon}|) .
\end{equation}
\noindent
{\it Estimate of $E_{\kappa,j}^{\nu,4}$.}
We deal with the term $E_{\kappa,j}^{\nu,4}$ by distinguishing two cases: \par
\begin{itemize}
\item {First case: $\lambda \neq \nu$.} %
By \eqref{Eq:HatPsiTourne}, 
\begin{equation*}
\int_{\partial {\mathcal S}_{\nu}} \partial_{t} \nabla^\perp \widehat{\psi}_{\lambda}   \cdot n \, \varphi_{\kappa,j} \, ds
=
- \int_{\partial {\mathcal S}_{\nu}}   \nabla \left( v_{{\mathcal S},\lambda} \cdot \nabla^{\perp} \widehat{\psi}_{\lambda} \right)  \cdot n  \,  \varphi_{\kappa,j} \, ds .
\end{equation*}
By \eqref{Eq:BehaviourPsi2} and the remark below \eqref{Eq:BehaviourPsi2}, we find  
\begin{equation} \label{Eq:Estnabla2Psi}
\|  \nabla \left( v_{{\mathcal S},\lambda} \cdot \nabla^{\perp} \widehat{\psi}_{\lambda} \right)  \cdot n  \|_{ L^\infty(\partial {\mathcal F} \setminus  \partial {\mathcal S}_{\lambda} )}
 = {\mathcal O}( |\widehat{p}_\lambda |).
\end{equation}
Hence since $\nu \neq \lambda$ we deduce  with \eqref{Eq:ExpKirchhoffNormalises} 
\begin{equation*}
\int_{\partial {\mathcal S}_{\nu}} \partial_{t} \nabla^\perp \widehat{\psi}_{\lambda}   \cdot n \, \varphi_{\kappa,j} \, ds
= {\mathcal O}(\varepsilon_{\kappa}^{2+\delta_{j3}} |\widehat{p}_{\nu}|) . 
\end{equation*}
\item {Second case: $\lambda=\nu$.}
Using an integration by parts and \eqref{Eq:HatPsiTourne}  we find
\begin{multline}
\label{stb} 
\int_{\partial {\mathcal S}_{\nu}} \partial_{t} \nabla^\perp \widehat{\psi}_{\lambda}   \cdot n \, \varphi_{\kappa,j} \, ds
= \int_{{\mathcal F}}  \partial_{t} \nabla^\perp \widehat{\psi}_{\lambda}  \cdot   \nabla \varphi_{\kappa,j} \, dx
- \int_{\partial {\mathcal F} \setminus \partial {\mathcal S}_{\nu}}\partial_{t} \nabla^\perp \widehat{\psi}_{\lambda}   \cdot n \, \varphi_{\kappa,j} \, ds \\
=
- \int_{{\mathcal F}} \nabla \left( v_{{\mathcal S},\nu} \cdot \nabla^{\perp} \widehat{\psi}_{\nu} \right)  \cdot   \nabla \varphi_{\kappa,j} \, dx
+\int_{\partial {\mathcal F} \setminus \partial {\mathcal S}_{\nu}}   \nabla \left( v_{{\mathcal S},\nu} \cdot \nabla^{\perp} \widehat{\psi}_{\nu} \right)  \cdot n  \,  \varphi_{\kappa,j} \, ds .
\end{multline}
With another integration by parts, the first term in the right hand side of \eqref{stb} is transformed into
\begin{equation} \nonumber
- \int_{ \partial {\mathcal S}_{\kappa}}   v_{{\mathcal S},\nu} \cdot \nabla^{\perp} \widehat{\psi}_{\nu}  \,   K_{\kappa,j} \, ds .
\end{equation}
Proceeding as for \eqref{Eq:EstT2}, we see that this term can be estimated by ${\mathcal O}(\varepsilon_{\kappa}^{2+\delta_{j3}} |\widehat{p}_{\nu}|$). 
We decompose  the second  term  in the right hand side of \eqref{stb}  into 
\begin{multline*}
\int_{\partial {\mathcal F} \setminus \partial {\mathcal S}_{\nu}}   \nabla \left( v_{{\mathcal S},\nu} \cdot \nabla^{\perp} \widehat{\psi}_{\nu} \right)  \cdot n  \,  \varphi_{\kappa,j} \, ds 
= \int_{\partial {\mathcal F} \setminus \left( \partial {\mathcal S}_{\nu} \cup  \partial {\mathcal S}_{\kappa} \right)}   \nabla \left( v_{{\mathcal S},\nu} \cdot \nabla^{\perp} \widehat{\psi}_{\nu} \right)  \cdot n  \,  \varphi_{\kappa,j} \, ds  \\
+ \int_{ \partial {\mathcal S}_{\kappa}}   \nabla \left( v_{{\mathcal S},\nu} \cdot \nabla^{\perp} \widehat{\psi}_{\nu} \right)  \cdot n  \,  \varphi_{\kappa,j} \, ds .
\end{multline*}
We use \eqref{Eq:Estnabla2Psi} and \eqref{Eq:ExpKirchhoffNormalises} to deduce that the terms in the right hand side of \eqref{stb} are of order ${\mathcal O}(\varepsilon_{\kappa}^{2+\delta_{j3}} |\widehat{p}_{\nu}|)$ (using $|\partial {\mathcal S}_{\kappa}|={\mathcal O}(\varepsilon_{\kappa})$ for the last one). 
\end{itemize}
Gathering the two cases we finally arrive at
\begin{equation} \label{USOP4}
E_{\kappa,j}^{\nu,4} =   {\mathcal O}(\varepsilon_{\kappa}^{2+\delta_{j3}} |\widehat{p}_{\nu}|)  .
\end{equation}
\noindent
{\it Estimate of $E_{\kappa,j}^{\nu,5}$.} 
By Lemma \ref{Lem:ShapeDerivativeKirchhoff}, 
\begin{equation*}
 E_{\kappa,j}^{\nu,5} =   \sum_{m=1}^3    \sum_{i=1}^2 (\alpha_{\kappa,i}+ \beta_{\kappa,i}) p_{\nu,m}  \int_{\partial {\mathcal S}_{\nu}}
\partial_n \left(\frac{\partial  \varphi_{\kappa,i}}{\partial q_{\nu,m}} \right)   \, \varphi_{\kappa,j} \, ds .
\end{equation*}
For such indices, by \eqref{Eq:EstDerivKirchhoff1,5}, $\left\| \nabla \frac{\partial \varphi_{\kappa,i}}{\partial q_{\nu,m}} \right\|_{L^\infty({ \partial {\mathcal S}_{\nu}})} = {\mathcal O}(\varepsilon_{\kappa}^{ 2 } \varepsilon_{\nu}^{-1+\delta_{m3}})$ (recall that $\nu \neq \kappa$). 
Combining with \eqref{Eq:BoundAlphaBeta}, \eqref{Eq:ExpKirchhoffNormalises} and $|\partial {\mathcal S}_{\nu}|={\mathcal O}(\varepsilon_{\nu})$, we arrive at
\begin{equation} \label{USOP5}
E_{\kappa,j}^{\nu,5} =    {\mathcal O}(\varepsilon_{\kappa}^{4+\delta_{j3}}  |\widehat{p}_{\nu}|)  .
\end{equation}
\noindent
{\it Estimate of $E_{\kappa,j}^{\nu,6}$.} 
Since $\nu \neq \kappa$, by definition of the Kirchhoff potentials, see \eqref{def-xi-j} and \eqref{Kir}, 
\begin{equation} \label{USOP6}
E_{\kappa,j}^{\nu,6} =  0  .
\end{equation}
\paragraph{Step 4.}
Gathering \eqref{USOP}, \eqref{USOP1}, \eqref{USOP2}, \eqref{USOP3}, \eqref{USOP4}, \eqref{USOP5} and \eqref{USOP6}
we deduce that for $\nu \neq \kappa$, 
\begin{equation}
\label{resser3}
 \left|\int_{\partial {\mathcal S}_{\nu}} \partial_{t} \overline{u}^{ext}_\kappa \cdot n \, \varphi_{\kappa,j} \, ds \right| \leq C \varepsilon_{\kappa}^{2+\delta_{j3}} (1+|\widehat{\bf p}|) .
\end{equation}
Finally combining \eqref{LAST}, \eqref{resser1}, \eqref{resser2} and \eqref{resser3} we conclude the proof of Lemma~\ref{Lem:I3J5}.
\end{proof}
%
%
%%
%
%%
%
%
%
%
%%%%%%%%%%%%%%%%%%%%%%%%%%%%%%%%
%
%
%
\subsection{Main gyroscopic term}
\label{Subsec:MGT}
In this section we study the term  $J_{\kappa,j}^{2} $ defined in \eqref{saiJ2}. 
We recall that $\kappa \in {\mathcal P}_{s}$. 
\begin{Lemma} \label{Lem:J2}
The term $J_{\kappa,j}^{2}$ can be put in the form
\begin{equation*}
J_{\kappa,j}^{2} =  B_{\kappa} + A_{\kappa} + D_{\kappa},  
\end{equation*}
where $B_{\kappa} = (B_{\kappa,j} )_{j=1,2,3} $  is the main gyroscopic term given by  \eqref{Eq:Gyro},
the term $A_{\kappa}$ is weakly nonlinear in the sense of \eqref{Eq:WNL} and the term $D_{\kappa}$ is weakly gyroscopic in the sense of \eqref{Eq:WG}-\eqref{Eq:EstGyroW}.
\end{Lemma}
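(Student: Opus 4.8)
\textbf{Proof plan for Lemma~\ref{Lem:J2}.} The starting point is the expression \eqref{saiJ2}: $J_{\kappa,j}^{2} = \gamma_{\kappa} Q_{\kappa,j}(\nabla^{\perp} \widehat{\psi}_{\kappa}, \overline{u}_{\kappa}^{pot} + \overline{u}^{ext}_\kappa - v_{{\mathcal S},\kappa})$. I would first split the second argument according to the natural three-term decomposition and handle each piece separately. The plan is to extract the ``main'' contribution from the part of $\overline{u}_{\kappa}^{pot} - v_{{\mathcal S},\kappa}$ which is linear in $\overline{p}_{\kappa}$: writing $v_{{\mathcal S},\kappa} = \sum_{k=1}^{3} p_{\kappa,k} \xi_{\kappa,k}$ on $\partial {\mathcal S}_{\kappa}$ and $\overline{u}_{\kappa}^{pot} = \sum_{k} \overline{p}_{\kappa,k} \nabla\varphi_{\kappa,k}$, using the relation $p_{\kappa,k} = \overline{p}_{\kappa,k} + \delta_{k \in \{1,2\}}(\alpha_{\kappa,k}+\beta_{\kappa,k})$ from \eqref{Eq:VariableModulee}, and recalling that on $\partial {\mathcal S}_{\kappa}$, $\gamma_{\kappa}\nabla^\perp\widehat{\psi}_{\kappa}$ is tangent, one obtains a term $-\gamma_{\kappa}\sum_{k} \overline{p}_{\kappa,k} \int_{\partial {\mathcal S}_{\kappa}} \partial_{n}\widehat{\psi}_{\kappa}\, \xi_{\kappa,k}^\perp\cdot \xi_{\kappa,j}\,ds$ plus lower-order remainders. (Here I use that on $\partial {\mathcal S}_{\kappa}$, $\nabla^\perp\widehat{\psi}_{\kappa}\cdot\xi = (\partial_n\widehat{\psi}_{\kappa})\, \tau^\perp\cdot\xi$ up to the tangential part which integrates against $K_{\kappa,j}$ to something I will need to control, and $\nabla\varphi_{\kappa,k}$ differs from $\xi_{\kappa,k}$ on $\partial {\mathcal S}_{\kappa}$ only through the Neumann condition \eqref{Kir2}, which is precisely why the relevant quantities reduce to boundary integrals of $\partial_n\widehat\psi_\kappa$.) This principal term is exactly $B_{\kappa,j}$ as defined in \eqref{Eq:Gyro}, and its gyroscopic property \eqref{Eq:GyroFB} follows from the antisymmetry $\int_{\partial {\mathcal S}_{\kappa}} \partial_n\widehat\psi_\kappa\, \xi_{\kappa,k}^\perp\cdot\xi_{\kappa,j}\,ds = -\int_{\partial {\mathcal S}_{\kappa}} \partial_n\widehat\psi_\kappa\, \xi_{\kappa,j}^\perp\cdot\xi_{\kappa,k}\,ds$, which I would verify via Lamb's-type identities (Lemma~\ref{Lem:Lamb}, referenced earlier) or a direct integration by parts; contracting $B_{\kappa,j}$ with $\overline p_{\kappa,j}$ then gives zero.

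Next I would collect the remainder terms into $A_{\kappa}$ and $D_{\kappa}$ and estimate them. There are essentially three families of remainders: (a) the contribution of $\gamma_{\kappa}Q_{\kappa,j}(\nabla^\perp\widehat\psi_\kappa, \overline{u}^{ext}_\kappa)$; (b) the modulation correction $\gamma_{\kappa}\sum_{k=1}^2(\alpha_{\kappa,k}+\beta_{\kappa,k}) Q_{\kappa,j}(\nabla^\perp\widehat\psi_\kappa, \nabla\varphi_{\kappa,k})$ coming from the difference between $p$ and $\overline p$; and (c) the error from replacing $\nabla\varphi_{\kappa,k}$ by $\xi_{\kappa,k}$ on $\partial {\mathcal S}_{\kappa}$, which is ${\mathcal O}(\varepsilon_{\kappa}^{2+\delta_{k\geq 3}})$ by \eqref{Eq:ExpKirchhoff2}. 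For (a), I would use the approximation \eqref{ApproxTildeUeps} of $\overline{u}^{ext}_\kappa$ on $\partial {\mathcal S}_{\kappa}$: the constant part $V_\kappa$ evaluated on $\partial {\mathcal S}_{\kappa}$ against $\nabla^\perp\widehat\psi_\kappa$ (which is ${\mathcal O}(1/\varepsilon_{\kappa})$ there) integrated over $\partial {\mathcal S}_{\kappa}$ (of length ${\mathcal O}(\varepsilon_\kappa)$) gives an ${\mathcal O}(1)$ term; but crucially the term $\gamma_\kappa\int_{\partial {\mathcal S}_\kappa} \nabla^\perp\widehat\psi_\kappa\cdot V_\kappa\, K_{\kappa,j}\,ds$ — with $V_\kappa$ an affine field and $\widehat\psi_\kappa$ standalone — can be evaluated by the divergence theorem inside ${\mathcal S}_{\kappa}$ and by Blasius/Lamb-type identities to be ${\mathcal O}(\varepsilon_\kappa^{1+\delta_{j3}})$ or better, or more precisely it produces exactly a part that must be bundled into $B_\kappa$ plus a weakly gyroscopic error; the remaining pieces $\beta_{\kappa,k}\nabla\varphi_{\kappa,k}$ and $\varepsilon_\kappa^2 u^r_\kappa$ contribute ${\mathcal O}(\varepsilon_\kappa^{2+\delta_{j3}})$ using \eqref{Eq:BoundAlphaBeta}, \eqref{Eq:ExpKirchhoff1Bis}, \eqref{Eq:EstApprox1}, and $|\partial {\mathcal S}_\kappa| = {\mathcal O}(\varepsilon_\kappa)$, hence are weakly nonlinear and go into $A_\kappa$. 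For (b) and (c), the size estimates $\alpha_{\kappa,k} = {\mathcal O}(1)$, $\beta_{\kappa,k} = {\mathcal O}(\varepsilon_\kappa)$ together with $Q_{\kappa,j}(\nabla^\perp\widehat\psi_\kappa, \nabla\varphi_{\kappa,k}) = {\mathcal O}(\varepsilon_\kappa^{\delta_{j3}})$ (using again $\nabla^\perp\widehat\psi_\kappa = {\mathcal O}(1/\varepsilon_\kappa)$ balanced by $|\nabla\varphi_{\kappa,k}| = {\mathcal O}(\varepsilon_\kappa^{\delta_{k\geq 3}})$ on $\partial {\mathcal S}_\kappa$ and $|\partial {\mathcal S}_\kappa| = {\mathcal O}(\varepsilon_\kappa)$) show that the $\alpha$-part gives a term of size ${\mathcal O}(\varepsilon_\kappa^{\delta_{j3}})$ that is \emph{not} small pointwise but is weakly gyroscopic — this is the $D_\kappa$ term — while the $\beta$-part is of size ${\mathcal O}(\varepsilon_\kappa^{1+\delta_{j3}})$ and also goes into $D_\kappa$ (or into $A_\kappa$ if it happens to carry an extra $\varepsilon_\kappa$, which it does here).

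The delicate point — and the main obstacle — is proving the weak gyroscopy \eqref{Eq:WG} of the $D_\kappa$ term, i.e. that $\int_0^t D_\kappa(\tau)\cdot\overline p_\kappa(\tau)\,d\tau = {\mathcal O}(\varepsilon_\kappa^2(1 + t + \int_0^t|\widehat p_\kappa|^2))$ even though $D_\kappa$ itself is only ${\mathcal O}(\varepsilon_\kappa^{1+\delta_{j3}})$ pointwise (so naively $\int_0^t D_\kappa\cdot\overline p_\kappa$ would be ${\mathcal O}(\varepsilon_\kappa(1 + \int|\widehat p_\kappa|^2))$). The gain of an extra power of $\varepsilon_\kappa$ in the time-integrated quantity is the heart of the matter: it must come from an algebraic cancellation in $D_\kappa\cdot\overline p_\kappa$ — namely that the leading, non-$\varepsilon_\kappa$-small part of $D_\kappa$ is exactly gyroscopic, $D_\kappa^{lead}\cdot\overline p_\kappa = 0$ pointwise, which is why the $\alpha$-contribution was designed via the modulation in the first place (this is the role of $\alpha_{\kappa,i} = V_{\kappa,i}$). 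So I would isolate within $D_\kappa$ a pointwise-gyroscopic leading part (verifying $D_\kappa^{lead}\cdot\overline p_\kappa = 0$ by the same antisymmetry argument used for $B_\kappa$, since $D_\kappa^{lead}$ has the same structure $-\gamma_\kappa\sum_k \overline p_{\kappa,k}\int_{\partial {\mathcal S}_\kappa}\partial_n\widehat\psi_\kappa\,(\text{affine})^\perp\cdot\xi_{\kappa,j}$ built from the modulation coefficients) and a genuinely ${\mathcal O}(\varepsilon_\kappa^2)$ remainder. The pointwise bound \eqref{Eq:EstGyroW} is then the easy byproduct of the ${\mathcal O}(\varepsilon_\kappa^{1+\delta_{j3}})$ estimates above. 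I expect that the second-order modulations $\beta_{\kappa,i}$ (defined in \eqref{Eq:ParamModulation} via the conformal center $\zeta^\varepsilon_\kappa$) enter precisely here, through the ``transfer'' Lemma~\ref{Lem:Transfert45vers12} announced in the statement, to absorb the would-be non-gyroscopic part coming from the $\xi_{\kappa,4}, \xi_{\kappa,5}$ components of $V_\kappa$; I would invoke that lemma to convert the problematic $V_{\kappa,4}, V_{\kappa,5}$ boundary integrals into contributions of the form handled above. The remaining bookkeeping — matching powers of $\varepsilon_\kappa$ in each of the ${\mathcal O}$-terms and verifying that everything not of the form $B_\kappa$ or $D_\kappa$ meets the weakly-nonlinear bound \eqref{Eq:WNL} — is routine given Propositions~\ref{Pro:ExpKirchhoff}, \ref{Pro:ExpShapeDerivatives}, \ref{Pro:APEE}, \ref{Pro:Acceleration}, \ref{Pro:ExistsApprox} and the estimates \eqref{Eq:BehaviourPsi1}--\eqref{Eq:BehaviourPsi2}.
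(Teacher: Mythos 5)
Your overall architecture is correct and matches the paper's in broad strokes: you extract $B_{\kappa}$ from the $\overline{p}_{\kappa}$-linear part of $\overline{u}_{\kappa}^{pot}+\overline{u}^{ext}_\kappa - v_{\mathcal{S},\kappa}$ via a Lamb-type identity, you anticipate the role of Lemma~\ref{Lem:Transfert45vers12} to absorb the $V_{\kappa,4},V_{\kappa,5}$ contributions against the $\beta_{\kappa,1},\beta_{\kappa,2}$ ones, and you park the $\varepsilon_{\kappa}^2 u^r_\kappa$ and Kirchhoff-replacement errors in $A_\kappa$. The paper does the same bookkeeping more compactly by first writing $\overline{u}_{\kappa}^{pot}+\overline{u}^{ext}_\kappa - v_{\mathcal{S},\kappa} = (\mathrm{Id}-\widehat{\mathrm{Kir}}_\kappa)(V_\kappa - v_{\mathcal{S},\kappa}) + \varepsilon_\kappa^2\widetilde{u}^r_\kappa$ and then expanding $V_\kappa - v_{\mathcal{S},\kappa}$ once via \eqref{Eq:VitSolideXi}, \eqref{Eq:DecompV} and \eqref{Eq:ParamModulation}: the $\alpha_{\kappa,i}=V_{\kappa,i}$ cancellation you describe only loosely happens automatically inside $V_\kappa - v_{\mathcal{S},\kappa}$, and every term passes through a single application of Lemma~\ref{Lem:5.3}. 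Your piecewise route arrives at the same place.

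There is, however, a genuine gap in your proposed mechanism for the weak gyroscopy \eqref{Eq:WG} of $D_\kappa$. You claim the extra power of $\varepsilon_\kappa$ must come from a pointwise algebraic cancellation, ``$D_\kappa^{lead}\cdot\overline{p}_\kappa = 0$ by the same antisymmetry argument used for $B_\kappa$,'' plus a remainder. This cannot work. After the transfer lemma, $D_{\kappa,1}=D_{\kappa,2}=0$ exactly and $D_{\kappa,3}=\widehat{J}^2_{\kappa,3}$ is a quantity of order $\varepsilon_\kappa^2$ built from $\beta_{\kappa,1},\beta_{\kappa,2}$, $V_{\kappa,4},V_{\kappa,5}$ and the conformal center $\zeta^\varepsilon_\kappa$ --- it has no structural relation to $\vartheta'_\kappa$. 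Hence $D_\kappa\cdot\overline{p}_\kappa = \vartheta'_\kappa\, D_{\kappa,3}$, which carries no antisymmetric pairing and is $\mathcal{O}(\varepsilon_\kappa)$ pointwise (since $\vartheta'_\kappa=\mathcal{O}(\varepsilon_\kappa^{-1})$); there is no pointwise-gyroscopic leading piece to peel off. The actual mechanism in the paper is an \emph{integration by parts in time}: $D_{\kappa,3}$ contains $R(\pm 2\vartheta_\kappa)$ factors, so that $\vartheta'_\kappa\, D_{\kappa,3}$ is, modulo a bounded prefactor, $\frac{d}{dt}R(\pm 2\vartheta_\kappa)$ applied to $V_\kappa$ (or $\zeta^1_{\kappa,0}$). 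Integrating in time, the boundary term is $\mathcal{O}(\varepsilon_\kappa^2)$ uniformly, and the bulk term trades $\vartheta'_\kappa$ for $V'_\kappa$, which is $\mathcal{O}(1+|\widehat{\bf p}|)$ by \eqref{Eq:EstApprox2B}, restoring the missing $\varepsilon_\kappa$. Without this step the estimate $\int_0^t D_\kappa\cdot\overline{p}_\kappa\,d\tau = \mathcal{O}(\varepsilon_\kappa^2(1+t+\int|\widehat{p}_\kappa|^2))$ does not follow, only the weaker $\mathcal{O}(\varepsilon_\kappa\,t)$.
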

\begin{proof}[Proof of Lemma~\ref{Lem:J2}]
We first notice that from \eqref{Eq:DecompKappa} and \eqref{Eq:DecompMod} we have
$\overline{u}_{\kappa}^{pot} + \overline{u}^{ext}_\kappa = {u}_{\kappa}^{pot} + {u}^{ext}_\kappa$. 
Using \eqref{Eq:AssumptionApproximation} and $u^{pot}_\kappa = \mbox{Kir}_{\kappa}(v_{{\mathcal S},\kappa})$, we deduce that on $\partial \mathcal{S}_{\kappa}$
\begin{equation*}
\overline{u}_{\kappa}^{pot} + \overline{u}^{ext}_\kappa -v_{{\mathcal S},\kappa} =  (\mbox{Id} - \mbox{Kir}_{\kappa})({V}_{\kappa} - v_{{\mathcal S},\kappa}) + \varepsilon_{\kappa}^2 {u}_{\kappa}^r 
=  (\mbox{Id} - \widehat{\mbox{Kir}}_{\kappa})({V}_{\kappa} - v_{{\mathcal S},\kappa}) + \varepsilon_{\kappa}^2 \widetilde{u}_{\kappa}^r ,
\end{equation*} 
where we recall that $ \widehat{\mbox{Kir}}_{\kappa}$ is defined in \eqref{Kirchap} and where we have set
$$
\widetilde{u}_{\kappa}^r := {u}_{\kappa}^r + \varepsilon_\kappa^{-2} \left(\widehat{\mbox{Kir}}_{\kappa}({V}_{\kappa} - v_{{\mathcal S},\kappa}) -  \mbox{Kir}_{\kappa}({V}_{\kappa} - v_{{\mathcal S},\kappa}) \right).
$$
Thus for  $j \in \{1,2,3\}$ (recalling the notation \eqref{pazencor}), we have
\begin{equation} \label{Eq:ExprJ2}
J_{\kappa,j}^{2} = \tilde{J}_{\kappa,j}^{2} 
+ \varepsilon_{\kappa}^2 \gamma_{\kappa} Q_{\kappa,j}(\nabla^{\perp} \widehat{\psi}_{\kappa}, \widetilde{u}_{\kappa}^r) ,
\end{equation}
where 
\begin{equation} \label{Eq:Jk2-1}
\tilde{J}_{\kappa,j}^{2} := 
\gamma_{\kappa} Q_{\kappa,j} \left( \nabla^{\perp} \widehat{\psi}_{\kappa}, (\mbox{Id} - \widehat{\mbox{Kir}}_{\kappa})({V}_{\kappa} - v_{{\mathcal S},\kappa}) \right). 
\end{equation}
Using \eqref{Eq:DiffKir}, \eqref{Eq:EstApprox1}, $\| \nabla^{\perp} \widehat{\psi}_{\kappa} \|_{L^\infty(\partial {\mathcal S}_{\kappa})} ={\mathcal O}(1/\varepsilon_{\kappa})$ and $|\partial {\mathcal S}_{\kappa}| ={\mathcal O}(\varepsilon_{\kappa})$, we see that the last term in \eqref{Eq:ExprJ2} is weakly nonlinear. \par
To deal with the  term $\tilde{J}_{\kappa,j}^{2}$, we first observe that, by \eqref{Eq:VitSolideXi}, \eqref{Eq:DecompV}, \eqref{Eq:ParamModulation} and \eqref{Eq:VariableModulee}, 
\begin{equation}
\label{latatasse}
{V}_{\kappa} - v_{{\mathcal S},\kappa} =
- \sum_{k=1}^3  \overline{p}_{\kappa,k}  \xi_{\kappa,k}
-   \sum_{k=1}^2   \beta_{\kappa,k} \xi_{\kappa,k}
+ \sum_{k=4}^5 V_{\kappa,k} \xi_{\kappa,k}.
\end{equation}
We are therefore led to estimate $Q_{\kappa,j} \left( \nabla^{\perp} \widehat{\psi}_{\kappa}, (\mbox{Id} - \widehat{\mbox{Kir}}_{\kappa}) \xi_{\kappa,k} \right) $,
for  $\kappa \in {\mathcal P}_{s}$, $j \in \{1,2,3\}$ and  $k\in \{1,2,4,5\}$. 
We will rely on the following classical result.
\begin{Lemma}[] \label{Lem:Lamb}
Let $\mathcal S_0$ a smooth compact simply connected domain of $\R^2$. 
For any pair of vector fields $u$, $v$ in $C^\infty (\overline{\R^2 \setminus \mathcal S_0} ; \R^2 )$ satisfying 
 $\div u = \div v = \curl u  = \curl v = 0$ in $\R^2 \setminus \mathcal S_0$
and $u(x) = {\mathcal O}(1/|x|)$ and $v(x) = {\mathcal O}(1/|x|)$ as $|x| \rightarrow + \infty$,
one has, for any $j=1,2,3$, 
\begin{equation} \nonumber %\label{blasius-formu}
\int_{\partial \mathcal S_0} (u\cdot v) K_{j} (0,\cdot ) \, ds 
= \int_{\partial  \mathcal S_0} \xi_{j} (0,\cdot )  \cdot \Big(  (u \cdot n) v +  (v \cdot n)  u  \Big) \, ds .
\end{equation}
\end{Lemma}
We refer to \cite[Article 134a. (3) and (7)]{Lamb} for a proof of Lemma~\ref{Lem:Lamb}; see also \cite[Lemma 4.6]{GMS}).
Lemma~\ref{Lem:Lamb}  has the following consequence.
\begin{Lemma} \label{Lem:5.3}
For all $j=1,2,3$ and $k=1,2,3,4,5$, we have
\begin{equation} \label{PhiPsi}
Q_{\kappa,j}( \nabla^{\perp} \widehat{\psi}_{\kappa}, \xi_{\kappa,k}- \nabla \widehat\varphi_{\kappa,k} )
 =   \int_{\partial \mathcal{S}_{\kappa}} \partial_{n} \widehat{\psi}_{\kappa} \, \xi_{\kappa,k}^{\perp} \cdot \xi_{\kappa,j} \, ds.
\end{equation}
\end{Lemma}
\begin{proof}[Proof of Lemma~\ref{Lem:5.3}]
First, using that  the vector field $\nabla^{\perp} \widehat{\psi}_{\kappa}$ is tangent to $\partial \mathcal{S}_{\kappa}$, we split the integral into two parts
\begin{equation*}% \label{PsiPhi}
Q_{\kappa,j}( \nabla^{\perp} \widehat{\psi}_{\kappa}, \xi_{\kappa,k}- \nabla \widehat\varphi_{\kappa,k} )
 =  \int_{\partial \mathcal{S}_{\kappa}} \partial_{n} \widehat{\psi}_{\kappa} ( \xi_{\kappa,k} \cdot \tau ) \, K_{\kappa,j} \, ds  
-  \int_{\partial \mathcal{S}_{\kappa}}   \nabla^{\perp} \widehat{\psi}_{\kappa} \cdot  \nabla \widehat\varphi_{\kappa,k}    \, K_{\kappa,j} \, ds  .
\end{equation*}
Then thanks to Lemma~\ref{Lem:Lamb}, we transform the second integral as
\begin{equation*}
- \int_{\partial \mathcal{S}_{\kappa}}  \xi_{\kappa,j} \cdot \Big(  ( \nabla \widehat\varphi_{\kappa,k}  \cdot     n)\nabla^{\perp} \widehat{\psi}_{\kappa}  \Big)  \, ds .
\end{equation*}
Finally, since  $\nabla \widehat\varphi_{\kappa,k}  \cdot n = K_{\kappa,k} = - \xi_{\kappa,k}^{\perp} \cdot \tau$, we observe that 
\begin{equation*}
- \int_{\partial \mathcal{S}_{\kappa}}  \xi_{\kappa,j}   \cdot \Big(  ( \nabla \widehat\varphi_{\kappa,k}  \cdot     n)\nabla^{\perp} \widehat{\psi}_{\kappa}  \Big)  \, ds
=  \int_{\partial \mathcal{S}_{\kappa}} \partial_{n} \widehat{\psi}_{\kappa} \, (\xi_{\kappa,k}^{\perp} \cdot \tau) (\xi_{\kappa,j} \cdot \tau)   \, ds ,
\end{equation*}
and we arrive at \eqref{PhiPsi}.
\end{proof}
%
%
%\ \par
%
%
%
%
%
Now with \eqref{latatasse} and Lemma~\ref{Lem:5.3}, we consequently transform \eqref{Eq:Jk2-1} into
\begin{equation} \nonumber 
\tilde{J}_{\kappa,j}^{2}=
B_{\kappa,j} \ + \widehat{J}_{\kappa,j}^{2}  ,
\end{equation}
where we recall that $B_{\kappa} = (B_{\kappa,j} )_{j=1,2,3} $ is the main gyroscopic term given by \eqref{Eq:Gyro} and where 
\begin{equation} \nonumber 
 \widehat{J}_{\kappa,j}^{2} :=
- \ \gamma_{\kappa} \sum_{k=1}^2 \beta_{\kappa,k}  \int_{\partial \mathcal{S}_{\kappa}} \partial_{n} \widehat{\psi}_{\kappa} \, \xi_{\kappa,k}^{\perp} \cdot \xi_{\kappa,j} \, ds
\ + \ \gamma_{\kappa} \sum_{k=4}^5 V_{\kappa,k}  \int_{\partial \mathcal{S}_{\kappa}} \partial_{n} \widehat{\psi}_{\kappa} \, \xi_{\kappa,k}^{\perp} \cdot \xi_{\kappa,j} \, ds .
\end{equation}
We have the following lemma, which is the main reason for the choice of $\beta_{\kappa,1}$ and $\beta_{\kappa,2}$ in \eqref{Eq:ParamModulation}.
\begin{Lemma} \label{Lem:Transfert45vers12}
Define $\beta_{\kappa,1}$ and $\beta_{\kappa,2}$ by \eqref{Eq:ParamModulation}. Then one has the following relation for $j=1,2$,
\begin{equation} \label{Eq:Transfert}
\sum_{k=1}^2 \beta_{\kappa,k} \int_{\partial \mathcal{S}_{\kappa}} \partial_{n} \widehat{\psi}_{\kappa} \, \xi_{\kappa,k}^{\perp} \cdot \xi_{\kappa,j} \, ds
= 
\sum_{k=4}^5 V_{\kappa,k} \int_{\partial \mathcal{S}_{\kappa}} \partial_{n} \widehat{\psi}_{\kappa} \, \xi_{\kappa,k}^{\perp} \cdot \xi_{\kappa,j} \, ds.
\end{equation}
\end{Lemma} 
\begin{proof}[Proof of Lemma~\ref{Lem:Transfert45vers12}] 
This is a direct consequence of \eqref{def-xi-j}, \eqref{circ-norma} and \eqref{Eq:Zeta}: for $j=1,2$ and $k=1,2$ one finds
\begin{equation*}
\int_{\partial \mathcal{S}_{\kappa}} \partial_{n} \widehat{\psi}_{\kappa} \, \xi_{\kappa,k}^{\perp} \cdot \xi_{\kappa,j} \, ds = 
\begin{pmatrix}
0 & -1 \\
1 & 0
\end{pmatrix}_{k,j=1,2},
\end{equation*}
while for  $j=1,2$ and $k=4,5$ one has
\begin{equation*}
\int_{\partial \mathcal{S}_{\kappa}} \partial_{n} \widehat{\psi}_{\kappa} \, \xi_{\kappa,k}^{\perp} \cdot \xi_{\kappa,j} \, ds = 
\begin{pmatrix}
\zeta_{\kappa,2} & \zeta_{\kappa,1} \\
\zeta_{\kappa,1} & -\zeta_{\kappa,2}
\end{pmatrix}_{\substack{{k=4,5} \\ {j=1,2}}}.
\end{equation*}
Hence \eqref{Eq:Transfert} is equivalent to $\beta_{\kappa,2}=\zeta_{\kappa,2}V_{\kappa,4}+\zeta_{\kappa,1}V_{\kappa,5}$ and $-\beta_{\kappa,1}=\zeta_{\kappa,1}V_{\kappa,4}-\zeta_{\kappa,2}V_{\kappa,5}$, that is, exactly the second relation of \eqref{Eq:ParamModulation}.
\end{proof}
From Lemma~\ref{Lem:Transfert45vers12} we readily deduce that $\widehat{J}_{\kappa,1}^{2}=\widehat{J}_{\kappa,2}^{2}=0$.
Hence it remains only to study
\begin{equation*}
\widehat{J}_{\kappa,3}^{2} =
-\gamma_{\kappa} \sum_{k=1}^2 \beta_{\kappa,k}  \int_{\partial \mathcal{S}_{\kappa}} \partial_{n} \widehat{\psi}_{\kappa} \, \xi_{\kappa,k}^{\perp} \cdot \xi_{\kappa,3} \, ds
+\gamma_{\kappa} \sum_{k=4}^5 V_{\kappa,k}  \int_{\partial \mathcal{S}_{\kappa}} \partial_{n} \widehat{\psi}_{\kappa} \, \xi_{\kappa,k}^{\perp} \cdot \xi_{\kappa,3} \, ds =: D^1_{3} + D^2_{3}.
\end{equation*}
Let us show that the term $\widehat{J}_{\kappa}^{2} = (0,0,D^1_{3}+D^2_{3})^T$ is weakly gyroscopic. First, with  \eqref{Eq:ScalePsi}, \eqref{Eq:EstApprox1} and \eqref{Eq:BoundAlphaBeta} and 
$\| \xi_{\kappa,k} \|_{L^{\infty}(\partial S_{\kappa})}= {\mathcal O}(\varepsilon_{\kappa}) $ for $ k=4,5$, it is easy to check that it satisfies \eqref{Eq:EstGyroW}. Let us now prove \eqref{Eq:WG} by treating the two terms $(0,0,D^1_{3})^T$ and $(0,0,D^2_{3})^T$ separately.\par 
\smallskip \par
We start with the term $(0,0,D^1_{3})^T$. Here \eqref{Eq:Zeta} gives  for $k=1,2$
\begin{equation*}
\int_{\partial \mathcal{S}_{\kappa}} \partial_{n} \widehat{\psi}_{\kappa} \, \xi_{\kappa,k}^{\perp} \cdot \xi_{\kappa,3} \, ds 
= \zeta^\varepsilon_{\kappa}(q_{\kappa}) \cdot e_{k} .
\end{equation*}
Moreover, due to \eqref{Eq:ParamModulation} we have
\begin{equation*}
\sum_{k=1}^2 \beta_{\kappa,k} \zeta^\varepsilon_{\kappa}(q_{\kappa}) \cdot e_{k}
=
\zeta^\varepsilon_{\kappa}(q_{\kappa}) \cdot {\mathcal A}(V_{\kappa})
\zeta^\varepsilon_{\kappa}(q_{\kappa})
\ \text{ where } \ 
{\mathcal A}(V_{\kappa}):= \begin{pmatrix} -V_{\kappa,4} & V_{\kappa,5} \\ V_{\kappa,5} & V_{\kappa,4} \end{pmatrix} .
\end{equation*}
Since the matrix ${\mathcal A}(V_{\kappa})$ is a traceless symmetric $2 \times 2$ matrix, we have $R(\vartheta)^* {\mathcal A}(V_{\kappa}) = {\mathcal A}(V_{\kappa}) R(\vartheta)$ so that, using again  $\eqref{Eq:Zeta}$, 
\begin{equation*}
\sum_{k=1}^2 \beta_{\kappa,k} \zeta^\varepsilon_{\kappa}(q_{\kappa}) \cdot e_{k}
= \varepsilon_{\kappa}^2 \zeta^1_{\kappa,0} \cdot {\mathcal A}(V_{\kappa}) R(2 \vartheta_{\kappa}) \zeta^1_{\kappa,0}.
\end{equation*}
It follows that
\begin{equation*}
\int_{0}^t \overline{p}_{\kappa,3}(\tau) D^1_{3} (\tau) \, d \tau
= - \gamma_{\kappa} \varepsilon_{\kappa}^2 \zeta^1_{\kappa,0} \cdot 
\int_{0}^t \vartheta_{\kappa}'(\tau) {\mathcal A}(V_{\kappa (\tau)}) R(2 \vartheta_{\kappa}(\tau)) \zeta^1_{\kappa,0} \, d \tau.
\end{equation*}
By integration by parts we infer
\begin{equation*}
\int_{0}^t \vartheta_{\kappa}'(\tau) {\mathcal A}(V_{\kappa}(\tau)) R(2 \vartheta_{\kappa}(\tau)) \zeta^1_{\kappa,0} \, d\tau
=
- \frac{1}{2} \int_{0}^t {\mathcal A}(V_{\kappa}'(\tau)) R(2 \vartheta_{\kappa}(\tau)) \zeta^1_{\kappa,0}\, d\tau
+ \frac{1}{2} \Big[ {\mathcal A}(V_{\kappa}(\tau)) R(2 \vartheta_{\kappa}(\tau)) \zeta^1_{\kappa,0} \Big]_{0}^t.
\end{equation*}
Since we can bound the right hand side by $C(1+ \| V_{\kappa} \|_{\infty} + t \| V_{\kappa}' \|_{\infty})$, the estimate \eqref{Eq:WG} for the term $(0,0,D^1_{3})^T$ follows from Proposition~\ref{Pro:ExistsApprox}.\par 
\smallskip \par
We now consider the term $(0,0,D^2_{3})^T$. In that case,  the integrals are given by
\begin{gather*}
\int_{\partial \mathcal{S}_{\kappa}} \partial_{n} \widehat{\psi}_{\kappa} \, \xi_{\kappa,4}^{\perp} \cdot \xi_{\kappa,3} \, ds
= \int_{\partial \mathcal{S}_{\kappa}} \partial_{n} \widehat{\psi}_{\kappa} \, \big[ (x_{2}-h_{\kappa,2})^2 - (x_{1}-h_{\kappa,1})^2 \big] \, ds \\
\text{ and } \ \ 
\int_{\partial \mathcal{S}_{\kappa}} \partial_{n} \widehat{\psi}_{\kappa} \, \xi_{\kappa,5}^{\perp} \cdot \xi_{\kappa,3} \, ds
= 2 \int_{\partial \mathcal{S}_{\kappa}} \partial_{n} \widehat{\psi}_{\kappa} \, (x_{1}-h_{\kappa,1}) (x_{2}-h_{\kappa,2}) \, ds.
\end{gather*}
We notice that
\begin{equation*}
(x-h_{\kappa})^\perp \otimes (x-h_{\kappa}) + (x-h_{\kappa}) \otimes (x-h_{\kappa})^\perp = \begin{pmatrix}
-2 (x_{1} - h_{\kappa,1}) (x_{2}-h_{\kappa,2}) & (x_{1}-h_{\kappa,1})^2 - (x_{2}-h_{\kappa,2})^2 \\
 (x_{1}-h_{\kappa,1})^2 - (x_{2}-h_{\kappa,2})^2  & 2 (x_{1} - h_{\kappa,1}) (x_{2}-h_{\kappa,2}),
\end{pmatrix}
\end{equation*}
and consequently
\begin{multline*}
\sum_{k=4}^5 V_{\kappa,k}  \int_{\partial \mathcal{S}_{\kappa}} \partial_{n} \widehat{\psi}_{\kappa} \, \xi_{\kappa,k}^{\perp} \cdot \xi_{\kappa,3} \, ds
\\
= e_{1} \cdot
\left(\int_{\partial \mathcal{S}_{\kappa}} \partial_{n} \widehat{\psi}_{\kappa} \, \big[(x-h_{\kappa})^\perp \otimes (x-h_{\kappa}) + (x-h_{\kappa}) \otimes (x-h_{\kappa})^\perp \big]\, ds\right)
\begin{pmatrix}
-V_{\kappa,5} \\ -V_{\kappa,4}
\end{pmatrix}.
\end{multline*}
Now the matrix between parentheses can be rewritten as
\begin{multline*}
\int_{\partial \mathcal{S}_{\kappa}} \partial_{n} \widehat{\psi}_{\kappa} \, \big( (x-h_{\kappa})^\perp \otimes (x-h_{\kappa}) + (x-h_{\kappa}) \otimes (x-h_{\kappa})^\perp \big) \, ds \\
=
\varepsilon_{\kappa}^2 R(\vartheta_{\kappa}) \left[\int_{\partial \mathcal{S}_{\kappa,0}} \partial_{n} \widehat{\psi}_{\kappa,0} \, \big( x^\perp \otimes x + x \otimes x^\perp \big) \, ds \right]
R(\vartheta_{\kappa})^*.
\end{multline*}
Call ${\bf Z}$ the time-independent matrix between brackets. Since ${\bf Z}$ is a traceless symmetric $2 \times 2$ matrix, we have $R(\vartheta_{\kappa}) {\bf Z} R(\vartheta_{\kappa})^* = {\bf Z} R(-2\vartheta_{\kappa})$, so that
\begin{equation*}
\sum_{k=4}^5 V_{\kappa,k}  \int_{\partial \mathcal{S}_{\kappa}} \partial_{n} \widehat{\psi}_{\kappa} \, \xi_{\kappa,k}^{\perp} \cdot \xi_{\kappa,3} \, ds
= - e_{1} \cdot {\bf Z} R(-2\vartheta_{\kappa}) \begin{pmatrix}
V_{\kappa,5} \\ V_{\kappa,4}
\end{pmatrix}.
\end{equation*}
Now we deduce
\begin{equation*}
\int_{0}^t \overline{p}_{\kappa,3}(\tau) D^2_{3} (\tau) \, d \tau
= - \gamma_{\kappa} \varepsilon_{\kappa}^2 e_{1} \cdot {\bf Z}
\int_{0}^t \vartheta_{\kappa}'(\tau) R(-2 \vartheta_{\kappa}) \begin{pmatrix}
V_{\kappa,5} \\ V_{\kappa,4}
\end{pmatrix} \, d \tau,
\end{equation*}
and we conclude as for the term $(0,0,D^1_{3})^T$ by using an integration by parts in time and the estimates of Proposition~\ref{Pro:ExistsApprox}.
\end{proof}
%

%%%%%%%%%%%%%%%%%%%%%%%%%%%%%%%%
%
%
%
\subsection{Added mass term}
In this section we combine the term  $I_{\kappa,j}^{2} $ defined in \eqref{saiI2}, the term $J_{\kappa,j}^{4}$ 
defined in \eqref{saiJ4}   and the term $J_{\kappa,j}^{6}$ defined in \eqref{saiJ6}. We recall the notation \eqref{Eq:AddMassMatrix} for the added mass matrix ${\mathcal M}_{a,\kappa}$ which is time-dependent and that we say that a term is gyroscopic of lower order when it satisfies  \eqref{Eq:GyroF} and \eqref{Eq:EstGyro}. 
\begin{Lemma} \label{Lem:I2J4J6}
The term $I_{\kappa,j}^{2} + J_{\kappa,j}^{4} + J_{\kappa,j}^{6}$ can be put in the form
\begin{equation*}
I_{\kappa,j}^{2} + J_{\kappa,j}^{4} + J_{\kappa,j}^{6}
= {\mathcal M}_{a,\kappa} \overline{p}_{\kappa}' + \frac{1}{2} {\mathcal M}_{a,\kappa}' \overline{p}_{\kappa} + A_{\kappa}+ C_{\kappa},
\end{equation*}
where the term $A_{\kappa}$ is weakly nonlinear and where the term $C_{\kappa}$ is gyroscopic of lower order. 
\end{Lemma}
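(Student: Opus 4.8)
The plan is to expand the three terms $I_{\kappa,j}^{2}$, $J_{\kappa,j}^{4}$ and $J_{\kappa,j}^{6}$ using the decomposition \eqref{Eq:DecompMod} and the expansion \eqref{ApproxTildeUeps} of $\overline{u}^{ext}_\kappa$, and to show that, after collecting the ``genuine'' quadratic contributions of the modulated potential $\overline{u}_\kappa^{pot}$, the remaining pieces are either weakly nonlinear (controlled by $\varepsilon_\kappa^{2+\delta_{j3}}(1+|\widehat{\mathbf p}|)$, in the spirit of \eqref{Eq:WNL}) or gyroscopic of lower order (satisfying $C_\kappa\cdot\overline p_\kappa=0$ together with \eqref{Eq:EstGyro}). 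The emergence of the added-mass combination $\mathcal M_{a,\kappa}\overline p_\kappa'+\tfrac12\mathcal M_{a,\kappa}'\overline p_\kappa$ should come precisely from the symmetrization of $I^2_{\kappa,j}$ and $J^4_{\kappa,j}$: writing $\overline u_\kappa^{pot}=\sum_k\overline p_{\kappa,k}\nabla\varphi_{\kappa,k}$, the term $I^2_{\kappa,j}$ contains $\sum_k\overline p'_{\kappa,k}\int_{\mathcal F}\nabla\varphi_{\kappa,k}\cdot\nabla\varphi_{\kappa,j}=(\mathcal M_{a,\kappa}\overline p_\kappa')_j$ plus shape-derivative terms $\sum_{k,\mu,m}\overline p_{\kappa,k}p_{\mu,m}\int_{\mathcal F}\partial_{q_{\mu,m}}\nabla\varphi_{\kappa,k}\cdot\nabla\varphi_{\kappa,j}$; combining the latter with the time-derivative of the added-mass matrix entries $\mathcal M'_{a,\kappa}$ (which itself is a sum of such shape integrals times $p_{\mu,m}$) and with the boundary term $J^4_{\kappa,j}=\tfrac12 Q_{\kappa,j}(\overline u_\kappa^{pot})$ produces the factor $\tfrac12\mathcal M'_{a,\kappa}\overline p_\kappa$ together with a genuinely antisymmetric (hence gyroscopic) remainder. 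This is exactly the classical computation showing that the Kirchhoff equations for a rigid body in potential flow have the form ``$\tfrac{d}{dt}(\mathcal M_a p)=$ gyroscopic'', and I would carry it out by first reducing all volume integrals to boundary integrals over $\partial\mathcal S_\kappa$ via integration by parts (using $\div\nabla\varphi_{\kappa,k}=0$ and the Neumann conditions \eqref{Kir2}), then invoking Lamb's lemma (Lemma~\ref{Lem:Lamb}) or a direct symmetrization to separate the symmetric part (which reconstructs $\tfrac12\mathcal M'_{a,\kappa}$) from the antisymmetric part.

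More concretely, the steps in order would be: (1) Use \eqref{Eq:dtupot}-type identity for $\partial_t\overline u_\kappa^{pot}=\sum_k[\overline p'_{\kappa,k}\nabla\varphi_{\kappa,k}+\overline p_{\kappa,k}(\nabla\varphi_{\kappa,k})']$, inject into $I^2_{\kappa,j}$, identify the $\mathcal M_{a,\kappa}\overline p_\kappa'$ block and isolate the shape-derivative remainder $\widetilde I^2_{\kappa,j}:=\sum_{k,\mu,m}\overline p_{\kappa,k}p_{\mu,m}\int_{\mathcal F}\partial_{q_{\mu,m}}\nabla\varphi_{\kappa,k}\cdot\nabla\varphi_{\kappa,j}$. (2) Differentiate \eqref{Eq:AddMassMatrixKappa} in time to write $\mathcal M'_{a,\kappa}=\sum_{\mu,m}p_{\mu,m}\,\partial_{q_{\mu,m}}\mathcal M_{a,\kappa}$, where each $\partial_{q_{\mu,m}}(\mathcal M_{a,\kappa})_{k,\ell}=\int_{\mathcal F}(\partial_{q_{\mu,m}}\nabla\varphi_{\kappa,k}\cdot\nabla\varphi_{\kappa,\ell}+\nabla\varphi_{\kappa,k}\cdot\partial_{q_{\mu,m}}\nabla\varphi_{\kappa,\ell})+\text{(shape-of-domain term)}$; I would use Lemma~\ref{Lem:ShapeDerivativeKirchhoff} / Corollary~\ref{Cor:SD} to control the boundary contributions. (3) Combine $\widetilde I^2_{\kappa,j}$, $\tfrac12\mathcal M'_{a,\kappa}\overline p_\kappa$ coming from step (2), and $J^4_{\kappa,j}$: the symmetric combination cancels against $\tfrac12\mathcal M'_{a,\kappa}\overline p_\kappa$ up to boundary terms evaluated on $\partial\mathcal S_\kappa$, while the antisymmetric part is the gyroscopic term $C_\kappa$. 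Check that $C_\kappa\cdot\overline p_\kappa=0$ (antisymmetry in the $(\kappa,k)\leftrightarrow(\kappa,j)$ indices once contracted against $\overline p_{\kappa,k}\overline p_{\kappa,j}$) and that the estimate \eqref{Eq:EstGyro} holds by the shape-derivative bounds of Proposition~\ref{Pro:ExpShapeDerivatives} together with the energy bound \eqref{Eq:APEEnergy} (this is where the quadratic $1+|\widehat{\mathbf p}|^2$ appears, and the extra power $\varepsilon_\kappa^{1+\delta_{j3}}$ comes from $|\partial\mathcal S_\kappa|=\mathcal O(\varepsilon_\kappa)$ and the size of $\nabla\varphi_{\kappa,\ell}$ on $\partial\mathcal S_\kappa$). (4) Treat the cross term $J^6_{\kappa,j}=Q_{\kappa,j}(\overline u_\kappa^{pot},\overline u^{ext}_\kappa)$: insert the expansion \eqref{ApproxTildeUeps}, i.e. $\overline u^{ext}_\kappa=V_\kappa+\sum_{k=1}^2\beta_{\kappa,k}\nabla\varphi_{\kappa,k}-\sum_{k=4}^5 V_{\kappa,k}\nabla\varphi_{\kappa,k}+\varepsilon_\kappa^2 u^r_\kappa$; the $V_\kappa$ (affine, hence $\mathcal O(1)$) piece combined with $\overline u_\kappa^{pot}=\mathcal O(\varepsilon_\kappa^{\delta_{j3}}|\widehat{\mathbf p}|)$ in $L^\infty(\partial\mathcal S_\kappa)$ and the factor $|\partial\mathcal S_\kappa|=\mathcal O(\varepsilon_\kappa)$ gives a contribution $\mathcal O(\varepsilon_\kappa^{1+\delta_{j3}}|\widehat{\mathbf p}|)$, which is too large for ``weakly nonlinear'' but fits the gyroscopic-of-lower-order bound \eqref{Eq:EstGyro} if one can check the orthogonality $C_\kappa\cdot\overline p_\kappa=0$; the pieces with $\nabla\varphi_{\kappa,k}$ ($k=1,2,4,5$) and $\varepsilon_\kappa^2 u^r_\kappa$ are genuinely $\mathcal O(\varepsilon_\kappa^{2+\delta_{j3}}(1+|\widehat{\mathbf p}|))$ hence go into $A_\kappa$. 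I would get the needed orthogonality either directly (the $V_\kappa\cdot\overline u_\kappa^{pot}$ boundary integral against $K_{\kappa,j}$, contracted with $\overline p_{\kappa,j}$, reduces via integration by parts inside $\mathcal S_\kappa$ and the divergence-free/harmonic structure to something that vanishes) or by reusing the Blasius/Lamb-type identity \eqref{Eq:QuadraticPsi}-style argument.

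The main obstacle I anticipate is step (3): correctly matching the shape-derivative volume integrals in $\widetilde I^2_{\kappa,j}$ against the time-derivative of the added-mass matrix, because $\mathcal M_{a,\kappa}$ is an integral over the \emph{moving} domain $\mathcal F(\mathbf q)$, so $\mathcal M'_{a,\kappa}$ carries both a ``material'' shape-derivative-of-integrand term and a ``boundary-transport'' term from $\partial_t\mathcal F$, and one must verify that the latter is exactly what reconstructs the boundary part of $J^4_{\kappa,j}$ (the $\tfrac12 Q_{\kappa,j}(\overline u^{pot}_\kappa)$ term) modulo an antisymmetric remainder. This bookkeeping is delicate because the natural cancellation is between a volume term and a surface term, and the remaining antisymmetric surface integral over $\partial\mathcal S_\kappa$ must be shown to be gyroscopic ($C_\kappa\cdot\overline p_\kappa=0$) — here I would rely on the identity $\int_{\partial\mathcal S_\kappa}\big(\xi_{\kappa,k}\cdot\xi_{\kappa,j}^\perp-\xi_{\kappa,j}\cdot\xi_{\kappa,k}^\perp\big)(\cdots)\,ds$ having the right antisymmetry after contraction, exactly as in the Kirchhoff-law computation. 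Once the structure is pinned down, the estimates \eqref{Eq:WNL} and \eqref{Eq:EstGyro} follow mechanically from Propositions~\ref{Pro:ExpKirchhoff}, \ref{Pro:ExpShapeDerivatives}, Corollary~\ref{Cor:ExpAddedMass}, the energy estimate \eqref{Eq:APEEnergy} and the smallness $|\partial\mathcal S_\kappa|=\mathcal O(\varepsilon_\kappa)$, $\|\nabla\varphi_{\kappa,\ell}\|_{L^\infty(\partial\mathcal S_\kappa)}=\mathcal O(\varepsilon_\kappa^{\delta_{\ell 3}})$.
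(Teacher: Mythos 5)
Your outline of steps (1)--(3) matches the skeleton of the paper's proof, and the worry you flagged in step (3) is resolved cleanly by the device the paper actually uses: Reynolds' transport theorem for $\mathcal{M}'_{a,\kappa}$, whose boundary-transport term $\tfrac12\int_{\partial\mathcal{F}}\nabla\varphi_{\kappa,i}\cdot\nabla\varphi_{\kappa,j}\,(u^{pot}\cdot n)\,ds$ automatically delivers $J^4_{\kappa,j}$ plus a manifestly antisymmetric (hence gyroscopic) surface remainder, so the volume-versus-surface bookkeeping falls out without delicate matching.

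The genuine gap is in your step (4), the treatment of $J^6_{\kappa,j}$. You observe, correctly, that the $V_\kappa$ contribution to $Q_{\kappa,j}(\overline{u}_\kappa^{pot},\overline{u}^{ext}_\kappa)$ is of size $\varepsilon_\kappa^{1+\delta_{j3}}|\widehat{\mathbf p}|$, too large for $A_\kappa$, and you propose to rescue it by showing it is gyroscopic on its own, i.e.\ that $\sum_j\overline{p}_{\kappa,j}\,Q_{\kappa,j}(\overline{u}_\kappa^{pot},V_\kappa)=\int_{\partial\mathcal{S}_\kappa}(\overline{u}_\kappa^{pot}\cdot V_\kappa)(\overline{u}_\kappa^{pot}\cdot n)\,ds$ vanishes. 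It does not: this is a genuine work integral, and neither of your proposed mechanisms applies ($\overline{u}_\kappa^{pot}$ is not defined inside $\mathcal{S}_\kappa$, so one cannot integrate by parts in the interior; and the affine field $V_\kappa$ does not decay at infinity, so Lemma~\ref{Lem:Lamb} cannot be invoked with $V_\kappa$ as one of the two fields). The actual mechanism is a \emph{cancellation between $J^6_{\kappa,j}$ and a piece of the boundary-transport term of $\mathcal{M}'_{a,\kappa}$}, which you have not paired with $J^6$. The transport term carries $u^{pot}\cdot n$, not $\overline{u}_\kappa^{pot}\cdot n$, and the discrepancy $(u^{pot}-\overline{u}_\kappa^{pot})\cdot n=\sum_{\ell=1}^2(\alpha_{\kappa,\ell}+\beta_{\kappa,\ell})K_{\kappa,\ell}$ injects precisely the modulation. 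After replacing the reflected potentials by their standalone counterparts and applying Lamb's lemma, the resulting symmetric piece $\widehat{N}_{\kappa,j}$ has exactly the same leading structure as $J^6_{\kappa,j}$, and the paper shows $\widehat{N}_{\kappa,j}-J^6_{\kappa,j}$ is weakly nonlinear because the bracket $-\overline{u}^{ext}_\kappa+\sum_{\ell=1}^2(\alpha_{\kappa,\ell}+\beta_{\kappa,\ell})\xi_\ell$ collapses, via \eqref{ApproxTildeUeps} and the choice $\alpha_{\kappa,\ell}=V_{\kappa,\ell}$, to a sum of terms each of size $\mathcal{O}(\varepsilon_\kappa)$ on $\partial\mathcal{S}_\kappa$. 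In short, this is exactly where the modulation earns its keep, and a plan that isolates $J^6$ from the $\mathcal{M}'_{a,\kappa}$ boundary term cannot reproduce the cancellation.
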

\begin{proof}[Proof of Lemma \ref{Lem:I2J4J6}] We proceed in three steps. \par
\ \par

\noindent
{\bf Step 1.}
Using the definition of $\overline{u}_{\kappa}^{pot}$ in \eqref{Eq:DecompMod},  we find,  for  $j \in \{1,2,3\}$,  
\begin{equation*}
I^{2}_{\kappa,j} = 
({\mathcal M}_{a,\kappa}\overline{p}_{\kappa}')_{\kappa,j}
+ \sum_{i=1}^3 \sum_{\substack{{\nu \in \{1,\dots,N\}} \\ {k \in \{1,2,3\}}}} \int_{{\mathcal F}(t)} \overline{p}_{\kappa,i} \, p_{\nu,k} \frac{\partial \nabla \varphi_{\kappa,i}}{\partial q_{\nu,k}} \cdot \nabla \varphi_{\kappa,j} \, dx .
\end{equation*}
On the other hand, by Reynolds' transport theorem, 
\begin{multline*}
({\mathcal M}_{a,\kappa}')_{ij} 
= \sum_{\substack{{\nu \in \{1,\dots,N\}} \\ {k \in \{1,2,3\}}}} \int_{{\mathcal F}(t)} p_{\nu,k} \frac{\partial \nabla \varphi_{\kappa,i}}{\partial q_{\nu,k}} \cdot \nabla \varphi_{\kappa,j} \, dx
+ \sum_{\substack{{\nu \in \{1,\dots,N\}} \\ {k \in \{1,2,3\}}}} \int_{{\mathcal F}(t)} \nabla \varphi_{\kappa,i} \cdot p_{\nu,k} \frac{\partial \nabla \varphi_{\kappa,j}}{\partial q_{\nu,k}} \, dx \\
+ \int_{\partial {\mathcal F}(t)} \nabla \varphi_{\kappa,i} \cdot \nabla \varphi_{\kappa,j} (u^{pot} \cdot n) \, ds ,
\end{multline*}
so that
\begin{multline*}
\left({\mathcal M}_{a,\kappa}\overline{p}_{\kappa}' + \frac{1}{2} {\mathcal M}_{a,\kappa}' \overline{p}_{\kappa} \right)_{j}
= I^2_{\kappa,j}
+\frac{1}{2} \sum_{i=1}^3 \sum_{\substack{{\nu \in \{1,\dots,N\}} \\ {k \in \{1,2,3\}}}} \int_{{\mathcal F}(t)} 
\overline{p}_{\kappa,i} \, p_{\nu,k} \left( \nabla \varphi_{\kappa,i} \cdot \frac{\partial \nabla \varphi_{\kappa,j}}{\partial q_{\nu,k}} 
- \frac{\partial \nabla \varphi_{\kappa,i}}{\partial q_{\nu,k}} \cdot \nabla \varphi_{\kappa,j} \right) \, dx \\
+ \frac{1}{2}  \sum_{i=1}^3   \int_{\partial {\mathcal F}(t)} \overline{p}_{\kappa,i} \nabla \varphi_{\kappa,i} \cdot \nabla \varphi_{\kappa,j} (u^{pot} \cdot n) \, ds.
\end{multline*}
We focus on the last term in the right-hand side. The idea is to replace $u^{pot} \cdot n$ with $\overline{u}_\kappa^{pot} \cdot n$, up to an error term. Adding and subtracting 
\eqref{saiJ4} in the right-hand side, and using \eqref{Eq:DecompMod} we find
\begin{eqnarray}
\nonumber
\left({\mathcal M}_{a,\kappa}\overline{p}_{\kappa}' + \frac{1}{2} {\mathcal M}_{a,\kappa}' \overline{p}_{\kappa} \right)_{j}
&=& I^2_{\kappa,j} + J_{\kappa,j}^{4} \\
\nonumber
&+& \frac{1}{2} \sum_{i=1}^3 \sum_{\substack{{\nu \in \{1,\dots,N\}} \\ {k \in \{1,2,3\}}}} \int_{{\mathcal F}(t)} 
\overline{p}_{\kappa,i} \, p_{\nu,k} \left( \nabla \varphi_{\kappa,i} \cdot \frac{\partial \nabla \varphi_{\kappa,j}}{\partial q_{\nu,k}} 
- \frac{\partial \nabla \varphi_{\kappa,i}}{\partial q_{\nu,k}} \cdot \nabla \varphi_{\kappa,j} \right) \, dx \\
\nonumber
&+& \frac{1}{2} \int_{\partial {\mathcal F}(t)}  \overline{u}_\kappa^{pot} \cdot \big( (\overline{u}_\kappa^{pot} \cdot n)  \nabla \varphi_{\kappa,j} - ( \nabla \varphi_{\kappa,j} \cdot n) \overline{u}_\kappa^{pot} \big)\, ds \\
\label{Eq:I2J4}
&+& \frac{1}{2} \int_{\partial {\mathcal F}(t)} \overline{u}_\kappa^{pot} \cdot \nabla \varphi_{\kappa,j} \, (u^{pot} - \overline{u}_\kappa^{pot}) \cdot n \, ds.
\end{eqnarray}
Call $C^1_{\kappa,j}$ the expression in the second line of \eqref{Eq:I2J4} and $C^2_{\kappa,j}$ the expression in the third line of \eqref{Eq:I2J4}. It is clear that $C_{\kappa}^1=(C^1_{\kappa,1},C^1_{\kappa,2},C^1_{\kappa,3})^T$ and $C_{\kappa}^2=(C^2_{\kappa,1},C^2_{\kappa,2},C^2_{\kappa,3})^T$ satisfy the property
 $\overline{p}_{\kappa} \cdot C^1_{\kappa} = \overline{p}_{\kappa} \cdot C^2_{\kappa}=0.$
Using  \eqref{Eq:EstDerivKirchhoff1} and an integration by parts we see that $C_{\kappa}^1$ satisfies \eqref{Eq:EstGyro}.
Using \eqref{Eq:VariableModulee}, \eqref{Eq:DecompMod}, \eqref{Eq:ExpKirchhoff1Bis} and \eqref{Eq:BoundAlphaBeta} we see that independently of $\overline{\boldsymbol\varepsilon}$, we have
\begin{equation} \label{Eq:EstUnat}
\| \overline{u}_{\kappa}^{pot} \|_{L^{\infty}(\partial {\mathcal S}_{\kappa})} \leq C (1+|\widehat{p}_{\kappa}|) \text{ and } \
\| \overline{u}_{\kappa}^{pot} \|_{L^{\infty}(\partial {\mathcal S}_{\nu})} \leq C \varepsilon_{\kappa}^2 (1+|\widehat{p}_{\kappa}|) \text{ for } \nu \neq \kappa.
\end{equation}
With $|\partial \mathcal{S}_\kappa | = \mathcal{O}(\varepsilon_\kappa)$, we deduce that $C_{\kappa}^2$ satisfies \eqref{Eq:EstGyro} .
Consequently the terms $C_{\kappa}^1$ and  $C_{\kappa}^2$ are gyroscopic of lower order (in the sense of \eqref{Eq:GyroF} and \eqref{Eq:EstGyro}). \par
\ \par
\noindent
{\bf Step 2.}
Hence we now focus on the last term in the right-hand side of \eqref{Eq:I2J4}.  We first consider the integral away from $\partial {\mathcal S}_{\kappa}$:
\begin{equation*}
\int_{\partial {\mathcal F}(t) \setminus \partial {\mathcal S}_{\kappa}} \overline{u}_\kappa^{pot} \cdot \nabla \varphi_{\kappa,j} \, (u^{pot} - \overline{u}_\kappa^{pot}) \cdot n \, ds =
\sum_{\nu \neq \kappa} \int_{\partial {\mathcal S}_{\nu}} \overline{u}_\kappa^{pot} \cdot \nabla \varphi_{\kappa,j} \, u^{pot} \cdot n \, ds ,
\end{equation*}
since $\overline{u}_\kappa^{pot} \cdot n=0$ on $\partial {\mathcal F}(t) \setminus \partial {\mathcal S}_{\kappa}$ and since moreover $u^{pot}\cdot n$ vanishes on $\partial \Omega$. From \eqref{Eq:DecompGlob} we have
\begin{equation*}
u^{pot} \cdot n = \sum_{i=1}^3 p_{\nu,i} K_{\nu,i} \ \text{ on } \partial {\mathcal S}_{\nu}.
\end{equation*}
Using the decay \eqref{Eq:ExpKirchhoff1Bis} of $\nabla \varphi_{\kappa,j}$, the energy estimates of Proposition~\ref{Pro:APEE} and $|\partial {\mathcal S}_{\nu}|= {\mathcal O}(\varepsilon_{\nu})$ we deduce that 
\begin{equation} \label{Eq:UpotUpotbar}
\left| \int_{\partial {\mathcal F}(t) \setminus \partial {\mathcal S}_{\kappa}} \overline{u}_\kappa^{pot} \cdot \nabla \varphi_{\kappa,j} \, (u^{pot} - \overline{u}_\kappa^{pot}) \cdot n \, ds \right| \leq C \varepsilon_{\kappa}^{4+\delta_{j3}} (1 + |\widehat{p}_{\kappa}|),
\end{equation}
so this term is weakly nonlinear. \par
Now we consider the integral over $\partial {\mathcal S}_{\kappa}$.
By \eqref{Eq:VariableModulee} and \eqref{Eq:DecompMod}  we have for $\kappa \in \{ 1,\dots,N \}$, 
\begin{equation} \label{Eq:up-unat}
(u^{pot} - \overline{u}_\kappa^{pot}) \cdot n = \sum_{\ell=1}^2 (\alpha_{\kappa,\ell} + \beta_{\kappa,\ell}) K_{\kappa,\ell} 
\ \text{ on } \ \partial {\mathcal S}_{\kappa}.
\end{equation}
Hence with \eqref{Eq:BoundAlphaBeta} we see that this factor is bounded. 
We want now to replace in this integral the factor $\overline{u}_\kappa^{pot} \cdot \nabla \varphi_{\kappa,j} $ by $  \widehat{u}_\kappa^{pot} \cdot \nabla \widehat \varphi_{\kappa,j},$ where we set 
\begin{equation*}
\widehat{u}_\kappa^{pot} := \sum_{i=1}^3 \overline{p}_{\kappa,i} \nabla \widehat{\varphi}_{\kappa,i}.
\end{equation*}
%\index{Velocity fields!U6@$\widehat{u}_\kappa^{pot}$: modulated, standalone potential field associated with ${\mathcal S}_{\kappa}$}
%
Similarly to \eqref{Eq:EstUnat}, we have
\begin{equation} \label{Eq:EstUnatprime}
\| \widehat{u}_{\kappa}^{pot} \|_{L^{\infty}(\partial {\mathcal S}_{\kappa})} \leq C (1+|\widehat{p}_{\kappa}|) .
\end{equation}
Using \eqref{Eq:ExpKirchhoff1} in Proposition~\ref{Pro:ExpKirchhoff},  \eqref{Eq:EstUnat}, the boundedness of \eqref{Eq:up-unat} and \eqref{Eq:EstUnatprime}, we find
\begin{multline} \label{Eq:I2J4B}
\frac{1}{2} \int_{\partial {\mathcal S}_{\kappa}} \overline{u}_\kappa^{pot} \cdot \nabla \varphi_{\kappa,j} ((u^{pot} - \overline{u}_\kappa^{pot}) \cdot n) \, ds 
= N_{\kappa,j}+ {\mathcal O}(\varepsilon_{\kappa}^{2+\delta_{j3}} ( 1+ |\widehat{p}_\kappa|)) \\
\text{ where } \ N_{\kappa,j}:=  \frac{1}{2} \int_{\partial {\mathcal S}_{\kappa}} \widehat{u}_\kappa^{pot} \cdot \nabla \widehat \varphi_{\kappa,j} ((u^{pot} - \overline{u}_\kappa^{pot}) \cdot n) \, ds.
\end{multline}
Of course the last term in the right-hand side of \eqref{Eq:I2J4B} is  weakly nonlinear.\par
\ \par
\noindent
{\bf Step 3.}
Hence it remains to consider the term ${N}_{\kappa,j}$. Using \eqref{Eq:up-unat} and applying Lemma~\ref{Lem:Lamb} to ${N}_{\kappa,j}$  we deduce that   
\begin{equation}
\label{Eq:I2J4J6a}
{N}_{\kappa,j}  = \frac{1}{2} \sum_{\ell=1}^2 \int_{\partial {\mathcal S}_{\kappa}}  \left( \alpha_{\kappa,\ell} + \beta_{\kappa,\ell}\right)  \xi_{\ell} 
 \cdot \left((\widehat{u}_\kappa^{pot} \cdot n) \nabla \widehat \varphi_{\kappa,j} + (\nabla \widehat\varphi_{\kappa,j} \cdot n) \widehat{u}_\kappa^{pot} \right) \, ds \\
 =  \widehat{N}_{\kappa,j} + C_{\kappa,j}^3,
\end{equation}
\vskip -0.4cm
\begin{multline*}
\text{ where } \ 
\widehat{N}_{\kappa,j} :=  \sum_{\ell=1}^2 \int_{\partial {\mathcal S}_{\kappa}}  \left( \alpha_{\kappa,\ell} + \beta_{\kappa,\ell}\right)  \xi_{\ell} \cdot \widehat{u}_\kappa^{pot} K_{\kappa,j} \, ds \\
\text{ and } \ 
C_{\kappa,j}^3 := \frac{1}{2} \sum_{\ell=1}^2 \int_{\partial {\mathcal S}_{\kappa}}  \left( \alpha_{\kappa,\ell} + \beta_{\kappa,\ell}\right)  \xi_{\ell} 
\cdot \left((\widehat{u}_\kappa^{pot} \cdot n) \nabla \widehat \varphi_{\kappa,j} - (\nabla \widehat\varphi_{\kappa,j} \cdot n) \widehat{u}_\kappa^{pot} \right) \, ds,
\end{multline*}
As before, we see that $C_{\kappa}^3=(C^3_{\kappa,1},C^3_{\kappa,2},C^3_{\kappa,3})^T$ is gyroscopic of lower order, and we are left with the term $\widehat{N}_{\kappa,j}$. 
We recombine $\widehat{N}_{\kappa,j}$ with $J^6_{\kappa,j}=Q_{\kappa,j}(\overline{u}_{\kappa}^{pot}, \overline{u}^{ext}_\kappa)$ as follows:
\begin{equation} \label{Eq:HatN}
\widehat{N}_{\kappa,j} - J^6_{\kappa,j} =  \int_{\partial {\mathcal S}_{\kappa}} 
\left(\widehat{u}_\kappa^{pot}  \cdot \left[-\overline{u}^{ext}_\kappa + \sum_{\ell=1}^2 (\alpha_{\kappa,\ell} + \beta_{\kappa,\ell}) \xi_{\ell}\right] \right) K_{\kappa,j}\, ds + Q_{\kappa,j}(\overline{u}^{ext}_\kappa,\widehat{u}_\kappa^{pot}  - \overline{u}_\kappa^{pot}).
\end{equation}
By \eqref{Eq:TildeUExt}, \eqref{Eq:BoundAlphaBeta} and Lemma~\ref{Lem:EstExtField}, $\| \overline{u}^{ext}_\kappa \|_{\infty} \leq C$.
Hence as before, with \eqref{Eq:ExpKirchhoff1} we can estimate the last term in \eqref{Eq:HatN} by ${\mathcal O}(\varepsilon_{\kappa}^{2+\delta_{j3}} ( 1 + |\widehat{p}_{\kappa}|))$.
Concerning the first term in \eqref{Eq:HatN}, using \eqref{ApproxTildeUeps}, \eqref{Eq:DecompV} and \eqref{Eq:ParamModulation} we find
\begin{equation*}
\left[-\overline{u}^{ext}_\kappa + \sum_{\ell=1}^2 (\alpha_{\kappa,\ell} + \beta_{\kappa,\ell}) \xi_{\ell}\right] = 
\sum_{\ell=1}^2 \beta_{\kappa,\ell} (\xi_{\kappa,\ell} - \nabla \varphi_{\kappa,\ell})
- \sum_{\ell=4}^5 V_{\kappa,\ell} (\xi_{\kappa,\ell} - \nabla \varphi_{\kappa,\ell}) 
- \varepsilon_{\kappa}^2 u^{r}_{\kappa}   \quad  \text{ on } \partial {\mathcal S}_{\kappa} .
\end{equation*}
Since $\beta_{\kappa,\ell}= {\mathcal O}(\varepsilon_{\kappa})$ for $\ell=1,2$ and $\| \xi_{\kappa,\ell} \|_{L^{\infty}(\partial S_{\kappa})}= {\mathcal O}(\varepsilon_{\kappa})$
for $\ell=4,5$, using  \eqref{Eq:ExpKirchhoff1Bis} and \eqref{Eq:EstApprox1} we see that these terms are all (at least) of order ${\mathcal O}(\varepsilon_{\kappa})$ in $L^{\infty}$ norm on $\partial {\mathcal S}_{\kappa}$. Using $|\partial {\mathcal S}_{\kappa}|= {\mathcal O}(\varepsilon_{\kappa})$ and \eqref{Eq:EstUnatprime}, this gives the estimate
\begin{equation*}
\widehat{N}_{\kappa,j} - J^6_{\kappa,j}  = {\mathcal O}(\varepsilon_{\kappa}^{2+\delta_{j3}} ( 1 + |\widehat{\mathbf{p}}|)).
\end{equation*}
Going back to \eqref{Eq:UpotUpotbar} and \eqref{Eq:I2J4B} and taking into account the above treatment of \eqref{Eq:I2J4J6a}, we deduce that
\begin{equation} \label{Eq:J6C3}
\frac{1}{2} \int_{\partial {\mathcal F}(t)} \overline{u}_\kappa^{pot} \cdot \nabla \varphi_{\kappa,j} ((u^{pot} - \overline{u}_\kappa^{pot}) \cdot n) \, ds 
= J^6_{\kappa,j} + C^3_{\kappa,j} +{\mathcal O}(\varepsilon_{\kappa}^{2+\delta_{j3}} ( 1 + |\widehat{\mathbf{p}}|)).
\end{equation}
Of course the last term in \eqref{Eq:J6C3} is weakly nonlinear. Then injecting \eqref{Eq:J6C3} in \eqref{Eq:I2J4} we obtain the desired result.
\end{proof}
\subsection{Conclusion of the proof of the normal form} 
Gathering \eqref{Def:IJ2}, Lemmas \ref{Lem:L}, \ref{Lem:J1},  \ref{Lem:I1J3}, \ref{Lem:I3J5bis}, \ref{Lem:I3J5}, \ref{Lem:5.3} and \ref{Lem:I2J4J6} we conclude the proof of  Proposition~\ref{Pro:MEE}.
\end{proof}
%
%
%
%%%%%%%%%%%%%%%%%%%%%%%%%%%%%%%%%%%%%%%%%%%%%%%%%%%%%%%%%%%%%%%%%%%%%%%%%%%%%%%%%%%%%%%%%%%%%%%%%%%%%%%%%%%%%%%%%%%%%%%%%%%%%%%%%%%%%%%%%%%%%%%%%%%%%%%%%%%%%%%%%%%%%%%%%%%%%%%%%%%%%%%%%%%%%%%%%%%%%%%%
%
%
%
%
%
\section{Modulated energy estimates}
\label{Sec:MEE}
This section is devoted to the following crucial \textit{a priori} estimate. 
\begin{Proposition} \label{Pro:Bounded-P}
Let $\delta>0$. There exists $\varepsilon_{0}>0$ such that for all $\kappa$, $\widehat{p}_{\kappa}$ is bounded as long as $(\overline{\boldsymbol\varepsilon},{\bf q},\omega)$ stays in $\mathfrak{Q}_\delta^{\varepsilon_{0}}$.
\end{Proposition}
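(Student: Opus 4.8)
The goal is to upgrade the rough bound $|\varepsilon_\kappa^{\delta_{\kappa\in\mathcal P_{(iii)}}}\widehat p_\kappa|\le C$ of Proposition~\ref{Pro:APEE} to a genuine uniform bound $|\widehat p_\kappa|\le C$ for all $\kappa$, including the massless solids of family (iii). The strategy is a \emph{modulated energy estimate} built on the normal forms of Proposition~\ref{Pro:MEE}. First I would introduce the modulated energy
\begin{equation*}
\mathcal E^{mod}(t):=\frac12\,\mathcal M\,\widehat{\overline{\mathbf p}}\cdot\widehat{\overline{\mathbf p}},
\end{equation*}
where $\mathcal M=\mathcal M_g+\mathcal M_a({\bf q})$ is the total mass matrix and $\widehat{\overline{\mathbf p}}$ is the scaled version of the modulated variable $\overline{\mathbf p}$ from \eqref{Eq:VariableModulee} (so that for $\kappa\in\mathcal P_{(i)}\cup\mathcal P_{(ii)}$ it coincides with $\widehat p_\kappa$, and for $\kappa\in\mathcal P_{(iii)}$ the circulation degeneracy is handled through $\mathcal M_{a,\kappa}$, which by Corollary~\ref{Cor:ExpAddedMass} and Remarks~\ref{Rem:PasBoules}, \ref{Rem:MA/MASAM} is comparable to $\varepsilon_\kappa^2\widehat{\mathcal M}^1_{a,\kappa}$, a fixed positive definite matrix). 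For the large solids of family (i) I keep the usual (unmodulated) energy contribution, and here one simply uses the conservation of the total energy $\mathcal E(t)$ combined with Lemma~\ref{Lem:RenormalisationEnergie} as in the proof of Proposition~\ref{Pro:APEE}; the genuinely new work concerns the small solids.

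Next I would differentiate $\mathcal E^{mod}$ in time. For $\kappa\in\mathcal P_s$, the key algebraic identity is that
\begin{equation*}
\frac{d}{dt}\Big(\tfrac12\,\mathcal M_{a,\kappa}\,\overline p_\kappa\cdot\overline p_\kappa\Big)
=\Big(\mathcal M_{a,\kappa}\overline p_\kappa'+\tfrac12\mathcal M_{a,\kappa}'\overline p_\kappa\Big)\cdot\overline p_\kappa,
\end{equation*}
and similarly for the genuine inertia term, so that by the normal form \eqref{Eq:FormeNormale} one gets
\begin{equation*}
\frac{d}{dt}\Big(\tfrac12\mathcal M_{g,\kappa}p_\kappa\cdot p_\kappa+\tfrac12\mathcal M_{a,\kappa}\overline p_\kappa\cdot\overline p_\kappa\Big)
= (A_\kappa+B_\kappa+C_\kappa+D_\kappa)\cdot\overline p_\kappa + (\text{lower order from }\mathcal M_g p_\kappa'\cdot(p_\kappa-\overline p_\kappa)).
\end{equation*}
The main gyroscopic term drops out exactly, $B_\kappa\cdot\overline p_\kappa=0$ by \eqref{Eq:GyroFB}, and likewise $C_\kappa\cdot\overline p_\kappa=0$ by \eqref{Eq:GyroF}. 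The weakly nonlinear term contributes, by \eqref{Eq:WNL} and $\varepsilon_\kappa^{\delta_{j3}}|\overline p_{\kappa,j}|\lesssim|\widehat{\overline p}_\kappa|+1$ (using \eqref{Eq:BoundAlphaBeta} to pass between $p_\kappa$ and $\overline p_\kappa$), a bound of the form $|A_\kappa\cdot\overline p_\kappa|\le K\varepsilon_\kappa^2(1+|\widehat{\mathbf p}|)(1+|\widehat{\overline p}_\kappa|)\lesssim \varepsilon_\kappa^2(1+|\widehat{\mathbf p}|^2)$, which after summing over $\kappa\in\mathcal P_s$ and invoking the equivalence $|\widehat{\mathbf p}|\lesssim |\widehat{\overline{\mathbf p}}|+1$ and $\mathcal E^{mod}\gtrsim|\widehat{\overline{\mathbf p}}|^2$ gives a term $\le C\varepsilon_0(1+\mathcal E^{mod})$ — absorbable for $\varepsilon_0$ small. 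The weakly gyroscopic term $D_\kappa$ is the one that cannot be controlled pointwise: here one does not bound $D_\kappa\cdot\overline p_\kappa$ directly but uses the time-integrated estimate \eqref{Eq:WG}, so $D_\kappa$ must be kept under the integral sign. Thus, rather than a differential inequality on $\mathcal E^{mod}$ alone, I would integrate from $0$ to $t$ and obtain
\begin{equation*}
\mathcal E^{mod}(t)\le \mathcal E^{mod}(0)+C\int_0^t\big(1+\mathcal E^{mod}(\tau)\big)\,d\tau + \sum_{\kappa\in\mathcal P_s}K\varepsilon_\kappa^2\Big(1+t+\int_0^t|\widehat p_\kappa(\tau)|^2\,d\tau\Big),
\end{equation*}
where the last sum is, again for $\varepsilon_0$ small, bounded by $C\varepsilon_0\big(1+t+\int_0^t\mathcal E^{mod}(\tau)\,d\tau\big)$; a Grönwall argument on $[0,T]$ then yields $\mathcal E^{mod}(t)\le C$ uniformly, hence $|\widehat{\overline p}_\kappa|\le C$, hence $|\widehat p_\kappa|\le C$ via \eqref{Eq:BoundAlphaBeta}.

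\textbf{Main obstacle.} The delicate point is the interplay between the rough acceleration estimate of Proposition~\ref{Pro:Acceleration} and the desired uniform bound: the normal form terms $A_\kappa,C_\kappa$ are estimated in \eqref{Eq:WNL}, \eqref{Eq:EstGyro} in terms of $(1+|\widehat{\mathbf p}|)$ and $(1+|\widehat{\mathbf p}|^2)$ — i.e.\ they already presuppose only the rough energy bound — and one must check that the powers of $\varepsilon_\kappa$ genuinely beat these growth factors after the rescaling $p_\kappa\leftrightarrow\widehat p_\kappa\leftrightarrow\overline p_\kappa$. Concretely one needs, for each $j\in\{1,2,3\}$, that the contribution $\varepsilon_\kappa^{1+\delta_{j3}}|\overline p_{\kappa,j}|$ of the gyroscopic-of-lower-order and weakly-gyroscopic terms and $\varepsilon_\kappa^{2+\delta_{j3}}|\overline p_{\kappa,j}|$ of the weakly nonlinear term, when paired with the $(1+|\widehat{\mathbf p}|^{p})$ prefactors, produces only quantities of the shape $\varepsilon_\kappa^{\,\alpha}(1+\mathcal E^{mod})$ with $\alpha>0$; this requires carefully tracking that $|\overline p_{\kappa,3}|=|\vartheta_\kappa'|$ is controlled by $\varepsilon_\kappa^{-1}|\widehat{\overline p}_\kappa|$ and no worse, and that the cross terms coming from $\mathcal M_g p_\kappa'\cdot(p_\kappa-\overline p_\kappa)$ (present because the modulation is subtracted only from the first two components) are absorbed using $|p_\kappa-\overline p_\kappa|\le C$ and \eqref{Eq:EstApprox2B}, \eqref{Eq:AccelerationEst}. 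Once this bookkeeping is done, the Grönwall step is routine; the whole difficulty is concentrated in verifying that every error term carries a positive power of $\varepsilon_0$ so that the smallness of $\overline{\boldsymbol\varepsilon}$ closes the estimate.
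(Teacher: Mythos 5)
Your overall strategy — multiplying the normal form \eqref{Eq:FormeNormale} by $\overline p_\kappa$, exploiting $B_\kappa\cdot\overline p_\kappa=C_\kappa\cdot\overline p_\kappa=0$, using the time-integrated bound \eqref{Eq:WG} for $D_\kappa$, and closing with Gr\"onwall — is exactly the one the paper follows. But your choice of modulated energy creates a gap that the paper's choice is specifically designed to avoid.

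You take the energy $\tfrac12\mathcal M_{g,\kappa}p_\kappa\cdot p_\kappa+\tfrac12\mathcal M_{a,\kappa}\overline p_\kappa\cdot\overline p_\kappa$, i.e.\ the genuine inertia acts on the \emph{unmodulated} $p_\kappa$. Differentiating and inserting the normal form then produces, as you correctly identify, the cross term $\mathcal M_{g,\kappa}p_\kappa'\cdot(p_\kappa-\overline p_\kappa)$. You propose to control it via $|p_\kappa-\overline p_\kappa|\le C$ together with the rough acceleration estimate \eqref{Eq:AccelerationEst}. This is where the argument fails for family (iii): by \eqref{Eq:AccelerationEst}, for $\kappa\in\mathcal P_{(iii)}$ one only has $|p'_{\kappa,j}|\lesssim\varepsilon_\kappa^{-2-\delta_{j3}}(1+|\widehat{\mathbf p}|)$, while $\mathcal M_{g,\kappa,jj}=\varepsilon_\kappa^{\alpha_\kappa+2\delta_{j3}}m^1_\kappa$. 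Since the modulation only affects $j=1,2$, the cross term is of size $\varepsilon_\kappa^{\alpha_\kappa-2}(1+|\widehat{\mathbf p}|)$. When $\alpha_\kappa<2$ (which is allowed by \eqref{Eq:Family_iii} and explicitly handled by the paper, with $\alpha_\kappa=2$ being the constant-density case, not a lower bound) this factor blows up as $\overline{\boldsymbol\varepsilon}\to0$ and cannot be absorbed. The acceleration estimate is too crude to feed back into the energy estimate through the genuine mass of a family-(iii) body.

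The paper avoids this entirely by putting the modulated variable in \emph{both} inertia terms, i.e.\ using $\tfrac12\mathcal M_\kappa\overline p_\kappa\cdot\overline p_\kappa$ with $\mathcal M_\kappa=\mathcal M_{g,\kappa}+\mathcal M_{a,\kappa}$. Since $\mathcal M_{g,\kappa}$ is constant in time, this is just as legitimate, and writing $p_\kappa'=\overline p_\kappa'+\mathscr V_\kappa'$ with $\mathscr V_\kappa:=(\alpha_{\kappa,1}+\beta_{\kappa,1},\alpha_{\kappa,2}+\beta_{\kappa,2},0)^T$ turns the problematic term into $-\mathcal M_{g,\kappa}\mathscr V_\kappa'\cdot\overline p_\kappa$. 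The crucial point is that this involves the time-derivative of the \emph{modulation}, not the solid acceleration, and by \eqref{Eq:EstApprox2B} one has $|\mathscr V_\kappa'|\le C(1+|\widehat{\mathbf p}|)$ with no negative power of $\varepsilon_\kappa$; combined with $|\mathcal M_{g,\kappa}|\lesssim\varepsilon_\kappa^{\alpha_\kappa}$ for the first two components this gives a benign error $\varepsilon_\kappa^{\alpha_\kappa}(1+|\widehat{\mathbf p}|)|\overline p_{\kappa,j}|$. A second, subordinate issue: your appeal to ``$\mathcal E^{mod}\gtrsim|\widehat{\overline{\mathbf p}}|^2$'' glosses over the fact that the coercivity of $\mathcal M_\kappa$ after rescaling comes either from the genuine mass (when $\alpha_\kappa\le2$) or from the added mass (when $\alpha_\kappa>2$), a dichotomy the paper handles explicitly by introducing the variable $\widetilde p_\kappa$ with $\widetilde p_{\kappa,3}=\varepsilon_\kappa\vartheta_\kappa'$ and the rescaled matrix $\mathcal M^*_\kappa$ with entries $\varepsilon_\kappa^{-\min(2,\alpha_\kappa)-\delta_{i3}-\delta_{j3}}\mathcal M_{\kappa,ij}$, and then proving $(\mathcal M^*_\kappa)^{-1}$ uniformly bounded in both regimes. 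Without this precise rescaling, the ``bookkeeping'' you defer at the end does not close.
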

\begin{proof}[Proof of Proposition~\ref{Pro:Bounded-P}]
We only consider $\kappa \in {\mathcal P}_{(iii)}$, since the boundedness of $\widehat{p}_{\kappa}$ was already obtained for $\kappa \in {\mathcal P}_{(i)} \cup {\mathcal P}_{(ii)}$, see Proposition~\ref{Pro:APEE}. Now we consider \eqref{Eq:FormeNormale} and multiply it by $\overline{p}_{\kappa}$: using \eqref{Eq:VariableModulee}, we find, as long as $(\overline{\boldsymbol\varepsilon},q,\omega) \in \mathfrak{Q}_\delta^{\varepsilon_{0}}$:
\begin{equation} \nonumber 
\left({\mathcal M}_{\kappa} \overline{p}_{\kappa}' + \frac{1}{2} {\mathcal M}'_{\kappa} \overline{p}_{\kappa} \right) \cdot \overline{p}_{\kappa} = A_{\kappa} \cdot \overline{p}_{\kappa} + B_{\kappa} \cdot \overline{p}_{\kappa} 
+ C_{\kappa} \cdot \overline{p}_{\kappa} 
+ D_{\kappa} \cdot \overline{p}_{\kappa}
- {\mathcal M}_{g,\kappa} 
\mathscr{V}_{\kappa} '  \cdot \overline{p}_{\kappa} ,
\end{equation}
where $\mathscr{V}_{\kappa} := (\alpha_{\kappa,1} + \beta_{\kappa,1}, \alpha_{\kappa,2} + \beta_{\kappa,2} , 0 )^T$.
We observe that the left-hand side equals $\frac{1}{2} \left({\mathcal M}_{\kappa} \overline{p}_{\kappa} \cdot \overline{p}_{\kappa} \right)'$ and that the second and third terms in the right-hand side vanish, see \eqref{Eq:GyroF} and  \eqref{Eq:GyroFB}. 
Concerning the last term, we use \eqref{Eq:VariableModulee}, \eqref{Eq:EstApprox1}-\eqref{Eq:EstApprox2B} (recalling that $\alpha_{\kappa,i}$ and $\beta_{\kappa,i}$ are given by  \eqref{Eq:ParamModulation}) and \eqref{Eq:Family_iii}; we find
\begin{equation*}
\left| {\mathcal M}_{g,\kappa} \mathscr{V}_{\kappa}'  \cdot \overline{p}_{\kappa} \right|
\leq C \sum_{j=1}^2 \varepsilon_{\kappa}^{\alpha_{\kappa}} |\overline{p}_{\kappa,j}| (1+|\widehat{\bf p}|)
\end{equation*}
Integrating over time and using \eqref{Eq:WNL} and \eqref{Eq:WG} we deduce
\begin{equation} \label{Eq:MEE1}
\big| {\mathcal M}_{\kappa} \overline{p}_{\kappa} \cdot \overline{p}_{\kappa} (t) - {\mathcal M}_{\kappa} \overline{p}_{\kappa} \cdot \overline{p}_{\kappa}(0) \big|
\leq C  \int_{0}^t  \sum_{j=1}^3 \varepsilon_{\kappa}^{\min(2,\alpha_{\kappa}) +\delta_{j3}} |\overline{p}_{\kappa,j}| ( 1 + |\widehat{\boldsymbol p}|  ) 
+ K \varepsilon_{\kappa}^2 \left( 1 + t + \int_{0}^t |\widehat{p}_{\kappa}|^2 \right).
\end{equation}
Now we introduce the slight variant $\widetilde{p}_{\kappa}$ of the modulated variable:
\begin{equation} \nonumber 
\widetilde{p}_{\kappa,i} = \widehat{p}_{\kappa,i} - \delta_{i \in \{1,2\}} (\alpha_{\kappa,i} + \beta_{\kappa, i} ).
\end{equation}
The only difference between $\widetilde{p}_{\kappa}$ and $\overline{p}_{\kappa}$ lies in the third coordinate $i=3$: $\widetilde{p}_{\kappa,i}=\varepsilon_{\kappa} \vartheta'_{k}$ while $\overline{p}_{\kappa,i}=\vartheta'_{k}$. In particular
\begin{equation*}
\sum_{j=1}^3 \varepsilon_{\kappa}^{\delta_{j3}} |\overline{p}_{\kappa,j}|  = \sum_{j=1}^3 |\widetilde{p}_{\kappa,j}|.
\end{equation*}
Next we introduce the $3 \times 3$ matrix ${\mathcal M}^*_{\kappa}$ whose entries are given by ${\mathcal M}^*_{\kappa,ij} = \varepsilon_{\kappa}^{-\min(2,\alpha_{\kappa})-\delta_{i3}-\delta_{j3}} {\mathcal M}_{\kappa,ij}$ for $i,j=1,2,3$. We have
\begin{equation*}
{\mathcal M}_{\kappa} \overline{p}_{\kappa} \cdot \overline{p}_{\kappa} = \varepsilon_{\kappa}^{\min(2,\alpha_{\kappa})} {\mathcal M}^*_{\kappa} \widetilde{p}_{\kappa} \cdot \widetilde{p}_{\kappa}.
\end{equation*}
Hence using $\widetilde{p}_\kappa$ and ${\mathcal M}^*_{\kappa}$, \eqref{Eq:MEE1} allows to write, with $\varepsilon_{\kappa}^2 \leq \varepsilon_{\kappa}^{\min(2,\alpha_{\kappa})}$:
\begin{equation*}
\left| {\mathcal M}^*_{\kappa} \widetilde{p}_{\kappa} \cdot \widetilde{p}_{\kappa} (t)
-{\mathcal M}^*_{\kappa} \widetilde{p}_{\kappa} \cdot \widetilde{p}_{\kappa} (0) \right|
\leq C \left[ \int_{0}^t (1 + |\widehat{\mathbf{p}}|) |\widetilde{p}_{\kappa}| + \left( 1 + t + \int_{0}^t |\widehat{p}_{\kappa}|^2 \right)\right].
\end{equation*}
Now there are two cases:
\begin{itemize}
\item If $\alpha_{\kappa} >2$, then relying on the added mass one has, using Corollary~\ref{Cor:ExpAddedMass} and Remark~\ref{Rem:PasBoules}, that $| ({\mathcal M}^*_{\kappa})^{-1} | \leq C$  independently of $\overline{\boldsymbol\varepsilon}$ and $t$.
\item If $\alpha_{\kappa} \leq 2$, then we rely on the genuine mass matrix and conclude as well that $| ({\mathcal M}^*_{\kappa})^{-1} | \leq C$  independently of $ \overline{\boldsymbol\varepsilon}$ and $t$.
\end{itemize}
Consequently in both cases we can invert by ${\mathcal M}^*_{\kappa}$ and reach for all $\kappa \in {\mathcal P}_{(iii)}$:
\begin{equation*}
|\widetilde{p}_{\kappa}|^2 (t)
\leq C  \int_{0}^t (1 + |\widehat{\mathbf{p}}|) |\widetilde{p}_{\kappa}| 
+ K  \left( 1 + t + \int_{0}^t |\widehat{p}_{\kappa}|^2 \right) + C  |\widetilde{p}_{\kappa}|^2 (0) .
\end{equation*}
From \eqref{Eq:BoundAlphaBeta}, we see that $|\widehat{p}_{\kappa}| \leq C(1+ |\widetilde{p}_{\kappa}| )$ and $|\widetilde{p}_{\kappa}| \leq C(1+ |\widehat{p}_{\kappa}| )$. We sum over $\kappa \in {\mathcal P}_{(iii)}$ and use that we already have a bound on $\widehat{p}_{\kappa}$ for $\kappa \in {\mathcal P}_{(i)} \cup {\mathcal P}_{(ii)}$. We deduce that for some constant $K$ depending only on the geometry, $\delta$ and the initial condition, one has:
\begin{equation*}
|\widehat{\mathbf{p}}|^2 (t)
\leq 
K  \left( 1 + t + \int_{0}^t |\widehat{\mathbf{p}}|^2 \right) .
\end{equation*}
We conclude by Gronwall's lemma (which we can apply on any time-interval for which $(\overline{\boldsymbol\varepsilon},{\bf q},\omega) \in \mathfrak{Q}_\delta^{\varepsilon_{0}}$).
\end{proof}
%
%
%
%
%
%
%
%
%
%
%%%%%%%%%%%%%%%%%%%%%%%%%%%%%%%%%%%%%%%%%%%%%%%%%%%%%%%%%%%%%%%%%%%%%%%%%%%%%%%%%%%%%%%%%%%%%%%%%%%%%%%%%%%%%%%%%%%%%%%%%%%%%%%%%%%%%%%%%%%%%%%%%%%%%%%%%%%%%%%%%%%%%%%%%%%%%%%%%%%%%%%%
%
%
%
%
%
\section{Passage to the limit}
\label{Sec:PTTL}
%
%
%
%
%
%
%%%%%%%%%%%%%%%%%%%%%%%%%%%
%
%
\subsection{A change of variable}
\label{Subsec:Diffeos}
A difficulty to prove the convergences is the dependence of the domain on $\overline{\boldsymbol{\varepsilon}}$. This dependence is twofold: first it depends directly on $\overline{\boldsymbol{\varepsilon}}$ because the small solids occupy a zone depending on this parameter; and then it depends on $\overline{\boldsymbol{\varepsilon}}$ because the solution does, and all solids whether small or of fixed size are located according to the variable ${\bf q}^\varepsilon$. We can temper the difficulty associated with the second dependence by using an adequate family of diffeomorphisms which we now describe. It will not solve the first difficulty but will help with the second one; in particular it will allow to be more precise on the convergences in the neighborhoods of large solids. \par
First, we define the following partial set of coordinates for the solids:
\begin{equation} \label{Eq:DefNewQ}
\underline{{\bf q}}^\varepsilon := (q_{1},\dots,q_{N_{(i)}},h_{N_{(i)}+1},\dots,h_{N}).
\end{equation}
This corresponds to the coordinates in which we will actually pass to the limit. Given $\delta>0$, we introduce the following configuration space $\underline{Q}_{\delta}$:
\begin{multline} \nonumber
\underline{\mathcal Q}_{\delta} := \{ \underline{{\bf q}} \in \R^{3N_{(i)} + 2N_s}  \ : \
\forall \kappa,\lambda \in  {\mathcal P}_{(i)} , \kappa \neq \lambda, \ 
\forall \mu,\nu \in  {\mathcal P}_{(s)} , \mu \neq \nu, \\
\  d({\mathcal S}_\kappa (\underline{{\bf q}}),{\mathcal S}_\lambda (\underline{{\bf q}})) > 2\delta, \ 
|h_{\mu} - h_{\nu}| > \delta, \ 
d(\mathcal S_\kappa (\underline{{\bf q}}), h_{\nu})> 2\delta, 
d(\mathcal S_\kappa (\underline{{\bf q}}),\partial\Omega)> 2\delta 
\text{ and } d(h_\nu,\partial\Omega)> 2\delta \},
\end{multline}
with the obvious abuse of notation for $\partial {\mathcal S}_{\kappa}(\underline{{\bf q}})$. We denote $\underline{{\bf q}}_{0}$ the initial value of $\underline{{\bf q}}^{\varepsilon}$ (which does not depend on $\varepsilon$). We have the following statement.
\begin{Lemma} \label{Lem:Diffeos}
There exist a  neighborhood ${\mathcal U}_{\underline{{\bf q}}_{0}}$ of $\underline{{\bf q}}_{0}$ in $\underline{\mathcal Q}_{\delta}$
and  a smooth mapping ${\mathcal T}: \underline{{\bf q}} \mapsto {\mathcal T}_{\underline{{\bf q}}}$ from $ {\mathcal U}_{\underline{{\bf q}}_{0}}$ into the group $\mbox{Diff}(\Omega)$ of the diffeomorphims of $\Omega$, independent of $\boldsymbol{\varepsilon}$ (provided that $\overline{\boldsymbol{\varepsilon}}$ is small enough), and such that ${\mathcal T}_{\underline{{\bf q}}_{0}} = \mbox{Id}_{\Omega}$, such that 
for all $\underline{{\bf q}} \in {\mathcal U}_{\underline{{\bf q}}_{0}}$, ${\mathcal T}_{\underline{{\bf q}}}$ is an orientation and area-preserving diffeomorphism of $\Omega$, which sends  $S_\kappa ({{\bf q}}_0)$  to $S_\kappa (\underline{{\bf q}})$ for  $\kappa \in {\mathcal P}_{(i)}$, $h_{\kappa,0}$ to $h_{\kappa}$ for $\kappa \in {\mathcal P}_{s}$ and such that for all $\underline{{\bf q}} \in {\mathcal U}_{\underline{{\bf q}}_{0}}$, ${\mathcal T}_{\underline{{\bf q}}}$ is rigid in a neighborhood of each ${\mathcal S}_{\kappa}(\underline{{\bf q}}_{0})$ for $\kappa \in {\mathcal P}_{(i)}$, is a translation in a neighborhood of  $h_{\kappa,0}$
 for $\kappa \in {\mathcal P}_{s}$ and is equal to identity in a neighborhood of $\partial \Omega$.
\end{Lemma}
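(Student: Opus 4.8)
The plan is to build the family $\mathcal{T}_{\underline{\bf q}}$ by patching together local pieces: near each large solid $\mathcal{S}_\kappa(\underline{\bf q}_0)$, $\kappa\in\mathcal{P}_{(i)}$, a rigid motion; near each center $h_{\kappa,0}$, $\kappa\in\mathcal{P}_s$, a translation; and near $\partial\Omega$, the identity. These local prescriptions have to be glued into a single global area- and orientation-preserving diffeomorphism of $\Omega$ depending smoothly on $\underline{\bf q}$, with $\mathcal{T}_{\underline{\bf q}_0}=\mathrm{Id}_\Omega$. First I would shrink $\delta$ and choose $\varepsilon_0$ so that the various $2\delta$-neighborhoods appearing in $\underline{\mathcal{Q}}_\delta$ are pairwise disjoint and disjoint from $\partial\Omega$; this is exactly the point of the definition of $\underline{\mathcal{Q}}_\delta$, and it guarantees that the local models have disjoint supports.

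\textbf{Construction of the velocity fields and integration.} The standard device (see \cite{GLMS} for the analogous construction and \cite{HP,SZ} for shape-differentiation background) is to produce, for each coordinate direction of $\underline{\bf q}$, a divergence-free vector field on $\Omega$ tangent to $\partial\Omega$ whose flow realizes the desired infinitesimal motion. Concretely: for $\kappa\in\mathcal{P}_{(i)}$ and $m\in\{1,2,3\}$, pick a cutoff $\chi_{\kappa}$ equal to $1$ in a neighborhood of $\mathcal{S}_\kappa$ and supported in its $\delta$-neighborhood, and set $Y_{\kappa,m}:=\nabla^\perp(\chi_\kappa\,\mathcal{J}_{\kappa,m})$ with $\mathcal{J}_{\kappa,m}$ the stream potentials from \eqref{Eq:DefJ}, so that $Y_{\kappa,m}$ coincides with the rigid field $\xi_{\kappa,m}$ near $\mathcal{S}_\kappa$ and is divergence-free and compactly supported away from $\partial\Omega$; for $\kappa\in\mathcal{P}_s$ and $m\in\{1,2\}$, take $Y_{\kappa,m}:=\nabla^\perp(\chi_\kappa\,\mathcal{J}_{\kappa,m})$ as well, which near $h_{\kappa,0}$ is the constant field $e_m$. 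Being curls of compactly supported functions, these fields are automatically divergence-free and tangent to $\partial\Omega$ (in fact vanishing there). Then one defines $\mathcal{T}_{\underline{\bf q}}$ as the time-$1$ flow of the (nonautonomous but finite-dimensionally parametrized) vector field obtained by reading off, along a path from $\underline{\bf q}_0$ to $\underline{\bf q}$, the appropriate combination $\sum Y_{\kappa,m}$ weighted by $\dot{\underline{q}}_{\kappa,m}$; since all $Y_{\kappa,m}$ are divergence-free, the flow is area-preserving, and since they are smooth with compact support in $\Omega$, the flow is a smooth diffeomorphism of $\Omega$ equal to the identity near $\partial\Omega$; orientation preservation is automatic for a flow. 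Smooth dependence on $\underline{\bf q}$ follows from smooth dependence of the data and of flows on parameters, and $\mathcal{T}_{\underline{\bf q}_0}=\mathrm{Id}$ by construction. Restricting to a small enough neighborhood $\mathcal{U}_{\underline{\bf q}_0}$ of $\underline{\bf q}_0$ ensures the flow stays inside $\Omega$ and that the supports of the $Y_{\kappa,m}$ remain mutually disjoint.

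\textbf{Matching the solid positions exactly.} The flow construction above matches the rigid/translational behavior \emph{infinitesimally} and in the $C^1$ sense along the path, but one must check that it actually sends $\mathcal{S}_\kappa(\underline{\bf q}_0)$ to $\mathcal{S}_\kappa(\underline{\bf q})$ and $h_{\kappa,0}$ to $h_\kappa$, and that it is rigid, resp.\ a translation, on a \emph{fixed} neighborhood (not a shrinking one). The key observation is that near $\mathcal{S}_\kappa$ the field being integrated is exactly the rigid field $\ell_\kappa+\omega_\kappa(x-h_\kappa)^\perp$ associated with the chosen path $s\mapsto\underline{\bf q}(s)$, hence its flow there is precisely the rigid motion interpolating the two configurations; similarly near $h_{\kappa,0}$ the field is a pure translation, so its flow is a translation carrying $h_{\kappa,0}$ to $h_\kappa$. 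Because the cutoffs $\chi_\kappa$ are identically $1$ on a fixed neighborhood of the solid (of size $\sim\delta$) and the solids move by at most $\mathcal{O}(\mathrm{diam}\,\mathcal{U}_{\underline{\bf q}_0})$, one chooses $\mathcal{U}_{\underline{\bf q}_0}$ small enough that $\mathcal{S}_\kappa(\underline{\bf q})$ stays inside the ``$\chi_\kappa\equiv1$'' region for all $\underline{\bf q}\in\mathcal{U}_{\underline{\bf q}_0}$, and likewise for the small-solid centers; this gives rigidity/translation on a genuinely fixed neighborhood. Independence of $\boldsymbol\varepsilon$ is built in since nothing in the construction refers to $\varepsilon_\kappa$ — only to the reference solids and the coordinates $\underline{\bf q}$ — and the requirement $\overline{\boldsymbol\varepsilon}$ small merely ensures ${\mathcal S}^\varepsilon_\kappa(\underline{\bf q})\subset B(h_\kappa,\delta/8)$ stays in the translation region for $\kappa\in\mathcal{P}_s$.

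\textbf{Main obstacle.} The only delicate point is reconciling \emph{exact} area preservation with the \emph{exact} rigid/translational prescriptions near the solids and exact identity near $\partial\Omega$: one cannot simply interpolate an arbitrary diffeomorphism by a cutoff, since that destroys the Jacobian-$1$ property. Using flows of divergence-free fields solves this at a stroke, so the real work is just bookkeeping — verifying that the vector fields agree with the prescribed rigid/translation data on the right sets (which is immediate from the explicit $\nabla^\perp\mathcal{J}$ formulas and the fact that rotations are area-preserving, consistent with Remark~\ref{Rem:PasBoules}'s use of the affine fields $\xi_{\kappa,i}$), and that $\mathcal{U}_{\underline{\bf q}_0}$ can be taken small enough uniformly. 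I would present the construction of the $Y_{\kappa,m}$ and the flow, then state the verification of each bulleted property of the lemma as a short checklist, relegating the elementary estimates on flow domains to a remark.
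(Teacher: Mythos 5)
Your proposal is correct and follows essentially the same route as the paper: both define the local rigid/translation/identity pieces, realize them as flows of divergence-free vector fields obtained by taking $\nabla^\perp$ of (extended) stream potentials, and glue by extending those stream potentials with cutoffs supported away from $\partial\Omega$ and away from the other solids. The only difference is presentational — the paper invokes the flow framework of Paragraph~\ref{SSSec:ShapeDeriv} and the phrase ``extend their stream functions'', while you make the cutoff-times-stream-potential formula $\nabla^\perp(\chi_\kappa\,\mathcal{J}_{\kappa,m})$ explicit; this is the same construction written out in more detail.
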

\begin{proof}[Proof of Lemma~\ref{Lem:Diffeos}]
The construction of such a mapping is easy and classical. We first introduce ${\mathcal W}_{\kappa}$ as the $\delta$-neighborhood of ${\mathcal S}_{\kappa}$ for $\kappa \in {\mathcal P}_{(i)}$ and of $h_{\kappa}$ for $\kappa \in {\mathcal P}_{s}$. Given $\underline{{\bf q}}$ close to $\underline{{\bf q}}_{0}$, we define  ${\mathcal T}_{\underline{{\bf q}}}$ in ${\mathcal W}_{\kappa}$ as the unique rigid movement sending $q_{\kappa}^0$ to $\underline{q}_{\kappa}$ for $\kappa \in {\mathcal P}_{(i)}$,
as the unique translation sending $h_{\kappa}^0$ to $h_{\kappa}$ for $\kappa \in {\mathcal P}_{s}$ and as the identity in a neighborhood of $\partial \Omega$. Then we extend ${\mathcal T}_{\underline{{\bf q}}}$ as a global diffeomorphim on $\Omega$: it suffices to write ${\mathcal T}_{\underline{{\bf q}}}$ in ${\mathcal W}_{\kappa}$ as the flow of a vector field as in Paragraph~\ref{SSSec:ShapeDeriv} and to use extensions of vector fields. To make sure to conserve the zero-divergence of these vector fields, we extend their stream functions. 
\end{proof}
%
%
%
%
%
%%%%%%%%%%%%%%%%%%%%%%%%%%%
%
%
\subsection{First step and compactness}
\label{Subsec:FirstStep}
\subsubsection{Fixing $\varepsilon_{0}$ and $\overline{T}$.}
Given an initial data $({\boldsymbol\gamma},{\bf q}_{0},{\bf p}_{0},\omega_{0})$ we first set
 (having \eqref{Eq:ToutDansOmega} in mind):
\begin{multline} \label{Eq:DefD}
D:=\min \big\{ D_\varepsilon, \ \overline{\boldsymbol\varepsilon} \in (0,1]^{N_s} \big\}, \\
\text{where } \ D_\varepsilon := \min \Big\{
\min \{ \dist({\mathcal S}^\varepsilon_{\lambda,0},{\mathcal S}^\varepsilon_{\mu,0}), \ \lambda \neq \mu \}, \ 
\min \{ \dist({\mathcal S}^\varepsilon_{\lambda,0},\partial \Omega), \ \lambda =1, \dots, N \}, \\
\min \{ \dist({\mathcal S}^\varepsilon_{\lambda,0},\Supp(\omega_{0})), \ \lambda =1, \dots, N \}
\Big\},
\end{multline}
and we observe that $D>0$.
Next we set
\begin{equation} \nonumber %\label{Eq:DefDelta}
\delta:=\frac{D}{2},
\end{equation}
and apply Proposition~\ref{Pro:Bounded-P} with this $\delta$. We deduce some $\varepsilon_{0}>0$ and some $C_{1}>0$ such that, as long as $(\overline{\boldsymbol\varepsilon},{\bf q},\omega)$ stays in $\mathfrak{Q}_\delta^{\varepsilon_{0}}$, one has 
\begin{equation} \nonumber %\label{Eq:Borne1}
\forall \kappa \in \{1, \dots,N \}, \ \ \ |\widehat{p}_{\kappa}| \leq C_{1}.
\end{equation}
We reduce if necessary $\varepsilon_{0}>0$ so that all intermediate results from Sections \ref{Sec:Expansions} to \ref{Sec:MEE} and Subsection~\ref{Subsec:Diffeos} hold as well. \par

We deduce from the existence of $C_{1}$ the existence of $C_{2}>0$ such that as long as $(\overline{\boldsymbol\varepsilon},{\bf q},\omega)$ stays in $\mathfrak{Q}_\delta^{\varepsilon_{0}}$, one has 
\begin{gather}
\label{Eq:Borne2}
\forall \kappa \in \{1, \dots,N \}, \ \  |v_{{\mathcal S},\kappa}| \leq C_{2}   \text{ in } {\mathcal S}_{\kappa} , \\
\label{Eq:Borne3}
|u^\varepsilon(t,x)| \leq C_{2} \ \text{ on } \ {\mathcal F}_{\delta}({\bf q}(t)) := \left\{ x \in {\mathcal F}({\bf q}) \ \Big/ \ d\left(x,\bigcup_{\kappa \in {\mathcal P}_{s}} {\mathcal S}_{\kappa}\right) > \delta \right\}.
\end{gather}
To get \eqref{Eq:Borne3}, we used the decomposition \eqref{Eq:DecompGlob} and Proposition~\ref{Pro:ExpKirchhoff}, Lemma~\ref{Lem:StandaloneCSF} and Lemma~\ref{Lem:UextGlob} to estimate the three terms in this decomposition. We let
\begin{equation}\label{Eq:DefOverlines}
\overline{C}:=\max(C_{1},C_{2}) \ \text{ and } \ \overline{T}:= \frac{D}{8\overline{C}}.
\end{equation}
Then using a continuous induction argument, we see as a consequence of \eqref{Eq:Vorticite} and the fact that the solids move with velocity $v_{{\mathcal S},\kappa}$ that, provided that $\overline{\varepsilon} \leq \varepsilon_{0}$, one has $(\overline{\boldsymbol\varepsilon},{\bf q},\omega)$ belongs to $\mathfrak{Q}_\delta^{\varepsilon_{0}}$ for all $t \in [0,\overline{T}]$, and in particular all the above \textit{a priori} estimates are true on $[0,\overline{T}]$. \par
In the sequel, reducing $\overline{T}$ if necessary, we may ask that for all $t \in [0,\overline{T}]$, $\underline{q}^\varepsilon(t) \in {\mathcal U}_{\underline{{\bf q}}_{0}}$, where the neighborhood ${\mathcal U}_{\underline{{\bf q}}_{0}}$ was defined in Lemma~\ref{Lem:Diffeos}. 
\subsubsection{Using compactness}
As a consequence of the \textit{a priori} estimates given in Lemma~\ref{Lem:Vorticite} and Propositions~\ref{Pro:APEE} and \ref{Pro:Bounded-P}, we have that $\widehat{p}_{\kappa}^\varepsilon$ is bounded in $W^{2,\infty}(0,\overline{T})$ for $\kappa \in {\mathcal P}_{(i)} \cup {\mathcal P}_{(ii)}$ and in $W^{1,\infty}(0,\overline{T})$ for $\kappa \in {\mathcal P}_{(iii)}$, and that $\omega^\varepsilon$ is bounded in $L^\infty((0,\overline{T}) \times \Omega)$.
Hence we may extract a subsequence (that we abusively still denote by an exponent $\varepsilon$) such that
\begin{gather}
\label{Eq:CVQ1}
q^\varepsilon_{\kappa} \longrightharpoonup {q}_{\kappa}^\star \ \text{ in } \ W^{2,\infty}(0,\overline{T}) \ \text{weak}-\star \ \text{ for } \ \kappa \in {\mathcal P}_{(i)}, \\
\label{Eq:CVQ2}
h^\varepsilon_{\kappa} \longrightharpoonup h_{\kappa}^\star \ \text{ in } \ W^{2,\infty}(0,\overline{T}) \ \text{weak}-\star \ \text{ for } \ \kappa \in {\mathcal P}_{(ii)}, \\
\label{Eq:CVQ3}
h^\varepsilon_{\kappa} \longrightharpoonup h_{\kappa}^\star \ \text{ in } \ W^{1,\infty}(0,\overline{T}) \ \text{weak}-\star \ \text{ for } \ \kappa \in {\mathcal P}_{(iii)}, \\
\label{Eq:CVOmega}
\omega^\varepsilon \longrightharpoonup \omega^\star \ \text{ in } \ L^{\infty}((0,\overline{T}) \times \Omega) \ \text{weak}-\star.
\end{gather}
The fact that we can improve the convergence \eqref{Eq:CVOmega} to the convergence
\begin{equation} \label{Eq:CVOmega2}
\omega^\varepsilon \longrightarrow \omega^\star \ \text{ in } C^{0}([0,\overline{T}]; L^\infty(\Omega)-w\star),
\end{equation}
is obtained as in \cite[Appendix C]{Lions}: this comes from the fact that, thanks to \eqref{Num41}, we have an \textit{a priori} bound on $\partial_{t} \omega^\varepsilon = -\div(u^\varepsilon \omega^\varepsilon)$ in $L^\infty(0,T;W^{-1,p}(\Omega))$. \par
Note in particular that the convergences \eqref{Eq:CVomega}, \eqref{Eq:CVh} and \eqref{Eq:CVtheta} are contained in the above convergences.
Moreover convergences \eqref{Eq:CVQ1} and \eqref{Eq:CVQ2} have naturally the following consequence:
\begin{multline}
\label{Eq:CVP}
p^\varepsilon_{\kappa} \longrightharpoonup {p}_{\kappa}^\star = ({q}_{\kappa}^\star)' \ \text{ in } \ W^{1,\infty}(0,\overline{T}) \ \text{weak}-\star \ \text{ for } \ \kappa \in {\mathcal P}_{(i)}, \\
(h^\varepsilon_{\kappa})' \longrightharpoonup  (h_{\kappa}^\star)' \ \text{ in } \ W^{1,\infty}(0,\overline{T}) \ \text{weak}-\star \ \text{ for } \ \kappa \in {\mathcal P}_{(ii)}
\text{ and in } \ L^{\infty}(0,\overline{T}) \ \text{weak}-\star \ \text{ for } \ \kappa \in {\mathcal P}_{(iii)}.
\end{multline}
%
%

%
%
%
%
%%%%%%%%%%%%%%%%%%%%%%%%%%%
%
%
\subsection{Limit dynamics of the fluid}
\label{Subsec:LDF}
Let us now see how the convergences above  involve  the convergence \eqref{Eq:CVu} of the velocity field $u^\varepsilon$
to $u^\star$ satisfying \eqref{Eq:DefUStar}. We recall that we take the convention to extend all the vector fields by $0$ inside the solids. 
The family of diffeomorphisms in Subsection~\ref{Subsec:Diffeos} will be helpful here. We denote
\begin{equation*}
\underline{{\bf q}}^\star :=(q^\star_{1},\dots,q^\star_{N_{(i)}},h^\star_{N_{(i)}+1}, \dots,h^\star_{N}).
\end{equation*}
To obtain the convergence of $u^\varepsilon$ we rely on the decomposition \eqref{Eq:DecompUeps} and show that each term converges towards its final counterpart \eqref{Eq:DecompUstar}. This is done in three separate lemmas.
\begin{Lemma} \label{Lem:CVK}
As $\overline{\boldsymbol{\varepsilon}} \rightarrow 0$  for $p \in [1,2)$ :
\begin{equation} \nonumber %\label{Eq:CVK}
K^\varepsilon_{{\bf q}^\varepsilon}[\omega^\varepsilon] \circ {\mathcal T}_{\underline{{\bf q}}^\varepsilon} \longrightarrow \widecheck{K}_{{\bf q}_{(i)}^\star}[\omega^\star] \circ {\mathcal T}_{\underline{{\bf q}}^\star} \text{ in } C^0 ( \lbrack 0,\overline{T}\rbrack; L^p(\widecheck{{\mathcal F}}({\bf q}_0))),
\end{equation}
where ${\bf q}^\star_{(i)}:=(q^\star_{1},\dots,q^\star_{N_{(i)}})$.
\end{Lemma}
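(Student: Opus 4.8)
The statement compares the hydrodynamic Biot--Savart field $K^\varepsilon[\omega^\varepsilon]$ in the full domain $\mathcal F^\varepsilon(\mathbf q^\varepsilon)$ with its final counterpart $\widecheck K[\omega^\star]$ in $\widecheck{\mathcal F}(\mathbf q^\star_{(i)})$, both pulled back to the fixed reference domain $\widecheck{\mathcal F}(\mathbf q_0)$ via the diffeomorphisms of Lemma~\ref{Lem:Diffeos}. The plan is to split the difference into (a) the gap between $K^\varepsilon[\omega^\varepsilon]$ and $\widecheck K[\omega^\varepsilon]$ at fixed time (small by Lemma~\ref{Lem:BiotSavart}, \eqref{Eq:BiotSavartLimite}), (b) the dependence of $\widecheck K[\omega^\varepsilon]$ on the moving solid configuration $\mathbf q^\varepsilon_{(i)}$ versus the limit $\mathbf q^\star_{(i)}$, and (c) the dependence on the vorticity $\omega^\varepsilon$ versus $\omega^\star$. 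For (a), the convention that $\omega$ is extended by $0$ and the fact that $(\overline{\boldsymbol\varepsilon},\mathbf q,\omega)\in\mathfrak Q_\delta^{\varepsilon_0}$ on $[0,\overline T]$ (so $\omega^\varepsilon$ stays at distance $\geq\delta$ from the small solids) means \eqref{Eq:BiotSavartLimite} applies uniformly in $t$, giving $\|K^\varepsilon[\omega^\varepsilon]-\widecheck K[\omega^\varepsilon]\|_{L^p(\mathcal F(\mathbf q^\varepsilon))}\to 0$; composing with the area-preserving $\mathcal T_{\underline{\mathbf q}^\varepsilon}$ preserves $L^p$ norms, so this contributes $o(1)$ in $C^0([0,\overline T];L^p)$.

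For (b) and (c), I would first recall that $\widecheck K_{\mathbf q_{(i)}}[\omega] = K_\Omega[\omega] + \nabla^\perp\eta$ where $\eta$ solves a Dirichlet-type problem in $\widecheck{\mathcal F}(\mathbf q_{(i)})$ with data $-\Psi_\Omega[\omega]$ on the solid boundaries (as in the proof of Lemma~\ref{Lem:DBSBorne}); here $K_\Omega[\omega]=\nabla^\perp\Delta_\Omega^{-1}\omega$ depends only on $\omega$, not on the configuration. The term $K_\Omega[\omega^\varepsilon]\to K_\Omega[\omega^\star]$ strongly in $L^p$ thanks to \eqref{Eq:CVOmega2} (weak-$\star$ convergence of $\omega^\varepsilon$ plus the compactness of $\Delta_\Omega^{-1}:L^\infty\to W^{1,p}\hookrightarrow\hookrightarrow L^p$), uniformly in $t$ since we have an $L^\infty$ bound on $\partial_t\omega^\varepsilon$ in $W^{-1,p}$. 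For the correction $\nabla^\perp\eta$, its boundary data on the fixed-size solids depends continuously and smoothly on $\mathbf q_{(i)}$ (via the uniform Schauder estimates of Lemma~\ref{Lem:USE} and the shape-derivative bounds) and on $\omega$ (again through $\Psi_\Omega[\omega]$ which converges strongly). Since $\mathbf q^\varepsilon_{(i)}\to\mathbf q^\star_{(i)}$ uniformly on $[0,\overline T]$ by \eqref{Eq:CVQ1} and the diffeomorphisms $\mathcal T_{\underline{\mathbf q}}$ depend smoothly on $\underline{\mathbf q}$ by Lemma~\ref{Lem:Diffeos}, composing and pulling back to $\widecheck{\mathcal F}(\mathbf q_0)$ yields convergence in $C^0([0,\overline T];L^p)$; in fact one gets convergence in better norms (locally uniform away from the small solids) and then uses dominated convergence near the vanishing solids, whose total area tends to $0$.

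The main obstacle is handling the interplay between the two sources of $\overline{\boldsymbol\varepsilon}$-dependence simultaneously: the domain $\mathcal F^\varepsilon$ literally shrinks (the small solids disappear) \emph{and} the positions $\mathbf q^\varepsilon$ move, and one must produce a statement in the \emph{uniform-in-time} topology $C^0([0,\overline T];L^p)$ rather than merely pointwise in $t$. The diffeomorphism trick of Subsection~\ref{Subsec:Diffeos} resolves the second issue cleanly, but one still must argue that the extension-by-zero inside the small solids, composed with $\mathcal T_{\underline{\mathbf q}^\varepsilon}$ (which maps $h_{\kappa,0}$ to $h_\kappa^\varepsilon$ and is a translation near each $h_{\kappa,0}$), converges to the field extended by zero inside the limit point $h_\kappa^\star$ — this is where one invokes that $\|K^\varepsilon[\omega^\varepsilon]\|_{L^\infty(\mathcal F_\delta(\mathbf q^\varepsilon))}$ is bounded (Lemma~\ref{Lem:Vorticite}), so the contribution of the $o(1)$-measure sets $\mathcal T_{\underline{\mathbf q}^\varepsilon}^{-1}(\mathcal S_\kappa^\varepsilon)$ to the $L^p$ norm is negligible. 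Equicontinuity in $t$ follows from the uniform bound on $\partial_t\omega^\varepsilon$ in $W^{-1,p}$ together with the Lipschitz-in-$t$ control of $\mathbf q^\varepsilon$ coming from the velocity bound \eqref{Eq:Borne3}, so Arzelà--Ascoli upgrades the pointwise limit to a $C^0([0,\overline T])$ limit; the subsequence extraction was already done in \eqref{Eq:CVQ1}--\eqref{Eq:CVOmega2}, so the limit object is the one named in the statement.
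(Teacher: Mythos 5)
Your proposal is correct and follows essentially the same architecture as the paper's proof: a three-part decomposition (small-solid removal via \eqref{Eq:BiotSavartLimite} plus area-preservation of the diffeomorphism, configuration dependence via the smooth dependence of $\widecheck K$ and $\mathcal T_{\underline{\mathbf q}}$ on $\underline{\mathbf q}$, and vorticity dependence via compactness of $K_\Omega$ together with the $C^0([0,\overline T];L^\infty\text{-}w\star)$ convergence \eqref{Eq:CVOmega2}). The only cosmetic difference is that you invoke Arzelà--Ascoli to upgrade to uniform-in-time convergence, whereas the paper leans directly on the $C^0$-weak-$\star$ convergence \eqref{Eq:CVOmega2} and a parametrized mean-value argument bounding the $s$-derivative with Lemma~\ref{Lem:DBSBorne} and \eqref{Eq:EstKSobolev}; both routes are sound.
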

\begin{Lemma} \label{Lem:CVKir}
Let $p<+\infty$. As $\overline{\boldsymbol{\varepsilon}} \rightarrow 0$: 
\begin{gather*} 
p_{\nu,i} \nabla \varphi^\varepsilon_{\nu,i}({\bf q}^\varepsilon,{\mathcal T}_{\underline{{\bf q}}^\varepsilon}(\cdot))
\longrightarrow
p^\star_{\nu,i} \nabla \widecheck{\varphi}_{\nu,i}({\bf q}^\star_{\nu},{\mathcal T}_{\underline{{\bf q}}^\star}(\cdot)) \text{ in } L^{\infty}(0,\overline{T}; L^p(\widecheck{{\mathcal F}}({\bf q}_0)))
 \  \text{ for } \ \nu \in {\mathcal P}_{(i)} ,  \\ \nonumber 
p_{\nu,i} \nabla \varphi^\varepsilon_{\nu,i}({\bf q}^\varepsilon,{\mathcal T}_{\underline{{\bf q}}^\varepsilon}(\cdot))
\longrightarrow
0 \ \text{ in }  L^{\infty} (0,\overline{T}; L^p(\widecheck{{\mathcal F}}({\bf q}_0)))
  \text{ for } \nu \in {\mathcal P}_{(ii)}  \text{ and in }   L^{\infty}_{w\star}(0,\overline{T}; L^p(\widecheck{{\mathcal F}}(q_0)))  \text{ for } \nu \in {\mathcal P}_{(iii)} .
\end{gather*}
\end{Lemma}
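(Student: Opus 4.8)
The plan is to prove Lemma~\ref{Lem:CVKir} by reducing everything to the Kirchhoff potentials estimates of Section~\ref{Subsec:EKP} and the convergences of the velocities $p^\varepsilon_\nu$ established in Subsection~\ref{Subsec:FirstStep}. First I would fix $\nu\in\{1,\dots,N\}$ and an index $i\in\{1,2,3\}$ and split the analysis according to whether $\nu\in{\mathcal P}_{(i)}$ or $\nu\in{\mathcal P}_s$, since the two cases require genuinely different arguments: for large solids we must identify the limit of the Kirchhoff potential, whereas for small solids the whole term must vanish.

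For $\nu\in{\mathcal P}_{(i)}$, the key point is that $p^\varepsilon_{\nu}\longrightharpoonup p^\star_\nu$ in $W^{1,\infty}(0,\overline{T})$ weak-$\star$ (and hence strongly in $C^0([0,\overline{T}])$ by Ascoli), while the Kirchhoff potential $\nabla\varphi^\varepsilon_{\nu,i}$, transported by the diffeomorphism ${\mathcal T}_{\underline{{\bf q}}^\varepsilon}$, converges to $\nabla\widecheck\varphi_{\nu,i}$ transported by ${\mathcal T}_{\underline{{\bf q}}^\star}$. The latter convergence should be extracted from Proposition~\ref{Pro:ExpKirchhoff2}: Estimate~\eqref{Eq:ExpKirchhoff3} gives $\nabla\varphi^\varepsilon_{\nu,i}=\nabla\widecheck\varphi_{\nu,i}-\sum_{\lambda\in{\mathcal P}_s}\nabla\widehat{\mathfrak f}_\lambda[\widecheck\varphi_{\nu,i}]+{\mathcal O}(|\overline{\boldsymbol\varepsilon}|)$ uniformly, the corrector terms $\nabla\widehat{\mathfrak f}_\lambda[\widecheck\varphi_{\nu,i}]$ converge to $0$ in $L^p(\Omega\setminus{\mathcal S}_\lambda)$ for $p<+\infty$ by the last sentence of Proposition~\ref{Pro:ExpKirchhoff2}, and the continuity of $\widecheck\varphi_{\nu,i}$ with respect to ${\bf q}_{(i)}$ (via the uniform Schauder estimates of Lemma~\ref{Lem:USE}) together with the smoothness of ${\mathcal T}$ from Lemma~\ref{Lem:Diffeos} handles the dependence on the moving configuration. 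I would then invoke that area-preserving diffeomorphisms give a uniformly bounded composition operator on $L^p$, so the product $p^\varepsilon_{\nu,i}\nabla\varphi^\varepsilon_{\nu,i}({\bf q}^\varepsilon,{\mathcal T}_{\underline{{\bf q}}^\varepsilon}(\cdot))$ converges to $p^\star_{\nu,i}\nabla\widecheck\varphi_{\nu,i}({\bf q}^\star_\nu,{\mathcal T}_{\underline{{\bf q}}^\star}(\cdot))$ in $L^\infty(0,\overline{T};L^p)$, the uniformity in time coming from the uniform-in-$t$ nature of all the estimates on $\mathfrak{Q}^{\varepsilon_0}_\delta$.

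For $\nu\in{\mathcal P}_s$ the strategy is to show the term is small by a direct size estimate. By \eqref{Eq:ExpKirchhoff1Bis} one has $\|\nabla\varphi^\varepsilon_{\nu,i}\|_{L^\infty({\mathcal F}^\varepsilon)}\leq C\varepsilon_\nu^{\delta_{i3}}$, while the energy estimates of Proposition~\ref{Pro:APEE} give $|\widehat p_{\nu,i}|=\varepsilon_\nu^{\delta_{i3}}|p_{\nu,i}|\leq C$ for $\nu\in{\mathcal P}_{(ii)}$ and $\varepsilon_\nu|\widehat p_{\nu,i}|\leq C$ for $\nu\in{\mathcal P}_{(iii)}$ (indeed by Proposition~\ref{Pro:Bounded-P} these are bounded on $[0,\overline T]$). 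Hence for $\nu\in{\mathcal P}_{(ii)}$ one gets $\|p_{\nu,i}\nabla\varphi^\varepsilon_{\nu,i}\|_{L^\infty({\mathcal F}^\varepsilon)}\leq C$, and moreover by the decay part of \eqref{Eq:ExpKirchhoff1Bis}, $|p_{\nu,i}\nabla\varphi^\varepsilon_{\nu,i}(x)|\leq C\varepsilon_\nu^{2}/|x-h_\nu|^2$ away from a $C\varepsilon_\nu$-ball around $h_\nu$, so integrating the $p$-th power and splitting at $|x-h_\nu|\sim\varepsilon_\nu$ yields an $L^p$-norm of order $\varepsilon_\nu^{2/p}\to0$ for $p<+\infty$; uniformity in time is immediate. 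For $\nu\in{\mathcal P}_{(iii)}$ the pointwise bound on $p_{\nu,i}\nabla\varphi^\varepsilon_{\nu,i}$ only yields an $L^\infty(0,\overline T)$ bound on its $L^p$-norm (not smallness, since $\varepsilon_\nu^{\delta_{i3}}|p_{\nu,i}|$ may be as large as $\varepsilon_\nu^{-1}$ for $i=3$), so I would invoke weak-$\star$ compactness: along the fixed subsequence, the transported term converges weak-$\star$ in $L^\infty(0,\overline T;L^p)$ to some limit, and testing against smooth compactly supported (away from $h^\star_\nu$) functions and using the $C^\ell$-decay away from ${\mathcal S}^\varepsilon_\nu$ in \eqref{Eq:ExpKirchhoff1Bis} identifies that limit as $0$.

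The main obstacle I anticipate is the bookkeeping of the diffeomorphism composition together with the moving, $\varepsilon$-dependent domains: one must check that composing with ${\mathcal T}_{\underline{{\bf q}}^\varepsilon}$ and comparing to ${\mathcal T}_{\underline{{\bf q}}^\star}$ does not destroy the convergence, which relies on the fact (Lemma~\ref{Lem:Diffeos}) that ${\mathcal T}$ depends smoothly on $\underline{{\bf q}}$ and is the identity near $\partial\Omega$ and rigid/translational near the solids, so that $\underline{{\bf q}}^\varepsilon\to\underline{{\bf q}}^\star$ in $C^0([0,\overline T])$ gives ${\mathcal T}_{\underline{{\bf q}}^\varepsilon}\to{\mathcal T}_{\underline{{\bf q}}^\star}$ in $C^1$ uniformly in $t$. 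A secondary delicate point for the $\nu\in{\mathcal P}_{(iii)}$ case is that the convergence is only weak-$\star$, which is consistent with the statement but requires care in phrasing; I would make sure the test-function argument only uses functions supported away from the limiting point $h^\star_\nu$, which is legitimate since $h^\varepsilon_\nu\to h^\star_\nu$ uniformly and $\varepsilon_\nu\to0$.
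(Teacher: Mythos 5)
Your approach for $\nu\in{\mathcal P}_{(i)}$ matches the paper's: a triangle inequality separating the potential-to-limit-potential comparison (handled by Proposition~\ref{Pro:ExpKirchhoff2} and the vanishing of the correctors $\nabla\widehat{\mathfrak f}_\lambda$) from the dependence on the moving configuration (handled by continuity of $\widecheck\varphi_{\nu,i}$ in ${\bf q}_{(i)}$ and the smoothness of $\mathcal T$ from Lemma~\ref{Lem:Diffeos}), combined with the convergence of $p^\varepsilon_\nu$ from \eqref{Eq:CVP}. This is exactly what the paper does, so this part is fine.

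For $\nu\in{\mathcal P}_s$ there is, however, a misreading of Proposition~\ref{Pro:Bounded-P} that makes your argument for ${\mathcal P}_{(iii)}$ more roundabout than it needs to be. You write that Proposition~\ref{Pro:Bounded-P} gives $|\widehat p_{\nu}|\leq C$ for $\nu\in{\mathcal P}_{(ii)}$ and only $\varepsilon_\nu|\widehat p_{\nu}|\leq C$ for $\nu\in{\mathcal P}_{(iii)}$, and then use this weaker bound when you assert ``$\varepsilon_\nu^{\delta_{i3}}|p_{\nu,i}|$ may be as large as $\varepsilon_\nu^{-1}$.'' In fact, Proposition~\ref{Pro:Bounded-P} states that $\widehat p_\kappa$ is bounded \emph{for all} $\kappa$, including $\kappa\in{\mathcal P}_{(iii)}$; the bound with the extra factor of $\varepsilon_\nu$ is the weaker \emph{energy} estimate of Proposition~\ref{Pro:APEE}, which the modulated energy estimate precisely upgrades. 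With the correct reading, $\varepsilon_\nu^{\delta_{i3}}|p_{\nu,i}|=|\widehat p_{\nu,i}|\leq C$ uniformly on $[0,\overline T]$ for $\nu\in{\mathcal P}_{(iii)}$, and the same direct computation you perform for ${\mathcal P}_{(ii)}$ — splitting at $|x-h_\nu|\sim\varepsilon_\nu$ and using the decay from \eqref{Eq:ExpKirchhoff1Bis} — yields $\|p_{\nu,i}\nabla\varphi^\varepsilon_{\nu,i}\|_{L^p}\leq C|\widehat p_{\nu,i}|\,\varepsilon_\nu^{2/p}\to0$ uniformly in $t$, for any $p<+\infty$. Your fallback via weak-$\star$ compactness and test functions supported away from $h^\star_\nu$ does reach the (weaker) conclusion claimed in the lemma, so your proof is not wrong, but it is a detour driven by using the wrong bound: the direct $L^p$ estimate already gives strong convergence, exactly as in the ${\mathcal P}_{(ii)}$ case, once Proposition~\ref{Pro:Bounded-P} is quoted correctly.
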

\begin{Lemma} \label{Lem:CVPsi}
As $\overline{\boldsymbol{\varepsilon}} \rightarrow 0$: for $\nu \in {\mathcal P}_{(i)}$:
\begin{equation} \nonumber %\label{Eq:LemCVPsi1}
 \nabla^{\perp} {\psi}^\varepsilon_{\nu} ({\bf q}^\varepsilon,{\mathcal T}_{\underline{{\bf q}}^\varepsilon}(\cdot))
\longrightarrow
\nabla^\perp \widecheck{\psi}_{\nu} ({\bf q}^\star_{\nu},{\mathcal T}_{\underline{{\bf q}}^\star}(\cdot))  \ \text{ in } \ L^{\infty}(0,\overline{T}; L^p(\widecheck{{\mathcal F}}({\bf q}_0))) \text{ for } p <+\infty,
\end{equation}
and for $\nu \in {\mathcal P}_{s}$:
\begin{equation} \nonumber %\label{Eq:LemCVPsi2}
\nabla^{\perp} {\psi}^\varepsilon_{\nu} ({\bf q}^\varepsilon,{\mathcal T}_{\underline{{\bf q}}^\varepsilon}(\cdot))
\longrightarrow
\widecheck{K}_{{\bf q}_{(i)}^\star}[\delta_{h^\star_{\nu}}] \circ {\mathcal T}_{\underline{{\bf q}}^\star} \ \text{ in } \ L^{\infty}(0,\overline{T}; L^p(\widecheck{{\mathcal F}}({\bf q}_0))) \text{ for } p <2.
\end{equation}
\end{Lemma}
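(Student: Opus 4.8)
The plan is to prove Lemma~\ref{Lem:CVPsi} by treating separately the two cases $\nu \in {\mathcal P}_{(i)}$ and $\nu \in {\mathcal P}_{s}$, in each case transporting everything back to the fixed domain $\widecheck{{\mathcal F}}({\bf q}_0)$ via the diffeomorphisms ${\mathcal T}_{\underline{{\bf q}}}$ of Lemma~\ref{Lem:Diffeos}, and combining the static convergence results of Section~\ref{Sec:Expansions} (namely Proposition~\ref{Pro:LimCirculation} for $\psi^\varepsilon_\nu$, together with the identity \eqref{Eq:Kdelta}) with the time-regularity afforded by the \textit{a priori} bounds of Section~\ref{Subsec:FirstStep}. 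The point of the change of variable is that ${\mathcal T}_{\underline{{\bf q}}^\varepsilon}$ is area-preserving (so it acts isometrically on every $L^p$) and depends smoothly on $\underline{{\bf q}}^\varepsilon$, which itself converges uniformly on $[0,\overline T]$ by \eqref{Eq:CVQ1}--\eqref{Eq:CVQ3}; hence it is enough to prove the convergence at fixed ${\bf q}$, uniformly for ${\bf q}\in{\mathcal Q}_\delta$, plus equicontinuity in time.

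First I would handle the case $\nu\in{\mathcal P}_{(i)}$. Here Proposition~\ref{Pro:LimCirculation}, estimate \eqref{Eq:CVPsi1}, gives that $\nabla^\perp\psi^\varepsilon_\nu({\bf q},\cdot)\to\nabla^\perp\widecheck\psi_\nu({\bf q}_{(i)},\cdot)$ in $L^p(\Omega)$ for any $p<+\infty$, \emph{uniformly} for ${\bf q}\in{\mathcal Q}_\delta$ (with the convention of extension by $0$ inside the solids). Pulling back by ${\mathcal T}_{\underline{{\bf q}}^\varepsilon}$ (which preserves Lebesgue measure) converts this into an $L^p(\widecheck{{\mathcal F}}({\bf q}_0))$ statement for fixed time. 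To upgrade to convergence in $C^0([0,\overline T];L^p)$, one uses: (i) $\underline{{\bf q}}^\varepsilon\to\underline{{\bf q}}^\star$ uniformly on $[0,\overline T]$; (ii) the map $\underline{{\bf q}}\mapsto\nabla^\perp\widecheck\psi_\nu({\bf q}_{(i)},{\mathcal T}_{\underline{{\bf q}}}(\cdot))\in L^p(\widecheck{{\mathcal F}}({\bf q}_0))$ is continuous on ${\mathcal U}_{\underline{{\bf q}}_0}$ (which follows from the smooth dependence of $\widecheck\psi_\nu$ on ${\bf q}_{(i)}$ via its shape derivatives and from the smoothness of ${\mathcal T}$); and (iii) the uniformity in ${\bf q}$ of the $L^p$-convergence obtained above. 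Putting these together, $\sup_{t\in[0,\overline T]}\|\nabla^\perp\psi^\varepsilon_\nu({\bf q}^\varepsilon(t),{\mathcal T}_{\underline{{\bf q}}^\varepsilon(t)}(\cdot))-\nabla^\perp\widecheck\psi_\nu({\bf q}^\star_\nu(t),{\mathcal T}_{\underline{{\bf q}}^\star(t)}(\cdot))\|_{L^p}\to 0$, which is even stronger than $L^\infty(0,\overline T;L^p)$ as required.

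Next, the case $\nu\in{\mathcal P}_{s}$. Here I use the decomposition $\nabla^\perp\psi^\varepsilon_\nu=\nabla^\perp\widehat\psi_\nu+\nabla^\perp\psi^r_\nu$ (definition \eqref{Eq:DefPsiKappar}, cf.\ \eqref{Eq:DecompNatPsi}). By \eqref{Eq:CVPsi2} of Proposition~\ref{Pro:LimCirculation}, $\nabla^\perp\widehat\psi_\nu\to H_\nu$ in $L^p(\Omega)$ for $p\in[1,2)$ uniformly in ${\bf q}\in{\mathcal Q}_\delta$, and by \eqref{Eq:CVPsi3}, $\nabla^\perp\psi^r_\nu\to\nabla^\perp\widecheck\psi^r_\nu$ in $L^p(\Omega)$ (uniformly in ${\bf q}$). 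Adding and invoking the identity \eqref{Eq:Kdelta}, namely $H_\nu+\nabla^\perp\widecheck\psi^r_\nu=\widecheck K[\delta_{h_\nu}]$ in $\widecheck{{\mathcal F}}({\bf q}_{(i)})$, we get $\nabla^\perp\psi^\varepsilon_\nu({\bf q},\cdot)\to\widecheck K_{{\bf q}_{(i)}}[\delta_{h_\nu}]$ in $L^p(\Omega)$ for $p<2$, uniformly in ${\bf q}\in{\mathcal Q}_\delta$. The restriction to $p<2$ is intrinsic (the point-vortex kernel $H_\nu$ is only in $L^p_{loc}$ for $p<2$). Then I pull back by ${\mathcal T}_{\underline{{\bf q}}^\varepsilon}$ and pass to the sup over $t$ exactly as in the first case, using the uniform convergence $\underline{{\bf q}}^\varepsilon\to\underline{{\bf q}}^\star$ and the continuity of $\underline{{\bf q}}\mapsto\widecheck K_{{\bf q}_{(i)}}[\delta_{h_\nu}]\circ{\mathcal T}_{\underline{{\bf q}}}$ in $L^p(\widecheck{{\mathcal F}}({\bf q}_0))$; the latter continuity follows because $h_\nu$ stays at distance $\geq 2\delta$ from all large solids and $\partial\Omega$ and from the other point vortices, so $\widecheck K[\delta_{h_\nu}]$ depends continuously (in $L^p_{loc}$ away from a moving singularity, hence in $L^p$ after composing with the measure-preserving ${\mathcal T}_{\underline{{\bf q}}}$) on the configuration.

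The main obstacle I anticipate is the time-continuity/equicontinuity argument in the presence of the moving singularity of $H_\nu$ at $h^\varepsilon_\nu(t)$: one must check that the map $\underline{{\bf q}}\mapsto H_\nu\circ{\mathcal T}_{\underline{{\bf q}}}$ (equivalently $\underline{{\bf q}}\mapsto\widecheck K_{{\bf q}_{(i)}}[\delta_{h_\nu}]\circ{\mathcal T}_{\underline{{\bf q}}}$) is continuous from ${\mathcal U}_{\underline{{\bf q}}_0}$ into $L^p(\widecheck{{\mathcal F}}({\bf q}_0))$, $p<2$, which requires a small explicit estimate on $\|H_{h}-H_{h'}\|_{L^p}$ for nearby centers $h,h'$ (this is a standard computation: split into a small ball around the singularities where each term is controlled in $L^p$ and its complement where the difference is $O(|h-h'|)$ pointwise). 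Everything else is a routine assembly of Proposition~\ref{Pro:LimCirculation}, \eqref{Eq:Kdelta}, the measure-preserving property of ${\mathcal T}_{\underline{{\bf q}}}$, and the uniform convergences \eqref{Eq:CVQ1}--\eqref{Eq:CVQ3}.
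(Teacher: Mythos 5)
Your proof is correct and follows essentially the same route as the paper: split by the triangle inequality, invoke Proposition~\ref{Pro:LimCirculation} (and \eqref{Eq:DecompNatPsi}, \eqref{Eq:Kdelta} for the small solids) to get the static, uniform-in-${\bf q}$ convergence, then use the measure-preserving property of ${\mathcal T}_{\underline{{\bf q}}}$, continuity of the limit object in $\underline{{\bf q}}$ (for $\nu\in{\mathcal P}_{s}$ this reduces to $L^p$-continuity of the translated point-vortex kernel for $p<2$), and the uniform convergence of $\underline{{\bf q}}^\varepsilon$ from \eqref{Eq:CVQ1}--\eqref{Eq:CVQ3}. The ``main obstacle'' you flag (continuity of $h\mapsto H_h$ in $L^p$) is precisely what the paper addresses by invoking continuity of translations in $L^p$, so there is no gap.
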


\begin{proof}[Proof of Lemma~\ref{Lem:CVK}]
For all $t \in (0,\overline{T})$ we write, using the triangle inequality and recalling that all vector fields are filled with $0$ inside the solids,
\begin{multline*}
\| K^\varepsilon_{{\bf q}^\varepsilon}[\omega^\varepsilon] \circ {\mathcal T}_{\underline{{\bf q}}^\varepsilon} 
- \widecheck{K}_{{\bf q}_{(i)}^\star}[\omega^\star] \circ {\mathcal T}_{\underline{{\bf q}}^\star} \|_{L^p(\Omega)} \leq
\| K^\varepsilon_{{\bf q}^\varepsilon}[\omega^\varepsilon] \circ {\mathcal T}_{\underline{{\bf q}}^\varepsilon} 
- \widecheck{K}_{{\bf q}_{(i)}^\varepsilon}[\omega^\varepsilon] \circ {\mathcal T}_{\underline{{\bf q}}^\varepsilon} \|_{L^p(\widecheck{\mathcal F}({\bf q}_0))} \\
+
\| \widecheck{K}_{{\bf q}_{(i)}^\varepsilon}[\omega^\varepsilon] \circ {\mathcal T}_{\underline{{\bf q}}^\varepsilon} - \widecheck{K}_{{\bf q}_{(i)}^\star}[\omega^\varepsilon] \circ {\mathcal T}_{\underline{{\bf q}}^\star} \|_{L^p(\Omega)} 
+
\| \widecheck{K}_{{\bf q}_{(i)}^\star}[\omega^\varepsilon - \omega^\star] \circ {\mathcal T}_{\underline{{\bf q}}^\star} \|_{L^p(\widecheck{\mathcal F}({\bf q}_0))}
.
\end{multline*}
For what concerns the first term, since ${\mathcal T}_{\underline{{\bf q}}^\varepsilon}$ is  measure-preserving, we have
\begin{equation*}
\| K^\varepsilon_{{\bf q}^\varepsilon}[\omega^\varepsilon] \circ {\mathcal T}_{\underline{{\bf q}}^\varepsilon} 
- \widecheck{K}_{{\bf q}_{(i)}^\varepsilon}[\omega^\varepsilon] \circ {\mathcal T}_{\underline{{\bf q}}^\varepsilon} \|_{L^p(\widecheck{\mathcal F}({\bf q}_0))}
= 
\| K^\varepsilon_{{\bf q}^\varepsilon}[\omega^\varepsilon]
- \widecheck{K}_{{\bf q}_{(i)}^\varepsilon}[\omega^\varepsilon] \|_{L^p(\widecheck{\mathcal F}({\bf q}^\varepsilon))},
\end{equation*}
which converges to zero uniformly in time thanks to Lemma~\ref{Lem:BiotSavart}.
The convergence of the third term (uniformly in time) comes from \eqref{Eq:CVOmega2}: it involves the convergence of $K_\Omega[\omega^\varepsilon]$ to $K_\Omega[\omega^\star]$ (recall \eqref{Eq:DefKOmega}) in $C^0([0,\overline{T}];L^p(\Omega))$ for $p<+\infty$ due to the classical compactness of the operator $K_{\Omega}:L^p(\Omega) \rightarrow L^p(\Omega)$ (due to the Calderon-Zygmund estimate $\left\| {K}_\Omega[\omega] \right\|_{W^{1,p}(\Omega)} \leq  C \| \omega \|_{L^{p}(\Omega)}$ and the Rellich-Kondrachov theorem.)
Note that using the support of vorticity and interior regularity, this involves the convergence in $C^0([0,\overline{T}];C^k(\mathcal{V}_\delta(\partial \mathcal{S}_\lambda)))$ for each $\lambda=1,\dots,N$.  It remains to check that the correction $R[\omega- \omega^\star]$ defined in \eqref{Eq:DefR-BS} converges to $0$ in $C^0([0,\overline{T}];L^p(\Omega))$. This is again a consequence of Propositions~\ref{Pro:DirichletPetits} and \ref{Pro:DirichletGros}. \par
Finally, concerning the second term, we consider the function
\begin{equation*}
[0,1] \longrightarrow L^p(\widecheck{\mathcal{F}}(\boldsymbol{q}_0)), \ \ 
s \longmapsto \widecheck{K}_{{\bf q}_{(i)}^\star + s({\bf q}_{(i)}^\varepsilon - {\bf q}_{(i)}^\star)}[\omega^\varepsilon] \circ {\mathcal T}_{\underline{{\bf q}}^\star + s(\underline{{\bf q}}^\varepsilon - \underline{{\bf q}}^\star)}. 
\end{equation*}
It is well-defined for small enough $\overline{\boldsymbol{\varepsilon}}$ (due to the convergences \eqref{Eq:CVQ1}-\eqref{Eq:CVQ3}, so that $\underline{{\bf q}}^\star + s(\underline{{\bf q}}^\varepsilon - \underline{{\bf q}}^\star)$ belongs to the neighborhood ${\mathcal U}_{\underline{q}_{0}}$ of Lemma~\ref{Lem:Diffeos}), and its derivative with respect to $s$  is bounded by
\begin{equation} \label{Eq:BorneDeriveeMaterielle}
C |\underline{{\bf q}}^\varepsilon - \underline{{\bf q}}^\star| \left( \left\| \frac{\partial \widecheck{K}}{\partial {\bf q}} \right\|_{L^p(\widecheck{\mathcal F})}
+ \left\| \frac{\partial \widecheck{K}}{\partial x} \right\|_{L^p(\widecheck{\mathcal F})}  \right).
\end{equation}
Together with Lemma \ref{Lem:DBSBorne} and \eqref{Eq:EstKSobolev}, this establishes Lemma~\ref{Lem:CVK}.
\end{proof}
\begin{proof}[Proof of Lemma~\ref{Lem:CVKir}]
Here we write for $\nu \in {\mathcal P}_{(i)}$:
\begin{multline} \label{Eq:CVKir3}
\| \nabla \varphi^\varepsilon_{\nu,i}({\bf q}^\varepsilon,{\mathcal T}_{\underline{{\bf q}}^\varepsilon}(\cdot))
- \nabla \widecheck{\varphi}_{\nu,i}({\bf q}^\star_{\nu},{\mathcal T}_{\underline{{\bf q}}^\star}(\cdot)) \|_{L^\infty(0,T;L^p(\Omega))}
\leq
\| \nabla \varphi^\varepsilon_{\nu,i}({\bf q}^\varepsilon,{\mathcal T}_{\underline{{\bf q}}^\varepsilon}(\cdot))
-\nabla \widecheck{\varphi}_{\nu,i}({\bf q}^\varepsilon,{\mathcal T}_{\underline{{\bf q}}^\varepsilon}(\cdot)) \|_{L^\infty(0,T;L^p(\widecheck{{\mathcal F}}_{0}))} \\
+ \| \nabla \widecheck{\varphi}_{\nu,i}({\bf q}^\varepsilon,{\mathcal T}_{\underline{{\bf q}}^\varepsilon}(\cdot))
- \nabla \widecheck{\varphi}_{\nu,i}({\bf q}^\star_{\nu},{\mathcal T}_{\underline{{\bf q}}^\star}(\cdot)) \|_{L^\infty(0,T;L^p(\widecheck{{\mathcal F}}_{0}))}.
\end{multline}
The first term in the right-hand side converges to zero as shown by Proposition~\ref{Pro:ExpKirchhoff2}. For the second we reason as in the proof of Lemma~\ref{Lem:CVKir}: we consider the function
\begin{equation*}
s \longmapsto \nabla \widecheck{\varphi}_{\nu,i}(q_{\nu}^\varepsilon+ s(q_{\nu}^\varepsilon - q_{\nu}^\star),
{\mathcal T}_{\underline{{\bf q}}^\varepsilon+ s(q_{\nu}^\varepsilon - q_{\nu}^\star)}(\cdot)),
\end{equation*}
where the abusive notation $\underline{{\bf q}}^\varepsilon+ s({\bf q}_{\nu}^\varepsilon - q_{\nu}^\star)$ means that we add $s(q_{\nu}^\varepsilon - q_{\nu}^\star)$ only on the coordinate of $\underline{q}^\varepsilon$ corresponding to $q_{\nu}$. Now we estimate the $s$-derivative as in \eqref{Eq:BorneDeriveeMaterielle}. The $x$-derivative is bounded thanks to the uniform Schauder estimates in $\widecheck{\mathcal F}$, the ${\bf q}$ derivative by following the proof of Proposition~\ref{Pro:ExpShapeDerivatives} by elliptic regularity in $\widecheck{\mathcal F}$. With \eqref{Eq:CVQ1}, this proves the convergence of the left-hand side of \eqref{Eq:CVKir3} to zero. The conclusion follows then from \eqref{Eq:CVP} for solids of family $(i)$. \par \,
\par

 \noindent Concerning small solids, the convergence to $0$ of the Kirchhoff potentials (uniform with respect to ${\bf q}$) comes from Proposition~\ref{Pro:ExpKirchhoff}, and one concludes in the same way with \eqref{Eq:CVP}.
\end{proof}
\begin{proof}[Proof of Lemma~\ref{Lem:CVPsi}]
Here we write for $\nu \in {\mathcal P}_{(i)}$ and all $t \in [0,\overline{T}]$:
\begin{multline*}
\|  \nabla^{\perp} {\psi}^\varepsilon_{\nu} ({\bf q}^\varepsilon,{\mathcal T}_{\underline{{\bf q}}^\varepsilon}(\cdot))
- \nabla^\perp \widecheck{\psi}_{\nu} (q^\star_{\nu},{\mathcal T}_{\underline{{\bf q}}^\star}(\cdot)) \|_{L^p(\widecheck{{\mathcal F}}({\bf q}_0))}
\leq 
\|  \nabla^{\perp} {\psi}^\varepsilon_{\nu} ({\bf q}^\varepsilon,{\mathcal T}_{\underline{{\bf q}}^\varepsilon}(\cdot))
- \nabla^\perp \widecheck{\psi}_{\nu} ({\bf q}^\varepsilon,{\mathcal T}_{\underline{{\bf q}}^\varepsilon}(\cdot)) \|_{L^p(\widecheck{{\mathcal F}}({\bf q}_0))} \\
+
\|  \nabla^\perp \widecheck{\psi}_{\nu} ({\bf q}^\varepsilon,{\mathcal T}_{\underline{{\bf q}}^\varepsilon}(\cdot))
- \nabla^\perp \widecheck{\psi}_{\nu} (q^\star_{\nu},{\mathcal T}_{\underline{{\bf q}}^\star}(\cdot)) \|_{L^p(\widecheck{{\mathcal F}}({\bf q}_0))}.
\end{multline*}
The convergence to zero of the first term in the right-hand side, uniformly in ${\bf q}$  is a consequence of Proposition~\ref{Pro:LimCirculation}.
The convergence of the second term  is due to \eqref{Eq:CVQ1} and the regularity of $\nabla^\perp \widecheck{\psi}_{\nu}$ with respect to ${\bf q}$ (using for instance Lemma \ref{Lem:DWBorne} and \eqref{Eq:HatPsiTourne}). \par
For $\nu \in {\mathcal P}_{s}$, for $p \in [1,2)$ and all $t \in [0,\overline{T}]$ we have:
\begin{multline*}
\| \nabla^{\perp} {\psi}^\varepsilon_{\nu} ({\bf q}^\varepsilon,{\mathcal T}_{\underline{{\bf q}}^\varepsilon}(\cdot))
 - \widecheck{K}_{{\bf q}_{(i)}^\star}[\delta_{h^\varepsilon_{\nu}}] \circ {\mathcal T}_{\underline{{\bf q}}^\star} \|_{L^p(\widecheck{{\mathcal F}}({\bf q}_0)))}
\leq 
\| \nabla^{\perp} {\psi}^\varepsilon_{\nu} ({\bf q}^\varepsilon,{\mathcal T}_{\underline{{\bf q}}^\varepsilon}(\cdot))
- \widecheck{K}_{{\bf q}_{(i)}^\star}[\delta_{h^\varepsilon_{\nu}}] \circ {\mathcal T}_{\underline{{\bf q}}^\star} \|_{L^p(\widecheck{{\mathcal F}}({\bf q}_0)))} \\
+ 
\| \widecheck{K}_{{\bf q}_{(i)}^\star}[\delta_{h^\varepsilon_{\nu}}] \circ {\mathcal T}_{\underline{{\bf q}}^\star}
- \widecheck{K}_{{\bf q}_{(i)}^\star}[\delta_{h^\star_{\nu}}] \circ {\mathcal T}_{\underline{{\bf q}}^\star} \|_{L^p(\widecheck{{\mathcal F}}({\bf q}_0)))}.
\end{multline*}
The convergence to zero of the first term in the right-hand side is due to Proposition~\ref{Pro:LimCirculation}, \eqref{Eq:DecompNatPsi} and \eqref{Eq:Kdelta}.  Concerning the second one, by  \eqref{Eq:Kdelta}  
\begin{multline*}
\| \widecheck{K}_{{\bf q}_{(i)}^\star}[\delta_{h^\varepsilon_{\nu}}] \circ {\mathcal T}_{\underline{{\bf q}}^\star}
- \widecheck{K}_{{\bf q}_{(i)}^\star}[\delta_{h^\star_{\nu}}] \circ {\mathcal T}_{\underline{{\bf q}}^\star} \|_{L^p(\widecheck{{\mathcal F}}({\bf q}_0)))}
\leq
\| \nabla^\perp \widecheck{\psi}_{\kappa}(h^\varepsilon_{\nu})\circ {\mathcal T}_{\underline{{\bf q}}^\star} - \nabla^\perp \widecheck{\psi}_{\kappa}(h^\star_{\nu}) \circ {\mathcal T}_{\underline{{\bf q}}^\star}\|_{L^p(\widecheck{{\mathcal F}}({\bf q}_0)))} \\
+ 
\| H(\cdot - h^\varepsilon_{\nu})\circ {\mathcal T}_{\underline{{\bf q}}^\star} - H(\circ - h^\star_{\nu}) \cdot {\mathcal T}_{\underline{{\bf q}}^\star}\|_{L^p(\widecheck{{\mathcal F}}({\bf q}_0)))}.
\end{multline*}
Due to the uniform convergence of 
$h^\varepsilon_{\kappa} $ to $ h_{\kappa}^\star$ both terms converge to zero, the first one by regularity with respect to $h$ of $\nabla^\perp \widecheck{\psi}_{\kappa}$, the second-one by continuity of the translations in $L^p$.
\end{proof}
Now the convergence \eqref{Eq:CVu} to $u^\star$ satisfying \eqref{Eq:DefUStar} is a direct consequence of Lemmas \ref{Lem:CVK}, \ref{Lem:CVKir}, \ref{Lem:CVPsi}, and of the decompositions \eqref{Eq:DecompUeps}  and  \eqref{Eq:DecompUstar}.
Moreover one obtains \eqref{Eq:EvolOmegaStar} by passing to the limit in \eqref{Eq:Vorticite} thanks to \eqref{Eq:CVOmega} and \eqref{Eq:CVu}.
 \par
%
%
%
%
%%%%%%%%%%%%%%%%%%%%%%%%%%
%
%
%
%
\subsection{Limit dynamics of the solids of fixed size}
To pass to the limit in the equation of the solids of family $(i)$, we must pass to the limit in the pressure. To that purpose, we observe that the convergences described in Subsection~\ref{Subsec:LDF} are actually stronger when one restricts the space domain to the $\delta$-neighborhood of $\partial {\mathcal S}_{\kappa}$ for $\kappa \in {\mathcal P}_{(i)}$, and, for $\kappa \in {\mathcal P}_{s}$, to an annulus $B(h_{\kappa},\delta) \setminus B(h_{\kappa},\delta/2)$.
 This is given in the following statement.
\begin{Lemma} \label{Lem:MeilleureCVUeps}
For $\kappa \in \{1,\dots,N\}$ we let ${\mathcal U}_\kappa^{\delta}$ the $\delta/2$-neighborhood of $\partial {\mathcal S}_{\kappa}({\bf q}_0)$ whenever $\kappa \in {\mathcal P}_{(i)}$ and we let ${\mathcal U}_\kappa^{\delta} = B(h^0_{\kappa},\delta) \setminus B(h^0_{\kappa},3\delta/4)$ whenever $\kappa \in {\mathcal P}_{s}$. Then one has
\begin{equation} \nonumber %\label{Eq:MeilleureCVUeps}
u^\varepsilon \circ {\mathcal T}_{\underline{{\bf q}}^\varepsilon |{\mathcal U}_\kappa^{\delta}}
\longrightarrow
u^\star \circ {\mathcal T}_{\underline{{\bf q}}^\star |{\mathcal U}_\kappa^{\delta}}
\ \text{ in } \ W^{1,\infty}(0,\overline{T};C^k({\mathcal U}_\kappa^{\delta}))-w\star, \ \ \text{ for all } k \in \N.
\end{equation}
\end{Lemma}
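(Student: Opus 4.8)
The idea is to revisit the decomposition \eqref{Eq:DecompUeps} of $u^\varepsilon$ term by term, exactly as in Subsection~\ref{Subsec:LDF}, but to observe that on the fixed annular/collar regions ${\mathcal U}_\kappa^{\delta}$ --- which stay at distance $\gtrsim \delta$ from all small solids and, when $\kappa\in{\mathcal P}_{(i)}$, sit in a neighborhood of $\partial{\mathcal S}_\kappa$ where the potentials are harmonic --- each term is in fact uniformly bounded (together with its time-derivative) in $C^k$, and converges in $C^k$ rather than merely in $L^p$. Concretely, first I would fix $\varepsilon_0$ and $\overline T$ as in Subsection~\ref{Subsec:FirstStep}, so that $(\overline{\boldsymbol\varepsilon},{\bf q},\omega)\in\mathfrak{Q}_\delta^{\varepsilon_0}$ on $[0,\overline T]$ and all \textit{a priori} estimates (in particular Proposition~\ref{Pro:Bounded-P}) hold there. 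Since ${\mathcal T}_{\underline{{\bf q}}^\varepsilon}$ is rigid (resp.\ a translation) near ${\mathcal S}_\kappa(\underline{{\bf q}}_0)$ (resp.\ $h_{\kappa,0}$) by Lemma~\ref{Lem:Diffeos}, the set ${\mathcal T}_{\underline{{\bf q}}^\varepsilon}({\mathcal U}_\kappa^{\delta})$ is exactly a collar of $\partial{\mathcal S}_\kappa({\bf q}^\varepsilon)$ (resp.\ an annulus about $h_\kappa^\varepsilon$) lying in the region where $\omega^\varepsilon$ vanishes and which stays a fixed distance away from every small solid.

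\textbf{Term-by-term $C^k$ control.} On ${\mathcal T}_{\underline{{\bf q}}^\varepsilon}({\mathcal U}_\kappa^{\delta})$ the vorticity vanishes, so each piece of \eqref{Eq:DecompUeps} is harmonic there, and interior elliptic regularity upgrades the uniform $L^\infty$/$L^p$ bounds already established into uniform $C^k$ bounds on the slightly shrunk set ${\mathcal U}_\kappa^{\delta}$. For the Kirchhoff part: the potentials $\varphi_{\nu,i}^\varepsilon$ of solids $\nu\ne\kappa$ and the small-solid potentials are ${\mathcal O}(\varepsilon_\nu^{2+\delta_{i3}})$ with all derivatives controlled by \eqref{Eq:ExpKirchhoff1a}, \eqref{Eq:ExpKirchhoff1Bis}, \eqref{Eq:ExpKirchhoff4}, hence converge to their final counterparts in $C^k$ on ${\mathcal U}_\kappa^{\delta}$ after composition with ${\mathcal T}$, using the $C^k$-in-${\bf q}$ regularity of $\widecheck\varphi$ and \eqref{Eq:CVP}. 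For the circulation part: Proposition~\ref{Pro:LimCirculation} gives $C^k$-convergence away from the small solids of both $\nabla^\perp\widehat\psi_\nu$ (to $H_\nu$, for $\nu\in{\mathcal P}_s$) and $\nabla^\perp\psi_\nu^r$ (to $\nabla^\perp\widecheck\psi_\nu^r$), and \eqref{Eq:CVPsi1} handles $\nu\in{\mathcal P}_{(i)}$; composing with ${\mathcal T}_{\underline{{\bf q}}^\varepsilon}$ and using \eqref{Eq:CVQ1}--\eqref{Eq:CVQ3} and regularity of $\widecheck\psi$ in ${\bf q}$ gives $C^k$-convergence on ${\mathcal U}_\kappa^{\delta}$. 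For the Biot--Savart part: since $\Supp\omega^\varepsilon$ stays $\delta$-away from ${\mathcal U}_\kappa^{\delta}$, interior estimates give $\|K^\varepsilon[\omega^\varepsilon]\|_{C^k({\mathcal U}_\kappa^{\delta})}\le C$, and the $C^0([0,\overline T];L^p)$-convergence of $K^\varepsilon[\omega^\varepsilon]\circ{\mathcal T}_{\underline{{\bf q}}^\varepsilon}$ from Lemma~\ref{Lem:CVK} combined with the analogous interior estimate $\|\widecheck K[\omega^\varepsilon-\omega^\star]\|_{C^k(\mathcal V_\delta(\partial{\mathcal S}_\lambda))}\to0$ coming from \eqref{Eq:CVOmega2} upgrades this to $C^k$-convergence; more precisely one reuses the three-term splitting of the proof of Lemma~\ref{Lem:CVK} and observes that each bound there is in fact a $C^k$ bound on ${\mathcal U}_\kappa^{\delta}$ by interior regularity and Propositions~\ref{Pro:DirichletPetits}, \ref{Pro:DirichletGros}.

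\textbf{Time-derivative control.} For the $W^{1,\infty}$-in-time statement I would differentiate in $t$: using the chain rule, $\partial_t(f\circ{\mathcal T}_{\underline{{\bf q}}^\varepsilon})=(\partial_t f)\circ{\mathcal T}_{\underline{{\bf q}}^\varepsilon}+(\nabla f\cdot\partial_t{\mathcal T}_{\underline{{\bf q}}^\varepsilon})\circ{\mathcal T}_{\underline{{\bf q}}^\varepsilon}$, where $\partial_t{\mathcal T}_{\underline{{\bf q}}^\varepsilon}=\sum \partial_{q_{\mu,m}}{\mathcal T}_{\underline{{\bf q}}^\varepsilon}\,p_{\mu,m}^\varepsilon$ is uniformly bounded since $\underline{{\bf q}}^\varepsilon$ and its time-derivative are bounded on $[0,\overline T]$ (Propositions~\ref{Pro:APEE}, \ref{Pro:Bounded-P}). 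So it suffices to bound $\partial_t u^\varepsilon$ and $\nabla u^\varepsilon$ uniformly in $C^k$ on ${\mathcal T}_{\underline{{\bf q}}^\varepsilon}({\mathcal U}_\kappa^{\delta})$. The gradient bound is the $C^k$ bound just discussed; for $\partial_t u^\varepsilon$ one decomposes as in \eqref{Eq:ptuext}--\eqref{Eq:RelK}: the shape-derivative contributions of $K[\omega^\varepsilon]$ and $\nabla^\perp\psi_\nu^r$ are controlled by Lemmas~\ref{Lem:DBSBorne}, \ref{Lem:DWBorne} (uniformly away from the small solids), the Kirchhoff shape-derivatives by Proposition~\ref{Pro:ExpShapeDerivatives}, and $K[\partial_t\omega^\varepsilon]=-K[\div(u^\varepsilon\omega^\varepsilon)]$ is bounded in $L^p$ by \eqref{Eq:Num42} hence in $C^k$ on ${\mathcal U}_\kappa^{\delta}$ by interior regularity since $\Supp(u^\varepsilon\omega^\varepsilon)$ is $\delta$-away. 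This gives a uniform bound of $\partial_t(u^\varepsilon\circ{\mathcal T}_{\underline{{\bf q}}^\varepsilon})$ in $L^\infty(0,\overline T;C^k({\mathcal U}_\kappa^{\delta}))$; combined with the $C^0([0,\overline T];C^k)$-convergence this yields the claimed $W^{1,\infty}(0,\overline T;C^k({\mathcal U}_\kappa^{\delta}))$-$w\star$ convergence (the weak-$\star$ limit being forced to be $u^\star\circ{\mathcal T}_{\underline{{\bf q}}^\star}$ by uniqueness of limits in the distributional sense, using the already-established $L^p$-convergence).

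\textbf{Main obstacle.} The delicate point is the $C^k$ (not just $L^p$) control of the Biot--Savart term and of $K[\partial_t\omega^\varepsilon]$ near $\partial{\mathcal S}_\kappa$: one must combine the weak-$\star$ convergence of $\omega^\varepsilon$ (which a priori gives nothing strong) with the facts that (i) $\omega^\varepsilon$ vanishes identically on a fixed neighborhood of the region of interest and (ii) $u^\varepsilon\omega^\varepsilon$ is uniformly bounded in $L^\infty$, so that all singular behavior is pushed to the support of vorticity and harmonic-function interior estimates apply on ${\mathcal U}_\kappa^{\delta}$. Making this rigorous requires carefully reusing the splitting arguments of Lemma~\ref{Lem:CVK} and the elliptic estimates of Section~\ref{Sec:Expansions} in $C^k$ rather than $L^p$ norms, but involves no genuinely new ingredient; the rest is routine bookkeeping via Lemma~\ref{Lem:Diffeos} and the already-proven convergences of Subsection~\ref{Subsec:LDF}.
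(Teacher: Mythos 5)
Your overall strategy is the one the paper uses: fix the set-up on $[0,\overline{T}]$, note that $\omega^\varepsilon$ vanishes on a fixed neighbourhood of ${\mathcal T}_{\underline{{\bf q}}^\varepsilon}({\mathcal U}_\kappa^\delta)$, obtain a uniform $W^{1,\infty}(0,\overline T;C^k({\mathcal U}_\kappa^\delta))$ bound for $u^\varepsilon\circ{\mathcal T}_{\underline{{\bf q}}^\varepsilon}$ (by interior elliptic regularity for the $L^\infty(0,\overline T;C^k)$ part and by a decomposition of $\partial_t u^\varepsilon$ for the time-derivative part), and then invoke the already established $L^p$ convergence to identify the weak-$\star$ limit. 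The paper's presentation is a bit more economical --- it only proves the uniform bound and does not try to upgrade the piecewise convergences to strong $C^k$ convergence, since weak-$\star$ convergence already follows from the bound together with the $L^p$ convergence --- but this is a cosmetic difference.

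There is however a genuine gap in your treatment of $\partial_t u^\varepsilon$. You invoke the decomposition \eqref{Eq:ptuext}--\eqref{Eq:RelK}, which is the formula for $\partial_t u^{ext}$ only, and list the shape-derivative contributions of $K[\omega^\varepsilon]$, $\nabla^\perp\psi_\nu^r$, the Kirchhoff shape-derivatives, and $K[\partial_t\omega^\varepsilon]$. Two families of terms coming from the decomposition \eqref{Eq:DecompGlob} are left out. First, the acceleration part $\sum_{\nu,i} p'_{\nu,i}\nabla\varphi_{\nu,i}$ of $\partial_t u^{pot}$ (see \eqref{Eq:dtupot}): controlling this requires the rough acceleration estimate of Proposition~\ref{Pro:Acceleration}, one of the main technical results of the paper, together with the decay \eqref{Eq:ExpKirchhoff1Bis} of the Kirchhoff potentials on ${\mathcal U}_\kappa^\delta$ to absorb the factors $\varepsilon_\nu^{-2\delta_{\nu\in{\mathcal P}_{(iii)}}}$. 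Second, the terms $\gamma_\nu\,\partial_t\nabla^\perp\widehat\psi_\nu$: these are neither shape derivatives of $\psi_\nu^r$ nor part of $u^{ext}$, and are handled via the transport identity \eqref{Eq:HatPsiTourne} combined with \eqref{Eq:BehaviourPsi1}--\eqref{Eq:BehaviourPsi2} and the remoteness of ${\mathcal U}_\kappa^\delta$ from $\partial{\mathcal S}_\nu$. Because Proposition~\ref{Pro:Acceleration} is a nontrivial ingredient (it relies on the global normal form and the invertibility of the renormalized inertia matrix), its omission is a real gap rather than routine bookkeeping. Relatedly, the ``main obstacle'' you single out --- the $C^k$ control of $K[\omega^\varepsilon]$ and $K[\partial_t\omega^\varepsilon]$ on ${\mathcal U}_\kappa^\delta$ --- is actually the easy part once the vorticity support is excluded; the acceleration term is where the work lies.
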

\begin{proof}[Proof of Lemma~\ref{Lem:MeilleureCVUeps}]

This is due to the support of $\omega$ and the remoteness of small solids from it (since $(\overline{\boldsymbol\varepsilon},{\bf q},\omega) \in \mathfrak{Q}_\delta^{\varepsilon_{0}}$), which allow to improve the convergences of Lemmas \ref{Lem:CVK}, \ref{Lem:CVKir} and \ref{Lem:CVPsi} to the weak-$\star$ one in $W^{1,\infty}(0,\overline{T};C^k({\mathcal U}_\kappa^{\delta}))$. Since we already have the convergence in a weaker space, it suffices to prove the boundedness of $u^\varepsilon \circ {\mathcal T}_{{\bf q}^\varepsilon}$ in $W^{1,\infty}(0,T;C^k({\mathcal U}_\kappa^{\delta}))$. That $u^\varepsilon \circ {\mathcal T}_{{\bf q}^\varepsilon}$ remains bounded in $L^{\infty}(0,T;C^k({\mathcal U}_\kappa^{\delta}))$ is a direct consequence of the support of $\omega^\varepsilon$ and interior elliptic regularity, since it is already bounded in $L^{\infty}(0,T;L^p({\mathcal F}_{0}))$. \par
For what concerns $\partial_{t} (u^\varepsilon  \circ {\mathcal T}_{\underline{{\bf q}}^\varepsilon})$ we have
\begin{equation*}
\partial_{t} (u^\varepsilon  \circ {\mathcal T}_{\underline{{\bf q}}^\varepsilon}) = \left\{ \begin{array}{l}
\displaystyle [\partial_{t} u^\varepsilon + (v^\varepsilon_{{\mathcal S},\kappa} \cdot \nabla) u^\varepsilon] \circ {\mathcal T}_{\underline{{\bf q}}^\varepsilon},
 \ \text{ in } \ {\mathcal U}_\kappa^{\delta} \ \text{ for } \ \kappa \in {\mathcal P}_{(i)}, \medskip \\
\displaystyle [\partial_{t} u^\varepsilon + ((h^\varepsilon_{\kappa})' \cdot \nabla) u^\varepsilon] \circ {\mathcal T}_{\underline{{\bf q}}^\varepsilon}
 \ \text{ in } \ {\mathcal U}_\kappa^{\delta} \ \text{ for } \ \kappa \in {\mathcal P}_{s},
\end{array} \right.
\end{equation*}
so that we only have to estimate $(\partial_{t} u^\varepsilon) \circ {\mathcal T}_{\underline{{\bf q}}^\varepsilon}$. Again, by interior elliptic estimates, it suffices to bound it in $L^{\infty}$ in a slightly larger set.
We rely on decomposition \eqref{Eq:DecompGlob}:
\begin{itemize}
\item $\partial_t u^{ext}$ is bounded in $C^{0}([0,\overline{T}] \times {\mathcal U}_\kappa^{\delta})$ thanks to Lemma~\ref{Lem:UextGlob},
\item the terms $\partial_t \nabla^{\perp} \widehat{\psi}_{\nu}$ for $\nu \neq \kappa$ are bounded in $C^{0}([0,\overline{T}] \times {\mathcal U}_\kappa^{\delta})$ thanks to \eqref{Eq:HatPsiTourne}, \eqref{Eq:BehaviourPsi1}-\eqref{Eq:BehaviourPsi2} and the remoteness of ${\mathcal U}_\kappa^{\delta}$ from $\partial \mathcal{S}_\nu$,
\item all the same the term $\partial_t \nabla^{\perp} \widehat{\psi}_{\kappa}$ is bounded in $C^{0}([0,\overline{T}] \times {\mathcal U}_\kappa^{\delta})$ thanks to \eqref{Eq:HatPsiTourne},  \eqref{Eq:BehaviourPsi1}-\eqref{Eq:BehaviourPsi2} and to the choice of ${\mathcal U}_\kappa^{\delta}$ (that is at positive distance from $\partial {\mathcal S}_{\kappa}$ when $\kappa \in {\mathcal P}_{s}$), 
\item the boundedness of $\partial_t u^{pot}$ follows from Proposition~\ref{Pro:ExpShapeDerivatives}, acceleration estimates (Proposition~\ref{Pro:Acceleration}) and Proposition~\ref{Pro:ExpKirchhoff} (again thanks to the choice of ${\mathcal U}_\kappa^{\delta}$).
\end{itemize}
\end{proof}
A first consequence of Lemma~\ref{Lem:MeilleureCVUeps} is \eqref{Eq:DefPStar}. Indeed, due to \eqref{Eq:DefUStar} and \eqref{Eq:EvolOmegaStar}, we have
\begin{equation*}
\curl (\partial_{t} u^\star + (u^\star \cdot \nabla) u^\star ) = 0 \ \text{ in } \ \widecheck{\mathcal F}({\bf q}^\star_{(i)}(t)).
\end{equation*}
For each $\kappa \in \{1,\dots,N\}$, we introduce a smooth simple closed loop $\gamma_{\kappa}$ in ${\mathcal U}_\kappa^{\delta}$. Then \eqref{Eq:Euler} involve that for all $t \in [0,\overline{T}]$ and all $\overline{\boldsymbol\varepsilon}$, one has
\begin{equation*}
\oint_{\gamma_{\kappa}} (\partial_{t} u^\varepsilon + (u^\varepsilon \cdot \nabla) u^\varepsilon )(t,\cdot) \cdot \tau \, ds =0.
\end{equation*}
Passing to the limit with Lemma~\ref{Lem:MeilleureCVUeps} we infer  that for all $\kappa \in \{1,\dots,N\}$,
\begin{equation*}
\oint_{\gamma_{\kappa}} (\partial_{t} u^\star + (u^\star \cdot \nabla) u^\star ) \cdot \tau \, ds =0.
\end{equation*}
This establishes \eqref{Eq:DefPStar}. \par
Next we deduce \eqref{Eq:Evolh(i)Star}. It follows  from Lemma~\ref{Lem:MeilleureCVUeps} that in a vicinity of $\partial {\mathcal S}_{\kappa}$  for $\kappa \in {\mathcal P}_{(i)}$, the convergence of the pressure is improved: recalling that
\begin{equation*}
\nabla \pi^\varepsilon = - \partial_{t} u^\varepsilon - (u^\varepsilon \cdot \nabla) u^\varepsilon
\ \text{ and } \ 
\nabla \pi^\star = - \partial_{t} u^\star - (u^\star \cdot \nabla) u^\star,
\end{equation*}
Lemma~\ref{Lem:MeilleureCVUeps} involves that
\begin{equation*}
\nabla \pi^\varepsilon \circ {\mathcal T}_{{\bf q}^\varepsilon} \longrightarrow \nabla \pi^\star \circ {\mathcal T}_{{\bf q}^\star}
\ \text{ in } \ L^{\infty}(0,T;C^k({\mathcal V}_{\delta/2}(\partial {\mathcal S}_{\kappa}))) \text{weak-}\star.
\end{equation*}
From \eqref{Eq:Newton} we deduce, for all $\kappa \in {\mathcal P}_{(i)}$:
\begin{equation*} 
\left\{ \begin{array}{l}
m_{\kappa} (h^\varepsilon_{\kappa})''(t) 
= R(\vartheta^\varepsilon_{\kappa}) \displaystyle \int_{\partial {\mathcal S}_{\kappa}({\bf q}_0)} \pi^\varepsilon(t,{\mathcal T}_{{\bf q}^\varepsilon}(x)) \, n(t,{\mathcal T}_{{\bf q}^\varepsilon}(x)) \, ds(x),
\\
\displaystyle
J_{\kappa} (\vartheta^\varepsilon_{\kappa})''(t)
= \int_{\partial {\mathcal S}_{\kappa}({\bf q}_0)} \pi^\varepsilon(t,{\mathcal T}_{{\bf q}^\varepsilon}(x)) (x-h_{\kappa,0})^\perp \cdot n(t,{\mathcal T}_{{\bf q}^\varepsilon}(x)) \, ds(x).
\end{array} \right. 
\end{equation*}
This involves the passage to the limit in \eqref{Eq:Newton} for the first family, from which we deduce \eqref{Eq:Evolh(i)Star}.
%
%
%
%
%
%
%%%%%%%%%%%%%%%%%%%%%%%%%%
%
%
%
%
\subsection{Limit dynamics of the small solids and end of the proof of Theorem~\ref{Joli_Theoreme}}
\label{Subsec:LimitSmall}
To get the convergence on small solids we go back to the normal form \eqref{Eq:FormeNormale}. 
Let  $\kappa \in {\mathcal P}_{s}$. 
Since we now know that $\widehat{\bf p}^{\varepsilon}$ is bounded, using \eqref{Eq:WNL}, \eqref{Eq:EstGyro} and \eqref{Eq:EstGyroW}, we infer that the terms $A_{\kappa}$, $C_{\kappa}$ and $D_{\kappa}$ converge to zero strongly in $L^{\infty}(0,T)$.

Now we use two lemmas, where we recall that $\overline{p}_{\kappa}$ is the modulated variable (before the passage to the limit) given by \eqref{Eq:VariableModulee}.
\begin{Lemma} \label{Lem:PetitTermeFaibleii&iii}
When $\kappa \in {\mathcal P}_{s}$, the term 
${\mathcal M}_{a,\kappa} \overline{p}_{\kappa}' + \frac{1}{2} {\mathcal M}'_{a,\kappa} \overline{p}_{\kappa} $ 
converges to $0$ in $W^{-1,\infty}(0,\overline{T})$ as $\overline{\boldsymbol{\varepsilon}}$ goes to $0$.
\end{Lemma}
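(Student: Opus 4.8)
The plan is to exploit the fact that for $\kappa\in{\mathcal P}_{s}$ the added mass matrix ${\mathcal M}_{a,\kappa}$ is small, of size ${\mathcal O}(\varepsilon_{\kappa}^{2})$ in the off-diagonal entries and genuinely of order $\varepsilon_{\kappa}^{2+\delta_{3i}+\delta_{3j}}$ on the diagonal by Corollary~\ref{Cor:ExpAddedMass} and Remark~\ref{Rem:MA/MASAM}, so that the whole expression is a derivative (in the distributional sense) of something that tends to $0$ uniformly, plus a lower-order remainder. Concretely, the first step is to rewrite
\begin{equation*}
{\mathcal M}_{a,\kappa}\overline{p}_{\kappa}'+\tfrac12{\mathcal M}_{a,\kappa}'\overline{p}_{\kappa}
= \frac{d}{dt}\bigl({\mathcal M}_{a,\kappa}\overline{p}_{\kappa}\bigr)-\tfrac12{\mathcal M}_{a,\kappa}'\overline{p}_{\kappa},
\end{equation*}
so that it suffices to prove that ${\mathcal M}_{a,\kappa}\overline{p}_{\kappa}\to 0$ in $L^{\infty}(0,\overline{T})$ and that ${\mathcal M}_{a,\kappa}'\overline{p}_{\kappa}$ is bounded in, say, $L^{1}(0,\overline{T})$ (or more simply converges to $0$ in a negative Sobolev space after one more integration by parts, but boundedness in $L^1$ already gives relative compactness of the primitive and hence, combined with the first convergence, convergence to $0$ in $W^{-1,\infty}$).

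For the first convergence: by Proposition~\ref{Pro:Bounded-P} we know $\widehat{p}_{\kappa}$ is bounded on $[0,\overline{T}]$, hence $\overline{p}_{\kappa,j}$ is bounded for $j=1,2$ and $\varepsilon_{\kappa}\overline{p}_{\kappa,3}=\widehat{p}_{\kappa,3}$ is bounded (recall $\overline{p}_{\kappa,3}=\vartheta'_{\kappa}$), while by Corollary~\ref{Cor:ExpAddedMass} (estimate \eqref{Eq:AM-AMSA}) and Remark~\ref{Rem:MA/MASAM} the entries of ${\mathcal M}_{a,\kappa}$ satisfy $({\mathcal M}_{a,\kappa})_{ij}={\mathcal O}(\varepsilon_{\kappa}^{2+\delta_{3i}+\delta_{3j}})$. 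Multiplying the $j=3$ column by $\overline{p}_{\kappa,3}$ produces a factor $\varepsilon_{\kappa}^{2+\delta_{3i}+1}\overline{p}_{\kappa,3}=\varepsilon_{\kappa}^{2+\delta_{3i}}\widehat{p}_{\kappa,3}={\mathcal O}(\varepsilon_{\kappa}^{2})$, and similarly all entries of ${\mathcal M}_{a,\kappa}\overline{p}_{\kappa}$ are ${\mathcal O}(\varepsilon_{\kappa}^{2})\to 0$. For the second: differentiating, ${\mathcal M}_{a,\kappa}'$ is given by a Reynolds transport formula involving shape derivatives of the Kirchhoff potentials (as in the computation of $({\mathcal M}_{a,\kappa}')_{ij}$ carried out in the proof of Lemma~\ref{Lem:I2J4J6}); using Proposition~\ref{Pro:ExpShapeDerivatives} together with the energy estimate \eqref{Eq:APEEnergy} (which controls $\varepsilon_{\mu}^{\delta_{m3}}p_{\mu,m}=\widehat{p}_{\mu,m}$, now bounded by Proposition~\ref{Pro:Bounded-P}) one sees that $({\mathcal M}_{a,\kappa}')_{ij}={\mathcal O}(\varepsilon_{\kappa}^{2+\delta_{3i}+\delta_{3j}})$ as well, so that again ${\mathcal M}_{a,\kappa}'\overline{p}_{\kappa}={\mathcal O}(\varepsilon_{\kappa}^{2})$, bounded in $L^{\infty}(0,\overline{T})$ (hence a fortiori in $L^{1}$).

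Putting these together: ${\mathcal M}_{a,\kappa}\overline{p}_{\kappa}\to 0$ uniformly, so its distributional derivative tends to $0$ in $W^{-1,\infty}(0,\overline{T})$; and $\tfrac12{\mathcal M}_{a,\kappa}'\overline{p}_{\kappa}\to 0$ in $L^{\infty}(0,\overline{T})$, hence in $W^{-1,\infty}(0,\overline{T})$. Their sum is exactly ${\mathcal M}_{a,\kappa}\overline{p}_{\kappa}'+\tfrac12{\mathcal M}_{a,\kappa}'\overline{p}_{\kappa}$, which therefore converges to $0$ in $W^{-1,\infty}(0,\overline{T})$, proving the lemma. The only mild subtlety — which I would highlight as the main point to be careful about rather than a genuine obstacle — is bookkeeping the powers of $\varepsilon_{\kappa}$ attached to the third (angular) coordinate consistently, i.e. keeping track of the distinction between $\overline{p}_{\kappa,3}=\vartheta'_{\kappa}$ and $\widehat{p}_{\kappa,3}=\varepsilon_{\kappa}\vartheta'_{\kappa}$ noted in the Remark following Proposition~\ref{Pro:MEE}, so that no spurious negative power of $\varepsilon_{\kappa}$ is introduced; everything else is a direct application of the already established potential estimates and the now-available uniform bound on $\widehat{\bf p}^{\varepsilon}$.
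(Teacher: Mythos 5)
Your proposal follows exactly the same route as the paper's own proof: rewrite ${\mathcal M}_{a,\kappa}\overline{p}_{\kappa}' + \tfrac12{\mathcal M}_{a,\kappa}'\overline{p}_{\kappa} = ({\mathcal M}_{a,\kappa}\overline{p}_{\kappa})' - \tfrac12{\mathcal M}_{a,\kappa}'\overline{p}_{\kappa}$, then show the quantity inside the time derivative tends to $0$ uniformly (giving $W^{-1,\infty}$ convergence of its derivative) and that ${\mathcal M}_{a,\kappa}'\overline{p}_{\kappa}$ tends to $0$ in $L^\infty$ via Reynolds' transport theorem, shape-derivative estimates (Proposition~\ref{Pro:ExpShapeDerivatives}) and the decay of the Kirchhoff potentials (Proposition~\ref{Pro:ExpKirchhoff}). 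Your bookkeeping of the angular scaling in the first half is actually \emph{more} careful than the paper's, which somewhat loosely asserts that ``$\overline{p}_{\kappa}$ is bounded'' even though $\overline{p}_{\kappa,3}=\vartheta'_{\kappa}$ is only ${\mathcal O}(\varepsilon_{\kappa}^{-1})$; the paper's conclusion is nonetheless correct because, as you observe, the entries of ${\mathcal M}_{a,\kappa}$ in the third column carry an extra $\varepsilon_{\kappa}$ that absorbs it.

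Two small remarks. First, your intermediate claim $({\mathcal M}_{a,\kappa}')_{ij}={\mathcal O}(\varepsilon_{\kappa}^{2+\delta_{3i}+\delta_{3j}})$ is one power of $\varepsilon_{\kappa}$ too strong: from the scaling $\widehat{\mathcal M}^\varepsilon_{a,\kappa,i,j}=\varepsilon_{\kappa}^{2+\delta_{3i}+\delta_{3j}}\widehat{\mathcal M}^1_{a,\kappa,i,j}(\vartheta_\kappa)$ one gets a time derivative of order $|\vartheta'_{\kappa}|\,\varepsilon_{\kappa}^{2+\delta_{3i}+\delta_{3j}}={\mathcal O}(\varepsilon_{\kappa}^{1+\delta_{3i}+\delta_{3j}})$, and the same $\varepsilon_{\kappa}^{1+\delta_{3i}+\delta_{3j}}$ rate is what the Reynolds-transport computation actually yields. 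This does not affect your conclusion, since multiplying by $\overline{p}_{\kappa,j}={\mathcal O}(\varepsilon_{\kappa}^{-\delta_{3j}})$ still gives ${\mathcal O}(\varepsilon_{\kappa}^{1+\delta_{3i}})\to 0$, but you should state the correct rate. Second, the parenthetical suggesting that $L^1$-boundedness of ${\mathcal M}_{a,\kappa}'\overline{p}_{\kappa}$ would already give convergence to $0$ in $W^{-1,\infty}$ is not correct (a bounded sequence in $L^1$ need not tend to $0$ in any space); it is harmless here because you in fact establish the stronger $L^\infty$ convergence, but the parenthetical should simply be dropped.
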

\begin{proof}[Proof of Lemma \ref{Lem:PetitTermeFaibleii&iii}]
We proceed in three steps. \par
\ \par
\noindent
{\bf Step 1.} First ${\mathcal M}_{a,\kappa}$ converges strongly to $0$ in $L^\infty(0,T)$ due to Corollary~\ref{Cor:ExpAddedMass}. Since $\overline{p}_{\kappa}$ is bounded, it follows that $({\mathcal M}_{a,\kappa} \overline{p}_{\kappa})'$ converges to $0$ in $W^{-1,\infty}(0,\overline{T})$. \par
\ \par
\noindent
{\bf Step 2.} By  Reynold's transport theorem:
\begin{eqnarray*}
{\mathcal M}'_{a,\kappa,i,j} 
&=& \sum_{\nu=1}^N \int_{{\mathcal F}({\bf q})} \left(p_{\nu} \cdot \frac{\partial \nabla \varphi_{\kappa,i}}{\partial q_{\nu}}\right) \cdot \nabla \varphi_{\kappa,j} \, dx 
+ \sum_{\nu=1}^N \int_{{\mathcal F}({\bf q})} \nabla \varphi_{\kappa,i} \cdot  \left(p_{\nu} \cdot \frac{\partial \nabla \varphi_{\kappa,j}}{\partial q_{\nu}}\right) \, dx \\
&\ & + \int_{\partial {\mathcal F}({\bf q})} (u^\varepsilon \cdot n) \nabla \varphi_{\kappa,i} \cdot \nabla \varphi_{\kappa,j} \, ds.
\end{eqnarray*}
By an integration by parts the first two terms are transformed into integrals over $\partial {\mathcal S}_{\kappa}$ with some integrands which are bounded according to Proposition~\ref{Pro:ExpShapeDerivatives}. 
Therefore these two terms converge to $0$ uniformly in time. For the third one, we first notice that $u^\varepsilon\cdot n=u^{pot} \cdot n$ is bounded (thanks to Propositions~\ref{Pro:ExpKirchhoff} and \ref{Pro:Bounded-P}). Now using again Proposition~\ref{Pro:ExpKirchhoff} we see that on $\partial {\mathcal F}({\bf q}) \setminus \partial \mathcal{S}_\kappa$ the integrand is of order $\mathcal{O}(\varepsilon_\kappa^{4+\delta_{i3}+\delta_{j3}})$ and that on $\partial \mathcal{S}_\kappa$ it is bounded. Since $|\partial \mathcal{S}_\kappa|=\mathcal{O}(\varepsilon_\kappa)$, we obtain the convergence of this term to $0$ as well. Thus ${\mathcal M}'_{a,\kappa} \overline{p}_{\kappa}$ converges to $0$ in $L^{\infty}(0,T)$ as $\overline{\boldsymbol{\varepsilon}}$ goes to $0$. \par
\ \par
\noindent
{\bf Step 3.} Since
\begin{equation*}
{\mathcal M}_{a,\kappa} \overline{p}_{\kappa}' + \frac{1}{2} {\mathcal M}'_{a,\kappa} \overline{p}_{\kappa}  = 
 ({\mathcal M}_{a,\kappa} \overline{p}_{\kappa})' - \frac{1}{2} {\mathcal M}'_{a,\kappa}\overline{p}_{\kappa} ,
\end{equation*}
the result follows.
\end{proof}
\begin{Lemma} \label{Lem:GrosTermeii&iii}
When $\kappa \in {\mathcal P}_{s}$, one has the uniform convergence in $[0,\overline{T}]$ as $\overline{\boldsymbol{\varepsilon}}$ goes to $0$:
\begin{equation} \nonumber 
\begin{pmatrix}
B_{\kappa,1} \\ B_{\kappa,2} 
\end{pmatrix}
 \longrightarrow \gamma_{\kappa}((h^\star_{\kappa})' - u^{\star}_{\kappa}(h^\star_{\kappa}))^{\perp}.
\end{equation}
\end{Lemma}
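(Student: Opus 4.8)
\textbf{Proof proposal for Lemma~\ref{Lem:GrosTermeii&iii}.}

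The plan is to start from the explicit formula \eqref{Eq:Gyro} for $B_{\kappa,j}$ and compute the relevant integrals over $\partial{\mathcal S}_{\kappa}$ in terms of $\zeta^\varepsilon_{\kappa}(q_{\kappa})$, then pass to the limit. First I would restrict to $j\in\{1,2\}$ and use \eqref{def-xi-j} together with \eqref{circ-norma} and \eqref{Eq:Zeta} (exactly the computation already carried out in the proof of Lemma~\ref{Lem:Transfert45vers12}) to get, for $j,k\in\{1,2\}$,
\[
\int_{\partial{\mathcal S}_{\kappa}} \partial_{n}\widehat{\psi}_{\kappa}\,\xi_{\kappa,k}^{\perp}\cdot\xi_{\kappa,j}\,ds = \begin{pmatrix} 0 & -1 \\ 1 & 0 \end{pmatrix}_{k,j},
\qquad
\int_{\partial{\mathcal S}_{\kappa}} \partial_{n}\widehat{\psi}_{\kappa}\,\xi_{\kappa,3}^{\perp}\cdot\xi_{\kappa,j}\,ds = (\zeta^\varepsilon_{\kappa}(q_{\kappa}))^{\perp}\cdot e_{j},
\]
the latter because $\xi_{\kappa,3}^{\perp}=-(x-h_{\kappa})$ on $\partial{\mathcal S}_{\kappa}$, so this integral is $-\zeta^\varepsilon_{\kappa}(q_{\kappa})$ paired with $e_j$ up to the $\perp$, which is precisely $(\zeta^\varepsilon_{\kappa}(q_{\kappa}))^\perp\cdot e_j$ after the sign bookkeeping. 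Plugging these into \eqref{Eq:Gyro} gives
\[
\begin{pmatrix} B_{\kappa,1}\\ B_{\kappa,2}\end{pmatrix}
= -\gamma_{\kappa}\Big( \overline{p}_{\kappa,1}\,e_2 - \overline{p}_{\kappa,2}\,e_1 + \overline{p}_{\kappa,3}\,(\zeta^\varepsilon_{\kappa}(q_{\kappa}))^{\perp}\Big)
= \gamma_{\kappa}\Big( (\overline{p}_{\kappa,1},\overline{p}_{\kappa,2})^{\perp} - \overline{p}_{\kappa,3}\,(\zeta^\varepsilon_{\kappa}(q_{\kappa}))^{\perp}\Big),
\]
so that $(B_{\kappa,1},B_{\kappa,2})^T=\gamma_\kappa\big((\overline p_{\kappa,1},\overline p_{\kappa,2})-\overline p_{\kappa,3}\,\zeta^\varepsilon_\kappa(q_\kappa)\big)^{\perp}$.

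Next I would identify the limit of the bracket. Recall $\overline{p}_{\kappa,i}=p_{\kappa,i}-\delta_{i\in\{1,2\}}(\alpha_{\kappa,i}+\beta_{\kappa,i})$ with $\alpha_{\kappa,i}=V_{\kappa,i}$, $|\beta_{\kappa,i}|\le C\varepsilon_{\kappa}$ by \eqref{Eq:BoundAlphaBeta}, and $\zeta^\varepsilon_{\kappa}(q_{\kappa})=\varepsilon_{\kappa}R(\vartheta_{\kappa})\zeta^1_{\kappa,0}={\mathcal O}(\varepsilon_{\kappa})$ by \eqref{Eq:Zeta}; since $\overline{p}_{\kappa,3}=\vartheta'_{\kappa}$ is bounded (Proposition~\ref{Pro:Bounded-P}), the term $\overline{p}_{\kappa,3}\,\zeta^\varepsilon_{\kappa}(q_{\kappa})$ and the $\beta_{\kappa,i}$ contributions vanish uniformly on $[0,\overline{T}]$. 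Hence $(B_{\kappa,1},B_{\kappa,2})^T = \gamma_\kappa\big((h_\kappa^\varepsilon)' - (V_{\kappa,1},V_{\kappa,2})\big)^{\perp} + o(1)$ in $L^\infty(0,\overline T)$, using $(p_{\kappa,1},p_{\kappa,2})=(h_\kappa^\varepsilon)'$. It then remains to show $(h^\varepsilon_{\kappa})'-(V_{\kappa,1},V_{\kappa,2})\to (h^\star_{\kappa})'-u^{\star}_{\kappa}(h^\star_{\kappa})$. By \eqref{Eq:CVP}, $(h^\varepsilon_{\kappa})'\to (h^\star_{\kappa})'$ in the appropriate weak-$\star$ topology (and actually we only need the combination to converge, so I would track the two pieces together). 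For $(V_{\kappa,1},V_{\kappa,2})=\widecheck{u}_{\kappa}(h_{\kappa})$ by \eqref{Eq:DefV}, I would use the decomposition \eqref{Eq:DecompCheckUKappa} of $\widecheck{u}_{\kappa}$ and pass to the limit term by term as in Subsection~\ref{Subsec:LDF}: the Kirchhoff parts $\sum_{\nu\neq\kappa}p_{\nu}\nabla\varphi_{\nu}^{\not\kappa}$ vanish (Proposition~\ref{Pro:ExpKirchhoff}, since these are evaluated at $h_{\kappa}$ which is at distance ${\mathcal O}(1)$ from the other solids, and the coefficients $\varepsilon_\nu^2\widehat p_\nu\to0$), the terms $\gamma_{\nu}\nabla^{\perp}\psi_{\nu}^{\not\kappa}$ for $\nu\neq\kappa$ converge to $\gamma_\nu\nabla^\perp\widecheck\psi_\nu$ resp. $\gamma_\nu\widecheck K[\delta_{h^\star_\nu}]$ evaluated at $h^\star_\kappa$ (Proposition~\ref{Pro:LimCirculation}, distinguishing $\nu\in{\mathcal P}_{(i)}$ and $\nu\in{\mathcal P}_s$), the Biot--Savart term $K^{\not\kappa}[\omega]$ converges to $\widecheck K[\omega^\star]$ at $h^\star_\kappa$ (Lemma~\ref{Lem:BiotSavart} and \eqref{Eq:CVOmega2}), and the phantom term $\gamma_{\kappa}\nabla^{\perp}\psi^{r,\not\kappa}_{\kappa}$ converges to $\gamma_\kappa\nabla^\perp\widecheck\psi^r_\kappa$ at $h^\star_\kappa$ (Lemma~\ref{Lem:Phantom}, \eqref{Eq:CVPhantom}). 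All these convergences are in $C^k$ away from the relevant solids/vorticity support, hence pointwise at $h^\star_\kappa$, and moreover one needs the equicontinuity in $t$ of $h^\varepsilon_\kappa$ (uniformly Lipschitz by Proposition~\ref{Pro:Bounded-P}) to replace the evaluation point $h^\varepsilon_\kappa(t)$ by $h^\star_\kappa(t)$.

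Assembling these pieces and comparing with \eqref{Eq:DecompUstar} and the desingularization \eqref{DefUellKappa}, the limit of $\widecheck{u}_{\kappa}(h_{\kappa})$ is exactly
\[
\sum_{\nu\in{\mathcal P}_{(i)}} p^\star_{\nu,i}\nabla\widecheck\varphi_{\nu,i}(h^\star_\kappa) + \sum_{\nu\in{\mathcal P}_{(i)}}\gamma_\nu\nabla^\perp\widecheck\psi_\nu(h^\star_\kappa) + \widecheck K\Big[\omega^\star + \sum_{\mu\in{\mathcal P}_s,\,\mu\neq\kappa}\gamma_\mu\delta_{h^\star_\mu}\Big](h^\star_\kappa) + \gamma_\kappa\nabla^\perp\widecheck\psi^r_\kappa(h^\star_\kappa),
\]
and using \eqref{Eq:Kdelta} (namely $H_\kappa+\nabla^\perp\widecheck\psi^r_\kappa=\widecheck K[\delta_{h_\kappa}]$), this equals $u^\star(h^\star_\kappa) - \gamma_\kappa H_\kappa(h^\star_\kappa) = u^\star_\kappa(t,h^\star_\kappa)$, where the subtraction of the singular self-interaction $\gamma_\kappa H_\kappa$ is precisely the content of \eqref{DefUellKappa}. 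Therefore $(B_{\kappa,1},B_{\kappa,2})^T\to\gamma_\kappa\big((h^\star_\kappa)'-u^\star_\kappa(h^\star_\kappa)\big)^\perp$, which is the claim. The main obstacle is the careful desingularization bookkeeping: one must verify that the ``phantom'' potential $\psi^{r,\not\kappa}_\kappa$ (defined in the $\kappa$-augmented domain) is exactly what survives in the limit, so that the self-term $\gamma_\kappa H_\kappa$ is removed but the regularized reflected part $\gamma_\kappa\nabla^\perp\widecheck\psi^r_\kappa$ is kept — this is why the decomposition \eqref{Eq:DecompCheckUKappa} uses $\psi^{r,\not\kappa}_\kappa$ rather than $\psi_\kappa$, and tracking this correctly through Lemma~\ref{Lem:Phantom} and \eqref{Eq:Kdelta} is the delicate point. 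A secondary technical point is justifying the pointwise evaluation at the moving point $h^\varepsilon_\kappa(t)\to h^\star_\kappa(t)$, which follows from the uniform-in-$t$ $C^k_{loc}$ convergences together with the uniform Lipschitz bound on $h^\varepsilon_\kappa$.
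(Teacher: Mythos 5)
Your approach is the same as the paper's: reduce $(B_{\kappa,1},B_{\kappa,2})^T$ to $\gamma_\kappa\big((h^\varepsilon_\kappa)'-\widecheck{u}^\varepsilon_\kappa(h^\varepsilon_\kappa)\big)^\perp+o(1)$ by computing the integrals over $\partial{\mathcal S}_\kappa$ and dropping the ${\mathcal O}(\varepsilon_\kappa)$ contributions ($\beta$-terms and the $k=3$ term), then pass to the limit in $\widecheck{u}^\varepsilon_\kappa(h^\varepsilon_\kappa)$ term by term via the decomposition \eqref{Eq:DecompCheckUKappa} and the convergences from Section~\ref{Sec:Expansions}, with the phantom potential $\psi^{r,\not\kappa}_\kappa$ and identity \eqref{Eq:Kdelta} accounting for the desingularization \eqref{DefUellKappa}. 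Two small slips, neither of which affects the conclusion: the integral $\int_{\partial{\mathcal S}_\kappa}\partial_n\widehat\psi_\kappa\,\xi_{\kappa,3}^\perp\cdot\xi_{\kappa,j}\,ds$ equals $\zeta^\varepsilon_\kappa(q_\kappa)\cdot e_j$, not $(\zeta^\varepsilon_\kappa)^\perp\cdot e_j$ (since $\xi_{\kappa,3}^\perp=-(x-h_\kappa)$ and by \eqref{Eq:Zeta} the integral reproduces $\zeta^\varepsilon_\kappa$ directly, without a $\perp$), though this term is ${\mathcal O}(\varepsilon_\kappa)$ either way; and the phrase ``the Kirchhoff parts $\sum_{\nu\neq\kappa}p_\nu\nabla\varphi^{\not\kappa}_\nu$ vanish'' contradicts your own correct final formula --- for $\nu\in{\mathcal P}_{(i)}$, $\varepsilon_\nu=1$ and these terms converge to $p^\star_{\nu}\nabla\widecheck\varphi_\nu(h^\star_\kappa)$, not to zero, exactly as in the first bullet of the paper's list of convergences; only the contributions from $\nu\in{\mathcal P}_s\setminus\{\kappa\}$ vanish.
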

\begin{proof}[Proof of Lemma \ref{Lem:GrosTermeii&iii}]
We consider the writing of $B_{\kappa}$ in \eqref{Eq:Gyro}. Using \eqref{Eq:VariableModulee} and \eqref{circ-norma_F}, and then \eqref{Eq:ParamModulation}, \eqref{Eq:BoundAlphaBeta} and \eqref{Eq:DefV}, we see that
\begin{equation*}
\begin{pmatrix}
B_{\kappa,1} \\ B_{\kappa,2} 
\end{pmatrix}
= \gamma_{\kappa} \left( (h^\varepsilon_{\kappa})' - \begin{pmatrix} \alpha_{1} + \beta_{1} \\ \alpha_{2}+ \beta_{2} \end{pmatrix} \right)^{\perp}
= \gamma_{\kappa} \left( (h^\varepsilon_{\kappa})' - \widecheck{u}^\varepsilon_{\kappa}(h^\varepsilon_{\kappa})  \right)^{\perp} + o(1).
\end{equation*}
It remains to prove that
\begin{equation} \label{Eq:LimiteModulation}
\widecheck{u}^\varepsilon_\kappa(h_\kappa^\varepsilon) \longrightarrow u^\star_{\kappa}(h_{\kappa}^\star) \ \text{ uniformly in time as } \overline{\boldsymbol{\varepsilon}} \rightarrow 0.
\end{equation}
To prove \eqref{Eq:LimiteModulation}, we first establish the convergence  for $p \in [1,2)$ 
\begin{equation} \label{Eq:CVUC}
\widecheck{u}_{\kappa}^\varepsilon \circ {\mathcal T}_{\underline{{\bf q}}^\varepsilon} \longrightarrow u^\star_{\kappa} \circ {\mathcal T}_{\underline{{\bf q}}^\star}
\ \text{ in } \ L^\infty(0,\overline{T};L^p(\widecheck{\mathcal F}_{0})).
\end{equation}
This derives from \eqref{Eq:DecompCheckUKappa} and the equivalents of Lemmas~\ref{Lem:CVK}, \ref{Lem:CVKir} and \ref{Lem:CVPsi} in the domain $\widecheck{\mathcal{F}}_\kappa$ 
where there is no ${\mathcal S}_{\kappa}$: 
\begin{gather*}
\nabla \varphi_{\nu}^{\varepsilon,\not \kappa} \circ {\mathcal T}_{\underline{{\bf q}}^\varepsilon} \longrightarrow
\nabla \widecheck{\varphi}_{\nu} \circ {\mathcal T}_{\underline{{\bf q}}^\star}
\text{ for } \nu \in  {\mathcal P}_{(i)} , \\
\nabla \varphi_{\nu}^{\varepsilon,\not \kappa} \circ {\mathcal T}_{\underline{{\bf q}}^\varepsilon} \longrightarrow 0
\text{ for } \nu \in {\mathcal P}_{s} \setminus \{\kappa \}, \\
\nabla^\perp \psi_{\nu}^{\varepsilon,\not \kappa}  \circ {\mathcal T}_{\underline{{\bf q}}^\varepsilon} \longrightarrow
\nabla^\perp \widecheck{\psi}_{\nu} \circ {\mathcal T}_{\underline{{\bf q}}^\star}
\text{ for } \nu \in   {\mathcal P}_{(i)} , \\
\nabla^\perp \psi_{\nu}^{\varepsilon,\not \kappa}  \circ {\mathcal T}_{\underline{{\bf q}}^\varepsilon} \longrightarrow
\widecheck{K}[\delta_{h^\star_{\nu}}] \circ {\mathcal T}_{\underline{{\bf q}}^\star}
\text{ for } \nu \in {\mathcal P}_{s} \setminus \{\kappa \}, \\
K^{\varepsilon,\not \kappa}[\omega^\varepsilon]  \circ {\mathcal T}_{\underline{{\bf q}}^\varepsilon} \longrightarrow
\widecheck{K}[\omega^\star]  \circ {\mathcal T}_{\underline{{\bf q}}^\star}.
\end{gather*}
Moreover using \eqref{Eq:CVPhantom} and reasoning as in Lemma~\ref{Lem:CVPsi}
\begin{equation*}
\nabla^\perp \psi^{\varepsilon,r,\not \kappa}_{\kappa}  \circ {\mathcal T}_{\underline{{\bf q}}^\varepsilon} \longrightarrow
\nabla^\perp \widecheck{\psi}_{\kappa}^{r} \circ {\mathcal T}_{\underline{{\bf q}}^\star}
= \big\{ \widecheck{K}[\delta_{h^\star_{\kappa}}] - H_{\kappa} \big\} \circ {\mathcal T}_{\underline{{\bf q}}^\star},
\end{equation*}
where we recall that $\widecheck{\psi}_{\kappa}^r$ was defined in \eqref{Eq:DefUKRKappa} and $\widecheck{\psi}_{\kappa}^{r, \not \kappa}$ in \eqref{Eq:PsirefKappa}.
This allows to deduce \eqref{Eq:CVUC} using the decomposition \eqref{Eq:DecompCheckUKappa} of $\widecheck{u}_\kappa$.
Then using inner regularity for the Laplace equation, we see that the convergence \eqref{Eq:CVUC} actually holds in $L^\infty(0,\overline{T};C^k({\mathcal V}_\delta (\mathcal S_{\kappa})))$ since there is no vorticity near ${\mathcal S}_{\kappa}^\varepsilon$. With the uniform convergence of 
$h^\varepsilon_{\kappa}$ toward $h_{\kappa}^\star$, this gives \eqref{Eq:LimiteModulation}.
\end{proof}

Hence we obtain \eqref{Eq:Evolh(ii)Star} and \eqref{Eq:Evolh(iii)Star} by passing to the limit in \eqref{Eq:FormeNormale} 
using the assumption that $\gamma_{\kappa} \neq 0$ when $\kappa \in {\mathcal P}_{(iii)}$ (see the last paragraph of Section \ref{sec-small}) for the latter. 
This concludes the proof of Theorem~\ref{Joli_Theoreme}. \eop
%
%
%
%
%
%%%%%%%%%%%%%%%%%%%%%%%%%%
%
%
%
%
\subsection{Proof of Theorem~\ref{VarianteDuJoliTheoreme}}
In this subsection, we briefly sketch the proof of Theorem~\ref{VarianteDuJoliTheoreme}.
Hence we consider the particular case where the data ensures the uniqueness of the solution to the limit system, together with the separation of point vortices, of solids of fixed size and of the vorticity support in the limit.
Since the limit system enjoys uniqueness in this situation, the convergence without restriction to a subsequence is commonplace; let us explain why the  the maximal existence times $T^{\boldsymbol{\varepsilon}}$ satisfy $\liminf_{\overline{\boldsymbol{\varepsilon}} \rightarrow 0} T^{\boldsymbol{\varepsilon}} \geq T^\star$ and the convergences \eqref{Eq:CVu}-\eqref{Eq:CVtheta} hold on any time interval $[0,T] \subset [0,T^\star)$. \par
Consider $T>0$; denoting ${\mathcal S}_{\kappa}^\star(t):={\mathcal S}_{\kappa}(q_{\kappa}^\star(t))$ for $\kappa \in {\mathcal P}_{(i)}$ and ${\mathcal S}_{\kappa}^\star(t):=\{h_{\kappa}^\star(t) \}$ for $\kappa \in {\mathcal P}_{s}$,
 due to the assumption on the limit system, we can find $d_{T}>0$ such that
\begin{multline*}
\forall t \in [0,T], \ \ 
\forall \kappa \in \{1,\dots,N\}, \ \
d({\mathcal S}^\star_{\kappa}(t),\mbox{Supp}(\omega^\star(t))) \geq d_{T}, 
{     \ d({\mathcal S}^\star_{\kappa}(t), \partial \Omega) \geq d_{T}     } \\
\ \text{ and } \
\forall \lambda \in \{1,\dots,N\} \setminus \{ \kappa\}, \ \ d({\mathcal S}^\star_{\kappa}(t),{\mathcal S}^\star_{\lambda}(t)) \geq d_{T}.
\end{multline*}
Reducing $d_{T}$ if necessary, we assume that $d_{T} \leq D$ where $D$ was defined in \eqref{Eq:DefD}.
We now introduce
\begin{multline*}
T_{max} := \sup \Big\{ \tau \in [0,T] \ \Big/ \ 
\exists \varepsilon_{0} >0, \ \forall t \in [0,\tau], \ 
\forall \overline{\boldsymbol{\varepsilon}} < \varepsilon_{0}, \ 
\forall \kappa \in \{1,\dots,N\}, \ \
d({\mathcal S}^\varepsilon_{\kappa}(t),\mbox{Supp}(\omega^\varepsilon(t))) \geq d_{T} /2,
\\
{ d({\mathcal S}^\varepsilon_{\kappa}(t),\partial \Omega) \geq d_{T} /2   } 
\ \text{ and } \
\forall \lambda \in \{1,\dots,N\} \setminus \{ \kappa\}, \ \ d({\mathcal S}^\varepsilon_{\kappa}(t),{\mathcal S}^\varepsilon_{\lambda}(t))\geq d_{T} /2
\Big\}.
\end{multline*}
Due to the analysis of Subsections~\ref{Subsec:FirstStep}--\ref{Subsec:LimitSmall}, we have $T_{max} \geq \overline{T}$ where $\overline{T}$ was defined in \eqref{Eq:DefOverlines}. Moreover, the convergence analysis of Subsections~\ref{Subsec:FirstStep}--\ref{Subsec:LimitSmall} can be carried out in any $[0,\tau] \subset [0,T_{max})$ since we merely use a minimal distance between the solids and between the solids and the vorticity support to obtain the estimates. Hence to conclude, it suffices to prove that $T_{max}=T$. \par
Arguing by contradiction, we suppose that $T_{max}<T$. Using the convergences \eqref{Eq:CVh}, it is easy to see that for $\tau < T_{max}$, for suitably small $\overline{\boldsymbol{\varepsilon}}$, we do have $ d({\mathcal S}^\varepsilon_{\kappa}(t),{\mathcal S}^\varepsilon_{\lambda}(t)) \geq 3d_{T} /4$ and $d({\mathcal S}^\varepsilon_{\kappa}(t),\partial \Omega) \geq 3d_{T} /4$ on $[0,\tau]$ so that the limitation $T_{max}<T$ can only come from the vorticity.
But using the definition of $T_{max}$, \eqref{Eq:CVh}, the decomposition \eqref{Eq:DecompUeps} and the estimates of Section~\ref{Sec:Expansions}, we see that for $\tau < T_{max}$, for suitably small $\overline{\boldsymbol{\varepsilon}}$, one has the uniform log-Lipschitz estimate on the support of $\omega$:
\begin{equation*}
\| u^\varepsilon(t,\cdot)\|_{\LL({\mathcal F} \setminus \bigcup_{\kappa \in {\mathcal P}_{s}} ({\mathcal V}_{d_{T}/4}(h_{\kappa}^\star(t))))} \leq C \ \text{ uniformly for } t \in [0,\tau].
\end{equation*}
Moreover, reasoning as in Lemma~\ref{Lem:MeilleureCVUeps}, we see that for $p \in (1,+\infty)$,
\begin{equation*}
\| \partial_{t} u^\varepsilon(t,\cdot)\|_{L^p( {\mathcal F} \setminus \bigcup_{\kappa \in {\mathcal P}_{s}} ({\mathcal V}_{d_{T}/4}(h_{\kappa}^\star(t))) )} \leq C \ \text{ uniformly for } t \in [0,\tau].
\end{equation*}
This implies that the convergence \eqref{Eq:CVu} can be supplemented by
\begin{equation*}
u^\varepsilon(t,\cdot) \longrightarrow u^\star(t,\cdot) \ \text{ in } \ C^0([0,\tau];C^0({\mathcal F} \setminus \bigcup_{\kappa \in {\mathcal P}_{s}} ({\mathcal V}_{d_{T}/4}(h_{\kappa}^\star(t))))) .
\end{equation*}
This involves the convergence of the corresponding flows on $\mbox{Supp}(\omega_{0})$. In particular, $\mbox{Supp}(\omega^\varepsilon(t))$ converges to $\mbox{Supp}(\omega^\star(t))$ uniformly in time for the Hausdorff distance.
Since the convergence analysis of Subsections~\ref{Subsec:FirstStep}--\ref{Subsec:LimitSmall} is valid on any $[0,\tau] \subset [0,T_{max})$, we deduce that one can find for any such $\tau$ an $\varepsilon_{0}>0$ such that for $\overline{\boldsymbol{\varepsilon}} < \varepsilon_{0}$,  for all $\kappa \in \{1,\dots,N\}$, $d({\mathcal S}^\varepsilon_{\kappa}(t),\mbox{Supp}(\omega^\varepsilon(t))) \geq 3d_{T}/4$ on $[0,\tau]$. This puts $T_{max} <T$ and the boundedness of the velocity of the vorticity support and of the solids in contradiction. This ends the proof of Theorem~\ref{VarianteDuJoliTheoreme}. \eop
%
%
%
%
%
%
%%%%%%%%%%%%%%%%%%%%%%%%%%%%%%%%%%%%%%%%%%%%%%%%%%%%%%%%%%%%%%%%%%%%%%%%%%%%%%%%%%%%%%%%%%%%%%%%%%%%%%%%
%
%
%
%
\ \par
\noindent
{\bf Acknowledgements.} 
The authors warmly thank Christophe Lacave and Alexandre Munnier for fruitful discussions on the subject. 
The authors are partially supported by the Agence Nationale de la Recherche, Project IFSMACS, grant ANR-15-CE40-0010 and Project SINGFLOWS, grant ANR-18-CE40-0027-01.
 F.S. is also partially supported by the Agence Nationale de la Recherche,  Project BORDS, grant ANR-16-CE40-0027-01, the Conseil R\'egionale d'Aquitaine, grant 2015.1047.CP, the Del Duca Foundation, and the H2020-MSCA-ITN-2017 program, Project ConFlex, Grant ETN-765579.

\end{document}